\documentclass[DIV=14]{scrartcl}
\usepackage{preamble} 

\addtokomafont{disposition}{\rmfamily}

\title{\textsc{diffeomorphism groups of solid tori \\ and the rational pseudoisotopy stable range}}
\author{João Lobo Fernandes \and Samuel Muñoz-Echániz}
\date{}

\begin{document}
\raggedbottom


\maketitle

\begin{abstract}
    \noindent \small \textsc{abstract.} \ We compute the rational homotopy groups of the classifying space
\(\BDiffb(S^1 \times D^{d-1})\) of the topological group of diffeomorphisms of \(S^1 \times D^{d-1}\) fixing the boundary for \(d \geq 6\), in a range of degrees up until around \(d\). This extends results of Budney--Gabai, Bustamante--Randal-Williams, and Watanabe.

As consequences of this computation, we determine the rational pseudoisotopy stable range for compact spin manifolds with fundamental group \(\ZZ\) of dimension $d\geq 6$ to be \([0,d-5]\), and compute in this range the rational homotopy groups of \(\BDiff_\partial(S^1 \times N)\) for compact simply-connected spin \((d-1)\)-manifolds \(N\). Finally, by combining our results with work of Krannich--Randal-Williams and Kupers--Randal-Williams on \(\BDiffb(D^d)\), we compute the rational homotopy groups of the space \(\Emb_\partial(D^{d-2}, D^d)\) of long knots in codimension \(2\) for \(d \geq 6\), again in the same range.
\end{abstract}

\tableofcontents

\section{Introduction}

The homotopy type of the classifying space $\BDiffb(M)$ of the topological group of diffeomorphisms of a smooth manifold\footnote{By \textit{manifold}, we will always mean \emph{smooth} manifold.} $M$ fixing its boundary has become one of the main objects of interest in the interplay between geometric topology and homotopy theory. Classically, this has been studied via the \emph{surgery-pseudoisotopy approach}, a two-step program that, first, approximates $\BDiffb(M)$ by a \emph{larger} classifying space of \emph{block diffeomorphisms} $\BbDiffb(M),$ which can be understood via \emph{surgery theory}, and second, describes the failure of this approximation in a range around $\frac{d}{3}$, where $d$ is the dimension of $M$, in terms of Waldhausen's algebraic $K$-theory of $M$ (more on this in the next subsection). While this program has been very useful and successful in providing low degree information about $\BDiffb(M)$ (see e.g. \cite{FarrellHsiang, BurgheleaCPn, BurgheleaLashofTransfer}), it fails to address this homotopy type away from the aforementioned range of $\frac{d}{3}$.     
\vspace{2mm}

In recent years, and out of the vision of Michael Weiss (as seen in \cite{dalian}), a new program for studying $\BDiffb(M)$ has emerged, which, in principle, allows one to understand this homotopy type in an unlimited range of degrees. Among several successes of this program, we highlight the computations of Kupers--Randal-Williams \cite{evendisc}, Krannich \cite{krannichpseudo}, and Krannich--Randal-Williams \cite{krannich2021diffeomorphismsdiscssecondweiss} on the rational homotopy groups $\pi_*^\QQ(\BDiffb(D^d))$ in ranges far from the one mentioned above. In turn, hinging on these computations, one can obtain information about $\BDiffb(M)$ for any compact simply-connected spin manifold $M$ (see e.g. \cite[Cor. C]{KKGoodwillie}). We can thus view $\BDiffb(D^d)$ as a \emph{stepping stone} to understanding the diffeomorphism group of any compact simply-connected spin manifold.
\vspace{2mm}


In this work, we focus on diffeomorphism groups of compact spin manifolds with fundamental group $\ZZ$. For this goal, the \textit{solid torus} $S^1\times D^{d-1}$ is a natural analog to $D^d$. By an adaptation, due to Bustamante--Randal-Williams \cite{BRW}, of the program just mentioned, together with recent advances of the first-named author \cite{joaoTriads} to treat odd-dimensional manifolds and of the second-named author \cite{MElongknots} to simplify the approach via \textit{trace methods}, we carry out a computation of the rational homotopy groups of $\BDiffb(S^1\times D^{d-1})$ in a certain range, and derive several corollaries. In the following sections, we present our main results.

\subsection{Diffeomorphisms and concordances of solid tori}

The homotopy type of $\BDiffb(S^1\times D^{d-1})$ has previously been studied by Budney--Gabai \cite{budneyGabai}, Bustamante--Randal-Williams \cite{BRW}, and Watanabe \cite{watanabe}. Our first main result extends our current knowledge of its rational homotopy type in high-dimensions---that is, $d\geq 6$---in the following way. Throughout this paper, we will denote the rational homotopy groups of a space $X$ by $\pi_*^\QQ(X)$. 

\begin{mainteo}\label{main even}\label{main odd}
    For $n\geq 3$, $d\in \{2n,2n+1\}$ and $1\leq *\leq 2n-3$, there is a surjective homomorphism 
    \[\pi_*^\QQ(\BDiffb(S^1\times D^{d-1}))\longrightarrow \left\{\begin{array}{cl}
        0 & \text{if $d=2n$,}\\
        \K^{\QQ}_{*+1}(\ZZ[\ZZ]) & \text{if $d=2n+1$,}
    \end{array}
    \right.\]
    which is an isomorphism in degrees $1\leq *\leq 2n-4$, where $\K^\QQ_*(\ZZ[\ZZ])$ denotes the rationalised algebraic $K$-theory groups of the ring $\ZZ[\ZZ]$ of Laurent polynomials. Moreover, the kernel of this homomorphism is a countably infinite-dimensional $\QQ$-vector space in degree $*=2n-3$.
\end{mainteo}

\begin{rmk}[Relation to previous work] \cref{main even} recovers \cite[Thm.~B]{BRW} of Bustamante--Randal-Williams by setting $d = 2n$ and $* \leq n-3$. Thus, our result improves loc.cit. in two ways: first, it effectively doubles the range, and second, it includes the odd-dimensional case $d = 2n+1$. In a different direction, Budney--Gabai and Watanabe \cite{budneyGabai,watanabe} discovered that the $(d-3)$-rd rational homotopy group of $\BDiffb(S^1 \times D^{d-1})$ is infinitely generated for all\footnote{Although stated only for $d=4$, their proof shows that $\pi_{d-3}^{\QQ}\bigl(\BDiffb(S^1\times D^{d-1})\bigr)$ is infinitely generated for all $d\ge 4$.} $d \geq 4$. \cref{main even} recovers this result for even $d \geq 6$, and presents a new source of infinite generation for odd $d \geq 7$, namely in degree $d-4$.
\end{rmk}

\begin{rmk}
    The rationalised algebraic $K$-theory groups $\K^\QQ_*(\ZZ[\ZZ])$ featuring in \cref{main even} above can be completely computed by combining Borel's computation \cite{BorelKZ} of $\K^\QQ_*(\ZZ)$ with the Bass--Heller--Swan splitting \cite{BassHellerSwan}. Explicitly, there is an isomorphism 
    \[\K^\QQ_*(\ZZ[\ZZ])\cong \begin{cases}
        \QQ &\text{if $*=0,1$ or $*\equiv 1,2$ (mod $4$) with $*\geq 5$,}\\
        0 &\text{otherwise.}
    \end{cases}\]
\end{rmk}

\begin{rmk}
    Given a compact manifold $M$, its diffeomorphism group $\Diffb(M)$ is an infinite-dimensional Frechét manifold and hence, by work of Palais \cite{PalaisDiffBanach}, has the homotopy type of a countable CW complex. In particular, its (rational) homotopy groups are countably generated. Since all countably infinite-dimensional $\QQ$-vector spaces are abstractly isomorphic to $\QQ^\infty$, exhibiting an infinitely generated subspace of $\pi_*^\QQ(\Diffb(M))$ in effect determines this vector space up to abstract isomorphism. (The same applies to other automorphism spaces considered in this text.) From now on, by \emph{infinite-dimensional} or \emph{infinite generation} we shall always mean \emph{countably} infinite.
\end{rmk}

\begin{rmk}
    By smoothing theory, there is a similar description for the rational homotopy groups of the classifying space $\BTop_\partial(S^1\times D^{d-1})$ of the group of homemomorphisms of solid tori---see \cref{Homeomorphisms}.
\end{rmk}


 In the remainder of this introduction, we discuss applications of this computation to the rational homotopy type of diffeomorphism groups of more general manifolds, as well as to other variants of moduli spaces of manifolds. We begin by focusing on the topological group $\C(S^1\times D^{d-1})$ of \textit{concordance diffeomorphisms} (or \textit{pseudoisotopies}) of solid tori, where $\C(M)\coloneq \Diff_{M\times \{0\}}(M\times [0,1])$. This group serves as a bridge between diffeomorphism groups of manifolds of different dimensions, as it fits into a fibre sequence
\begin{equation}\label{rest fib seqe}
    \Diffb(M\times [0,1])\longrightarrow \C(M)\longrightarrow \Diffb(M).
\end{equation}
Moreover, its classifying space $\BC(M)$ plays an essential role in the surgery-pseudoisotopy approach mentioned above, as we will discuss at length in the next section. Our second main result is a computation of its rational homotopy groups in a range of degrees when $M=S^1\times D^{d-1}$.

\begin{mainteo}\label{main conc intro}
    For $n\geq 3$, $d\in\{2n,2n+1\}$ and $1\leq *\leq 2n-3$, there is a surjective homomorphism 
    \[\pi_*^\QQ(\BC(S^1\times D^{d-1}))\longrightarrow
        \K^{\QQ}_{*+1}(\ZZ[\ZZ])\]
    which is an isomorphism in degrees $1\leq *\leq 2n-4$. Moreover, the kernel of this homomorphism is infinite-dimensional in degree $*=2n-3$.    
\end{mainteo}


\begin{rmk}\label{restrictionRmkIntro} Our methods can also analyse the restriction map $r:\BC(S^1\times D^{d-1})\to \BDiffb(S^1\times D^{d-1})$:

\begin{enumerate}[label = (\roman*)]

    \item When $d=2n+1\geq 7$, it essentially follows from \cref{main even} (see \cref{main odd conc}) that $r$ is rationally $(2n-1)$-connected. In particular, since $\pi_{2n-2}^{\QQ}(\BDiffb(S^1\times D^{2n}))$ is infinite-dimensional by \cite{budneyGabai, watanabe}, the same holds for $\smash{\pi_{2n-2}^{\QQ}(\BC(S^1\times D^{2n}))}$.
    
    \item When $d=2n$, by \cref{main even,main conc intro}, the rational homotopy groups in degree $2n-3$ of domain and target of $r$ are both infinite-dimensional $\QQ$-vector spaces. We can additionally prove that the homomorphism $\smash{\pi_{2n-3}^{\QQ}(r)}$ is surjective with infinitely generated kernel (see \cref{surjectionInWatanabeDegree}). 
\end{enumerate} 

Both of these can be seen as an improvement of \cite[Thm. 1.3 and Cor. 1.5]{watanabe} and \cite[Thm. 1.3]{Botvinnik2023}.
\end{rmk}

\begin{rmk}
    Technically, \cref{main conc intro} is not directly deducible from \cref{main even}. In fact, the former will be a key step in showing the latter. We refer to \cref{strategy section} for an explanation of the strategy of proof. 
\end{rmk}

\subsection{Rational pseudoisotopy range for manifolds with infinite cyclic fundamental group}

One immediate observation from \cref{main conc intro} is that, in the range of the theorem, the rational homotopy groups of $\BC(S^1\times D^{d-1})$ are independent of the dimension $d\geq 6$. In a considerably lower range, this phenomenon is explained by an important result in geometric topology: \textit{Igusa's stability theorem} \cite{IgusaStableRange}. This result is concerned with the connectivity of the \textit{concordance stabilisation map}
\[s_M:\C(M)\longrightarrow \C(M\times [0,1])\]for a compact manifold $M$ (see e.g. \cite[Chapter II, \S 1]{IgusaStableRange}), and, in particular, establishes $\frac{\dim(M)}{3}$ as an asymptotic lower bound for the \textit{concordance} (or \textit{pseudoisotopy}) \textit{stable range} of $M$, denoted
\[
\phi(M)\coloneq \max\{ k\in \ZZ ~~|~~ s_{M\times [0,1]^m} \text{ is $k$-connected for all $m\geq 0$}\}.
\]
(Replacing $k$-connected above by \emph{rationally} $k$-connnected gives the \textit{rational concordance stable range} $\phi^{\QQ}(M)$.) The (rational) concordance stable range plays a central role in the \textit{surgery-pseudoisotopy approach} for the (rational) homotopy type of $\BDiffb(M)$. For example, it is precisely the range in which the failure of approximating the latter space by its block analogue (as mentioned above) is controlled by Waldhausen's algebraic $K$-theory of $M$, as a consequence of the work of Weiss--Williams \cite{WWI}. 

An important property of $\phi(M)$, established by Goodwillie--Krannich--Kupers \cite{KKGoodwillie}, is that it essentially only depends on the tangential $2$-type of $M$ (see p.25 in loc.cit.). As a consequence of this and the work of \cite{krannich2021diffeomorphismsdiscssecondweiss} on the rational concordance stable range of $D^d$ for $d\geq 6$, it was established in \cite[Cor. C]{KKGoodwillie} that the rational concordance stable range for compact simply-connected spin manifolds $M^d$ with $d\geq 6$ is $\phi^\QQ(M)=d-4$. Similarly, by a refined version of \cref{main conc intro}, we deduce an analogous result for compact spin manifolds with infinite cyclic fundamental group.

\begin{mainteo}\label{main range}
    Let $d\geq 6$, and let $M$ be a connected compact spin $d$-dimensional smooth manifold (possibly with boundary) with $\pi_1(M)\cong \ZZ$. Then, the concordance stabilisation map
    \[s_M:\C(M)\longrightarrow \C(M\times [0,1])\]is rationally $(d-5)$-connected\footnote{For $d \geq 6$, the group of path-components $\pi_0(\C(M))$ is an abelian group (hence rationalisable) by work of Hatcher--Wagoner \cite{HatcherWag} and Igusa \cite[Thm. 8.a.1]{IgusaCorrection} (see \eqref{HatcherWagoner}). Thus, the notion of \emph{rational connectivity} is well-defined in this case.}, but not rationally $(d-4)$-connected. That is, $\phi^\QQ(M)=d-5$. Moreover,
    \[\pi_{d-4}^\QQ(\C(M\times [0,1]),\C(M))\]is an infinite-dimensional $\QQ$-vector space.
\end{mainteo}

Exploiting, once again, the tangential invariance of the concordance stable range, we obtain the following upper bound to the rational concordance stable range, as suggested to us by Manuel Krannich.

\begin{maincor}\label{application to G}
    Let $d\geq 6$ and $M$ be a connected compact spin $d$-dimensional smooth manifold (possibly with boundary) such that $\H^1(\pi_1(M);\QQ)\neq 0$. Then, the rational vector space 
    \[\pi_{d-4}^\QQ(\C(M\times [0,1]),\C(M))\]
    is infinite-dimensional. In particular, we have $\phi^\QQ(M)\leq d-5$.
\end{maincor}

Just like embedding spaces play an important role in the study of diffeomorphism groups, spaces of concordance diffeomorphisms are closely related to spaces of \textit{concordance embeddings}. Fix an embedding of compact manifolds $M\subset N$, and let $\CEmb(M,N)$ denote the space of embeddings $M\times [0,1]\hookrightarrow N\times [0,1]$ fixed on $M\times \{0\}$ and that send $M\times \{1\}$ to $N\times \{1\}$ (see \eqref{CEmbIETSeq} for details). There is a stabilisation map
\[s_{M,N}:\CEmb(M,N)\longrightarrow \CEmb(M\times [0,1],N\times [0,1]),\]
and Goodwillie--Krannich--Kupers studied in \cite{KKGoodwillie} the connectivity of this map---roughly, the \textit{concordance embedding stable range} of $M\subset N$---when the handle codimension of $M$ in $N$ is at least $3$. As a corollary of \cref{main range} and the parametrised isotopy extension theorem, we determine the rational concordance embedding stable range for certain embeddings of submanifolds $M\subset N$ of handle codimension $\leq 2$.

\begin{maincor}\label{main range embeddings}
    Let $d\geq 6$, $N$ be a $d$-dimensional compact simply-connected spin smooth manifold and $M$ a compact neat submanifold of $N$ (both possibly with boundary), such that $N-M$ is connected with $\pi_1(N-M)\cong \ZZ$. Then, the concordance embedding stabilisation map 
    \[s_{M,N}:\CEmb(M,N)_\iota\longrightarrow \CEmb(M\times [0,1],N\times [0,1])_\iota\]where $(-)_\iota$ stands for the path-component of the standard inclusion $M\times [0,1]\subset N\times [0,1]$, is rationally $(d-4)$-connected, but not rationally $(d-3)$-connected. In fact, the rational vector space
    \[
    \pi_{d-3}^{\QQ}(\CEmb(M\times [0,1],N\times [0,1])_\iota,\CEmb(M,N)_\iota)
    \]
    is infinite-dimensional.
\end{maincor}

\begin{rmk}\label{main range embeddings long knots}
    The standard inclusion $M=D^{d-2}\subset D^d=N$, for $d\geq 6$, satisfies the hypothesis of \cref{main range embeddings}.
\end{rmk}

\begin{rmk}
    Writing $d=\dim(N)$ and $p$ for the handle dimension of $M$ in $N$, the main result \cite[Thm A]{KKGoodwillie} states that if $p\leq d-3$, then $s_{M,N}$ is $(2d-p-5)$-connected. Somewhat unexpectedly, we see from \cref{main range embeddings} that this formula does not generalise to $p=d-2$, as in this case $2d-p-5=d-3$. 
\end{rmk}

\subsection{Diffeomorphism groups of \texorpdfstring{$S^1\times N$}{S1xN} for \texorpdfstring{$N$}{N} simply-connected spin}

As mentioned before, we view $\BDiffb(S^1\times D^{d-1})$ as a \textit{stepping stone} to understanding $\BDiffb(M)$ for any compact spin manifold $M$ with fundamental group $\ZZ$. As a consequence of the surgery-pseudoisotopy approach together with our results on the rational concordance stable range, we can now justify this belief.

In the following result, write $L(-)\coloneq \Map(S^1,-)$ for the \emph{free loop space} functor, let $\tau_{\FLS}$ denote the \textit{free loop space involution} induced by complex conjugation on $S^1\subset \CC$ and, given an involution $\tau$ on a $\QQ$-vector space $A$, let $[A]_{\tau}$ denote the $C_2$-coinvariants of $A$ with respect to the involution $\tau$.

\begin{maincor}\label{BDiffSpinManifoldsMainCor} Let $N^{d-1}$ be a compact simply-connected spin smooth manifold (possibly with boundary) of dimension $d-1\geq 5$. Then, in degrees $2\leq *\leq d-5$, there is an isomorphism
\begin{align*}
    \pi_*^{\QQ}(\BDiff_\partial(S^1\times N))\cong &\hspace{4pt}\left\{
    \begin{array}{cl}
        0 & \text{if $d$ even,} \\
        \K^{\QQ}_{*+1}(\ZZ[\ZZ]) & \text{if $d$ odd,}
    \end{array}
    \right.\oplus \left\{
    \begin{array}{cl}
        \QQ & \text{if $\partial N=\emptyset$ and $*=2$,} \\
        0 & \text{otherwise,}
    \end{array}
    \right.\\[4pt]
    &\oplus \pi_{*-1}^{\QQ}(\bDiff_\partial(N))\oplus \pi_{*}^{\QQ}(\bDiff_\partial(N))\\[4pt]
    &\oplus \left[\frac{\H_*(LN;\QQ)}{\H_*(N;\QQ)\oplus \H_{*+1}(N;\QQ)}\right]_{(-1)^{d+1}\tau_{\FLS}}\oplus \bigoplus_{r=1}^\infty\H_*(LN;\QQ).
\end{align*}
    Here, the quotient is the cokernel of certain monomorphism $\widetilde{\H}_*(N;\QQ)\oplus \widetilde{\H}_{*+1}(N;\QQ)\hookrightarrow \widetilde{\H}_*(LN;\QQ)$ (see \cref{WeirdMonoLem}), and the first summand is the image of the map $\pi_*^{\QQ}(\BDiff_\partial(S^1\times D^{d-1}))\to \pi_*^{\QQ}(\BDiff_\partial(S^1\times N))$ induced by any embedded disc $D^{d-1}\subset N$. 
    
    In particular, for $2\leq k\leq d-5$, the rational vector space $\smash{\pi_k^{\QQ}(\BDiffb(S^1\times N))}$ is finite-dimensional if and only if $\H_k(LN;\QQ)= 0$. Additionally, $\smash{\pi_{d-4}^{\QQ}(\BDiffb(S^1\times N))}$ is infinite-dimensional if $\H_{d-4}(LN;\QQ)\neq 0$.
\end{maincor}

\begin{rmk}
    By surgery theory, understanding the homotopy type of $\BbDiffb(N)$, especially rationally, is often feasible---in fact, explicit rational models for this homotopy type have already been developed \cite[Thm. 4.24]{BerglundMadsen}. See Examples \ref{CPnExample} and \ref{HomotopySphereExample} for the special cases $N=\CP^n$ and $N=\Sigma$ a homotopy sphere.
\end{rmk}

\begin{rmk}\label{pi0S1xNRmk}
    The group $\pi_1(\BDiffb(S^1\times N))$ is abelian (or nilpotent) precisely when $\pi_0(\bDiff_\partial(N))$ is so, in which case, the formula in the statement above extends to $*=1$. 
\end{rmk}
\begin{rmk}
    The conclusion of \cref{BDiffSpinManifoldsMainCor} is only new in degrees above Igusa’s lower bound for the (integral) concordance stable range (roughly $\smash{d/3}$). Nevertheless, the result has several new consequences beyond exceeding this range. For instance, given that $\H_*(N;\QQ)$ is a split summand of $\H_*(LN;\QQ)$, in \cref{InfGenClosedManifold} we show that for essentially all simply-connected spin closed manifolds $N$ of dimension $d-1\geq 5$, the $\QQ$-vector space $\smash{\pi_*^{\QQ}(\BDiffb(S^1 \times N))}$ is infinitely generated in some degree $2 \leq * \leq d-4$.
\end{rmk}

Thus, one could argue that infinite generation of the rational homotopy groups $\pi_*^{\QQ}(\BDiffb(S^1 \times N))$ is \emph{to be expected}, and far from a striking phenomenon. In the (rational) concordance stable range, the source of infinite generation is the $S^1$-equivariant homology of the free loop space $L(S^1 \times N)$, i.e. the \emph{cyclic homology} of $S^1 \times N$, which is always either zero or infinitely generated. See \cite{FarrellNil} for a closely related result.


\subsection{Long knots in codimension \texorpdfstring{$2$}{2}}

Let us consider another object of interest in geometric topology, namely the space  $\Emb_\partial(D^{d-k}, D^d)$ of \emph{long knots} of codimension $k \geq 0$. In high dimensions, the homotopy type of these spaces has been extensively studied for $k \neq 2$ (when $k=2$, attention has so far mainly focused on its set of path components, as studied in high-dimensional knot theory; see e.g. \cite{Ranicki1998}). For $k = 0$, this space is simply the diffeomorphism group of the disc $\Diffb(D^d)$, which, as already explained, is understood rationally \cite{evendisc,krannichpseudo,krannich2021diffeomorphismsdiscssecondweiss} in an extensive range. For $k = 1$, the component of the unknot is equivalent to the classifying space $\BDiffb(D^d)$ as a consequence of Cerf’s lemma, and is therefore covered by the previous case. For $k \geq 3$, the rational homotopy type of this space is accessible via embedding calculus and its relation to the little discs operad and graph complexes \cite{Dwyer2012,Arone2014,Arone2015,fresse,BoavidadeBrito2018}), or integrally via the embedding surgery-pseudoisotopy program \cite{MElongknots}. Both of these approaches are unavailable in codimension $k = 2$.

Nevertheless, the parametrised isotopy extension theorem yields a fibre sequence
\[\Emb_\partial(D^{d-2}\times \RR^2,D^d)_u\longrightarrow \BDiffb(S^1\times D^{d-1})\longrightarrow \BDiffb(D^d).\]
The natural restriction map $\Emb_\partial(D^{d-2}\times \RR^2,D^d)\to \Emb_\partial(D^{d-2},D^d)$ is an equivalence for $d\geq 4$ (the homotopy fibre is the space of framings $\Omega^{d-2}\O(2)$, which is contractible since $\O(2)$ is $1$-truncated; see \cref{LongKnotsSection}\ref{restricting is we}). Thus, our results on solid tori in \cref{main even}, together with the work of \cite{evendisc,krannichpseudo,krannich2021diffeomorphismsdiscssecondweiss} on $\BDiffb(D^d)$, yield the following computation.

\begin{maincor}\label{main knots}
    For $d\geq 6$ and ${*}\geq 1$, we have an isomorphism
    \[\pi_*^\QQ(\Emb_\partial(D^{d-2},D^{d}),u)\cong \left\{\begin{array}{cl}
        0 &  \text{for } d \text{ even and } {*}<d-3,\\
        \mathrm{\K}^{\QQ}_{*}(\ZZ) &   \text{for } d \text{ odd and } {*}<d-4,
    \end{array}\right.\]where $u$ denotes the unknot. Furthermore, in degree ${*}=d-3$ for $d$ even and degree ${*}= d-4$ for $d$ odd, the left-hand side is an infinite-dimensional $\QQ$-vector space.
\end{maincor}

\begin{rmk} Fresse--Turchin--Willwacher \cite{fresse} studied the embedding calculus approximation of the rationalised presheaf $\Emb^\QQ_\partial(-, D^d)$, denoted $T_\infty \Emb^{\QQ}_\partial(D^{d-k}, D^d)$. Its homotopy groups are rational vector spaces which are, roughly and up to a degree shift, isomorphic to the homology of the hairy graph complex in codimension $k$--see Corollary~3 and equation~(10) of loc.cit. By inspection in the case $k = 2$, one verifies that $\pi_*(T_\infty \Emb^{\QQ}_\partial(D^{d-2}, D^d)_u)$ is finite-dimensional in degrees $1 \leq * \leq 2d-7$ (see Corollary~4 and equations~(1) and~(2) there). By the last part of \cref{main knots}, it follows that the natural comparison map $\Emb_\partial(D^{d-2}, D^d)_u \to T_\infty \Emb^{\QQ}_\partial(D^{d-2}, D^d)_u$ is not a rational equivalence if $d \geq 6$. See also \cite[Cor.~8.20]{KKsdisc}.
\end{rmk}

\subsection{On the orthogonal functor of topological Stiefel manifolds}
In \cite[Thm.~D]{krannich2021diffeomorphismsdiscssecondweiss}, the authors use their results on the rational homotopy type of $\BC(D^d)$ to determine that of the second derivative spectrum of the orthogonal functor $\Bt(V) \coloneq \BTop(V)$; see Section~1.5 of loc.cit. for a brief account of Weiss’ \emph{orthogonal calculus} \cite{WeissOrthCalc}. Analogously, our results admit a reinterpretation in this framework (see \cref{VtSection} for the explicit connection)--the relevant orthogonal functor is now
\[
\Vt_2: V\longmapsto \Top(V\oplus \RR^2)/\Top(V,V\oplus \RR^2)\simeq \hofib(\BTop(V,V\oplus \RR^2)\to \BTop(V\oplus \RR^2)).
\]
Here, $\Top(V, V \oplus \RR^2)$ denotes the topological group of homeomorphisms of $V \oplus \RR^2$ that fix the subspace $V$ pointwise. In particular, $\Vt_2(V)$ is the \emph{topological Stiefel manifold} of $V$-planes in $V \oplus \RR^2$. What is currently known about this orthogonal functor is limited to the following:
\begin{itemize}
    \item The zero-th orthogonal approximation of $\Vt_2$ is $P_0\Vt_2\simeq \Top/\Top(2)$ by \cite[Thm. B]{KirbySiebenmannTopmn}.

    \item The first derivative of $\Vt_2$ is $1$-connective\footnote{That is, its homotopy groups vanish in degrees $*\leq0 $.} and satisfies $\Omega^{\infty+1}(\Theta \Vt_2^{(1)})\simeq \Omega^\infty (\K(\bfS))$; see \cref{Theta1Lemma}.
\end{itemize}
As a consequence of \cref{main range embeddings}, we obtain the first results concerning the second derivative of $\Vt_2$.

\begin{maincor}\label{RelativeBTopMainCor}
    The second Weiss derivative $\Theta \Vt^{(2)}_2$ is rationally (at least) $(-1)$-connective. 
\end{maincor}
\begin{rmk}
    The proof of \cref{RelativeBTopMainCor} actually shows that, for $d\geq 4$, the unstable derivatives \(\smash{\Vt_2^{(2)}(\RR^d)}\) are rationally \((2d-2)\)-connected but not rationally \((2d-1)\)-connected. With this alone, we cannot conclude that \(\smash{\Theta \Vt_2^{(2)}}\) is not rationally \(0\)-connective, since this would also require understanding the structure maps \(\smash{\Vt_2^{(2)}(\RR^d)\to \Omega^2 \Vt_2^{(2)}(\RR^{d+1})}\) on \(\smash{\pi_{2d-1}^{\QQ}}\). Nevertheless, one can show that these homomorphisms for $d\geq 6$ \emph{even} are injective on an infinite-dimensional subspace of the domain.
\end{rmk}

\subsection{Strategy of the proof of \texorpdfstring{\cref{main even}}{Theorem A}}\label{strategy section} As mentioned at the beginning of this introduction, most of our results are consequences of \cref{main even}. We now outline the strategy of its proof. In the even-dimensional case $d=2n$, we follow the approach of \cite{BRW} by studying a variant of the \textit{Weiss fibre sequence}. More precisely, we look at a certain fibre sequence
\begin{equation}\label{WeissXgFibSeqIntroduction}
\BDiffb(S^1\times D^{2n-1})\longrightarrow \BDiffb(X_g)\longrightarrow \BEmbcong(X_g),
\end{equation}
or rather, the framed variant \eqref{plusConstructedWeissXg} of it, where $X_g$ denotes the connected sum of $S^1\times D^{2n-1}$ with $ (S^n\times S^n)^{\# g}$. This reduces the study of $\BDiffb(S^1\times D^{2n-1})$ to that of the remaining spaces in this sequence. The homology of the total space can be completely understood for $g$ large enough, as a consequence of the seminal work of Galatius and Randal-Williams \cite{GRWStableModuli,GRWII} on stable moduli spaces of even-dimensional manifolds. Our main contribution lies in the computation of the homology of the base space in a range doubling the one obtained in \cite{BRW}. The main novelty of our approach in this even-dimensional case is to use the work of the second named author \cite{MElongknots} to simplify the analysis of this embedding space by using \emph{trace methods}. 

\begin{rmk}[On embedding calculus]
    The approach proposed in \cite{BRW} is analogous to the highly successful one of Kupers--Randal-Williams \cite{evendisc} (building on insights of Weiss \cite{dalian}) for computing the rational homotopy groups of $\BDiffb(D^{2n})$. In \cite{evendisc}, the authors study the corresponding embedding space via \emph{embedding calculus}. Although a similar strategy could in principle be applied in our setting, we encountered significant computational obstacles in carrying it out. The main difficulty is the ubiquitous appearance of infinitely generated $\QQ$-vector spaces, which makes it hard to analyse various differentials in the relevant spectral sequences. In the range of \cref{main even}, our approach avoids many of these obstacles.
\end{rmk}

Our strategy for the odd-dimensional case $d=2n+1$ is analogous to the one above, and is inspired by the work of Krannich--Randal-Williams \cite{krannich2021diffeomorphismsdiscssecondweiss}. Given our knowledge of $\BDiffb(S^1\times D^{2n-1})$ by the previous case, and in view of the fibre sequence \eqref{rest fib seqe}, instead of studying $\BDiffb(S^1\times D^{2n})$ directly, we focus on the concordance space $\BC(S^1\times D^{2n-1})$. This space features also in another \textit{Weiss-type} fibre sequence
\begin{equation}\label{WeissYgXgFibSeqIntroduction}
\BC(S^1\times D^{2n-1})\longrightarrow \BDiffv(Y_g)\longrightarrow \BEmbcong(Y_g;X_g),
\end{equation}
where $Y_g$ denotes the boundary connected sum of $S^1\times D^{2n}$ with the handlebody $(S^n\times D^{n+1})^{\natural g}$. In turn, the homology of the total space of this fibre sequence can be completely understood for $g$ large enough, now as consequence of the work of first named author \cite{joaoTriads} on stable moduli spaces of odd-dimensional triads. We approach the base space of this sequence with the same method as above, relying on the results in \cite{MElongknots}. This strategy allows us to deduce \cref{main conc intro} for $d$ even. By combining the fibre sequence \eqref{rest fib seqe} with the computations above, we can deduce both \cref{main even,main conc intro} for $d$ odd.

\vspace{3mm}

\noindent \textbf{Structure of the paper.} For the reader’s convenience, we provide a brief overview of the paper.

\cref{preliminarySection} introduces the notation, spaces, and constructions that will play a role throughout. In particular, \cref{hAutSection} studies the homotopy type of the space of homotopy automorphisms of the pair $(Y_g;X_g)$. We also recall the analysis of the automorphism space of $X_g$ by Bustamante--Randal-Williams \cite[\S 3.1]{BRW}.

Sections \ref{MCGSection}, \ref{SelfEmbeddingXgSection}, and \ref{SelfEmbeddingYgSection} study the self-embedding spaces of $X_g$ and of the pair $(Y_g;X_g)$ (more precisely, their stably framed versions) appearing in the Weiss fibre sequences \eqref{WeissXgFibSeqIntroduction} and \eqref{WeissYgXgFibSeqIntroduction}. Section \ref{MCGSection} focuses on their path-components, i.e.\ the associated mapping class groups. The case of $X_g$ was analysed in \cite{BRW} using surgery theory; in \cref{YgXgMCGSection} we adapt related methods, based on \cite{krannichpseudo}, to treat the case of $(Y_g;X_g)$.

In \cref{SelfEmbeddingXgSection} we investigate the rational homotopy and homology type of the self-embedding space of $X_g$ in degrees up to roughly $2n-2$. We begin by analysing its higher rational homotopy groups as representations of the mapping class group, using a new approach based on trace methods and the embedding Weiss--Williams theorem of \cite{MElongknots}. An overview of this approach is given in \cref{RecollectionWWSection}, which we expect may be of independent interest. In \cref{homotopyOfPlusSectionXg} we then use this analysis to compute the homology of the embedding space. Section \ref{SelfEmbeddingYgSection} carries out the analogous analysis for the self-embedding space of the pair $(Y_g;X_g)$.

In \cref{ProofSection} we prove all of the main theorems stated in the introduction, along with several related results. \cref{AppendixWW} addresses some technicalities in the embedding Weiss--Williams approach of \cref{SelfEmbeddingXgSection}.\\[-5pt]

\vspace{3mm}

\noindent \textbf{Aknowledgments.} We would first like to express our deepest gratitude to Manuel Krannich and Oscar Randal-Williams, who first suggested this project to us. Manuel Krannich suggested the use of \cite{joaoTriads} to extend our computations to the odd-dimensional case, gave us helpful comments on the first drafts, and several ideas in this paper, including the approach of \cref{FrobeniusXgPseudoIsotopySection} and \cref{application to G}, grew out of illuminating conversations with him. Oscar Randal-Williams suggested to SME the application of \cite[Thm. A]{MElongknots} to the study of diffeomorphisms of solid tori, pointed out a mistake related to \cref{CEmbLongKnotLemma}, and provided valuable guidance at many stages of the project. We also thank Alexander Kupers, Florian Kranhold, Juan Muñoz-Echániz and Robin Stoll for helpful conversations and comments on the introduction. JLF was supported by the Deutsche Forschungsgemeinschaft (DFG, German Research Foundation) through the Research Training Group DFG~281869850 (RTG~2229) and partially by the European Union through the ERC grant MaFC (101221003). SME was partially supported by an EPSRC PhD Studentship, grant no.~2597647.

\section{Preliminaries}\label{preliminarySection}

\begin{nota}\label{manifolds definition}
    Let $n\geq 3$ and $g\geq 0$ be integers. We consider the following families of manifolds.
    \begin{enumerate}[label=(\roman*)]
        \item $\wg$ is the $g$-fold boundary connected sum of $S^n\times S^n\backslash \int(D^{2n})$, where $D^{2n}$ is an embedded $2n$-disc. We consider $\wg$ as a triad with the decomposition $\partial_0\wg\coloneq D^{2n-1}_+$ and $\partial_1\wg\coloneq D^{2n-1}_-$ of its boundary $\partial D^{2n}$. Here, $D^{2n-1}_+$ stands for the subspace of those tuples $(x_1,\dots,x_n)\in \partial D^{2n}$ where $x_{2n}\geq 0$ and $D^{2n-1}_-$ for the subspace of those tuples where $x_n\leq 0$.
        \item $\vg$ is the $g$-fold boundary connected sum of $S^n\times D^{n+1}.$ Its boundary is the $g$-fold connected sum of $S^n\times S^n$. Notice that $\wg$ embedds into $\partial \vg$ and its complement is a $2n$-disc $D^{2n}.$ We consider $\vg$ as a $4$-ad with the decomposition $\partial_0 \vg \coloneq D^{2n}_+$, $\partial_1\vg \coloneq D^{2n}_-$ and $\partial_2 \vg\coloneq \wg$. Here $D^{2n}_+$ stands for the subspace of $D^{2n}$ of those tuples $(x_1,\dots,x_n)\in D^{2n}$ where $x_{2n}\geq 0$ and $D^{2n-1}_-$ for the subspace of those tuples where $x_n\leq 0.$ Denote by $\partial_{ij}\vg$ the intersection of $\partial_i\vg$ and $\partial_j\vg$ for $i,j\in \{0,1,2\}$ and $\partial_{012}\vg$ for the common intersection $\partial_0\vg\cap \partial_1\vg\cap\partial_2\vg.$
        \item $\xg$ is the boundary connected sum of $S^1\times D^{2n-1}$ with $\wg.$ We consider $X_g$ as a triad with the following decomposition of its boundary $\partial(S^1\times D^{2n-1})\# \partial D^{2n}$: $\partial_0\xg\coloneq (S^1\times D^{2n-2}_+)\natural \partial_0\wg$ and $\partial_1\xg\coloneq (S^1\times D^{2n-2}_-)\natural \partial_1\wg$. The boundary connected sum above is assumed to be at a point in $S^1\times S^{2n-3}=(S^1\times D^{2n-2}_+)\cap (S^1\times D^{2n-2}_-).$
        \item $\yg$ is the boundary connected sum of $S^1\times D^{2n}$ with $\vg.$ Note that $\xg$ embeds into $\partial(\yg)$ with complement diffeomorphic to $S^1\times \operatorname{int}D^{2n-1}$, by seeing $S^1\times D^{2n-1}$ as a subspace of the boundary of $S^1\times D^{2n}$ given by the upper hemisphere $D^{2n-1}_+$ in the $D^{2n}$-coordinate. We consider $\yg$ as a $4$-ad with the following decomposition $\partial_0\yg \coloneq S^{1}\times D^{2n-1}_+$, $\partial_1\yg \coloneq S^1\times D^{2n-1}_-$ and $\partial_2\yg\coloneq \xg$. Similarly, denote the subspaces $\partial_{ij}\yg$ and $\partial_{012}\yg$ defined in the same way. 
    \end{enumerate}By definition, we have an embedding of triads $\iota:(\wg;\partial_0\wg,\partial_1\wg)\hookrightarrow (\xg;\partial_0\xg,\partial_1\xg)$ and an embedding of $4$-ads $\hat{\iota}:(\vg;\partial_0\vg,\partial_1\vg,\partial_2\vg)\hookrightarrow (\yg;\partial_0\yg,\partial_1\yg,\partial_2\yg)$ extending $\iota$ on $\partial_2.$ We fix a basepoint $x\in \partial_{01}(\wg)$ which induces basepoints on $\xg, \vg$ and $\yg$. As mentioned in \cite[Section 2.1]{BRW}, the inclusion $S^1\times D^{2n-1}\subset D^{2n}$ induces an embedding of triads $\kappa:(\xg;\partial_0\xg,\partial_1\xg)\hookrightarrow (\wg;\partial_0\wg,\partial_1\wg)$ such that $\kappa\circ \iota$ is isotopic to the identity. Similarly, $\kappa$ extends to an embedding of $4$-ads $\hat{\kappa}:(\yg;\partial_0\yg,\partial_1\yg,\partial_2\yg)\hookrightarrow (\vg;\partial_0\vg,\partial_1\vg,\partial_2\vg) $ and $\hat{\kappa}\circ \hat{\iota}$ is isotopic to the identity.
\end{nota}

\subsection{Embedding and automorphism spaces}\label{moduliSpacesDefnSection}
Let $(M;\partial_0 M,\partial_1 M)$ be a compact smooth manifold triad, we denote by $\Embo(M)$ the space of self-embeddings of $M$ extend the identity map on a neighbourhood of $\partial_0$ (but might send $\partial_1$ to the interior of $M$), equipped with the $C^\infty$-topology. Composition gives a (topological) monoid structure to this space. As mentioned in \cite[Section 5.1]{Bustamante2021FinitenessPO}, we have a fibre sequence, usually called the \textit{Weiss fibre sequence},
\begin{equation}\label{weiss fibre for triads}
    \BDiffb(\partial_1 M\times [0,1])\to \BDiffb(M)\to \BEmbop(M)
\end{equation}which deloops once to the right, where $\BEmbop(M)$ denotes the classifying space of the group-like submonoid $\Embop(M)$ of those path components of $\Embo(M)$, which are hit from $\Diffb(M).$ Denote also by $\bDiff_\partial(M)$ and by $\bEmbo(M)$ for the (realisation of the simplicial) spaces of block diffeomorphisms and self-embeddings of $M$, and by $\bEmbop(M)$ the union of all path components hit by $\bDiffb(M)$ (see \cite[Section 1.4]{krannichpseudo} for definition of block diffeomorphisms and embeddings) Finally, we denote by $\Aut_\partial(M)$ and $\Aut_{\partial_0}(M)$ for the group-like monoids of self homotopy equivalences of $M$ fixed in $\partial M$ or $\partial_0$, respectively. As observed in \cite[Section 1.5]{krannichpseudo}, the map $\Diffb(M)\to \Aut_\partial(M)$ factors through $\bDiffb(M)$ up to homotopy, since the analogous map $\Aut_\partial(M)\to \bAut_\partial(M)$ is a weak equivalence.

\subsubsection{\texorpdfstring{$4$}{4}-ads}
Similarly, for a compact smooth manifold $4$-ad $(N;\partial_0 N,\partial_1 N,\partial_2N)$, we denote by $\Embo(N;\partial_2 N)$ the space of self-embeddings of $N$ which are the identity on a neighbourhood of $\partial_0$ and fix $\partial_2 N$ setwise (but once again, $\partial_1 N$ is allowed to go to the interior of $N$, as long as $\partial_{12} N$ goes to the interior of $\partial_2 N$). Denote the topological group of diffeomorphisms of $N$ which are the identity on $\vb N\coloneq\partial_0 N\cup \partial_1 N$ by $\Diffv(N).$ As pointed out in \cite[Section 2.3]{krannich2021diffeomorphismsdiscssecondweiss}, there exists an analogous \textit{"Weiss fibre sequence"} of the form
\begin{equation}\label{weiss fibre for 4 ads}
    \BDiffv(\partial_1 N\times [0,1])\to \BDiffv(N)\to \BEmbop(N;\partial_2 N)
\end{equation}where $\vb (\partial_1 N\times [0,1])\coloneq \partial_1N\times \{0\}\cup_{\partial_{01}N\times [0,1]} \partial_1N\times \{1\}$, which deloops once. Here, $\Embop(N;\partial_2 N)$ denotes the path components of $\Embo(N;\partial_2 N)$ hit by $\Diffv(N).$ The sequences \eqref{weiss fibre for triads} and \eqref{weiss fibre for 4 ads} are compatible along the maps that restrict to $\partial_2N. $ Denote the block variants by $\bDiffv(N)$, $\bEmbo(N;\partial_2 N)$ and $\bEmbop(N;\partial_2N)$, defined in the same way as before. Denote by $\Aut_{\vb}(N;\partial_2N)$ the space of self-homotopy equivalences of the pair $(N,\partial_2 N)$ fixed in $\vb N$, with the compact-open topology.

\subsubsection{Stable framings}\label{defn of stable framings section} Consider the map $\sfr_d:\oo(d)\to \BO(d)$ given by the homotopy fibre of $\BO(d)\to \BO$. A \textit{stable framing} on a $d$-dimensional smooth manifold is a fibrewise isomorphism $\ell:TM\cong (\sfr_d)^*\gamma_d,$ where $\gamma_d$ denotes the universal $d$-dimensional vector bundle over $\BO(d)$. If $K\subset \partial M$ is a codimension $0$ submanifold, a stable framing $\ell$ on $M$ induces a fibrewise isomorphism $\ell|_{K}:TK\times [0,1]\cong TM|_K\cong (\sfr_d)^{*}\gamma_d,$ which induces an (essentially unique) stable framing $\ell_K:TK\cong (\sfr_{d-1})^*\gamma_{d-1}$ on $K$: this follows by the fact that the square
\[\begin{tikzcd}
    \oo(d-1)\arrow[d]\arrow[r] & \BO(d-1) \arrow[d] \\
    \oo(d) \arrow[r] &\BO(d)
\end{tikzcd}\]
is homotopy cartesian. Given a $d$-dimensional triad $(M;\partial_0 M,\partial_1 M)$ and a stable framing $\ell_\partial$ of $\partial M$, the spaces $\Diffb(M)$ and $\bDiffb(M)$ act (in the $A_\infty$ sense) on the space $\Bun_\partial(TM,(\sfr_d)^*\gamma_d;\ell_\partial)$, since the structure $\sfr_d$ is \textit{stable} in the sense of \cite[Section 1.8.1]{krannichpseudo}. We denote their homotopy quotients by $\BDiffsfr(M;\ell_\partial)$ and $\BbDiffsfr(M;\ell_\partial)$, respectively. Similarly, the spaces $ \Embop(M)$ and $\bEmbop(M)$ act on $\Bun_{\partial_0}(TM,(\sfr_d)^*\gamma_d;\ell_0)$ given a stable framing $\ell_0$ on $\partial_0M$. We denote the respective homotopy quotients by $\BEmbsfr(M)$ and $\BbEmbsfr(M).$ Similarly, for $d$-dimensional $4$-ad $(N;\partial_0 N,\partial_1 N,\partial_2N)$, we denote the homotopy quotients $\BDiffsfrv(N;\ell_v)$ and $\BbDiffsfrv(N;\ell_v)$ of the actions of $\Diffv(N)$ and $\bDiffv(N)$ on $\Bun_{\vb}(TN,(\sfr_d)^*\gamma_d;\ell_v).$ Denote the homotopy quotients $\BEmbsfr(N;\partial_2N;\ell_0)$ and $\BbEmbsfr(N;\partial_2N;\ell_0)$ of the actions of $\Embop(N;\partial_2N)$ and $\bEmbop(N;\partial_2 N)$ on $\Bun_{\partial_0}(TN,(\sfr_d)^*\gamma_d;\ell_0).$ As mentioned in \cite{krannich2021diffeomorphismsdiscssecondweiss}, we have a map of fibre sequences
\begin{equation}\label{weiss for sfr}
    \begin{tikzcd}[row sep = 15pt]
        \BDiffsfrv(\partial_1 N\times [0,1];\ell_v)\arrow[d] \arrow[r] & \BDiffsfrv(N;\ell_v)\arrow[r]\arrow[d] & \BEmbsfr(N;\partial_2 N;\ell_0) \arrow[d] \\
        \BDiffsfr(\partial_{12} N\times [0,1];\ell_v) \arrow[r] & \BDiffsfr(\partial_2 N,\ell_v) \arrow[r] & \BEmbsfr(\partial_2 N,\ell_0)
    \end{tikzcd}
\end{equation}with compatible deloopings once to the right, given a stable framing $\ell_v$ on $\vb N$.
\\\\
We now specify to the $4$-ad $\yg.$ Start by observing that $\partial(\partial_1\yg\times [0,1])$ is diffeomorphic to the union $S^1\times D^{2n-1}_-\cup_{S^1\times D^{2n-2}} S^1\times D^{2n-1}_+, $ which is diffeomorphic to $S^1\times D^{2n-1}. $ Thus, the space $\Diffv(\partial_1 \yg\times [0,1])$ is equivalent to the space $\Diff_{S^1\times D^{2n-1}}(S^1\times D^{2n})$ and thus equivalent to the concordance space $\conc_\partial(S^1\times D^{2n-1}).$ Moreover, since the map $\vb (S^1\times D^{2n})\to S^1\times D^{2n}$ is a homotopy equivalence, forgetting stable framings 
\[\BDiffsfrv(S^1\times D^{2n};\ell_v)\to \BDiffv(S^1\times D^{2n})\]is an equivalence, as the fibre is equivalent to $\Bun_{\vb}(T(S^1\times D^{2n}),(\sfr_{2n+1})^*\gamma_{2n+1},\ell_v)$ is contractible.

\subsubsection{Block embeddings spaces of $X_g$ and $Y_g$}\label{block embeddings definition section} In this section, we combine results of \cite{BRW,krannichpseudo} to describe the rational homotopy type of the block embedding spaces defined above in terms of certain spaces of homotopy automorphisms. Consider the block version of the sequence \eqref{weiss fibre for 4 ads} for $N=\yg.$ Observe that the block concordance space $\BbDiffv(S^1\times D^{2n})$ is contractible, since $\BbDiffb(S^1\times D^{2n})\simeq \Omega \BbDiffb(S^1\times D^{2n-1}).$ Thus, we have an equivalence $\BbDiffv(\yg)\simeq \BbEmbop(\yg;\xg),$ and thus we get a map 
\begin{equation}\label{block yg to aut}\BbEmbop(\yg;\xg)\to \BAutvp(\yg;\xg)\end{equation}to the classifying space of the group-like submonoid of those path components of $\Autv(\yg;\xg)$ hit by $\bDiffv(\yg).$ Moreover, in \cite[1650]{BRW} the authors explain how to get an analogous map 
\begin{equation}\label{block xg to aut}
    \BbEmbop(\xg)\to \BAutbp(\xg)
\end{equation}using that $\Aut_\partial(S^1\times D^{2n-1})$ is contractible. Moreover, the maps \eqref{block yg to aut} and \eqref{block xg to aut} are compatible with respect to restricting from $\yg$ to $\xg.$ Let $\ell$ be a stable framing on $\yg$, we use the subscript $(-)_\ell$ for the path component of $\ell$ in $\BbEmbsfr(\yg;\xg;\ell_0)$ and $\BbEmbsfr(\xg;\ell_0)$, where $l_0$ denotes the restriction of $\ell$ to $\partial_0$.

\begin{prop}\label{stable framed block embs are autos}
    For all $n\geq 3$ and $\ell$ a stable framing on $\yg$, the homotopy fibres of the horizontal maps induced by \eqref{block yg to aut} and \eqref{block xg to aut} in the square
    \[\begin{tikzcd}[row sep = 15pt]
        \BbEmbsfr(\yg;\xg;\ell_0)_\ell \arrow[r]\arrow[d] & \BAutvp(\yg;\xg) \arrow[d]\\
        \BbEmbsfr(\xg;\ell_0|_{\xg})_\ell \arrow[r] & \BAutbp(\xg)
    \end{tikzcd}\]are nilpotent and have finite homotopy groups, and thus are rationally contractible. 
\end{prop}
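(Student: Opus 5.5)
The plan is to factor each horizontal map as a composite of two maps and control the homotopy fibre of each. The first factor forgets the stable framing. The second is the comparison \eqref{block yg to aut}, respectively \eqref{block xg to aut}, from block embeddings (equivalently block diffeomorphisms) to homotopy automorphisms.

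\textbf{Step 1: the framing factor.} Forgetting the stable framing gives a fibre sequence whose fibre is the relevant component of the framing space, say $\Bun_{\partial_0}(T\yg,(\sfr_{2n+1})^*\gamma_{2n+1};\ell_0)$. Since $\yg\simeq S^1\vee\bigvee^g S^n$ relative to $\partial_0$, this fibre is, up to components, a product of copies of $\Omega\O$-type mapping spaces of the form $\Map_*(S^1\vee\bigvee S^n,\O)$. Its homotopy groups are $\pi_{*+1}\O\oplus\pi_{*+n}\O^{\oplus g}$. These need not vanish rationally, so the fibre of the framed map genuinely mixes two contributions. The same description applies to $\xg\simeq S^1\vee\bigvee^{2g}S^n$.

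\textbf{Step 2: surgery input for the automorphism factor.} I would identify $\BbDiffv(\yg)\simeq\BbEmbop(\yg;\xg)$, as observed before the statement. Then I would use the surgery fibre sequence of \cite{krannichpseudo} for block diffeomorphisms of the $4$-ad relative to homotopy automorphisms of the pair. Its fibre, the structure space, is described by $L$-theory of $\ZZ[\ZZ]$ together with normal invariants $\Map(\yg/\vb\yg,\G/\O)$.

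Rationally, $\G/\O\simeq_\QQ \BO$, and Wall realisation/$L$-groups of $\ZZ[\ZZ]$ are rationally $L(\ZZ)\oplus L(\ZZ)[1]$. These rational normal invariants and $L$-terms are exactly cancelled, as in \cite[\S3]{BRW} for $\xg$, by the framing data from Step~1. The mechanism is that the stable framing trivialises the map to $\BO$, and the remaining term is the fibre of $\Map(-,\G)\to\Map(-,\G/\O)$, whose homotopy is finite. I would follow \cite[Sect.~3.1]{BRW} verbatim for $\xg$, and adapt to the pair $(\yg;\xg)$ using \cite[Sect.~1.5 and 1.8]{krannichpseudo} on the block structure spaces of triads/$4$-ads. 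The outcome is that the homotopy groups of the fibre are extensions of finite groups: stable homotopy of spheres, and torsion in $L$-groups/Whitehead terms, noting that $\Wh(\ZZ)=0$.

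\textbf{Step 3: nilpotence.} To conclude rational contractibility I need nilpotence. The fibre is a union of components of an infinite loop space (or a fibre of a map of such) built from $\Map(-,\G)$ and $L$-spaces, so it is simple. It therefore suffices to have finite homotopy groups, by the rational Hurewicz/localisation theory for nilpotent spaces.

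\textbf{Main obstacle.} I expect the hardest step to be the $4$-ad case in Step~2. There one must check that the relative $L$-group contributions for the pair $(\yg,\xg)$ cancel rationally with the framing and normal invariant terms. This requires $\pi$-$\pi$-type vanishing: $\xg\to\yg$ is a $\pi_1$-isomorphism, so the relative $L$-groups $L(\ZZ[\ZZ]\to\ZZ[\ZZ])$ vanish. Using this, the relative structure set becomes rationally just normal invariants, which the framing then kills.
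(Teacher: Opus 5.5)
Your route is the paper's. For the top row, the paper first identifies $\BbDiffsfrv(\yg)_\ell\simeq\BbEmbsfr(\yg;\xg)_\ell$, using that framed block concordances are contractible. It then quotes \cite[Cor.~2.4]{krannichpseudo}, which applies because $\xg\hookrightarrow\yg$ is an equivalence of fundamental groupoids; this is exactly the $\pi$-$\pi$ surgery argument in your last paragraph. For the bottom row the paper, like you, imports the result from \cite{BRW}. The relevant statement is \cite[Cor.~6.2]{BRW}, not \S3.1, which only concerns homotopy automorphisms. It identifies the fibre as $\Map_*(\wg,\G)_J$, which is nilpotent as a product of $n$-fold loop spaces and has finite homotopy groups by Serre.

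However, Step~2 (and a detail of Step~1) describes the mechanism incorrectly, and taken literally the argument fails there.

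First, the fibre of $\Map(-,\G)\to\Map(-,\G/\O)$ is $\Map(-,\O)$, whose homotopy is neither finite nor rationally trivial. The correct statement is as follows. The structure space is $\Map_{\vb}(\yg,\G/\O)$ and the framing space is $\Map_{\vb}(\yg,\O)$. The fibre of the framed map is then the fibre of $\Map_{\vb}(\yg,\G/\O)\to\Map_{\vb}(\yg,\BO)$, i.e.\ components of $\Map_{\vb}(\yg,\G)\simeq\Map_*(\bigvee^g S^n,\G)$. Its homotopy groups are $(\pi^s_{*+n})^{\oplus g}$, and it is simple because $\G$ is a group-like $H$-space.

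Second, the stable framing cannot cancel $L$-theory: it only contributes $\Map(-,\O)$, which rationally matches the looped normal invariants and nothing else. Any rationally non-zero $L_*(\ZZ[\ZZ])$ terms would therefore survive. They must instead be absent, and that is exactly what the $\pi$-$\pi$ theorem gives. You should use it from the outset rather than claim a cancellation, and "torsion in $L$-groups" plays no role in the answer.

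Finally, in Step~1 the framing is fixed on $\partial_0\yg\simeq S^1$, so the framing fibre is $\Map_*(\bigvee^g S^n,\O)$. The $\pi_{*+1}\O$ summand you list is not there, and it must not be, since nothing in $\Map_{\vb}(\yg,\G/\O)$ would cancel it rationally.
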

\begin{proof}
    We start by proving the claim for the upper horizontal map. Since $\BbDiffv(S^1\times D^{2n})\simeq \BbDiffsfrv(S^1\times D^{2n})$ is contractible, we see that the map 
    \[\BbDiffsfrv(\yg;l_0)_\ell\to \BbEmbsfr(\yg;\xg)_\ell\]is a weak equivalence. The claim now follows from \cite[Cor. 2.4]{krannichpseudo}, since $\xg\hookrightarrow\yg$ induces an equivalence of fundamental groupoids. By \cite[Cor. 6.2]{BRW}, the homotopy fibre of the lower horizontal map is equivalent to $\Map_*(\wg,\G)_J$, where $\G\coloneq \hocolim_n \Aut_*(S^n)$ and $J$ are the union of components of maps $\wg\to \G$ which factor up to homotopy through the $J$-homomorphism $J:\O\to\G $. Since $\wg$ is equivalent to a wedge of $n$-spheres, this mapping space is a product of $n$-fold loop spaces and thus nilpotent. Moreover, its homotopy groups are $2g$-fold products of shifts of homotopy groups of $\G$. These identify with certain stable homotopy groups of spheres, which are finite by Serre.
\end{proof}

\subsection{Quadratic modules}

Let $\pi\coloneq \pi_1(\xg,x)$ and $\bar{(-)}:\ZZ[\pi]\to \ZZ[\pi]$ be the ring involution induced by $\bar{g}=g^{-1}$, for $g\in \pi$. 

\begin{convention}\label{ConventionsModules}
    By \textit{$\ZZ[\pi]$-module}, we mean a left $\ZZ[\pi]$-module, and we see any such module $M$ as a right module by setting $x\cdot a\coloneq \bar{a}\cdot x$, and vice-versa. Given a module $M$, we define its dual $M^\vee$ to be the right $\ZZ[\pi]$-module of left $\ZZ[\pi]$-module maps $\Hom_{\ZZ[\pi]}(M,\ZZ[\pi])$, where the right module structure on this abelian group is given by $\alpha\cdot r(m)\coloneq \alpha(m)r.$ Given another module $N$, we can form the tensor product $M^\vee\otimes_{\ZZ[\pi]} N\coloneq M^\vee\otimes_\pi N$, which we see as an abelian group, and define the canonical map of abelian groups
    \(M^\vee\otimes_{\pi} N\to \Hom_{\ZZ[\pi]}(M,N)\) that takes $\alpha\otimes n$ to the map $(m\mapsto \alpha(m)\cdot n).$ This map is an isomorphism, whenever $M$ is a finitely generated projective $\ZZ[\pi]$-module.
\end{convention}

\begin{nota}\label{allTheNotationUngraded}
    We now set the notation of various modules that we will consider throughout the entire paper. First, consider the following $\ZZ[\pi]$ and $\QQ[\pi]$-modules.
    \begin{align*}
        \pix\coloneq \pi_n(X_g) \qquad \piy\coloneq \pi_n(Y_g) \qquad \pirel\coloneq\pi_{n+1}(Y_g,X_g)\cong \ker(\iota_*:\pix\to \piy) \\
    \pixq\coloneq \pi_n^\QQ(X_g) \qquad \piyq\coloneq \pi_n^\QQ(Y_g) \qquad \pirelq\coloneq\pi_{n+1}^\QQ(Y_g;X_g)\cong \ker(\iota_*:\pixq\to \piyq)
    \end{align*}where $\iota_*$ is the map on (rational) homotopy groups of the inclusion $X_g\hookrightarrow Y_g$. We also set the analogous notation for the pair $(V_g;W_{g,1})$, similar to the one in \cite[Section 2.5]{krannich2021diffeomorphismsdiscssecondweiss}.
    \begin{align*}
        \piw\coloneq \H_n(W_{g,1};\ZZ) \qquad \piv\coloneq \H_n(V_g;\ZZ) \qquad \piwrel\coloneq\H_{n+1}(V_g,W_{g,1};\ZZ)\cong \ker(\iota_*:\piw\to \piv) \\
    \piwq\coloneq \H_n(W_{g,1};\QQ) \qquad \pivq\coloneq \H_n(V_g;\QQ) \qquad \piwrelq\coloneq\H_{n+1}(V_g,W_{g,1};\QQ)\cong \ker(\iota_*:\piwq\to \pivq)
    \end{align*}Observe that the pair $(Y_g;X_g)$ is equivalent to the pair $(S^1\vee V_g,S^1\vee W_{g,1})$, from which we see that the first (resp. second) row of the notation for $(Y_g;X_g)$, as $\ZZ[\pi]$-modules, is obtained from the first (resp. second) row of the notation of $(V_g;W_{g,1})$ by applying $(-)\otimes_\ZZ\ZZ[\pi]$ (resp. $(-)\otimes_\QQ \QQ[\pi]$). Since $\iota:W_{g,1}\simeq \bigvee_{2g} S^n\hookrightarrow V_g\simeq \bigvee_g S^n$ is given by collapsing the summands represented by the spheres $\{*\}\times S^n\subset W_{g,1}$, the inclusion of the remaining summands defines a homotopy section of $\iota.$ Thus, we have splittings $\smash{\piw\cong \piv\oplus \piwrel}$ as $\ZZ$-modules and $\smash{\pix\cong \piy\oplus \pirel}$ as $\ZZ[\pi]$-modules, and also analogous splittings for the rationalised versions. Finally, we fix the \textit{standard basis} of $\pix$ $\{a_i,b_i\}_{i=1}^g$, where $a_i$ and $b_i$ are the classes representing, respectively, the inclusions of $S^n\times \{*\},\{*\}\times S^n\subset W_{1,1}=(S^n\times S^n)\backslash \int(D^{2n})$ of the $i$-th connected summand $W_{1,1}\subset X_g$. Note that $\{a_i\}_{i=1}^g$ is a basis of the summand $\piy$, and $\{b_i\}_{i=1}^g$ of the summand $\pirel.$ 
\end{nota}

In \cite[Section 2.2]{BRW}, the authors associate a quadratic module $ (\smash{\pix},\lambda_X,q_X)$ over $\ZZ[\pi]$ with form parameter $((-1)^n,\Lambda_n)$ in the sense of Bak \cite{Bak1969}, where $\Lambda_n$ is either $\Lambda^{\text{min}}_n\coloneq \{a-(-1)^n\bar{a}:a\in \ZZ[\pi]\}$ if $n\neq 3,7$, and $\langle\Lambda^{\text{min}},e\rangle$ where $e\in \pi$ is the identity element. By definition, this is the data of $\ZZ$-linear maps
\[\begin{tikzcd}
    \lambda_X:\pix\otimes \pix\to \ZZ[\pi] & q_X:\pix\to \ZZ[\pi]/\Lambda_n
\end{tikzcd}\]such that, for every $x,y\in \pix$ and $a,b\in \ZZ[\pi]$, we have:
\begin{enumerate}[label=(\roman*)]
    \item $\lambda_X(a\cdot x,b\cdot y)=a\cdot \lambda_X(x,y)\cdot \bar{b},$ 
    \item  $\overline{\lambda_X(x,y)}=(-1)^n\lambda_X(y,x),$
    \item $q_X(a\cdot x)=a\cdot q_X(x)\cdot \bar{a},$ and
    \item  $q_X(x+y)-q_X(x)-q_X(y)=\lambda_X(x,y) \text{ mod } \Lambda_n.$
\end{enumerate}In this case, $\lambda_X$ is given by the equivariant intersection form on the universal cover $\widetilde{X}_g.$ This quadratic module is \textit{non-degenerate}, in the sense that, the right $\ZZ[\pi]$-module map $\smash{\pix\to (\pix)^\vee}$ sending $x$ to $\lambda_X(-,x)$ is an isomorphism. Since $\pi_1(\xg)\cong \pi_1(\yg),$ observe that $\smash{\pirel}$ and $\smash{\piy}$ are isotropic subspaces with respect to $\lambda_X,$ that is, $q_X(x)=0$ and $\lambda_X(x,y)=0$ for $x,y$ in $\pirel$ or $\piy$. Moreover, the duality map $\smash{\pix\to (\pix)^\vee}$ restricts to  isomorphisms $\smash{\pirel\to (\piy)^\vee}$ and $\smash{\piy\to (\pirel)^\vee}$ of right $\ZZ[\pi]$-modules. 

Similarly, we have a quadratic module $(\piw,\lambda_W,q_W)$ over $\ZZ$ with form parameter $((-1)^n,\Lambda_n)$ where $\bar{(-)}:\ZZ\to \ZZ$ is the identity and $\Lambda_n$ is defined in the analogous way. Here, we have the isotropic subsets $\piwrel$ and $\piv$. We shall need the following improvement of \cite[Lemma 2.2]{BRW}.

\begin{lemma}\label{turn maps to embeddings}
    Let $n\geq 3$. An element $e\in \pix$ may be represented by a smooth embedding $\hat{e}:S^n\times D^n\hookrightarrow X_g$ if and only if $q_X(e)=0$. Moreover, every element $f\in \pirel$ may be represented by a smooth embedding of pairs $\hat{f}:(D^{n+1}\times D^n,S^n\times D^n)\hookrightarrow (\yg,\xg)$. The analogous claim holds for $\wg$ and $\vg.$
\end{lemma}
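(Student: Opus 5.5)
The first claim is \cite[Lemma 2.2]{BRW}, and I will simply quote it. The argument behind it is Wall's embedding theorem in the metastable range. By Hurewicz and general position, since $2n\geq 6$ and $X_g$ has dimension $2n$, a class $e\in\pix$ is represented by an immersion $S^n\looparrowright X_g$. This immersion extends to an immersion of $S^n\times D^n$, because its stable normal bundle is trivial: $X_g$ is stably parallelisable, and the remaining obstruction in $\pi_n(\BO(n))$ is detected by $q_X(e)$. Wall's self-intersection invariant $\mu(e)\in \ZZ[\pi]/\Lambda_n$ coincides with $q_X(e)$, and the Whitney trick (needing $n\geq 3$ and $\pi_1$ of the complement behaving well) removes the double points exactly when $\mu(e)=0$.

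The new content is the relative statement for $f\in\pirel\cong\pi_{n+1}(\yg,\xg)$. First I apply the first part to $\partial f\in\pix$. Since $\pirel$ is isotropic, $q_X(\partial f)=0$, so $\partial f$ is represented by an embedding $\hat e:S^n\times D^n\hookrightarrow \xg$. I then have to extend the core $S^n\to\xg$ to an embedding of pairs $(D^{n+1},S^n)\hookrightarrow(\yg,\xg)$ in the class $f$. The dimension count is $2(n+1)-(2n+1)=1$, so a generic map $(D^{n+1},S^n)\to(\yg,\xg)$ representing $f$ and restricting to $\hat e$ on the boundary has isolated double points in the interior. Each can be pushed off through the boundary: a Whitney-type finger move along an arc to $\partial D^{n+1}$ trades an interior double point for a boundary crossing, which is then a double point of $S^n$ in $\xg$ and can be removed there. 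Equivalently, I will run the relative Whitney trick (Wall, \emph{Surgery on compact manifolds}, Ch.~5 / Thm.~5.?) for relative embeddings of $(D^{n+1},S^n)$ into a $(2n+1)$-manifold with boundary. Its obstruction is a relative self-intersection number in a quotient of $\ZZ[\pi]$, which can be altered arbitrarily by changing the boundary sphere within its regular homotopy class while preserving embeddedness, since $\pi_1(\xg)\to\pi_1(\yg)$ is an isomorphism. Once the disc is embedded, I thicken it using its normal bundle, which restricts to the trivial $D^n$-bundle on the boundary and is therefore trivial over the disc.

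Alternatively, and more concretely, I can use the splitting $\pirel\cong \piwrel\otimes\ZZ[\pi]$ with basis $\{b_i\}$. Each $b_i$ is represented by the standard embedding $(D^{n+1}\times D^n,S^n\times D^n)\hookrightarrow (S^n\times D^{n+1}, \partial)\subset \vg\subset \yg$ coming from the $D^{n+1}$-fibres. A general $f=\sum a_i b_i$ with $a_i\in\ZZ[\pi]$ is then obtained by tubing together parallel copies of these discs along arcs in $\xg$ realising the group elements, with orientations chosen according to the signs. Parallel copies are disjoint, and the connecting tubes are $1$-dimensional cores thickened in codimension at least $3$, so generic position keeps everything embedded; the boundary sum of discs is again a disc. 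This construction is what I will actually write down, with the Wall-style argument as backup.

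The main obstacle is checking that the boundary-connected sum of these discs along arcs yields an embedding of pairs, i.e.\ that the bands lie in $\xg$ and avoid the other discs. This holds by general position, since the arcs are $1$-dimensional in the $2n$-dimensional $\xg$ and the discs meet $\xg$ only in $n$-spheres. For $\wg,\vg$ the same argument applies with $\pi$ trivial.
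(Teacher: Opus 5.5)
Your handling of the first claim is the paper's (a citation of \cite[Lemma~2.2]{BRW}). For the relative claim you take a genuinely different route.

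The paper takes a map of pairs $(D^{n+1},S^n)\to(Y_g,X_g)$ representing $f$ and applies Hudson's embedding theorem \cite{Hudson_1972}, in the form \cite[Thm.~5.13]{BP}. Since $n\geq 3$, $(D^{n+1},S^n)$ is $1$-connected and $(Y_g,X_g)$ is $2$-connected, the map is homotopic to an embedding of pairs. This is then thickened using a framing of the normal bundle of the disc. That bundle is trivial because $D^{n+1}$ is contractible, and it restricts to the normal bundle of $S^n$ in $X_g$.

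Your concrete construction is also correct and more elementary: band-sum pairwise disjoint parallel copies $\{p\}\times D^{n+1}$ of the cocore discs representing the $b_i$, oriented according to the signs, along embedded arcs in $X_g$ realising the required powers of $t$. It uses only three things: general position, the fact that $\pi_{n+1}(Y_g,X_g)$ is free over $\ZZ[\pi]$ on the $b_i$, and additivity of tethered band sums (e.g.\ via relative Hurewicz, $\pi_{n+1}(Y_g,X_g)\cong H_{n+1}(\widetilde{Y}_g,\widetilde{X}_g)$).

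State the disjointness step carefully. Two $(n+1)$-dimensional objects in the $(2n+1)$-manifold $Y_g$ generically meet in a $1$-manifold. So the bands do not avoid the other discs by general position of the bands themselves. They avoid them because their core arcs (in $X_g$, or pushed slightly into a collar) miss everything, and the bands are thin neighbourhoods of those cores. Your last paragraph is really using this.

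As for what each route buys: Hudson's theorem shows that \emph{any} map of pairs with these connectivities is homotopic to an embedding, and it needs no explicit model. Yours is self-contained. Run for several classes at once, with distinct parallel copies and disjoint arcs, it directly yields pairwise \emph{disjoint} discs. That is exactly what the proof of Lemma~\ref{surjectivity from embeddings to unitary} needs, and there it is obtained from \cite[Thm.~C.3]{BP}.

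Do not rely on your first ``relative Whitney trick'' sketch. The count $2(n+1)-(2n+1)=1$ means a generic map $D^{n+1}\to Y_g$ has one-dimensional double point curves (and possibly isolated Whitney-umbrella singularities), not isolated double points. So the finger-move argument and the $\ZZ[\pi]$-valued relative self-intersection number you describe do not make sense as stated. In this codimension the correct black box is precisely the Hudson theorem the paper uses.
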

\begin{proof}
    The first claim is a special case of \cite[Lemma 2.2]{BRW}, so we focus on the second claim. Let $f\in \pirel$ be an element represented by a map of pairs $f:(D^{n+1},S^n)\to (Y_g;X_g).$ By a result of Hudson \cite{Hudson_1972} (best stated for our purposes in \cite[Thm. 5.13]{BP}), the map $f$ is homotopic to an embedding $f':(D^{n+1},S^n)\hookrightarrow (Y_g;X_g)$ since $n\geq 3$, $(D^{n+1},S^n)$ is $1$-connected, and $(Y_g;X_g)$ is $2$-connected. Additionally, as $D^{n+1}$ is contractible, the normal bundle of $f'$ is trivial, and hence, $f'$ may be extended to an embedding $\hat{f}:(D^{n+1}\times D^n,S^n\times D^n)\hookrightarrow (\yg,\xg)$, by choosing a framing of the normal bundle (here we also use that the normal bundle of $f'|_{S^n}$ in $X_g$ is the restriction of the normal bundle of $f'$ to $S^n$). The same proofs work verbatim for $W_{g,1}$ and $V_g$.
\end{proof}

Denote the orthogonal groups of these quadratic modules by $\Ug$ and $\Ugw.$ The maps $\ZZ\to\ZZ[\pi]\to \ZZ$ induced by the unique maps $0\to \ZZ\to 0$ of groups induce maps $\Ugw\to \Ug\to \Ugw.$ By naturality, self-embeddings of $\xg$ fixing $\partial_0\xg$ preserve this quadratic module. Moreover, if such self-embedding extends to $\yg$, we conclude that the isotropic subspace $\pirel$ is fixed set-wise. Denote by $\Ugext$ the subgroup of $\Ug$ consisting of those $A$ such that $A(\pirel)\subset \pirel.$ Define $\Ugwext$ in the analogous way. From \cref{turn maps to embeddings}, we deduce the following consequence.

\begin{lemma}\label{surjectivity from embeddings to unitary}
    Let $n\geq 3$. The horizontal maps of the commutative square
    \[\begin{tikzcd}
        \pi_0(\Embo(\yg;\xg))\arrow[d]\arrow[r] & \Ugext \arrow[d] \\
        \pi_0(\Embo(\xg)) \arrow[r] & \Ug
    \end{tikzcd}\]are surjective. The analogous claim holds for $\wg$ and $\vg.$
\end{lemma}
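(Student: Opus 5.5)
The plan is to realise each element of $\Ug$ (resp. $\Ugext$) by an explicit self-embedding, following the standard "embed the images of the basis and take a thickened neighbourhood" argument of \cite[Lemma 2.3]{BRW}. The bottom map is surjective by loc. cit., so the work lies in the top row, and we strengthen the \(X_g\) argument to keep track of \(Y_g\).

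\textbf{Step 1 (bottom row).} Let $A\in \Ug$. Since $A$ preserves $q_X$, we have $q_X(A a_i)=q_X(a_i)=0$ and $q_X(Ab_i)=0$. By \cref{turn maps to embeddings}, these classes are represented by embeddings $S^n\times D^n\hookrightarrow X_g$. Since $A$ also preserves $\lambda_X$, the Whitney trick (with $n\geq 3$ and $\pi_1$ acting) lets us isotope them so that:
\begin{itemize}
\item the $A a_i$ and $A b_j$ are disjoint for $i\neq j$;
\item $A a_i$ meets $A b_i$ transversally in exactly one point;
\item all these spheres lie in the complement of a collar of $\partial_0 X_g$.
\end{itemize}
Plumbing $Aa_i$ and $Ab_i$ and tubing to $\partial_0 X_g$ along disjoint arcs yields an embedding $W_{g,1}\hookrightarrow X_g$ extending the standard one near $\partial_0$. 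Together with the identity on the $S^1\times D^{2n-1}$ summand (whose $\pi_1$-generator is fixed by the convention on basepoints), this gives a self-embedding of $X_g$ realising $A$. This is precisely \cite[Lemma 2.3]{BRW}, and we may cite it.

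\textbf{Step 2 (top row, main step).} Let $A\in\Ugext$, so $A(\pirel)\subset\pirel$. Then $Ab_i\in\pirel$, and by the second part of \cref{turn maps to embeddings} each $Ab_i$ is represented by an embedding of pairs $(D^{n+1}\times D^n,S^n\times D^n)\hookrightarrow (Y_g,X_g)$. Our plan is to first choose these $g$ relative discs disjointly: this is possible by general position in $Y_g$, since $2(n+1)<2n+1+1$ fails only barely, and we need disjointness of the discs in the $(2n+1)$-manifold. Here $(n+1)+(n+1)-(2n+1)=1$, so the intersections are $1$-dimensional arcs/circles. We then instead push the discs off one another using that their boundary spheres can be made disjoint in $X_g$ (by $\lambda_X(Ab_i,Ab_j)=0$ and the Whitney trick) and a relative Whitney/Hudson argument in codimension $n\geq 3$; alternatively, we take the discs as cores of the handles $D^{n+1}\times D^n$ and isotope them to be disjoint, which is the step I expect to be the main obstacle.

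Next, we run Step 1 on $X_g$ using these boundary spheres for the $Ab_i$, and additionally realise the $Aa_i$ disjoint from the $Ab_j$ for $j\neq i$, meeting $Ab_i$ once. The resulting self-embedding $e$ of $X_g$ sends the $i$-th $b$-sphere to the boundary of a disc in $Y_g$. Since $Y_g$ is obtained from a collar of $X_g$ (together with $S^1\times D^{2n}$ and $\partial_0$ data) by attaching the $g$ handles $D^{n+1}\times D^n$ along the $b_i$-spheres, we extend $e\times\mathrm{id}$ on the collar over these handles using the disjoint embedded relative discs. This gives an element of $\Embo(Y_g;X_g)$ restricting to $e$ on $\partial_2$, and hence mapping to $A$.

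\textbf{Step 3.} The same argument applies verbatim for $W_{g,1}$ and $V_g$, with $\pi$ trivial.
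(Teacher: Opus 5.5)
Your overall route is the paper's. You represent the $Ab_i$ by neat relative discs $(D^{n+1},S^n)\hookrightarrow (Y_g,X_g)$ and make them pairwise disjoint. You then realise the $Aa_i$ in $X_g$ as in \cite[Prop.~5.2]{BRW}, so that they meet the boundary spheres geometrically in $\delta_{ij}$ points, and finally plumb and extend.

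The gap is precisely the step you flag, and none of the justifications you sketch closes it. General position leaves $1$-dimensional intersections. Using $\lambda_X(Ab_i,Ab_j)=0$ and the Whitney trick in $X_g$ only separates the boundary spheres; afterwards the interiors of two $(n+1)$-discs in the $(2n+1)$-manifold $Y_g$ still generically meet in closed $1$-manifolds. Removing these is a disjunction problem at the edge of the metastable range ($2(n+1)=(2n+1)+1$). There is a potential secondary obstruction carried by the framed, $\pi$-labelled intersection circles, which $\lambda_X$ does not see and Whitney moves do not remove. Hudson's theorem turns a single map into an embedding and says nothing about disjunction. The paper handles this step by quoting the disjunction result \cite[Thm.~C.3]{BP}, which is only available for $n\geq 4$; for $n=3$ it relies on an announced extension. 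So a ``relative Whitney/Hudson argument in codimension $n\geq 3$'' is not an adequate substitute as stated.

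You can, however, sidestep disjunction by constructing the discs instead of taking arbitrary ones from \cref{turn maps to embeddings}.
\begin{enumerate}
\item As you note, $Y_g$ is a collar $X_g\times[0,1]$ with $g$ handles of index $n+1$ attached along the $b_j$-spheres. Moreover $Ab_i=\sum_j c_{ij}b_j$ with $c_{ij}\in\ZZ[\pi]$.
\item For every monomial $\pm t^k$ of every $c_{ij}$, use a separate parallel copy $D^{n+1}\times\{p\}$ of the core of the $j$-th handle. Extend it through the collar and orient it according to the sign. These copies are pairwise disjoint neat discs.
\item For fixed $i$, boundary-connect-sum the copies belonging to $Ab_i$ along arcs in $X_g$ that wind $k$ times around the $S^1$-factor, using thin half-bands in the collar. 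The arcs and bands miss all other discs by general position, since $1+n<2n$.
\item The map $\pi_{n+1}(Y_g,X_g)\to\pi_n(X_g)$ is injective, because $X_g\hookrightarrow Y_g$ admits a homotopy section. Hence the resulting $D_i$ represents the required class, and $D_1,\dots,D_g$ are disjoint by construction.
\end{enumerate}
With this input, the rest of your Step~2 goes through essentially as you describe.
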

\begin{proof}
    The surjectivity of the bottom map is the content of \cite[Prop. 5.2]{BRW}. We focus on the surjectivity of the top map. Let $a_i$ and $b_i$ denote the elements of the standard basis of $\pix$ from \cref{allTheNotationUngraded}. By definition, the classes $b_i$ lie in $\pirel.$ Let $\rho\in \Ugext$. Since $\rho(b_i)\in \pirel$, we can apply \cref{turn maps to embeddings} to represent this class by an embedding $\beta_i:(D^{n+1}\times D^n,S^n\times D^{n})\to (Y_g;X_g)$. Moreover, we can isotope the embeddings $\beta_i$ to be pairwise disjoint, by applying \cite[Thm. C.3]{BP}, so we can assume that we have chosen $\beta_i$ above pairwise disjoint. This result only applies to $n\geq 4$, but it was communicated to us by Manuel Krannich and Alexander Kupers that loc.cit. can be extended to dimension $7$, using different methods, and will appear as part of forthcoming work of these authors. On the other hand, by following the proof of \cite[Prop. 5.2]{BRW}, we can represent $\rho(a_i)$ by embeddings $\alpha_i:S^n\times D^n\hookrightarrow X_g$, which are pairwise disjoint, and such that $\alpha_i|_{S^n\times \{0\}}$ and $\beta_j|_{S^n\times \{0\}}$ intersect transversely at exactly $1$ point if $i=j$, and are disjoint if $i\neq j.$ By proceeding as loc.cit., plumbing assembles the embeddings $\alpha_i$ and $\beta_i$ into an embedding $e:(V_g;W_{g,1})\hookrightarrow (Y_g;X_g)$ which sends $(\partial_0 V_g,\partial_{02} V_g)$ to $(\partial_0 Y_g,\partial_{02} Y_g)$. Hence, we can extend it to an element of $\Embo(\yg;\xg)$, whose induced map on $n$-th homotopy groups is $\rho.$
\end{proof}

\begin{rmk}[On dimension $7$]
    In the proof above, we relied on an extension of \cite[Thm. C.3]{BP} to dimension $7$ that is not yet available. This is however the only step where this extension is used. If one does not assume it, one must assume additionally that $n\geq 4$ or $d\geq 8$ in \cref{main even,main conc intro,main range} and Corollaries \ref{application to G}--\ref{RelativeBTopMainCor}. Additionally, \cref{main even} and \cref{main knots} are also valid for $d=6$, without assuming this extension. 
\end{rmk}

\subsection{Homotopy automorphisms of \texorpdfstring{$\xg$}{Xg} and \texorpdfstring{$\yg$}{Yg}}\label{hAutSection}

In this subsection, we study the spaces of homotopy automorphisms of $X_g$ and of the pair $(Y_g;X_g)$. The former was understood in \cite{BRW}, whose results we recall in the first subsubsection. We study the homotopy type of the latter using similar methods as the ones from loc.cit. The results we obtain will be used both in \cref{MCGSection} to understand the group of components of certain embedding spaces (once again, the case of $X_g$ is merely a recollection of the work of \cite{BRW}), and in \cref{SelfEmbeddingXgSection,SelfEmbeddingYgSection} to compute the higher rational homotopy groups of these embedding spaces. 

\subsubsection{The case of $X_g$}

In this section, we recall the main result of \cite[Section 3.1]{BRW} on the homotopy type of the space $\Aut_{\partial}(X_g)$. The authors consider the following fibre sequence
\begin{equation}\label{XgMappingSpaceFibSeq}
\begin{tikzcd}
\Map_{\partial} (\xg)\rar &\Map_{S^1}(\xg) \rar["\res_X"] &\Map_{S^1}(S^1\times S^{2n-2},\xg)    
\end{tikzcd}
\end{equation}given by restriction of maps, where $S^1\subset S^1\times S^{2n-2}$ is given by the inclusion of $S^1\times \{*\}$ induced by a fixed basepoint of $S^{2n-2}.$ We extract the following result from the main result Theorem $3.1$ of loc.cit.

\begin{prop}[\cite{BRW}]\label{homotopy groups of mapping of xg}
    The following statements hold for $k\geq 0:$
    \begin{enumerate}[label=(\roman*)]
        \item There is an isomorphism of $\ZZ[\pi_0(\Aut_{S^1}(X_g))]$-modules
        \[\pi_k(\Map_{S^1}(\xg),\id)\cong \Hom_{\ZZ[\pi]}(\pix,\pi_{k+n}(\xg))\]where $\pi_0(\Aut_{S^1}(X_g))$ acts on the left group using the conjugation action of $\Aut_{S^1}(X_g)$ on $\Map_{S^1}(X_g)$ and on the right group by post-composition with the induced map on $\pi_{k+n}(X_g)$ and pre-composition by the induced map on $\pi_n(X_g)$. 
        \item There is an isomorphism of abelian groups
        \[\pi_k(\Map_{S^1}(S^1\times S^{2n-2},\xg),\iota)\cong [\pi_{k+2n-1}(\xg)]_\pi\]where $\iota:S^1\times S^{2n-2}\hookrightarrow \xg$ denotes the inclusion and $[\pi_{k+2n-1}(\xg)]_\pi$ denotes the coinvariants of the action of $\pi=\pi_1(\xg)$ on $\pi_{k+2n-1}(\xg)$.
        \item After applying $(-)\otimes \QQ $, the map $\pi_k(\res_X)$ is surjective for every $k> 0.$ In particular, we have an isomorphism $\pi_k(\Map_{\partial} (\xg),\id)\otimes \QQ\cong \ker(\pi_k(\res_X)\otimes \QQ).$
    \end{enumerate}
\end{prop}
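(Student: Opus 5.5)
We will establish the three statements using the cellular structure of $\xg$ relative to $S^1$. Up to homotopy, $\xg\simeq S^1\vee\bigvee_{2g}S^n$, and $X_g$ is obtained from $S^1\times S^{2n-2}$ (with $S^1$ already present) by attaching cells.

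\textbf{Statement (i).} Since $\xg\simeq S^1\vee\bigvee_{2g}S^n$ relative to the circle, restricting to the wedge summands gives
\[
\Map_{S^1}(\xg)\simeq \prod_{2g}\Omega^n\xg,
\]
where each factor is taken at the component of the corresponding sphere inclusion. So $\pi_k$ is the $2g$-fold product of copies of $\pi_{k+n}(\xg)$. This product is identified with $\Hom_{\ZZ[\pi]}(\pix,\pi_{k+n}(\xg))$ because $\pix$ is free on the standard basis $\{a_i,b_i\}$, as noted in \cref{allTheNotationUngraded}. The equivariance is then checked by naturality. Conjugation by $\phi$ acts on a homotopy $S^k\times \xg\to \xg$ by post-composition with $\phi$ and pre-composition with $\phi^{-1}$. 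Unwinding the identification, this gives the stated action; one has to track that $\phi$ fixes $S^1$, which is what makes pre-composition $\ZZ[\pi]$-linear.

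\textbf{Statement (ii).} The space $S^1\times S^{2n-2}$ is $S^1$ with a $(2n-2)$-cell and a $(2n-1)$-cell attached. I would fibre $\Map_{S^1}(S^1\times S^{2n-2},\xg)$ over $\Map_{S^1}(S^1\vee S^{2n-2},\xg)\simeq \Omega^{2n-2}\xg$, with fibre $\Omega^{2n-1}\xg$. A cleaner route is to write $S^1\times S^{2n-2}\setminus S^1 $ as $S^{2n-2}\ltimes S^1$, a half-smash, and to pass to universal covers. Maps under $S^1$ then correspond to $\pi$-equivariant maps
\[
\RR\times S^{2n-2}\to \widetilde X_g.
\]
Since $\widetilde X_g$ is $(n-1)$-connected, the relevant homotopy groups of the equivariant mapping space in degree $k$ are computed by a short equivariant cellular (Bredon-type) spectral sequence. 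Its $E_2$ page has $[\pi_{k+2n-2}(\xg)]$ and $[\pi_{k+2n-1}(\xg)]$ contributions, namely coinvariants $H_0(\pi;-)$ and invariants $H^1$-type terms of the free $\ZZ[\pi]$-modules involved.

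The stated answer $[\pi_{k+2n-1}(\xg)]_\pi$ requires the $(2n-2)$-cell term to cancel, and I do not yet see the mechanism for this. The claim as stated names $\iota$ as the base component, so the identification should be made at that component. BRW's Theorem 3.1 presumably gives it directly. I would extract it from there rather than rederive it, verifying that $\pi$-invariants of the free module $\pi_{k+2n-2}(\xg)$ vanish because $\pi\cong\ZZ$ is infinite, and that coinvariants survive.

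\textbf{Statement (iii).} After rationalisation, we need $\pi_k(\res_X)$ to be surjective for $k>0$. The plan is to show that every class in $[\pi_{k+2n-1}^\QQ(\xg)]_\pi$ lifts. Rationally, $\pi_{*}^\QQ(\xg)$ is generated as a $\QQ[\pi]$-module under Whitehead products by $\pix$, so $\pi_{k+2n-1}^\QQ$ consists of iterated Whitehead brackets. The restriction of a homotopy $\Sigma^k$-family of self-maps to the boundary sphere records the image of the attaching map of the top cell, which is $\sum_i[a_i,b_i]$ plus the circle term. Varying the self-map along $a_i$ by $x\in\pi_{k+n}$ changes this attaching class by $[x,b_i]$; varying along $b_i$ changes it by $[a_i,x]$. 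Brackets of this form span all of $\pi_{k+2n-1}^\QQ$ modulo $\pi$-coinvariants, which gives surjectivity. The isomorphism $\pi_k(\Map_\partial(\xg),\id)\otimes\QQ\cong\ker(\pi_k(\res_X)\otimes\QQ)$ then follows from the long exact sequence of \eqref{XgMappingSpaceFibSeq}, together with surjectivity in degree $k+1$.

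The main obstacle is this spanning claim. It requires knowledge of the rational homotopy Lie algebra of $S^1\vee\bigvee S^n$, essentially the free graded Lie algebra over $\QQ[\pi]$. Within that, one needs to show that $\mathrm{ad}$ of the elements $a_i$ and $b_i$ hits all elements of the relevant degree modulo coinvariants. I expect this to follow from freeness, since every bracket word ends in some $a_i$ or $b_i$.
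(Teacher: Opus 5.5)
Your proposal is correct and is essentially the paper's argument. The paper quotes (i) and (ii) directly from \cite[Thm.~3.1]{BRW}, and proves (iii) exactly as you propose: it identifies $\pi_k(\mathrm{res}_X)$ with the Whitehead bracket map $x\otimes y\mapsto [y,x]$ on $\pi_n(X_g)\otimes_\pi\pi_{k+n}(X_g)$, then concludes with the same long exact sequence argument. Three small corrections to your write-up:

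\begin{itemize}
\item For (ii), the mechanism you say you cannot see is this. The connecting map of your fibration is $1-t$ up to sign. So $\pi_k$ sits in a short exact sequence between $[\pi_{k+2n-1}(X_g)]_\pi$ and the $t$-invariants of $\pi_{k+2n-2}(X_g)$. Those invariants vanish by a support/retraction argument on $\bigvee_{\ZZ}\bigvee_{2g}S^n$. They do not vanish because the module is free (it is not: it has torsion), nor merely because $\pi$ is infinite.
\item For (iii), the free graded Lie algebra is over $\QQ$ on the $\QQ$-vector space $\pi_n^{\QQ}(X_g)$, with $\pi$ acting diagonally. It is not free over $\QQ[\pi]$.
\item The spanning step you flag is the classical fact the paper uses: by the Jacobi identity, this Lie algebra is spanned in bracket length $\geq 2$ by right-normed brackets $[v,w]$ with $v\in\pi_n^{\QQ}(X_g)$. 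A bare statement that ``every bracket word ends in a generator'' is not literally true before this rewriting. In the $\pi$-coinvariants one has $[t^m a_i,w]\equiv[a_i,t^{-m}w]$, so brackets with the $a_i,b_i$ suffice.
\end{itemize}
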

\begin{proof}
    The first two claims are precisely the first two isomorphisms of \cite[Thm. 3.1]{BRW} (the equivariance claim in (i) follows directly from the construction of the isomorphism in the proof of loc.cit), so we focus on the third claim. We briefly recall the description of $\pi_k(\res_X)$ from loc.cit. The quadratic pairing on $\pix$ is non-degenerate and hence we have the duality isomorphism of $\ZZ[\pi]$-modules $\smash{\pix\to (\pix)^{\vee}}$ given by taking $x$ to $\lambda_X(-,x)$. Under the canonical isomorphism \[\pix\otimes_{\pi}\pi_{k+n}(X_g)\to (\pix)^\vee\otimes_\pi \pi_{k+n}(X_g)\to \Hom_{\ZZ[\pi]}(\pix,\pi_{n+k}(X_g)),\]induced by the duality isomorphism and the canonical map in \cref{ConventionsModules} (where we use that $\pix$ is free and finitely generated), and the isomorphisms of (i) and (ii), the map $\pi_k(\res_X)$ agrees with the map $[-,-]:\pix\otimes_{\pi}\pi_{k+n}(X_g)\to [\pi_{k+2n-1}(\xg)]_\pi,$ given by taking $x\otimes y$ to the Whitehead product $[y,x]$ in $X_g$. We now show that $[-,-]$ is surjective after rationalising. Since the universal cover of $X_g$ is a wedge of spheres $\smash{\bigvee_{\ZZ}\bigvee_{i=1}^{2g}S^n}$, the rational homotopy groups $\smash{\pi_{*}^\QQ(X_g)}$ in degrees $*\geq 2$ are given by $(*-1)$-graded piece of the free graded Lie algebra $\mathrm{Lie}(\pixq)$ over $\QQ$, where $\pixq$ is seen as a graded vector space concentrated in degree $n-1$. Additionally, the Whitehead product on $X_g$ corresponds to the bracket on this free Lie algebra. Our claim now follows from the following classical fact about Lie algebras: Let $V$ be a graded vector space, then the bracket map $[-,-]:V\otimes \mathrm{Lie}(V)\to \mathrm{Lie}^{\geq 2}(V) $ is surjective, where the target is the subspace of decomposable elements.
\end{proof}

\subsubsection{The case of $(Y_g;X_g)$}

In this section, we analyse the homotopy groups of the space $\Autv(\yg;\xg)$ of self-homotopy equivalences of the pair $(\yg,\xg)$ which fix the pair $(S^1\times D^{2n-1}_+,S^1\times S^{2n-2})$ pointwise. This is essentially analogous to \cite[Section 3]{BRW}. We have a fibre sequence of the following form
\begin{equation}\label{YgXgMappingSpaceFibSeq}
\begin{tikzcd}
\Map_{\vb}(\yg;\xg)\rar &\Map_{S^1}(\yg;\xg) \rar["\res_Y"] &\Map_{S^1}((S^1\times D^{2n-1},S^1\times S^{2n-2}),(\yg,\xg))    
\end{tikzcd}
\end{equation}
given by restriction of maps. Here, the leftmost space is the endomorphism space of the pair $(\yg,\xg)$ under $(S^1\times D^{2n-1},S^1\times S^{2n-2})$, the middle space is the endomorphism space of $(\yg,\xg)$ under $(S^1,S^1)$ and the rightmost space is the mapping space of pairs under $(S^1,S^1),$ where the inclusion of $(S^1,S^1)$ in $(S^1\times D^{2n-1},S^1\times S^{2n-2})$ is induced by a fixed basepoint of $S^{2n-2}.$  

\begin{prop}\label{homotopy groups of mapping of pair}
    The following statements hold for $k\geq 0:$
    \begin{enumerate}[label=(\roman*)]
        \item There is an isomorphism of abelian groups
        \[\pi_k(\Map_{S^1}(\yg;\xg),\id)\cong \Hom_{\ZZ[\pi]}^{\mathrm{ext}}(\pix,\pi_{k+n}(\xg))\]where $(-)^\mathrm{ext}$ denotes the subgroup of those maps taking $\pirel$ to the subgroup $\pi_{k+n+1}(Y_g,X_g)\subseteq \pi_{k+n}(X_g).$
        \item There is an isomorphism of abelian groups
        \[\pi_k(\Map_{S^1}((S^1\times D^{2n-1},S^1\times S^{2n-2}),(\yg,\xg)),\iota)\cong [\pi_{k+2n}(\yg,\xg)]_\pi\]where $\iota:(S^1\times D^{2n-1},S^1\times S^{2n-2})\hookrightarrow (\yg,\xg)$ denotes the inclusion and $[\pi_{k+2n}(\yg,\xg)]_\pi$ denotes the coinvariants of the action of $\pi=\pi_1(\xg)$ on $\pi_{k+2n}(\yg,\xg)$.
        \item After applying $(-)\otimes \QQ $, the map $\pi_k(\res_Y)$ is surjective for every $k> 0.$ In particular, we have an isomorphism $\pi_k(\Map_{\vb}(\yg;\xg),\id)\otimes \QQ\cong \ker(\pi_k(\res_Y)\otimes \QQ).$
    \end{enumerate}Under these isomorphisms, the maps induced by restriction $\pi_k(\Map_{S^1}(\yg;\xg),\id)\to \pi_k(\Map_{S^1}(\xg),\id)$ and $\pi_k(\Map_{S^1}((S^1\times D^{2n-1},S^1\times S^{2n-2}),(\yg,\xg)),\iota)\to \pi_k(\Map_{S^1}(S^1\times S^{2n-2},\xg),\iota)$ agree with the inclusions, induced by the splittings $\pi_{*}(X_g)\cong \pi_{*}(Y_g)\oplus \pi_{*+1}(Y_g,X_g)$, \[\Hom_{\ZZ[\pi]}^{\mathrm{ext}}(\pix,\pi_{k+n}(\xg))\hookrightarrow \Hom_{\ZZ[\pi]}(\pix,\pi_{k+n}(\xg))\] and $[\pi_{k+2n}(\yg,\xg)]_\pi\hookrightarrow [\pi_{k+2n-1}(\xg)]_\pi$, respectively.
\end{prop}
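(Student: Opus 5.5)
We follow the strategy of \cite[Thm.~3.1]{BRW}, replacing the absolute cell structure of $\xg$ by a relative cell structure of the pair $(\yg,\xg)$ under $(S^1,S^1)$. Up to homotopy, $\xg$ is obtained from $S^1$ by attaching the $n$-cells $a_i,b_i$ and then a single $2n$-cell along $\res_X$-type attaching data, that is, along the boundary sphere $S^1\times S^{2n-2}$ relative to $S^1$. The pair $(\yg,\xg)$ is then obtained from $(\xg,\xg)$ by attaching, for each $i$, a relative $(n+1)$-cell $(D^{n+1},S^n)$ filling $b_i$, and one relative $(2n+1)$-cell along $(S^1\times D^{2n-1},S^1\times S^{2n-2})$. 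More conveniently, using the equivalence of pairs $(\yg,\xg)\simeq(S^1\vee V_g, S^1\vee W_{g,1})$ and $W_{g,1}\simeq\bigvee_{2g}S^n$, we write the pair under $(S^1,S^1)$ as $(S^1\vee\bigvee_g S^n\vee\bigvee_g D^{n+1},\,S^1\vee\bigvee_g S^n\vee\bigvee_g S^n)$. We first prove (i) and (ii) for this model and then handle the boundary cell.

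For (ii), a map of pairs $(S^1\times D^{2n-1},S^1\times S^{2n-2})\to(\yg,\xg)$ under $(S^1,S^1)$ amounts to extending along the relative cell $(S^1\times D^{2n-1},S^1\times S^{2n-2})/(S^1,S^1)$. Passing to universal covers over $S^1$, this cell becomes a $\ZZ$-indexed family of cells $(D^{2n-1},S^{2n-2})$. Exactly as in \cite[Thm.~3.1(ii)]{BRW}, the based mapping space is then identified with a space whose $\pi_k$ is $[\pi_{k+2n}(\yg,\xg)]_\pi$. The coinvariants arise because the $S^1$-direction is free, being only constrained along $S^1\times\{*\}$. We would mimic the proof in loc.\ cit.\ verbatim, replacing spheres by disc pairs.

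For (i), $\Map_{S^1}(\yg;\xg)$ is computed by restricting to the $n$-skeleton of the pair. A map of pairs under $(S^1,S^1)$ is determined, up to homotopy, by maps on the spheres $a_i$ in $\xg$ and on the disc pairs $(D^{n+1},S^n)$ representing $b_i$ into $(\yg,\xg)$. This gives $\pi_k\cong\bigoplus_g\pi_{k+n}(\xg)\oplus\bigoplus_g\pi_{k+n+1}(\yg,\xg)$, computed via universal covers and the free $\ZZ[\pi]$-basis. Using the splitting $\pi_*(\xg)\cong\pi_*(\yg)\oplus\pi_{*+1}(\yg,\xg)$ and freeness of $\pix$, this is exactly $\Hom^{\mathrm{ext}}_{\ZZ[\pi]}(\pix,\pi_{k+n}(\xg))$.

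The compatibility with restriction to $\xg$ and to $S^1\times S^{2n-2}$ is immediate from the construction: restricting a disc pair to its boundary sphere is the inclusion $\pi_{*+1}(\yg,\xg)\hookrightarrow\pi_*(\xg)$ given by the boundary map, which is injective by the splitting.

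For (iii), we use the compatibility just established. Rationally, $\res_X$ is identified with the Whitehead bracket $\pix\otimes_\pi\pi_{k+n}(\xg)\to[\pi_{k+2n-1}(\xg)]_\pi$. Hence $\pi_k(\res_Y)$ is its restriction to the $\mathrm{ext}$ subspace, landing in $[\pi_{k+2n}(\yg,\xg)]_\pi$. Surjectivity is the key step and the main obstacle. Rationally, $\pi_{*+1}(\yg,\xg)$ is the kernel of $\mathrm{Lie}(\pixq)\to\mathrm{Lie}(\piyq)$, which is the Lie ideal generated by $\pirelq$. Under the duality $\pirel\cong(\piy)^\vee$, a homomorphism in $\Hom^{\mathrm{ext}}$ corresponds to a tensor $\sum_i a_i\otimes y_i+\sum_i b_i\otimes z_i$ in which the $z_i$ lie in the ideal; the tensor factors are dual to $b_i$ and $a_i$. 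We then need the Lie algebra fact that every element of the ideal $I=\langle\pirelq\rangle$ of decomposables can be written as $\sum[a_i,y_i]+\sum[b_i,z_i]$ with $y_i$ arbitrary and $z_i\in I$. To prove this, we write a bracket monomial containing some $b_j$ as a left-normed bracket whose outermost generator is either a $b_j$, or an $a_i$ whose complement contains some $b$ and hence lies in $I$. This uses Jacobi and induction on length. Passing to $\pi$-coinvariants preserves surjectivity. The identification of $\pi_k(\Map_{\vb})\otimes\QQ$ with the kernel then follows from the long exact sequence of \eqref{YgXgMappingSpaceFibSeq}.
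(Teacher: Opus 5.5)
Your overall strategy is the paper's: the wedge-of-pairs model $(\yg,\xg)\simeq(S^1,S^1)\vee\bigvee_g(S^n,S^n)\vee\bigvee_g(D^{n+1},S^n)$ for (i), an appeal to the argument of \cite[Thm.~3.1]{BRW} for (ii), and right-normed words in the free Lie algebra for (iii). However, the key step of (iii) has the roles of $a_i$ and $b_i$ interchanged.

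Under the duality $x\mapsto\lambda_X(-,x)$ one has $b_i\mapsto a_i^*$ and $a_i\mapsto(-1)^n b_i^*$. Hence the tensor $a_i\otimes y$ is the homomorphism sending $b_i\mapsto\pm y$ and killing the other basis elements, while $b_i\otimes z$ is the homomorphism $a_i\mapsto z$. Your own remark that the tensor factor $a_i$ is dual to $b_i$ already says this. Since $\Hom^{\mathrm{ext}}$ is defined by $\phi(\pirel)\subseteq\pi_{k+n+1}(\yg,\xg)$, it corresponds to $\sum_i a_i\otimes y_i+\sum_i b_i\otimes z_i$ with $y_i\in I$ and $z_i$ arbitrary. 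Equivalently, it is $\piy\otimes_\pi\pi_{k+n+1}(\yg,\xg)+\pirel\otimes_\pi\pi_{k+n}(\xg)$, not the other way round.

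With your labelling, a choice $y_i\in\mathrm{Lie}(\piyq)$ gives a homomorphism that is not in $\Hom^{\mathrm{ext}}$. Its bracket $[a_i,y_i]$ does not even lie in the ideal, which contradicts $\pi_k(\res_Y)$ landing in $[\pi_{k+2n}(\yg,\xg)]_\pi$. The Lie-algebra fact you actually need is that every decomposable element of $I$ lies in $\sum_i[a_i,I]+\sum_i[b_i,\mathrm{Lie}(\pixq)]$, using all $\pi$-translates of the generators. It is not $\sum_i[a_i,\mathrm{Lie}(\pixq)]+\sum_i[b_i,I]$, as you stated.

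Your sketched proof already establishes the corrected statement. A right-normed word containing some $b$ either starts with some $t^kb_j$, which puts it in $[t^kb_j,\mathrm{Lie}]$. Or it starts with some $t^ka_i$, in which case the rest of the word contains a $b$ and so lies in $I$. This is exactly the paper's argument, so once you swap the labels, (iii) goes through.

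Two smaller points.
\begin{enumerate}
\item Your first description of the cell structures is wrong. $\xg\simeq S^1\vee\bigvee_{2g}S^n$ has no $2n$-cell, and up to homotopy $\yg$ is obtained from $\xg$ by attaching $(n+1)$-cells only. This is harmless, because the wedge model is an honest equivalence of pairs under $(S^1,S^1)$, so there is no ``boundary cell'' to handle. The boundary enters only through the fibre sequence \eqref{YgXgMappingSpaceFibSeq}.
\item In (i), your computation naturally gives $\pi_k$ at the constant map, but the statement is about $\pi_k$ at $\id$. You should add that $\Map_*((S^n,S^n),(\yg,\xg))\simeq\Omega^n\xg$ and $\Map_*((D^{n+1},S^n),(\yg,\xg))\simeq\Omega^n\hofib(\xg\to\yg)$ are $n$-fold loop spaces. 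The paper phrases this as the co-$H$-structure on the pair. Consequently all path components, in particular that of $\id$, have the same homotopy groups.
\end{enumerate}
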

\begin{proof}
    We start by proving the first claim, which is essentially a relative version of the analogous claim in \cite[Thm 3.1]{BRW}. Start by observing the there is an equivalence $\Map_{S^1}(Y_g;X_g)\simeq \Map_{*}((V_g;W_{g,1}),(Y_g;X_g))$ by restricting, where the latter space is the space of based maps of pairs. We can observe that the pair $(V_g;W_{g,1})$ is equivalent to the wedge sum \[\bigvee_{\mathrm{rk}(\H_{n+1}(V_g,W_{g,1}))} (D^{n+1},S^n)\vee \bigvee_{\mathrm{rk}(\H_{n}(V_g))}(S^n,S^n)\] of pairs. Hence, by taking homotopy groups at the constant map, we have an isomorphism
    \[\pi_k(\Map_{*}((V_g;W_{g,1}),(Y_g;X_g)),\mathrm{const})\cong \Hom_\ZZ(\piwrel,\pi_{n+1+k}(Y_g,X_g))\oplus \Hom_\ZZ(\piv,\pi_{n+k}(X_g)),\]using the following two facts, for any based pair of spaces $(X,X')$: 
    \begin{enumerate}
        \item We have an isomorphism $\pi_k(\Map_*((D^{n+1},S^n),(X,X'),\mathrm{const})\cong \Hom_\ZZ(\H_{n+1}(D^{n+1},S^n), \pi_{n+k}(X,X'))$.
        \item The restriction map $\Map_*((S^n,S^n),(X,X'))\to \Map_*(S^n,X')$ is an equivalence, and hence the homotopy group in degree $k$ of either of the spaces is isomorphic to $ \Hom_\ZZ(\H_n(S^n),\pi_{n+k}(X')).$
    \end{enumerate}To compute the homotopy groups of the space $\Map_{*}((V_g;W_{g,1}),(Y_g;X_g))$ based at the inclusion map, one observes that, using the wedge summand decomposition of above, the pair $(V_g;W_{g,1})$ admits a co-$H$-space structure, i.e. it is a comonoid in the homotopy category of based pairs of spaces, which implies that the mapping space above is a group-like $H$-space, and hence all its path components are equivalent. Now, using the isomorphisms $\pirel\cong \piwrel\otimes_\ZZ \ZZ[\pi]$ and $\pix\cong \piw\otimes_\ZZ \ZZ[\pi],$ we deduce the first claim from the isomorphism of abelian groups $\Hom_\ZZ(A,B)\cong \Hom_{\ZZ[\pi]}(A\otimes \ZZ[\pi],B)$, which holds for any abelian group $A$ and $\ZZ[\pi]$-module $B$. Moreover, one checks that the restriction map $\pi_k(\Map_{S^1}(\yg;\xg),\id)\to \pi_k(\Map_{S^1}(\xg),\id)$ agrees with the inclusion map in the statement, since it is given by restricting the maps of pairs to the source of the pair.

    The second claim follows analogously to the analogous claim in loc.cit., using the pushout square of pairs
    \[
    \begin{tikzcd}
    (D^{2n-1},S^{2n-2}) \arrow[d, "\alpha"] \arrow[r] & (D^{2n},D^{2n-1})  \arrow[d]\\
    (S^1\vee D^{2n-1},S^1\vee S^{2n-2}) \arrow[r,"\iota"] & (S^1\times D^{2n-1},S^1\times S^{2n-2}) 
    \end{tikzcd}
    \]where $\alpha$ is a map of pairs representing the unique homotopy class $[\alpha]\in \pi_{2n-1}(S^1\vee D^{2n-1},S^1\times S^{2n-2})$ mapping to the Whitehead product $[\iota_1,\iota_{2n-2}]\in \pi_{2n-2}(S^1\vee S^{2n-2})$ of the inclusion maps $\iota_1:S^1\hookrightarrow S^1\vee S^{2n-2}$ and $\iota_{2n-2}:S^{2n-2}\hookrightarrow S^1\vee S^{2n-2}$ under the connecting map of the long exact sequence of the pair $(S^1\vee D^{2n-1},S^1\vee S^{2n-2}).$ We leave the check to the reader. One also checks that the restriction map $\pi_k(\Map_{S^1}((S^1\times D^{2n-1},S^1\times S^{2n-2}),(\yg,\xg)),\iota)\to \pi_k(\Map_{S^1}(S^1\times S^{2n-2},\xg),\iota)$ agrees with the inclusion $[\pi_{k+2n}(\yg,\xg)]_\pi\hookrightarrow [\pi_{k+2n-1}(\xg)]_\pi$.

    We move now to the third claim. To do so, we consider the following commutative square
    \[\begin{tikzcd}
        \Hom^{\mathrm{ext}}_{\ZZ[\pi]}(\pix,\pi_{k+n}(X_g)) \arrow[d, hook]\arrow[r, "\pi_k(\res_Y)"] & \left[\pi_{k+2n}(Y_g,X_g)\right]_\pi \arrow[d, hook] \\
        \Hom_{\ZZ[\pi]}(\pix,\pi_{k+n}(X_g)) \arrow[r, "\pi_k(\res_X)"] & \left[\pi_{k+2n-1}(X_g)\right]_\pi
    \end{tikzcd}\]and proceed in the following way: First, we identify the images of the vertical maps after rationalisation and second, we show that the image of the left map is hit by the image of the right map under the bottom map. The proof of the first step will hinge on the following two classical facts about free graded Lie algebras: Let $V,A,B$ be graded $\QQ$-vector spaces:
    \begin{enumerate}
        \item\label{rightBracketed} As a $\QQ$-vector space, $\mathrm{Lie}(V)$ is generated by right bracketed words $[v_1,[v_2,[\cdots [v_{k-1},v_k]\cdots ]]]$ for $v_1,\cdots, v_k\in V$. This follows by inductively using the Jacobi identity.
        \item\label{kernel of Lie} The kernel of the projection map $\mathrm{Lie}(A\oplus B)\to \mathrm{Lie}(A)$ is generated by right bracketed as above, for $V=A\oplus B$, where at least one of the entries is an element of $B.$ This follows by the following argument: Any element $x$ in the kernel can be written as a sum of right bracketed words, as in \ref{rightBracketed}, where $v_i$ is either in $A$ or $B$. The image of such a sum in $\mathrm{Lie}(A)$ is the sum of those words where $v_i\in A$ for all $i$. Since $x$ is in the kernel, the latter sum vanishes in $\mathrm{Lie}(A)$ and hence it also vanishes in $\mathrm{Lie}(A\oplus B)$ (since the map $\mathrm{Lie}(A)\to \mathrm{Lie}(A\oplus B)$ is a section of the projection, and hence injective). Thus, each word present in the sum decomposition of $x$ must contain at least on element in $B$.    
    \end{enumerate}We start by identifying the images of the vertical maps in the square above. For the left map, we identify this image under the identification $\pix\otimes_{\pi} \pi_{k+n}(X_g)\cong \Hom_{\ZZ[\pi]}(\pix,\pi_{k+n}(X_g)).$ By seeing $\Hom^{\mathrm{ext}}_{\ZZ[\pi]}(\pix,\pi_{k+n}(X_g))$ as the direct sum 
    \[\Hom_{\ZZ[\pi]}(\pirel,\pi_{k+n+1}(Y_g,X_g))\oplus \Hom_{\ZZ[\pi]}(\piy,\pi_{n+k}(X_g)),\]we observe that the image of the left vertical map under the identification above is the sum of $\piy\otimes_\pi \pi_{k+n+1}(Y_g,X_g)$ and $\pirel\otimes_\pi \pi_{n+k}(X_g)$, using the duality isomorphisms above \cref{turn maps to embeddings} (see also \cref{ConventionsModules}). Observe that $\smash{\pi_*^\QQ(Y_g;X_g)}$ is, as a $\QQ$-vector space, the kernel of the map of Lie algebras $p:\mathrm{Lie}(\piy\oplus \pirel)\to \mathrm{Lie}(\piy)$, where we see $\piy$ and $\pirel$ as graded $\QQ$-vector spaces concentrated in degree $n-1$. Thus, by \ref{kernel of Lie}, it is generated by words of the form $[a,b]$ where either $a\in \pirel$ or $b$ is in this kernel. We now proceed with the second step. We recall that, under the identification above, the map $\pi_k(\res_X)$ is given by the Whitehead product map (see proof of \cref{homotopy groups of mapping of xg}). Observe that the image of the Whitehead bracket map of $\piy\otimes_\pi \pi_{k+n+1}(Y_g,X_g)$ are those classes which can be decomposed as sums of words $[x,y]$ where $y\in \pi_{k+n+1}(Y_g,X_g)$. Similarly, the image of $\pirel\otimes_\pi \pi_{n+k}(X_g)$ are those classes which can be decomposed as sums of words $[x,y]$ where $x\in \pirel$. Thus, the kernel of $p$ is hit by the Whitehead bracket map, once restricted to the image of the left vertical map. We conclude that the map $\pi_k(\res_Y)$ is surjective, since taking coinvariants by $\pi$ does not affect surjectivity. 
\end{proof}

The same proof as before gives the following analogous result. We leave the details to the reader.

\begin{prop}\label{homotopy groups of mapping of pair wg}
    The following statements hold for $k\geq 0:$
    \begin{enumerate}[label=(\roman*)]
        \item There is an isomorphism
        \(\pi_k(\Map_{*}(\vg;\wg),\id)\cong \Hom_{\ZZ}^{\mathrm{ext}}(\piw,\pi_{k+n}(\wg)).\)
        \item There is an isomorphism
        \(\pi_k(\Map_{*}((D^{2n},S^{2n-1}),(\vg,\wg)),\iota)\cong \pi_{k+2n}(\vg,\wg)\)where $\iota:(D^{2n},S^{2n-1})\hookrightarrow (\vg,\wg)$ denotes the inclusion.
    \end{enumerate}
\end{prop}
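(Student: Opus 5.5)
We follow the proof of \cref{homotopy groups of mapping of pair} verbatim, replacing $(\yg;\xg)$ by $(\vg;\wg)$, $\ZZ[\pi]$ by $\ZZ$, and the $S^1$-based mapping spaces by based mapping spaces. The simplification is that $\pi_1(\wg)$ is trivial, so no coinvariants or $\ZZ[\pi]$-linearity appear.

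For (i), we use that the pair $(\vg;\wg)$ is homotopy equivalent, as a based pair, to the wedge
\[\bigvee_{\mathrm{rk}(\piwrel)}(D^{n+1},S^n)\vee\bigvee_{\mathrm{rk}(\piv)}(S^n,S^n).\]
This holds because $\wg\simeq\bigvee_{2g}S^n$ and $\vg\simeq\bigvee_g S^n$, with the inclusion collapsing the $b_i$-spheres. The wedge decomposition gives a co-$H$-structure on the pair, so $\Map_*((\vg;\wg),(\vg;\wg))$ is a group-like $H$-space. Hence its component of the identity is equivalent to that of the constant map. At the constant map we apply the two facts recorded in the proof of \cref{homotopy groups of mapping of pair}: maps out of $(D^{n+1},S^n)$ contribute $\Hom_\ZZ(\piwrel,\pi_{n+k+1}(\vg,\wg))$, and maps out of $(S^n,S^n)$ contribute $\Hom_\ZZ(\piv,\pi_{n+k}(\wg))$. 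Using the splitting $\piw\cong\piv\oplus\piwrel$, this direct sum is exactly the subgroup $\Hom_\ZZ^{\mathrm{ext}}(\piw,\pi_{k+n}(\wg))$ of homomorphisms sending $\piwrel$ into $\pi_{k+n+1}(\vg,\wg)\subseteq\pi_{k+n}(\wg)$. This inclusion is split injective because $\wg\to\vg$ has a homotopy section.

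For (ii), $(D^{2n},S^{2n-1})$ is a single relative cell. Based maps of pairs $(D^{2n},S^{2n-1})\to(\vg,\wg)$ near $\iota$ therefore form a space whose $k$-th homotopy group is $\pi_{k+2n}(\vg,\wg)$. To see that all components are equivalent, we use that $(D^{2n},S^{2n-1})\simeq\Sigma(D^{2n-1},S^{2n-2})$ is a co-$H$ pair, so the mapping space is again a group-like $H$-space. The homotopy groups at the constant map are then those of $\Omega^{2n}$ of the homotopy fibre of $\wg\to\vg$, namely $\pi_{k+2n}(\vg,\wg)$.

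The main point to check is that these identifications are compatible with the naturality used later, i.e.\ with restriction to $\wg$. As in the proof of \cref{homotopy groups of mapping of pair}, restriction corresponds to the inclusions induced by the splitting $\pi_*(\wg)\cong\pi_*(\vg)\oplus\pi_{*+1}(\vg,\wg)$. This follows directly from the explicit wedge decomposition, so we do not expect a genuine obstacle.
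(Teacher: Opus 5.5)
Your proposal is correct and follows the same route as the paper, which states only that the proof of \cref{homotopy groups of mapping of pair} carries over; you carry it over faithfully, using the wedge decomposition of the pair, the group-like $H$-structure to move to the constant map, and the two facts about maps out of $(D^{n+1},S^n)$ and $(S^n,S^n)$. One small indexing slip in (ii): based maps of pairs $(D^{2n},S^{2n-1})\to(\vg,\wg)$ form $\Omega^{2n-1}$, not $\Omega^{2n}$, of $\hofib(\wg\to\vg)$, and it is $\Omega^{2n-1}$ that yields $\pi_k\cong\pi_{k+2n}(\vg,\wg)$.
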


\subsection{The reflection involution on \texorpdfstring{$(Y_g;X_g)$}{(Yg,Xg)}}\label{ReflectionInvSection}

In this subsection, we define an involution $\rho_g$ on the $4$-ad $(Y_g;\partial_0 Y_g,\partial_1 Y_g,\partial_2 Y_g)$ of \cref{manifolds definition}(iv). Given any such involution, then conjugation by $\rho_g$ induces an involution on the spaces $\Diffv(Y_g)$, $\Diffb(X_g)$, $\Embcong(Y_g;X_g),$ and $\Embcong(X_g)$, and their block analogues (recall \cref{moduliSpacesDefnSection}). Taking classifying spaces induces a compatible $A_\infty$-involution\footnote{An \textit{$A_\infty$-involution} on a space $X$ is a morphism of $A_\infty$-algebras $C_2\to \End(X)$ to the endomorphism monoid of a space $X$} on the spaces $\BDiffv(Y_g)$, $\BDiffb(X_g)$, $\BEmbcong(Y_g;X_g),$ and $\BEmbcong(X_g)$, and their block analogues. We will define $\rho_g$ with the following design criteria, which will be the only properties from $\rho_g$ used in later sections:
\begin{enumerate}[label=$(\rho\arabic*)$]
    \item\label{action of rho on homology} The action of $\rho_g$ on the standard $\ZZ[\pi]$-basis $\{a_i,b_i\}_{i=1}^g$ of $\pix$ (recall \cref{allTheNotationUngraded}) takes $a_i$ to $-a_i$ and fixes $b_i.$
    \item\label{action on framings} There exists a stable framing $\ell$ on $Y_g$, such that the $A_\infty$-involutions by conjugation of $\rho_g$ on the spaces above admit compatible lifts to $A_\infty$-involutions on the spaces $\BDiffsfrv(Y_g)_\ell$, $\BDiffsfr(X_g)_\ell$, $\BEmbsfr(Y_g;X_g)_\ell,$ and $\BEmbsfr(X_g)_\ell$, and their block analogues.
\end{enumerate}To this end, we will first fix a model of $(Y_g;\partial_0 Y_g,\partial_1 Y_g,\partial_2 Y_g)$ as a submanifold of $\RR^{2n+1}$ together with fixed embeddings $e_i,f_i:S^n\hookrightarrow Y_g$ representing the classes $a_i$ and $b_i$, respectively, such that $(Y_g;\partial_0 Y_g,\partial_1 Y_g,\partial_2 Y_g)\subset (Y_{g+1};\partial_0 Y_{g+1},\partial_1 Y_{g+1},\partial_2 Y_{g+1})$ and these inclusions preserve the embeddings $e_i$ and $f_i.$ We will then define $\rho_g$ as reflection on the first coordinate of $\RR^{2n+1}$, which will fix $(Y_g;\partial_0 Y_g,\partial_1 Y_g,\partial_2 Y_g)$ setwise. Hence, the following design criterion will also hold. 

\begin{enumerate}[label=$(\rho\arabic*)$, resume]
    \item\label{equivarian} The inclusions $(Y_g;\partial_0 Y_g,\partial_1 Y_g,\partial_2 Y_g)\hookrightarrow (Y_{g+1};\partial_0 Y_{g+1},\partial_1 Y_{g+1},\partial_2 Y_{g+1}) $ are equivariant with respect to the actions of $\rho_g$ and $\rho_{g+1}$, respectively.
\end{enumerate}
\subsubsection{A standard model} We start by modelling $Y_g$ as a submanifold of $\RR^{2n+1}$. This will be done using the \textit{standard model} of $V_g$ as a submanifold of $\RR^{2n+1}$, defined in \cite[Section 2.5]{krannich2021diffeomorphismsdiscssecondweiss}. We briefly recall the properties of this construction that we will need. In loc.cit., the authors model $V_g\subset \RR^{2n+1}$ and define embeddings $e_i, f_i:S^n\to \partial V_g$ representing the standard basis $a_i,b_i\in \piw$ of the embeddings $S^n\times \{*\}, {*}\times S^n\subset W_{1,1} $ (see \cref{allTheNotationUngraded}) satisfying the following properties:
\begin{enumerate}[label=(\thesubsubsection.\alph*), align=left, leftmargin=*]
    \item\label{reflection of vg} The reflection along the first coordinate of $\RR^{2n+1}$ restricts to a diffeomorphism of $V_g$. This diffeomorphism fixes $f_i$ pointwise, and once restricted to $e_i$ is given by precomposition with the reflection along one coordinate of $S^n$ (in particular, it has degree $-1$).
    \item\label{image of vg is small} The submanifold $V_g$ is contained in $\RR^{2n}\times [\frac{1}{2},2g)$, and $V_g$ is contained in $V_{h}$, for $g\leq h$.
    \item The point $\frac{1}{2}\cdot\vec{e}_{2n+1}$ is in $\partial V_g,$ where $\vec{e}_i$ denotes the $i$-th basis vector of $\RR^{2n+1}.$
\end{enumerate}These properties follow directly by the construction in \cite[14]{krannich2021diffeomorphismsdiscssecondweiss}, by unravelling definitions.  We now define a model of $S^1\times D^{2n}$ as a submanifold of $\RR^{2n}\times (-2,0).$ Denote the standard inclusion $S^1\subset \RR^2$ by $\iota_{S^1}$ and see its normal bundle as a subset of $\RR^2$. We define the embedding $\phi_{S^1}:S^1\times D^1\times D^{2n-1}\to \RR^{2n-1}\times \RR^2$ defined by $\phi_{S^1}(z,t,x)=(x,\frac{1+t}{2}\iota_{S^1}(z))-\frac{3}{2}\vec{e}_{2n+1}$ to see $S^1\times D^{2n}$ as a submanifold of $\RR^{2n+1}$. Notice also that the image of $\phi_{S^1}$ is contained in $\RR^{2n}\times (-2,0)$, contains the point $-\frac{1}{2}\cdot\vec{e}_{2n+1},$ and is preserved setwise by the reflection along the first coordinate of $\RR^{2n+1}.$ Now, by choosing an appropriate tubular neighbourhood of the path $[0,1]\ni s\mapsto \frac{t-1}{2}\cdot \vec{e}_{2n+1}$ connecting $V_g$ with the image of $S^1\times D^{2n}\subset \RR^{2n+1}$ (and in $(0,1)$ is disjoint from both images by \ref{image of vg is small}), which is axially symmetric around the $\vec{e}_{2n+1}$-axis, then observe that the union of $S^1\times D^{2n}$ with $V_g$ along this tubular neighbourhood is diffeomorphic to $Y_g$. We define the submanifold $Y_g\subset \RR^{2n+1}$ to be this union. Observe that, by the definition above and \ref{reflection of vg}, the reflection along the first coordinate restricts to a diffeomorphism of $Y_g$. 
\begin{defn}\label{theReflectionInvolution}(Reflection involution)
    We define the involution $\rho_g:Y_g\to Y_g$ to be the reflection on the first coordinate of $\RR^{2n+1}$, restricted to the submanifold $Y_g\subset \RR^{2n+1}$.
\end{defn}

We can fix a $4$-ad structure on $Y_g$ by fixing a decomposition of $S^{2n-1}\cong \partial(D^1\times D^{2n-1})$ as two $(2n-1)$-dimensional discs $D_\pm$ such that the reflection on the first coordinate restricted to $S^1\times D^{2n}\subset \RR^{2n+1}$ restricts to a reflection on each disc, and setting $\partial_0 Y_g$ and $\partial_1 Y_g$ to be, respectively, $S^1\times D_+$ and $S^1\times D_-$. Using this $4$-ad structure, the involution $\rho_g$ is a diffeomorphism of $4$-ads. We observe that the design criteria \ref{action of rho on homology} and \ref{equivarian} follow directly from \ref{reflection of vg} and \ref{image of vg is small}. In the next subsubsection, we show that \ref{action on framings} holds.

\begin{rmk}\label{compatibility with V_g}
    The embedding $\phi_{S^1}$ extends to $D^2\times D^{2n-1}\cong D^{2n}$, using the standard inclusion to $\RR^{2n+1}$ translated by $-\frac{3}{2}\vec{e}_{2n+1}.$ If we take the union of this extended embedding with $V_g$ along the tubular neighbourhood of the path used above, we obtain another model $V_g'$ for $V_g$ as a submanifold of $\RR^{2n+1}$. By retracting the former embedding, we see that these two submanifolds are ambient isotopic. Moreover, this ambient isotopy can be arranged while preserving the symmetry along the first axis. Thus, we have $\langle\rho\rangle$-equivariant embeddings $V_g\subset Y_g\subset V_g',$ where the composite $V_g\subset V_g'$ is a $\langle\rho\rangle$-equivariant ambient isotopy.
\end{rmk}

\subsubsection{Action on stable framings}

In this section, we lift the action of $\rho_g$ on various automorphism and embedding spaces of $Y_g$ by conjugation to an action on the its variants with stable framings, as defined in \cref{defn of stable framings section}. We fix once and for all a standard stable framing on $Y_g$.

\begin{convention}\label{the standard framing on Y_g}(Standard framing)
    For the entirity of this paper, the stable framing $\ell$ on $Y_g$ will denote the stable framing induced by the inclusion $Y_g\subset \RR^{2n+1}$ and the standard framing on $\RR^{2n+1}$. This framing will be called the \textit{standard stable framing} on $Y_g$. Observe that the restriction of $\ell$ to $V_g\subset Y_g$ is the stabilisation of the \textit{standard framing} in the sense of \cite[Section 2.5.1]{krannich2021diffeomorphismsdiscssecondweiss}. By \cref{compatibility with V_g}, we see that the inclusion $Y_g\subset V_g$ takes the standard stable framing on $V_g$ to a stable framing homotopic to $\ell.$
\end{convention}

To lift the action of the reflection involution to stable framings, we will proceed analogously as \cite[Section 2.8.1]{krannich2021diffeomorphismsdiscssecondweiss}. We briefly recall their strategy, applied to our setting. We focus on lifting the action along the map $\BDiffsfrv(Y_g)_\ell\to \BDiffv(Y_g)$, as the remaining cases are analogous. We will use the following facts:
\begin{enumerate}[label=(\alph*)]
    \item We have a fibre sequence
    \[\BDiffv(Y_g)\to \BDiff(Y_g;\vb)\to \mathrm{BDiff_\partial^{ext}}(\vb Y_g)\]where the total space is the classifying space of the topological group of diffeomorphisms of $Y_g$ that fix $\vb Y_g$ setwise, the base space is the classifying space of the subgroup of $\Diffb(\vb Y_g)$ given by the union of the path components of those diffeomorphisms that extend to $Y_g$, and the right map restriction of diffeomorphisms. As for any fibre sequence of spaces, the loop space of the total space acts in the $A_\infty$-sense on the fibre by pointed maps. For the specific case of the sequence above, and under the canonical equivalence $\Diff(Y_g;\vb)\to \Omega\BDiff(Y_g;\vb)$, this action agrees with the action of $\Diff(Y_g;\vb )$ on $\BDiffv(Y_g)$ by conjugation of diffeomorphisms. Thus, the reflection involution on $\BDiffv(Y_g)$ agrees with the action of $\Omega\BDiff(Y_g;\vb)$ on this space, once restricted to $\Omega\B\langle\rho_g\rangle.$
    \item The group $\Diff(Y_g;\vb)$ acts on $\Bun(TY_g;\sfr^*\gamma_{2n})$, so we denote its homotopy orbits by $\BDiff^{\sfr}(Y_g;\vb)$. Note that we have an analogous fibre sequence as above, by replacing the spaces with their stably framed analogs. Thus, to lift the action of $\langle\rho_g\rangle$ on $\BDiffv(Y_g)$ to $\BDiffsfrv(Y_g)_\ell$, it suffices to solve the following lifting problem
    \begin{equation}\label{lift manuel}
        \begin{tikzcd}[row sep =15pt]
        & \BDiff^{\sfr}(Y_g;\vb)_\ell \arrow[d]\\
        \B\langle \rho_g\rangle \arrow[r]\arrow[ur, dashed] & \BDiff(Y_g;\vb)
    \end{tikzcd}
    \end{equation}by using the canonical action of $\Omega\BDiff^{\sfr}(Y_g;\vb)_\ell$ on $\BDiffsfrv(Y_g)_\ell$, and precomposing it with the lift $\langle \rho_g\rangle\to\Omega\B\langle \rho_g\rangle\to\Omega\BDiff^{\sfr}(Y_g;\vb)_\ell$.
\end{enumerate}
One can observe that the strategy of proof of \cite[Lemma 2.12]{krannich2021diffeomorphismsdiscssecondweiss} also extends to our setting so to show that the lift of \eqref{lift manuel} exists: one simply replaces $V_g$ by $X_g$, framings by stable framings and the structure $\pm \mathrm{fr}$ by $\pm \sfr:\oo(1)\to \oo(2n+1)$, using the fact that, by definition, the stable framing $\ell$ extends to $\RR^{2n+1}.$ One can check that such lifts can be found in a compatible way for the various diffeomorphism and embedding spaces used in this work, hence establishing \ref{action on framings}. We leave this check to the reader. We fix \textit{once and for all} such choice of lifts.

\section{Mapping class groups and their homology}\label{MCGSection}

In this section, we study various mapping class groups of (stably framed) self-embeddings of $X_g$ and $Y_g$. We start by recalling the work of Bustamante and Randal-Williams \cite{BRW} on the former case, and later obtain an analogous result for the odd-dimensional case, whose proof is heavily inspired by Krannich's work of the case for $V_g$ in \cite{krannichpseudo}.

\subsection{The case of \texorpdfstring{$X_g$}{Xg}}

In this subsection, we recall the work in \cite[Section 6.2]{BRW}. Recall that we have fixed the standard stable framing $\ell$ on $Y_g$, which induces a stable framing on $X_g$, which we will also denote by $\ell.$ Denote the group $\pi_1(\BEmbsfr(\xg;\ell_0),\ell)$ by $\Embmcgsfr$ (recall \cref{defn of stable framings section}). Given any stable framing $\ell$ on $X_g$, there exists a unique framing which stabilises to it. This produces a map $\pix\to \pi_n(\Fr^+(T\xg))\cong \text{Imm}^{\text{fr}}_n(X_g)$ from \cite[Section 2]{BRW}. Here, the rightmost identification follows from Hirsch-Smale theory and depends on the choice of a framing on $S^n\times D^{n}$. We impose that this framing extends to $D^{n+1}\times D^n$, which is always possible (take for example the framing induced by the embedding $S^n\times D^n\subset \RR^{2n})$.  Using this map, one can lift the map $q_X:\pix\to \ZZ[\pi]/\Lambda_n$ to a map $q_X^\ell:\pix\to \ZZ[\pi]/\Lambda^{\text{min}}_n$, by taking generic self-intersections of immersions (see loc.cit.). By unwrapping the definition of $q_X^\ell$, we see that $(\pix,\lambda_X,q_X^\ell)$ is a hyperbolic module with respect to the basis $\{a_i,b_i\}_{i=1}^g$ of \cref{allTheNotationUngraded}, that is, $q_X^\ell(a_i)=0=q_X^\ell(b_i)$, $\lambda_X(a_i,a_j)=0=\lambda_X(b_i,b_j)$ and $\lambda_X(a_i,b_j)=\delta_{ij}$ (see also the proof of Lemma 6.6 of loc.cit). The hyperbolic basis $\{a_i,b_i\}_{i=1}^g$ induces a map
\[\Embmcgsfr\to \Ugmin\]whose image we denote by $\Omega^{\min}_g.$ We state now the main input from \cite{BRW} regarding this map given by Corollary 5.8 and Proposition 6.7 in loc.cit.

\begin{teo}\label{mcg of xg}
    Let $g\geq 8$ if $n$ is even and $g\geq 2$ if $n$ is odd. We have an extension
    \[1\to L_g^{\sfr,\ell}\to \Embmcgsfr\to \Omega^{\min}_g\to 1\]where $L_g^{\sfr,\ell}$ is a finite group. Moreover, the subgroup $\Omega^{\min}_g\subset \Ugmin$ is a normal subgroup and the quotient $\Ugmin/\Omega^{\min}_g$ is a finite abelian group independent of $g$.
\end{teo}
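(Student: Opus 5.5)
This theorem is a repackaging of \cite[Cor.~5.8 and Prop.~6.7]{BRW}, so the plan is to reduce to those two statements rather than reprove them. I would organise the argument around the composite
\[
\Embmcgsfr \longrightarrow \pi_0(\Embcong(\xg)) \longrightarrow \Ug,
\]
which forgets the stable framing and then records the action on $\pix$. The kernel of $\Embmcgsfr\to \Ugmin$ would be analysed in two layers.

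\emph{First layer: the kernel of $\pi_0(\Embcong(\xg))\to\Ug$.} Via the block embedding comparison \eqref{block xg to aut} and \cref{stable framed block embs are autos}, I would reduce this to the kernel of $\pi_0(\Aut_\partial(\xg))\to \Ug$, up to finite error. \cref{homotopy groups of mapping of xg} identifies the latter as a quotient of subgroups of $\Hom_{\ZZ[\pi]}(\pix,\pi_{2n}(\xg))$ and related torsion groups. The passage from block embeddings to honest embeddings in degree $0$ contributes a pseudoisotopy-type term, namely $\pi_0$ of $\BC(S^1\times D^{2n-1})$ together with Hatcher--Wagoner data. This is the content of \cite[Cor.~5.8]{BRW}: it gives a finite kernel, after restricting to the image of $\Embmcgsfr$ and imposing the framing condition.

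\emph{Second layer: the framing.} Stable framings kill the potential infinite contribution coming from $\pi_n(\SO)$-twists, since the fibre $\Bun$ has finitely many relevant components. Combined with the first layer, this gives that $L_g^{\sfr,\ell}$ is finite. The lift $q^\ell_X$ is preserved by framed embeddings, so the image lands in $\Ugmin$; this defines $\Omega_g^{\min}$, and surjectivity onto it holds by definition.

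\emph{Normality and the quotient.} To show that $\Omega_g^{\min}$ is normal in $\Ugmin$ with finite abelian quotient independent of $g$, I would follow \cite[Prop.~6.7]{BRW}. The idea is to construct a homomorphism $\Ugmin \to A$, with $A$ a finite abelian group, whose kernel is exactly $\Omega_g^{\min}$. The group $A$ would be built from the obstruction to realising a unitary automorphism by a framed embedding, by evaluating the change of framing on the hyperbolic basis; this lands in something like $\Hom(\pix,\pi_n \SO)$ modulo the image of $J$, or a related $\ZZ/2$ or $\ZZ/4$ invariant.
\begin{itemize}
\item Surjectivity of $\pi_0(\Embo(\xg))\to\Ug$, from \cref{surjectivity from embeddings to unitary}, ensures that every element of $\Ugmin$ is realised by an unframed embedding.
\item The only failure is then the framing obstruction, which is additive.
\item Independence of $g$ follows from stabilisation along $X_g\subset X_{g+1}$, once $g$ is in the stable range where the relevant abelianisation of $\Ugmin$ has stabilised. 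This is where the bounds $g\geq 8$ for $n$ even and $g\geq 2$ for $n$ odd enter.
\end{itemize}

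\emph{Main obstacle.} I expect the hardest step to be showing that this obstruction map is a homomorphism whose kernel is precisely $\Omega^{\min}_g$, and that its target is stable in $g$. Rather than redo this, I would cite \cite{BRW} directly, checking only that their framing conventions match the standard stable framing $\ell$ on $Y_g$ restricted to $X_g$ from \cref{the standard framing on Y_g}. In particular, the hyperbolic basis used there must satisfy $q^\ell_X(a_i)=q^\ell_X(b_i)=0$.
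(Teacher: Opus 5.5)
Your proposal takes the paper's approach: the paper gives no independent proof of this theorem but imports it from \cite[Cor.~5.8, Prop.~6.7]{BRW}, and its only local input is that $q^\ell_X$ makes $\{a_i,b_i\}$ a hyperbolic basis (by unwinding definitions, cf.\ \cite[Lem.~6.6]{BRW}), which is exactly the convention check you propose. Do not treat your heuristic sketch as a proof outline, though, since parts of it are wrong: no pseudoisotopy or Hatcher--Wagoner term enters, because by Morlet's lemma $\bEmbmodEmb(X_g)$ is $(n-2)$-connected and hence $\Embmcgsfr\cong\pi_1(\BbEmbsfr(X_g)_\ell)$; \cref{stable framed block embs are autos} controls only the stably framed block space, while the unframed kernel contains the $S\pi_n(\SO(n))$-twists, which are infinite for $n\equiv 3 \bmod 4$; and the change-of-framing obstruction is a crossed homomorphism rather than a homomorphism, so normality of $\Omega^{\min}_g$ in $\Ugmin$ does not follow from it automatically.
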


\subsection{The case of \texorpdfstring{$(Y_g;X_g)$}{(Yg;Xg)}}

We now investigate the analogous mapping class group of stably framed self-embeddings of the pair $(Y_g;X_g)$. Our strategy is heavily inspired by and relies on both the even-dimensional case of the last subsection, and the work of Krannich in \cite{krannichpseudo} for the analogous problem for the pair $(V_g;W_{g,1}).$ 

\subsubsection{Homotopy mapping class groups of $\yg$}

Recall the map $\pi_0(\Embo(\xg))\to \Ug$ given by the action on the quadratic module $(\pix,\lambda,q).$ The discussion in \cite[Section 3.4]{BRW} proves that this map factors through forgetting to $\pi_0(\Autb(\xg)).$ In other words, homotopy automorphisms of $\xg$ fixing the boundary preserve the intersection form. By pre-composition, we obtain a map $\pi_0(\Autv(\vg;\xg))\to \Ug$. It is clear that the image lies in the subgroup $\Ugext$ and that this map factors $\pi_0(\Embo(\yg;\xg))\to \Ugext.$ The analogous claim holds once $\xg$ and $\yg$ are replaced by $\wg$ and $\vg.$

\begin{prop}\label{auts to ugext cartesian}
    The square of groups
    \[\begin{tikzcd}[row sep =15pt]
        \pi_0(\Autv(\yg;\xg))\arrow[r] \arrow[d] & \Ugext\arrow[d] \\
        \pi_0(\Autv(\vg;\wg))\arrow[r] & \Ugwext
    \end{tikzcd}\]induced by the inclusion of $4$-ads $\yg\hookrightarrow \vg$ is a pullback square with surjective horizontal maps.
\end{prop}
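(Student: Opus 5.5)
Surjectivity of the horizontal maps comes from \cref{surjectivity from embeddings to unitary}. Its top map $\pi_0(\Embo(\yg;\xg))\to\Ugext$ is surjective and factors through $\pi_0(\Autv(\yg;\xg))$ once each self-embedding is extended by the identity on the complementary collar. The same holds for $(\vg;\wg)$.

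The substance is the pullback property. I will compare the two rows using the fibre sequences \eqref{YgXgMappingSpaceFibSeq} and its $(\vg;\wg)$-analogue. The kernel of $\pi_0(\Autv(\yg;\xg))\to\Ugext$ consists of classes acting trivially on $\pix$. I want to show that restriction along $\yg\hookrightarrow\vg$, equivalently along $\hat\kappa$, maps this kernel isomorphically onto the corresponding kernel for $(\vg;\wg)$. Combined with surjectivity, a five-lemma argument for group extensions then gives the pullback.

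To understand these kernels I will use the model of the pair $(\yg;\xg)$ as $(S^1\vee\vg, S^1\vee\wg)$ relative to the pair $(S^1\times D^{2n-1},S^1\times S^{2n-2})$. Equivalently, I will use the pushout description from the proof of \cref{homotopy groups of mapping of pair}, where $(\yg;\xg)$ is obtained from the boundary pair by attaching $(V_g;W_{g,1})$-cells.

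First, a homotopy automorphism $\phi$ of $(\yg;\xg)$ fixing $\vb$ is determined, up to the $\pi_1$ of the target of $\res_Y$, by its restriction to $(\vg;\wg)$. That restriction is a map of based pairs whose homotopy class is exactly its effect on $\pirel$ and $\piy$ (by \cref{homotopy groups of mapping of pair}(i) with $k=0$), together with the extension data across the top cell.

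The obstruction and ambiguity for extending across the top cell $(D^{2n},S^{2n-1})$ live in $[\pi_{2n}(\yg,\xg)]_\pi$ and $[\pi_{2n+1}(\yg,\xg)]_\pi$ respectively. The analogous groups for $\vg$ are $\pi_{2n}(\vg,\wg)$ and $\pi_{2n+1}(\vg,\wg)$ (see \cref{homotopy groups of mapping of pair wg}).

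The key claim is that $\hat\kappa$ induces isomorphisms on the relevant pieces. Concretely, the kernel of $\pi_0\Autv(\yg;\xg)\to\Ugext$ is the cokernel of $\pi_1(\res_Y)$ into $[\pi_{2n+1}(\yg,\xg)]_\pi$, modulo the image of the part of $\Hom^{\mathrm{ext}}$ with $k=1$. I will identify the coinvariants via $\pi$-equivariance: the universal cover is a $\ZZ$-indexed wedge, so $[\pi_*(\widetilde{\yg},\widetilde{\xg})]_\pi$ splits into a $\ZZ$-indexed sum of copies of $\pi_*(\vg,\wg)$-type terms. The Whitehead product images match under collapsing $S^1$, in the style of the argument proving \cref{homotopy groups of mapping of pair}(iii) but done integrally in low degrees.

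I expect this integral comparison of cokernels to be the main obstacle, since surjectivity was previously established only rationally. My first attempt will be to bypass it with Hilton--Milnor. In the relevant degree $2n+1$, only products of length at most two appear; these are $[a,b]$ with $a\in\pirel$ or $b$ relative, and the terms involving nontrivial $\pi$-translates die in the coinvariants up to exactly the corresponding $\vg$-terms. Finally, to conclude the pullback, I will map the extension for $(\yg;\xg)$ to that for $(\vg;\wg)$ and check that the kernels agree and that $\Ugext\to\Ugwext$ is compatible. Injectivity into the fibre product then follows from the kernel isomorphism, and surjectivity onto it follows from surjectivity of the top map together with kernel surjectivity.
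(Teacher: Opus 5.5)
Your reduction matches the paper's: surjectivity via \cref{surjectivity from embeddings to unitary}, pullback $\Leftrightarrow$ isomorphism of horizontal kernels, and, since $\pi_0\Map_{S^1}(\yg;\xg)\cong\Hom^{\mathrm{ext}}_{\ZZ[\pi]}(\pix,\pix)$ injects into $\End(\pix)$, identification of each kernel with the cokernel of $\pi_1(\res_Y)\colon\Hom^{\mathrm{ext}}_{\ZZ[\pi]}(\pix,\pi_{n+1}(\xg))\to[\pi_{2n+1}(\yg,\xg)]_\pi$ (resp.\ its $(\vg;\wg)$-analogue). Comparing these two cokernels is the whole content of the proposition, and the mechanism you propose for it is false. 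The coinvariants $[\pi_{2n+1}(\yg,\xg)]_\pi$ are not a copy of $\pi_{2n+1}(\vg,\wg)$. By Hilton--Milnor on the universal cover, they contain one $\pi_{2n}(S^n)$ for each $b_i$. They also contain a $\pi_{2n}(S^{2n-1})\cong\ZZ/2$ spanned by $[t^ax_i,x_j]\circ\eta$ for \emph{every} $a\in\ZZ$ (up to the symmetry $(a,i,j)\sim(-a,j,i)$), with $x_j$ a relative class. The $\pi$-action only lets you normalise one of the two entries, so the terms with $a\neq0$ survive in the coinvariants. Hence this group is infinitely generated, while $\pi_{2n+1}(\vg,\wg)$ is finitely generated. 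The map $[\pi_{2n+1}(\yg,\xg)]_\pi\to\pi_{2n+1}(\vg,\wg)$ therefore has a large kernel, and your argument breaks exactly at injectivity of the induced map on cokernels.

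What you need is that this kernel lies in the image of $\pi_1(\res_Y)$. The paper gets this from the explicit formula $\phi\mapsto\sum_i[a_i,\phi(b_i)]+(-1)^n[\phi(a_i),b_i]$ coming from \cite[Thm.~3.1]{BRW}. Consider homomorphisms supported on a single basis element $b_i$ or $a_i$, with value a twisted relative class $t^{c}b_j\circ\eta$. These lie in $\Hom^{\mathrm{ext}}$, since they send $\pirel$ to relative classes. Because $\eta$ is a suspension, $[x,y\circ\eta]=[x,y]\circ\eta$, so these homomorphisms hit every twisted product $[t^ax_i,x_j]\circ\eta$. The remaining part of the image is $\piwrel\otimes\ker(\Sigma\colon\pi_{2n}(S^n)\to\pi_{2n+1}(S^{n+1}))$ in the $\pi_{2n}(S^n)$-summands, as in \cite{BRW}. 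Hence both cokernels are $\piwrel\otimes\Sigma\pi_{2n}(S^n)$, and the comparison map is the identity. The extra terms are killed because $\Hom^{\mathrm{ext}}$ allows twisted values $t^{c}b_j\circ\eta$, not by passing to coinvariants; this integral computation is the step your proposal is missing.
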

\begin{proof}
    To prove surjectivity of the horizontal maps, observe that they factor the maps present in \cref{surjectivity from embeddings to unitary}, which are surjective. We now prove that the square is a pullback of groups. We start by reducing this statement to proving that the canonical map between the horizontal kernels is an isomorphism. This follows from the following standard observation in group theory: a commutative square of groups whose horizontal maps are surjective
    \[\begin{tikzcd}[row sep =15pt]
        A\arrow[r, "f"]\arrow[d] & B \arrow[d] \\
        C \arrow[r, "g"] & D
    \end{tikzcd}\]
    is a pullback square if and only if the map $\ker(f)\to \ker(g)$ is an isomorphism. Recall that $\Ugext$ is a subgroup of $\GLpi$ and similarly for $\Ugwext$. By \cite[Thm. 3.1]{BRW}, we have the action of $\Map_{S^1}(\xg)$ on $\pix$ induces an isomorphism $\pi_0(\Map_{S^1}(\xg))\cong \Hom_{\ZZ[\pi]}(\pix,\pix).$ The analogous result for $\wg$ is given in \cite[Thm. 3.3]{BRW}. Therefore, we can verify that the horizontal kernels of the square above are isomorphic by checking the same property for the following square
    \[\begin{tikzcd}[row sep =15pt]
        \pi_0(\Autv(\yg;\xg))\arrow[r] \arrow[d] & \pi_0(\Map_{S^1}(\xg))\arrow[d] \\
        \pi_0(\Autv(\vg;\wg))\arrow[r] & \pi_0(\Map_{*}(\wg))
    \end{tikzcd}\]whose horizontal maps are induced by restricting automorphisms to $\partial_2.$ This restriction map factors through $\Map_{S^1}(\yg;\xg)$ from \cref{homotopy groups of mapping of pair}. Moreover, the map $\Map_{S^1}(\yg;\xg)\to \Map_{S^1}(\xg)$ is injective on $\pi_0$: in the description of \cref{homotopy groups of mapping of pair} and \cite[Thm. 3.1]{BRW}, the induced map on $\pi_0$ is the inclusion \(\Hom_{\ZZ[\pi]}^{\mathrm{ext}}(\pix,\pix)\to\Hom_{\ZZ[\pi]}(\pix,\pix) \), and the same holds for $\wg.$ We conclude that we can test isomorphic horizontal kernels of the square
    \[\begin{tikzcd}
        \pi_0(\Autv(\yg;\xg))\arrow[r] \arrow[d] & \pi_0(\Map_{S^1}(\yg;\xg))\arrow[d] \\
        \pi_0(\Autv(\vg;\wg))\arrow[r] & \pi_0(\Map_{*}(\vg;\wg))
    \end{tikzcd}\]to deduce that the kernels of the original square are isomorphic. The top horizontal map fits in the long exact sequence of the fibre sequence
    \[
        \Autv(\yg;\xg)\to \Aut_{S^1}(\yg;\xg)\to \Map_{S^1}((S^1\times D^{2n-1},S^1\times S^{2n-2}),(\yg,\xg))
    \]and similarly for the bottom map. Consider the following portion of this map of long exact sequences
    \[\begin{tikzcd}[row sep =15pt]
       \pi_1(\Map_{S^1}(\yg;\xg),\iota) \arrow[r]\arrow[d] & \left[\pi_{2n+1}(\yg,\xg)\right]_\pi \arrow[r] \arrow[d] &  \pi_0(\Autv(\yg;\xg))\arrow[r] \arrow[d] & \pi_0(\Map_{S^1}(\yg;\xg))\arrow[d] \\ \pi_1(\Map_{*}(\vg;\wg),\iota) \arrow[r] &
       \pi_{2n+1}(\vg,\wg)\arrow[r]  & \pi_0(\Autv(\vg;\wg))\arrow[r] & \pi_0(\Map_{*}(\vg;\wg)).
    \end{tikzcd}\]To show that the kernels of the rightmost horizontal maps are isomorphic, it suffices to show that the cokernels of the leftmost horizontal maps are isomorphic. We shall first describe these maps using \cite[Thm. 3.1]{BRW}. We start with the top map. The restriction map to $\partial_2$ induces the following commutative square
    \[\begin{tikzcd}[row sep =15pt]
        \pi_1(\Map_{S^1}(\yg;\xg),\iota)\cong \Hom_{\ZZ[\pi]}^{\mathrm{ext}}(\pix,\pi_{n+1}(\xg)) \arrow[r, "\res_Y"] \arrow[d] & \left[\pi_{2n+1}(\yg,\xg)\right]_\pi \arrow[d]\\
        \pi_1(\Map_{S^1}(\xg),\iota) \cong \Hom_{\ZZ[\pi]}(\pix,\pi_{n+1}(\xg))\arrow[r] & \left[\pi_{2n}(\xg)\right]_\pi.
    \end{tikzcd}\]where $\res_Y$ denotes the induced map $\pi_{1}(\res_Y)$ from \eqref{YgXgMappingSpaceFibSeq}. Once again, both vertical maps in this square are injective. From the description of the bottom map in \cite[Thm. 3.1]{BRW}, we can describe the top map as taking $\phi$ to the sum $\sum_{i=1}^g[a_i,\phi(b_i)]+(-1)^n[\phi(a_i),b_i],$ where $a_i$ denote the generators of $\piw\subset \pix$ representing the classes $S^n\times \{*\}$ in the $i$-th copy of $S^n\times S^n$ and $b_i$ the classes representing $\{*\}\times S^n$. Note that $b_i$ generate $ \pirel$  and $a_i$ generate $\piy$ as $\ZZ[\pi]$-modules. From the computation of $\pi_k(\xg)$ in the metastable range from \cite[Appendix A]{BRW}, we see that
    \[[\pi_{2n+1}(\yg,\xg)]_\pi\cong \bigoplus_{\substack{g+1\leq i\leq 2g}} \pi_{2n}(S^n)\{x_i\}\oplus \bigoplus_{\substack{1\leq i\leq 2g \\ g+1\leq j\leq 2g \\ a\in \ZZ,\ (a,i)<(0,j)}} \pi_{2n}(S^{2n-1})\{t^ax_i\otimes x_j\} \]where the first term is given by $x_i\circ f$ for $f\in \pi_{2n}(S^n)$ and $x_i=a_i$ for $i\leq g$ and $x_i=b_{i-g}$ for $g+1\leq i$, and the second term is given by $[t^ax_i,x_j]\circ f'$ for $f'\in \pi_{2n}(S^{2n-1}).$ We compute now the cokernel of $\res_Y.$ This is heavily inspired by \cite[Thm. 3.5]{BRW}. We start by showing that the image of $\res_Y$ spans the second summand. For fixed $1\leq i,j\leq g$ and $a\in \ZZ,$ consider the morphism $\phi:\pix\to \pi_{n+1}(\xg)$ defined by $\phi(b_i)=t^{-a}b_j\circ \eta$ and that sends all the other generators to $0$, where $\eta\in \pi_{2n}(S^{2n-1})$ is the suspension of the Hopf fibration element. This is in $\Hom_{\ZZ[\pi]}^{\mathrm{ext}}(\pix,\pi_{n+1}(\xg))$ since all the elements $b_i$ are sent to $\pi_{n+2}(\yg,\xg).$ Thus, $\res_Y(\phi)= [a_i,t^{-a}b_j\circ \eta]$, which equals $[a_i,t^{-a}b_j]\circ \eta$, since $\eta$ is a suspension, by \cite[Section A.2]{BRW}. On the other hand, $[a_i,t^{-a}b_j]\circ \eta=[t^aa_i,b_j]\circ \eta$, since we took coinvariants. Since $\eta$ generates $\pi_{2n}(S^{2n-1})$, the second factor when $i\leq g$ is spanned by the image of $\res_Y.$ For $g<i$, simply choose $\phi$ to be defined by $\phi(a_i)=t^ab_i\circ \eta$ and the remaining generators to $0$, which again lies in $\Hom_{\ZZ[\pi]}^{\mathrm{ext}}(\pix,\pi_{n+1}(\xg)).$ By the same argument of \cite[1644]{BRW}, the remaining elements generate $x_i\circ f$ for $f$ in the kernel of the suspension map $\Sigma:\pi_{2n}(S^n)\to \pi_{n+1}(S^{n+1}).$ We conclude that the cokernel of $\res_Y$ identifies with $\piwrel\otimes \Sigma\pi_{2n}(S^n).$ The exact same argument along with the computations in \cite[Appendix A.1]{BRW} identifies the cokernel of $\res_Y $ in the $\wg$ case with the same group. As mentioned in this reference, under these identifications the map between the cokernels can be seen to be the identity. Thus, we conclude that the horizontal kernels of the square in the claim are isomorphic and thus the square is a pullback square. This finishes the proof.
\end{proof}

\subsubsection{Isotopy classes of self-embeddings of $\yg$}

In this section, we describe the group $\pi_0(\Embop(\yg; \xg))$ up to extensions. We shall do so by combining \cref{auts to ugext cartesian} and results from \cite[Section 3]{krannichpseudo} and \cite[Section 3]{BRW}. Let $S\pi_n(\SO(n))\subset \pi_n(\SO(n+1))$ denote the image of the stabilisation map $\pi_n(\SO(n))\to \pi_n(\SO(n+1))$. Let $\Mg$ be the abelian group of sesquilinear forms $\mu:\piy\otimes \piy\to \ZZ[\pi]$ (that is, $\mu(a\cdot x,b\cdot y)=a\mu(x,y)\bar{b}$ for all $a,b\in \ZZ[\pi]$ and $x,y\in \piy$) which are $(-1)^{n+1}$-symmetric (that is, $\mu(x,y)=(-1)^{n+1}\overline{\mu(y,x)}$ for all $x,y\in \piy$) and whose values $\mu(x,x)$ lie in $\Lambda_n$ for all $x\in \piy.$ We have the following result.

\begin{prop}\label{extensions of mcg of yg}
    The action of $\pi_0(\Embop(\yg;\xg))$ on $\pix$ induces an extension
    \[1\to \piwrel\otimes S\pi_n(\SO(n))\to \pi_0(\Embop(\yg;\xg))\to \Ugext\to 1. \]Moreover, the projection $\Ugext\to \GL(\piy)$ fits into an extension of the form
    \[1\to \Mg\to \Ugext\to \GL(\piy)\to 1\]which admits a splitting.
\end{prop}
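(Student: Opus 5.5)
We reduce the first extension to the known case of $(V_g;W_{g,1})$ treated in \cite[Section 3]{krannichpseudo}, using the pullback square of \cref{auts to ugext cartesian}, and we handle the second extension by linear algebra on the hyperbolic module $\pix=\piy\oplus\pirel$.

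\emph{First extension.} Surjectivity of $\pi_0(\Embop(\yg;\xg))\to\Ugext$ is \cref{surjectivity from embeddings to unitary}, since every element of $\pi_0(\Embo(\yg;\xg))$ realising an element of $\Ugext$ may be chosen hit by $\Diffv(\yg)$; as in \cref{stable framed block embs are autos}, the block Weiss sequence identifies $\pi_0$ of the embedding space with $\pi_0(\bDiffv(\yg))$. We then compare with the analogous square of mapping class groups
\[\begin{tikzcd}[row sep=15pt]
\pi_0(\bDiffv(\yg))\arrow[r]\arrow[d] & \pi_0(\Autv(\yg;\xg))\arrow[d]\\
\pi_0(\bDiffv(\vg))\arrow[r] & \pi_0(\Autv(\vg;\wg)).
\end{tikzcd}\]
The horizontal kernels are controlled by surgery theory, via the exact sequence of structure sets relative to $\vb$. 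The relevant normal invariants $[(\yg,\xg)/\vb,\G/\O]$ and $[(\vg,\wg)/\vb,\G/\O]$ differ only by a contribution from the $S^1$-cell. That contribution is killed by the same mechanism that makes $\BbDiffv(S^1\times D^{2n})$ contractible, namely the $\pi$-$\pi$ theorem applied to the relative part $\xg\subset\yg$. We therefore expect the horizontal kernels to agree. Krannich identifies the kernel in the $\vg$ case, after composing with \cref{auts to ugext cartesian}, as $\piwrel\otimes S\pi_n(\SO(n))$, realised by twisting the $(n+1)$-handles by elements of $\pi_n(\SO(n))$. Composing the two squares then yields the extension
\[1\to\piwrel\otimes S\pi_n(\SO(n))\to\pi_0(\Embop(\yg;\xg))\to\Ugext\to 1.\]

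\emph{Second extension.} An element $A\in\Ugext$ preserves $\pirel$, so it induces an automorphism of $\pix/\pirel\cong\piy$; this defines the projection to $\GL(\piy)$. For the splitting, given $B\in\GL(\piy)$ we set $B\oplus(B^\vee)^{-1}$, using the duality $\pirel\cong(\piy)^\vee$. This element preserves $\lambda_X$, and it preserves $q^\ell_X$ because both summands are isotropic. In particular the projection is surjective. For the kernel, if $A$ induces the identity on $\piy$ and preserves $\lambda_X$, then $A$ is also the identity on $\pirel$, since $\pirel\cong(\piy)^\vee$. Hence $A(y)=y+\phi(y)$ with $\phi:\piy\to\pirel$. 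We set $\mu(x,y)=\lambda_X(x,\phi(y))$. Preservation of $\lambda_X$ forces $\mu$ to be sesquilinear and $(-1)^{n+1}$-symmetric, and preservation of $q_X$ forces $\mu(x,x)\in\Lambda_n$. Conversely, every $\mu\in\Mg$ defines such an $A$, and composition corresponds to addition of forms, so the kernel is $\Mg$.

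\emph{Main obstacle.} The hardest step is comparing the horizontal kernels in the first extension, specifically checking that the extra $S^1$-factor contributes nothing to the kernel. The first approach to try is the surgery exact sequence relative to $\vb$, applying the $\pi$-$\pi$ theorem to the pair $\xg\subset\yg$. If that fails, the fallback is the cokernel-of-restriction argument used in the proof of \cref{auts to ugext cartesian}.
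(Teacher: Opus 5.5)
Your treatment of the second extension is correct and is essentially the paper's argument: the projection to $\GL(\piy)$, the splitting $B\mapsto B\oplus(B^\vee)^{-1}$, and the identification of the kernel with $\Mg$ via $\mu(x,y)=\lambda_X(x,\phi(y))$. Your strategy for the first extension also matches the paper's: compose the pullback square of \cref{auts to ugext cartesian} with a comparison of surgery kernels against $(\vg;\wg)$, then import Krannich's computation.

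The gap is that the one step with real content is only stated as an expectation. That step is that the kernels of $\pi_0(\bDiffv(\yg))\to\pi_0(\Autvp(\yg;\xg))$ and of its $\vg$-analogue agree. Moreover, the heuristic you give for it is wrong. The normal invariants rel $\vb$ do not differ by any contribution from the circle. The circle already lies in $\vb\yg\cong S^1\times D^{2n-1}$, so $\yg/\vb\yg\simeq\bigvee_g S^n\simeq\vg/\vb\vg$, and hence $\Map_{\vb}(\yg,\G/\O)\simeq\Map_{\vb}(\vg,\G/\O)$ directly.

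The $\pi$-$\pi$ theorem does a different job. It applies because $\xg\hookrightarrow\yg$ is a $\pi_1$-isomorphism, and it kills the $L$-theory term, which would otherwise involve $L$-groups of $\ZZ[\ZZ]$. This identifies the fibre of $\bDiffv(\yg)\to\Autvp(\yg;\xg)$ (the loop space of the block structure space rel $\vb$) with $\Omega\Map_{\vb}(\yg,\G/\O)$. Comparing with the same description for $\vg$ shows the square of fibre sequences is homotopy cartesian, and that is what the paper uses to identify the kernels. The contractibility of $\BbDiffv(S^1\times D^{2n})$ plays no role here. Your proposed fallback cannot substitute either: the cokernel computation for the restriction map in the proof of \cref{auts to ugext cartesian} compares homotopy automorphisms with $\Ugext$. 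It says nothing about the surgery-theoretic kernel between block diffeomorphisms and homotopy automorphisms.

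Two reductions you need before this step are also missing or misattributed.

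First, the block Weiss sequence only gives $\bEmbo(\yg;\xg)\simeq\bDiffv(\yg)$, via contractibility of block concordances. To get from $\pi_0(\Embop(\yg;\xg))$ to block diffeomorphisms you must also show that $\pi_0(\Embo(\yg;\xg))\to\pi_0(\bEmbo(\yg;\xg))$ is bijective. The paper gets this from Morlet's lemma of disjunction, applied to the restriction fibration $\Emb_{\xg\cup\partial_0}(\yg)\to\Embo(\yg;\xg)\to\Embop(\xg)$ and its block analogue. The outer comparison maps are $(n-2)$-connected, hence bijective on $\pi_0$ for $n\geq 3$, and the five lemma gives the middle.

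Second, surjectivity onto $\Ugext$ via \cref{surjectivity from embeddings to unitary} requires that the realising self-embeddings lie in components hit by $\Diffv(\yg)$. This holds because every self-embedding is isotopic to a diffeomorphism. After isotoping $\partial_1\yg$ into the interior, the complement of its image is an $h$-cobordism starting at $S^1\times D^{2n-1}$, which is trivial since $\mathrm{Wh}_1(\ZZ)=0$. You assert this without argument.
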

\begin{proof}
    We begin with the following remarks. Firstly, the map $\Diffv(\yg)\to \Embo(\yg;\xg)$ hits all path components. In other words, every self-embedding $e$ of $\yg$ is isotopic to a diffeomorphism: first isotope the embedding $e$ so that the image of $\partial_1Y_g$ lies in the interior of $Y_g$ and consider its complement. Observe that this complement is an $h$-cobordism from $S^1\times D^{2n-1}\cong \partial_1\yg\cup e(\partial_1\yg)$ to the complement of the self-embedding $e|_{\xg}.$ Since $\text{Wh}_1(\ZZ)$ is trivial, this $h$-cobordism is trivial and thus $e$ may be isotoped to be surjective and thus a diffeomorphism. Therefore, the decoration $\cong$ is redundant. Secondly, the map $\pi_0(\Embo(\yg;\xg))\to \pi_0(\bEmbo(\yg;\xg))$ is an isomorphism: one considers the map of fibre sequences
    \[\begin{tikzcd}[row sep =15pt]
        \Emb_{\xg\cup \partial_0}^{\cong}(\yg)\arrow[d] \arrow[r] & \Embo(\yg;\xg)\arrow[r] \arrow[d] & \Embop(\xg)\arrow[d] \\
        \bEmb_{\xg\cup \partial_0}^{\cong}(\yg) \arrow[r] & \bEmbo(\yg;\xg) \arrow[r] & \bEmbop(\xg)
    \end{tikzcd}\]where the decoration $\cong$ on the leftmost spaces refers to the path components hit by $\Diffb(\yg).$ By applying Morlet's lemma of disjunction \cite[Thm. 3.1]{Burghelea1975} (or \cite[Thm. 4.4]{krannich2021diffeomorphismsdiscssecondweiss}), the outer vertical map are $(n-2)$-connected (see \cite[Prop. 4.3]{BRW} for more details). Hence, they are isomorphisms on path components when $n\geq 3$. In this case, the middle map induces an isomorphism on path components. Thirdly, as observed in \cref{block embeddings definition section}, we have an equivalence $\bDiffv(\yg)\simeq \bEmbo(\yg;\xg).$ The analogous discussion holds for $\vg$ and $\wg$. Consider the following commutative diagram
    \begin{equation}\label{pullback of embds and ug}
    \begin{tikzcd}[row sep =15pt]
        \pi_0(\Embop(\yg;\xg)) \arrow[d] \arrow[r] & \pi_0(\Autv(\yg;\xg)) \arrow[d] \arrow[r] & \Ugext\arrow[d] \\
        \pi_0(\Embop(\vg;\wg)) \arrow[r] & \pi_0(\Autv(\vg;\wg)) \arrow[r] & \Ugwext.
    \end{tikzcd}\end{equation}By \cref{auts to ugext cartesian}, the rightmost square is a pullback square. To show the first extension, we start by showing that the outer square is a pullback square. Since the right square is cartesian, it suffices to prove that the horizontal kernels of the left square are isomorphic. To do so, it is equivalent to showing the same claim for the square
    \[\begin{tikzcd}[row sep =15pt]
        \pi_0(\bDiffv(\yg))\arrow[r] \arrow[d] & \pi_0(\Autvp(\yg;\xg)) \arrow[d]
        \\ \pi_0(\bDiffv(\vg))\arrow[r] & \pi_0(\Autvp(\vg;\wg))
    \end{tikzcd}\]by the discussion above. The fibre of the map $\bDiffv(\yg)\to \Autvp(\yg;\xg)$ is equivalent to the loop space of the block surgery structure space of the triad $\yg$ fixed on $\vb \yg$ (see \cite[Section 2]{krannichpseudo} for more details). Since this triad satisfies the "$\pi$-$\pi$"-condition, it follows that the surgery structure space is equivalent to $\Map_{\vb}(\yg,\G/\O)$. This is, in turn, equivalent to $\Map_{\vb}(\vg,\G/\O)$ and thus (after looping) to the fibre of $\bDiffv(\vg)\to \Autvp(\vg;\wg)$. So the square above is obtained by applying $\pi_0$ to a homotopy cartesian square. Thus, it is a pullback square and hence, the horizontal kernels are isomorphic. We conclude that the kernel of the map $\pi_0(\Embop(\yg;\xg))\to \Ugext$ is isomorphic to the kernel of the map $\pi_0(\Embop(\vg;\wg))\to \Ugwext$, which is isomorphic to $\piwrel\otimes S\pi_n(\SO(n))$ by \cite[Thm. 3.3.(iii)]{krannichpseudo}. This finishes the proof of the first extension.

    We now move to the proof of the second extension. We start with the following simple observations:
    \begin{enumerate}[label=(\roman*)]
        \item any $\ZZ[\pi]$-linear map $\phi:\pix\to \pix$ taking $\pirel$ to itself is completely determined by the maps $A=\phi|_{\pirel}:\pirel\to \pirel$, $A'=p_0\circ \phi|_{\piy}:\piy\to \piy$ and $B=p_1\circ \phi|_{\piy}:\piy\to \pirel$, where $p_0$ and $p_1$ are the projections $\pix\to \piy$ and $\pix\to \pirel;$
        \item such $\ZZ[\pi]$-linear map $\phi$ preserves the hyperbolic form $\lambda_X$ if and only if $A$ is invertible, $A=(A^{-1})^\vee$ and $A\circ B=(-1)^n(A\circ B)^\vee$. Here, $(-)^\vee$ denotes the dual morphism in the category of $\ZZ[\pi]$-modules and we use the identification $(\pirel)^\vee\cong \piy.$
        \item $\phi$ preserves the quadratic refinement $q_X$ if and only if $A$ preserves $q_X$ and $B+(A^{-1})^\vee:\piy\to \pix$ preserves the quadratic form.
    \end{enumerate}Under this notation, the map $p:\Ugext\to \GL(\piy)$ takes a map $\phi$ to the map $A$ and its clear that the map $\GL(\piy)\to \Ugext$ taking $A$ to $A+(A^{-1})^\vee$ is a splitting of this map. Moreover, the kernel of $p$ is exactly those $\phi$ such that $A=\id.$ Let $\phi\in \ker(p)$, then it follows from the remarks above that $B=(-1)^nB^\vee$ and $B+\id$ preserves $q$. From the second condition, we conclude that for $x\in \piy$,
    \[q(x)=q(x+B(x))=q(x)+q(B(x))-\lambda(x,B(x)) \mod \Lambda_n.\]Moreover from the fact that $q(B(x))=0$ since $q$ vanishes on the isotropic subspace $\pirel$, we conclude that $\lambda(x,B(x))$ vanishes in $\ZZ[\pi]/\Lambda_n$ and thus lives in $\Lambda_n$. Consider the map $r:\ker(p)\to \Mg$ that takes a map $\phi$ to the sesquilinear form $\mu_\phi$ given by $\mu_\phi(x,y)=\lambda(x,B(y))$. It follows from the discussion above that $\mu_\phi$ is $(-1)^{n+1}$-symmetric and $\mu_\phi(x,x)\in \Lambda_n$. In other words, this map associates $\phi$ to the adjoint of $B:\piy\to (\piy)^\vee$. Thus, the image of $\phi$ under $r$ totally determines $B$ and thus determines $\phi$ since $A=\id.$ In other words, the map $r$ is injective. Surjectivity follows from the fact that such a sesquilinear form $\mu$ determines a map $B_\mu$ since $\lambda$ is non-degenerate. Thus, $r$ is an isomorphism of groups, which finishes the proof of the second claim.
    \end{proof}

\subsubsection{Stably framed embeddings} Denote by $\pi_0(\Embop(\yg;\xg)_\ell)$ the image of the map $\pi_1(\BEmbsfr(\yg;\xg;\ell_0),\ell)\to \pi_1(\BEmbop(\yg;\xg)).$ This can be described as the isotopy classes of those self-embeddings $e$ such that $e^*\ell$ is homotopic to $\ell$. Denote by $\Ugextl$ the image of the subgroup $\pi_0(\Embop(\yg;\xg)_\ell)$ on $\Ugext$, and by $\Mgs\subset \Mg$ the submodule of sesquilinear forms $\mu$ such that the image of $\mu(x,x)$ along the augmentation $\Lambda_n\subset \ZZ[\pi]\to \ZZ$ is even, for all $x\in \piy$. Write $\Lambda_n^{\sfr}$ for the pre-image of $2\ZZ$ along this map.

\begin{prop}\label{extensions of mcg of yg sfr}
    The action of $\pi_0(\Embop(\yg;\xg)_\ell)$ on $\pix$ induces an extension
    \[1\to \piwrel\otimes \ker(S\pi_n(\SO(n))\to \pi_n(\SO))\to \pi_0(\Embop(\yg;\xg)_\ell)\to \Ugextl\to 1. \]Moreover, the projection $\Ugextl\to \GL(\piy)$ fits into an extension of the form
    \[1\to \Mgs\to \Ugextl\to \GL(\piy)\to 1.\]
\end{prop}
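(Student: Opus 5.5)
We deduce both extensions from \cref{extensions of mcg of yg} by determining which isotopy classes of embeddings preserve the standard stable framing $\ell$ up to homotopy. This parallels the passage from \cite[Thm. 3.3]{krannichpseudo} to its framed analogue for $(V_g;W_{g,1})$.

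The first step is to measure the failure of $e\in\pi_0(\Embop(\yg;\xg))$ to preserve $\ell$. The difference $e^*\ell-\ell$ is a class in $[\yg/\partial_0\yg,\O]\cong [(Y_g,\partial_0 Y_g),\O]$, and this group is computed by the $n$-cells of $Y_g$ relative to $\partial_0$. Since $(Y_g,\partial_0Y_g)$ is equivalent to $\bigvee_g S^n$ relative to the $S^1$ direction (and $\pi_1(\O)$ contributes nothing to the $S^1\times D^{2n-1}_+$ part since it is fixed), the group is $\Hom(\piv,\pi_n(\O))$. This defines a crossed homomorphism $\pi_0(\Embop(\yg;\xg))\to \Hom(\piv,\pi_n\O)$, and $\pi_0(\Embop(\yg;\xg)_\ell)$ is its kernel.

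The second step handles the kernel part. Restrict this obstruction to the subgroup $\piwrel\otimes S\pi_n(\SO(n))$ of \cref{extensions of mcg of yg}. These elements are realised, as in \cite{krannichpseudo}, by twisting along the $n$-handles dual to the $b_i$ by elements of $S\pi_n(\SO(n))$. The change of framing they induce is the stabilisation $S\pi_n(\SO(n))\to\pi_n(\SO)$ applied to the corresponding coefficient. Hence the kernel of the restricted map to $\Ugextl$ is $\piwrel\otimes\ker(S\pi_n(\SO(n))\to\pi_n(\SO))$. Since this kernel computation is identical for $V_g$, we can alternatively import it directly via the pullback square \eqref{pullback of embds and ug} together with \cite[Thm. 3.5]{krannichpseudo}.

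The third step identifies the image $\Ugextl$, and this is the main obstacle. First, $\GL(\piy)$ still surjects. The splitting $A\mapsto A\oplus (A^{-1})^\vee$ can be realised by embeddings preserving $\ell$: we would realise elements of $\GL(\piy)$ by self-embeddings of $S^1\times D^{2n}\natural V_g$ built from handle slides of the $S^n\times D^{n+1}$ summands and multiplication by $t^{\pm1}$ along the $S^1$-factor. These act on the framing only through the $\GL$-action on $\Hom(\piv,\pi_n\O)$, and we can correct this by the surjective kernel part when needed. Alternatively, we would compose with a $\rho$-type argument as in \cite[Lemma 3.7]{krannichpseudo}.

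It then remains to show $\ker(p)\cap\Ugextl=\Mgs$. An element of $\ker(p)$ with form $\mu$ is realised by an embedding whose framing obstruction, evaluated on $a_i$, is $\mu(a_i,a_i)$ pushed along the augmentation $\ZZ[\pi]\to\ZZ$ and then mapped to $\pi_n(\O)$. That image is the image of the Euler/self-intersection number in $\pi_n(\SO)/S\pi_n(\SO(n))$, which is $\ZZ/2$ or $0$ in the relevant cases (via $\pi_n(S^n)\to\pi_{n-1}\SO(n)$). We then arrange the remaining freedom from $S\pi_n(\SO(n))$ to cancel the rest. Verifying that the parity of the augmented diagonal value is exactly the obstruction is the delicate point. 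We would check it by reducing along $Y_g\to V_g$ (augmentation) to the statement in \cite[Lemma 3.5]{krannichpseudo} for $(V_g;W_{g,1})$, where the analogous subgroup is the even forms. This yields the second extension.
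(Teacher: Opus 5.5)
Your argument for the first extension is the paper's. $\pi_0(\Embop(\yg;\xg)_\ell)$ is the kernel of the framing-change crossed homomorphism to $\Hom(\piv,\pi_n(\SO))\cong\piwrel\otimes\pi_n(\SO)$, and its restriction to $\piwrel\otimes S\pi_n(\SO(n))$ is stabilisation, checked through $(\vg;\wg)$. Your kernel computation for the second extension also becomes the paper's argument once the ``reduction along $Y_g\to V_g$'' is made precise.

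The gap is the surjectivity of $\Ugextl\to\GL(\piy)$. You realise $A\oplus(A^{-1})^\vee$ geometrically and then ``correct by the surjective kernel part''. But the kernel part cannot surject onto the obstruction group in general. Suppose $\piwrel\otimes S\pi_n(\SO(n))\to\Hom(\piv,\pi_n(\SO))$ were onto. By the crossed-homomorphism identity, every $e\in\pi_0(\Embop(\yg;\xg))$ could then be multiplied by a kernel element to become framed, without changing its image in $\Ugext$. That would give $\Ugextl=\Ugext$, so the kernel over $\GL(\piy)$ would be all of $\Mg$. For $n\in\{3,7\}$ we have $e\in\Lambda_n$, so the form with $\mu(a_1,a_1)=1$ and vanishing on the other basis pairs lies in $\Mg\setminus\Mgs$, contradicting the very statement you are proving. (For $n=3$, the image of $\pi_3(\SO(3))$ in $\pi_3(\SO)\cong\ZZ$ has index $2$.) In these dimensions the residual obstruction can only be absorbed by changing the $\Mg$-component of the lift, not by the kernel part. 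Your other justification, that the generators ``act on the framing only through the $\GL$-action'', is not an argument either. The crossed-homomorphism identity controls composites, but the obstruction of each generator still has to be computed, in particular for the $t^{\pm1}$-moves around the $S^1$-factor, and you do not do this.

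The missing idea is to go through a pullback instead of realising a section geometrically. $Y_g/\partial_0Y_g\to V_g/\partial_0V_g$ is an equivalence, and $\ell$ is the restriction of the standard framing of $V_g$. Hence the framing obstructions for $(\yg;\xg)$ and $(\vg;\wg)$ are compatible crossed homomorphisms to the same group, so the square of framed versus unframed mapping class groups is a pullback. Paste this with \eqref{pullback of embds and ug}, and use that the framed $V_g$-group surjects onto $\Ugwextl$ by definition. This gives $\Ugextl=\Ugext\times_{\Ugwext}\Ugwextl$.

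Both halves of the second extension then follow from Krannich's description of $\Ugwextl$:
\begin{itemize}
\item The kernel over $\GL(\piy)$ is the preimage of the even forms under augmentation, i.e.\ $\Mgs$.
\item For surjectivity, take $A\in\GL(\piy)$ and lift its augmentation $\bar A$ to an element $(\bar A\oplus(\bar A^{-1})^\vee)\nu\in\Ugwextl$. Lift $\nu$ to $\tilde\nu\in\Mg$ by extension of scalars. Then $(A\oplus(A^{-1})^\vee)\tilde\nu$ lies in the pullback, hence in $\Ugextl$.
\end{itemize}
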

\begin{proof}
    We start by proving the first extension. Notice that $\pi_0(\Embop(\yg;\xg)_\ell)$ identifies with the kernel of the map $\pi_0(\Embop(\yg;\xg))\to \pi_0(\Bun_{\partial_0}(T\yg,\sfr^*\gamma_{2n+1})).$ Using the stable framing $\ell$, the latter identifies with $\piwrel\otimes \pi_n(\SO),$ as $\piwrel\cong (\piv)^\vee$ and the map above is a crossed homomorphism\footnote{Let $G$ be a group and $H$ an abelian group with a $G$-action. A function $h:G\to H$ is a \textit{crossed homomorphism} if $h(xy)=h(x)+x\cdot h(y)$.}. Under this identification, the map 
    \[f:\piwrel\otimes S\pi_n(\SO(n))\to  \pi_0(\Embop(\yg;\xg))\to \piwrel\otimes \pi_n(\SO),\]where the left map is the one appearing in \cref{extensions of mcg of yg}, is a homomorphism and identifies with the stabilisation $S\pi_n(\SO(n))\to \pi_n(\SO)$ tensored with $\piwrel$ (see \cite[Prop. 3.7]{krannichpseudo} for a proof of this fact, since this map factors through $\pi_0(\Embop(\vg;\wg)$). The kernel of the map $ \pi_0(\Embop(\yg;\xg)_\ell)\to \Ugextl$ identifies with the kernel of $f$, which proves the first extension. 

    We move on to the second extension. Let $P$ be the pullback of the following diagram
    \[\begin{tikzcd}[row sep =15pt]
        P\arrow[d]\arrow[r] & \Ugext \arrow[d]\\
        \Ugwextl \arrow[r] & \Ugwext
    \end{tikzcd}\]where $\Ugwextl\subset \Ugwext$ is the analogous image of $\pi_0(\Embop(\vg;\wg)_\ell).$ Since the bottom map is an inclusion, we can identify $P$ as a subgroup of $\Ugext.$ The square 
    \[\begin{tikzcd}[row sep =15pt]        \pi_0(\Embop(\yg;\xg)_\ell)\arrow[d] \arrow[r] & \pi_0(\Embop(\yg;\xg))\arrow[d] \\
        \pi_0(\Embop(\vg;\wg)_\ell)\arrow[r] & \pi_0(\Embop(\vg;\xg))    
    \end{tikzcd}\]is a pullback square of groups, since the leftmost groups are the kernels of maps to a common group, namely $\piwrel\otimes \pi_n(\SO).$ Moreover, this square maps to the previous square. We conclude that the map $\pi_0(\Embop(\yg;\vg)_\ell)\to \Ugext$ factors through $P$. In other words, $\Ugextl\subset P$. Since the square \eqref{pullback of embds and ug} is a pullback, so is the square
    \[\begin{tikzcd}[row sep =15pt] 
    \pi_0(\Embop(\yg;\xg)_\ell)\arrow[d] \arrow[r] & P\arrow[d] \\
    \pi_0(\Embop(\vg;\wg)_\ell)\arrow[r] & \Ugwextl .   
    \end{tikzcd}\]However, by definition, the bottom map is surjective and thus, so is the top map. In other words, $\Ugextl=P.$ Thus, we can describe the kernel $K$ of the map $\Ugextl\to \GL(\piy)$ using the pullback square
    \[\begin{tikzcd}[row sep =15pt] 
        K \arrow[d] \arrow[r] & \ker(\Ugext\to \GL(\piy))\cong \Mg \arrow[d] \\
        \ker(\Ugwextl\to \GL(\piv)) \arrow[r] & \ker(\Ugwext\to \GL(\piv)).
    \end{tikzcd}\]The bottom kernels were described by Krannich in \cite[Thm. 3.3/Prop 3.7]{krannichpseudo}. The bottom right kernel is isomorphic to the submodule of sesquilinear forms $\mu:\piv\otimes \piv\to \ZZ$ which are $(-1)^{n+1}$-symmetric and $\mu(x,x)\in \Lambda_n$ (see \cite[Rmk. 3.2]{krannichpseudo}). Under this identification, the right vertical map is induced by the augmentation map $\ZZ[\pi]\to \ZZ.$ The bottom left kernel is isomorphic to the submodule of those sesquilinear forms where $\mu(x,x)$ is even. Under this identification, it is clear that $K$ identifies with $\Mgs.$ This finishes the proof of the second extension.
\end{proof}

\subsection{Stable homology of mapping class groups}

In this section, we relate the stable homology with rational coefficients of the mapping class groups of stably framed embeddings of $X_g$ and $(Y_g;X_g)$ to that of groups of arithmetic nature. For the case of $X_g$, we deduce this relation from the work of Bustamante and Randal-Williams \cite{BRW}, and for the $(Y_g;X_g)$, we use the work of the previous subsection together with a result in functor homology due to Betley \cite{Betley1992}.

\subsubsection{The case of $X_g$} In this section, we extract the homology of the group $\Embmcgsfr$ from the work of Bustamante and Randal-Williams \cite{BRW}, as $g$ is taken to $\infty$. The space $(\BEmbmcginfs)^+$ is equivalent to the identity component of the loop space $\Omega\B(\amalg_g \BEmbmcgsfr)$ by the group-completion theorem, applied to the monoid $\amalg_g \BEmbmcgsfr$ defined via stacking of embeddings, which is possible since $n\geq 2$ (see also the proof of \cite[Lemma 7.7]{BRW}). In particular, it is a simple space. So is $\BUgmininf^+$ for the same reason.

\begin{prop}\label{stable mcg of xg is gw}
    For $n\geq 3$, the action of $\Embmcgsfr$ on $\pix$ induces a map of simple spaces
    \[\left(\BEmbmcginfs\right)^+\to \BUgmininf^+\]which induces an isomorphism on rational homotopy groups in all degrees.
\end{prop}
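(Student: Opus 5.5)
The plan is to deduce the statement from \cref{mcg of xg} by comparing rational homology in the colimit $g\to\infty$, and then upgrading to rational homotopy using simplicity.

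First, for each $g$ in the range of \cref{mcg of xg}, we have the extension
\[1\to L_g^{\sfr,\ell}\to \Embmcgsfr\to \Omega^{\min}_g\to 1\]
with $L_g^{\sfr,\ell}$ finite. The Lyndon--Hochschild--Serre spectral sequence with $\QQ$-coefficients collapses, since a finite group has trivial higher rational homology and $\H_0(L;\QQ)=\QQ$. Hence $\B\Embmcgsfr\to \B\Omega^{\min}_g$ is a rational homology isomorphism. Next, $\Omega^{\min}_g\subset \Ugmin$ is normal with finite quotient. The same spectral sequence gives
\[\H_*(\Omega^{\min}_g;\QQ)_{\Ugmin/\Omega^{\min}_g}\cong \H_*(\Ugmin;\QQ).\]
So the remaining task is to show that, stably, the finite quotient acts trivially on $\H_*(\Omega^{\min}_\infty;\QQ)$. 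I would argue as follows. The quotient is abelian and independent of $g$, so in the colimit it is represented by elements supported in a fixed finite block. Conjugation by such an element is, after stabilisation, conjugate to an element commuting with any given finite-$g$ subgroup of $\Omega^{\min}_\infty$, by moving the block to disjoint summands. This is the usual ``conjugation acts trivially on stable homology'' argument, and it is the step needing most care. Alternatively, I would use that the commutator subgroup of $\Ugmininf$ is perfect (elementary subgroup) and contained in $\Omega^{\min}_\infty$. Then the plus constructions of $\B\Omega^{\min}_\infty$ and $\B\Ugmininf$ differ by a covering with finite abelian deck group, which is a rational equivalence. Taking colimits, and using that homology commutes with filtered colimits, we get that the map $\B\Embmcginfs\to \BUgmininf$ is a rational homology isomorphism.

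Finally, both plus-constructed spaces are simple, as noted before the statement, being identity components of loop spaces of group completions. The map respects the stacking monoid structures: the hyperbolic basis is compatible with stabilisation and boundary connected sum, so the map on group completions is a map of infinite loop spaces, or at least of $H$-spaces. A rational homology isomorphism between simple (indeed nilpotent) spaces is an isomorphism on rational homotopy groups by the rational Whitehead theorem, and this holds in all degrees including $\pi_1$, which is abelian.

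The main obstacle is the finite-index step. One has to check that passing from $\Omega^{\min}_g$ to $\Ugmin$ does not change stable rational homology, i.e.\ that the finite abelian quotient acts trivially. I would first try the covering-space argument on plus constructions, and fall back on the conjugation argument if perfection of the commutator subgroup is not readily available.
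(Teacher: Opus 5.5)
Your proposal is correct and, in the version you say you would try first, is essentially the paper's argument: the finite kernel $L_g^{\sfr,\ell}$ is rationally invisible (the paper simply cites \cite[Lemma 7.10]{BRW} for this step), and the finite abelian quotient $Q=\Ugmin/\Omega^{\min}_g$ is disposed of via the covering fibre sequence $(\B\Omega^{\min}_\infty)^+\to\BUgmininf^+\to\B Q$, with $\B Q$ rationally contractible. You do not even need perfection of the commutator subgroup for this: since $Q$ is abelian, the maximal perfect subgroup of $\colim_g\Ugmin$ automatically lies in $\Omega^{\min}_\infty$, which is all the covering argument requires.
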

\begin{proof}
    This follows by combining \cite[Lemma 7.10]{BRW} (stated above as \cref{mcg of xg} with the fact that there is a fibre sequence 
    \[(\B\Omega_\infty^{\min})^+\to \BUgmininf^+\to \B Q\]where $Q$ is the quotient $\Ugmin/\Omega_g^{\min}$, which is finite abelian and independent of $g$ (see \cref{mcg of xg}), and thus its classifying space is rationally contractible.
\end{proof}

\begin{rmk}\label{computation of GW}
    The rational homotopy groups of $\BUgmininf^+$ were completely computed in \cite[Lemma 7.3]{BRW}, by combining the results of \cite{SteimleHebestreit,nineI,nineII,nineIII}. More precisely, we have that $\pi_k^\QQ(\BUgmininf^+)$ is isomorphic to $\QQ$ when $k=1$ or $2n+k\equiv 0,1$ mod $4$, and vanishes otherwise.
\end{rmk}

\subsubsection{The case of $(Y_g;X_g)$}\label{YgXgMCGSection} Denote the group $\pi_1(\BEmbsfr(\yg;\xg),\ell)$ by $\mcgy$ and the colimit over $g$ along the stabilisation maps by $\mcgyi$. The basis $a_i$ (recall \cref{allTheNotationUngraded}) induces an isomorphism $\GL(\piy)\cong \GL_g(\ZZ[\pi]).$ Just as before, the spaces $(\Bmcgyi)^+$ and $\BGL_\infty(\ZZ[\pi])^+$ are equivalent to loop spaces and are thus simple spaces.

\begin{prop}\label{stable mcg of yg are k theory}
    For $n\geq 3$, the action of $\mcgy$ on $\piy$ induces a map of simple spaces
    \[\left(\Bmcgyi\right)^+\to \BGL_\infty(\ZZ[\pi])^+\]which induces an isomorphism on rational homotopy groups in all degrees.
\end{prop}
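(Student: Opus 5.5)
The plan is to factor the map $\mcgy\to\GL_g(\ZZ[\pi])$ as
\[
\mcgy\longrightarrow \Ugextl\longrightarrow \GL(\piy)\cong\GL_g(\ZZ[\pi])
\]
and show that each step is a rational homology isomorphism after stabilising in $g$. The plus constructions are simple spaces, so a rational homology isomorphism between them is a rational homotopy isomorphism; it therefore suffices to compare rational homology of the stabilised groups.

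\emph{Step 1: the first map.} By \cref{extensions of mcg of yg sfr}, the image of $\mcgy$ in $\pi_0(\Embop(\yg;\xg))$ is $\pi_0(\Embop(\yg;\xg)_\ell)$. The map $\mcgy\to\pi_0(\Embop(\yg;\xg)_\ell)$ has kernel a quotient of $\pi_1$ of the space of stable framings. That space is $\Map_{\partial_0}(\yg,\SO)$ with $\yg\simeq S^1\vee\bigvee_g S^n$ relative to $\partial_0$, so its $\pi_1$ is a quotient of $\piwrel\otimes\pi_{n+1}(\SO)$ together with a contribution from the $S^1$-factor, roughly $\pi_2(\SO)=0$. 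I expect this kernel to be finite, or at least to act in a way that is killed rationally; I would check this directly from the long exact sequence.

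The second kernel, from \cref{extensions of mcg of yg sfr}, is $\piwrel\otimes\ker(S\pi_n(\SO(n))\to\pi_n(\SO))$. This group is finite: $S\pi_n(\SO(n))$ is finitely generated, and its rank-one part, when $n$ is even, is detected stably or by the Euler class. A finite kernel does not change rational homology, by the Lyndon--Hochschild--Serre spectral sequence, so $H_*(\mcgy;\QQ)\cong H_*(\Ugextl;\QQ)$.

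\emph{Step 2: the second map.} \cref{extensions of mcg of yg sfr} gives the extension
\[
1\to\Mgs\to\Ugextl\to\GL_g(\ZZ[\pi])\to1 .
\]
Here $\Mgs$ is a finite-index sublattice of a module of $(-1)^{n+1}$-hermitian forms on $\ZZ[\pi]^g$, namely $\Mg$. This extension is (virtually) split, by the splitting $A\mapsto A\oplus(A^{-1})^\vee$ of \cref{extensions of mcg of yg}, which I would check preserves the $\ell$-condition up to finite index. In the Lyndon--Hochschild--Serre spectral sequence
\[
E^2_{p,q}=H_p\bigl(\GL_g(\ZZ[\pi]);\Lambda^q_\QQ(\Mgs\otimes\QQ)\bigr)\Longrightarrow H_{p+q}(\Ugextl;\QQ),
\]
the coefficients $\Lambda^q_\QQ(\Mgs\otimes\QQ)$ form a polynomial functor without constant term when $q>0$. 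It is built from $\ZZ[\pi]^g\otimes\ZZ[\pi]^g$ twisted by duals, and passing to $\QQ$ commutes with this. Betley's theorem \cite{Betley1992} says that the stable homology $\colim_g H_*(\GL_g(R);F(R^g))$ vanishes for such reduced polynomial functors, here with $R=\ZZ[\pi]$, and the dual variant applies as well. So stably $E^2_{p,q}=0$ for $q>0$, and $H_*(\mcgyi;\QQ)\cong H_*(\GL_\infty(\ZZ[\pi]);\QQ)$.

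\emph{Main obstacle.} The delicate point is justifying Betley's vanishing theorem in this setting. The coefficients $\Lambda^q(\Mgs\otimes\QQ)$ are functors of mixed variance in $\ZZ[\pi]^g$: the forms are sesquilinear, so they are built from $(\ZZ[\pi]^g)^\vee\otimes(\ZZ[\pi]^g)^\vee$ with the involution. Two reductions are needed here. First, one passes from the finite-index sublattice $\Mgs$ to $\Mg$ rationally. Second, one uses that $\Lambda^q$ of a bifunctor is a subquotient of tensor powers, which are bifunctors to which Betley's result (stated for $\mathrm{Hom}$-type bifunctors vanishing when either variable is zero) applies. I would first try to write $\Mg\otimes\QQ$ as the $(\pm)$-invariants of $\Hom_{\ZZ[\pi]}(P^\vee\otimes P^\vee,\QQ[\pi])$ with $P=\ZZ[\pi]^g$. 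I would then reduce exterior powers to tensor powers using characteristic zero, so that everything is a direct summand of Betley-type coefficients.
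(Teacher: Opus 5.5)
Your overall route matches the paper's. You factor through $\pi_0(\Embop(\yg;\xg)_\ell)$ and $\Ugextl$, discard finite kernels using the Lyndon--Hochschild--Serre spectral sequence, and kill the extension by $\Mgs$ stably with Betley's vanishing theorem. The second kernel and Step~2 are fine. Step~1, however, has a genuine gap: you do not show that the kernel of $\mcgy\to\pi_0(\Embop(\yg;\xg)_\ell)$ is finite, and this does not follow from the long exact sequence alone.

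That kernel is the cokernel of the derivative map $\pi_1(\Embop(\yg;\xg))\to\pi_1(\Bun_{\partial_0}(T\yg;\sfr^*\gamma_{2n+1}),\ell)$. Since $\yg$ is obtained from $\partial_0\yg$ by attaching $g$ handles of index $n$ (there is no contribution from the $S^1$-factor), the target is $\pi_{n+1}(\SO)^{\oplus g}$. This group is $\ZZ^g$ whenever $n\equiv 2\pmod 4$. So finiteness is the nontrivial assertion that loops of embeddings realise all loops of stable framings up to finite index. Without it, your unstable claim $\H_*(\mcgy;\QQ)\cong\H_*(\Ugextl;\QQ)$ is unsupported.

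The missing input is the comparison with block embeddings and homotopy automorphisms that the paper uses. Map the fibre sequence $\Bun_{\partial_0}(T\yg;\sfr^*\gamma_{2n+1})\to\BEmbsfr(\yg;\xg)\to\BEmbop(\yg;\xg)$ to $F\to\BbEmbsfr(\yg;\xg)\to\BAutvp(\yg;\xg)$.
\begin{itemize}
\item The middle map is an isomorphism on $\pi_1$, a consequence of Morlet's lemma of disjunction (cf.\ the proof of \cref{extensions of mcg of yg}).
\item $F$ has finite homotopy groups by \cref{stable framed block embs are autos}, which rests on the surgery-theoretic input \cite[Cor.~2.4]{krannichpseudo}.
\end{itemize}
Hence the image of $\pi_1(\Bun_{\partial_0}(T\yg;\sfr^*\gamma_{2n+1}),\ell)$ in $\mcgy$ maps injectively to $\pi_1(\BbEmbsfr(\yg;\xg)_\ell)$. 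There it factors through the finite group $\pi_1(F)$, so the kernel is finite. With this added, your proof goes through.

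Two minor remarks. First, the ``mixed variance'' worry in Step~2 is unfounded. The functor $V\mapsto M^{\sfr}(V)$ (the $(-1)^{n+1}$-symmetric sesquilinear forms $\mu$ with $\mu(x,x)\in\Lambda_n^{\sfr}$) is a purely contravariant subfunctor of $V\mapsto\Hom_{\ZZ[\pi]}(V\otimes_{\ZZ[\pi]}V,\ZZ[\pi])$; this uses that $\ZZ[\pi]$ is commutative. Polynomiality passes to subfunctors and to composites with $\Lambda^t(-\otimes\QQ)$, so \cite[Thm.~F.3]{Djament2010} applies directly, with no detour through $\Mg$, tensor powers, or a splitting. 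Second, your justification that $\ker(S\pi_n(\SO(n))\to\pi_n(\SO))$ is finite is muddled: the Euler-class phenomenon occurs for $n$ odd, not $n$ even. The conclusion is still right, since $\pi_n(\SO(n))\to\pi_n(\SO)$ is rationally injective.
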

\begin{proof}
    Recall that a map of simple spaces induces an isomorphism on rational homotopy groups if and only if it induces one on rational homology groups. We shall show that the map $\Bmcgyi\to \BGL_\infty(\ZZ[\pi])$ is a rational homology isomorphism. Consider the sequence of groups
    \begin{equation}\label{sequence of groups}
    \mcgy\to \pi_0(\Embop(\yg;\xg)_\ell)\to \Ugextl\to \GL(\piy)
    \end{equation}which factors the action of $\mcgy$ on $\piy.$ Recall that, by definition, the leftmost map is surjective. We shall show that the leftmost and middle map are rational homology equivalences after taking classifying spaces and the rightmost map induces a rational homology equivalence after taking the colimit over $g$. We start with the leftmost map. Consider the following long exact sequence
    \[\cdots \to  \pi_1(\Bun_{\partial_0}(T\yg;\sfr^*\gamma_{2n+1}),\ell)\to \mcgy\to \pi_0(\Embop(\yg;\xg))\to \cdots\]and let us show that the kernel of the map $\mcgy\to \pi_0(\Embop(\yg;\xg))$ is a finite group. Consider the following map of fibre sequences
    \[\begin{tikzcd}[row sep =15pt] 
        \Bun_{\partial_0}(T\yg;\sfr^*\gamma_{2n+1})\arrow[d]\arrow[r] & \BEmbsfr(\yg;\xg) \arrow[r]\arrow[d] & \BEmbop(\yg;\xg) \arrow[d] \\
        F \arrow[r] & \BbEmbsfr(\yg;\xg) \arrow[r] & \BAutvp(\yg;\xg).
    \end{tikzcd}\]As mentioned in the proof of \cref{extensions of mcg of yg}, the middle map is an isomorphism on $\pi_1.$ By \cref{stable framed block embs are autos}, $F$ has finite homotopy groups and thus the map $\pi_1(\Bun_{\partial_0}(T\yg;\sfr^*\gamma_{2n+1}),\ell)\to \mcgy$ factors thorugh a finite group. We conclude that the kernel $K$ of the leftmost map of \eqref{sequence of groups} is a finite group. The Serre spectral sequence of the fibre sequence $1\to \B K\to \Bmcgy\to \B\pi_0(\Embop(\yg;\xg)_\ell)\to 1$ and the fact that $\H_*(K;\QQ)$ vanishes in positive degrees and is isomorphic to $\QQ$ in degree $0$ implies that $\smash{\Bmcgy\to \B\pi_0(\Embop(\yg;\xg)_\ell)}$ induces an isomorphism on rational homology in all degrees. As for the middle map, by \cref{extensions of mcg of yg sfr} and the finiteness of $\ker(S\pi_n(\SO(n))\to \pi_n(\SO))$ (see \cite[Lemma 3.2]{framingswg}), we deduce that it is a rational homology equivalence after taking classifying spaces, by the same argument as the leftmost map. 
    
    We consider now the rightmost map. The second extension of \cref{extensions of mcg of yg sfr} yields a spectral sequence 
    \[E^2_{s,t}=\H_s(\GL(\piy);\H_t(\Mgs;\QQ))\Rightarrow \H_{s+t}(\Ugextl;\QQ) \]which is compatible with respect to stabilisation over $g$. We shall prove that 
    \begin{equation}\label{the group we want to vanish}
        \colim_g\H_s(\GL(\piy);\H_t(\Mgs;\QQ))
    \end{equation}vanishes for $t\geq 1$. Together with the fact that $\H_0(\Mgs;\QQ)\cong \QQ$, this implies that the homomorphism $\Ugextl\to \GL(\piy)$ induces a rational homology equivalence after taking classifying spaces and colimits over $g.$ This vanishing result is a consequence of a result of Betley \cite[Thm. 4.2]{Betley1992} (better stated for our purposes in \cite[Thm. F.3]{Djament2010}). We proceed to prove this claim. Let $V$ be a finitely generated projective $\ZZ[\pi]$-module and denote the abelian group $M^{\sfr}(V)$ of sesquilinear forms $\mu:V\otimes V\to \ZZ[\pi]$ which are $(-1)^{n+1}$-symmetric and $\mu(x,x)\in \Lambda^{\sfr}_n.$  The basis $\{a_i\}_i$ induces an isomorphism from the group \eqref{the group we want to vanish} and $\colim_g\H_s(\GL_g(\ZZ[\pi]);\H_t(M^{\sfr}(\ZZ[\pi]^g))).$ Let $t\geq 0$ and consider the functor $F:\Proj_{\ZZ[\pi]}^\text{op}\to \Mod_\QQ$ taking a finitely generated projective $\ZZ[\pi]$-module $V$ to $\H_t(M(V);\QQ)$. Our goal is to show that $F$ is \textit{polynomial} in the sense of \cite[Lecture 1 
 Defn. 3.1]{functorhomology}. A functor between additive categories $T:C\to D$ is called \textit{polynomial} if its $d$-th cross-effect functor vanishes for some $d\geq 0$. The class of such functors is closed under composition and subfunctors (see \cite[25]{functorhomology}). In our setting, the category $C$ will be the either $\Proj_{\ZZ[\pi]}$, the category of finitely generated projective left $\ZZ[\pi]$-modules, or its opposite. The functor $F$ is a composition of the functor $M^{\sfr}: \Proj^{\text{op}}_{\ZZ[\pi]}\to \text{Ab}$ taking $V$ to $M(V)$ and the functor $\H_t(-;\QQ):\text{Ab}\to \Mod_\QQ$. The latter identifies with the $t$-th symmetric power $\smash{\Lambda^t_\QQ(-\otimes \QQ)}$ which is polynomial (see \cite[Rmk. 3.3.4]{functorhomology}). Thus, it suffices to prove that $M^{\sfr}$ is polynomial. Observe that $M^{\sfr}$ is a subfunctor of the functor $T:\Proj^{\text{op}}_{\ZZ[\pi]}\to \text{Ab}$ taking $V$ to $\Hom_{\ZZ[\pi]}(V\otimes_{\ZZ[\pi]} V, \ZZ[\pi])$, since $\ZZ[\pi]$ is commutative and hence any sesquilinear form factors through $V\otimes_{\ZZ[\pi]} V$. (This tensor product is defined by using the right module structure on the first variable and the left modules structure on the second variable, and seen as a left $\ZZ[\pi]$-module by using the left module structure on the first variable.) The functor $T$ is in turn the composition of the functors $(V\mapsto V\otimes_{\ZZ[\pi]} V )$ and $(V\to \Hom_{\ZZ[\pi]}(V,\ZZ[\pi]))$, which are polynomial by Example 3.2 and Remark 3.3.4 of loc.cit. This implies that $T$ and the subfunctor $M^{\sfr}$ are polynomial. We may apply \cite[Thm. F.3]{Djament2010} for the ring $A=\ZZ[\pi]$ and the functor $F$ for $t\geq 1$, as $F(0)=0$. We conclude that \eqref{the group we want to vanish} vanishes for $t\geq 1$, and hence that the string of morphisms \eqref{sequence of groups} induces a rational homology isomorphism after taking classifying spaces and colimit over $g$. This completes the proof.
\end{proof}

\begin{rmk}\label{computation of K}
    The rational homotopy groups of $\BGL_\infty(\ZZ[\pi])^+$ agree with the rationalised algebraic $K$-theory groups of $\ZZ[\pi]$ in positive degrees. In particular, the Bass--Heller--Swan splitting in algebraic $K$-theory implies that $\pi_k^\QQ(\BGL_\infty(\ZZ[\pi])^+)\cong (\K_{k}^\QQ(\ZZ)\oplus \K^{\QQ}_{k-1}(\ZZ)) $ for $k\geq 1$, and thus, by the work of Borel, are isomorphic to $\QQ$ if $k\equiv 1,2$ mod $4$ and $k\geq 4$, and vanishes otherwise.
\end{rmk}

\subsection{Odd modules over mapping class groups}

In this section, we will introduce the notion of a \textit{gr-odd module} in the context of $\QQ[G]$-modules for certain groups $G$. We will establish closure properties of the class of gr-odd modules and prove a vanishing result for group (co)homology with odd coefficients. 

    \begin{defn}\label{gr odd modules}
        Let $G$ be a group and $\iota\in G$ be an element such that there exists a finite normal subgroup $L\lhd G$ such that $\iota$ lies in the center of $G/L$. Let $M$ be a module over $\QQ[G].$ We say that $M$ is \textit{odd} if $\iota\cdot m =-m$ for every $m\in M.$ We say that $M$ is \textit{gr-odd} if it admits a finite length filtration of $\QQ[G]$-submodules \[0=F_{-1}M \subseteq \cdots \subseteq F_iM\subseteq F_{i+1}M\subseteq \cdots \subseteq F_kM=M\]where for every $i,$ the module $F_{i+1}M/F_iM$ is odd. These filtrations will be called \textit{odd}. We say that $M$ is \textit{even} if $\iota\cdot m=m$ for every $m\in M.$ Analogously, we define \emph{gr-even} modules as those that have a finite length filtration where the filtration quotients are even.
    \end{defn}

    We start by recording favorable closure properties of the classes of gr-odd and gr-even modules.
    
    \begin{prop}\label{odd modules are cool}
    The class of gr-odd $\QQ[G]$-modules is closed under taking subgroups, quotients, and extensions. The same properties hold for gr-even modules.
    \end{prop}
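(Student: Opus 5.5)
The plan is to prove the three closure properties directly from \cref{gr odd modules}, by intersecting an odd filtration with a submodule, pushing it forward to a quotient, and concatenating filtrations along an extension. The gr-even case is word-for-word the same with the sign $-1$ replaced by $+1$, so I only treat gr-odd modules. Throughout, "subgroups" in the statement is read as $\QQ[G]$-submodules. The only basic fact needed is that oddness, meaning $\iota$ acts by $-1$, is inherited by submodules and quotients of odd modules, since it is a pointwise condition on elements.

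\emph{Submodules.} Let $N\subseteq M$ be a submodule and $F_\bullet M$ an odd filtration of $M$. I set $F_iN\coloneq N\cap F_iM$, which is a finite filtration of $N$ by $\QQ[G]$-submodules. The inclusion induces an injective $\QQ[G]$-map
\[F_{i+1}N/F_iN\hookrightarrow F_{i+1}M/F_iM,\]
with kernel $N\cap F_iM\cap F_{i+1}N=F_iN$. So each graded piece of $F_\bullet N$ is a submodule of an odd module, hence odd.

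\emph{Quotients.} Let $q\colon M\to M/N$ be the projection and set $F_i(M/N)\coloneq q(F_iM)$. The map $F_{i+1}M\to F_{i+1}(M/N)/F_i(M/N)$ is surjective and kills $F_iM$. It therefore induces a $\QQ[G]$-linear surjection
\[F_{i+1}M/F_iM\twoheadrightarrow F_{i+1}(M/N)/F_i(M/N).\]
Each graded piece is thus a quotient of an odd module, hence odd.

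\emph{Extensions.} Given a short exact sequence $0\to N\to M\xrightarrow{q} P\to 0$ with odd filtrations $F_\bullet N$ of length $k$ and $F_\bullet P$ of length $l$, I define a filtration of $M$ that runs through $F_\bullet N$ and then through the preimages $q^{-1}(F_jP)$. The graded pieces are of two kinds:
\[F_{i+1}N/F_iN \qquad\text{and}\qquad q^{-1}(F_{j+1}P)/q^{-1}(F_jP)\cong F_{j+1}P/F_jP.\]
Both kinds are odd, so this is an odd filtration of finite length. I expect no genuine obstacle in any of the three steps. The only point to check is that the oddness condition refers to the fixed element $\iota$ acting through $\QQ[G]$ and so passes to sub- and quotient modules. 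The finite normal subgroup $L$ and the centrality of $\iota$ in $G/L$ play no role in these closure properties; they matter only for the vanishing results proved later.
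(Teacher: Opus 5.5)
Your proposal is correct and follows essentially the same route as the paper: image filtrations for quotients, the filtration $N\cap F_iM$ for submodules (this is exactly the paper's kernel of $F_iM\to F_i(M/N)$), and preimages of the filtration of the quotient for extensions. Your explicit concatenation of $F_\bullet N$ with $q^{-1}(F_\bullet P)$ simply spells out the paper's step of ``first filtering $M'$'' to reduce to the case where the submodule is odd.
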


    \begin{proof}
    We will prove the statement for the odd case, since the even case is completely analogous. Let $M$ be an gr-odd $\QQ[G]$-module and $F_iM$ a finite filtration realizing this condition. For a quotient $M\twoheadrightarrow Q,$ set $F_iQ$ to be the image of $F_iM$ under the quotient map. Then $F_iQ/F_{i-1}Q$ is a quotient of $F_iM/F_{i-1}M$ and thus, it is odd. For a submodule $N$, take $F_iN$ to be the kernel of $F_iM\to F_i(M/N)$ as just defined. We see that $F_iN/F_{i-1}N$ is a submodule of $F_iM/F_{i-1}M$ and the claim follows. Let now $0\to M'\to M\to M''\to 0$ be an extension of gr-odd modules $M'$ and $M''.$ By first filtering $M',$ we may assume without loss of generality that $M'$ is itself is odd. Then setting $F_iM$ to be the pullback of $M\to M''$ over $F_iM''$ for $i\geq 1$ and $F_0M=M',$ we have an exact sequence \(0\to M'\to F_iM\to F_iM''\to 0\), so we see that the associated graded is odd.
    \end{proof}

    One can easily see that the classes of gr-odd and gr-even modules are not closed under tensor products over $\QQ$ and mapping spaces. However, the next proposition shows that their interplay behaves as expected. 

    \begin{nota}
        Given two \smash{$\QQ[G]$}-modules $A$ and $B,$ we consider $A\otimes_{\QQ}B$ as a \smash{$\QQ[G]$}-module with the diagonal action. Similarly, we consider $\Hom_\QQ(A,B)$ as a \smash{$\QQ[G]$}-module with the action by conjugation.
    \end{nota}

    \begin{prop}\label{tensors and homs of odd/even}
        Let $A$ and $B$ be $\QQ[G]$-modules with $A$ gr-odd. Then if $B$ is gr-odd (\textit{resp.} gr-even) then both $A\otimes_\QQ B$ and $\Hom_\QQ(A,B)$ are gr-even (\textit{resp.} gr-odd). The analogous statement for $A$ gr-even also holds.
    \end{prop}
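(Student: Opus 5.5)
The plan is to reduce to the case where $A$ and $B$ are both genuinely odd or even, and then use the closure properties of \cref{odd modules are cool} to pass to filtered modules. Fix odd filtrations $F_\bullet A$ and $F_\bullet B$ (or an even one for $B$, as appropriate). The subtlety is that $\iota$ is only central modulo the finite normal subgroup $L$, so it need not act as a $\QQ[G]$-module map. The definition of odd or even only asks $\iota$ to act by $\mp 1$, however, and that condition makes sense for any $\QQ[G]$-module.

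First I would treat the pure case. If $A$ is odd and $B$ is odd, then on $A\otimes_\QQ B$ with the diagonal action, $\iota\cdot(a\otimes b)=(\iota a)\otimes(\iota b)=(-a)\otimes(-b)=a\otimes b$, so the tensor product is even. For $\Hom_\QQ(A,B)$ with the conjugation action, $(\iota\cdot f)(a)=\iota f(\iota^{-1}a)$. Since $\iota^{-1}$ also acts on $A$ by $-1$, this gives $(\iota\cdot f)(a)=\iota(-f(a))=f(a)$, so the Hom module is even. Replacing $B$ odd by $B$ even flips one sign in each computation, making both constructions odd. The case of $A$ even is symmetric.

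Next I would handle filtrations. For the tensor product, I would filter $A\otimes_\QQ B$ by $G_k\coloneq\sum_{i+j=k}F_iA\otimes F_jB$. These are $\QQ[G]$-submodules because the filtrations consist of submodules. Since we work over the field $\QQ$, everything is flat, and the associated graded satisfies
\[G_k/G_{k-1}\cong\bigoplus_{i+j=k}(F_iA/F_{i-1}A)\otimes_\QQ(F_jB/F_{j-1}B)\]
equivariantly. Each summand is even, respectively odd, by the pure case, and the filtration has finite length. An equally valid route is to induct on the filtration lengths. Exactness of $-\otimes_\QQ B$ gives short exact sequences $0\to F_{i-1}A\otimes B\to F_iA\otimes B\to (F_iA/F_{i-1}A)\otimes B\to 0$, and extension-closure from \cref{odd modules are cool} then reduces to $A$ odd; doing the same in $B$ reduces to the pure case.

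For $\Hom_\QQ(A,B)$, I would use the same inductive route. The functor $\Hom_\QQ(-,B)$ is exact over a field, so a short exact sequence $0\to A'\to A\to A''\to 0$ of $\QQ[G]$-modules gives an equivariant short exact sequence
\[0\to\Hom_\QQ(A'',B)\to\Hom_\QQ(A,B)\to\Hom_\QQ(A',B)\to 0.\]
Likewise, $\Hom_\QQ(A,-)$ is exact in the second variable. Inducting on the lengths of both filtrations and applying extension-closure from \cref{odd modules are cool} therefore reduces the claim to the pure case.

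The one point needing care is the equivariance of these sequences, which I would verify with the diagonal and conjugation actions as defined. I expect no real obstacle, since the arguments are formal once the pure computation is done.
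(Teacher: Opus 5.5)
Your proof is correct and is essentially the paper's argument: compute the pure case (a tensor product or Hom of two odd modules is even, with one sign flipping in the mixed case), then induct on filtration length using exactness of $\otimes_\QQ$ and $\Hom_\QQ$ in each variable together with extension-closure from \cref{odd modules are cool}. Your alternative for the tensor product also works. It uses the filtration $\sum_{i+j=k}F_iA\otimes_\QQ F_jB$, whose associated graded is equivariantly the tensor product of the associated gradeds, and so produces an explicit even (resp. odd) filtration in one step instead of by induction.
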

    \begin{proof}
        We will assume that $A$ and $B$ are both gr-odd and will prove that $A\otimes_\QQ B$ and $\Hom_\QQ(A,B)$ are gr-even. The remaining cases are analogous. We start with the tensor product. We will proceed by induction on the filtration length witnessing the gr-oddness of $B.$ The base case is when $B$ is odd. Let $F_*A$ be an odd filtration of $A,$ then by exactness of $(-)\otimes_\QQ B,$ we can define a filtration of $\QQ[G]$-modules $F_i(A\otimes_\QQ B)\coloneq F_iA\otimes_\QQ B$. Once again by the exactness of the tensor product, we have
        \[F_i(A\otimes_\QQ B)/F_{i-1}(A\otimes_\QQ B)\cong (F_iA/F_{i-1}A)\otimes_\QQ B \]as $\QQ[G]$-modules. One can easily see by bilinearity that the tensor product of two odd modules is even. Assume now that the claim holds for all gr-odd modules $B'$ with odd filtration length $\leq n.$ Let $B$ be an gr-odd module along with an odd filtration $F_iB$ of length $n+1.$ Observe that we have a short exact sequence of $\QQ[G]$-modules
        \[0\to A\otimes_\QQ F_nB\to A\otimes_\QQ B\to A\otimes_\QQ (B/F_nB)\to 0.\]Both leftmost and rightmost modules are gr-even by the induction hypothesis and the base case, respectively. The claim follows from \ref{odd modules are cool}. The case of $\Hom_\QQ(A,B)$ follows the same strategy, since $\Hom_\QQ(-,-)$ is exact in both variables and that $\Hom_\QQ(A,B)$ is even when $A$ and $B$ are odd.
    \end{proof}

    \subsubsection{Vanishing results and the centre kills trick}

    In this subsubsection, we establish the key property of gr-odd modules. This property may be viewed as a graded analogue of the classical \emph{centre kills trick} from group homology. The idea of using this argument was inspired by \cite[Lem. A.3]{manuel4k+2}.

    
    \begin{prop}\label{center kills for odd}
    If $M$ is a gr-odd $\QQ[G]$-module, then the group $\H_i(G;M)$ vanishes for all $i\geq 0$. The same conclusion holds for cohomology.
\end{prop}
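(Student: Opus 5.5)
We first reduce to the case where $M$ is odd. Fix an odd filtration $F_\bullet M$ of finite length. The short exact sequences $0\to F_{i-1}M\to F_iM\to F_iM/F_{i-1}M\to 0$ induce long exact sequences in group homology, so by induction on the length it suffices to prove $\H_*(G;M)=0$ for $M$ odd. The same reduction works for cohomology.

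Next we pass from $G$ to $\bar G\coloneq G/L$, where $\bar\iota$ is central. Consider the Lyndon--Hochschild--Serre spectral sequence
\[E^2_{p,q}=\H_p(\bar G;\H_q(L;M))\Rightarrow \H_{p+q}(G;M).\]
Since $L$ is finite and we work over $\QQ$, we have $\H_q(L;M)=0$ for $q>0$, and $\H_0(L;M)=M_L$ is the module of $L$-coinvariants. Hence $\H_*(G;M)\cong \H_*(\bar G;M_L)$. The module $M_L$ is a $\QQ[\bar G]$-module, and it is still odd: the element $\iota$ acts by $-1$ on $M$, so it acts by $-1$ on the quotient $M_L$, and this action depends only on $\bar\iota$. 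For cohomology the same argument works with $M^L$ in place of $M_L$, since $\H^q(L;M)=0$ for $q>0$.

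It now suffices to prove the centre kills trick for $\bar G$ and the central element $z=\bar\iota$ acting on the odd module $N=M_L$. Let $c_z\colon \bar G\to \bar G$ be conjugation by $z$ and $\mu_z\colon N\to N$ multiplication by $z$. The pair $(c_z,\mu_z)$ is a compatible map of pairs, and it is standard that it induces the identity on $\H_*(\bar G;N)$. Since $z$ is central, $c_z=\id$, so the induced map is just $\H_*(\id;\mu_z)$, and $\mu_z=-\id_N$ because $N$ is odd. Therefore $-\id=\id$ on $\H_*(\bar G;N)$. This group is a $\QQ$-vector space, where $2$ is invertible, so it vanishes. Cohomology is handled identically.

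The only delicate step is the standard fact that an inner automorphism together with the corresponding module map acts trivially on (co)homology. To justify it, I would quote Brown's \emph{Cohomology of groups}, Prop.~III.8.3. I would also check carefully that $\iota$ acts through the quotient $\bar G$ on $M_L$ compatibly, since $\iota$ is central only modulo $L$.
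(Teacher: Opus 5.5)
Your proof is correct and follows the paper's argument. You reduce to odd modules by induction on the length of an odd filtration. You then use the Lyndon--Hochschild--Serre spectral sequence for $1\to L\to G\to G/L\to 1$, which collapses to $H_*(G/L;M_L)$ (resp.\ $H^*(G/L;M^L)$) because $L$ is finite and the coefficients are rational. Finally you apply the centre kills trick to the central element $\bar\iota$, which acts by $-1$. The only difference is that you derive the centre kills trick from the fact that an inner automorphism, paired with the corresponding module map, acts trivially on (co)homology, whereas the paper cites it directly to conclude that $H_i(G/L;[M]_L)$ is $2$-torsion.
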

\begin{proof}
    We will prove this statement by induction on the odd filtration length of $M$. The base case is, once again, when $M$ is odd. Let $L$ be a finite normal subgroup of $G$ such that $\iota$ is in the center of $Q\coloneq G/L$. Notice that the $L$-coinvariants of $M$ form a $\QQ[Q]$-module. By the \textit{"centre kills trick"} (see, e.g., \cite[Lemma 5.4]{dupont2001scissors} for a proof), since the element $\iota\in Q$ is central, the group $\H_i(Q;[M]_{L})$ is $2$-torsion. On the other hand, it is a rational vector space so it vanishes. Consider the Lyndon--Hochschild--Serre spectral sequence
    \[E^2_{s,t}=\H_s(Q;\H_t(L;M))\Rightarrow \H_{s+t}(G;M)\]for the extension $1\to L\to G\to Q\to 1.$ By finiteness of $L,$ the only non-trivial entries are of the form $E^2_{s,0}=\H_s(Q;[M]_{L}).$ But these vanish and therefore so do $\H_i(G;M).$ Assume now that the claim holds for $M'$ with odd filtration length $\leq k$. Let $F_iM$ be an odd filtration of $M$ of length $k+1$. Then the long exact sequence on homology groups of the extension $0\to F_kM\to M\to M/F_kM\to 0$ implies the claim.
    \end{proof}

    \subsubsection{The element $-\id$} In order to use the vanishing results from above, we choose a specific $\iota$ for the mapping class groups considered in this section, satisfying the hypothesis of \cref{gr odd modules}.
    \begin{lemma}\label{-id exists}
        The subgroup $\Ugextl\subset \GL(\pix)$ contains the element $-\id$ for all stable framings $\ell$.
    \end{lemma}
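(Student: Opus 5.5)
Our plan is to reduce the claim to the analogous statement for the pair $(V_g;W_{g,1})$, using the pullback descriptions established above. By the proof of \cref{extensions of mcg of yg sfr}, $\Ugextl$ equals the pullback $P$ of $\Ugext\to\Ugwext\leftarrow\Ugwextl$. So it suffices to check two things: that $-\id\in\Ugext$, and that its image $-\id\in\Ugwext$ lies in $\Ugwextl$. The image is indeed $-\id$, since the map $\Ugext\to\Ugwext$ is induced by the augmentation $\ZZ[\pi]\to\ZZ$.

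\textbf{Step 1: $-\id\in\Ugext$.} The map $-\id$ on $\pix$ is $\ZZ[\pi]$-linear. It preserves $\lambda_X$ because $\lambda_X(-x,-y)=\lambda_X(x,y)$ by sesquilinearity. It preserves $q_X$ because $q_X(-x)=(-1)\,q_X(x)\,\overline{(-1)}=q_X(x)$. It also maps $\pirel$ to itself. In the notation of the proof of \cref{extensions of mcg of yg}, it is the element $A+(A^{-1})^\vee$ with $A=-\id\in\GL(\piy)$, i.e.\ the image of $-\id$ under the splitting.

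\textbf{Step 2: $-\id\in\Ugwextl$.} This is the substantive step, and we expect it to be the main obstacle. We would first try to cite Krannich's description of $\Ugwextl$ in \cite[Thm.~3.3/Prop.~3.7]{krannichpseudo}. There $\pi_0(\Embop(\vg;\wg)_\ell)$ maps onto a subgroup whose image in $\GL(\piv)$ is all of $\GL_g(\ZZ)$, and whose kernel is the even forms, by analogy with the second extension in \cref{extensions of mcg of yg sfr}. The splitting $A\mapsto A+(A^{-1})^\vee$ lands in $\Ugwextl$ provided the relevant quadratic condition holds: the only constraint defining $\ell$-stabilised elements is the vanishing of the crossed homomorphism to $\piwrel\otimes\pi_n(\SO)$.

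If that citation is not directly available, we would argue geometrically. The element of $\pi_0(\Embop(\vg;\wg))$ realising $A=-\id$ is obtained by applying, in each handle $S^n\times D^{n+1}$, the diffeomorphism $(x,y)\mapsto(r(x),r'(y))$, where $r$ and $r'$ are reflections. This realises $a_i\mapsto -a_i$ and $b_i\mapsto -b_i$. It is obtained, just as with the reflection $\rho_g$ of \cref{theReflectionInvolution}, from an orientation-preserving linear map of $\RR^{2n+1}$ acting on the standard model, composed with a rotation. Since it is a restriction of a linear map in $\SO(2n+1)$ preserving the model up to isotopy rel $\partial_0$, it pulls back the standard stable framing to a homotopic one. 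Hence it lies in the $\ell$-stabiliser.

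Alternatively, one can avoid the geometry. By \cref{extensions of mcg of yg sfr}, the obstruction to lifting $A$ to $\Ugextl$ is measured by the evenness condition defining $\Mgs$ inside $\Mg$. For the block-diagonal element $A+(A^{-1})^\vee$ there is no $B$-component, so the quadratic term $\mu(x,x)=0$ is even. The framing obstruction is therefore trivial, and $-\id$ lies in $\Ugextl$. Finally, independence of the choice of $\ell$ follows because $-\id$ is central in $\Ugext$, so it is fixed under conjugation by any change of framing.
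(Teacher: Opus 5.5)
Your reduction is the same as the paper's. $\Ugextl$ is the pullback of $\Ugext\to\Ugwext\leftarrow\Ugwextl$, your Step~1 is correct, and everything comes down to showing $-\id\in\Ugwextl$. The paper settles this by citing \cite[Lem.~3.9]{krannichpseudo}, which is exactly that statement.

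Your Step~2 does not prove it. Your ``alternative'' argument is circular. Write elements of $\Ugext$ as pairs $(A,B)$, as in the proof of \cref{extensions of mcg of yg}. The extension $1\to\Mgs\to\Ugextl\to\GL(\piy)\to1$ only says that the lifts of $A$ lying in $\Ugextl$ form \emph{some} $\Mgs$-coset inside the $\Mg$-coset $\{(A,B)\}$ of lifts in $\Ugext$. Evenness of $\mu(x,x)$ is a condition on kernel elements. It says nothing about whether this coset contains the split lift $(A,0)$, and that is precisely what has to be proved. Conjugation by $(-\id,0)$ is trivial on $\Mg$, so a lift $(-\id,B)$ with $B$ not even cannot be conjugated into the split one. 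Both possibilities are consistent with the extension. Note also that for $n\notin\{3,7\}$ one has $\Lambda_n^{\sfr}=\Lambda_n$, hence $\Mgs=\Mg$ and $\Ugextl=\Ugext$. So the whole content of the lemma sits in $n\in\{3,7\}$, which is exactly where your evenness remark is unjustified.

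Citing \cite[Thm.~3.3/Prop.~3.7]{krannichpseudo} has the same defect: those results give the extensions, not the statement about $-\id$. Your closing remark on the choice of $\ell$ also fails. Changing the framing is not conjugation in $\Ugext$, and stabilisers of framings lying in different mapping class group orbits need not be conjugate.

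The geometric route is the right kind of argument, but as written it has two problems. It relies on an unverified symmetry of the standard model (the reflection $\rho_g$ negates only the $a_i$). It also ignores that a linear map does not fix $\partial_0$.

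It can be repaired as follows.
\begin{enumerate}
\item On each $S^n\times D^{n+1}$, use $L=\mathrm{diag}(-1,1,\dots,1)\oplus\mathrm{diag}(-1,1,\dots,1)$. Near its fixed boundary points, $L$ is a rotation by $\pi$ in a plane tangent to the boundary. Untwist it to the identity on small boundary discs before forming $\natural^g$.
\item The framing obstruction rel $\partial_0$ is detected on the core spheres. There the derivative is the constant $L$, so the obstruction vanishes for the framing induced by the linear model.
\item For any other framing $\ell'=\ell+c$ one gets $s_{\ell'}(\phi)=s_\ell(\phi)-2c$, since $\phi$ acts by $-1$ on $\H^n(V_g;\pi_n\SO)$.
\item The cokernel of $S\pi_n\SO(n)\to\pi_n\SO$ is $0$ or $\ZZ/2$. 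Hence $2c$ can be absorbed by a kernel element, and $-\id$ remains in $\Ugwextl$ for every framing.
\end{enumerate}
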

    \begin{proof}
        We know that $-\id\in \Ugext$ as it lies in the image of the splitting $\GL(\piy)\to \Ugext$. Recall from \cref{extensions of mcg of yg sfr} that we have the following pullback square
         \[\begin{tikzcd}[row sep =15pt] 
        \Ugextl\arrow[d]\arrow[r] & \Ugext \arrow[d]\\
        \Ugwextl \arrow[r] & \Ugwext
        \end{tikzcd}\]so it suffices to show that $-\id\in \GL(\piw)$ is contained in $\Ugwextl $. This is exactly \cite[Lem. 3.9]{krannichpseudo}.
    \end{proof}

    \begin{nota}
        We fix, once and for all, an element $\iota_Y\in \pi_1(\BDiffsfrv(\yg),\ell)$ lifting $-\id\in \Ugextl$, which is possible by \cref{extensions of mcg of yg sfr} and the fact that $\smash{\pi_1(\BDiffsfrv(\yg),\ell)\to \mcgy}$ is surjective. Denote also the image of $\iota_Y$ in $\smash{\pi_1(\BDiffsfr(\xg),\ell)}$ by $\iota_X$. Since $-\id$ is central in $\GL(\pix)$, it is so too in $\Ugextl$. The groups $\smash{\mcgy}$ and $\smash{\Embmcgsfr}$ along with the elements $\iota_Y$ and $\iota_X$ satisfy the hypothesis of \cref{gr odd modules} by \cref{extensions of mcg of yg sfr} and \cref{mcg of xg}. When the context is clear, we supress the subscripts of $\iota_X$ and $\iota_Y$.   
    \end{nota}

\section{Self-embedding spaces of \texorpdfstring{$X_g$}{Xg}}\label{SelfEmbeddingXgSection}

A major ingredient in the approach of Bustamante--Randal-Williams is the computation of the rational homotopy type of $\BEmbsfr(X_g)_\ell$ in the range $* \leq n-2$. In this section, we extend this computation to degrees up to approximately $* \leq 2n - 2$, the main result being \cref{HomotopyBEmbsfr}. In \cref{homotopyOfPlusSectionXg}, we use this last result to analyse the homotopy type of the plus construction $\BEmbsfr(X_\infty)^+_\ell$ appearing in the Weiss fibre sequence \eqref{plusConstructedWeissXg}; the conclusion is \cref{HomotopyPlusConstruction}. We now give a brief overview this section.

In order to study the rational homotopy type of the space $\BEmbsfr(X_g)_\ell$, we consider the fibre sequence
\begin{equation}\label{BEmbsfrFibreSeq}
    \begin{tikzcd}
       \bEmbmodEmb(X_g) \rar & \BEmbsfr(X_g)_\ell\rar["\iota"] &\BbEmbsfr(X_g)_\ell,
    \end{tikzcd}
\end{equation}
where the left hand term denotes the homotopy fibre of $\BEmbop(X_g)\to \BbEmbop(X_g)$, and hence that of $\iota$ too. By Proposition \ref{stable framed block embs are autos}, the base space is rationally equivalent to $\BAutbp(X_g)$, whose higher homotopy groups are described in \cite[Thm. 3.1]{BRW}. By Morlet's lemma, the fibre is $(n-2)$-connected \cite[Prop. 4.3]{BRW}. The main input that allows us to extend the understanding of this homotopy type up to degrees $*\leq 2n-2$ is the work of the second-named author in \cite{MElongknots}, which we recall in \cref{RecollectionWWSection} for convenience. The computation of this left-hand term---outlined in \cref{PlanpEmbXgSection}---will be carried out in Sections \ref{FLShomologySection} and \ref{mcgXgsection}.

We also need to deal with the connecting homomorphisms in the long exact sequence of the fibration \eqref{BEmbsfrFibreSeq}. For this, note that for each positive integer $d\in \NN$, the circle $S^1$ is its own $d$-fold cover and, similarly, $X_{dg}$ is the corresponding $d$-fold cover of $X_g$ for every $g\geq 0$. This observation gives rise to \textit{Frobenius maps}
\begin{equation}\label{FrobeniusBEmbSfr}
  \begin{array}{lc}
    \varphi_d: \BEmbsfr(X_g)_\ell\longrightarrow \BEmbsfr(X_{dg})_\ell, &\\[6pt]
    \varphi_d: \BbEmbsfr(X_g)_\ell\longrightarrow \BbEmbsfr(X_{dg})_\ell, & \quad g\geq 0,\quad  d\in \NN,\\[4pt]
    \varphi_d: \bEmbmodEmb(X_g)\longrightarrow \bEmbmodEmb(X_{dg}), &
  \end{array}
\end{equation}
given by ``lifting'' a self-embedding of $X_g$ to $X_{dg}$. These maps are natural in $g\geq 0$, satisfy that the composite $\varphi_d\circ \varphi_{d'}$ is homotopic to $\varphi_{d\cdot d'}$ for every $d,d'\in \NN$, and are compatible with the fibre sequences \eqref{BEmbsfrFibreSeq} for varying $g\geq0$. In particular, the connecting homomorphisms
\begin{equation}\label{ConnectingMapSelfEmb}
\delta_*: \pi_{*+1}^{\QQ}(\BbEmbsfr(X_g)_\ell)\longrightarrow \pi_*^{\QQ}\left(\bEmbmodEmb(X_g)\right)
\end{equation}
are compatible with the homomorphisms induced by the Frobenius maps \eqref{FrobeniusBEmbSfr}---this additional compatibility will allow us to determine these connecting homomorphisms in \cref{HomotopyOfEmbeddingsXgSection} (it will turn out that, for our purposes, we will only need to understand $\delta_n$). The analysis of the Frobenius homomorphisms on the domain and codomain of \eqref{ConnectingMapSelfEmb} will take place in \cref{FrobeniusXgSection}.

Finally, in order to understand the homotopy type of the plus construction $\BEmbsfr(X_\infty)_\ell^{+}$, we will need to compute its homology and, consequently, that of $\BEmbsfr(X_g)_\ell$. In \cref{homotopyOfPlusSectionXg}, we convert the knowledge we 
have of its homotopy groups into information about its homology. We will need to understand the action of 
$\Embmcgsfr = \pi_1(\BEmbsfr(X_g)_\ell)$ on the higher homotopy groups of the base and the fibre in \eqref{BEmbsfrFibreSeq} to do this. The analysis of this mapping class group action on the fibre takes place in \cref{mcgXgsection}.

\subsection{Recollections on pseudoisotopy theory}\label{RecollectionWWSection}

The purpose of this section is to recall some results and constructions from \cite{GoodwillieHC-,WWI,WJR,MElongknots} that we will need later. We aim to keep the exposition concise, but readers already familiar with the contents of these references may safely skip this section and consult it only when referenced in the text.

\subsubsection{The Weiss--Williams theorems}\label{RecollectionWWtheorems}

The original Weiss--Williams theorem \cite{WWI} aims to describe the homotopy type of the pseudoisotopy space $\bDiffmodDiff(M)\coloneq \hofib(\BDiff_\partial(M)\to \BbDiffb(M))$ of a compact, smooth $d$-manifold $M$. To do so, the authors introduced certain (Borel) $C_2$-spectrum $\Hsp(M)$---the \textit{h-cobordism spectrum} of $M$---which is non-connective and natural in codimension-zero embeddings. Its $1$-connective cover $\Hsp^s(M)$ is known as the \textit{simple} $h$-cobordism spectrum. Then, they showed that there is a map
\begin{equation}\label{WWMap}
\Phi^{\Diff}_M: \bDiffmodDiff(M)\longrightarrow \Omega^{\infty}\big(\Hsp^s(M)_{hC_2}\big)
\end{equation}
that is $(\phi(M)+1)$-connected, where $\phi(M)$ denotes the concordance stable range of $M$.

We now recall an analogue of this theorem for embedding spaces that was established in \cite[Thm. A]{MElongknots}. Fix a compact, codimension-zero (for simplicity), neat submanifold $\iota:P\hookrightarrow M$. The goal now is to describe the homotopy type of the \textit{pseudoisotopy embedding space}
\[
\pEmb(P,M)\coloneq \hofib_{\iota}\left(\Embo(P,M)\to \bEmbo(P,M)\right).
\]
Write $\nu P$ for an open tubular neighbourhood of $P$ in $M$, and consider the induced embedding of compact manifolds $M-\nu P\hookrightarrow M$. The \textit{concordance embedding spectrum} of $\iota: P\hookrightarrow M$ is the $C_2$-spectrum
\begin{equation}\label{CEspDefn1}
\CEsp(P,M)\coloneq \hofib(\Hsp(M-\nu P)\to \Hsp(M)).
\end{equation}
Then, there exists a map
\begin{equation}\label{WWCEMap}
\Phi^{\Emb}_{(P,M)}: \pEmb(P,M)\longrightarrow \Omega^\infty\big(\CEsp(P,M)_{hC_2}\big)
\end{equation}
which is $\phi_{\CEmb}(P,M)$-connected if $d\geq 4$ and the handle dimension $p$ of $P$ relative to $\partial_0P\coloneq \partial P\cap \partial M$ satisfies $p \leq d-3$. Here, $\phi_{\CEmb}(P,M)$ is the concordance embedding stable range of $\iota: P\hookrightarrow M$. By \cite{KKGoodwillie}, 
\begin{equation}\label{CEmbRangeBound}
\phi_{\CEmb}(P,M)\geq 2d-p-5 \quad \text{if} \quad p\leq d-3.
\end{equation}
Moreover, the maps $\Phi^{\Diff}_{(-)}$ and $\Phi^{\Emb}_{(-)}$ of \eqref{WWMap} and \eqref{WWCEMap} give rise to a map of fibre sequences
\[
\begin{tikzcd}[row sep =18pt]     \pEmb(P,M)\rar\ar[dd,"\Phi^{\Emb}_{(P,M)}"] & \bDiffmodDiff(M-\nu P)\dar["\Phi^{\Diff}_{M-\nu P}"]\rar & \bDiffmodDiff(M)\dar["\Phi^{\Diff}_{M}"]\\
    &\Omega^{\infty}\big(\Hsp^s(M-\nu P)_{hC_2}\big)\dar\rar&\Omega^{\infty}\big(\Hsp^s(M)_{hC_2}\big)\dar\\
    \Omega^\infty(\CEsp(P,M))\rar & \Omega^{\infty}\big(\Hsp(M-\nu P)_{hC_2}\big)\rar&\Omega^{\infty}\big(\Hsp(M)_{hC_2}\big),
\end{tikzcd}
\]
where the top fibre sequence arises as the homotopy fibre of the map between the ordinary and block isotopy extension fibre sequences, and the bottom right subsquare is cartesian as long as $M-\nu P\to M$ is $2$-connected (cf. \cite[Cor. 5.6]{WWI})---this is the case under the assumption $p\leq d-3$.

\subsubsection{The map $\Phi^{\Emb}$ and orthogonal calculus}\label{OrthCalcRecallSection}
We will need to understand the particular construction of the map $\Phi^{\Emb}_{(P,M)}$, which we now recall. For this, we will use Weiss' \textit{orthogonal calculus} \cite{WeissOrthCalc}, which studies functors $\cJ\to \Spc$ from the ($\infty$-)category $\mathcal{J}$ of inner product vector spaces and linear isometric embeddings between them, to based spaces $\Spc$. The only thing we will need to recall from this theory is that, to each such orthogonal functor $F: \cJ\to\Spc$, there is associated a $C_2=\O(1)$-spectrum $\Theta F^{(1)}$---the \textit{first derivative of} $F$---and a diagram of orthogonal functors
\begin{equation}\label{OrthTower}
\begin{tikzcd}[row sep =12pt] 
     &&& T_1F(-)\dar[color=blue!65!black] &\lar[color=blue!65!black] \Omega^\infty\left((S^{\sigma\cdot(-)}\otimes \Theta F^{(1)})_{h\O(1)}\right)\\
     F(-)\ar[rrr, "\eta_0"]\arrow[urrr, bend left = 15pt, "\eta_1"] &&&T_0F(-)\rar[equal, shorten >=12ex]&\hspace{-70pt}F(\RR^\infty),
\end{tikzcd}
\end{equation}
where the indicated blue arrows form a fibre sequence of orthogonal functors. Here,
\begin{itemize}[itemsep = 3pt]
    \item $\sigma$ is the one-dimensional $\O(1)$-sign representation, $S^{\sigma\cdot V}$ denotes the one point compactification of $\sigma\cdot V\coloneq \sigma\otimes V$, and $\O(1)$ acts diagonally on the smash $S^{\sigma\cdot V}\otimes \Theta F^{(1)}$;

    \item the zeroth stage $T_0F(-)$ is given by
    $
    T_0F(V)\coloneq\operatorname{hocolim}_k F(V\oplus \RR^k)
    $, and thus admits a canonical equivalence from the constant orthogonal functor with \textit{value at infinity} $F(\RR^\infty)\coloneq\operatorname{hocolim}_k F(\RR^k)$. The map $\eta_0: F(V)\to T_0F(V)$ is simply the inclusion map;

    \item a model of the $\O(1)$-spectrum $\Theta F^{(1)}$ is as follows: the space $F^{(1)}(V)\coloneq\hofib(F(V)\to F(V\oplus \RR))$ inherits an $\O(1)$-action by declaring $-1\in \O(1)$ to act on $V$ and $V\oplus \RR$ by $-1$ on all coordinates. There are $\O(1)$-equivariant maps $s_V: S^1\wedge F^{(1)}(V)\longrightarrow F^{(1)}(V\oplus \RR)$ given, roughly, by performing a 180º rotation of $V\oplus \RR^2$ about the $2$-plane $0\oplus \RR^2$ (cf. \cite[\S A.1]{MElongknots} for an explicit description). The $\O(1)$-(pre-)spectrum $\Theta F^{(1)}$ has $F^{(1)}(\RR^n)$ as its $n$-th space, and $s_{\RR^n}$ as the structure map.
\end{itemize}

Given such an orthogonal functor $F:\cJ\to \Spc$, there is a map
\begin{equation}\label{PhiFInftyMap}
\begin{tikzcd}
    \Phi^F_\infty: F^{(1)}(\RR^\infty)\coloneq\hofib(F(0)\to F(\RR^\infty))\rar["\eta_1"] & \hofib(T_1F(0)\to T_1F(\RR^\infty))\simeq \Omega^\infty\big(\Theta F^{(1)}_{h\O(1)}\big),
\end{tikzcd}
\end{equation}
where the second equivalence follows from the observation that $T_1 F(\RR^\infty)\to T_0F(\RR^\infty)=F(\RR^\infty)$ is an equivalence (for the infinite-dimensional sphere $S^{\sigma\cdot\RR^\infty}$ is contractible). The map $\Phi^{\Emb}_{(P,M)}$ of \eqref{WWCEMap} is $\Phi^F_\infty$ for some appropriate orthogonal functor $F$ that we now introduce.

\begin{defn}\label{BdAndEorthFunctorsDefn}
    \begin{enumerate}[label = (\roman*)]
        \item For $Q$ a compact smooth manifold, consider the orthogonal functor
\[
\Bd_Q: \cJ\longrightarrow \Spc, \quad \Bd_Q(V)\coloneq \BDiff^b_\partial(Q\times V),
\]
where $\Diff^b_\partial(Q\times V)$ is the group-like topological monoid of diffeomorphisms of $Q\times V$ (rel boundary) which are bounded in the $V$-direction (see e.g. \cite[Defn. 2.4]{MElongknots}). Weiss and Williams define the $h$-cobordism spectrum $\Hsp(Q)$ as the first orthogonal derivative of $\Bd_Q$:
\begin{equation}\label{HspDefn}
    \Hsp(Q)\coloneq\Theta \Bd_Q^{(1)}.
\end{equation}
The involution $\iota_{\H}$ on $\Hsp(Q)$ inhereted from the $\O(1)$-action on $\Theta\Bd_Q^{(1)}$ is the \textit{h-cobordism involution}.
\item A codimension-zero embedding $Q\hookrightarrow Q'$ induces a natural transformation $\Bd_Q\to \Bd_{Q'}$. We set
\begin{equation}\label{CEspDefn}
\sfE_{(P,M)}\coloneq \hofib(\Bd_{M-\nu P}\longrightarrow \Bd_M), \quad \text{so that}\quad \Theta \sfE_{(P,M)}^{(1)}= \CEsp(P,M).
\end{equation}
We will also write $\iota_{\H}$ for the involution on $\CEsp(P,M)$ inhereted from the $\O(1)$-action on $\Theta\sfE_{(P,M)}^{(1)}$.
    \end{enumerate}

\end{defn}

Consider now the following commutative diagram:
\begin{equation}\label{CartesianIETBounded}
\begin{tikzcd}[column sep = 20pt, row sep = 12pt]
    \BbDiffb(M-\nu P)\rar[hook]\dar & \BbDiff^b_\partial((M-\nu P)\times \RR^\infty)\dar & \lar[hook', "\sim"'] \BDiff^b_\partial((M-\nu P)\times \RR^\infty)\dar\rar[equal] & \Bd_{M-\nu P}(\RR^\infty)\dar\\
    \BbDiffb(M)\rar[hook] & \BbDiff^b_\partial(M\times \RR^\infty) & \lar[hook', "\sim"']  \BDiff^b_\partial(M\times \RR^\infty)\rar[equal] & \Bd_{M}(\RR^\infty),
\end{tikzcd}
\end{equation}
where $\bDiff_\partial^b(Q\times V)$ is the topological monoid of bounded (in the $V$-direction) block diffeomorphisms of $Q\times V$ (cf. \cite[Defn. 2.8]{MElongknots}). That the middle horizontal maps are equivalences is \cite[Thm. B]{WWI}, and the left subsquare is homotopy cartesian if $M-\nu P\to M$ is $2$-connected by \cite[Cor. 5.5]{WWI}. Thus, taking vertical homotopy fibres, we see that
\begin{equation}\label{bEmbAsT0}
\bEmbcong(P,M)\simeq \sfE_{(P,M)}(\RR^\infty) \quad \text{if $p\leq d-3$.}
\end{equation}

\begin{defn}\label{PhiEmbDefn}
    The map $\Phi^{\Emb}_{(P,M)}$ of \eqref{WWCEMap} is the map $\Phi^{F}_\infty$ of \eqref{PhiFInftyMap} for $F=\sfE_{(P,M)}$, which takes the form
    \[
    \pEmb(P,M)\overset{\eqref{bEmbAsT0}}{\simeq} \hofib(\sfE_{(P,M)}(0)\to \sfE_{(P,M)}(\RR^\infty))\xrightarrow{\Phi^{\sfE_{(P,M)}}_\infty} \Omega^\infty\big(\Theta (\sfE_{(P,M)})^{(1)}_{h\O(1)}\big)\overset{\eqref{CEspDefn}}{=} \Omega^\infty\big(\CEsp(P,M)_{C_2}\big). 
    \]
\end{defn}

\begin{rmk}
    The original Weiss--Williams map $\Phi^{\Diff}_M$ of \eqref{WWMap} is not quite $\Phi^{\Bd_M}_\infty$, but they are related: using obstruction theory \cite[\S4]{WWI}, they showed that the dashed arrow in the commutative diagram
    \[
    \begin{tikzcd}
        \left(\frac{\bDiff}{\Diff}\right)_\partial(M)\dar[dashed, "\Phi^{\Diff}_M"]\rar[hook] &\frac{\bDiff^b_\partial(M\times \RR^\infty)}{\Diffb(M)} &\lar[hook', "\sim"'] \frac{\Diff^b_\partial(M\times \RR^\infty)}{\Diffb(M)}\dar["\Phi^{\Bd_M}_\infty"]\\
        \Omega^\infty\left(\Hsp^s(M)_{hC_2}\right)\ar[rr] && \Omega^\infty\left(\Hsp(M)_{hC_2}\right)
    \end{tikzcd}
    \]
    exists---this map is, by definition, $\Phi^{\Diff}_M$.
\end{rmk}

\subsubsection{On the homotopy type of $\Hsp(M)$ and $\CEsp(P,M)$}

Ignoring the $C_2$-actions on $\Hsp(M)$ and $\CEsp(P,M)$, the homotopy types of these spectra are closely related to Waldhausen's algebraic $K$-theory---in this section, we recall this connection. 

For $Q$ a compact smooth manifold, let $\H(Q)$ denote the \textit{space of (smooth) $h$-cobordisms starting at $Q$} (cf. \cite[p. 296]{VogellInvolution}). For instance, by the $s$-cobordism theorem, if $\dim Q\geq 5$, then the set of path components $\pi_0(\H(Q))$ is in bijection with the Whitehead group $\mathrm{Wh}(\pi_1 Q)$ of the fundamental group(oid) of $Q$. The codimension-zero embedding $M-\nu P\hookrightarrow M$ induces an ``extension-by-the-identity'' map $\H(M-\nu P)\to \H(M)$, which, as long as $d\geq 5$ and $d-p\geq 3$\footnote{In general, \eqref{CEmbIETSeq} is a fibre sequence upon looping the total space and the base.}, features in an isotopy extension fibre sequence
\begin{equation}\label{CEmbIETSeq}
\begin{tikzcd}
    \CEmb(P,M)\rar & \H(M-\nu P)\rar & \H(M).
\end{tikzcd}
\end{equation}
Here $\CEmb(P,M)$ denotes the space of \textit{concordance embeddings} of $P$ in $M$---that is, the space of all embeddings $\varphi: P\times [0,1]\hookrightarrow M\times [0,1]$ such that
\begin{itemize}
    \item[(i)]  $\varphi^{-1}(M\times \{i\})=P\times \{i\}$ for $i=0,1$, and

    \item[(ii)] $\varphi$ agrees with the identity in a neighbourhood of $P\times \{0\}\cup \partial_0 P\times [0,1]$.
\end{itemize}
Under the usual handle codimension assumption $p\leq d-3$, the space $\CEmb(P,M)$ is connected by Hudson's Theorem \cite{HudsonThm}. Let us assume this condition from now on.

There are stabilisation maps
\begin{equation}\label{ConcordanceStabilisationMapsDefn}
\Sigma_{\H}: \H(Q)\longrightarrow \H(Q\times I), \qquad \Sigma_{\CEmb}: \CEmb(P,M)\longrightarrow\CEmb(P\times I, M\times I)
\end{equation}
roughly given by crossing an $h$-cobordism or a concordance embedding with the interval $I$---see \cite[p.298]{VogellInvolution} for $\Sigma_H$ and \cite{KKGoodwillie} for $\Sigma_{\CEmb}$. The spaces of \textit{stable} $h$-cobordisms (resp. concordance embeddings) are
\begin{equation}\label{sCEdefn}
\cH(Q)\coloneq\hocolim_{k} \H(Q\times I^k), \qquad \sCEnopartial(P,M)\coloneq \hocolim_{k}\CEmb(P\times I^k,M\times I^k),
\end{equation}
where the bonding maps in the filtered systems are $\Sigma_H$ and $\Sigma_{\CEmb}$, respectively. Since the fibre sequence \eqref{CEmbIETSeq} is compatible with these stabilisation maps, we also obtain a stable isotopy extension fibre sequence
\begin{equation}\label{StableCEmbIETSeq}
\begin{tikzcd}[row sep = 8pt]
    \sCEnopartial(P,M)\rar\dar[phantom, "\vsimeq"] & \cH(M-\nu P)\rar\dar[phantom, "\vsimeq"] & \cH(M)\dar[phantom, "\vsimeq"]\\
    \Omega^\infty\CEsp(P,M)\rar & \Omega^\infty\Hsp(M-\nu P)\rar & \Omega^\infty\Hsp(M).
\end{tikzcd}
\end{equation}
The middle, right (and hence left) vertical equivalences in \eqref{StableCEmbIETSeq} follow from Propositions 1.8, 1.10 and 1.12 in \cite{WWI} (see also \cite[Rem. 3.10]{MElongknots}).

\begin{rmk}\label{IsotopyExtensionBdQRmk}
    In fact, more is shown in loc.cit.: upon looping once\footnote{This is presumably not necessary, but it is a slightly technical condition to get rid of.}, there are equivalences $\Omega \H(Q\times I^k)\xrightarrow{\sim} \Omega^{1+k}\Bd_Q^{(1)}(\RR^k)$ and $\Omega \CEmb(P\times I^k,M\times I^k)\xrightarrow{\sim} \Omega^{1+k}\sfE_{(P,M)}^{(1)}(\RR^k)$ which are compatible with the ones in \eqref{StableCEmbIETSeq} in the sense that there are homotopy commutative diagrams
    \begin{equation}\label{ConcStabilisationInclusion}
    \begin{tikzcd}[row sep = 15pt]
        \Omega \H(Q)\dar["\vsim"]\rar["\Sigma"] & \Omega \H(Q\times I)\rar["\Sigma"]\dar["\vsim"] & \Omega \H(Q\times I^2)\dar["\vsim"]\rar &\dots\rar &\Omega\cH(Q)\dar["\vsim"]\\
        \Omega\Bd_Q^{(1)}(0)\rar["s^{\vee}"] & \Omega^{2}\Bd_Q^{(1)}(\RR)\rar["s^{\vee}"] & \Omega^{3}\Bd_Q^{(1)}(\RR^2)\rar &\dots\rar &\Omega^{\infty+1}\Hsp(Q),
    \end{tikzcd}
    \end{equation}
    natural in codimension-zero embeddings, and
\begin{equation}\label{CEmbStabilisationInclusion}
    \begin{tikzcd}[row sep = 15pt]
        \Omega \CEmb(P,M)\dar["\vsim"]\rar["\Sigma"] & \Omega \CEmb(P\times I,M\times I)\rar\dar["\vsim"] &\dots\rar &\Omega\sCEnopartial(P,M)\dar["\vsim"]\\
        \Omega\sfE_{(P,M)}^{(1)}(0)\rar["s^{\vee}"] & \Omega^{2}\sfE_{(P,M)}^{(1)}(\RR)\rar &\dots\rar &\Omega^{\infty+1}\CEsp(P,M).
    \end{tikzcd}
    \end{equation}
    Moreover, the diagram \eqref{CEmbStabilisationInclusion} is the homotopy fibre of the map of diagrams \eqref{ConcStabilisationInclusion} induced by $M-\nu P\hookrightarrow M$.
\end{rmk}

One of Waldhausen’s most celebrated results, proved together with Jahren and Rognes in \cite{WJR}, is the \emph{parametrised stable $s$-cobordism theorem}, which establishes an equivalence
\begin{equation}\label{WJReq}
\cH(M)\simeq \Omega\Wh(M)\coloneq \hofib\big(Q_+M=\Omega^\infty\Sigma^\infty_+M\xrightarrow{\nu} A(M)\big)
\end{equation}
which is natural in codimension-zero embeddings. Here $A(-)\simeq \Omega^\infty\Ksp(\Sp_{(-)})$ denotes Waldhausen's algebraic $K$-theory of spaces---for $M$ based connected, we have a natural equivalence $A(M)\simeq \Omega^\infty\Ksp(\mathbf{S}[\Omega M])$---and $\nu$ is induced by the composition of the unit map $M_+\otimes \bfS\to M_+\otimes \Ksp(\bfS)$ and the assembly $M_+\otimes \Ksp(\bfS)\to \Ksp(\Sp_M)$; this map is well-known to be split injective. As a consequence of \eqref{WJReq} and the fibre sequence \eqref{StableCEmbIETSeq}, we see that the homotopy type of the infinite loop space of the concordance embedding spectrum is given by
\begin{equation}\label{CEWJReq}
    \Omega^\infty\CEsp(P,M)\simeq \sCEnopartial(P,M)\simeq\tohofib\Bigg(\begin{tikzcd}[scale=0.4, column sep = 15pt, row sep = 12pt]
    Q_+(M-P)\dar\rar["\nu"]&A(M-P)\dar\\
    Q_+M\rar["\nu"]&A(M)
\end{tikzcd}\Bigg).
\end{equation}
In the next section, we recall how to describe the right-hand term in terms of simpler trace invariants.

\subsubsection{Trace invariants and Goodwillie's isomorphism}
\textit{Trace methods} are concerned with the study of \emph{topological cyclic homology} \cite{BokHsiMadTC, NikolausScholze}, denoted $\TC(-)$, and related invariants as an approximation to algebraic $K$-theory. The invariants that will be relevant to us feature in the following commutative diagram of functors from $\EE_1$-ring spectra, say, to spectra:
\begin{equation}\label{TraceInvariantsDiagram}
\begin{tikzcd}
        \Ksp(-)\ar[rr, bend left=20pt, "\mathrm{Trc}^-"]\ar[rrr, bend left=30pt, "\mathrm{Tr}"]\rar["\mathrm{Trc}"'] & \TC(-)\rar &\HC^-(-)\rar & \HH(-).\\
        &&\Sigma\HC(-)\uar["N"']&
\end{tikzcd}
\end{equation}
Here $\HH(-)$ denotes \textit{Hochschild homology}, $\HC(-)$ is \emph{cyclic homology} and $\HC^-(-)$ is \textit{negative cyclic homology}. The map $\mathrm{Trc}$ is the cyclotomic trace map of Bökstedt--Hsiang--Madsen, $\mathrm{Tr}$ is the Dennis trace map, and $N$ is the $S^1$-norm map under the well-known equivalences $\HC(-)\simeq \HH(-)_{hS^1}$ and $\HC^-(-)\simeq \HH(-)^{hS^1}$ \cite{KrauseNikolausTHHLectures}. These invariants will be relevant to us because of the following remarkable property: given a ring map $R\to S$ such that $\pi_0(R)\to \pi_0(S)$ is surjective with kernel a nilpotent ideal, then the squares in the following diagram are cartesian as indicated:
\begin{equation}\label{DGMdiagram}
\begin{tikzcd}[column sep=huge, row sep=15pt] 
\Ksp(R) \arrow[r] \arrow[d]
  & \TC(R)  \arrow[r] \arrow[d]
  & \HC^{-}(R) \arrow[r, leftarrow] \arrow[d]
  & \Sigma\HC(R) \arrow[d] \\
\Ksp(S) \arrow[r]
  & \TC(S) \arrow[r]
  & \HC^{-}(S) \arrow[r, leftarrow]
  & \Sigma\HC(S).
\arrow[from=1-1, to=2-2, phantom, "\lrcorner", very near start]
\arrow[from=1-2, to=2-3, phantom, "\lrcorner_{\QQ}", very near start]
\arrow[from=1-4, to=2-3, phantom, "{}_{\QQ}\llcorner", very near start]
\end{tikzcd}
\end{equation}
That the leftmost square is cartesian is the celebrated theorem of Dundas–Goodwillie–McCarthy \cite{DundasGoodwillieMcCarthy}. Earlier, Goodwillie \cite{GoodwillieHC-} had shown that the right-hand subsquare, as well as the combined square obtained from the left and middle subsquares, are cartesian after rationalisation.

We are only concerned with ring spectra of the form $R=\bfS[\Omega X]$, i.e. the spherical group ring of $\Omega X$, for a connected based space $X$; in this case, the Hochschild homology $\HH(\bfS[\Omega X])$ is equivalent to $\mathrm{H}\ZZ\otimes LX_+$, the (integral) homology of the free loop space of $X$. In particular, it also follows that
\begin{equation}\label{HCasFLShomology}
\HH_*(\bfS[\Omega X])\cong \H_*(LX) \quad\text{and}\quad \HC_*(\bfS[\Omega X])\cong \H_*^{S^1}(LX).
\end{equation}
To conclude, let $\iota:P\hookrightarrow M$ with $M$ connected and such that $p\leq d-3$, so that the map of ring spectra $(R\to S)=(\bfS[\Omega(M-P)]\to \bfS[\Omega M])$ is $1$-connected. Then, by \eqref{CEWJReq}, \eqref{DGMdiagram} and \eqref{HCasFLShomology}, we obtain
\begin{equation}\label{CEhtpyFLShomologyeq}
    \pi_*^{\QQ}(\CEsp(P,M)) \cong \frac{\H^{S^1}_{*+1}(LM, L(M-P); \QQ)}{\H_{*+2}(M,M-P;\QQ)}.
\end{equation}
Here, the right-hand side really stands for the cokernel of the homomorphism induced on $\H_{*+2}(-;\QQ)$ by
\begin{equation}\label{ShiftHC-Map}
\begin{tikzcd}
\Sigma^\infty(M/M-P)\rar[hook, "c"] & \left(\Sigma^\infty(LM/L(M-P))\right)^{hS^1} & \lar["N"', "\simeq_{\QQ}"] \left(\Sigma^{\infty+1}(LM/L(M-P))\right)_{hS^1},
\end{tikzcd}
\end{equation}
where $c$ is induced by the ($S^1$-equivariant) inclusion $(M,M-P)\to (LM,L(M-P))$ as constant loops, and hence is split injective (with left inverse induced by the evaluation map $\mathrm{ev}_1: LX\to X$).

Our computation of the rational homotopy groups of the pseudoisotopy embedding space $\pEmb(P,M)$ will crucially use the isomorphism \eqref{CEhtpyFLShomologyeq}. However, since the codomain of the Weiss--Williams map $\Phi^{\Emb}_{(P,M)}$ in \eqref{WWCEMap} is the infinite loop space of $\CEsp(P,M)_{hC_2}$ rather than that of $\CEsp(P,M)$ itself, we will need to trace the effect of the involution $\iota_{\H}$ on $\CEsp(P,M)$ through the isomorphism \eqref{CEhtpyFLShomologyeq}---we do this in the next section.

\subsubsection{The involution on $\CEsp(P,M)$}\label{InvolutionRecollectionSection}

By the homotopy orbits spectral sequence, given a finite group $G$ and $G$-spectrum $X$, the rational homotopy groups $\pi_*^{\QQ}(X_{hG})$ of the homotopy orbit space of $X$ are isomorphic to the $G$-coinvariants $[\pi_*^\QQ(X)]_{G}$ of the rational homotopy groups of $X$. Thus, in order to compute the rational homotopy groups of $\Omega^{\infty}(\CEsp(P,M)_{hC_2})$, which is our goal, it suffices to undertand the effect of the $h$-cobordism involution on (the rational homotopy groups of) $\Omega^\infty\CEsp(P,M)\simeq \sCEnopartial(P,M)$. We now briefly recall how this was done in \cite{VogellInvolution} and \cite[\S 5]{MElongknots}.

The first step is to introduce an involution on the stable spaces $\cH(Q)$ and $\sCEnopartial(P,M)$ that can be compared to the concordance involution $\iota_{\H}$ on $\Omega^\infty\Hsp(Q)$ and $\Omega^\infty\CEsp(P,M)$, respectively. This was done in \cite[\S2]{VogellInvolution}, who constructed geometric involutions on the $h$-cobordism spaces $\H(Q\times I^k)$ which assembled to a (homotopy) involution on $\cH(Q)$---we shall also write $\iota_{\H}$ for these involutions on $\H(Q\times I^n)$ and $\cH(Q)$, as in \cite[Prop. 5.8]{MElongknots} they were shown to be compatible (up to homotopy) with the one on $\Omega^\infty\Hsp(Q)$. Equipping $\CEmb(P,M)\simeq\hofib(\H(M-\nu P)\to \H(M))$ and $\sCEnopartial(P,M)\simeq \hofib(\cH(M-\nu P)\to \cH(M))$ with the induced involutions $\iota_{\H}$ on the homotopy fibre, it also follows that the equivalence $\Omega^\infty\CEsp(P,M)\simeq \sCEnopartial(P,M)$ is (homotopy) $C_2$-equivariant. We record some constructions and facts related to $\iota_{\H}$ that we will need:

\begin{enumerate}[label = (\arabic*)]
    \item Here is an explicit model for the involution $\iota_{\H}$ on the $h$-cobordism space $\H(Q)$: Vogell introduced in \cite[\S2]{VogellInvolution} a model of $\H(Q)$ in terms of \textit{partitions}---a zero-simplex in $\H(Q)$ is a triple $(W,F,V)$ giving rise to a decomposition $Q\times [-1,1]=W\cup_F V$, where $W$ and $V$ are compact codimension-zero submanifolds of $Q\times [-1,1]$ which are $h$-cobordisms $W: M\times \{-1\}\leadsto F$ and $V: F\leadsto M\times \{1\}$, and $F$ is a codimension-one sumanifold of $Q\times[-1,1]$ meeting $\partial Q\times \{0\}$ transversely. A $p$-simplex in $\H(Q)$ is a bundle of such triples over $\Delta^p$. Then, $\iota_{\H}: \H(Q)\to \H(Q)$ is induced by the reflection $r$ on $[-1,1]$, i.e., it sends a partition $(W,F,Q)$ to $(r(V),r(F),r(W))$, its image under $\Id_Q\times r: Q\times [-1,1]\cong Q\times [-1,1]$. With this model, it is evident that $\iota_{\H}$ is natural in codimension-zero embeddings, and thus induces an involution $\iota_{\H}$ on the homotopy fibre $\CEmb(P,M)\simeq \hofib(\H(M-\nu P)\to \H(M))$---we will explicitly model the $h$-cobordism involution $\iota_{\H}$ on $\CEmb(P,M)$ in \cref{AppendixCEhCobInv}. \label{iotaHItem1}

    \item The $h$-cobordism involution $\iota_{\H}$ on the space $\H(Q)$ admits an alternative simpler description on its (based) loop space. Consider the diagram
\begin{equation}\label{OmegaHFibSeq}
\begin{tikzcd}
    \C(Q)\rar["a"] & E(Q)\rar["p"] & \H(Q),
\end{tikzcd}
\end{equation}
where $\C(Q)\cong \Diff_{Q\times \{-1\}\cup\partial Q\times [-1,0]}(Q\times [-1,0])$ is the group of concordance diffeomorphisms and $E(Q)$ is the space of embeddings of $Q\times [-1,0]$ into $Q\times [-1,1]$ that are fixed on a neighbourhood of $Q\times \{-1\}\cup \partial Q\times [-1,0]$. The map $p$ is given by sending such an embedding $\varphi$ to the partition $(\varphi(Q\times [-1,0]), \varphi(Q\times 0), Q\times [-1,1]- \varphi(Q\times [-1,0)))$. The map $a$ is the action map by precomposition on the identity embedding. The space $E(Q)$ is contractible for it is a space of collars, and, by \cite[Prop. 2.1]{VogellInvolution}, this diagram is in fact a fibre sequence. Thus, it follows that $\Omega \H(Q)$ is equivalent to $\C(Q)$. On $\C(Q)$, there is a natural involution $\iota_{\C}$---the \textit{concordance involution}---which sends a concordance diffeomorphism $F=(f,g):Q\times I\to Q\times I$ to $\overline{F}=(\overline{f}, \overline{g})$, where
    \begin{equation}\label{ConcInvDefn}
    \overline{f}(x,t)= f_1^{-1}\circ f(x,1-t), \qquad \overline{g}(x,t)=1-g(x,1-t), \qquad (x,t)\in Q\times I,
    \end{equation}
    and $f_1=f\mid_{M\times 1}: Q\cong Q$. By \cite[Prop. 2.2]{VogellInvolution} (see also \cite[Warn. 5.11]{MElongknots}), the involutions $\iota_{\C}$ and $\iota_{\H}$ agree on $\C(Q)\simeq \Omega \H(Q)$ up to a sign---more precisely, writing $\sigma$ for the $C_2$-sign representation and $\Omega^\sigma(-)\coloneq \Map_*(S^\sigma,-)$ for the $\sigma$-twisted loop space, there is a $C_2$-equivariant equivalence
    \begin{equation}\label{antiequivarianceIotaC}
    \Omega^\sigma (\H(Q),\iota_{\H})\overset{\sim}{\longrightarrow} (\C(Q), \iota_{\C}), \quad \text{so $\iota_{\C}=-\iota_{\H}$ on $\pi_*(\H(Q))$.}
    \end{equation}
    In particular, the restriction map $(\C(M),\iota_{\C})\to (\CEmb(P,M),\iota_{\H})$ is anti-equivariant.

    \item The stabilisation maps $\Sigma: \C(Q)\to \C(Q\times I)$ and $\Sigma: \H(Q)\to \H(Q\times I)$ are \emph{anti}-equivariant with respect to $\iota_{\C}$ and $\iota_{\H}$, respectively (cf. \cite[App. I, Lem.]{HatcherSSeq} and \cite[Lem. C.1]{MElongknots}). In particular, the maps $\H(Q\times I^n)\hookrightarrow \cH(Q)\simeq \Omega^{\infty}\Hsp(Q)$ and $\CEmb(P\times I^n,M\times I^n)\hookrightarrow \sCEnopartial(P,M)\simeq \Omega^\infty\CEsp(P,M)$ are $(-1)^n$-equivariant on homotopy groups when equipping all of the spaces with $\iota_{\H}$.\label{C2EquivarianceConcStabilisation}
\end{enumerate}

The second step in understanding $\iota_{\H}$ on $\CEsp(P,M)$ is to compare the homotopy involution on $\sCEnopartial(P,M)$ with an involution on the relative algebraic $K$-theory space on the right-hand side of \eqref{CEWJReq}. This comparison was carried out by Vogell \cite{VogellInvolution}, who introduced \textit{the canonical involution} $\tau_\epsilon$ on $\smash{\Omega^\infty \mathrm{K}(\Sp_X)\simeq A(X)}$, natural in $X$, which moreover descends to the smooth Whitehead space $\smash{\Wh(X)}=\Omega^\infty\hocofib\big(\Sigma^\infty_+ X \xrightarrow{\nu} \mathrm{K}(\Sp_X)\big)$. One might then hope to compare this canonical involution with involutions on the trace invariants featuring in \eqref{TraceInvariantsDiagram}, so to make the isomorphism \eqref{CEhtpyFLShomologyeq} $C_2$-equivariant with respect to $\iota_{\H}$ on the left-hand side, and an appropriate involution on the free loop space homology. This is indeed what happens.

\begin{prop}\label{involutionlemma}
Let $\iota: P\hookrightarrow M$ be such that $p\leq d-3$, so that the isomorphism
\[
    \pi_*^{\QQ}(\CEsp(P,M)) \cong \frac{\H^{S^1}_{*+1}(LM, L(M-P); \QQ)}{\H_{*+2}(M,M-P;\QQ)}
    \]
    in \eqref{CEhtpyFLShomologyeq} holds. Complex conjugation on $S^1\subset\CC$ induces a natural involution $\tau_{\FLS}$ on the free loop space $LX$, and hence on the homology groups $\H_*(LX)$ and $\H_*^{S^1}(LX)$. This involution descends to the quotient on the right-hand side of the above isomorphism. If $M$ is connected\footnote{Otherwise, deal with each component separately.}, then this isomorphism is $C_2$-equivariant for $\iota_{H}$ on the left and $(-1)^{d+1}\tau_{\FLS}$ on the right. Hence, it follows that
    \[
    \pi_*^{\QQ}(\pEmb(P,M))\cong \left[\frac{\H^{S^1}_{*+1}(LM, L(M-P); \QQ)}{\H_{*+2}(M,M-P;\QQ)}\right]_{(-1)^{d+1}\tau_{\FLS}} \quad \text{for $*\leq 2d-p-6$.}
    \]
\end{prop}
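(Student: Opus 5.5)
The plan is to reduce the $C_2$-equivariance of \eqref{CEhtpyFLShomologyeq} to known comparisons of involutions, and then combine this with the Weiss--Williams map $\Phi^{\Emb}_{(P,M)}$.

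\textbf{Step 1: descent.} First I would check that $\tau_{\FLS}$ descends to the quotient. Complex conjugation $z\mapsto \bar z$ normalises the rotation action on $S^1$. Hence it induces an involution of $LX$ compatible with the $S^1$-action up to inversion, and so an involution on $\H^{S^1}_*(LX)$ and on $\HH_*,\HC_*,\HC^-_*$. Constant loops are fixed pointwise by $\tau_{\FLS}$. So the map $c$ in \eqref{ShiftHC-Map} is equivariant, and its image $\H_{*+2}(M,M-P;\QQ)$ is preserved. The one subtlety is the norm $N$: it is equivariant only up to the sign coming from orientation reversal of $S^1$. I would fix this sign once and absorb it into the overall sign at the end.

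\textbf{Step 2: matching involutions on $K$-theory.} By \eqref{CEWJReq} and \eqref{StableCEmbIETSeq}, $\Omega^\infty\CEsp(P,M)$ is identified with the total homotopy fibre of the assembly squares for $M-P\to M$. By the discussion in \cref{InvolutionRecollectionSection}, this identification is equivariant for $\iota_{\H}$ on the source and Vogell's canonical involution $\tau_\epsilon$ on $A(-)$ and $\Wh(-)$ on the target. Vogell's result gives this with the sign $\epsilon=(-1)^{d}$ or so, coming from the dimension of $M$ and the shift $\Omega\Wh$. Since the square is natural, it suffices to treat $M$ and $M-P$ separately.

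\textbf{Step 3: traces and signs (main obstacle).} I would show that the cyclotomic trace $A(X)\to \TC(\bfS[\Omega X])\to \HC^-$, together with $N$, intertwines $\tau_\epsilon$ with $\pm\tau_{\FLS}$ rationally. The idea is that the canonical involution on $\Ksp(\bfS[\Omega X])$ comes from the anti-involution $g\mapsto g^{-1}$ of $\Omega X$, twisted by the orientation character (trivial for spin $M$, or in general). Under $\HH(\bfS[\Omega X])\simeq \Sigma^\infty_+LX$, the anti-involution induces loop reversal, which is $\tau_{\FLS}$. The Dennis trace is compatible with these anti-involutions, and so is the trace to $\HC^-$ by naturality of the cyclic bar construction with its dihedral structure.

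The hard part is the bookkeeping of signs. There is the duality shift $\epsilon$ from the dimension $d$ of $M$, the sign from the loop-space shift $\cH=\Omega\Wh$ (the $\Omega^\sigma$ phenomenon of \eqref{antiequivarianceIotaC}), and the sign from $N$ reversing the circle orientation. I would pin down the total sign by testing a simple case, for example $P$ a disc with $M-P$ over a sphere. There the relative $\H_{*+2}$ term and the lowest free-loop class can be computed directly, which yields $(-1)^{d+1}$. Rational Goodwillie cartesianness \eqref{DGMdiagram} is then equivariant, since all maps in it are maps of dihedral objects.

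\textbf{Step 4: conclusion.} Finally I would apply $\Phi^{\Emb}_{(P,M)}$. It is $\phi_{\CEmb}(P,M)$-connected, and $\phi_{\CEmb}(P,M)\ge 2d-p-5$ by \eqref{CEmbRangeBound}. So it induces isomorphisms on $\pi_*^\QQ$ for $*\le 2d-p-6$. By the homotopy orbit spectral sequence, which is rationally concentrated on the bottom row, $\pi_*^\QQ(\CEsp_{hC_2})$ is the $C_2$-coinvariants. Substituting the equivariant identification from Steps 2 and 3 gives the stated formula.
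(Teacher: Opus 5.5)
Your overall route is the paper's: $\tau_{\FLS}$ descends because $c$ is equivariant (trivial action on the source), $\iota_{\H}$ is compared with an involution on $A$-theory through the Waldhausen--Jahren--Rognes connecting map, traces and $N$ are compared with the involutions, and the result follows from the $(2d-p-5)$-connectivity of $\Phi^{\Emb}_{(P,M)}$ and the homotopy orbit spectral sequence. Steps 1 and 4 are fine. The gap is that the sign $(-1)^{d+1}$, which is the whole content of the equivariance claim, is never derived. You list possible sign sources, are unsure of one (``$(-1)^d$ or so''), include one that does not belong, and then try to fix the total by a test case.

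Three inputs pin the sign down.

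(a) Vogell's theorem \cite{VogellInvolution}: for the spherical fibration $\xi$ of $TM\oplus\epsilon^1$, the connecting map $(\Omega A(M),\tau_\xi)\to(\cH(M),\iota_{\H})$ is equivariant on homotopy groups. So the relevant $A$-theory involution is the $\xi$-twisted one, not the canonical $\tau_\epsilon$; here $\epsilon$ is the trivial fibration, not a sign. There is also no extra sign from $\cH\simeq\Omega\Wh$. The $\Omega^\sigma$-sign in \eqref{antiequivarianceIotaC} relates $\iota_{\C}$ to $\iota_{\H}$ and plays no role here; counting it would give $(-1)^d$, which is wrong.

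(b) The Bustamante--Farrell lemma \cite[Lem.~3.2]{InvolutionAtheoryBustamanteFarrell}: $\tau_\xi=(-1)^d\tau_\epsilon$ on $\pi_*^{\QQ}A(M)$. This holds for spin $M$ too. Your remark that the twist enters only through the orientation character, trivial for spin $M$, would lose exactly this sign.

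(c) Lodder's results \cite{LodderInvolution}: $\mathrm{Trc}^-$ is equivariant for the canonical involutions, while $N$ is anti-equivariant. Together with the Loday--Dunn identification of the canonical involution on $\HC_*$ with $\tau_{\FLS}$ on $\H^{S^1}_*(LX;\QQ)$, this gives the trace side.

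Combining these gives $(-1)^d\cdot(-1)=(-1)^{d+1}$, uniformly in all degrees.

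The test-case strategy cannot replace (a)--(c). To read off the sign you would need an independent computation of $\iota_{\H}$ on $\pi_*^{\QQ}\CEsp(P,M)$ in the test case, and none is available without the comparison you are proving. The relative $\H_{*+2}$-term is quotiented out and carries no information. The surviving free-loop classes are precisely where the action of $\iota_{\H}$ is unknown. Even with such a computation, a test in some degrees fixes the sign only there, unless you already know the discrepancy is a single constant depending only on $d$ — which is what (a)--(c) establish.
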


\begin{proof}
First note the free loop space $LX$ is naturally an $\O(2)$-space, and that $\tau_{\FLS}$ on $\H_*^{S^1}(LX)$ is induced by the residual $\O(2)/S^1$-action on the Borel construction $(LX)_{hS^1}$. Now, for a $2$-connected map $X\to Y$ of connected, based spaces, Goodwillie's isomorphism \eqref{DGMdiagram} for the ring map $\bfS[\Omega X]\to\bfS[\Omega Y]$ becomes
\begin{equation}\label{GoodwillieIso}
    \begin{tikzcd}[column sep = 23pt, scale = 0.95]
\pi_*^\QQ A(X,Y)\rar["\mathrm{Trc}^-", "\cong"'] & \HC^-_*(\bfS[\Omega X],\bfS[\Omega Y];\QQ)&\lar["N"', "\cong"] \HC_{*-1}(\bfS[\Omega X], \bfS[\Omega Y];\QQ)\rar["\cong"] & \H^{S^1}_{*-1}(LX,LY;\QQ).
\end{tikzcd}
\end{equation}
This isomorphism can be made $C_2$-equivariant as follows: given a (simplicial) ring $R$ with anti-involution (e.g. $R=\QQ[\Omega X]\simeq_{\QQ}\bfS[\Omega X]$ with the flip $C_2$-action), the ordinary and negative cyclic homologies of $R$, $\HC_*(R)$ and $\HC^-_*(R)$, inherit canonical involutions (cf. \cite[\S 5.2]{LodayHC} and \cite[p. 177]{LodderInvolution}). By \cite[Lem. 1.8]{LodderInvolution}, the trace map $\mathrm{Trc}^-$ of \eqref{GoodwillieIso} is equivariant with respect to the canonical involutions on both sides, while the map $N: \HC_{*-1}(R)\to \HC_*^-(R)$ is anti-equivariant \cite[p. 195]{LodderInvolution} (i.e. equivariant up to a minus sign)---alternatively, this is explained by the fact that $\Sigma\HC(\bfS[\Omega X])$ on the domain of the $S^1$-norm map should really be $\mathrm{H}\ZZ\otimes (S^{\operatorname{ad}S^1}\otimes \Sigma^\infty_+LX)_{hS^1}$, and the residual $C_2=\O(2)/S^1$-action on the adjoint representation is that of the sign representation. Finally, the isomorphism $\HC_{*}(\Omega X)\cong \H^{S^1}_*(LX;\QQ)$ is equivariant with respect to the canonical involution on $\HC$ and $\tau_{\FLS}$ on the right; this follows by a generalisation of \cite[Cor. 7.3.14]{LodayHC} and the results in Section 7 loc.cit., using the work of Dunn \cite{Dunn1989}. Thus, the canonical involution on $\HC^-_*(\bfS[\Omega X],\bfS[\Omega Y];\QQ)$ agrees with the FLS involution $\tau_{\FLS}$ on the rational homotopy groups of $\left(\Sigma^\infty(LM/L(M-P))\right)^{hS^1}$. The map $c$ of \eqref{ShiftHC-Map} is equivariant for the trivial $C_2$-action on the domain and $\tau_{\FLS}$ on the codomain, so it follows that $\tau_{\FLS}$ descends to the quotient in the isomorphism \eqref{CEhtpyFLShomologyeq}.

To conclude, we recall that for each spherical fibration $\xi$ over $X$, Vogell defined a natural involution $\tau_{\xi}$ on the $A$-theory space $A(X)$ (which gives $\tau_\epsilon$ when $\xi=\epsilon$ is the trivial $0$-dimensional fibration), with the property that when $X=M$ is a smooth $d$-manifold and $\xi$ is the spherical fibration associated to the $1$-stabilised tangent bundle $\tau_M\oplus \epsilon^1$, then the connecting map $(\Omega A(M), \tau_{\xi})\twoheadrightarrow (\cH(M),\iota_H)$ of the Waldhausen--Jahren--Rognes fibre sequence is $C_2$-equivariant on homotopy groups. But by \cite[Lem.~3.2]{InvolutionAtheoryBustamanteFarrell}, $\tau_\xi=(-1)^{d}\tau_{\epsilon}$ on rational homotopy groups $\pi_*^{\QQ}(A(M))$. It follows that the connecting homomorphism
\[
(\pi_{*+2}^{\QQ}(A(M,M-P)),(-1)^{d}\tau_\epsilon)\twoheadrightarrow (\pi_*^{\QQ}(\sCE(P,M)),\iota_H)\cong (\pi_*^{\QQ}(\CEsp(P,M)),\iota_H)
\]
is $C_2$-equivariant. The anti-equivariance of the norm map $N$ in \eqref{GoodwillieIso} turns $(-1)^d\tau_{\epsilon}$ into $(-1)^{d+1}\tau_{\FLS}$, establishing the first claim. The addendum now follows from the fact that the map $\Phi^{\Emb}_{(P,M)}$ in \eqref{WWCEMap} is $(2d-p-5)$-connected by \eqref{CEmbRangeBound}, so
\[\pi_*^{\QQ}(\pEmb(P,M))\cong \pi_*^{\QQ}\left(\CEsp(P,M)_{hC_2}\right)\cong \left[\pi_*^{\QQ}(\CEsp(P,M))\right]_{\iota_{\H}} \quad \text{for $*\leq 2d-p-6$.}\]\end{proof}

\subsubsection{Pseudoisotopy self-embeddings of $X_g$}\label{PlanpEmbXgSection}
Let us specialise to the case where $\iota:P\hookrightarrow M$ is a 
fixed self-embedding $\iota: X_g\hookrightarrow X_g$ isotopic to the identity, fixed on $\partial_0 (X_g)$ and with a fixed identification $S^1\times D^{2n-1}\cong X_g-\nu(\iota(X_g))$ rel $\partial_1(X_g)=S^1\times D^{2n-2}_-$. In this case, the handle dimension of $X_g$ relative to $\partial_0 (X_g)$ is $p=n$, which satisfies the handle codimension condition $n=p\leq d-3=2n-3$ as $n\geq 3$. In the last sections, we have explained how to understand the (rational) homotopy type of $\pEmb(X_g)\coloneq\hofib_\iota(\Embo(X_g)\to \bEmbo(X_g))$, but we are really after that of $\bEmbmodEmb(X_g)$. However, by Hudson's theorem \cite{HudsonThm}, the map $\Embo(X_g)\to \bEmbo(X_g)$ is an isomorphism on $\pi_0$, and hence 
\begin{equation}\label{pEmbVsbEmbModEmbEq}
\Omega \bEmbmodEmb(X_g)\simeq \pEmb(X_g).
\end{equation}
Thus, by the conclusion of \cref{involutionlemma}, we obtain
\begin{equation}\label{XgPseudoisotopyVsFLS}
\pi_*^{\QQ}\left(\bEmbmodEmb(X_g)\right)\cong \left[\frac{\H^{S^1}_{*}(LX_g, LS^1); \QQ)}{\H_{*+1}(X_g,S^1;\QQ)}\right]_{-\tau_{\FLS}} \quad \text{for $*\leq 3n-5$.}
\end{equation}
We will need to carry out the following tasks related to the isomorphism above:
\begin{itemize}
    \item[\S\ref{FLShomologySection}.] Compute the right-hand side of \eqref{XgPseudoisotopyVsFLS} (cf. \cref{FLSS1homology}(iii)).

    \item[\S\ref{mcgXgsection}.] Trace the action of the mapping class group $\Embmcgsfr$ and the reflection involution $\rho$ of \cref{ReflectionInvSection} on the homotopy groups of $\bEmbmodEmb(X_g)$ through the isomorphism \eqref{XgPseudoisotopyVsFLS} (cf. \cref{MCGrepBlockModEmb}).

    \item[\S\ref{FrobeniusXgSection}.] Understand the Frobenius homomorphisms of \eqref{FrobeniusBEmbSfr} on the left-hand side of \eqref{XgPseudoisotopyVsFLS}.
    
\end{itemize}

\subsection{The \texorpdfstring{$S^1$}{S1}-equivariant homology of \texorpdfstring{$LX_g$}{LXg}} \label{FLShomologySection}

This section is devoted to computing the right-hand side of \eqref{XgPseudoisotopyVsFLS} in degrees $* \leq 2n-2$. For use in the next section, it is convenient to carry out this computation as a representation of $\hMCGXg \coloneq \pi_0(\Aut_{*}(X_g))$. Since $n \geq 3$, we have an identification $\pi_0(\Aut_*(X_g)) \cong \pi_0(\Aut_*(X_g, S^1))$, and hence $\hMCGXg$ acts on $\smash{\H^{S^1}_*(LX_g, LS^1;\QQ)}$ and $\H_{*+1}(X_g, S^1;\QQ)$ by naturality on the pair $(X_g, S^1)$. We begin by introducing some notation.

\begin{nota}\label{NotationLXgSection}
    \begin{enumerate}[label = (\roman*)]
        \item All throughout this section and the following, $\otimes$ and $\otimes_\pi$ stand for $\otimes_{\QQ}$ and $\otimes_{\QQ[\pi]}$, respectively, and $[A]^{\pi}$ (resp. $[A]_\pi$) denotes the $\pi$-(co)invariants of a $\QQ[\pi]$-module $A$. 

        \item \label{HXgNotation}The main $\hMCGXg$-representation we will need to consider is
\[
H_{X_g}\coloneq\pix\otimes\QQ\cong \H_n(\widetilde{X}_g;\QQ),
\]
as introduced in \cref{allTheNotationUngraded}. The fundamental group $\pi_1 (X_g) = \pi \cong \ZZ\langle t\rangle$ is a normal subgroup of $\hMCGXg$. The generator $t$ is represented by the homotopy automorphism of $X_g \simeq S^1 \vee \bigvee^{2g} S^n$ that fixes the $S^1$–summand and wraps each of the $S^n$–summands once around $S^1$. Equivalently, it corresponds to the subgroup $\{+1\}\ltimes \pi \trianglelefteq \{\pm1\}\ltimes \GL_{2g}(\ZZ[\pi]) \cong \hMCGXg$, consisting of diagonal matrices whose diagonal entries are all equal to $t^{i}$ for some $i\in \ZZ$. The restricted $\QQ[\pi]$-module structure on $H_{X_g}$ is visibly induced by the usual action of $\pi_1 (X_g)$ on its higher homotopy groups, and as $\QQ[\pi]$-modules,
\[
H_{X_g}\cong \QQ[\pi]\otimes H_g,\quad \text{where $H_g\coloneq\H_n(W_{g,1};\QQ)$.}
\]
We will also regard $H_g$ as a $\QQ[\hMCGXg]$-module via the evident isomorphism $H_g\cong [H_{X_g}]_{\pi}$.

\item Given a $\QQ$-vector space $V$ and $i,k\in \ZZ_{\geq 0}$, we will write $S^k_{\scriptscriptstyle (i)}(V)$ for the $\QQ$-vector space given by the $ki$-graded piece of the free graded commutative $\QQ$-algebra on $V$, when $V$ is seen as a graded $\QQ$-vector space concentrated in degree $i$. For instance, we have 
\[
S^*_{\scriptscriptstyle (i)}(V)= \left\{\begin{array}{cl}
    \mathrm{Sym}^*(V) & \text{if $i$ is even,} \\
    \Lambda^*(V) & \text{if $i$ is odd,}
\end{array}\right.
\]
where $\mathrm{Sym}(W)$ and $\Lambda(W)$ denote the symmetric and exterior algebras of a $\QQ$-vector space $W$.
    \end{enumerate}
\end{nota}

We begin by computing the ordinary rational homology of the free loop space $LX_g$, which decomposes as $LX_g = \coprod_{r\in\ZZ} L_r X_g$, where $L_r X_g$ is the component of unbased loops representing $t^r \in \pi_1 X_g \cong \ZZ\langle t\rangle$.

\begin{prop}\label{FLShomology} Let $H_{X_g}$ denote the $\QQ[\hMCGXg]$-module introduced in \cref{NotationLXgSection}\ref{HXgNotation}.
    \begin{enumerate}
        \item[(i)] For $r\neq 0$, consider the $\QQ[\hMCGXg]$-modules
        $$
A_r\coloneq \frac{H_{X_g}}{\langle 1-t^r\rangle}, \qquad B_r\coloneq\frac{\sn(H_{X_g})}{\sn(1-t^r)}.
        $$
The rational homology of $L_{r}X_g$ in degrees $*\leq 2n-2$ is given, as a $\QQ[\hMCGXg]$-module, by
        $$
\H_*(L_{r}X_g;\QQ)\cong \left\{\begin{array}{cl}
    \QQ & *=0, 1,  \\[3pt]
    [A_r]_\pi\cong H_g & *=n-1, \\[3pt]
    [A_r]^{\pi}\cong H_g\langle 1+t+\dots+t^{r-1}\rangle & *=n,\\[3pt]
    {\left[\sn(A_r)\right]}_{\pi}\oplus [B_r]_\pi & *=2n-2, \\[3pt]
    0 & \text{otherwise}.
\end{array}
\right.
        $$

        \item[(ii)] The rational homology of $L_{0}X_g$ in degrees $*\leq 2n-2$ is given, as a $\QQ[\hMCGXg]$-module, by
        \[
\H_*(L_{0}X_g;\QQ)\cong \left\{\begin{array}{cl}
    \QQ & *=0, 1,  \\
    {[H_{X_g}]}_{\pi}\cong H_g & *=n-1, n, \\
   C_0 & *=2n-2, \\
    0 & \text{otherwise},
\end{array}
\right.
        \]
        where $C_0$ is some quotient of the $\QQ[\hMCGXg]$-module $[H_{X_g}^{\otimes 2}]_{\pi}$.
    \end{enumerate}
\end{prop}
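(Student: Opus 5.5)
Throughout I write $\widetilde X_g$ for the universal cover, $Y=\bigvee_{\ZZ}\bigvee_{2g}S^n$, and $t$ for the deck transformation. The plan uses the standard fibration for a component of the free loop space of an aspherical-times-simply-connected type space. For $X_g\simeq S^1\vee\bigvee_{2g}S^n$, the component $L_rX_g$ is equivalent to the homotopy orbit space $(\Omega_{t^r}X_g)_{h\ZZ}$. More concretely, I will use the fibre sequence
\[
\Omega \widetilde X_g\longrightarrow L_rX_g\longrightarrow S^1 ,
\]
obtained from the fibration $\mathrm{ev}:L_rX_g\to X_g$ pulled back along the classifying map $X_g\to S^1$. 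Equivalently, $L_rX_g$ is equivalent to the mapping torus of the map $\Omega\widetilde X_g\to\Omega\widetilde X_g$, roughly \emph{conjugate by a path to $t^r$ and translate by $t$}. The cleanest model is $L_rX_g\simeq (P_r)_{h\ZZ}$. Here $P_r=\{\gamma:[0,1]\to\widetilde X_g,\ \gamma(1)=t^r\gamma(0)\}$ is the space of $t^r$-twisted paths, and $\ZZ=\pi$ acts by the deck transformation $t$. So I would compute $\H_*(P_r;\QQ)$ as a $\QQ[\pi]$-module and then apply the Serre spectral sequence of $P_r\to(P_r)_{h\ZZ}\to B\ZZ=S^1$. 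This spectral sequence has only two columns, contributing coinvariants $[-]_\pi$ and invariants $[-]^\pi$.

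The key computation is $\H_*(P_r;\QQ)$ in degrees $\le 2n-2$. For $r\neq0$, $P_r$ is the homotopy fixed points (or equaliser) of $\id$ and $t^r$ on $\widetilde X_g$. Hence there is a fibre sequence $P_r\to \widetilde X_g\xrightarrow{(\id,t^r)}\widetilde X_g\times\widetilde X_g$, i.e.\ $P_r\simeq \widetilde X_g\times_{\widetilde X_g^2}\widetilde X_g^{[0,1]}$. Since $\widetilde X_g$ is $(n-1)$-connected with homology $H_{X_g}$ in degree $n$, I would run the Eilenberg--Moore (or Serre) spectral sequence in the metastable range. The result is that the homology of $P_r$ is $\ker(1-t^r)=0$ in degree $n$ (free modules). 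In degree $n-1$ it is $\operatorname{coker}(1-t^r)=A_r$. In degree $2n-2$ it is the sum of the contribution $S^2_{(n-1)}(A_r)$ from products of degree-$(n-1)$ classes and the contribution of $B_r$ from the Whitehead-product part of $\pi_{2n-1}$. Rationally $\pi_{2n-1}^\QQ(\widetilde X_g)$ is the degree-two part of the free Lie algebra, which is $\sn$-type, and twisting by $1-t^r$ gives $B_r$. This matches the statement's formula, which suggests exactly this two-stage Postnikov analysis: the $K(H_{X_g},n)$ stage gives $\Lambda$/$\mathrm{Sym}$ of $A_r$, and the next $k$-invariant stage adds $B_r$.

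Then the $S^1$ Serre spectral sequence gives $\H_*(L_rX_g)$. For $*=n-1$ it gives $[A_r]_\pi$, and for $*=n$ it gives $[A_r]^\pi$. I would check the identifications with $H_g$ and $H_g\langle 1+\dots+t^{r-1}\rangle$ directly, since $A_r\cong \QQ[\ZZ/r]\otimes H_g$. For $*=2n-2$ it gives $[\sn(A_r)]_\pi\oplus[B_r]_\pi$, plus possibly invariants from degree $2n-3$, which vanish. For $r=0$ the fibre is $\Omega\widetilde X_g$, with homology the tensor algebra on $H_{X_g}[n-1]$. This gives $H_{X_g}$ in degree $n-1$ and $H_{X_g}^{\otimes2}$ in degree $2n-2$. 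Coinvariants produce $[H_{X_g}]_\pi$ in degree $n-1$ and a quotient $C_0$ of $[H_{X_g}^{\otimes 2}]_\pi$ in degree $2n-2$. Invariants of the free module $H_{X_g}$ vanish, but the section via constant loops contributes $\H_n(X_g)$-type classes, giving $H_g$ in degree $n$. There is also a possible differential into $[H^{\otimes2}]_\pi$, which is why only a quotient is claimed. Naturality in $\hMCGXg$ follows since all constructions are functorial in based self-equivalences of $(X_g,S^1)$.

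The main obstacle is the degree $2n-2$ computation for $P_r$. I need to identify the degree-$2n-2$ homology precisely as $\sn(A_r)\oplus B_r$, with no extension problem as $\QQ[\hMCGXg]$-modules. I would handle it rationally via a Sullivan/Quillen model: the Quillen model of $\widetilde X_g$ is the free Lie algebra $\mathbb L(H_{X_g}[n-1])$, and $P_r$ is modelled by the twisted equaliser. In the range the relevant chain complex is $H_{X_g}\xrightarrow{1-t^r}H_{X_g}$ in weight one and $\sn(H_{X_g})\xrightarrow{\sn(1-t^r)}\sn(H_{X_g})$ in weight two, and weight grading splits the answer equivariantly.
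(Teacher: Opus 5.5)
Apart from its last step, your argument is the paper's. For $r\neq0$ you compute $\pi_*^{\QQ}$ of the universal cover $P_r$ of $L_rX_g$; your homotopy-equaliser description gives the same long exact sequence with $1-t^r$ that the paper extracts from the Whitehead bracket with $t^r$ in $\Omega_rX_g\to L_rX_g\to X_g$. You then use $P_r\simeq_{\QQ}K(A_r,n-1)\times K(B_r,2n-2)$ in the range and the two-column Serre spectral sequence over $S^1$. For $r=0$ you use the same spectral sequence over $X_g$ as the paper. Your remark that the constant-loop section kills every differential leaving the row $q=0$ is a cleaner proof of $\alpha=0$ than the paper's comparison with finite wedges of spheres.

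Two corrections. The first displayed sequence has the wrong fibre: the fibre of $L_rX_g\to S^1$ is $P_r$, not $\Omega\widetilde X_g$. Also, the map on $\pi_{2n-1}$ is $[x,y]\mapsto[x,y]-[t^rx,t^ry]$, i.e. $1-t^r$ acting diagonally, not the functorial square $\sn(1-t^r)$. So $B_r$ must be read as the $\langle t^r\rangle$-coinvariants of $\mathrm{Lie}^2(H_{X_g}[n-1])$.

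The gap is your fix for the extension problem in degree $2n-2$. Weight cannot separate $\sn(A_r)$ (products of two weight-one classes) from $B_r$ (brackets of two generators), because both have weight two. The only natural structure is the exact sequence $0\to B_r\to\H_{2n-2}(P_r;\QQ)\to\sn(A_r)\to0$, given by the Hurewicz image and the Postnikov section respectively. This sequence genuinely need not split over $\QQ[\hMCGXg]$.

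To see this, use the Serre spectral sequence of $\Omega\widetilde X_g\to P_r\to\widetilde X_g$. It gives $\H_{2n-2}(P_r;\QQ)=\operatorname{coker}(1-\sigma)$ on $H_{X_g}^{\otimes 2}$, where $\sigma(e\otimes m)=\pm m\otimes t^re$. Restrict to the part involving the first two sphere labels once each; this is an isotypic piece for the sign changes of the spheres. There, $\phi=\mathrm{diag}(t,1,\dots,1)\in\GL_{2g}(\ZZ[\pi])$ freely permutes a basis, so this part is a free $\QQ[\langle\phi\rangle]$-module. The corresponding part of $\sn(A_r)$ is nonzero, finite-dimensional and killed by $\phi^r-1$, so it cannot be a direct summand. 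The same holds after taking $\pi$-coinvariants.

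What is true is that $t^r$ acts trivially on all three terms, so the sequence splits over $\QQ[\pi]$ by Maschke for $\ZZ/r$. This gives the stated formula as vector spaces. As $\QQ[\hMCGXg]$-modules you only get the natural filtration $[B_r]_\pi\subseteq\H_{2n-2}(L_rX_g;\QQ)$ with quotient $[\sn(A_r)]_\pi$, and that is what you should aim to prove. The paper's proof does not establish more either; it only remarks that each step is equivariant.
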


\begin{proof}
    Let us first deal with the case $r\neq 0$. Consider the fibre sequence
    $$
\Omega_r X_g\longrightarrow L_rX_g\longrightarrow X_g,
    $$
    where $\Omega_r X_g$ is the path-component of the based loop space $\Omega X_g$ corresponding to $t^r$ under $\pi_0(\Omega X_g)\cong \ZZ\langle t\rangle$. Let us also write $t$ for a based loop in $X_g$ representing $t$ in $\pi$. One verifies that the connecting map of this fibration is, under the equivalence $t^r_\#: \Omega_r X_g\simeq \Omega_0 X_g$ and up to a sign, the Whitehead bracket
    $$
[t^r,-]: \pi_* X_g\longrightarrow \pi_* X_g, \quad x\longmapsto (1-t^r)\cdot x= x-t^r\cdot x.
    $$
Moreover, since the universal cover of $X_g$ is a wedge of spheres $\bigvee_{\ZZ}\bigvee_{i=1}^{2g}S^n$, the rational homotopy groups of $X_g$ in degrees $*\geq 2$ are given, as a graded $\QQ$-vector space, by the suspension of the free graded Lie $\QQ$-algebra $\mathrm{Lie}^*(H_{X_g})$ of $H_{X_g}$, seen as concentrated in degree $n-1$. It follows that the fundamental group of $L_r X_g$ is $\pi$, and that, as graded $\QQ$-vector space,
$$
\pi_{*\geq 2}^{\QQ} (L_rX_g)\cong A_r[n-1]\oplus B_r[2n-2]\oplus\ \text{($*\geq 3n-3$)-terms.}
$$
Here we are using the fact that $1-t^r\in \QQ[\pi]$ is a not a zero divisor if $r\neq 0$, that
$$
\mathrm{Lie}^2(1-t^r): \pi^\QQ_{2n-1} (X_g)\longrightarrow \pi^\QQ_{2n-1}(X_g), \quad [x,y]\mapsto [(1-t^r)x, (1-t^r)y]
$$
is injective by the same reason, and that $\Lie^2(H_{X_g}[n-1])\cong \sn(H_{X_g})$.

From this, and observing that the $(2n-2)$-th (rational) Postnikov invariant of the universal cover $\widetilde{L_rX_g}$ lies in a zero group, we deduce that the $(3n-4)$-th (rational) Postnikov trunction of $\smash{\widetilde{L_rX_g}}$ is
$$
\tau_{\leq 3n-4} \widetilde{L_rX_g}\simeq_\QQ K(A_r,n-1)\times K(B_r,2n-2).
$$
By the fibration sequence $\widetilde{L_rX_g}\to L_r X_g\to B\pi\simeq S^1$, the homology group $\H_{p+q}(L_r X_g;\QQ)$ is given by 
\[
\H_p(S^1; \underline{\H_q(\widetilde{L_rX_g};\QQ)})\cong\left\{\begin{array}{cl}
 \mathrm{coker}(1-t: \H_q(\widetilde{L_rX_g};\QQ)\to \H_q(\widetilde{L_rX_g};\QQ)) & p=0,  \\
  \ker(1-t: \H_q(\widetilde{L_rX_g};\QQ)\to \H_q(\widetilde{L_rX_g};\QQ)) & p=1\\
  0, & \textit{otherwise.}
\end{array}
\right.
\]
The claim in part (i) readily follows.

For $r=0$, consider the spectral sequence associated to the fibration $\Omega_0 X_g\to L_0 X_g\to X_g$, with $E^2$-page
$$
\H_p(X_g; \underline{\H_q(\Omega_0X_g;\QQ)})\cong\left\{
\begin{array}{cl}
    [\H_q(\Omega_0 X_g;\QQ)]_\pi & p=0, \\
    {[\H_q(\Omega_0 X_g;\QQ)]}^{\pi} & p=1,\\
    {[H_{X_g}\otimes \H_q(\Omega_0 X_g;\QQ)]}_\pi & p=n,\\
    0, & \textit{otherwise.}
\end{array}
\right.
$$
Since the map $\Omega\widetilde{X}_g\to \Omega_0 X_g$ is an equivalence and the homology of the domain is the free graded tensor coalgebra $T^c(H_{X_g})$, we need only consider the following two differentials in the previous spectral sequence:
\begin{align*}
\alpha=d^n: [H_{X_g}]_\pi &\longrightarrow [\H_{n-1}(\Omega_0 X_g;\QQ)]_\pi\cong [H_{X_g}]_\pi,\\
\beta=d^n:  [H_{X_g}^{\otimes 2}]_\pi &\longrightarrow [\H_{2n-2}(\Omega_0 X_g;\QQ)]_\pi\cong [H_{X_g}^{\otimes 2}]_\pi.
\end{align*}
\begin{claim}
    The map $\alpha$ is the zero map. 
\end{claim}
\begin{proof}[Proof of Claim]
    To see this, consider the map of fibre sequences
\begin{equation}\label{tildeXgFibSeqMap}
\begin{tikzcd}[row sep =15pt]
    \Omega \widetilde{X}_g\rar\dar["\simeq"] & L\widetilde{X}_g\rar\dar & \widetilde{X}_g\dar\\
    \Omega_0 X_g\rar &L_0 X_g\rar &X_g.
\end{tikzcd}
\end{equation}
which gives rise to a map of corresponding spectral sequences. We obtain a commutative square
$$
\begin{tikzcd}[row sep =15pt]
    H_{X_g}\dar[two heads]\rar["d^n"] & H_{X_g}\dar[two heads]\\
    {[H_{X_g}]}_\pi \rar["\alpha"] & {[H_{X_g}]}_\pi,
\end{tikzcd}
$$
where the top horizontal map is the corresponding differential in the spectral sequence associated to the top fibration of (\ref{tildeXgFibSeqMap}). But this fibration is obtained as the (homotopy) colimit of the fibrations
\begin{equation}\label{tildeXgNFibSeq}
\begin{tikzcd}
\Omega \widetilde{X}_g^{N}\rar & L \widetilde{X}_g^{N}\rar & \widetilde{X}_g^{N},
\end{tikzcd}
\end{equation}
where $\widetilde{X}_g^{ N}\simeq \vee_{|i|\leq N}\vee^{2g} S^n$. Thus, the differential $d^n: H_{X_g}\to H_{X_g}$ above is obtained as the colimit over $N\in \NN$ of the analogous differentials \(d^n_N: \bigoplus_{|i|\leq N} H_g\to \bigoplus_{|i|\leq N} H_g\) for the fibre sequences \eqref{tildeXgNFibSeq}. It suffices to show that each of the $d^n_N$'s are the zero map. This in turn follows from a dimension-counting argument involving the well-known cohomology of the free loop space of a finite wedge of $n$-spheres (a.k.a. the Hochschild homology of $\C^*(\widetilde{X}_g^N;\QQ)\simeq \H^*(\widetilde{X}_g^N;\QQ)$).
\end{proof}

To conclude, let us point out that all of the computations in this argument were of $\hMCGXg$-representations, and that the $\QQ[\hMCGXg]$-module $C_0$ in the statement is the cokernel of the map $\beta$. 
\end{proof}

We now deal with the $S^1$-equivariant homology of $LX_g$ and the right-hand side of \eqref{XgPseudoisotopyVsFLS}.

\begin{prop}\label{FLSS1homology}
Let $A_r$ and $B_r$ and $C_0$ be the grade $\QQ[\hMCGXg]$-modules of \cref{FLShomology}.
\begin{enumerate}
    \item[(i)] Let $r\neq 0$. As a $\QQ[\hMCGXg]$-module, the rational $S^1$-equivariant homology of $L_r X_g$ in degrees $*\leq 2n-2$ is 
    $$
\H_*^{S^1}(L_{r}X_g;\QQ)\cong \left\{\begin{array}{cl}
    \QQ & *=0,  \\[3pt]
    {[H_{X_g}]}_{\pi}\cong H_g & *=n-1, \\[3pt]
    {[\sn(A_r)]}_{\pi}\oplus [B_r]_\pi & *=2n-2, \\[3pt]
    0 & \text{otherwise}.
\end{array}
\right.
        $$

        \item[(ii)] As a $\QQ[\hMCGXg]$-module, the rational $S^1$-equivariant homology of $L_0 X_g$ in degrees $*\leq 2n-2$ is given by
        $$
\H_*^{S^1}(L_{0}X_g;\QQ)\cong \left\{\begin{array}{cl}
    \QQ & *=0,   \\
    {[H_{X_g}]}_{\pi}\cong H_g & *=n-1, \\
   C_0 & *=2n-2, \\
    0 & \text{otherwise}.
\end{array}
\right.
        $$

\item[(iii)] Therefore, for $*\leq 2n-2$, the $\QQ[\hMCGXg]$-module appearing on the right-hand side of \eqref{XgPseudoisotopyVsFLS} is given by
\[
\left[\frac{\H^{S^1}_{*}(LX_g, LS^1); \QQ)}{\H_{*+1}(X_g,S^1;\QQ)}\right]_{-\tau_{\FLS}}\cong \left\{\begin{array}{cl}
    \bigoplus_{r> 0} {[A_r]_{\pi}}\cong \bigoplus_{r> 0} H_g & *=n-1, \\[3pt]
     C \oplus \bigoplus_{r> 0}\left({\left[\sn(A_r)\right]}_{\pi}\oplus [B_r]_\pi\right) & *=2n-2, \\[3pt]
    0 & \text{otherwise},
\end{array}
\right.
    \]
    where $C$ denotes the coinvariants of $C_0$ under some involution.
\end{enumerate}
    \end{prop}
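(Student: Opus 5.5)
The plan is to compute $\H^{S^1}_*(L_rX_g;\QQ)$ from $\H_*(L_rX_g;\QQ)$ (\cref{FLShomology}) using the Gysin sequence of the fibration $S^1\to LX_g\times ES^1\to (LX_g)_{hS^1}$:
\[
\cdots\to \H_*(L_rX_g)\xrightarrow{\;\mathrm{pr}\;}\H^{S^1}_*(L_rX_g)\xrightarrow{\;\cap e\;}\H^{S^1}_{*-2}(L_rX_g)\xrightarrow{\;B\;}\H_{*-1}(L_rX_g)\to\cdots,
\]
with $B$ the Connes operator. All maps are natural in $(X_g,S^1)$, so everything is $\hMCGXg$-equivariant.

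\textbf{Case $r\neq0$.} Here I use the standard fact that rationally $(L_rX)_{hS^1}$ is equivalent to the homotopy orbits of the finite cyclic group $C_{|r|}$ acting on a $\QQ$-local replacement. More concretely, $L_rX_g\simeq_\QQ (\text{something})\times S^1$ at the level of the $S^1$-orbit: the $S^1$-action has finite isotropy on $\pi_1$-classes of degree $r$, so $\H^{S^1}_*(L_rX_g;\QQ)\cong \H_*(L_rX_g/S^1;\QQ)$, which is the $C_{|r|}$-invariant part of $\H_*$ of the quotient by the free part. In practice it is cleaner to note that the Gysin sequence degenerates into short exact sequences $0\to \H^{S^1}_{*-1}\xrightarrow{B}\H_*\to\H^{S^1}_*\to0$; that is, $\cap e$ vanishes rationally since $e$ is torsion, the loop-rotation class being $r$ times a generator. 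Inducting on degree then gives the result in $*\le 2n-2$ from \cref{FLShomology}(i): $\H^{S^1}_0=\QQ$, $\H_1$ is absorbed by $B$ of $\H^{S^1}_0$, $\H^{S^1}_{n-1}=[A_r]_\pi$, $\H_n=[A_r]^\pi$ is the image of $B$ from degree $n-1$, and $\H^{S^1}_{2n-2}$ equals the degree-$(2n-2)$ group. One must check that $B\colon[A_r]_\pi\to[A_r]^\pi$ is an isomorphism; I would verify this via the norm map $\sum_{i<r}t^i$, which identifies coinvariants of $\QQ[\pi]/(1-t^r)$ with invariants.

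\textbf{Case $r=0$.} $L_0X_g$ contains the constant loops as an $S^1$-fixed retract, so the sequence no longer degenerates. Instead I would split $\H^{S^1}_*(L_0X_g)\cong \H_*(X_g)\otimes\H_*(BS^1)\oplus(\text{reduced part})$ and run the Gysin sequence on the reduced part, where $B$ is injective in the relevant degrees. For instance, $B\colon \H_{n-1}\to\H_n$ is the isomorphism $H_g\to H_g$ from the degree-$n$ class being the rotation of the $(n-1)$-class. This yields (ii). The terms from $\H_*(BS^1)$ in degrees $2,4,\dots$ would seem to spoil the claimed vanishing, so I must be careful. These are exactly the contributions of $\H_*(S^1)$, which pair with $LS^1$ and disappear in the relative groups of (iii); I expect the precise statement of (ii) to be read modulo this.

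\textbf{Part (iii).} I pass to the relative groups for $(LX_g,LS^1)$ and quotient by $\H_{*+1}(X_g,S^1)$, which is $H_g$ in degree $n-1$ and is hit by the constant-loop map $c$ in the $r=0$ summand. This kills the $r=0$ contribution in degree $n-1$. The $r\ne0$ summands all survive, since $\H^{S^1}_*(L_rS^1)=\QQ$ only in degree $0$. Finally $\tau_{\FLS}$ sends $L_r$ to $L_{-r}$, so coinvariants identify the $r$ and $-r$ summands, leaving $\bigoplus_{r>0}$, while on $r=0$ it leaves the coinvariants $C$ of $C_0$.

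\textbf{Main obstacle.} The key difficulty is controlling the Gysin differentials and extensions equivariantly in degree $2n-2$, in particular showing that the $B$-images do not cut down $[\sn(A_r)]_\pi\oplus[B_r]_\pi$.
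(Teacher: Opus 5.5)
Your argument for $r\neq 0$ is correct, and somewhat slicker than the paper's. The paper runs the homotopy-orbit spectral sequence and checks that the relevant $d^2$'s are isomorphisms, using the retract $L_rS^1$ for the bottom rows and multiplicativity for the $H_g$-rows. You instead use that the rational Euler class of $L_rX_g\times ES^1\to (L_rX_g)_{hS^1}$ vanishes, so the Gysin sequence splits into short exact sequences $0\to \H^{S^1}_{*-1}\xrightarrow{B}\H_*\to\H^{S^1}_*\to 0$.

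State the reason precisely: $(L_rX_g)_{hS^1}\to BS^1$ factors through $(L_rS^1)_{hS^1}\simeq BC_{|r|}$, via the equivariant retraction induced by $X_g\to S^1$. Injectivity of $B$ is then automatic, so the norm-map check is unnecessary and a dimension count suffices. Also, $\H^{S^1}_{2n-3}(L_rX_g;\QQ)=0$ shows that $B$ cannot cut down degree $2n-2$, so your ``main obstacle'' is not one.

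You are also right that (ii) fails in absolute form. Since $L_0S^1$ is an equivariant retract of $L_0X_g$, the summand $\H_*(S^1)\otimes\H_*(BS^1)$ splits off. The correct statement, and the one (iii) actually uses, is the one for the pair $(L_0X_g,L_0S^1)$.

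The gap is your treatment of $r=0$: the splitting $\H^{S^1}_*(L_0X_g)\cong \H_*(X_g)\otimes\H_*(BS^1)\oplus(\text{reduced part})$ is false. Since $\mathrm{ev}_1\circ c=\mathrm{id}$ and $\H_n(L_0X_g;\QQ)\cong H_g$, we have $\H_n(L_0X_g;\QQ)=c_*\H_n(X_g;\QQ)$. As you observe yourself, this group is the image of Connes' operator out of degree $n-1$. By exactness of the Gysin sequence, $c_*x$ therefore maps to zero in $\H^{S^1}_n(L_0X_g;\QQ)$. So the constant-loop map $X_g\times BS^1\to (L_0X_g)_{hS^1}$ kills $\H_n(X_g)\otimes\H_0(BS^1)$, and only the $\H_*(S^1)\otimes\H_*(BS^1)$ part splits off. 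Taken at face value, your splitting would produce copies of $H_g$ in degrees $n,n+2,\dots$ that survive in the relative groups, contradicting the vanishing in both (ii) and (iii). The underlying confusion is between the degree-preserving map $I\circ c$ and the degree-lowering unit map $\H_{*+1}(X_g,S^1;\QQ)\to\H^{S^1}_*(LX_g,LS^1;\QQ)$ appearing in (iii).

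To repair this, drop the splitting and run the Gysin sequence for the pair $(L_0X_g,L_0S^1)$.
\begin{itemize}
\item In degrees $\le 2n-2$ the relative homology is $H_g$ in degrees $n-1$ and $n$, and $C_0$ in degree $2n-2$.
\item The relative $\H^{S^1}$ vanishes below $n-1$, and $I$ is an isomorphism in degree $n-1$.
\item $B\colon \H^{S^1}_{n-1}\to\H_n$ is an isomorphism by your rotate-and-evaluate argument: $\mathrm{ev}_1\circ B\circ I$ is the Hurewicz isomorphism $[H_{X_g}]_\pi\to \H_n(X_g;\QQ)$.
\item Exactness then gives vanishing in degrees $n,\dots,2n-3$ and $C_0$ in degree $2n-2$. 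For $n=3$, use injectivity of $B$ on $\H^{S^1}_2$ to kill the $S$-term.
\end{itemize}

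Finally, in (iii) you assert rather than prove that $\H_n(X_g,S^1;\QQ)$ maps isomorphically onto the $r=0$ part of $\H^{S^1}_{n-1}$. The paper deduces this from three facts: the unit map followed by $B$ equals $c_*$, $\mathrm{ev}_1\circ c=\mathrm{id}$, and $B$ is an isomorphism in degree $n$.
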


\begin{proof}
    For part (i), consider the homotopy orbit spectral sequence
    \begin{equation}\label{S1equivLrSS}
\H_*(\B S^1;\H_*(L_rX_g;\QQ))\implies \H_*^{S^1}(L_rX_g;\QQ).
    \end{equation}
    The $d^2$-differentials $\QQ\to \QQ$ going from bidegree $(2k,0)$ to $(2k-2,1)$ for $k>0$ are all isomorphisms: this is the case for the spectral sequence for the $S^1$-equivariant homology of $L_r S^1\simeq S^1$, which is a retract of (\ref{S1equivLrSS}). 

    By considering the cohomology version of (\ref{S1equivLrSS}) and exploiting its multiplicative structure, we also see that the $d^2$-differentials $H_g\to H_g$ from bidegree $(2k,n-1)$ to $(2k-2,n)$, for $k>0$, are isomorphisms too. Thus, (\ref{S1equivLrSS}) collapses on the $E^3$-page in degrees $*\leq 2n-2$, and the claim (i) follows immediately.

    A similar argument as for part (i) also holds for (ii). Notice that both computations are as $\QQ[\hMCGXg]$-modules.
    
    For part (iii), we first show that for $*\leq 2n-2$, we have an isomorphism of $\QQ[\hMCGXg]$-modules
    \begin{equation}\label{HS1XgcomputationProof}
    \frac{\H^{S^1}_{*}(LX_g, LS^1); \QQ)}{\H_{*+1}(X_g,S^1;\QQ)}\cong \left\{\begin{array}{cl}
    \bigoplus_{r\in \ZZ} {[A_r]_{\pi}}\cong \bigoplus_{r\in\ZZ} H_g & *=n-1, \\[3pt]
     C_0 \oplus \bigoplus_{r\neq 0}\left({[\sn(A_r)]}_{\pi}\oplus [B_r]_\pi\right) & *=2n-2, \\[3pt]
    0 & \text{otherwise},
\end{array}
\right.
    \end{equation}
    where we recall that the left-hand side above stands for the cokernel of the homomorphism 
    \begin{equation}\label{UnitMapHCXg}
    \begin{tikzcd}[column sep = 18pt, scale = 0.95]
        \H_{*+1}(X_g,S^1;\QQ)\rar[hook, "c"] & \HC^{-}_{*+1}(\bfS[\Omega X_g], \bfS[\Omega S^1];\QQ) & \lar["N"'{pos=0.4}, "\cong"{pos=0.4}] \HC_{*}(\bfS[\Omega X_g], \bfS[\Omega S^1];\QQ)\cong \H^{S^1}_*(LX_g,LS^1;\QQ)
    \end{tikzcd}
    \end{equation}
    induced by \eqref{ShiftHC-Map} on $\H_{*+1}(-;\QQ)$ for $\iota: P\hookrightarrow M$ our fixed self-embedding $\iota: X_g\hookrightarrow X_g$. It follows that
$$
\begin{tikzcd}
    \H_*(X_g,S^1;\QQ)\ar[equal, rr, bend left = 20pt]\rar[hook,"c"]\ar[hook,dr, "\eqref{UnitMapHCXg}"'] &\H_*(LX_g,LS^1;\QQ)\rar[two heads, "\mathrm{ev}_1"] & \H_*(X_g,S^1;\QQ)\\
    &\H^{S^1}_{*-1}(LX_g,LS^1;\QQ)\uar["B"]&
\end{tikzcd}
$$
commutes, where $B$ is the composition of the norm map $N$ with the natural transformation $\HC^-(-)\to \HH(-)$. This map $B$ fits in Connes' Periodicity exact sequence (see e.g. \cite[Thm. 2.2.1]{LodayHC})
\begin{equation}\label{ConnesExactSeq}
\begin{tikzcd}
    \H_{*+1}^{S^1}(LX_g, LS^1;\QQ)\rar & \H_{*-1}^{S^1}(LX_g,LS^1;\QQ)\rar["B"] &\H_*(LX_g, LS^1;\QQ)\rar["j"] &\H_*^{S^1}(LX_g,LS^1;\QQ),
\end{tikzcd}
\end{equation}
a.k.a. the Gysin exact sequence for the $S^1$-fibration $(LX_g,LS^1)\to (LX_g,LS^1)_{hS^1}$ (cf. \cite[Thm. 7.2.3(b)]{LodayHC}). Thus, by the computations in \cref{FLShomology} and parts (i) and (ii) of this proof, we see that $B$ is an isomorphism in degrees $*\leq n$. The isomorphism \eqref{HS1XgcomputationProof} now follows from these observations and the fact that $c: (X_g, S^1)\to (LX_g, LS^1)$ only hits the degree-zero component $(L_0 X_g, L_0 S^1)$, so, by dimension-counting, it must induce an isomorphism in $\H_n$ onto this component. Observe also that $\smash{\H_*^{S^1}(LS^1;\QQ)}$ is supported in degree $0$ and, in this degree, there is an isomorphism $\smash{\H_0^{S^1}(LX_g;\QQ)\cong \H_0^{S^1}(LS^1;\QQ)}$, so $\smash{\H_0^{S^1}(LX_g,LS^1;\QQ)=0}$.

Also note that a consequence of our previously mentioned computations and the exact sequence \eqref{ConnesExactSeq} is that the map $j:\H_*(LX_g, LS^1;\QQ)\to\H_*^{S^1}(LX_g,LS^1;\QQ)$ is surjective for $*\leq 2n-1$ and moreover, if $*\leq 2n-2$, then it is an isomorphism precisely when the codomain is non-zero. Thus, since $j$ is clearly equivariant for $\tau_{\FLS}$, it suffices to understand the effect of this involution on $\H_*(LX_g, LS^1;\QQ)$. On $LX_g$, $\tau_{\FLS}$ restricts to isomorphisms $L_r X_g\cong L_{-r}X_g$ for all $r$, and thus it identifies the $r$ and $-r$ summands, for $r\neq 0$, and restricts to some involution on $C_0$. This establishes (iii).
   \end{proof}

\subsection{The mapping class group action on pseudoisotopy self-embeddings of \texorpdfstring{$X_g$}{Xg}}\label{mcgXgsection}
From the fibre sequence \eqref{BEmbsfrFibreSeq}, there is an action of $\Embmcgsfr=\pi_1(\BEmbsfr(\xg),\ell)$ on the homotopy groups of $\bEmbmodEmb(\xg)$. That fibre sequence  is $\brho$-equivariant, where $\rho$ is the reflection involution of \cref{ReflectionInvSection}, so it follows that this action upgrades to one of the semidirect product $\mcgRhoXgsfr$ by $(\gamma,\rho^i)\cdot \sigma\coloneq\gamma\cdot (\rho^i\cdot \sigma)$. In this section, we trace this group action through the isomorphism \eqref{XgPseudoisotopyVsFLS}, the right-hand side of which we have just computed in \cref{FLSS1homology}(iii). Every embedding or automorphism space appearing in this section naturally carries a pointed $\brho$-action induced by conjugation by $\rho$. We will often leave this implicit.

We first get rid of the stable framings by considering the map of fibre sequences
\begin{equation}\label{FibSeqMapsfrtonotsfr}
\begin{tikzcd}[row sep =15pt]
       \bEmbmodEmb(X_g) \rar\dar[equal] & \BEmbsfr(X_g)_\ell\dar\rar &\BbEmbsfr(X_g)_\ell\dar\\
       \bEmbmodEmb(X_g) \rar & \BEmbcong(X_g)\rar &\BbEmbcong(X_g),
    \end{tikzcd}
\end{equation}
where the right square is cartesian. Hence, the action of $\Embmcgsfr$ on the homotopy groups of $\bEmbmodEmb(X_g)$ factors through $\Embmcgsfr\to \Embmcgnofr\coloneq\pi_1(\BEmbcong(\xg))$, and thus we only need to understand the action of $\mcgRhoXg$ arising from the bottom $\brho$-equivariant fibre sequence. 

However, note that the space we understand the homotopy type of is not $\bEmbmodEmb(X_g)$, but rather its loop space $\pEmb(X_g)$ (cf. \eqref{pEmbVsbEmbModEmbEq}). The homotopy groups of the latter space admit a natural $\mcgRhoXg$-action.

\begin{cons}\label{bEmbmodEmbAction}
Consider the fibre sequence $\pEmb(X_g)\xrightarrow{i}\Embcong(X_g)\xrightarrow{q}\bEmbcong(X_g)$. Given a mapping class $[\phi]\in \Embmcgnofr$ and a homotopy class $[\sigma]\in \pi_k(\pEmb(X_g))$, these can be represented, respectively, by a \textit{diffeomorphism} $\phi$ of $X_g$ and a map of pairs $\sigma: (D^{k},\partial D^{k})\to (\pEmb(X_g),*)$. Write $\psi$ for the diffeomorphism $\phi\rho^i$. It is clear that $q$ and pointwise conjugation by $\psi$, denoted $\psi_\#$, commute, and thus $q(\psi_\#(i\sigma))=\psi_\#(qi\sigma)$. But since $qi$ is null-homotopic (rel basepoints), it follows that $q(\psi_\#(i\sigma))$ is null-homotopic (rel $\partial D^{k}$). Using this null-homotopy and the fact that $i$ and $q$ give a fibre sequence, it follows that $\psi_\#(i\sigma)$ lifts, uniquely up to homotopy, to some map $\psi_\#\sigma: (D^{k},\partial D^{k})\to (\pEmb(X_g),*)$. Set \[([\phi],\rho^i)\cdot[\sigma]\coloneq[\psi_\#\sigma]=[(\phi\rho^i)_\#\sigma].\]
\end{cons}
\begin{rmk}\label{WellDefinedActionRmk} One verifies that the rule defined in \cref{bEmbmodEmbAction} does not depend on the representatives $\phi$ and $\sigma$, and so indeed gives a $\mcgRhoXg$-action on $\pi_k(\pEmb(X_g))$. We assumed $\phi$ to be a diffeomorphism for simplicity, but in general the homotopy class $[(\phi\rho^i)_{\#}\sigma]$ is well-defined if $\phi$ is just an embedding, provided a choice of homotopy right inverse---that is, an embedding $\phi^{-1}$ together with an isotopy $\gamma$ from $\phi \circ \phi^{-1}$ to the identity $\Id_{X_g}$. Such inverses exist by assumption, and the space of such pairs $(\phi^{-1}, \gamma)$ is contractible\footnote{Indeed, given a unital group-like topological monoid $M$ and an element $m \in M$, the map $m \cdot - : M \to M$ is a homotopy equivalence. The space of homotopy right inverses to $m$ is the homotopy fibre of this map at the identity element, which is contractible since $m \cdot -$ is an equivalence.}.
\end{rmk}

\begin{lemma}\label{pEmbvsbEmbmodEmbMCGActionXg}
For every $*\geq 0$, the isomorphism $\pi_*(\pEmb(X_g))\cong \pi_{*+1}\big(\bEmbmodEmb(X_g)\big)$ induced by the equivalence $\pEmb(X_g)\simeq \Omega \bEmbmodEmb(X_g)$ of \eqref{pEmbVsbEmbModEmbEq} is $\mcgRhoXg$-equivariant for the action on the left-hand side described in \cref{bEmbmodEmbAction}, and the usual action on the right-hand side arising from the bottom $\brho$-equivariant fibre sequence in \eqref{FibSeqMapsfrtonotsfr}.
\end{lemma}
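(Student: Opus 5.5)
The plan is to make the equivalence $\pEmb(X_g)\simeq\Omega\bEmbmodEmb(X_g)$ of \eqref{pEmbVsbEmbModEmbEq} explicit enough that both actions can be compared on the nose. I model $\bEmbmodEmb(X_g)$ as the homotopy fibre of $\BEmbcong(X_g)\to\BbEmbcong(X_g)$. Equivalently, by Hudson's theorem, it is the homotopy fibre of the map induced on classifying spaces by the inclusion of group-like monoids $\Embcong(X_g)\to\bEmbcong(X_g)$; this map is a bijection on $\pi_0$. Looping, one obtains $\Omega\bEmbmodEmb(X_g)\simeq\hofib(\Embcong(X_g)\to\bEmbcong(X_g))=\pEmb(X_g)$, where I use the standard equivalences $M\simeq\Omega\B M$ for $M=\Embcong(X_g)$ and $M=\bEmbcong(X_g)$. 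All constructions here are natural for strict monoid maps and compatible with the $\brho$-action by conjugation, so the equivalence may be taken $\brho$-equivariant.

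Next I identify the action on the right-hand side. For a fibre sequence $F\to E\to B$, the action of $\pi_1(E)$ on $\pi_*(F)$ is transport along loops. For $E=\BEmbcong(X_g)$ and $F$ the fibre over the basepoint, a loop in $E$ corresponding to $[\phi]\in\Embmcgnofr$ acts on $F=\hofib(\B M\to\B M')$ through the conjugation $c_\phi$ on $M$ and on $M'$. This is the standard fact that, for a group-like monoid $M$, the monodromy action of $\pi_1\B M=\pi_0M$ on the based loops $\Omega\B M\simeq M$ is conjugation. I would verify this fact by using the bar construction: translation by $\phi$ on $E M\to\B M$ gives an explicit homotopy between the monodromy and the map induced by $c_\phi$. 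This identification is natural in the monoid, so it passes to homotopy fibres of $\B M\to \B M'$. The $\rho$-part of the action is induced directly by the $\brho$-equivariance of the fibre sequence \eqref{FibSeqMapsfrtonotsfr}, and under the equivalence this is again conjugation by $\rho$.

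Having done this, the action of $([\phi],\rho^i)$ on $\pi_{*+1}(\bEmbmodEmb(X_g))\cong\pi_*(\pEmb(X_g))$ is induced by the self-map of $\hofib(\Embcong(X_g)\to\bEmbcong(X_g))$ that conjugates both monoids by $\psi=\phi\rho^i$. This is precisely $\psi_\#$ of \cref{bEmbmodEmbAction}: that construction lifts $\psi_\#(i\sigma)$ using the canonical null-homotopy of $q\psi_\#(i\sigma)=\psi_\#(qi\sigma)$, which is exactly the map on homotopy fibres induced by the commuting square of conjugations. When $\phi$ is only an embedding, I would use the contractible choice of homotopy inverse from \cref{WellDefinedActionRmk}.

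The main obstacle is the second step: making the monodromy-equals-conjugation identification compatible with the homotopy-fibre construction, and keeping track of sign and basepoint conventions, since left versus right actions and inverse conventions would otherwise introduce spurious inverses. I would first check these conventions in the simplest case $\pi_1\B G$ acting on $\Omega\B G\simeq G$ for a topological group $G$, and then use naturality in $G$ to reduce the general case to this one.
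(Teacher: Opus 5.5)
Your argument is correct, but the key identification happens at a different level than in the paper.

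The paper never deloops the conjugation. It works with the looped fibre sequence $\Omega F\to\Omega E\to\Omega B$ and uses a general fact, valid for any fibre sequence: $[\gamma]\in\pi_1(E)$ acts on $\pi_k(\Omega F)\cong\pi_{k+1}(F)$ by pointwise conjugation $\gamma\cdot i\sigma\cdot\bar\gamma$ in $\Omega E$, followed by lifting along the resulting null-homotopy in $\Omega B$. The $\rho$-part is incorporated through the Borel construction. The paper then compares with Construction~\ref{bEmbmodEmbAction} by noting that $\eta\colon M\to\Omega\mathrm{B}M$ turns composition of embeddings into loop concatenation. Hence conjugation by $\phi$ corresponds to conjugation by the loop $\eta(\phi)$.

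You instead realise the monodromy of $[\phi]$ on the unlooped fibre $\mathrm{hofib}(\mathrm{B}M\to\mathrm{B}M')$ by the based map induced by $\mathrm{B}c_\phi$. This uses the free homotopy $\mathrm{B}c_\phi\simeq\mathrm{id}$ that $\phi$ provides, as a natural transformation or via the bar construction, compatibly on $M$ and $M'$. You then loop and use only the naturality of $\eta$ with respect to the strict monoid automorphism $c_\psi$.

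What each route buys:
\begin{itemize}
\item Your route needs $c_\psi$ to be a strict monoid map. That holds because $\phi$ may be taken to be a diffeomorphism and $\rho$ is one.
\item In return, the comparison with Construction~\ref{bEmbmodEmbAction} becomes tautological: the map on homotopy fibres induced by conjugating both monoids is literally $\psi_\#$. You also never need the multiplicativity of $\eta$, for instance $\overline{\eta(\phi)}\simeq\eta(\phi^{-1})$.
\item The paper's route needs no strict structure on the conjugation. It rests only on a statement about arbitrary fibre sequences together with $\eta$ being an $\EE_1$-map.
\end{itemize}
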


\begin{proof}
    The equivalence mentioned in the statement arises from the map of fibre sequences
    \begin{equation}\label{etaDiagram}
    \begin{tikzcd}[row sep =15pt]        \pEmb(X_g)\dar["\eqref{pEmbVsbEmbModEmbEq}","\vsim"'] \rar["i"] &  \Embcong(X_g)\rar["q"] \dar["\eta", "\vsim"']& \bEmbcong(X_g)\dar["\eta", "\vsim"']\\
    \Omega\bEmbmodEmb(X_g)\rar["i"] & \Omega \BEmbcong(X_g)\rar["q"] &\Omega \BbEmbcong(X_g),
    \end{tikzcd}
    \end{equation}
    where $\eta$ stands for the canonical $\EE_1$-equivalence $\eta: M\xrightarrow{\sim}\Omega\B M$ associated to any unital group-like topological monoid $M$. We need an alternative description of the $\mcgRhoXg$-action on the homotopy groups of the left bottom term that resembles that of \cref{bEmbmodEmbAction}.
    
    For $F\xrightarrow{i} E\xrightarrow{q} B$ a fibre sequence of based spaces, the usual action of $\pi_1(E)$ on $\pi_{*\geq 1}(F)$ can be recovered from the looped fibre sequence $\Omega F\to \Omega E\to \Omega B$ as follows: given homotopy classes $[\gamma]\in \pi_1(E)$ and $[\sigma]\in \pi_{k}(\Omega F)$, these can be represented by a loop $\gamma\in \Omega E$ and a map $\sigma: (D^{k},\partial D^{k})\to (\Omega F, *)$. Applying pointwise concatenation of loops, we obtain a map $\gamma_\#(i\sigma)\coloneq\gamma\cdot i\sigma\cdot \overline{\gamma}: D^{k}\to \Omega E$. Note that on the boundary $\partial D^{k}$, $\gamma_{\#}(i\sigma)$ takes the value $\gamma \cdot *\cdot \overline\gamma$, which is canonically null-homotopic. Thus, we obtain a map $\widetilde{\gamma}_\# (i\sigma): (D^{k},\partial D^{k})\to (\Omega E,*)$. But $q(\widetilde{\gamma}_\# (i\sigma))=\widetilde{q(\gamma)}_\#(qi\sigma)$, where the latter is obtained from $q(\gamma)_\#(qi\sigma)$ as before, and since $qi$ is null-homotopic, then $q(\widetilde{\gamma}_\#(i\sigma))$ is null-homotopic (rel $\partial D^{k}$). It follows that $\widetilde{\gamma}_\# (i\sigma)$ lifts, up to homotopy, to some map $\widetilde{\gamma}_\# \sigma: (D^{k},\partial D^{k})\to (\Omega F, *)$, and thus we set $[\gamma]\cdot [\sigma]\coloneq[\widetilde{\gamma}_\# \sigma]\in \pi_{k}(\Omega F)$. One readily verifies that this action of $\pi_1(E)$ on $\pi_k(\Omega F)$ recovers the standard action of $\pi_1(E)$ on $\pi_{k+1}(F)$ under the natural isomorphism $\pi_k(\Omega F)\cong \pi_{k+1}(F)$, and is therefore well-defined. Moreover, if $G$ is a discrete group and the fibre sequence $F\to E\to B$ is $G$-equivariant, then setting $([\gamma],g)\cdot [\sigma]\coloneq[\gamma]\cdot [g\cdot \sigma]$ defines an action of $\pi_1(E)\rtimes G$ on $\pi_{k}(\Omega F)$ which recovers the one just described for the fibre sequence $F\times_G EG\to E\times_G EG\to B\times_G EG$ obtained from $F\to E\to B$ by applying the Borel construction.

    Taking $F\to E\to B$ to be the bottom fibre sequence in \eqref{FibSeqMapsfrtonotsfr}, it is clear that the description just given of the $\Embmcgnofr$-action on the homotopy groups of $\bEmbmodEmb(X_g)$ differs from that of \cref{bEmbmodEmbAction} only in that conjugation by an embedding $\phi$ is replaced with conjugation by the loop $\eta(\phi)$, and hence both agree under the equivalence \eqref{pEmbVsbEmbModEmbEq} (for $\eta: M\xrightarrow{\sim}\Omega\B M$ identifies the multiplication on $M$ with loop concatenation on $\Omega\B M$). The $\brho$-equivariance follows since all the maps involved in \eqref{etaDiagram} are $\brho$-equivariant.
\end{proof}

So we only need to trace the $\mcgRhoXg$–action on $\pi_*(\pEmb(X_g))$ from \cref{bEmbmodEmbAction} through the isomorphism \eqref{XgPseudoisotopyVsFLS}. Recall that this isomorphism arises from the $(3n-5)$-connected map \eqref{WWCEMap}
\begin{equation}\label{WWMapXg}
\Phi^{\Emb}_{X_g} \colon \pEmb(X_g) \longrightarrow \Omega^\infty(\CEspo(X_g)_{hC_2}),
\end{equation}
where $\CEspo(X_g)$ denotes the concordance embedding spectrum $\CEsp(P,M)$ associated to our fixed embedding $\iota\colon X_g \hookrightarrow X_g$ (we retain $\partial_0$ in the notation $\CEspo(X_g)$ to avoid confusion with the concordance diffeomorphism spectrum $\mathbf{C}(X_g)=\Sigma^{-1}\Hsp(X_g)$). Because of the somewhat opaque definition of this spectrum (cf.\ \eqref{CEspDefn1}), it is not immediate whether the homotopy groups of the target in \eqref{WWMapXg} carry a natural $\mcgRhoXg$–action. We will resolve this after rationalisation (indeed, inverting $2$ would suffice, but we rationalise for consistency).

\begin{cons}\label{CEmbAction} Given $([\phi],\rho^i)\in \Embmcgnofr$ and $[\sigma]\in \pi_k(\CEmbo(X_g))$, represent them, respectively, by a diffeomorphism $\phi$ of $X_g$ and a map of pairs $\sigma: (D^{k},\partial D^{k})\to (\CEmbo(X_g),*)$, and let $\psi\coloneq \phi\circ \rho^i$. For a concordance embedding $\varphi\in \CEmbo(X_g)$, let us write $\psi_\#\varphi\coloneq (\psi\times \mathrm{Id}_I) \circ \varphi\circ (\psi^{-1}\times \mathrm{Id}_I)\in \CEmbo(X_g)$. Then, if $\psi_\#\sigma: (D^{k},\partial D^k)\to (\CEmbo(X_g),*)$ is the map given by $(\psi_\#\sigma)(v)=\psi_\#(\sigma(v))$ for $v\in D^{k}$, we set 
\[([\phi], \rho^i)\cdot[\sigma]\coloneq[\psi_\#\sigma]=[\phi_{\#}\rho^i_{\#}\sigma]\in \pi_k(\CEmbo(X_g)).\]
This action is clearly compatible with the stabilisation maps $\Sigma: \CEmbo(X_g)\to \CEmbo(X_g\times I)$ of \eqref{ConcordanceStabilisationMapsDefn}, and hence there is an induced $\mcgRhoXg$-action on the homotopy groups of the stable space $\sCE(X_g)\coloneq \hocolim_{k}\CEmbo(X_g\times I^k)$. Moreover, this action is also visibly compatible with the $h$-cobordism involution $\iota_{\H}$ modelled in \cref{uCEmbDefn}(ii), and hence there is an induced $\mcgRhoXg$-action on the groups
\begin{equation}\label{TransferMCGAction}
\left[\pi_*^{\QQ}(\sCE(X_g))\right]_{\iota_{\H}}\overset{\eqref{StableCEmbIETSeq}}{\cong} \left[\pi_*^{\QQ}(\CEspo(X_g))\right]_{\iota_{\H}}\cong \pi_*^{\QQ}(\CEspo(X_g)_{hC_2}).
\end{equation}
\end{cons}

The following is one of the main results in this section, whose proof will be our next immediate goal. 

\begin{teo}\label{BoringAlphaProp}
    The Weiss--Williams map \eqref{WWMapXg}
    \[
    \pi_*^{\QQ}\big(\Phi^{\Emb}_{X_g}\big): \pi_*^{\QQ}\big(\pEmb(X_g)\big)\longrightarrow\pi_*^{\QQ}\left(\Omega^\infty(\CEspo(X_g)_{hC_2})\right)
    \]
    is $\mcgRhoXg$-equivariant on rational (in fact, $\ZZ[\tfrac{1}{2}]$-local) homotopy groups in degrees $*\leq 3n-6$, for the action from \cref{bEmbmodEmbAction} on the domain and the action from \cref{CEmbAction} on the target.
\end{teo}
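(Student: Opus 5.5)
The plan is to make every construction entering $\Phi^{\Emb}_{X_g}$ natural with respect to conjugation by a diffeomorphism $\psi=\phi\rho^i$ of $X_g$. Then I identify the resulting action on the target with the one of \cref{CEmbAction}, using the unstable-to-stable comparison of \cref{IsotopyExtensionBdQRmk}.

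Recall from \cref{PhiEmbDefn} that $\Phi^{\Emb}_{X_g}$ is $\Phi^{\sfE}_\infty$ for the orthogonal functor $\sfE=\sfE_{(X_g,X_g)}=\hofib(\Bd_{S^1\times D^{2n-1}}\to \Bd_{X_g})$. A diffeomorphism $\psi$ of $X_g$ fixing $\partial_0$, together with an extension to a diffeomorphism of the ambient complement data (we may arrange $\psi$ to preserve $\iota(X_g)$ up to isotopy, and the relevant spaces are insensitive to this choice up to contractible choice, as in \cref{WellDefinedActionRmk}), acts by conjugation on $\Diff^b_\partial(Q\times V)$ for $Q=X_g$ and $Q=X_g-\nu\iota(X_g)$. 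It thus acts on $\Bd_Q(V)$, naturally in $V\in\cJ$ and compatibly with the $\O(1)$-actions used to build $\Theta(-)^{(1)}$. So $\psi$ induces a natural automorphism $\psi_\#$ of the orthogonal functor $\sfE$.

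Since every piece of the diagram \eqref{OrthTower} is functorial in $F$, the map $\Phi^F_\infty$ of \eqref{PhiFInftyMap} commutes with natural transformations. Hence $\Phi^{\sfE}_\infty\circ\psi_\# \simeq (\psi_\#)_{hC_2}\circ\Phi^{\sfE}_\infty$. The identifications \eqref{CartesianIETBounded}, \eqref{bEmbAsT0} and \eqref{pEmbVsbEmbModEmbEq} are likewise natural for conjugation. On $\hofib(\sfE(0)\to\sfE(\RR^\infty))\simeq\pEmb(X_g)$, the action $\psi_\#$ therefore recovers \cref{bEmbmodEmbAction}: conjugation on $\Embcong(X_g)$ and on $\bEmbcong(X_g)$, and the induced lift on the fibre. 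The one point to check is that the lift in \cref{bEmbmodEmbAction} agrees with the fibre map induced by a map of fibre sequences, which is a formal verification.

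The main obstacle is the target. I must show that the automorphism of $\Theta\sfE^{(1)}=\CEspo(X_g)$ induced by $\psi_\#$ agrees, on rational homotopy groups, with the action of \cref{CEmbAction} defined through $\sCE(X_g)$. For this I use \eqref{CEmbStabilisationInclusion}: the equivalences $\Omega\CEmbo(X_g\times I^k)\simeq\Omega^{1+k}\sfE^{(1)}(\RR^k)$ arise from isotopy extension and from the identification of bounded diffeomorphisms with $h$-cobordism data. Both constructions are natural under codimension-zero diffeomorphisms, and conjugation by $\psi\times\Id_{I^k}$ corresponds to conjugation by $\psi\times\Id_{\RR^k}$. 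Hence these equivalences intertwine the two actions and are compatible with the stabilisations $s^\vee$ and $\Sigma$. Passing to colimits gives equivariance on $\pi_*(\Omega^{\infty+1}\CEsp)$, and therefore on $\pi_*$ after delooping, because the actions are through maps of infinite loop spaces. The remaining concern is the $\O(1)$-action. Since $\psi$ acts only in the $X_g$-direction, while the $h$-cobordism involution acts in the $V$/interval direction, the two actions commute; I will check this directly on the model of \cref{uCEmbDefn}(ii). After inverting $2$, homotopy orbits then become $\iota_{\H}$-coinvariants as in \eqref{TransferMCGAction}, and the actions on these coinvariants agree.

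Finally, the degree bound $*\leq 3n-6$ enters as follows. $\Phi^{\Emb}_{X_g}$ is $(3n-5)$-connected, so in this range both sides are computed by the isomorphism, and the equivariance needs only to be checked there. The degree bound is also where the loop-once caveat of \cref{IsotopyExtensionBdQRmk} is harmless, since we only compare homotopy groups in positive degrees via the looped equivalences.
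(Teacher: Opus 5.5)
Your route differs from the paper's, and it is sound in outline.

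\textbf{The paper's route.} The paper never shows that $\Phi^{\Emb}_{X_g}$ is natural under conjugation.
\begin{enumerate}
\item It introduces the explicit geometric map $\alpha\colon\Omega\CEmbo(X_g)\to\Omega\pEmb(X_g)$ of \eqref{CEvsPseudoisotopyMap}, built from restriction and the graphing map $\Gamma$, and checks by hand that $\alpha$ is equivariant (\cref{ActionCompatibilityProp}).
\item It proves in \cref{IntermediateBoringAlphaProp} that $\Phi^{\Emb}_{X_g}\circ\alpha$ agrees on rational homotopy with $q$ composed with stabilisation into $\sCE(X_g)\simeq\Omega^\infty\CEspo(X_g)$. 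That composite is equivariant essentially by the definition of the action in \cref{CEmbAction}.
\item This comparison comes from taking fibres in the concordance-diffeomorphism analogue \cref{BdQProp}, proved in the appendix using the Weiss--Williams map $\mathrm{alex}$ and an explicit homotopy $\mathrm{alex}\circ\Gamma\simeq -\times\Id_{\RR}$.
\item That square only commutes after looping. So for $2n=6$ the bottom degree needs the separate input $\pi_0^{\QQ}(\C(S^1\times D^{2n-1}))=0$ from Hatcher--Wagoner--Igusa.
\item Equivariance of $\Phi^{\Emb}_{X_g}$ then follows because $\alpha$ is rationally surjective for $*\le 3n-6$. This is the only place the degree bound enters.
\end{enumerate}

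\textbf{Your route.} You put all the content into naturality:
\begin{itemize}
\item conjugation by $\psi$ is an automorphism of the orthogonal functor $\sfE_{X_g}$, compatible with the $\O(1)$-structure;
\item $\Phi^F_\infty$ is natural in $F$;
\item the looped comparison equivalences of \cref{IsotopyExtensionBdQRmk} are natural in codimension-zero embeddings, so they intertwine conjugation by $\psi\times\Id_{I^k}$ with conjugation by $\psi\times\Id_{\RR^k}$.
\end{itemize}
This is more conceptual. It needs neither the appendix nor the Hatcher--Wagoner input, and it gives equivariance in all degrees. Consequently your last paragraph is beside the point: the connectivity of $\Phi^{\Emb}_{X_g}$ plays no role in your argument.

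\textbf{What your route costs.} Everything rests on naturality of the Weiss--Williams identifications under diffeomorphisms, including $\rho$, which moves the boundary. You assert this (it is recorded in \cref{IsotopyExtensionBdQRmk}) rather than check it. You also lose the paper's by-product: the identification $\pi_*^{\QQ}(\pEmb(X_g))\cong[\pi_*^{\QQ}(\CEmbo(X_g))]_{\iota_{\H}}$ through the geometric map $\alpha$. The paper reuses this for the Frobenius analysis in \eqref{iotaCeq} and for the section $\psigma$ of \cref{splittingprop}.

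\textbf{One repair.} ``Preserve $\iota(X_g)$ up to isotopy'' does not make $\psi$ act on $\sfE_{X_g}$. Instead, choose $\phi$ fixing a neighbourhood of $\partial X_g\cup(X_g-\nu\iota(X_g))$, as the paper does, and use that $\rho$ preserves $X_g-\nu\iota(X_g)$ setwise. Then $\psi$ genuinely acts on both $\Bd_{S^1\times D^{2n-1}}$ and $\Bd_{X_g}$.
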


\begin{rmk}
    As mentioned in the statement of \cref{BoringAlphaProp}, the conclusion also holds for $\ZZ[\tfrac{1}{2}]$-local homotopy groups. The only properties of the rationalisation hypothesis that we will use are:
    \begin{itemize}
        \item the natural map $\nu:(\Omega^\infty X)_{hC_2}\xrightarrow{\simeq_{\QQ}}\Omega^\infty(X_{hC_2})$ is a rational equivalence  for any $C_2$-spectrum $X$;
        \item the group $\pi_0\left(\C(S^1\times D^{2n-1})\right)$ is $2$-torsion for $2n\geq 6$, and so vanishes after rationalisation (cf. \eqref{HatcherWagoner}). 
    \end{itemize}
    The $\ZZ[\tfrac{1}{2}]$-local analogues of both of these facts clearly hold. In particular, a $\ZZ[\tfrac{1}{2}]$-local version of \cref{IntermediateBoringAlphaProp} also holds. For clarity of notation, and because this is the only case required later, we choose to work rationally throughout.
\end{rmk}

We first relate the $\mcgRhoXg$–actions on $\pEmb(X_g)$ and on $\CEmbo(X_g)$ from \cref{bEmbmodEmbAction} and \cref{CEmbAction}, respectively, via a map that is (at least initially) unrelated to the Weiss--Williams map \eqref{WWMapXg}. Consider the $\brho$-equivariant fibre sequence
$$
\begin{tikzcd}
    \Embo(X_g\times I)\rar[hook] & \CEmbo(X_g)\rar["\mathrm{res}_1"] &\Embop(X_g),
    \end{tikzcd}
$$
where $\opartial(X_g\times I)=\partial_0(X_g\times I)\coloneq X_g\times \{0,1\}\cup \partial_0(X_g)\times I$. Looping the base and total space, we obtain 
\begin{equation}
   \begin{tikzcd}
    \Omega\CEmbo(X_g)\rar["\mathrm{res}_1"] & \Omega\Embop(X_g)\rar["\Gamma"]&  \Embo(X_g\times I),
\end{tikzcd} 
\end{equation}
where $\Gamma$ is, by inspection, the map that sends a loop $\{\varphi_t\}_{t\in [0,1]}$ (based at the identity) of self-embeddings of $X_g$ to the embedding\footnote{For $\varphi$ to be an actual embedding, one needs $\Omega$ in the domain of $\Gamma$ to denote based \textit{smooth} loops—see, for instance, \cite[\S4]{MElongknots} for a proper model of the map $\Gamma$.} $\varphi: X_g \times I \xhookrightarrow{} X_g \times I$ given by $\varphi(x,t) = (\varphi_t(x), t)$. Note that $\Gamma\circ \mathrm{res}_1$ is indeed null-homotopic: replace the interval $I=[0,1]$ in the codomain of $\Gamma$ by $[-1,1]$, and model $\Gamma$ by the map sending $\{\varphi_t\}_{t\in [0,1]}$ to $\varphi(x,t)=(x,t)$ if $t\leq 0$ and $\varphi(x,t)=(\varphi_t(x),t)$ if $t\geq 0$. Now, given a loop of concordance embeddings $\{\Phi_t: X_g\times [0,1]\hookrightarrow X_g\times [0,1]\}_{t\in [0,1]}$ with $\{\varphi_t\}_{t\in [0,1]}=\res_1(\{\Phi_t\}_{t\in [0,1]})$, and writing $\mathrm{tr}_{r}: X_g\times \RR\to X_g\times \RR$ for the translation map $\mathrm{tr}_r(x,t)=(x,t+r)$, the nullhomotopy $\Gamma\circ \mathrm{res}_1\simeq *$ is 
\[
[0,1]\ni s\longmapsto \left((x,t)\mapsto \left\{
\begin{array}{cc}
    (x,t), & -1\leq t\leq s-1,\\
    \mathrm{tr}_{s-1}(\Phi_s(x,t-s+1)), & s-1\leq t\leq s,\\
    (\varphi_t(x),t), & s\leq t\leq 1.
\end{array}
\right.\right).
\]

With this in mind, observe that there is a map of $\brho$-equivariant fibre sequences
\begin{equation}\label{CEvsPseudoisotopyMap}
\begin{tikzcd}
\Omega\CEmbo(X_g)\rar["\mathrm{res}_1"]\ar[dd, "\alpha"] & \Omega\Embop(X_g)\ar[dd, equal]\rar["\Gamma"]&  \Embo(X_g\times I)\dar["\beta"]\\
&&\bEmbo(X_g\times I)\\
    \Omega\pEmb(X_g)\rar["i"] & \Omega \Embcong(X_g)\rar["\beta"] &\Omega \bEmbcong(X_g),\ar[u, "\widetilde{\Gamma}"', "\vsim"]
\end{tikzcd}
\end{equation}
where the block version of the graphing map $\Gamma$, denoted $\widetilde{\Gamma}$, is an equivalence (essentially by definition). The map $\alpha$ is the induced map on horizontal homotopy fibres of the right-hand commutative square.

\begin{lemma}\label{ActionCompatibilityProp}
    The map $\pi_{*}(\alpha): \pi_*(\Omega\CEmbo(X_g))\to \pi_*(\Omega\pEmb(X_g))$ is $\mcgRhoXg$-equivariant for the actions of Constructions \ref{CEmbAction} and \ref{bEmbmodEmbAction}, respectively. 
\end{lemma}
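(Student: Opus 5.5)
The plan is to show that, for each diffeomorphism $\psi=\phi\rho^i$, the whole diagram \eqref{CEvsPseudoisotopyMap} carries a compatible "conjugation by $\psi$" self-map. Concretely, $\psi$ acts on each of the six spaces: on $\Omega\CEmbo(X_g)$ by pointwise $\psi_\#$ as in \cref{CEmbAction}; on $\Omega\Embop(X_g)$ and $\Omega\Embcong(X_g)$ by pointwise conjugation $e\mapsto\psi e\psi^{-1}$; on $\Embo(X_g\times I)$ and $\bEmbo(X_g\times I)$ by conjugation with $\psi\times\mathrm{Id}_I$; and on $\Omega\bEmbcong(X_g)$ by pointwise block conjugation. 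The first step is to check that every map in \eqref{CEvsPseudoisotopyMap} commutes strictly with these conjugations. This covers $\res_1$, $\Gamma$, $\widetilde\Gamma$, $\beta$ and $i$, and in each case it is immediate from the formulas: for example, $\Gamma(\{\psi\varphi_t\psi^{-1}\}_t)=(\psi\times\mathrm{Id})\Gamma(\{\varphi_t\}_t)(\psi^{-1}\times\mathrm{Id})$. The same holds for the explicit null-homotopy of $\Gamma\circ\res_1$ written above, since the translations $\mathrm{tr}_r$ commute with $\psi\times\mathrm{Id}$.

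The second step uses this to deduce that $\alpha$ commutes with conjugation, up to homotopy. The map $\alpha$ is induced on horizontal homotopy fibres by the right-hand square, together with the chosen null-homotopies $\Gamma\circ\res_1\simeq *$ and $\beta\circ i\simeq *$. We therefore need the null-homotopy on the bottom row, coming from the fibre sequence, to be compatible with conjugation as well. Since $\pEmb(X_g)$ is defined as a homotopy fibre, conjugation by $\psi$ acts on it strictly, and the canonical null-homotopy is preserved. Then $\psi_\#\circ\alpha=\alpha\circ\psi_\#$, strictly given these choices and otherwise up to homotopy.

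The third step identifies the action on $\pi_*(\Omega\pEmb(X_g))$ coming from strict conjugation of the homotopy fibre with the action of \cref{bEmbmodEmbAction}. That construction lifts $\psi_\#(i\sigma)$ along $i$ using a null-homotopy of $q(\psi_\#(i\sigma))=\psi_\#(qi\sigma)$. Conjugating the canonical null-homotopy of $qi\sigma$ supplies exactly such a null-homotopy, and by uniqueness of lifts up to homotopy the two actions agree. This is the same argument recorded in \cref{WellDefinedActionRmk}. On the source, the action in \cref{CEmbAction} is literally pointwise conjugation, so the two actions correspond. Equivariance for $\rho$ follows the same way, because $\rho$ is itself a diffeomorphism and all constructions are $\brho$-equivariant.

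I expect the main obstacle to be the bookkeeping of basepoints and null-homotopies in the second step. The action in \cref{bEmbmodEmbAction} is defined via loops in $\Embcong(X_g)$ rather than via diffeomorphisms acting on the fibre. The spaces must also use smooth based loops so that $\Gamma$ is defined, and one must check that conjugation preserves this smoothness, which it does. If strict compatibility fails for some model of $\bEmbo$, I would instead argue as in \cref{WellDefinedActionRmk}: the choices form a contractible space, so the two composites agree on homotopy groups.
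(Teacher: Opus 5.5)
Your proposal is correct and follows essentially the same route as the paper: a direct verification that conjugation by $\psi=\phi\rho^i$ commutes with every map in \eqref{CEvsPseudoisotopyMap}, including the explicit null-homotopy of $\Gamma\circ\mathrm{res}_1$, so that the map $\alpha$ induced on horizontal homotopy fibres intertwines the two conjugate-and-lift actions. The only difference is one of phrasing. The paper recasts the action on $\Omega\CEmbo(X_g)$ as a lift along the null-homotopy of $\Gamma\circ\mathrm{res}_1$ and then compares null-homotopies, whereas you recast the action of \cref{bEmbmodEmbAction} as strict conjugation on the homotopy fibre.
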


\begin{proof}
    This follows by a direct verification. Given $([\phi],\rho^{i})\in \mcgRhoXg$ and $[\sigma]\in \pi_{k-1}(\Omega\CEmbo(X_g))$, choose representatives $\phi$ and $\sigma\colon (D^{k},\partial D^{k})\to (\CEmbo(X_g),*)$ as in \cref{CEmbAction}, and set $\psi=\phi\circ \rho^{i}$. Pointwise conjugation by $\psi$, written $\psi_{\#}$, is well-defined in each space of the top fibre sequence in \eqref{CEvsPseudoisotopyMap}: on the base it is given by conjugation by $\psi\times \Id_I$, and on the total space and the fibre by conjugation by $\psi$ and by $\psi\times \Id_I$, respectively. Both $\mathrm{res}_1$ and $\Gamma$ commute with $\psi_{\#}$, hence $\Gamma(\psi_{\#}\mathrm{res}_1(\sigma))=\psi_{\#}\Gamma(\mathrm{res}_1(\sigma))$. Since $\Gamma\circ \mathrm{res}_1$ is null-homotopic, the map $\psi_{\#}\mathrm{res}_1(\sigma)$ lifts along this null-homotopy to a map $(D^{k},\partial D^{k})\to (\CEmbo(X_g),*)$, unique up to homotopy; this lift must be $\psi_{\#}(\sigma)$.

    Replacing the base space of the bottom fibre sequence in \eqref{CEvsPseudoisotopyMap} by $\bEmbo(X_g\times I)$ and replacing $b$ by $\widetilde{\Gamma}\circ \beta$, it is clear that the action described in \cref{CEmbAction} is compatible with that of \cref{bEmbmodEmbAction}, since the null-homotopy of $\widetilde{\Gamma}\circ i\circ \beta\circ \alpha = \beta\circ \Gamma\circ \mathrm{res}_1$ is compatible with the null-homotopy of $\Gamma\circ \mathrm{res}_1$.
    \end{proof}

\begin{rmk}
    Note that \cref{ActionCompatibilityProp} does not deal with the map $\pi_0(\CEmbo(X_g))\to \pi_0(\pEmb(X_g))$, but, luckyly, both of these groups are zero by Hudson's theorem \cite{HudsonThm}. 
\end{rmk}

As foreshadowed, the map $\alpha$ of \eqref{CEvsPseudoisotopyMap} is indeed closely related to the Weiss--Williams map \eqref{WWMapXg}.

\begin{prop}\label{IntermediateBoringAlphaProp}
   If $2n\geq 6$, the following diagram commutes on rational homotopy groups
   $$
\begin{tikzcd}
    \Omega \CEmbo(X_g)\dar[hook]\ar[rr,"\alpha"] && \Omega \pEmb(X_g)\ar[d,"\Phi^{\Emb}_{X_g}"]\\
\Omega\sCE(X_g)\rar[dash, "\eqref{StableCEmbIETSeq}", "\sim"']&\Omega^{\infty+1}\big(\CEspo(X_g)\big)\rar["q"] & \Omega^{\infty+1}\big(\CEspo(X_g)_{hC_2}\big),
\end{tikzcd}
   $$   
   where the left vertical map is the inclusion into the colimit \eqref{CEmbStabilisationInclusion}, and $q$ is induced by the natural quotient map $q: X\to X_{hC_2}$ for $X$ a $C_2$-spectrum. In fact, the diagram is homotopy commutative when looped once.
\end{prop}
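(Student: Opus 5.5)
The plan is to unwind both composites in the square through the orthogonal calculus description of $\Phi^{\Emb}_{X_g}$ from \cref{PhiEmbDefn}, with $F\coloneq\sfE_{(X_g,X_g)}$ the orthogonal functor of \eqref{CEspDefn}. We identify both $\alpha$ and the left vertical inclusion with maps out of the first unstable derivative $F^{(1)}(0)=\hofib(F(0)\to F(\RR))$, and then compare these two maps inside the tower \eqref{OrthTower}.

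First, I would identify $\Omega\CEmbo(X_g)$ with $\Omega F^{(1)}(0)$ via the equivalence of \cref{IsotopyExtensionBdQRmk}. Under this identification, the left vertical map of the square becomes the map $\Omega F^{(1)}(0)\to \Omega^{\infty+1}\Theta F^{(1)}$ into the colimit of the spaces $\Omega^{k}F^{(1)}(\RR^k)$ along the structure maps $s^\vee$, by the diagram \eqref{CEmbStabilisationInclusion}. Next, I would check that $\alpha$ corresponds, up to homotopy, to the looping of the natural map
\[
F^{(1)}(0)=\hofib(F(0)\to F(\RR))\longrightarrow \hofib(F(0)\to F(\RR^\infty))\simeq \pEmb(X_g).
\]
For this I would use two identifications. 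The right-hand side is identified with $\pEmb(X_g)$ via \eqref{bEmbAsT0} and the diagram \eqref{CartesianIETBounded}. The middle term $F(\RR)$ is identified with a bounded model of $\bEmbcong$ of $X_g\times I$, or equivalently with a delooping of it, compatibly with the graphing maps $\Gamma$ and $\widetilde{\Gamma}$ in \eqref{CEvsPseudoisotopyMap}. Concretely, $\alpha$ is defined as a map of horizontal homotopy fibres in \eqref{CEvsPseudoisotopyMap}, and the plan is to exhibit \eqref{CEvsPseudoisotopyMap} as the looping of the evident map of fibre sequences comparing $F(0)\to F(\RR)$ with $F(0)\to F(\RR^\infty)$. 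This uses that crossing a bounded diffeomorphism with $\RR$ is modelled by the graphing construction.

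Granting these identifications, the statement reduces to a general claim about an orthogonal functor $F$. The claim is that the composite
\[
F^{(1)}(0)\longrightarrow F^{(1)}(\RR^\infty)\xrightarrow{\ \Phi^F_\infty\ }\Omega^\infty\big(\Theta F^{(1)}_{h\O(1)}\big)
\]
agrees, after looping once, with
\[
F^{(1)}(0)\longrightarrow \Omega^\infty\Theta F^{(1)}\xrightarrow{\ q\ }\Omega^\infty\big(\Theta F^{(1)}_{h\O(1)}\big).
\]
I would prove this by applying $\eta_1$ to the square formed by $F(0)$, $F(\RR)$, $F(\RR^\infty)$ and mapping into $T_1F$. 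We have $T_1F(\RR)\to T_0F(\RR)$ with fibre $\Omega^\infty\big((S^{\sigma}\otimes\Theta F^{(1)})_{h\O(1)}\big)$, while $T_1F(0)$ has fibre $\Omega^\infty\big(\Theta F^{(1)}_{h\O(1)}\big)$. So the induced map $F^{(1)}(0)\to\hofib(T_1F(0)\to T_1F(\RR))$ lands in
\[
\hofib\Big(\Omega^\infty\big(\Theta F^{(1)}_{h\O(1)}\big)\to\Omega^\infty\big((S^\sigma\otimes\Theta F^{(1)})_{h\O(1)}\big)\Big)\simeq\Omega^{\infty}\big((\O(1)_+\otimes\Theta F^{(1)})_{h\O(1)}\big)\simeq\Omega^\infty\Theta F^{(1)},
\]
using the cofibre sequence $\O(1)_+\to S^0\to S^\sigma$. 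Under this identification the map back to $\Omega^\infty\big(\Theta F^{(1)}_{h\O(1)}\big)$ is precisely the quotient $q$. It then remains to see that the resulting map $F^{(1)}(0)\to\Omega^\infty\Theta F^{(1)}$ is the canonical inclusion of the first term of the spectrum. This is a check against Weiss' construction of $\eta_1$ and of the derivative spectrum, as recalled in \cite[\S A.1]{MElongknots}.

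I expect the main obstacle to be this last identification, together with the indexing and sign conventions involving the rotation maps $s_V$; I would first try to verify it directly at the level of the homotopy fibres $F^{(1)}(\RR^k)$. Here the passage to rational homotopy groups enters in two places. The first is to replace $(\Omega^\infty X)_{hC_2}$ by $\Omega^\infty(X_{hC_2})$ where needed, using that $\nu$ is a rational equivalence. The second is to deal with $\pi_0$ and with the component issue in $F(\RR)$, which is controlled by $\pi_0\C(S^1\times D^{2n-1})$; this group is $2$-torsion by \eqref{HatcherWagoner} and hence vanishes rationally when $2n\geq 6$. The integral statement after looping once would follow from the same argument, since looping removes these $\pi_0$ issues.
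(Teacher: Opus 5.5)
Your orthogonal-calculus step is sound and is essentially the paper's lemma establishing \eqref{PhiFsquare}. The steps are: $\hofib(T_1F(0)\to T_1F(\RR))\simeq\Omega^\infty(S(\RR)_+\wedge_{\O(1)}\Theta F^{(1)})\simeq\Omega^\infty\Theta F^{(1)}$; the map to $\Omega^\infty(\Theta F^{(1)}_{h\O(1)})$ induced by $S(\RR)\subset S(\RR^\infty)$ is $q$; and identifying the resulting map with the stabilisation inclusion is the $k=0$ case of \eqref{PhikFdiagram}, from \cite[Prop.~2.2]{MElongknots}.

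The gap is the identification of $\alpha$, which is where the real work lies. You want to do it directly for $F=\sfE_{X_g}$ on $\Omega\CEmbo(X_g)$. However, the equivalence $\Omega\CEmbo(X_g)\simeq\Omega\sfE^{(1)}(0)$ under which the left vertical map is the stabilisation (\cref{IsotopyExtensionBdQRmk}) is only available as a homotopy fibre of the Weiss--Williams equivalence for $\Bd_Q$. To compare it with the graphing map $\Gamma$ of \eqref{CEvsPseudoisotopyMap} you would need three things: an Alexander-trick map from the honest embedding space $\Embo(X_g\times I)$ into $\Omega\sfE(\RR)$, a bounded isotopy-extension model for $\sfE(\RR)$, and a proof that the resulting equivalence agrees with the one in \cref{IsotopyExtensionBdQRmk}. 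None of these is supplied. Moreover, your description of $F(\RR)$ as ``a bounded model of $\bEmbcong$ of $X_g\times I$, or a delooping of it'' actually describes (a loop space of) $F(\RR^\infty)\simeq\bEmbcong(X_g)$, not $F(\RR)$. The paper sidesteps all of this by proving the comparison for $\Bd_Q$ in \cref{BdQProp}. There it uses the explicit homotopy $\mathrm{alex}\circ\Gamma\simeq(-)\times\Id_{\RR}$ and the diagram \eqref{bigQcub}, whose left part is literally \eqref{ConcStabilisationInclusion}. It then takes homotopy fibres along $S^1\times D^{2n-1}\hookrightarrow X_g$, which costs one loop.

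That lost loop is why a separate argument is needed in the bottom degree, and your final paragraph does not supply one. You never say which map's surjectivity or injectivity is at stake, nor how the vanishing of $\pi_0^{\QQ}\C(S^1\times D^{2n-1})$ enters. Note also that if your direct identification held on $\Omega\CEmbo(X_g)$, the square would commute integrally with no rational input; the paper remarks that removing this loop seems more involved and does not attempt it.

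The missing argument runs as follows.
\begin{enumerate}
\item $\pi_1(\CEmbo(X_g))$ vanishes by connectivity unless $2n=6$, so only that case needs work.
\item For $2n=6$, the $Q=X_g$ instance of \cref{BdQProp} commutes on $\pi_1^{\QQ}\C(X_g)$, and it maps to the square in question via the restriction $r$.
\item $r_*\colon\pi_1^{\QQ}\C(X_g)\to\pi_1^{\QQ}\CEmbo(X_g)$ is surjective, because the next term of the isotopy-extension sequence is $\pi_0^{\QQ}\C(S^1\times D^{2n-1})=0$ by \eqref{HatcherWagoner}.
\item A diagram chase then gives commutativity.
\end{enumerate}
Finally, the rational equivalence $(\Omega^\infty X)_{hC_2}\to\Omega^\infty(X_{hC_2})$ you invoke plays no role in this proposition; it is only used later, in \cref{BoringAlphaProp}.
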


\begin{proof}[Proof of \cref{BoringAlphaProp} given \cref{IntermediateBoringAlphaProp}]
    In the diagram of \cref{IntermediateBoringAlphaProp}, equip the homotopy groups of the top-right term with the $\mcgRhoXg$-action of \cref{bEmbmodEmbAction}, and those of the remaining corners with the actions described in \cref{CEmbAction}. We now verify which maps in this diagram are $\mcgRhoXg$-equivariant on rational homotopy groups. 

The map $\alpha$ is equivariant by \cref{ActionCompatibilityProp}. The stabilisation map $\CEmbo(X_g)\hookrightarrow \sCE(X_g)$ is equivariant by construction. The bottom horizontal composition is equivariant, since it fits into the commutative square
\[
\begin{tikzcd}
    \sCE(X_g)\dar["q"]\rar[dash, "\eqref{StableCEmbIETSeq}", "\sim"'] &
    \Omega^{\infty}\big(\CEspo(X_g)\big)\dar["q"] \\
    \sCE(X_g)_{hC_2}\rar["\nu", "\simeq_{\QQ}"'] &
    \Omega^{\infty}\big(\CEspo(X_g)_{hC_2}\big),
\end{tikzcd}
\]
where the left vertical map is equivariant because $\iota_{\H}$ is compatible with the $\mcgRhoXg$-action, and where $\nu$ is equivariant by construction, as the isomorphism \eqref{TransferMCGAction} transferring the action is induced by $\nu$.

Since $\alpha$ is surjective on rational homotopy groups in degrees $*\leq 3n-6$ (both vertical maps in the diagram of \cref{IntermediateBoringAlphaProp} are $(3n-5)$\nobreakdash–connected, and $q$ is surjective on homotopy groups), it follows that $\pi_*^{\QQ}(\Phi^{\Emb}_{X_g})$ is indeed $\mcgRhoXg$-equivariant for $*\leq 3n-6$.
\end{proof}

So our goal now is to prove \cref{IntermediateBoringAlphaProp}. To do so, we establish an analogue for concordance diffeomorphisms (\cref{BdQProp}), and obtain \cref{IntermediateBoringAlphaProp} by taking fibres. Consider the following map $\alpha$, obtained as the horizontal homotopy fibre map of the right subsquare in
\begin{equation}
\begin{tikzcd}
\Omega\C(Q)\rar["\mathrm{res}_1"]\ar[dd, "\alpha"] & \Omega\Diffb(Q)\ar[dd, equal]\rar["\Gamma"]&  \Diffb(Q\times I)\dar["\beta"]\\
&&\bDiffb(Q\times I)\\
    \Omega^2\left(\frac{\bDiff}{\Diff}\right)_{\partial}(Q)\rar["i"] & \Omega \Diffb(Q)\rar["\beta"] &\Omega \bDiffb(Q),\ar[u, "\widetilde{\Gamma}"', "\vsim"]
\end{tikzcd}
\end{equation}
where notation is completely analogous to that in \eqref{CEvsPseudoisotopyMap}. (Hence, for $Q=X_g$, this diagram maps compatibly to that in \eqref{CEvsPseudoisotopyMap}.) Consider also the composition
\[
\begin{tikzcd}
    j:\left(\frac{\bDiff}{\Diff}\right)_\partial(Q)\rar[hook] &\frac{\bDiff^b_\partial(Q\times\RR^\infty)}{\Diffb(Q)} & \lar["\sim"', hook'] \frac{\Diff^b_\partial(Q\times\RR^\infty)}{\Diffb(Q)}=\colon \Bd_Q^{(1)}(\RR^\infty),
\end{tikzcd}
\]
where the right equivalence is \cite[Thm. B]{WWI}, and $\Bd_Q$ is the orthogonal functor of \cref{BdAndEorthFunctorsDefn}(i). 

Finally, let $\cC(Q)\coloneq \hocolim_{k}\C(Q\times I^k)$ denote the space of \textit{stable} concordance diffeomorphisms of $Q$, where the maps in the defining filtered system are the concordance stabilisation maps of \eqref{ConcordanceStabilisationMapsDefn}. The equivalences $\Omega\H(Q)\xrightarrow{\sim} \C(Q)$ induced by the fibre sequence \eqref{OmegaHFibSeq} are compatible with the stabilisation maps, and hence induce a natural equivalence of stable spaces
\begin{equation}\label{cCvscH}
    \Omega \cH(Q)\overset{\simeq}{\longrightarrow} \cC(Q).
\end{equation}

The following is an analogue of \cref{IntermediateBoringAlphaProp} for concordance diffeomorphisms.

\begin{prop}\label{BdQProp}
There is a homotopy commutative square
   \[
\begin{tikzcd}[row sep = 15pt]
    \Omega \C(Q)\dar[hook, "\mathrm{stab}."]\ar[rr,"j\circ \alpha"] && \Omega^{2}\left(\frac{\Diff^b_\partial(Q\times \RR^\infty)}{\Diff_\partial(Q)}\right)\ar[dd,"\Phi^{\Bd_Q}_{\infty}"]\\    
\Omega\mathcal{C}(Q)\dar[dash, "\vsim", "\eqref{cCvscH}"'] &&\\
\Omega^2\cH(Q)\rar[dash,"\eqref{StableCEmbIETSeq}", "\sim"']&\Omega^{\infty+2}(\Hsp(Q))\rar["q"] & \Omega^{\infty+2}\big(\Hsp(Q)_{hC_2}\big),
\end{tikzcd}
   \]
   which is natural in codimension-zero embeddings. Here, $\Phi^{\Bd_Q}_\infty$ is the map \eqref{PhiFInftyMap} for $F=\Bd_Q$.
\end{prop}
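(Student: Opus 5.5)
The plan is to reduce both composites in the square to the structure map of the first derivative of $\Bd_Q$ and compare them there. First I unwind $\Phi^{\Bd_Q}_\infty$ from \eqref{PhiFInftyMap}. Its domain $\Bd_Q^{(1)}(\RR^\infty)=\hofib(\Bd_Q(0)\to\Bd_Q(\RR^\infty))$ is the homotopy colimit of the spaces $\Bd_Q^{(1)}(\RR^k)=\hofib(\Bd_Q(\RR^{k})\to \Bd_Q(\RR^{k+1}))$ after the appropriate deloopings. The map $\eta_1$ restricted to the stage $k=0$, composed with $\Omega^{\infty}$ of the quotient $\Theta\Bd_Q^{(1)}\to (\Theta \Bd_Q^{(1)})_{h\O(1)}$, is by construction the inclusion $\Bd^{(1)}_Q(0)\to \Omega^\infty\Theta\Bd_Q^{(1)}$ followed by $q$. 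Concretely, the fibre sequence $T_1F\to T_0F$ of \eqref{OrthTower} evaluated at $V=0$ has fibre $\Omega^\infty(\Theta F^{(1)}_{h\O(1)})$, and on the first piece $F^{(1)}(0)$ it is the canonical map into the zeroth space of the prespectrum, then into homotopy orbits. So after looping twice, $\Phi^{\Bd_Q}_\infty\circ j\circ\alpha$ should agree with the composite
\[
\Omega\C(Q)\to \Omega^2\Bd_Q^{(1)}(0)\to \Omega^{\infty+2}\Hsp(Q)\xrightarrow{q}\Omega^{\infty+2}(\Hsp(Q)_{hC_2}),
\]
where the first map is the looped version of $j\circ\alpha$ landing in the $k=0$ stage.

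Next I identify this first map with the equivalence $\Omega\H(Q)\simeq \Omega\Bd_Q^{(1)}(0)$ of \cref{IsotopyExtensionBdQRmk}, precomposed with $\C(Q)\simeq\Omega\H(Q)$ from \eqref{OmegaHFibSeq}. Here $\Bd_Q^{(1)}(0)=\hofib(\BDiffb(Q)\to\BDiff^b_\partial(Q\times\RR))$. Its loop space receives $\C(Q)$ via the fibre sequence $\C(Q)\to\Diffb(Q)\to\Diff^b_\partial(Q\times\RR)$, which is the standard Weiss--Williams identification, proved in \cite{WWI} via bounded diffeomorphisms. I will check by an explicit homotopy that $\alpha$, which is built from the graphing map $\Gamma$ and the null-homotopy of $\Gamma\circ\res_1$ given before \cref{ActionCompatibilityProp}, followed by $j$, recovers exactly this map. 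Under the Weiss--Williams model, a concordance $F$ on $Q\times[0,1]$ gives the bounded diffeomorphism of $Q\times\RR$ that equals $F$ near $[0,1]$ and is extended by translates of $F_1$. The graph of a loop of diffeomorphisms is isotopic to this bounded picture by sliding the support in the $\RR$-direction, mirroring the translation $\mathrm{tr}_{s-1}$ used in the null-homotopy. With the two first maps matched, commutativity reduces to the left column of \eqref{ConcStabilisationInclusion}, namely that the stage inclusion $\Omega\H(Q)\to\Omega^{\infty+1}\Hsp(Q)$ agrees with $\Omega\H(Q)\to\Omega\cH(Q)\simeq\Omega^{\infty+1}\Hsp(Q)$, which is the content of that remark.

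Naturality in codimension-zero embeddings then follows because every construction used is natural: $\Gamma$, $\alpha$, $j$, the identification \eqref{OmegaHFibSeq}, the equivalences of \cref{IsotopyExtensionBdQRmk}, and $\Phi^F_\infty$, which is natural in $F$.

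The main obstacle is the second step: producing a coherent homotopy between the "graph plus null-homotopy" map $\alpha$ and the Weiss--Williams/Vogell identification of $\C(Q)$ with loops on the first derivative. Two points need care. The first is signs and orientation conventions: the two identifications might differ by the reflection of the interval, which is an involution, in which case one must absorb $\iota_\C$ versus $\iota_\H$ as in \eqref{antiequivarianceIotaC}. Since $q$ coequalises the $C_2$-action, such a discrepancy would in any case be invisible after composing with $q$. The second is working with bounded rather than compactly supported diffeomorphisms. I would first attempt to prove this step by writing both maps as maps of fibre sequences out of $\Omega\C(Q)\to\Omega\Diffb(Q)\to\Diffb(Q\times I)$ and comparing the induced maps on fibres.
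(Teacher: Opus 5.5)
Your proposal follows the paper's proof. The paper likewise proves the general orthogonal-calculus fact that $\Phi^F_\infty$ restricted to $F^{(1)}(0)$ is stabilisation followed by $q$, via the maps $\Phi^F_k$ of \cite{MElongknots} and $S(\RR)_+\wedge_{\O(1)}X\simeq X$, so this step is not quite ``by construction''. It then identifies $j\circ\alpha$ with the Weiss--Williams map $\mathrm{alex}\colon\C(Q)\to\Omega\Bd_Q^{(1)}(0)$ by comparing maps out of $\Omega\C(Q)\to\Omega\Diff_\partial(Q)\to\Diff_\partial(Q\times I)$, using an explicit translation homotopy $\mathrm{alex}\circ\Gamma\simeq(-)\times\Id_{\RR}$, and concludes with \eqref{ConcStabilisationInclusion}.

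One caution concerns your fallback that an interval-reflection discrepancy would be invisible after $q$. This only holds for a discrepancy by $\iota_{\H}$ itself, for instance reflecting the $\RR$-coordinate of the bounded diffeomorphisms. A discrepancy by the concordance involution $\iota_{\C}$ acts as $-\iota_{\H}$ on homotopy by \eqref{antiequivarianceIotaC} and would survive $q$ as a sign, so the explicit homotopy is genuinely needed.
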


\begin{proof}
    Given the technical nature of this proof, we defer it to \cref{AppendixBoringAlpha}.
\end{proof}

\begin{rmk}
    It is very likely that one can get rid of one of the loopings in the square of \cref{BdQProp} and, moreover, that such delooped square actually factors through another commutative square
    \[
    \begin{tikzcd}
    \C(Q)\dar[hook, "\mathrm{stab.}"]\ar[rr,"\B(\alpha)"] && \Omega\left(\frac{\bDiff}{\Diff}\right)_\partial(Q)\dar["\Phi^{\Diff}_Q"]\\
\mathcal{C}(Q)\rar[dash,"\eqref{StableCEmbIETSeq}", "\sim"']&\Omega^{\infty+1}(\Hsp^s(Q))\rar["q"] & \Omega^{\infty+1}\big(\Hsp^s(Q)_{hC_2}\big),
\end{tikzcd}
    \]
    where $\Phi^{\Diff}_Q$ is the Weiss--Williams map from \eqref{WWMap}. Both of these claims seem more involved to show and we will not need them, so we shall not pursue this direction.
\end{rmk}

\begin{proof}[Proof of \cref{IntermediateBoringAlphaProp}]
    We first argue that the homotopy fibre of the map between the natural commutative diagrams of \cref{BdQProp} induced by the embedding $(Q\hookrightarrow Q')=(S^1\times D^{2n-1}\hookrightarrow X_g)$ yields the one appearing in the statement of \cref{IntermediateBoringAlphaProp} looped once more. 

    The bottom horizontal map follows by \eqref{StableCEmbIETSeq} and the definition of $\CEspo(X_g)\coloneq \hofib(\Hsp(S^1\times D^{2n-1})\to \Hsp(X_g))$. The left vertical map follows by the usual isotopy extension fibre sequence \eqref{CEmbIETSeq} and its stable version (see \cref{IsotopyExtensionBdQRmk}).
    For the top horizontal map, there is a commutative diagram
    \[
    \begin{tikzcd}[row sep =15pt]
        \Omega^2\CEmbo(X_g)\dar["\alpha"] \rar& \Omega\C(S^1\times D^{2n-1})\dar["\alpha"]\rar & \Omega\C(X_g)\dar["\alpha"]\\
        \Omega^2\pEmb(X_g)\rar["i"]\dar[equal] & \Omega^2\left(\frac{\bDiff}{\Diff}\right)_\partial(S^1\times D^{2n-1})\dar["j"] \rar&\Omega^2\left(\frac{\bDiff}{\Diff}\right)_\partial(X_g)\dar["j"]\\
        \Omega^2\pEmb(X_g)\rar["j\circ i"]& \Omega^2\left(\frac{\Diff^b_\partial(S^1\times D^{2n-1}\times \RR^\infty)}{\Diffb(S^1\times D^{2n-1})}\right)\rar&\Omega^2\left(\frac{\Diff^b_\partial(X_g\times \RR^\infty)}{\Diffb(X_g)}\right),
    \end{tikzcd}
    \]
    where the top map is one of fibre sequences by the isotopy extension theorem, and the bottom right subsquare is cartesian as explained in \eqref{CartesianIETBounded}. Finally for the right vertical map, note from \cref{PhiEmbDefn} that the map $\smash{\Phi^{\Emb}_{X_g}}$ is defined to be $\smash{\Phi_{\infty}^{\sfE_{X_g}}}$ up to the equivalence \eqref{bEmbAsT0} in the domain (which is a consequence of the bottom right subsquare in the above diagram being cartesian). Since the maps $\smash{\Phi_\infty^F}$ of \eqref{PhiFInftyMap} preserve fibre sequences of orthogonal functors, the claim follows. 

    Thus, we have established that the diagram in the statement of \cref{IntermediateBoringAlphaProp} is homotopy commutative upon looping once, so the original diagram must be commutative on homotopy groups in degrees $*\geq 1$. It remains to argue the commutativity on $\pi_0^{\QQ}(-)$, i.e. that the diagram of rational vector spaces
    \begin{equation}\label{2n6homotopysquare}
\begin{tikzcd}
\pi_1^{\QQ}\left(\CEmbo(X_g)\right)\dar[hook]\ar[r,"\alpha_*"] & \pi_1^{\QQ}\left(\pEmb(X_g)\right)\ar[d,"\Phi^{\Emb}_*\coloneq (\Phi^{\Emb}_{X_g})_*"]\\
\pi_1^{\QQ}\left(\CEspo(X_g)\right)\rar[two heads, "q_*"] & \left[\pi_1^{\QQ}\left(\CEspo(X_g)\right)\right]_{\iota_{\H}}
\end{tikzcd}
    \end{equation}
    commutes. (Observe that since $\CEmbo(X_g)$ is an $\EE_1$-space, $\pi_1\left(\CEmbo(X_g)\right)$ is abelian and thus can indeed be rationalised.) If $2n>6$, this follows immediately from the fact that $\CEmbo(X_g)$ is $(n-2)$-connected, so we only need to deal with the case $2n=6$. To this end, consider the commutative square
\[
\begin{tikzcd}
\pi_1^{\QQ}\left(\C(X_g)\right)\ar[dd,hook]\ar[r,"\alpha_*"] & \pi_2^{\QQ}\left(\left(\frac{\bDiff}{\Diff}\right)_\partial(X_g)\right)\ar[d, "j_*"]\\
&\pi_2^{\QQ}\left(\frac{\Diff^b_\partial(X_g\times \RR^\infty)}{\Diff_\partial(X_g)}\right)
\ar[d,"\Phi^{\Diff}_*"]\\
\pi_2^{\QQ}\left(\Hsp(X_g)\right)\rar[two heads, "q_*"] & \left[\pi_2^{\QQ}\left(\Hsp(X_g)\right)\right]_{\iota_{\H}}
\end{tikzcd}
    \]
   obtained from the one in \cref{BdQProp} for $Q=X_g$ when taking $\pi_0(-)$ and rationalising appropriately. This square maps to \eqref{2n6homotopysquare} and we further claim that the group homomorphism
    \[
r_*:\pi_1^{\QQ}\left(\C(X_g)\right)\longrightarrow\pi_1^{\QQ}\left(\CEmbo(X_g)\right)
    \]
    is surjective. If so, then \eqref{2n6homotopysquare} is commutative by an elementary diagram chase. To see this, note that $r_*$ fits in a long exact sequence
    \[
    \begin{tikzcd}
        \pi_1^{\QQ}\left(\C(S^1\times D^{2n-1})\right)\rar&\pi_1^{\QQ}\left(\C(X_g)\right)\rar["r_*"] &\pi_1^{\QQ}\left(\CEmbo(X_g)\right)\rar &\pi_0^{\QQ}\left(\C(S^1\times D^{2n-1})\right)
    \end{tikzcd}
    \]
    and hence it suffices to show that $\pi_0^{\QQ}\left(\C(S^1\times D^{2n-1})\right)=0$. This in turn follows from the work of Hatcher--Wagoner \cite{HatcherWag} (with corrections by Igusa \cite[Thm. 8.a.1]{IgusaCorrection}) who, for $M$ a compact smooth manifold of dimension $\dim M\geq 6$, established an exact sequence of abelian groups 
   \begin{equation}\label{HatcherWagoner}
       \begin{tikzcd}
           \mathrm{Wh}_1^+(\pi_1 M; \pi_2 M)\oplus \mathrm{Wh}_1^+(\pi_1 M; \mathbf{F}_2)\rar & \pi_0 (\C(M))\rar & \mathrm{Wh}_2(\pi_1M)\rar &0.
       \end{tikzcd}
   \end{equation}
   For $M=S^1\times D^{2n-1}$ (so $\pi_1 M=\ZZ$ and $\pi_2 M=0$), the abelian groups on the left and right terms in \eqref{HatcherWagoner} are either zero or $2$-torsion. Thus, $\pi_0^{\QQ}(\C(S^1\times D^{2n-1}))=0$ (in fact, $\pi_0(\C(S^1\times D^{2n-1}))[\tfrac{1}{2}]=0$), as desired.
\end{proof}

Now that \cref{BoringAlphaProp} has been established, we need to understand the $\QQ[\mcgRhoXg]$-action of \cref{CEmbAction} on $\pi_*^{\QQ}(\sCE(X_g))$ (or rather, on its $\iota_{\H}$-coinvariants). For the remainder of the section, fix $([\phi],\rho^i)\in \mcgRhoXg$ and choose a representative diffeomorphism $\phi$ of $X_g$ that fixes a neighbourhood of $\partial X_g\cup(S^1\times D^{2n-1})$. Looping the fibre sequence \cref{StableCEmbIETSeq} and noting \eqref{cCvscH} yields a fibre sequence
\begin{equation}\label{rSurjectionFibSeq}
\begin{tikzcd}
    \mathcal{C}(S^1\times D^{2n-1})\rar["i"] &\mathcal{C}(X_g)\rar["r"] &\sCE(X_g)
\end{tikzcd}
\end{equation}
 which is split since $\cC(-)\simeq \Omega\cH(-)$ is a homotopy functor and the embedding $S^1\times D^{2n-1}\hookrightarrow X_g$ lies in the homotopy class of the standard inclusion $S^1\hookrightarrow S^1\vee \vee^{2g}S^n$ (which is a section of the obvious collapse map). Thus the restriction map $r$ is surjective on homotopy groups. Moreover, conjugation by $\psi\coloneq\phi\rho^i$ is also defined on the space $\mathcal{C}(X_g)$, and $r$ clearly commutes with $\psi_\#$. We claim that $\psi_\#$ on $\mathcal{C}(X_g)$ is induced by functoriality of the homotopy functor $\Omega^2\Wh(-)\simeq \Omega\cH(-)$. 

\begin{lemma}
    Let $\psi$ be a diffeomorphism of a compact manifold $M$. Then, there is a commutative diagram
    $$
\begin{tikzcd}[column sep = 35pt]
    \mathcal{C}(M)\dar["\psi_\#"] &\lar["\sim", "\eqref{cCvscH}"']\Omega\mathcal{H}(M)\dar["\psi_*"]\rar[dash, "\sim"', "\eqref{WJReq}"] 
    &\Omega^2\Wh(M)\dar["\psi_*"]\\
    \mathcal{C}(M)&\lar["\sim", "\eqref{cCvscH}"']\Omega\mathcal{H}(M)\rar[dash, "\sim"', "\eqref{WJReq}"] &\Omega^2\Wh(M).
\end{tikzcd}
    $$
    Here $\psi_*$ are the maps induced by $\psi$ from the functoriality of the homotopy functors $\mathcal{H}(-)$ and $\Wh(-)$. Moreover, if $N$ is a codimension-zero submanifold of $M$ on which $\psi$ restricts to a diffeomorphism $\psi\mid_N=\xi$, then the above diagram is compatible with
    $$
\begin{tikzcd}
    \mathcal{C}(N)\dar["\xi_\#"] &\lar["\sim"']\Omega\mathcal{H}(N)\dar["\xi_*"]\rar[dash, "\sim"] 
    &\Omega^2\Wh(N)\dar["\xi_*"]\\
    \mathcal{C}(N)&\lar["\sim"']\Omega\mathcal{H}(N)\rar[dash, "\sim"] &\Omega^2\Wh(N)
\end{tikzcd}
    $$
    via the obvious maps induced by the inclusion $N\subset M$.
\end{lemma}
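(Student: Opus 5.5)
The plan is to check naturality at each stage of the two equivalences, using only the explicit models already recalled. For the left square, we use Vogell's fibre sequence \eqref{OmegaHFibSeq}, $\C(M)\xrightarrow{a}E(M)\xrightarrow{p}\H(M)$, with $E(M)$ contractible. The diffeomorphism $\psi$ acts on all three terms. On $\C(M)$ it acts by conjugation $\psi_\#F=(\psi\times\Id)\circ F\circ(\psi^{-1}\times\Id)$. On $E(M)$ it acts by $\varphi\mapsto(\psi\times\Id)\circ\varphi\circ(\psi^{-1}\times\Id)$. On $\H(M)$ it acts by the image of a partition under $\psi\times\Id_{[-1,1]}$; this is precisely the functoriality $\psi_*$ of the partition model in codimension-zero embeddings, here applied to a diffeomorphism.

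We then verify that $a$ and $p$ are strictly equivariant for these actions. For $a$, the identity collar is fixed by conjugation. For $p$, we have $p(\psi_\#\varphi)=(\psi\times\Id)(p(\varphi))$, because the image of the conjugated embedding is the $\psi\times\Id$-image of the original. Hence $\psi$ induces a self-map of the fibre sequence, and therefore commutes with the connecting equivalence $\Omega\H(M)\simeq\C(M)$. All of these constructions commute with the stabilisation maps $-\times I$, since $\psi\times\Id_{I^k}$ is used throughout. Passing to the homotopy colimit then gives the left square for $\mathcal{C}(M)$ and $\Omega\mathcal{H}(M)$.

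For the right square, we invoke the naturality of the Waldhausen--Jahren--Rognes equivalence \eqref{WJReq} in codimension-zero embeddings, and apply it to the embedding $\psi\colon M\to M$. The only subtlety is that $\mathcal H(-)$ on the left must be given the same functoriality as is used in \cite{WJR}. This is extension by the identity and pushforward of $h$-cobordisms, which on a diffeomorphism agrees with the partition pushforward above, up to the natural comparison between Vogell's partition model and the model of \cite{WJR}. I expect this comparison to be the main obstacle, namely checking that Vogell's partition model of $\H(M)$ and the model used in \cite{WJR} are related by a natural zig-zag of equivalences compatible with diffeomorphisms. My plan is to use the standard comparison (forgetting $V$ and $F$, i.e.\ remembering only the $h$-cobordism $W$), which is visibly natural in codimension-zero embeddings, including diffeomorphisms. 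The same comparison is natural with respect to stabilisation.

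For the compatibility statement, note that if $N\subset M$ is preserved by $\psi$ with $\psi|_N=\xi$, then the inclusion $N\subset M$ satisfies $\psi\circ\mathrm{inc}=\mathrm{inc}\circ\xi$ as codimension-zero embeddings. Every map in the argument above is functorial in codimension-zero embeddings: conjugation and extension by the identity on $\C$ and $E$, pushforward on $\H$, and the natural WJR equivalence. Applying this functoriality to the commuting square of embeddings gives the compatibility of the two diagrams via the inclusion-induced maps.
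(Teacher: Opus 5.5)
Your proposal is correct and matches the paper's proof. The left square comes from the strictly commuting self-map of Vogell's collar fibre sequence $\C(M)\xrightarrow{a}E(M)\xrightarrow{p}\H(M)$ given by conjugation and pushforward along $\psi$, which is compatible with stabilisation. The right square comes from the naturality of the Waldhausen--Jahren--Rognes equivalence \eqref{WJReq} in codimension-zero embeddings, and the addendum follows from functoriality of all of these in the inclusion $N\subset M$. The comparison between the partition model of $\H(M)$ and the model of \cite{WJR} that you flag is not checked in the paper either; it is absorbed into the cited naturality of \eqref{WJReq}.
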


\begin{proof}
    The Waldhausen--Jahren--Rognes equivalence $\mathcal{H}(-)\simeq \Omega\Wh(-)$ of \eqref{WJReq} is functorial in codimension-zero embeddings (in particular diffeomorphisms), so the right hand square in the diagram above commutes. 

    Regarding the commutativity of the left-hand subsquare, consider the diagram
\begin{equation}\label{CollarDiagram}
    \begin{tikzcd}
  \C(M)\dar["\psi_\#"]\rar["a"] &E(M)\dar["\psi_\#"]\rar["p"] & \H(M)\dar["\psi_*"]  \\
    \C(M)\rar["a"] &E(M)\rar["p"] & \H(M),  
    \end{tikzcd}
\end{equation}
    where the rows are the fibre sequence in \eqref{OmegaHFibSeq} inducing $\Omega \H(M)\xrightarrow{\sim} \C(M)$. This diagram is visibly commutative and compatible with stabilisation. Thus, we obtain the desired commutativity of the left subsquare in the statement. The addendum easily follows too: that the right subsquares are compatible follows from the fact that the Waldhausen--Jahren--Rognes equivalence is functorial with respect to codimension-zero embeddings. The left subsquares are compatible because \eqref{CollarDiagram} is also functorial in such embeddings.
\end{proof}

Therefore, given $([\phi],\rho^i)\in \mcgRhoXg$ as before, the diffeomorphism $\psi=\phi\rho^i$ restricts to a diffeomorphism on $S^1\times D^{2n-1}\subset X_g$ by the assumption that $\phi$ is fixed on this submanifold and the reflection $\rho$ preserves it. Thus, by the previous result we obtain a homotopy commutative diagram
\[
\begin{tikzcd}
\Omega\sCE(X_g)\dar["\psi_\#"]\rar[dash, "\sim"] & \Omega^{3}\Wh(X_g,S^1)\dar["\psi_*"]\\
   \Omega\sCE(X_g)\rar[dash, "\sim"] & \Omega^{3}\Wh(X_g,S^1).
\end{tikzcd}
\]
Finally, Goodwillie's isomorphism \eqref{GoodwillieIso} is functorial in the pair $(X,Y)$ as it is induced by the natural diagram \eqref{DGMdiagram}---this isomorphism is the one that allows us to access the rational homotopy of the relative Whitehead space $\Wh(X,Y)$ in \cref{involutionlemma}. All in all, we have proved:

\begin{teo}\label{ActionTheorem}
    The action of $\mcgRhoXgsfr$ on the homotopy groups of the fibre in
$$
\begin{tikzcd}
\bEmbmodEmb(X_g) \rar& \BEmbsfr(X_g)_\ell\rar &\BbEmbsfr(X_g)_\ell
\end{tikzcd}
$$
factors through the homomorphism $\mcgRhoXgsfr\to \mcgRhoXg$. Under the isomorphism 
$$
\pi_*^{\QQ}\big(\bEmbmodEmb(X_g)\big)\cong \left[\frac{\H^{S^1}_{*}(LX_g, LS^1; \QQ)}{\H_{*+1}(X_g,S^1;\QQ)}\right]_{-\tau_{\FLS}}, \quad *\leq 3n-5,
$$
of \eqref{XgPseudoisotopyVsFLS}, the action of $([\phi],\rho^i)\in \mcgRhoXg$ (where $[\phi]$ is represented by a diffeomorphism $\phi$ of $X_g$) on the left-hand side corresponds, on the right-hand side, to the map
$$
(\phi\rho^i)_*: \left[\frac{\H^{S^1}_{*}(LX_g, LS^1; \QQ)}{\H_{*+1}(X_g,S^1;\QQ)}\right]_{-\tau_{\FLS}}\overset\cong\longrightarrow \left[\frac{\H^{S^1}_{*}(LX_g, LS^1; \QQ)}{\H_{*+1}(X_g,S^1;\QQ)}\right]_{-\tau_{\FLS}}
$$
induced by functoriality on $(X_g,S^1)$.
\end{teo}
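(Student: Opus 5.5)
The plan is to assemble the chain of identifications already built in this section and check that each link is equivariant. There are four steps.

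\textbf{Step 1: remove the framings.} The map of fibre sequences \eqref{FibSeqMapsfrtonotsfr} is the identity on fibres and its right square is cartesian. Hence the action of $\mcgRhoXgsfr$ on $\pi_*(\bEmbmodEmb(X_g))$ is the pullback of the action of $\mcgRhoXg$ coming from the bottom row. This proves the first claim. The compatibility with $\rho$ holds because the chosen lifts in \ref{action on framings} make \eqref{FibSeqMapsfrtonotsfr} $\brho$-equivariant.

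\textbf{Step 2: pass to the loop space and compare with concordance embeddings.} By \cref{pEmbvsbEmbmodEmbMCGActionXg}, the action on $\pi_{*}(\bEmbmodEmb(X_g))$ corresponds under \eqref{pEmbVsbEmbModEmbEq} to the action of \cref{bEmbmodEmbAction} on $\pi_{*-1}(\pEmb(X_g))$. \cref{BoringAlphaProp} then shows that $\pi^\QQ_*(\Phi^{\Emb}_{X_g})$ intertwines this action with the action of \cref{CEmbAction} on $\pi_*^\QQ(\CEspo(X_g)_{hC_2})\cong[\pi_*^\QQ(\sCE(X_g))]_{\iota_\H}$. This holds in the range $*\le 3n-6$ for $\pEmb$, which is $*\le 3n-5$ for $\bEmbmodEmb$. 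In that range $\Phi^{\Emb}_{X_g}$ is also the rational isomorphism underlying \eqref{XgPseudoisotopyVsFLS}. The task therefore reduces to identifying the action $\psi_\#$, with $\psi=\phi\rho^i$, on $\pi_*^\QQ(\sCE(X_g))$, compatibly with $\iota_\H$.

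\textbf{Step 3: conjugation on $\sCE(X_g)$ is functoriality.} Choose $\phi$ fixing a neighbourhood of $\partial X_g\cup (S^1\times D^{2n-1})$. Then $\psi$ restricts to a diffeomorphism of $S^1\times D^{2n-1}$, since $\rho$ preserves it. Use the split fibre sequence \eqref{rSurjectionFibSeq}: the map $r$ is surjective on homotopy and commutes with $\psi_\#$. By the lemma just proved, $\psi_\#$ on $\cC(X_g)$ and on $\cC(S^1\times D^{2n-1})$ agrees with $\psi_*$ on $\Omega^2\Wh(-)$ through \eqref{cCvscH} and \eqref{WJReq}, compatibly with the inclusion. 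Taking fibres, $\psi_\#$ on $\Omega\sCE(X_g)$ corresponds to $\psi_*$ on $\Omega^3\Wh(X_g,S^1)$. Surjectivity of $r$ lets one transport the identification on homotopy groups without any ambiguity in the choice of fibre map.

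\textbf{Step 4: pass through Goodwillie's isomorphism.} The isomorphism \eqref{GoodwillieIso}, together with the quotient by the image of $c$ in \eqref{ShiftHC-Map}, is natural in the pair $(X,Y)$, since it comes from the natural diagram \eqref{DGMdiagram} and the natural map $c$. So $\psi_*$ on relative $A$-theory and Whitehead groups becomes $\psi_*$ on $\H^{S^1}_*(LX_g,LS^1;\QQ)/\H_{*+1}(X_g,S^1;\QQ)$. This naturality also commutes with the involutions used in \cref{involutionlemma}, because $\tau_{\FLS}$ and Vogell's involutions are natural. Hence the identification descends to $(-\tau_{\FLS})$-coinvariants.

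The main obstacle is Step 3. One must check that the homotopy-level functoriality identification is compatible with the $h$-cobordism involution, and that taking the homotopy fibre in the split sequence really identifies the induced maps rather than only the maps up to some indeterminacy. I would handle this through naturality of the collar fibre sequence \eqref{OmegaHFibSeq} under codimension-zero embeddings, combined with the surjectivity of $r$.
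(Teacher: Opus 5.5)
Your proposal is correct and follows the paper's argument essentially step for step: you remove the framings via \eqref{FibSeqMapsfrtonotsfr}, transfer the action to $\pEmb(X_g)$ via \cref{pEmbvsbEmbmodEmbMCGActionXg} and then across the Weiss--Williams map via \cref{BoringAlphaProp}, identify conjugation on $\sCE(X_g)$ with functoriality of $\Omega^3\Wh(X_g,S^1)$ through the split sequence \eqref{rSurjectionFibSeq} and the collar/Waldhausen--Jahren--Rognes lemma, and finish with the naturality of Goodwillie's isomorphism in the pair. The only additional remark in the paper is that degrees $*=0,1$ are vacuous, since $\bEmbmodEmb(X_g)$ is $(n-2)$-connected.
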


\begin{proof}
    Note that when $*=0$ or $1$ there is nothing to prove as $\bEmbmodEmb(X_g)$ is $(n-2)$-connected. The rest of the cases follow from \cref{BoringAlphaProp} and the work in this section.
\end{proof}

Recall that in \cref{FLSS1homology}(iii) we computed the group $\smash{\big[\frac{\H^{S^1}_{*}(LX_g, LS^1; \QQ)}{\H_{*+1}(X_g,S^1;\QQ)}\big]_{-\tau_{\FLS}}}$ as a representation of $\hMCGXg=\pi_0(\Aut_*(X_g))$. Moreover, there are natural homomorphisms
\begin{equation}\label{GrestrictionHomomorphism}
\begin{tikzcd}
    \mcgRhoXgsfr\rar &\mcgRhoXg\rar["\nu"] &\hMCGXg,
    \end{tikzcd}
\end{equation}
where $\nu([\phi],\rho^i)\coloneq[\phi\rho^i]$. Thus, all the $\hMCGXg$-representations involved in that statement are naturally $\mcgRhoXgsfr$-representations by restriction along \eqref{GrestrictionHomomorphism}. Together with \cref{ActionTheorem}, we conclude:

\begin{cor}\label{MCGrepBlockModEmb}
    Let $H_{X_g}$, $A_r$, $B_r$ and $C$ be the $\QQ[\hMCGXg]$-modules of \cref{FLShomology,FLSS1homology}, regarded now as $\QQ[\mcgRhoXgsfr]$-modules by restriction along \eqref{GrestrictionHomomorphism}. Then, for all $n\geq 3$ and $*\leq 2n-2$, there is an isomorphism of $\QQ[\mcgRhoXgsfr]$-modules
    \[
\pi_*^{\QQ}\big(\bEmbmodEmb(X_g)\big)\cong \left\{\begin{array}{cl}
    \bigoplus_{r> 0} {[A_r]_{\pi}}\cong \bigoplus_{r> 0} H_g & *=n-1, \\[3pt]
     C \oplus \bigoplus_{r> 0}\left({[\sn(A_r)]}_{\pi}\oplus [B_r]_\pi\right) & *=2n-2, \\[3pt]
    0 & \text{otherwise},
\end{array}
\right.
    \]
    where $C$ is some quotient of $[H_{X_g}^{\otimes 2}]_{\pi}$
.
    \end{cor}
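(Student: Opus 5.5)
This corollary is assembled directly from \cref{ActionTheorem} and \cref{FLSS1homology}(iii); no new geometric input is needed. First, by \cref{ActionTheorem}, the action of $\mcgRhoXgsfr$ on $\pi_*^{\QQ}(\bEmbmodEmb(X_g))$ factors through $\mcgRhoXgsfr\to\mcgRhoXg$. Moreover, under the isomorphism \eqref{XgPseudoisotopyVsFLS}, valid for $*\leq 3n-5$, an element $([\phi],\rho^i)$ acts by the map $(\phi\rho^i)_*$ induced by functoriality on the pair $(X_g,S^1)$. Since $n\geq 3$ gives $2n-2\leq 3n-5$, this identification covers the whole range $*\leq 2n-2$ of the statement.

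Next, we observe that the map $(\phi\rho^i)_*$ depends only on the based homotopy class of $\phi\rho^i$ as a self-map of the pair $(X_g,S^1)$. This holds because $\H^{S^1}_*(L(-))$ and $\H_*(-)$ are homotopy functors, and the involution $\tau_{\FLS}$ and the map $c$ are natural. Recall that $\pi_0(\Aut_*(X_g))\cong\pi_0(\Aut_*(X_g,S^1))$ for $n\geq 3$. Hence the $\mcgRhoXg$-module structure on the right-hand side of \eqref{XgPseudoisotopyVsFLS} is precisely the restriction along $\nu$ in \eqref{GrestrictionHomomorphism} of the $\QQ[\hMCGXg]$-module structure used in \cref{FLSS1homology}. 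Consequently the isomorphism of \cref{FLSS1homology}(iii), which was established as an isomorphism of $\QQ[\hMCGXg]$-modules, remains an isomorphism of $\QQ[\mcgRhoXgsfr]$-modules after restricting along \eqref{GrestrictionHomomorphism}. Composing it with \eqref{XgPseudoisotopyVsFLS} yields the displayed formula. The identification $[A_r]_\pi\cong H_g$ comes from \cref{FLShomology}(i), and $C$ is a quotient of $C_0$, hence of $[H_{X_g}^{\otimes 2}]_\pi$.

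The one step requiring care is checking that the isomorphisms in \cref{FLSS1homology}(iii) are genuinely $\hMCGXg$-equivariant, and not merely equivariant for the subgroup $\pi$. Two points need attention. The first is the identification of the $\tau_{\FLS}$-coinvariants with the sum over $r>0$, which uses that $\tau_{\FLS}$ swaps $L_rX_g$ and $L_{-r}X_g$. The components $L_rX_g$ are indexed by $\pi_1\cong\ZZ$, and a based self-equivalence may act on $\pi_1$ by $-1$ (for instance $\rho$, or elements in the $\{\pm1\}$ factor of $\hMCGXg$), so such elements permute the summands $r\leftrightarrow -r$. After taking $\tau_{\FLS}$-coinvariants, the sum over $r>0$ is canonically identified with the coinvariants of the sum over $r\neq 0$, which is preserved by $\hMCGXg$. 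This identification is therefore equivariant, with elements acting by $-1$ on $\pi_1$ acting through the composite of the swap with $\tau_{\FLS}$. I expect this bookkeeping to be the only real obstacle; the rest is formal once one notes that every spectral sequence argument in \cref{FLShomology,FLSS1homology} was carried out naturally in the pair $(X_g,S^1)$.
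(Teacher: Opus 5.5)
Your proposal is correct and is essentially the paper's argument: it combines \cref{ActionTheorem} (the action factors through $\mcgRhoXg$ and is given by functoriality on $(X_g,S^1)$, which is valid here since $2n-2\leq 3n-5$) with the $\QQ[\hMCGXg]$-equivariant computation of \cref{FLSS1homology}(iii), restricted along \eqref{GrestrictionHomomorphism}. One small correction to your last paragraph: $\rho$ reflects a $D^{2n-1}$-coordinate and fixes the $S^1$-factor, and embeddings fixing $\partial_0 X_g$ also fix the circle, so every element of $\mcgRhoXgsfr$ acts trivially on $\pi_1(X_g)$ and preserves each component $L_rX_g$. The $r\leftrightarrow -r$ bookkeeping you single out is therefore vacuous for this group, though your treatment of it would be harmless.
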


\subsection{The \texorpdfstring{$\ZN$}{Z[N]}-module structures}\label{FrobeniusXgSection} For each positive integer $d\in \NN$, the circle $S^1$ is its own $d$-fold cover and, similarly, $X_{dg}$ is the corresponding $d$-fold cover of $X_g$ for every $g\geq 0$. This observation gives rise to maps
\[
\varphi_d: \BEmbsfr(X_g)_\ell\longrightarrow \BEmbsfr(X_{dg})_\ell, \quad d\in \NN,
\]
given by ``lifting'' a self-embedding of $X_g$ to $X_{dg}$. These maps are natural in $g\geq 0$ and satisfy that the composite $\varphi_d\circ \varphi_{d'}$ is homotopic to $\varphi_{d\cdot d'}$ for every $d,d'\in \NN$. Letting $g\to\infty$, these (homotopy classes of) maps induce self-maps of $\BEmbsfr(X_\infty)_\ell$ that equip its homotopy groups with the structure of a $\ZN$-module---also known as a \textit{cyclotomic module}. 

Similarly, the homotopy groups of $\BbEmbsfr(X_\infty)_\ell$ and $\bEmbmodEmb(X_\infty)$ inherit compatible $\ZN$-module structures that make the long exact sequence associated to
\[\begin{tikzcd}
       \bEmbmodEmb(X_\infty) \rar & \BEmbsfr(X_\infty)_\ell\rar["\iota"] &\BbEmbsfr(X_\infty)_\ell,
    \end{tikzcd}\]
one of $\ZN$-modules---in particular, the connecting maps
\[
\delta_*: \pi_{*+1}^\QQ(\BbEmbsfr(X_\infty)_\ell)\longrightarrow \pi_*^\QQ\left(\bEmbmodEmb(X_\infty)\right)
\]
are $\ZN$-module maps. We will use this additional structure in later sections to understand $\delta$ and, consequently, the homotopy groups of $\BEmbsfr(X_\infty)_\ell$. 

In this section, we focus on the $\ZN$-module structures of the vector spaces featuring in the connecting maps $\delta_*$ above. To do so, we will study the Frobenius maps $\varphi_d$ unstably, i.e., in the homotopy groups of the spaces $\BbEmbsfr(X_g)_\ell$ and $\bEmbmodEmb(X_g)$, for all $g\geq0$ and $d\in \NN$.

\subsubsection{The $\ZN$-module structure of pseudoisotopy embeddings} \label{FrobeniusXgPseudoIsotopySection}

In this section, we study the Frobenius map  
\begin{equation}\label{FrobeniusPseudoIsotopy}
\varphi_d \colon \pi_{n-2}^{\QQ}(\pEmb(X_g))  \;\longrightarrow\; 
\pi_{n-2}^{\QQ}(\pEmb(X_{dg})) ,
\end{equation}
which, by \cref{MCGrepBlockModEmb}, is of the form 
$\varphi_d \colon \bigoplus_{r>0} H_g \longrightarrow \bigoplus_{r>0} H_{dg}$. One possible approach is to trace the effect of $\varphi_d$ through our computation of 
$\pi_{n-2}^{\QQ}(\pEmb(X_g)) $ in the previous sections, which, as explained in \cref{RecollectionWWSection}, proceeds via the Weiss--Williams map, the Waldhausen--Jahren--Rognes equivalence, and finally the cyclotomic trace.  
This method is complicated by the intricacy of the Waldhausen--Jahren--Rognes equivalence, which makes it difficult to relate $\varphi_d$ to modern $K$-theoretic transfer maps. Instead, we adopt a more direct approach to understanding \eqref{FrobeniusPseudoIsotopy}, using the work of Goodwillie~\cite{GoodwillieCalcI}. (We note, however, that this direct method does not work for the Frobenius map \eqref{FrobeniusPseudoIsotopy} in degree $2n-3$---the next non-vanishing homotopy group---for which the earlier, more involved approach seems better suited. As we will not need that case, we restrict ourselves to the direct method.) It occurred to us to use this more direct approach after an illuminating conversation about Goodwillie’s work with Manuel Krannich, to whom we are grateful.

Let us explain the strategy first. Recall the map $\alpha: \Omega \CEmbo(X_g)\to \Omega \pEmb(X_g)$ of \eqref{CEvsPseudoisotopyMap}. By the construction of this map, it is readily seen that it is compatible with the Frobenii, i.e.
\[
\begin{tikzcd}
     \Omega \CEmbo(X_g)\rar["\alpha"]\dar["\varphi_d"] &  \Omega \pEmb(X_g)\dar["\varphi_d"]\\
     \Omega \CEmbo(X_{dg})\rar["\alpha"]&  \Omega \pEmb(X_{dg})
\end{tikzcd}
\]
commutes. It follows from \cref{IntermediateBoringAlphaProp} that the map $\alpha$ induces the right isomorphism in
\begin{equation}\label{iotaCeq}
\left[\pi_*^{\QQ}\big(\CEmbo(X_g\times I^{2m})\big)\right]_{\iota_{\H}}\cong \left[\pi_*^{\QQ}\big(\CEmbo(X_g)\big)\right]_{\iota_{\H}}\cong\pi_*^{\QQ}\big(\pEmb(X_g)\big) , \qquad *\leq 3n-6, 
\end{equation}
which is compatible with the Frobenii. The left isomorphism is induced by the concordance stabilisation map $\Sigma^{2m}: \CEmbo(X_g)\to\CEmbo(X_g\times I^{2m})$, which is $(3n-5)$-connected, clearly compatible with the Frobenii, and $\iota_{\H}$-equivariant by \ref{C2EquivarianceConcStabilisation}. Thus, it is enough for us to compute $\smash{\pi_{n-2}^{\QQ}(\CEmbo(X_g\times I^{2m}))}$, for sufficiently large $m\geq 0$, in a way that we can identify the action of $\iota_{\H}$ and understand the Frobenius maps $\varphi_d$.

For this, we import the work of Goodwillie in \cite[\S 3]{GoodwillieCalcI}, which we shall now recall. Writing $\uSpc^{[1]}$ for the arrow category $\mathrm{Fun}([1],\uSpc)$, there is defined a (homotopy) functor
\[
L: \uSpc^{[1]}\longrightarrow \uSpc, \quad (Y\to X)\longmapsto Y\times_{X}LX\simeq \operatorname{holim}(Y\to X\xleftarrow{\mathrm{ev}_1}LX).
\]
The inclusion $c: X\to LX$ as the constant loops (a section of $\mathrm{ev}_1$) gives rise to a map $Y\to L(Y\to X)$, and we denote by $L(Y\to X)/Y$ its (homotopy) cofibre. There is another functor
\[
\Psi: \uSpc^{[1]}\longrightarrow \Spc, \quad (Y\to X)\longmapsto \Omega^2Q(L(Y\to X)/Y)=\Omega^{\infty+2}\Sigma^\infty(L(Y\to X)/Y).
\]
 Write $\Psi(X)$ for $\Psi(\id_X)$ and $\overline{\Psi}(Y\to X)$ for the homotopy fibre of the natural map $\Psi(Y\to X)\to \Psi(X)$. Note that $\Psi(X)\simeq \Omega^2 Q(LX/X)$ and that for a map $Y\to X$, the factorisation
\[
\begin{tikzcd}[row sep = 15pt]
    Y\dar[equal]\rar[equal] &Y\dar\rar &X\dar[equal]\\
    Y\rar &X\rar[equal] &X
\end{tikzcd}
\]
gives rise to natural maps $\Psi(Y)\to \Psi(Y\to X)\to \Psi(X)$.

For $(Y\to X)=(N\xhookrightarrow{}M)$ an embedding of compact manifolds, Goodwillie constructs a commutative diagram (cf. page 24 loc.cit.)
\[
\begin{tikzcd}[row sep = 15pt]
    \C(N)\rar["\tau"]\dar & \Psi(N)\rar & \Psi(N\hookrightarrow M)\dar\\
\C(M)\ar[rr,"\tau"] && \Psi(M),
\end{tikzcd}
\]
whose vertical homotopy fibre is denoted by $\overline{\tau}$. When $P\subset M$ is a compact submanifold of $M$, and $N\xhookrightarrow{}M$ is the evident inclusion $M-\nu P\hookrightarrow M$, this map $\overline{\tau}$ is of the form
\begin{equation}\label{BarTauPM}
\overline{\tau}: \Omega\CEmb(P,M)\longrightarrow \overline{\Psi}(M-P\to M).
\end{equation}

\begin{lemma}[Lemma 3.19, \cite{GoodwillieCalcI}]\label{GoodwillieDisjunctionTau} If $3\leq k<\dim M$ and $M$ is obtained from $M-\nu P$ by attaching handles of index $k$, then $\eqref{BarTauPM}$ is $(2k-5)$-connected. 
\end{lemma}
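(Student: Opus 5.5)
The plan is to reduce the statement to the case of a single handle and settle that case by computing the lowest homotopy groups on both sides. Both the source $\Omega\CEmb(P,M)$ and the target $\overline{\Psi}(M-P\to M)$ of $\overline{\tau}$ in \eqref{BarTauPM} are functors of the handle structure of $M$ relative to $N\coloneq M-\nu P$. I would first establish that both satisfy the same multiple excision (disjunction) estimates with respect to adding $k$-handles.

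\textbf{Excision on the source.} For concordance embeddings this is Morlet/Goodwillie multiple disjunction, in the form used by Goodwillie in \cite{GoodwillieCalcI}. Take a strongly cocartesian $r$-cube of handle attachments, each of index $k$ relative to $N$. Applying $\Omega\CEmb$ gives a cube that is roughly $(r(k-2)+1-\text{const})$-cartesian.

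\textbf{Excision on the target.} Here I would use the Blakers--Massey theorem for $\Sigma^\infty$ applied to the cofibre $L(Y\to X)/Y$. Attaching a $k$-handle to $Y$ is a $(k-1)$-connected map. The functor $(Y\to X)\mapsto Y\times_X LX$ is built from mapping spaces out of $S^1$. Hence the same cube becomes cartesian in a range that grows linearly in $r$ with slope at least $k-2$.

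Since $\overline{\tau}$ is natural, these estimates reduce the claim to showing that $\overline{\tau}$ is $(2k-5)$-connected for a single $k$-handle $D^k\times D^{d-k}$. Induction on the number of handles, assembling the cubes, then propagates the single-handle connectivity, provided the $r\geq 2$ faces are at least $(2k-5)$-cartesian on both sides.

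\textbf{Single handle.} In this case both sides are $(k-3)$-connected. I would check that $\overline{\tau}$ is an isomorphism on homotopy groups from degree $k-2$ up to degree $2k-6$, and a surjection in degree $2k-5$.

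\emph{Target.} Here $L(Y\to X)/Y$ is computed relative to $X$. Modulo $Y$, it is the part of $LX$ coming from the new cell, namely the free loops that pass through the handle. Through dimension roughly $2k-3$ this looks like $\Sigma^{k-1}$ applied to $(\Omega X)_+$ together with its conjugation data. After $\Omega^{\infty+2}$, the first nontrivial group is $\ZZ[\pi_1 X]$ modulo the relevant relations, shifted to degree $k-2$. The next contributions, from two passes through the handle, only appear around degree $2k-4$.

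\emph{Source.} For $\Omega\CEmb$ I would describe the low-dimensional homotopy geometrically, in the Hatcher--Wagoner/Igusa manner. Loops of concordance embeddings of the core $D^{d-k}$ of the dual handle are studied through their self-intersection and double-point data, whose lowest piece is a Dax-type invariant valued in $\ZZ[\pi_1 X]$. I would then verify that $\tau$ records exactly this invariant, since $\tau$ is built by tracking loops through the handle in Goodwillie's construction. The map is then an isomorphism on the first group. The comparison extends through the metastable range $\leq 2k-6$ because both sides are, in that range, linear (first-order) in the handle. This is what the excision estimates give, by an additional application to the handle split into two parallel pieces.

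\textbf{Main obstacle.} I expect the hardest step to be the single-handle identification: matching the geometric double-point invariant of loops of concordance embeddings with the algebraic invariant produced by $\tau$, including correct signs and the $\pi_1$-equivariance. What I would try first is to reduce, via a multi-handle argument, to the case where $X$ is a wedge of circles plus the handle, and compute both sides there explicitly. The connectivity bound $2k-5$, rather than something better, should come out as exactly the point at which the quadratic (two-pass) terms on the two sides begin to differ.
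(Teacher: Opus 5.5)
Your reduction to a single handle is fine. It even needs only an elementary estimate on the $\overline\Psi$ side, since the relative concordance spaces of successive handle attachments already form fibre sequences. The gap is the single-handle case.

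For $M=N\cup h^k$ one has exactly $\overline\Psi(N\to M)\simeq\Omega^{\infty+3}\Sigma^{\infty+k}(\Omega M,\mathrm{const})$. Hence $\pi_{k-3+j}(\overline\Psi)\cong\pi^s_j(\Omega M,\mathrm{const})$, and the lemma asserts that $\overline\tau$ is bijective for all $0\le j\le k-3$ and onto for $j=k-2$.

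Your Dax-type invariant only addresses $j=0$, the group $\ZZ[\pi_1M]/\ZZ\cdot 1$. Already $j=1$ is built from $\mathbf{F}_2[\pi_1M\setminus 1]$ and $\ZZ[\pi_1M]\otimes\pi_2M$, the same ingredients as the Hatcher--Wagoner term in \eqref{HatcherWagoner}. Higher $j$ involve higher stable stems and the homology of $\Omega M$. Nothing in your argument touches these groups.

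Appealing to ``linearity in the handle'' does not close this. The excision estimates say that below roughly $2k-5$ the multi-handle term is assembled from single-handle terms. That is what reduced you to one handle, but it says nothing about the single-handle term itself. A natural map of connective spectra that is an isomorphism on the bottom group need not be an equivalence. ``Splitting the handle into two parallel pieces'' is not a cube of attachments producing the same manifold; at best it reproduces the excision estimate you already used.

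What is needed is an argument controlling $\overline\tau$ in all degrees of the range at once. One candidate is a family version of the crossing analysis in \cref{taumapconstruction}. A $j$-parameter family of concordance embeddings of the cocore has a framed crossing locus of dimension $j+2-k$ over $(LM,M)$. That is exactly what $\pi_{j-1}(\overline\Psi)\cong\pi^s_{j+2-k}(\Omega M,\mathrm{const})$ classifies. One would then have to show that every such bordism class is realised by a family, and that null-bordant crossing data can be removed in families within the range. This full-range statement is the actual content of Goodwillie's lemma, which the paper imports without proof.

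There is also an indexing error. Both sides are $(k-4)$-connected with bottom group in degree $k-3$; they are not $(k-3)$-connected. By Morlet's lemma $\CEmb(P,M)$ is $(k-3)$-connected, so $\Omega\CEmb(P,M)$ starts in degree $k-3$, and by the formula above so does $\overline\Psi$. This is exactly how the paper uses the lemma in \eqref{taubarmap}: $\pi_{n-2}(\CEmbo(X_g\times I^{2m}))\cong\pi_{n-3}(\overline\Psi(S^1\to X_g))$ with $k=n$. With your indexing, the very group you want to match would be declared zero.

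That application only uses the bottom degree. A corrected version of your Dax-invariant step, together with the reduction, would therefore cover what the paper needs, but not the lemma as stated.
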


The case of interest for us is when $N\xhookrightarrow{} M$ is the inclusion $S^1\times D^{2n-1+2m}\xhookrightarrow{} X_g\times I^{2m}$. In this case, \cref{GoodwillieDisjunctionTau} tells us that, for $n\geq 3$, the map $\overline{\tau}: \Omega \CEmbo(X_g\times I^{2m})\to \overline{\Psi}(S^1\to X_g)$ is $(2n-5)$-connected. Since $n-3\leq 2n-6$ for $n\geq 3$, we obtain an isomorphism
\begin{equation}\label{taubarmap}
\overline{\tau}_*: \pi_{n-2}(\CEmbo(X_g\times I^{2m}))\xrightarrow{\cong} \pi_{n-3}(\overline{\Psi}(S^1\to X_g)).
\end{equation}
We will use this isomorphism, described in \cite[pp. 23--24]{GoodwillieCalcI}, to study the Frobenii on $\pi_{n-2}\CEmbo(X_g\times I^{2m})$.

\begin{rmk}
A quick word on why $X_g\times I^{2m}$: as explained in \cite[p.~26]{GoodwillieCalcI}, the map $\tau\colon \C(M)\to \Psi(M)$ factors through the stable concordance space $\cC(M)$, and hence $\overline{\tau}$ factors through $\Omega\sCEnopartial(P,M)$. Since the descriptions of $\tau$ and $\overline{\tau}$ are simpler in a fixed finite dimension, we work in such a dimension. For \cref{rProp}, we require this dimension $N\coloneq d+2n$ large enough that Igusa’s lower bound for the concordance stable range of $X_g\times I^{2m}$ (approximately $N/3$) exceeds $n-2$. Thus, we replace $X_g$ by $X_g\times I^{2m}$ for $m$ sufficiently large; however, we continue to simply write \(X_g\) for the homotopy type 
of \(X_g\times I^{2m}\).

\end{rmk}

To this end, we first need to compute $\pi_{n-3}^{\QQ}(\overline{\Psi}(S^1\to X_g))$; of course, we already know by \eqref{CEhtpyFLShomologyeq} and \eqref{HS1XgcomputationProof} that it is isomorphic to $\bigoplus_{r\neq 0} H_g$, but we need to fix an identification that we can trace the Frobenii through. Consider the commutative square
\begin{equation}\label{PsiDiagram}
\begin{tikzcd}[column sep = 7pt]
    &\pi_{n-2}^{\QQ}(\C(X_g\times I^{2m}))\dar["\tau_*"]\ar[rrrr, "r"] &&&& \pi_{n-2}^{\QQ}(\CEmbo(X_g\times I^{2m}))\dar["\overline{\tau}_*", "\cong"']\\
    \bigoplus_{r\neq 0} H_g \rar[phantom, "\cong"] &\pi_{n-2}^{\QQ}(\Psi(X_g))\ar[rrrr,"\delta"] &&&& \pi_{n-3}^{\QQ}(\overline{\Psi}(S^1\to X_g)),
\end{tikzcd}
\end{equation}
where the bottom left isomorphism is $\pi_{n-2}^{\QQ}(\Psi(X_g))=\pi_{n}^s(LX_g/X_g)\otimes \QQ\cong \H_n(LX_g,X_g;\QQ)$ together with \cref{FLShomology}.

\begin{lemma}\label{PsiLemma}
    The map $\delta$ in \eqref{PsiDiagram} is an isomorphism.
\end{lemma}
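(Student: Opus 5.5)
The map $\delta$ is the connecting homomorphism of the fibre sequence $\overline{\Psi}(S^1\to X_g)\to \Psi(S^1\to X_g)\to \Psi(X_g)$, taken in degree $n-2$. My plan is to identify this sequence with a cofibre sequence of suspension spectra and read off $\delta$ from the long exact sequence, using \cref{FLShomology} for the computations.

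\textbf{Step 1: the neighbouring terms.} First I would show that the flanking terms vanish rationally:
\[\pi_{n-2}^{\QQ}(\Psi(S^1\to X_g))=0=\pi_{n-3}^{\QQ}(\Psi(S^1\to X_g)).\]
Since $\Psi(S^1\to X_g)=\Omega^{\infty+2}\Sigma^\infty(L(S^1\to X_g)/S^1)$, we have
\[\pi_{k}^{\QQ}(\Psi(S^1\to X_g))\cong \H_{k+2}(L(S^1\to X_g),S^1;\QQ).\]
Here $L(S^1\to X_g)=S^1\times_{X_g}LX_g$ fibres over $S^1$, and the fibre over the basepoint is $\Omega X_g$. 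The universal cover of $X_g$ is a wedge of $n$-spheres, so $\Omega_r X_g\simeq \Omega\widetilde X_g$ has rational homology concentrated in degrees that are multiples of $n-1$. The total space $L(S^1\to X_g)$ therefore decomposes as a disjoint union over $r$ of spaces $E_r$, each fibred over $S^1$ with fibre $\Omega\widetilde X_g$. This puts the rational homology of $E_r$ in degrees $0,1,n-1,n,2n-2,\dots$.

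\textbf{Step 2: degree bookkeeping.} The relevant degrees are $n$ and $n-1$, and at first sight both can be nonzero. I therefore expect the sequence is not simply a vanishing argument. More precisely, I would compare with $\H_*(LX_g,X_g;\QQ)$ in degrees $n$ and $n+1$:
\begin{itemize}
\item By \cref{FLShomology}, $\H_n(L_rX_g)=[A_r]^\pi$ for $r\neq 0$, while $\H_{n+1}=0$.
\item For $E_r$, the Serre spectral sequence over $S^1$ gives $\H_{n-1}(E_r)=[H_{X_g}]_\pi$ and $\H_n(E_r)=[H_{X_g}]^\pi=0$, since $t$ acts freely on $H_{X_g}\cong\QQ[\pi]\otimes H_g$ and so has no invariants.
\end{itemize}
Hence $\pi_{n-2}^\QQ(\Psi(S^1\to X_g))\cong \H_n(L(S^1\to X_g),S^1)=0$.

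Step 1 as stated therefore only half-holds. In degree $n-3$ we get $\H_{n-1}(L(S^1\to X_g),S^1)=\bigoplus_r H_g$, which is not zero. So injectivity of $\delta$ is automatic, and the real content is surjectivity, which amounts to showing that
\[\pi_{n-3}^\QQ\overline{\Psi}\longrightarrow \pi_{n-3}^\QQ\Psi(S^1\to X_g)=\bigoplus_r H_g\]
is zero. Equivalently, $\bigoplus_r H_g\to \H_{n-1}(LX_g,X_g)$ must be injective. This map is induced by the inclusion of $E_r$ into $L_rX_g$, i.e.\ the loops whose basepoint lies on $S^1$. On $\H_{n-1}$ it is the map $[H_{X_g}]_\pi\to[A_r]_\pi$ given by the projection $H_{X_g}\to H_{X_g}/(1-t^r)$ followed by coinvariants. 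Both sides are $H_g$, and the map is an isomorphism for $r\neq 0$, and also for $r=0$ by \cref{FLShomology}(ii). The constant loops only affect the $r=0$ summand, where $\H_{n-1}(X_g)=0$.

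\textbf{Main obstacle: dimension bookkeeping.} Combining injectivity from the vanishing of $\H_n(L(S^1\to X_g),S^1)$ with surjectivity from the injectivity of the $\H_{n-1}$-map shows that $\delta$ is an isomorphism. The step I expect to be delicate is making these identifications carefully and compatibly with the equivariant Serre spectral sequences of \cref{FLShomology}. In particular, I need to check that $\H_n(LX_g,X_g;\QQ)$ in the $r=0$ component vanishes as a relative group, since $c$ maps $\H_n(X_g)$ isomorphically onto $\H_n(L_0X_g)$. If it did not vanish, the $r=0$ summand would break the count, so this check is essential for the claimed identification with $\bigoplus_{r\neq0}H_g$.
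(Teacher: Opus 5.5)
Your proof is correct and follows the paper's argument. In the long exact sequence of $\overline{\Psi}(S^1\to X_g)\to\Psi(S^1\to X_g)\to\Psi(X_g)$, you show that $\pi_{n-2}^{\QQ}\Psi(S^1\to X_g)\cong\H_n(L(S^1\to X_g),S^1;\QQ)=0$ using $[H_{X_g}]^\pi=0$, and that $\pi_{n-3}^{\QQ}\Psi(S^1\to X_g)\to\pi_{n-3}^{\QQ}\Psi(X_g)$ is injective by comparing the Serre spectral sequences over $S^1$ and over $X_g$ with fibre $\Omega_rX_g$; this is exactly the paper's proof. Your Step 1 claim that $\pi_{n-3}^{\QQ}\Psi(S^1\to X_g)$ vanishes is false, but you rightly retract it in Step 2, and your closing check on the $r=0$ summand of $\H_n(LX_g,X_g;\QQ)$ only concerns the identification of the source with $\bigoplus_{r\neq0}H_g$, not the lemma itself.
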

\begin{proof}
    By definition, $\delta$ is the connecting map in a long exact sequence
    \[
    \adjustbox{scale=0.98,center}{
    \begin{tikzcd}[column sep = 17pt]
        \pi_{n-2}^{\QQ}(\Psi(S^1\to X_g))\rar &\pi_{n-2}^{\QQ}(\Psi(X_g))\rar["\delta"] & \pi_{n-3}^{\QQ}(\overline{\Psi}(S^1\to X_g))\rar & \pi_{n-3}^{\QQ}(\Psi(S^1\to X_g))\rar["a"] & \pi_{n-3}^{\QQ}(\Psi(X_g)).
    \end{tikzcd}}
    \]
    We will show that i) the leftmost term $\pi_{n-2}^{\QQ}\Psi(S^1\to X_g)=0$ and that ii) the rightmost map $a$ is an isomorphism. Recall also that $\Psi(X_g)=\Omega^2Q(LX_g/X_g)$ and $\Psi(S^1\to X_g)=\Omega^2Q(L(S^1\to X_g)/S^1)$, so we need to understand the map $L(S^1\to X_g)/S^1\to LX_g$ in rational homology. Note that, just like $LX_g$, the space $L(S^1\to X_g)$ decomposes as a disjoint union of connected components $\coprod_{r\in \ZZ}L_r(S^1\to X_g)$, and by definition we have maps of fibrations
    \[
    \begin{tikzcd}
        \Omega_rX_g\rar\dar[equal] & L_r(S^1\to X_g)\dar\rar &S^1\dar\\
        \Omega_rX_g\rar & L_rX_g\rar & X_g.
    \end{tikzcd}
    \]
Their respective Serre spectral sequences are
\begin{align}
   \H_p(S^1;\underline{\H_q(\Omega_r X_g;\QQ)})&\implies \H_{p+q}(L_r(S^1\to X_g);\QQ), \notag \\
   \H_p(X_g;\underline{\H_q(\Omega_r X_g;\QQ)})&\implies \H_{p+q}(L_rX_g;\QQ), \label{LrXgSS}
\end{align}
and the map of $E^2$-pages $\H_p(S^1;\underline{\H_q(\Omega_r X_g;\QQ)})\to \H_p(X_g;\underline{\H_q(\Omega_r X_g;\QQ)})$ is an isomorphism if $p\leq n-1$. We only need to consider the case when $q=n-1$, as the homology of $\Omega_r X_g\simeq \Omega_0 X_g$ is concentrated in degrees that are multiples of $n-1$. In this degrees, we have the Hurewicz isomorphism $\H_{n-1}(\Omega_r X_g;\QQ)\cong \pi_{n-1}^{\QQ}(\Omega_r X_g)\cong H_{X_g}$, and thus the action of $\pi=\pi_1 S^1=\pi_1 X_g$ on $\underline{\H_{n-1}(\Omega_r X_g;\QQ)}$ is the one arising from the usual action of $\pi_1$ of the total space of a fibration on its fibre. Thus, $\underline{\H_{n-1}(\Omega_r X_g;\QQ)}\cong H_{X_g}$ as a $\QQ[\pi]$-module. Noting that $[H_{X_g}]^\pi=0$, both i) and ii) follow.
\end{proof}

\begin{lemma}\label{rProp}
    For all $n\geq 3$ and $m\geq 0$ such that $\min(\tfrac{2(n+m)-4}{3}, \tfrac{2(n+m)-7}{2})\geq n-1$, the map $r$ in \eqref{PsiDiagram} is surjective. (By \cref{PsiLemma}, then so is $\tau_*$).
\end{lemma}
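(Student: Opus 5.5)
The surjectivity of $r$ will come from the split fibre sequence \eqref{rSurjectionFibSeq}, transported from the stable spaces to the unstable level $X_g\times I^{2m}$ using Igusa's stability theorem.

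First, consider the isotopy extension fibre sequence for concordances of $X_g\times I^{2m}$ relative to $S^1\times D^{2n-1+2m}$:
\[
\C(S^1\times D^{2n-1+2m})\longrightarrow \C(X_g\times I^{2m})\xrightarrow{\ r\ } \CEmbo(X_g\times I^{2m}).
\]
This sequence maps compatibly, via concordance stabilisation, to the stable sequence $\cC(S^1\times D^{2n-1})\to \cC(X_g)\to \sCE(X_g)$ of \eqref{rSurjectionFibSeq}.

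Second, in the stable sequence the map $r$ is split surjective on homotopy groups. The splitting comes from homotopy functoriality of $\cC(-)\simeq\Omega\cH(-)$ together with the retraction $X_g\simeq S^1\vee\bigvee S^n\to S^1$. Hence the stable connecting map $\pi_{n-2}(\sCE(X_g))\to\pi_{n-3}(\cC(S^1\times D^{2n-1}))$ vanishes.

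Third, compare the unstable sequence with the stable one in degrees around $n-2$, using a five-lemma type argument on the map of long exact sequences.
\begin{itemize}
\item By Igusa's stability theorem, the stabilisation maps $\C(Q)\to\cC(Q)$ for $Q=X_g\times I^{2m}$ and $Q=S^1\times D^{2n-1+2m}$ are roughly $\tfrac{2(n+m)-4}{3}$-connected; this is the reason for the first bound in the hypothesis.
\item The map $\CEmbo(X_g\times I^{2m})\to\sCE(X_g)$ is highly connected by \eqref{CEmbRangeBound}, or equivalently by \cref{GoodwillieDisjunctionTau}; the second bound $\tfrac{2(n+m)-7}{2}\geq n-1$ should account for this, possibly via the $(2k-5)$-connectivity estimate in dimension $2n+2m$.
\end{itemize}
Under these hypotheses all three comparison maps are isomorphisms on $\pi_{n-2}$ and injective on $\pi_{n-3}$. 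Consequently the unstable connecting map $\pi_{n-2}^{\QQ}(\CEmbo(X_g\times I^{2m}))\to \pi_{n-3}^{\QQ}(\C(S^1\times D^{2n-1+2m}))$ injects, after stabilisation, into the stable connecting map, which is zero. So the unstable connecting map vanishes, and exactness gives surjectivity of $r$ on $\pi_{n-2}^{\QQ}$. Finally, $\tau_*$ is surjective by commutativity of \eqref{PsiDiagram}, since $\overline\tau_*$ and $\delta$ are isomorphisms (\cref{PsiLemma}).

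The main obstacle is the bookkeeping of connectivity ranges: matching Igusa's bound and the concordance embedding stable range precisely against the degree $n-1$, so that the comparison maps are isomorphisms on $\pi_{n-2}$ and injective on $\pi_{n-3}$. A further risk is the low-dimensional case where $\pi_0$ or $\pi_1$ issues appear (for $n=3$). There, rationalisation and the vanishing of $\pi_0^{\QQ}(\C(S^1\times D^{k}))$ from \eqref{HatcherWagoner} would be used, as in the proof of \cref{IntermediateBoringAlphaProp}.
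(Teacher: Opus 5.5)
Your argument is correct, but the diagram chase differs from the paper's.

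The paper works only with the commutative square formed by $r\colon \C(X_g\times I^{2m})\to\CEmbo(X_g\times I^{2m})$ and the stable $r\colon\cC(X_g)\to\sCE(X_g)$. It uses three facts:
\begin{enumerate}
\item the stable $r$ is surjective on homotopy groups, by the splitting of \eqref{rSurjectionFibSeq};
\item the left vertical map $\C(X_g\times I^{2m})\to\cC(X_g)$ is surjective on $\pi_{n-2}$, by Igusa;
\item the right vertical map is injective on $\pi_{n-2}$, by the concordance embedding range \eqref{CEmbRangeBound}.
\end{enumerate}
A lift-and-compare chase in that square then gives surjectivity.

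You instead compare the whole isotopy extension sequences and show that the unstable connecting map $\pi_{n-2}\to\pi_{n-3}$ vanishes. For this you only need two things: naturality of the connecting maps under stabilisation, and injectivity of $\C(S^1\times D^{2n-1+2m})\to\cC(S^1\times D^{2n-1})$ on $\pi_{n-3}$. The latter is Igusa applied to the fibre rather than to $X_g\times I^{2m}$. So your appeal to the concordance embedding range, and to isomorphisms on $\pi_{n-2}$, is superfluous. The trade-off is that the paper needs only a commuting square, while your route needs the full map of fibre sequences but only a single connectivity estimate.

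Two small corrections. First, both terms of the minimum in the hypothesis come from Igusa's bound $\min(\frac{N-4}{3},\frac{N-7}{2})$ with $N=2n+2m$. The second term is not the $(2k-5)$ estimate of \cref{GoodwillieDisjunctionTau}; that lemma concerns $\overline{\tau}$, not the stabilisation $\CEmbo(X_g\times I^{2m})\to\sCE(X_g)$. Second, the case $n=3$ needs no appeal to \eqref{HatcherWagoner}. Injectivity of the fibre stabilisation on $\pi_0$, as a map of pointed sets, already forces the connecting map to be trivial.
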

\begin{proof}
By Igusa's stability theorem \cite{IgusaStableRange}, the left vertical map in
\[
\begin{tikzcd}
    \C(X_g\times I^{2m})\dar[hook]\rar["r"] &\CEmbo(X_g\times I^{2m})\dar[hook]\\
    \cC(X_g)\rar[two heads,"r"] & \sCE(X_g)
\end{tikzcd}
\]
is $\min(\tfrac{2(n+m)-4}{3}, \tfrac{2(n+m)-7}{2})$-connected, whereas the right vertical map is $3(n+m)-5\geq n-1$. As explained in \eqref{rSurjectionFibSeq}, the bottom horizontal map $r$ is surjective on homotopy groups, so the result follows.
\end{proof}

From now on, let $m\geq 0$ satisfy the assumption in \cref{rProp}. We now define Frobenius homomorphisms 
\begin{equation}\label{PsiFrobenius}
\varphi_d: \pi_{n-2}^\QQ(\Psi(X_g))\longrightarrow \pi_{n-2}^\QQ(\Psi(X_{dg}))
\end{equation}
which will be compatible with the map $\tau_*$ in \eqref{PsiDiagram}. Since $r$ is surjective by \cref{rProp} and $\delta$ is an isomorphism by \cref{PsiLemma}, the map $\delta^{-1}\circ \overline{\tau}_*$ will be Frobenius-preserving as long as we equip the right-bottom term in \eqref{PsiDiagram} with the Frobenii
\[
\begin{tikzcd}
\overline{\varphi}_d: \pi_{n-3}^\QQ(\overline{\Psi}(S^1\to X_g))\rar["\delta^{-1}", "\cong"'] & \pi_{n-2}^\QQ(\Psi(X_g))\rar["\varphi_d"] & \pi_{n-2}^\QQ(\Psi(X_{dg}))\rar["\delta", "\cong"'] & \pi_{n-3}^\QQ(\overline{\Psi}(S^1\to X_{dg})).
\end{tikzcd}
\]
To define $\varphi_d$, let $p: X_{dg}\times I^{2m}\to X_g\times I^{2m}$ be a fixed $d$-fold cover. Then $Lp: LX_{dg}\to LX_g$ is a fibration with compact fibres (either empty or $d$-many points), so we can consider the diagram
\[
\begin{tikzcd}
    \Sigma^\infty_+X_g\rar["p^!"]\dar["\Sigma^\infty_+c"] & \Sigma^\infty_+X_{dg}\dar["\Sigma^\infty_+c"]\\
    \Sigma^\infty_+LX_g\rar["(Lp)^!"] & \Sigma^\infty_+X_{dg},
\end{tikzcd}
\]
where $c: (-)\to L(-)$ stands for the inclusion as the constant loops, and $p^!$ and $Lp^!$ are the Becker--Gottlieb transfers of the fibrations $p$ and $Lp$, respectively. By formal properties of the Becker--Gottlieb transfer, this square of spectra is homotopy commutative and hence, upon taking vertical cofibres and $\Omega^{\infty+2}(-)$, induces a map $(Lp)^!: \Omega^2Q(LX_g/X_g)\to \Omega^2Q(LX_{dg}/X_{dg})$ which, on homotopy groups gives
\begin{equation}\label{PsiFrobeniusHtpy}
\varphi_d\coloneq\pi_k((Lp)^!): \pi_{k+2}^s(LX_g,X_g)\longrightarrow \pi_{k+2}^s(LX_{dg},X_{dg}).
\end{equation}
Rationally, this gives the desired \eqref{PsiFrobenius}. We must now check that $\tau_*$ is compatible with these Frobenii.

\begin{cons}\label{taumapconstruction}
Let us first recall the description of the map 
\[
\tau_*: \pi_{k}(\C(M))\longrightarrow \pi_{k}(\Psi(M))\cong \pi_{k+2}^s(LM,M)\cong \Omega^{\fr}_{k+2}(LM,M)
\]
given in \cite[pp. 23--24]{GoodwillieCalcI}. Here $\Omega^{\fr}_*$ stands for framed bordism theory, and the last isomorphism in the above equation is the one by Pontryagin--Thom. For simplicity, assume that $M$ is stably framed\footnote{The homomorphism $\tau_*$ factors through $\pi_k (\mathcal{C}(M))$ as shown in \cite[pp. 26--27]{GoodwillieCalcI}. If $M$ is not stably framed, we can replace it with the disc bundle $D(\nu_M)$ of a normal bundle for $M$ since Burghelea--Lashof construct in \cite[p. 10]{BurgheleaLashofTransfer} a factorisation $\C(M)\to \C(D(\xi))\to\mathcal{C}(M)$ for any vector bundle $\xi$ over $M$.}.

Given a concordance diffeomorphism $F=(f,g):M\times I\xrightarrow{\cong} M\times I$, a point $(x,s,t)\in M\times I\times I$ is a \textit{crossing} for $F$ if it is either an
\begin{itemize}
    \item \emph{ordinary crossing}: $s<t$, $f(x,s)=f(x,t)$ and $g(x,s)>g(x,t)$;
    \item \emph{infinitesimal crossing}: $s=t$, $(Df)_{(x,s)}(0,1)=0$ and $(Dg)_{(x,s)}(0,1)< 0$.
\end{itemize} 
A homotopy class in $\pi_k(\C(M))$ can be represented by a smooth map $F: D^k\times M\times I\to M\times I$, seen as a family of maps $F_z: M\times I\to M\times I$ for each $z\in D^k$, such that $F_z\in C(X_g)$ for every $z\in D^k$ and $F_z=\Id_{M\times I}$ if $z\in \partial D^k$. Under the generic transversality assumptions on $F$ of Hypothesis 3.18 loc.cit., the set 
\[
\mathcal{M}_F\coloneq\{(z,x,s,t)\in D^k\times M\times I\times I: \text{$(x,s,t)$ is a crossing for $F_z$}\}
\]
is a smooth $(k+2)$-dimensional manifold with boundary $\partial \mathcal{M}_F$, and the boundary points correspond precisely to the infinitesimal crossings of $F$. It is also stably framed, for the normal bundle of $\mathcal{M}_F- \partial \mathcal{M}_F$ in $D^k\times M\times I\times I$ can be identified with the pullback of the tangent bundle of $M$ along
\[
\mathcal{M}_F\longrightarrow M, \quad (z,x,s,t)\longmapsto f_z(x,s)=f_z(x,t),
\]
map which is homotopic to the evident projection $\mathcal{M}_F\to M$ via $\{f_z(x,rs)\}_{r\in [0,1]}$. (Here we use that $M$ is stably framed by assumption.) Moreover, there is a natural map
\[
q_F: (\mathcal{M}_F,\partial{M}_F)\longrightarrow (LM,M), \quad (z,x,s,t)\mapsto \big([0,1]\ni r\mapsto f_z(x,s+r(t-s))\big),
\]
where we regard $M\subset LM$ as the constant loops. Then:
\[
\tau_*: \pi_k(\C(M))\longmapsto \pi_k (\Psi(M))\cong \Omega_{k+2}^{\fr}(LM,M), \quad [F]\longmapsto [q_F: (\mathcal{M}_F, \partial \mathcal{M}_F)\to (LM,M)].
\]
This concludes the description of $\tau_*$.
\end{cons}

We now show that $\tau_*$ is Frobenius-preserving and explicitly describe the Frobenii on the target.

\begin{prop}\label{tauFrobeniusPreservingProp}
    The map $\tau_*: \pi_k(\C(X_g\times I^{2m}))\to \pi_k(\Psi(X_g))$ of \cref{taumapconstruction} is Frobenius-preserving if we equip the target with the Frobenii of \eqref{PsiFrobeniusHtpy}, i.e., for every $d\in \NN$, the following square commutes:
    \begin{equation}\label{tauFrobeniusPreservingSquare}
    \begin{tikzcd}
        \pi_k(\C(X_g\times I^{2m}))\ar[rr,"\varphi_d"]\dar["\tau_*"] &&\pi_k(\C(X_{dg}\times I^{2m}))\dar["\tau_*"]\\
        \pi_{k+2}^s(LX_g,X_g)\ar[rr, "\varphi_d=(Lp)^{!}"] && \pi_{k+2}^s(LX_{dg},X_{dg}).
    \end{tikzcd}
    \end{equation}
    Under the isomorphism $\pi_{n}^s(LX_g,X_g)\otimes \QQ\cong H_n(LX_g,X_g;\QQ)\cong \bigoplus_{r\neq 0} H_g$, the Frobenius $\varphi_d=(Lp)^!$ is:
    \[
    \begin{tikzcd}
\bigoplus_{r\neq 0} H_g^{(r)}\ar[rr,"\bigoplus_{r\neq0} \alpha_r"] && \bigoplus_{s\neq 0} H_{dg}^{(s)},
\end{tikzcd}
    \]
    where the subscripts $(-)^{(r)}$ and $(-)^{(s)}$ only serve bookkeeping purposes, and $\alpha_r$:
    \begin{itemize}
        \item[(i)] is zero if $d$ does not divide $r$;
        \item[(ii)] is injective if $r=di$, and factors as
        \[
        \begin{tikzcd}
        H_{g}^{(di)}\rar["\Delta"] & \bigoplus_{j=0}^{d-1} H_g\cong H_{dg}^{(i)}\rar[hook] &\bigoplus_{s\neq 0} H_{dg}^{(s)}.
        \end{tikzcd}
        \]
    \end{itemize}
\end{prop}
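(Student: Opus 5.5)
The proof has two parts: a geometric compatibility argument for the square \eqref{tauFrobeniusPreservingSquare}, and a homological computation of $(Lp)^!$. For the square, I use the bordism description of $\tau_*$ from \cref{taumapconstruction}. Represent a class by a generic family $F\colon D^k\times X_g\times I\to X_g\times I$. Its Frobenius image $\varphi_d F$ is the unique lift $\widetilde{F}$ to $X_{dg}\times I$ that restricts to the identity on $\partial_0$. The lift is unique because concordances are fixed on $X\times\{0\}\cup\partial_0 X\times I$, and $p\times\Id_I$ is a covering.

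The key geometric observation is that crossings of $\widetilde F_z$ are exactly the lifts of crossings of $F_z$. Indeed, $\widetilde f_z(\tilde x,s)=\widetilde f_z(\tilde x,t)$ holds precisely when $f_z(p\tilde x,s)=f_z(p\tilde x,t)$ and the path $r\mapsto f_z(x,s+r(t-s))$ lifts to a closed loop at $\tilde f_z(\tilde x,s)$. Infinitesimal crossings lift without condition, since they are local. The genericity hypotheses also pull back along a local diffeomorphism. Hence $\mathcal M_{\widetilde F}$ is exactly the pullback of $q_F\colon\mathcal M_F\to LX_g$ along the covering $Lp\colon LX_{dg}\to LX_g$, and the stable framings pull back as well. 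On the other side, the Becker--Gottlieb transfer of a finite covering is modelled in framed bordism by pulling back a framed manifold over the base along the covering. So $\tau_*(\varphi_dF)=(Lp)^!\tau_*(F)$. The constant-loop boundary part is compatible by the same argument applied to $p^!$. This gives commutativity of \eqref{tauFrobeniusPreservingSquare}.

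For the formula, rationally $(Lp)^!$ is the homology transfer of the finite covering $Lp$, computed componentwise. A loop in $L_r X_g$ lifts to a closed loop exactly when $d\mid r$. So $(Lp)^{-1}(L_rX_g)$ is empty if $d\nmid r$, which gives (i). If $r=di$, the preimage is $L_iX_{dg}$, a $d$-sheeted cover of $L_{di}X_g$ via the $d$ lifts of a basepoint, and the transfer goes $H_n(L_{di}X_g)\to H_n(L_iX_{dg})$.

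Using \cref{FLShomology}, in degree $n$ I identify $H_n(L_{di}X_g;\QQ)\cong H_g\cdot N_{di}$, where $N_r=1+t+\dots+t^{r-1}$, inside $H_{X_g}$. Similarly $H_n(L_iX_{dg};\QQ)\cong H_{dg}\cdot N_i$. The identification of $H_{dg}$ with $\bigoplus_{j=0}^{d-1}H_g$ comes from the $d$ translates of the fundamental domain, and the universal covers of $X_g$ and $X_{dg}$ agree with $t_{X_g}=t_{X_{dg}}^{1/d}$. The transfer on these $\pi$-invariants is then the inclusion of invariants under the index-$d$ subgroup generated by $t^d$. This sends $h N_{di}$ to $\sum_j t^j h\cdot N_i(t^d)$, i.e.\ to the diagonal $\Delta(h)$ in $\bigoplus_j H_g$, which is injective and gives (ii).

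I expect the main obstacle to be the first step: verifying that the framed-bordism model of the Becker--Gottlieb transfer really matches pulling back $\mathcal M_F$, including the stable framings and the boundary relative to constant loops. I would handle this by citing the standard description of the transfer of a finite cover via Pontryagin--Thom, and by checking that the framing in \cref{taumapconstruction} is pulled back from $TX_g$ along $p$, using that $p^*TX_g\cong TX_{dg}$. The bookkeeping in the last step, matching the isomorphisms of \cref{FLShomology} along the cover, is routine but needs care with the indexing.
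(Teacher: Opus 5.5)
Your proposal is correct. For the commutativity of the square you follow the paper. It factors the square through $\Omega^{\fr}_{k+2}(LX,X)$: \cref{circled1lemma} is exactly your observation that $\mathcal{M}_{\widetilde F}$ is the pullback of $q_F$ along $Lp$. \cref{circled2lemma} is the fact you propose to cite, that pulling back along a finite cover corresponds under Pontryagin--Thom to the Becker--Gottlieb transfer. The paper proves this directly by combining a fibrewise embedding $E\subset B\times\RR^n$ with an embedding of $W$ and using functoriality of collapse maps, which also settles the framing bookkeeping you flag.

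You genuinely differ in the computation of $(Lp)^!$ in degree $n$. You stay in the Cartan--Leray model of \cref{FLShomology}, where $H_n(L_rX_g;\QQ)\cong H_1(\pi;A_r)=[A_r]^{\pi}$, and note that $L_iX_{dg}\to L_{di}X_g$ is the cover for $\langle t^d\rangle\subset\pi$ with the same universal cover, so $A'_i=A_{di}$. You then identify the transfer with the inclusion $[A_{di}]^{\langle t\rangle}\hookrightarrow[A_{di}]^{\langle t^d\rangle}$ and read off $\Delta$ from $N_{di}(t)=N_d(t)N_i(t^d)$. The paper instead uses:
\begin{itemize}
\item naturality of the transfer for the pullback square of covers over $\mathrm{ev}_1$;
\item the fact that $(\mathrm{ev}_1)_*\colon H_n(L_rX_g;\QQ)\to H_n(X_g;\QQ)$ is an isomorphism;
\item the fact that $p^!$ on $H_n(X_g;\QQ)$ sends each sphere to the sum of its $d$ lifts.
\end{itemize}
It gets injectivity from $(L_ip)_*\circ(L_ip)^!=d$.

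The paper's route reduces everything to a transparent transfer on $X_g$ itself, and it uses the $\mathrm{ev}_1$-identification that is reused in \cref{ExtraDataLemma}. Yours is purely algebraic, but it relies on the identification $hN_r\leftrightarrow h$. You should check that this agrees with the $\mathrm{ev}_1$-identification; it does up to a universal sign, as a Wang-sequence computation for $L_rX_g$ viewed as a mapping torus shows. You should also state that the Becker--Gottlieb transfer is compatible with the group-homology transfer on the Cartan--Leray spectral sequence. One minor slip: $H_g\cdot N_{di}$ lies in $A_{di}$, not in the free module $H_{X_g}$, which has no $\pi$-invariants.
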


For the first part of \cref{tauFrobeniusPreservingProp}, fix a finite-sheeted cover $p: \widetilde{X}\to X$ (we only need the special case $p: X_{dg}\to X_g$). Consider the homomorphism
\begin{align*}
    \ell_p: \Omega_{k+2}^{\fr}(LX,X)&\longrightarrow \Omega_{k+2}^{\fr}(L\widetilde{X},\widetilde{X}),\\
    [q: (W,\partial W)\to (LX,X)]&\longmapsto [q\times_{\id} \id: (W,\partial W)\times_{(LX,X)}(L\widetilde{X},\widetilde{X})\to (L\widetilde{X},\widetilde{X})],
\end{align*}
which is well-defined as $L\widetilde{X}\to \widetilde{X}$ is also finite-sheeted, and consider the diagram
\begin{equation}\label{tauFrobeniusFactorisation}
\begin{tikzcd}
\pi_k(\C(X)) \ar[rr,"\varphi_p"]\dar["\tau_*"] \ar[rrd, phantom, "\circled{1}", pos = 0.6] &&\pi_k(\C(\widetilde{X}))\dar["\tau_*"]\\
\Omega_{k+2}^{\fr}(LX,X)\ar[rr,"\ell_d"]\dar["PT", "\cong"'] \ar[rrd, phantom, "\circled{2}"]&& \Omega_{k+2}^{\fr}(L\widetilde{X},\widetilde{X})\dar["PT", "\cong"']\\
        \pi_{k+2}^s(LX,X)\ar[rr, "(Lp)^{!}"'] && \pi_{k+2}^s(L\widetilde{X}, \widetilde{X}),
\end{tikzcd}
\end{equation}
where $\varphi_p$ stands for the lifting map. When $p$ is our preferred cover $X_{dg}\to X_g$, the outer square in \eqref{tauFrobeniusFactorisation} is \eqref{tauFrobeniusPreservingSquare}. We shall show that both $\circled{1}$ and $\circled{2}$ commute.

\begin{lemma}\label{circled1lemma}
    The square $\circled{1}$ in \eqref{tauFrobeniusFactorisation} commutes. 
\end{lemma}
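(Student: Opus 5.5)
We check the commutativity of \circled{1} directly on representatives, using the explicit description of $\tau_*$ from \cref{taumapconstruction}. Let $[F]\in\pi_k(\C(X))$ be represented by a smooth family $F\colon D^k\times X\times I\to X\times I$, $F_z=(f_z,g_z)$, satisfying the transversality hypotheses of \cite[Hyp.~3.18]{GoodwillieCalcI}. The lift $\varphi_p(F)=\widetilde F$ is the unique family of concordances $\widetilde F_z=(\widetilde f_z,\widetilde g_z)$ of $\widetilde X$ covering $F_z$ (i.e.\ $(p\times\id_I)\circ\widetilde F_z=F_z\circ(p\times\id_I)$) and equal to the identity on $\widetilde X\times\{0\}$. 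It exists and is unique because $X\times I\simeq X$ and $F_z$ is homotopic to the identity through $\widetilde X$-compatible maps. Concretely, $\widetilde g_z=g_z\circ(p\times\id)$, and $\widetilde f_z$ is the lift of $f_z\circ(p\times\id)$ agreeing with $\mathrm{pr}_{\widetilde X}$ at $s=0$. Transversality is a local condition and $p$ is a local diffeomorphism, so $\widetilde F$ also satisfies the genericity hypotheses.

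The key step is to identify the crossing manifold of $\widetilde F$ with the fibre product $\mathcal M_F\times_{(LX,X)}(L\widetilde X,\widetilde X)$ along $q_F$ and $Lp$. A crossing $(z,\tilde x,s,t)$ of $\widetilde F$ maps to a crossing $(z,p(\tilde x),s,t)$ of $F$, and the path $r\mapsto\widetilde f_z(\tilde x,s+r(t-s))$ is a closed loop in $\widetilde X$ lifting $q_F(z,p\tilde x,s,t)$. Conversely, a crossing $(z,x,s,t)$ together with a lift $\gamma$ of the loop $q_F(z,x,s,t)$ to a closed loop in $\widetilde X$ determines a unique preimage $\tilde x$: one lifts the path $u\mapsto f_z(x,u)$ for $u\in[0,s]$ backwards from $\gamma(0)$ to $u=0$. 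Then $(z,\tilde x,s,t)$ is a crossing, since $\widetilde f_z(\tilde x,t)=\gamma(1)=\gamma(0)=\widetilde f_z(\tilde x,s)$ and the $g$-inequality is pulled back. Infinitesimal crossings correspond to constant loops in the same way, so boundaries match.

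Both constructions are local diffeomorphisms, so this gives a diffeomorphism of manifolds with boundary $\mathcal M_{\widetilde F}\cong\mathcal M_F\times_{LX}L\widetilde X$ under which $q_{\widetilde F}$ becomes the projection to $L\widetilde X$. The stable framings also agree. The framing of $\mathcal M_{\widetilde F}$ comes from the normal bundle, identified with the pullback of $T\widetilde X=p^*TX$ together with the pulled-back stable framing of $\widetilde X$. Since $p$ is a local diffeomorphism compatible with all the data, this is the pullback of the framing of $\mathcal M_F$, including the homotopy $f_z(x,rs)$ to the projection. Hence $\tau_*\varphi_p[F]=\ell_p\tau_*[F]$.

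I expect the main obstacle to be the bijection between crossings of $\widetilde F$ and pairs (crossing of $F$, closed lift of its loop). In particular, one must use the normalisation $\widetilde F_z|_{\widetilde X\times 0}=\id$ to ensure the lift of $F$ is the correct one and not a deck-transformation translate. One also has to check that framings agree with the sign and orientation conventions of \cite[pp.~23--24]{GoodwillieCalcI}. Everything else is local and inherits through the covering.
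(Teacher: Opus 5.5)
Your proposal is correct and follows the paper's proof. The paper only asserts that $(z,\tilde x,s,t)\mapsto\bigl((z,p(\tilde x),s,t),\,q_{\widetilde F}(z,\tilde x,s,t)\bigr)$ is a diffeomorphism $(\mathcal M_{\widetilde F},\partial\mathcal M_{\widetilde F})\cong(\mathcal M_F,\partial\mathcal M_F)\times_{(LX,X)}(L\widetilde X,\widetilde X)$ over $(L\widetilde X,\widetilde X)$; you supply the checks it leaves implicit, namely the inverse via path lifting (using the normalisation of $\widetilde F$ on $\widetilde X\times\{0\}$), the matching of infinitesimal crossings with constant loops, and the compatibility of the stable framings.
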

\begin{proof}
    This is nearly immediate. Represent a homotopy class $[F]\in \pi_k(\C(X))$ by a smooth map $F: D^k\times X\times I\to X\times I$ as in \cref{taumapconstruction}, and let $\widetilde{F}: D^k\times \widetilde{X}\times I\to \widetilde{X}\times I$ be its lift (thus representing $\varphi_p([F])$). Then, the map
    \begin{align*}
    (\mathcal{M}_{\widetilde{F}},\partial \mathcal{M}_{\widetilde{F}})&\longrightarrow (\mathcal{M}_{F},\partial \mathcal{M}_{F})\times_{(LX,X)}(L\widetilde{X},\widetilde{X})\\
    (z,\widetilde{x},s,t)&\longmapsto ((x,p(\widetilde{x}),s,t), q_{\widetilde{F}}(z,\widetilde{x},s,t))
    \end{align*}
    is a diffeomorphism over $(L\widetilde{X},\widetilde{X})$, as desired.
\end{proof}

\begin{lemma}\label{circled2lemma}
    The square $\circled{2}$ in \eqref{tauFrobeniusFactorisation} commutes. More generally, given a finite-sheeted cover $p: E\to B$, we can define the homomorphism
    \[
    \ell_p: \Omega^{\fr}_k(B)\longrightarrow \Omega^{\fr}_k(E), \quad [q:W\to B]\longmapsto [q\times_{\id}\id: W\times_B E\to E].
    \]
    Then, if there exists a fibrewise smooth embedding $\psi: E\subset B\times \RR^n$, the following square commutes:
    \[
    \begin{tikzcd}
        \Omega_{k}^{\fr}(B)\ar[rr,"\ell_p"]\dar["PT", "\cong"'] \ar[rrd, phantom, "\circled{2}"]&& \Omega_{k}^{\fr}(E)\dar["PT", "\cong"']\\
        \pi_{k}^s(B)\ar[rr, "p^{!}"'] && \pi_{k}^s(E).
    \end{tikzcd}
    \]
\end{lemma}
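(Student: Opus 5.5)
The plan is to unwind both vertical Pontryagin--Thom isomorphisms and the Becker--Gottlieb transfer in terms of explicit geometric representatives. This reduces the commutativity of $\circled{2}$ to the standard fact that the Becker--Gottlieb transfer of a finite cover corresponds, on framed bordism, to pullback along the cover. Using the fibrewise embedding $\psi\colon E\subset B\times\RR^n$, $p^!$ is modelled by the Umkehr (Pontryagin--Thom collapse) map. Suspending $n$ times, a normal tubular neighbourhood $\nu$ of $\psi(E)$ in $B\times \RR^n$ is fibrewise a disjoint union of discs, so collapsing its complement gives a map $\Sigma^n B_+\to E^{\nu}\simeq \Sigma^n E_+$. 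Here the identification uses the trivialisation of the vertical normal bundle (the vertical tangent bundle of $p$ is zero-dimensional, so $\nu$ is canonically the trivial rank-$n$ bundle). For finite covers this collapse map is the Becker--Gottlieb transfer, which is where the formal properties already cited in the text enter.

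Next, I would take a class $[q\colon W\to B]\in\Omega^{\fr}_k(B)$ with $W\subset \RR^{N}$ framed, so its Pontryagin--Thom image is the collapse $S^N\to \Sigma^{N-k}W_+\to\Sigma^{N-k}B_+$. Composing with the transfer gives $S^{N+n}\to \Sigma^{N+n-k}B_+\to\Sigma^{N+n-k}E_+$. The key step is to observe that this composite is itself a single Pontryagin--Thom collapse, namely for the submanifold $\{(w,v): (q(w),v)\in \psi(E)\}\subset W\times\RR^n\subset \RR^{N+n}$. This submanifold is canonically $W\times_B E$, and its normal framing is the sum of the framing of $W$ and the trivialised normal framing of $\psi(E)$. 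That this is the pullback framing on the étale cover $W\times_B E\to W$ is immediate because the cover is a local diffeomorphism. The collapse of a composite of transverse tubular neighbourhoods equals the composite of collapses, which is a routine check, so the bottom composite represents $PT(\ell_p[q])$.

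For square $\circled{2}$ in \eqref{tauFrobeniusFactorisation}, I would apply this to the relative setting $(LX,X)$ with the cover $Lp$ restricted to the relevant components. Two things need care. First, relative bordism is handled by running the same argument on pairs, using collapse maps of pairs, or equivalently via cofibres, since the transfer square with constant loops commutes. Second, the fibres of $L\widetilde X\to LX$ are finite but not necessarily of constant cardinality on components. This does not matter, since the fibrewise embedding exists componentwise: $L\widetilde{X}\to LX$ is a covering over the components it hits, and one embeds it fibrewise into $LX\times\RR^n$ by a classifying map to a finite configuration space. I would construct that embedding before running the argument.

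The main obstacle is making precise the identification of the Becker--Gottlieb transfer with the fibrewise collapse map, including its compatibility with framings and signs, for possibly infinite-dimensional bases such as $LX$. I would handle this by restricting to compact subspaces or finite-dimensional approximations, since any class in $\Omega^{\fr}_k$ factors through a compact $W$.
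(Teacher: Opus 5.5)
Your proposal is correct and takes essentially the same route as the paper. Both proofs model $p^!$ by the fibrewise collapse along $\psi$, and both identify the composite $S^{N+n}\to\Sigma^{N+n-k}W_+\to\Sigma^{N+n-k}B_+\to\Sigma^{N+n-k}E_+$ with the Pontryagin--Thom collapse of $W\times_B E\subset W\times\RR^n\subset\RR^{N+n}$. The paper just splits your ``key step'' into two subsquares: naturality of the Becker--Gottlieb transfer for the pullback of $p$ along $q$ (via $\mathrm{pr}_W\colon W\times_B E\to W$), and functoriality of collapse maps under composition of embeddings. For the loop-space case, the paper uses the explicit fibrewise embedding $\gamma\mapsto(p\circ\gamma,\phi(\gamma(1)))$ with $\phi\colon\widetilde X\subset\RR^n$. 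This is simpler than your configuration-space argument and avoids the compactness issues you raise.
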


\begin{rmk}

    When $p\colon E\to B$ is a finite-sheeted cover, the homomorphism $\ell_p$ is well defined since $W\times_B E$ is a closed, stably framed $k$-manifold. If $p\colon E\to B$ were a smooth fibre bundle with fibres closed, stably framed $d$-manifolds, then $W\times_B E$ would be a $(k+d)$-manifold, and $\ell_p\colon \Omega_k^{\fr}(B)\to \Omega_{k+d}^{\fr}(E)$ would necessarily raise degree. Thus, the assumption in \cref{circled2lemma} that $p\colon E\to B$ is a finite cover seems to be necessary.

\end{rmk}

\begin{rmk}

    
    For a finite-sheeted cover $p: \widetilde{X}\to X$ of compact (smooth) manifolds, the cover $Lp:L\widetilde{X}\to LX$ admits such a fibrewise smooth embedding as in the statement of \cref{circled2lemma}: indeed, fixing an embedding $\phi: \widetilde{X}\subset \RR^n$, the map
    \[
    \psi: L\widetilde{X}\longrightarrow LX\times \RR^n, \quad \gamma\longmapsto (p\circ\gamma, \phi(\gamma(1))),
    \]
    is such a fibrewise smooth embedding (so long as we replace $L(-)$ with the homotopy equivalent space of smooth maps $C^\infty(S^1,-)$). 
\end{rmk}
\begin{proof}[Proof of \cref{circled2lemma}]
    We will need to recall the constructions of the Becker–Gottlieb transfer and the Pontryagin–Thom isomorphism. For the former, let $p:E \to B$ be a smooth fibre bundle with compact manifold fibres (we will later specialise to the case of a finite cover), and let $\tau$ and $\nu$ denote the vertical tangent and normal bundles of a fibrewise smooth embedding $\psi:E \subset B \times \RR^n$. We obtain
    \[
    \begin{tikzcd}
    S^n\wedge B_+=\mathrm{Th}(B\times \RR^n)\rar["\psi^!"] & \mathrm{Th}(\nu)\rar[hook] & \mathrm{Th}(\tau\oplus\nu)=S^n\wedge E_+,
    \end{tikzcd}
    \]
where $\mathrm{Th}(-)$ stands for the Thom space and $\psi^!$ is the Pontryagin--Thom collapse map along $\psi$. This construction stabilises as $n\to\infty$ to the Becker--Gottlieb transfer for $p$:
\[
p^!: \Sigma^\infty_+ B\longrightarrow \Sigma^\infty_+ E.
\]
A property of the Becker--Gottlieb transfer that we will need \cite[Eq.~3.2]{BeckerGottlieb} is that if $p':E' \to B'$ is another smooth fibre bundle and $h:E \to E'$ is a map of smooth fibre bundles, then the following square commutes:
\begin{equation}\label{BGtransfer1}
\begin{tikzcd}
    \Sigma^\infty_+ B\dar["p^!"]\rar["h"] & \Sigma^\infty_+ B'\dar["(p')^!"]\\
    \Sigma^\infty_+ E\rar["h"] & \Sigma^\infty_+ E'.
\end{tikzcd}
\end{equation}

As for the Pontryagin--Thom isomorphism
\[
PT:\Omega_k^{\fr}(X)\overset{\cong}\longrightarrow \pi_k^s(X),
\]
given a map $q: W\to X$ representing a framed bordism class (so $W$ is a stably framed closed $k$-manifold), we can find an embedding $\varphi: W\subset \RR^m$ with trivial normal bundle. Then, the composition
\[
\begin{tikzcd}
S^m=\Th(\RR^m)\rar["\varphi^!"] & \Th(\nu_\varphi)\simeq W_+\wedge S^{m-k}\rar["q\wedge\id"] & X_+\wedge S^{m-k} 
\end{tikzcd}
\]
stabilises, as $m\to \infty$, to a map $\mathbf{S}^k\to \Sigma^\infty_+ W$ representing $PT([q: W\to X])\in \pi_k^s(X)$.

With this in mind, let $p:E\to B$ be a finite-sheeted cover as in the statement, let $q:W\to B$ represent a framed bordism class in $\Omega^{\fr}_k(B)$, and choose an embedding $\varphi: W\subset \RR^m$. This yields
\[
\begin{tikzcd}[column sep = 45pt]
\Phi: W\times_B E\rar[hook,"\id\times_{B}\psi"] &W\times_B (B\times \RR^n)=W\times \RR^n\rar[hook,"\varphi\times \id"] & \RR^m\times \RR^n=\RR^{m+n}.
\end{tikzcd}
\]
whose normal bundle is again trivial. We need to show that the outer square in
\begin{equation}\label{BGvsBorddiagram}
\begin{tikzcd}[column sep = 45pt]
    S^{m+n}\dar["\Phi^!"]\rar["(\varphi\times \id_{\RR^n})^!"] & S^{m-k+n}\wedge W_+\dar[dashed, "\mathrm{pr}_W^!"]\rar["\id\wedge q_+"] & S^{m-k+n}\wedge B_+\dar["p^!"]\\
    \mathrm{Th}(\nu_{\Phi})\rar[equal] & S^{m-k+n}\wedge (W\times_B E)_+\rar["\id\wedge (\mathrm{pr}_E)_+"] & S^{m-k+n}\wedge E_+
\end{tikzcd}
\end{equation}
is homotopy commutative. The dashed arrow $\mathrm{pr}_W^!$ is the (unstable) Becker--Gottlieb transfer for the finite cover $\mathrm{pr}_W: W\times_BE\to W$. Then, the right subsquare commutes since it is an unstable instance of \eqref{BGtransfer1} for the map of smooth fibre bundles
\[
\begin{tikzcd}
W\times_BE\dar["\mathrm{pr}_W"]\rar["\mathrm{pr}_E"] & E\dar["p"]\\
W\rar["q"] & B.
\end{tikzcd}
\]
Regarding the left subsquare in \eqref{BGvsBorddiagram}, note that the (unstable) Becker--Gottlieb transfer for $\mathrm{pr}_W$ arises from the fibrewise smooth embedding $\id\times_B\psi: W\times_B E\hookrightarrow W\times \RR^n$. Thus, because the Pontryagin--Thom collapse map is functorial on compositions of embeddings, it follows that the left subsquare is commutative by definition of $\Phi$. Of course, one needs to check that the commutativity of the diagram \eqref{BGvsBorddiagram} stabilises appropriately as $m\to \infty$, but this is routine. This concludes the proof.
\end{proof}

\begin{proof}[Proof of \cref{tauFrobeniusPreservingProp}]
    Lemmas \ref{circled1lemma} and \ref{circled2lemma} establish the commutativity of \eqref{tauFrobeniusPreservingSquare}. It remains to show that the Becker--Gottlieb transfer $(Lp)^!: \pi_n^s(LX_g,X_g)\otimes \QQ\to \pi_n^s(LX_{dg},X_{dg})\otimes \QQ$ yields the claimed formula under the isomorphisms $\pi_n^s(LX_g,X_g)\otimes \QQ\cong \bigoplus_{r\neq 0}H_g$ and $\pi_n^s(LX_{dg},X_{dg})\otimes \QQ\cong \bigoplus_{r\neq 0}H_{dg}$.

    Observe that $p: X_{dg}\to X_g$ sends the generator of $\pi_1 (X_{dg})$ to $t^d\in \pi$, and hence $Lp$ factors as
    \[
    \begin{tikzcd}[column sep = 40pt]
    Lp: LX_{dg}=\coprod_{s\in \ZZ} L_sX_{dg}\rar["\coprod_{s\in \ZZ}L_sp"] &\coprod_{s\in \ZZ} L_{ds}X_g\rar[hook] &\coprod_{r\in \ZZ}L_rX_g=LX_g.
    \end{tikzcd}
    \]
    Thus, the connected component $L_rX_g$ is hit by $Lp$ if and only if $d$ divides $r$. This proves claim (i).

    It remains to understand the Becker--Gottlieb transfer of the cover $L_ip: L_i X_{dg}\to L_{di}X_g$ on $H_n(-,\QQ)$. Note that $(L_i p)^!: H_n(L_{di}X_g;\QQ)\to H_n(L_iX_{dg};\QQ)$ is readily seen to be injective since, by formal properties of the Becker--Gottlieb transfer \cite[Eq. 3.4]{BeckerGottlieb}, the composition $(L_ip)_*\circ (L_ip)^!$ is given by multiplication by $d$, which is invertible over $\QQ$. Recall that from our computation of $H_n(LX_g;\QQ)$ in \cref{PsiLemma} using the spectral sequence \eqref{LrXgSS}, it follows that the map $(\mathrm{ev}_1)_*: H_n(L_rX_g;\QQ)\to H_n(X_g;\QQ)$ is an isomorphism for all $r\in \ZZ$. Moreover, by \eqref{BGtransfer1}, the left subsquare in the diagram
    \[
    \begin{tikzcd}[row sep = 10pt]
    H_n(L_{di}X_g;\QQ)\rar["(\mathrm{ev}_1)_*"]\ar[dd,"(L_ip)^!"] & H_n(X_g;\QQ)\ar[dd,"p^!"]\rar["\cong"] &H_g\dar["\Delta"]\\
    &&\bigoplus_{j=0}^{d-1}H_g\dar[equal]\\
    H_n(L_{i}X_{dg};\QQ)\rar["(\mathrm{ev}_1)_*"] & H_n(X_{dg};\QQ)\rar["\cong"] &H_{dg}
    \end{tikzcd}
    \]
    is commutative. The right subsquare is easily seen to commute, yielding claim (ii).
\end{proof}

To finish our computation of the cyclotomic structure on $\pi_{n-2}^{\QQ}(\pEmb(X_g))$, by \eqref{iotaCeq}, it remains to understand the action of the $h$-cobordism involution $\iota_{\H}$ on $\pi_{n-2}^{\QQ}(\CEmbo(X_g\times I^{2m}))\cong \bigoplus_{r\neq 0}H_g$. Since the map $r:(\Omega \C(X_g), \iota_{\C})\to (\Omega \CEmbo(X_g),\iota_{\H})$ is anti-equivariant by \cref{iotaCvsiotaHCor} and surjective by \cref{rProp}, it suffices to equip $\bigoplus_{r\neq 0}H_g$ with an involution for which the map $\tau_*$ is anti-equivariant. For this, recall the free loop space involution $\tau_{\FLS}$ on $LX_g$ induced by complex conjugation on $S^1\subset \CC$.

\begin{lemma}\label{TauVsInvolutionLemma}
    Let $M$ be a compact manifold. The map $\tau_*: \pi_*(\C(M))\longrightarrow \pi_{*+2}^s(LM,M)$ is $C_2$-equivariant for $\iota_{\C}$ on the domain and $\tau_{\FLS}$ on the codomain. 
\end{lemma}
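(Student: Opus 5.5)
We argue directly with the geometric description of $\tau_*$ in \cref{taumapconstruction}: we compute the crossing manifold of $\iota_{\C}(F)$ and compare it with that of $F$. Represent $[F]\in\pi_k(\C(M))$ by a generic family $F_z=(f_z,g_z)$ as in \cref{taumapconstruction}, and write $\overline F_z=(\overline f_z,\overline g_z)$ for the family obtained by applying \eqref{ConcInvDefn} pointwise in $z$. Genericity (Hypothesis 3.18 of \cite{GoodwillieCalcI}) is preserved by this reparametrisation, so $\mathcal M_{\overline F}$ is again a framed $(k+2)$-manifold.

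The first step is to produce a diffeomorphism $\theta:(\mathcal M_F,\partial\mathcal M_F)\to(\mathcal M_{\overline F},\partial\mathcal M_{\overline F})$. Take $(z,x,s,t)\in\mathcal M_F$, an ordinary crossing, so $s<t$, $f_z(x,s)=f_z(x,t)$ and $g_z(x,s)>g_z(x,t)$. Set $s'=1-t<t'=1-s$. Then $\overline f_z(x,s')=f_{z,1}^{-1}f_z(x,t)=\overline f_z(x,t')$, and $\overline g_z(x,s')=1-g_z(x,t)>1-g_z(x,s)=\overline g_z(x,t')$. So $\theta(z,x,s,t)=(z,x,1-t,1-s)$ is a crossing for $\overline F_z$, and the same computation with derivatives handles infinitesimal crossings. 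Since $\theta$ is an involutive bijection given by a restriction of a diffeomorphism of $D^k\times M\times I^2$, it is a diffeomorphism of pairs.

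The second step compares the maps to $(LM,M)$. The loop $q_{\overline F}(\theta(z,x,s,t))$ is $r\mapsto f_{z,1}^{-1}f_z(x,t-r(t-s))$, which is the reversal $\tau_{\FLS}(q_F(z,x,s,t))$ postcomposed with $f_{z,1}^{-1}$. The family $f_{z,1}$ is a loop of diffeomorphisms based at the identity on $\partial D^k$. Contracting $f_{z,1}^{-1}$ through the path $f_{z,u}^{-1}$, $u\in[0,1]$ (which is the identity at $u=0$, since concordances are fixed on $M\times 0$), gives a homotopy of pairs $q_{\overline F}\circ\theta\simeq \tau_{\FLS}\circ q_F$ rel the required boundary conditions. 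The simplifying step $f_z(x,rs)$ from \cref{taumapconstruction} should be treated the same way so that the identification of normal bundles with $f^*TM$ is compatible.

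The main obstacle is the third step, the framings and orientation. We must show that $\theta$ carries the stable framing of $\mathcal M_F$ to that of $\mathcal M_{\overline F}$ up to the correct sign. The map $\theta$ involves the involution $(s,t)\mapsto(1-t,1-s)$ of $I^2$, which has determinant $-1$, and also $t\mapsto1-t$ inside $\overline g$. The normal framing pulled back from $TM$ is unaffected, but the ambient orientation of $D^k\times M\times I\times I$ is reversed once. So the framed class changes by the sign of an orientation reversal in one trivial direction, which may need to be absorbed into the convention for $\iota_{\C}$ versus the statement's sign. If the signs fail to cancel, I would expect the correct statement to carry a sign, compatible with the anti-equivariance in \eqref{antiequivarianceIotaC}. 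I would therefore carefully track the second reflection coming from $\overline g=1-g(\,\cdot\,,1-t)$, which should supply a compensating $-1$ on the normal direction in which $g_z(x,s)-g_z(x,t)$ is transverse. Once the signs are settled, $\tau_*\iota_{\C}[F]=[\tau_{\FLS}\circ q_F]=\tau_{\FLS}\tau_*[F]$ via Pontryagin--Thom, using the naturality of $PT$ in the target $(LM,M)$.
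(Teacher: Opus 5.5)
Your Steps 1 and 2 are exactly the paper's proof. The paper exhibits $(z,x,s,t)\mapsto(z,x,1-t,1-s)$ as a diffeomorphism $(\mathcal{M}_F,\partial\mathcal{M}_F)\cong(\mathcal{M}_{\overline F},\partial\mathcal{M}_{\overline F})$. It notes that the corresponding loop for $\overline F$ is $f_1^{-1}$ applied to the reversed loop, uses $f_1\simeq\mathrm{id}$ via $\{f|_{M\times r}\}$, and stops there without comparing framings. There is one slip in your Step 2: $f_{z,u}$ is not a diffeomorphism for $0<u<1$, so $f_{z,u}^{-1}$ is undefined. Use instead $u\mapsto f_{z,1}^{-1}\circ f_{z,u}$, a homotopy from $f_{z,1}^{-1}$ to $\mathrm{id}$; it acts on loops by postcomposition and preserves constant loops, so it is a homotopy of maps of pairs to $(LM,M)$. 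The genuine gap is Step 3. You leave it open and end with a conditional, so your proposal establishes nothing beyond what the paper writes. Moreover, the compensating sign you hope to get from $\overline g=1-g(\,\cdot\,,1-t)$ does not exist. Since $F_z$ is injective, $g_z(x,s)\neq g_z(x,t)$ whenever $s<t$ and $f_z(x,s)=f_z(x,t)$. So the $g$-inequality only selects components and never enters the normal bundle.

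Here is what the count actually gives. Write $G_F(z,x,s,t)=(f_z(x,s),f_z(x,t))$, so that $\nu_{\mathcal{M}_F}\cong G_F^*\nu_\Delta$. Let $\Xi=\mathrm{id}\times\mathrm{id}\times\sigma$ with $\sigma(s,t)=(1-t,1-s)$; then $G_{\overline F}\circ\Xi=(f_{z,1}^{-1}\times f_{z,1}^{-1})\circ\mathrm{sw}\circ G_F$. Three contributions arise:
\begin{itemize}
\item $\det d\sigma=-1$;
\item the swap acts by $-1$ on $\nu_\Delta\cong TM$, contributing $(-1)^{\dim M}$;
\item $df_{z,1}^{-1}$ preserves orientation relative to the chosen null-homotopies, since $f_{z,1}\simeq\mathrm{id}$.
\end{itemize}
So $\xi$ changes the induced orientation by $(-1)^{\dim M+1}$. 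Rationally a framed class depends only on its underlying oriented class, so $\tau_*\circ\iota_{\C}=(-1)^{\dim M+1}(\tau_{\FLS})_*\circ\tau_*$ on $\pi_*^{\QQ}$.

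This is consistent with the paper's own framework. The reasoning of \cref{involutionlemma} gives $\iota_{\C}=-\iota_{\H}\leftrightarrow(-1)^{\dim M}\tau_{\FLS}$ on $\H^{S^1}_*(LM;\QQ)$. Passing to $\H_{*+1}(LM;\QQ)$ via Connes' $B$, a circle transfer that anticommutes with $\tau_{\FLS}$, flips this sign.

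So your suspicion is justified. With $\tau_{\FLS}$ acting on $\pi^s_{*+2}(LM,M)$ by functoriality, this argument gives the lemma rationally only up to the sign $(-1)^{\dim M+1}$. For the even-dimensional $X_g\times I^{2m}$ to which it is applied, that means anti-equivariance. The sign is harmless for the paper. The lemma is used only to identify the coinvariants of $\bigoplus_{r\neq0}H_g$ under an involution exchanging the $r$ and $-r$ summands. For either sign these are $\bigoplus_{r>0}H_g$, compatibly with the Frobenii.
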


\begin{proof}
    As explained in \cref{taumapconstruction}, we may assume that $M$ is stably framed. The concordance involution $\iota_{\C}$ sends a concordance diffeomorphism $F=(f,g):M\times I\to M\times I$ to $\overline{F}=(\overline{f}, \overline{g})$, where
    \[
    \overline{f}(x,t)= f_1^{-1}\circ f(x,1-t), \qquad \overline{g}(x,t)=1-g(x,1-t), \qquad (x,t)\in M\times I,
    \]
    and $f_1=f\mid_{M\times 1}: M\cong M$. It is straightforward to check that if $(x,s,t)\in M\times I\times I$ is a crossing for $F$ (in the sense of \cref{taumapconstruction}), then $(x,1-t,1-s)$ is a crossing for $\overline{F}$. It follows that if $F: D^k\times M\times I\to M\times I$ is a family of concordance embeddings representing a homotopy class in $\pi_k C(M)$ (and which satisfies the generic transversality assumtion), then there is a diffeomorphism
    \[
    \xi: (\mathcal{M}_F,\partial \mathcal{M}_F)\longrightarrow (\mathcal{M}_{\overline{F}},\partial \mathcal{M}_{\overline{F}}), \quad (z,x,s,t)\longmapsto (z,x,1-t,1-s).
    \]
    Moreover, if $(z,x,s,t)\in \mathcal{M}_F$, then the loop $q_{\overline{F}}(z,x,1-t,1-s)$ is $f^{-1}_1(\overline{q_F(z,x,s,t)})$. But the diffeomorphism $f_1$ is homotopic to the identity (via $\{f\mid_{M\times r}\}_{0\leq r\leq 1}$), and hence $q_F$ and $\tau_{\FLS}\circ q_{\overline{F}}\circ \xi$ are homotopic.
\end{proof}

The upshot of this section is the following.

\begin{prop}\label{FrobXgPseudoIsotopyCor}
    For each $d\in \NN$, the Frobenius map
    \[
    \varphi_d: \pi_{n-1}^{\QQ}\big(\bEmbmodEmb(X_g)\big)\longrightarrow \pi_{n-1}^{\QQ}\big(\bEmbmodEmb(X_{dg})\big)
    \]
    is isomorphic to
      \[
    \begin{tikzcd}
\bigoplus_{r> 0} H_g^{(r)}\ar[rr,"\bigoplus_{r>0} \alpha_r"] && \bigoplus_{s> 0} H_{dg}^{(s)},
\end{tikzcd}
    \]
    where the subscripts $(-)^{(r)}$ and $(-)^{(s)}$ only serve bookkeeping purposes, and $\alpha_r$:
    \begin{itemize}
        \item[(i)] is zero if $d$ does not divide $r$;
        \item[(ii)] is injective if $r=di$, and factors as
        \[
        \begin{tikzcd}
        H_{g}^{(di)}\rar["\Delta"] & \bigoplus_{j=0}^{d-1} H_g\cong H_{dg}^{(i)}\rar[hook] &\bigoplus_{s> 0} H_{dg}^{(s)}.
        \end{tikzcd}
        \]
    \end{itemize}
\end{prop}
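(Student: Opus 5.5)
The plan is to transport the Frobenius computation of \cref{tauFrobeniusPreservingProp} along the chain of Frobenius-compatible identifications already assembled in this subsection, and then pass to $\iota_{\H}$-coinvariants. Concretely, by \eqref{pEmbVsbEmbModEmbEq} we have $\pi_{n-1}^{\QQ}(\bEmbmodEmb(X_g))\cong \pi_{n-2}^{\QQ}(\pEmb(X_g))$. This identification is compatible with the Frobenii, since $\varphi_d$ is defined by lifting embeddings and therefore commutes with the fibre sequence maps. By \eqref{iotaCeq}, the group $\pi_{n-2}^{\QQ}(\pEmb(X_g))$ is identified Frobenius-compatibly with $[\pi_{n-2}^{\QQ}(\CEmbo(X_g\times I^{2m}))]_{\iota_{\H}}$, for $m$ as in \cref{rProp}; note that $n-2\leq 3n-6$.

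Next I will identify $\pi_{n-2}^{\QQ}(\CEmbo(X_g\times I^{2m}))$ with $\bigoplus_{r\neq 0}H_g$ via $\delta^{-1}\circ\overline{\tau}_*$ from \eqref{PsiDiagram}, using \eqref{taubarmap} and \cref{PsiLemma}. This identification intertwines the geometric Frobenius with $(Lp)^!$. To see this, take a class $x$ and use \cref{rProp} to write $x=r(y)$ with $y\in\pi_{n-2}^{\QQ}(\C(X_g\times I^{2m}))$. By commutativity of \eqref{PsiDiagram}, $\delta^{-1}\overline{\tau}_*(x)=\tau_*(y)$. The lifting maps commute with $r$, because the restriction of a lifted concordance diffeomorphism is the lift of the restriction. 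Hence $\delta^{-1}\overline{\tau}_*(\varphi_d x)=\tau_*(\varphi_d y)=(Lp)^!\tau_*(y)$ by \cref{tauFrobeniusPreservingProp}. Here one should double check that $\delta$ is itself compatible with lifts, or simply use the defined $\overline{\varphi}_d$ consistently. It then follows that, on $\bigoplus_{r\neq0}H_g$, the Frobenius $\varphi_d$ is given by the maps $\alpha_r$ of \cref{tauFrobeniusPreservingProp}: zero unless $d\mid r$, and the diagonal $\Delta$ onto the summand $H_{dg}^{(r/d)}$ otherwise.

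The remaining and main step is to pass to $\iota_{\H}$-coinvariants and check that these reduce to the summands with $r>0$. By \cref{TauVsInvolutionLemma}, $\tau_*$ is equivariant for $\iota_{\C}$ and $\tau_{\FLS}$. By \eqref{antiequivarianceIotaC}, $r$ is anti-equivariant for $\iota_{\C}$ and $\iota_{\H}$, and $r$ is surjective. Consequently $\iota_{\H}$ corresponds to $-\tau_{\FLS}$ on $\bigoplus_{r\neq0}H_g$, and $-\tau_{\FLS}$ swaps the summands $H_g^{(r)}$ and $H_g^{(-r)}$, up to sign and the identification via $\mathrm{ev}_1$ used in \cref{PsiLemma}. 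The coinvariants are therefore naturally $\bigoplus_{r>0}H_g^{(r)}$, where the projection sends $v^{(-r)}$ to $\pm v^{(r)}$.

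Finally, $(Lp)^!$ commutes with $\tau_{\FLS}$, since $Lp$ is $\O(2)$-equivariant and the Becker--Gottlieb transfer is natural (cf.\ \eqref{BGtransfer1}). The Frobenius therefore descends to coinvariants. On the representatives $r>0$ it is exactly $\bigoplus_{r>0}\alpha_r$, because $\alpha_r$ maps $H^{(r)}$ into the positive summand $H^{(r/d)}$. Injectivity of $\alpha_{di}$ persists after passing to coinvariants, because the swap identifies distinct summands rather than collapsing any of them. The main obstacle I anticipate is the sign and identification bookkeeping: making sure that the involution really pairs $r$ with $-r$ compatibly with the chosen isomorphisms $H_n(L_rX_g)\cong H_g$, which I would check using $\mathrm{ev}_1\circ\tau_{\FLS}=\mathrm{ev}_1$.
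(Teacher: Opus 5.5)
Your proposal is correct and follows essentially the same route as the paper. You reduce Frobenius-compatibly to $\pi_{n-2}^{\QQ}(\CEmbo(X_g\times I^{2m}))$ via \eqref{pEmbVsbEmbModEmbEq} and \eqref{iotaCeq}, compute there through $\tau_*$ using the surjectivity of $r$ from \cref{rProp} and \cref{tauFrobeniusPreservingProp}, and pass to coinvariants using \cref{TauVsInvolutionLemma} together with $\iota_{\C}=-\iota_{\H}$, so that $-\tau_{\FLS}$ pairs $H_g^{(r)}$ with $H_g^{(-r)}$; your extra check that $(Lp)^!$ commutes with $\tau_{\FLS}$ is harmless but unnecessary, since the paper gets descent to coinvariants for free from the Frobenius-preserving surjection $\alpha\circ r$ onto $\pi_{n-2}^{\QQ}(\pEmb(X_g))$, where the Frobenius is defined geometrically.
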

\begin{proof}
    The isomorphism $\pi_{*+1}(\bEmbmodEmb(X_g))\cong \pi_*(\pEmb(X_g))$ is Frobenius-preserving, so it suffices to prove the claim for the Frobenius of the latter. Under the isomorphism $H_n(LX_g,X_g;\QQ)\cong \bigoplus_{r\neq 0} H_g$, $-\tau_{\FLS}$ on the left becomes the involution on the right-hand side that identifies $H_g^{(r)}$ with $H_g^{(-r)}$ via the $(-1)$-map. Thus, by \cref{TauVsInvolutionLemma} and \eqref{iotaCeq} (noting $\iota_{\C}=-\iota_{\H}$), we recover our isomorphism \(\pi_{n-2}^{\QQ}(\pEmb(X_g))\cong \bigoplus_{r>0} H_g\). The claim now follows from the second part of \cref{tauFrobeniusPreservingProp} and because all the maps in
\[
\begin{tikzcd}
    \pi_{n-2}^{\QQ}(\C(X_g))\rar[two heads, "r"] &\pi_{n-2}^{\QQ} (\CEmbo(X_g))\rar[two heads, "\alpha"] & \pi_{n-2}^\QQ(\pEmb(X_g))
\end{tikzcd}
\]
are Frobenius-preserving.
\end{proof}

There are two additional pieces of data that, together with the Frobenii, will be useful to keep track of:
\begin{enumerate}[label = (\alph*)]
    \item There is an embedding $\iota_0 : X_g \hookrightarrow X_{dg}$ which, under the evident homotopy equivalences $X_g \simeq S^1 \vee (\vee^{2g} S^n)$ and $X_{dg} \simeq S^1 \vee (\vee_{i=0}^{d-1} (\vee^{2g} S^n))$, lies in the homotopy class of the map $S^1 \vee (\vee^{2g} S^n) \to S^1 \vee (\vee_{i=0}^{d-1} (\vee^{2g} S^n))$ that is the identity on the $S^1$ summand, and on the other summand is the canonical inclusion $\vee^{2g} S^n \to \vee_{i=0}^{d-1} (\vee^{2g} S^n)$ as the $i=0$ summand. See \cref{XginXdgEmbFig}. This embedding gives rise to a map of fibre sequences
    \begin{equation}\label{iota0map}
    \begin{tikzcd}
        \pEmb(X_g)\rar\dar["(\iota_0)_*"] & \Embsfr(X_g)\rar\dar["(\iota_0)_*"] & \bEmbsfr(X_g)\dar["(\iota_0)_*"]\\
        \pEmb(X_{dg})\rar & \Embsfr(X_{dg})\rar& \bEmbsfr(X_{dg}),
    \end{tikzcd}
    \end{equation}
    where, the middle and right vertical maps are induced by extension by the identity on the complement of the embedding $\iota_0$. We would like to understand the map 
    \[
    (\iota_0)_*: \pi_{n-2}^{\QQ}(\pEmb(X_g))\longrightarrow \pi_{n-2}^{\QQ}(\pEmb(X_{dg})).
    \]
\end{enumerate}
\begin{figure}[h]
    \centering
    \includegraphics[scale=0.10]{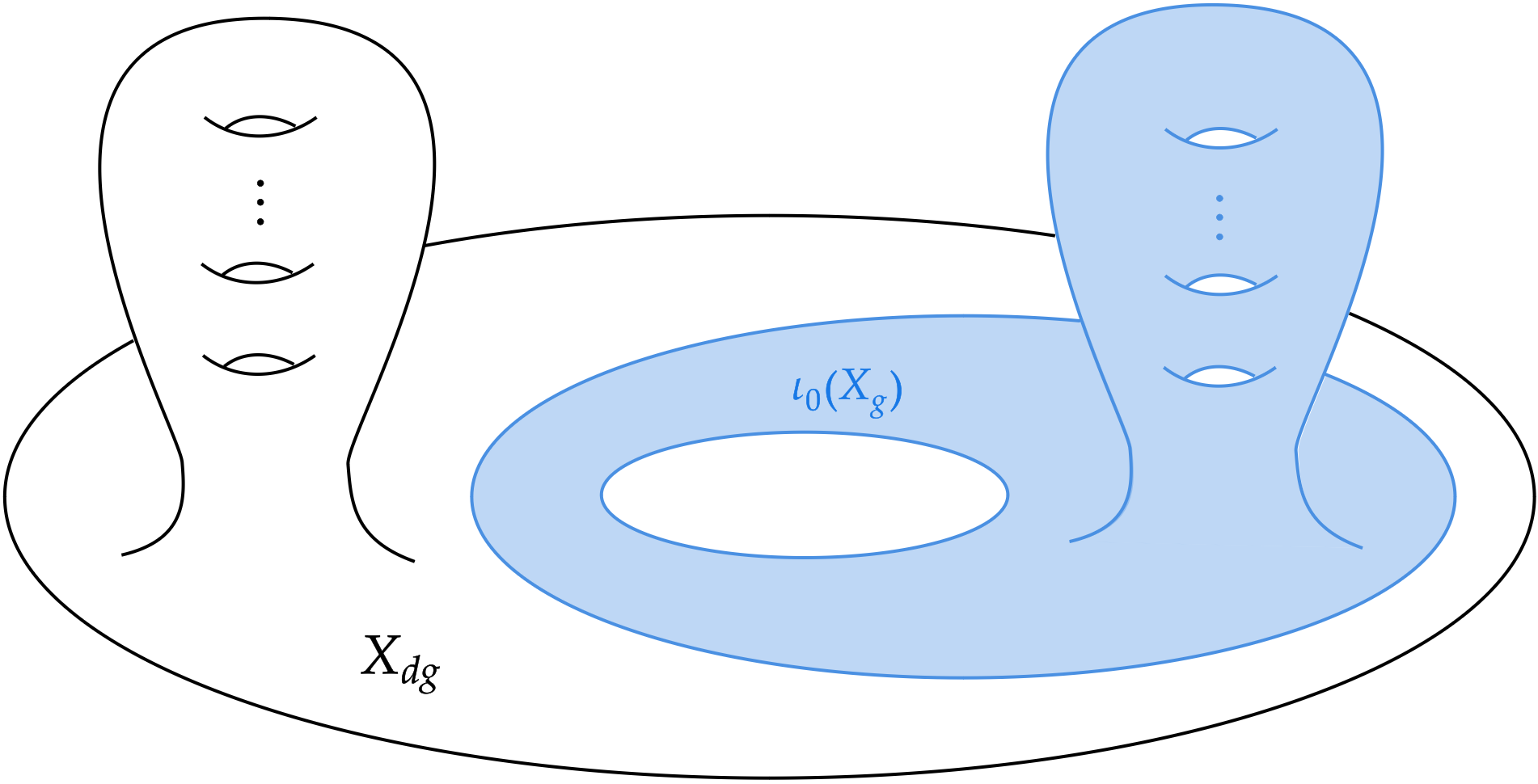}
    \caption{Depiction of the image of $\iota_0:X_g\hookrightarrow X_{dg}$ for $d=2$ and $2n=2$.}
    \label{XginXdgEmbFig}
\end{figure}
\begin{enumerate}[label = (\alph*)]\setcounter{enumi}{1}
    \item Conjugation by the rotation $C_d$-action on the manifold $X_{dg}$ gives rise to another $C_d$-action on the bottom fibre sequence in \eqref{iota0map}. We should keep track of the $C_d$-action on $\pi_{n-2}^{\QQ}(\pEmb(X_{dg}))$.
\end{enumerate}

We finish this section by recording this data.

\begin{lemma}\label{ExtraDataLemma}
    Under the isomorphism $\pi_{n-1}^{\QQ}\big(\bEmbmodEmb(X_{dg})\big)\cong \bigoplus_{r>0} H_{dg}$ of \cref{FrobXgPseudoIsotopyCor},
    \begin{itemize}
        \item[(a)] the map $(\iota_0)_*: \pi_{n-1}^{\QQ}\big(\bEmbmodEmb(X_g)\big)\to \pi_{n-1}^{\QQ}\big(\bEmbmodEmb(X_{dg})\big)$ becomes
        \[
        \begin{tikzcd}[column sep = 10pt]
            \bigoplus_{r>0}H_g\ar[rrr,hook, "j=0"] &&&\bigoplus_{r>0} \bigoplus_{j=0}^{d-1}H_g\rar[equal] & \bigoplus_{r>0}H_{dg};
        \end{tikzcd}
        \]
        
        \item[(b)] the $C_d$-action on the group $\pi_{n-1}^{\QQ}\big(\bEmbmodEmb(X_{dg})\big)$ is the diagonal $C_d$-action on $\bigoplus_{r>0} H_{dg}$, where $C_d$ acts on $H_{dg}\cong \bigoplus_{j=0}^{d-1}H_g$ by permuting the summands.
    \end{itemize}
\end{lemma}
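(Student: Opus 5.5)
The plan is to run the same chain of naturality that proved \cref{FrobXgPseudoIsotopyCor}, now for the maps $(\iota_0)_*$ and for the rotation action, rather than for the Frobenius. Both $\iota_0$ and a rotation $c\in C_d$ are codimension-zero embeddings (resp. diffeomorphisms) of manifolds containing the solid torus, and they act compatibly on every space in
\[
\pi_{n-2}^{\QQ}(\C(X_g\times I^{2m}))\xrightarrow{r}\pi_{n-2}^{\QQ}(\CEmbo(X_g\times I^{2m}))\xrightarrow{\alpha}\pi_{n-2}^{\QQ}(\pEmb(X_g)).
\]
For $\C(-)$ this is extension by the identity (resp. conjugation), and for $\pEmb$ it is exactly the map in \eqref{iota0map} (resp. the conjugation action). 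I will therefore check that $r$ and $\alpha$ commute with these operations. For $r$ this is immediate from its definition as restriction. For $\alpha$ it follows directly from the construction of \eqref{CEvsPseudoisotopyMap}, in the same way as \cref{ActionCompatibilityProp}. Since $r$ and $\alpha$ are surjective (\cref{rProp} and \cref{IntermediateBoringAlphaProp}), it suffices to compute the effect on $\pi_{n-2}^{\QQ}(\C(X_g\times I^{2m}))$ after applying $\tau_*$.

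Next I will show that $\tau_*$ of \cref{taumapconstruction} is natural for codimension-zero embeddings $e\colon M\hookrightarrow M'$, using the pushforward $(Le)_*$ on $\pi^s_{*+2}(LM,M)$. Given a family $F$, extend it by the identity to get $e_\#F$. The identity part contributes no crossings, so $\mathcal M_{e_\#F}=\mathcal M_F$ and $q_{e_\#F}=Le\circ q_F$. The framings agree because the stable framing of $X_{dg}$ restricts to that of $X_g$; both come from the standard one on $Y_g$, and $\iota_0$ can be chosen compatibly. The same argument applies to a diffeomorphism $c$, giving $\tau_*(c_\#F)=(Lc)_*\tau_*(F)$.

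Finally, I compute $(L\iota_0)_*$ and $(Lc)_*$ on $\H_n(LX_g,X_g;\QQ)\cong\bigoplus_{r\neq0}H_g$. As in the proof of \cref{tauFrobeniusPreservingProp}, $(\mathrm{ev}_1)_*\colon \H_n(L_rX;\QQ)\to\H_n(X;\QQ)$ is an isomorphism for every $r$. The map $\iota_0$ induces the identity on $\pi_1$, so it sends $L_rX_g$ to $L_rX_{dg}$. On $\H_n(X;\QQ)$ it is the inclusion $H_g\hookrightarrow\bigoplus_{j=0}^{d-1}H_g$ as the $j=0$ summand. Applying $\mathrm{ev}_1$-naturality gives (a) summand by summand in $r$. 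The rotation is the deck transformation, so it acts trivially on $\pi_1(X_{dg})$ up to conjugacy and preserves each $L_sX_{dg}$. On $\H_n(X_{dg};\QQ)=\bigoplus_j H_g$ it cyclically permutes the summands, which gives (b).

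It remains to check that these identifications are compatible with passing to $-\tau_{\FLS}$-coinvariants, i.e. identifying $H^{(r)}$ with $H^{(-r)}$ to get $\bigoplus_{r>0}$. Both $\iota_0$ and $c$ commute with $\tau_{\FLS}$, and the identification $\iota_{\C}=-\iota_{\H}$ is the same one used in \cref{FrobXgPseudoIsotopyCor}, so this step is formal. The step I expect to be the main obstacle is making precise the framing compatibility and the requirement that $\iota_0$ and the rotation fix $S^1\times D^{2n-1}$ up to isotopy, so that they genuinely act on $\CEmbo$ and the relative $\pEmb$; in particular the rotation moves the solid torus. I would handle this by conjugating with an isotopy that returns the solid torus to itself. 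Alternatively, I would work with the pair $(X_{dg},S^1)$ only up to homotopy, because Goodwillie's isomorphism is functorial in pairs.
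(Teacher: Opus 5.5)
Your proposal is correct and follows essentially the same route as the paper. The paper's proof uses naturality of $\tau_*$ with respect to the map of pairs $(L\iota_0,\iota_0)$ and the $C_d$-action, identifies the effect on each summand $\H_n(L_rX,X;\QQ)$ via $\mathrm{ev}_1$, and then transports this through the surjections $r$ and $\alpha$ and the $-\tau_{\FLS}$-coinvariants exactly as in \cref{FrobXgPseudoIsotopyCor}; you additionally spell out the crossing-manifold check of naturality that the paper only asserts, and the obstacle you anticipate does not arise, since the deck rotation preserves the lifted solid torus and $\partial_0X_{dg}$ setwise, which is all that conjugation on $\CEmbo$ requires.
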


\begin{proof}
    Both (a) and (b) are almost immediate from the work we have already done: for (a), note that there is a commutative diagram
    \[
    \begin{tikzcd}[row sep = 15pt]
        \pi_* (\C(X_g))\rar["\tau_*"]\dar["(\iota_0)_*"] & \pi_{*+2}^s(LX_g,X_g)\dar["(\iota_0)_*"]\\
        \pi_* (\C(X_{dg}))\rar["\tau_*"] & \pi_{*+2}^s(LX_{dg},X_{dg}),
    \end{tikzcd}
    \]
    where the right vertical map $(\iota_0)_*$ is the one induced by the map of pairs $(L\iota_0,\iota_0): (LX_g,X_g)\to (LX_{dg},X_{dg})$. Now in our identification $H_{n}(LX_g,X_g;\QQ)\cong \bigoplus_{r\in \ZZ}H_g$, the $r$-th summand corresponds to the $n$-th rational homology of the pair $(L_rX_g,X_g)$, and the map $(\iota_0)_*$ on it behaves as the inclusion $H_g\xhookrightarrow{} \bigoplus_{j=0}^{d-1}H_g\cong H_{dg}$ of the $j=0$-summand. Then, the claim follows by the same argument as in the previous proof. 

    For (b), again note that the map $\tau_*: \pi_*(\C(X_{dg}))\to \pi_{*+2}^s(LX_{dg}, X_{dg})$ is $C_d$-equivariant and, under the isomorphism $H_{n}(LX_{dg},X_{dg};\QQ)\cong \bigoplus_{r\in \ZZ}H_{dg}$, the $C_d$-action becomes the diagonal action described in the statement. The same line of reasoning as before applies.
\end{proof}

\subsubsection{The \texorpdfstring{$\ZN$}{Z[N]}-module structure on block embeddings of \texorpdfstring{$X_g$}{Xg}}\label{FrobBlockXgSection}
Recall from \cite[Cor. 6.2]{BRW} that there is a map
$$
\BbEmbsfr(X_g)_\ell\xrightarrow{\simeq_\QQ} \BAut_\partial^{\cong}(X_g)
$$
which is a rational equivalence; this map is also Frobenius-preserving, so it will be enough for us to study these in the codomain. Recall from \cite[Eq. (2)]{BRW} that there is a fibration sequence
\begin{equation}\label{MapXgFibSeq}
\begin{tikzcd}
    \Map_\partial(X_g,X_g)\rar & \Map_{S^1}(X_g,X_g)\rar & \Map_{S^1}(S^1\times S^{2n-2}, X_g),
\end{tikzcd}
\end{equation}
where the decoration on the middle and right mapping spaces demands maps to be fixed on $S^1\times *\subset S^1\times S^{2n-2}=\partial X_g$. This fibration sequence is compatible with the lifting Frobenius maps, and since the fibre inclusion is injective in rational homotopy groups, we will only need to understand the Frobenii on the rational homotopy groups of the middle term.

\begin{lemma} \label{MapS1FrobLemma}
Fix a $d$-th root of unity $\zeta\in S^1$. Then, there is a commutative diagram
\[
\begin{tikzcd}[column sep = 70pt]
\Map_{S^1}(\xg,\xg)\ar[r, "\varphi_d"]\dar["(\iota_0)_*"] & \Map_{S^1}(X_{dg}, X_{dg})\\
    \Map_{*}(X_{dg}, X_{dg})\rar["{(1,\zeta,\dots, \zeta^{d-1})}"]&\prod_{i=0}^{d-1}\Map_{S^1}(X_{dg}, X_{dg}),\uar["{\circ}"]
\end{tikzcd}
\]
where $(\iota_0)_*$ denotes the "extension-by-the-identity" map induced by the embedding $\iota_0: X_g\hookrightarrow X_{dg}$ of \cref{XginXdgEmbFig}, the horizontal bottom maps are conjugation by the rotation action of $\zeta^i$, and the right vertical map stands for composition of self-maps. Thus, on homotopy groups, the $d$-th Frobenius is given by
\[
\varphi_{d}(x)=\sum_{i=0}^{d-1}t^i\cdot(\iota_0)_*(x), \quad x\in \pi_{*}^\QQ\Map_{S^1}(X_{dg},X_{dg})\cong H_{g}\otimes \pi_{*+n}^{\QQ}(X_g).
\]
Moreover, it is injective.

\end{lemma}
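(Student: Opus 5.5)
The proof splits into three steps: a geometric factorisation of the lift, a translation to homotopy groups via \cref{homotopy groups of mapping of xg}(i), and an injectivity check.

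\textbf{Step 1 (the commutative diagram).} I will prove the diagram at the point-set level, using a convenient model of the cover $p\colon X_{dg}\to X_g$. Write $X_{dg}$ as the quotient of $d$ rotated copies of a fundamental domain $D\subset X_g$. Here $D$ is $X_g$ cut open along a fibre $\{*\}\times D^{2n-1}$ of the $S^1$-factor, so that the $W_{g,1}$-summand lies in the interior of $D$. Take $\iota_0$ to be the inclusion of the $0$-th translate, with the $S^1\times D^{2n-1}$ part thickened to all of the circle factor. A self-map $f$ of $X_g$ fixing $S^1$ (hence homotopic rel $S^1$ to one supported near the $W_{g,1}$-summand and a collar) then lifts uniquely to a map $\tilde f$ fixing the lifted circle. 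Moreover, $\tilde f$ agrees with $\zeta^i\circ \iota_{0*}f\circ\zeta^{-i}$ on the $i$-th translate of $D$ and with the identity elsewhere. Since these supports are disjoint, $\tilde f$ is the composite of the $d$ conjugates. This gives the diagram, and the construction is natural in families, so it holds on mapping spaces.

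\textbf{Step 2 (homotopy groups).} I pass to homotopy groups using \cref{homotopy groups of mapping of xg}(i), which identifies $\pi_k(\Map_{S^1}(X_g),\id)\cong \Hom_{\ZZ[\pi]}(\pix,\pi_{k+n}(X_g))\cong H_g\otimes\pi_{k+n}^\QQ(X_g)$ rationally. Composition of self-maps induces addition on $\pi_k$ for $k\geq1$, by the usual Eckmann--Hilton argument at the identity component. Conjugation by the deck rotation $\zeta$ acts on $\pi_{*+n}(X_{dg})$ via the deck transformation, which on the $\ZZ[\pi]$-module structure is multiplication by the generator $t$ of $\pi_1(X_{dg})$, up to the chosen identification of the copies. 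Combining these gives $\varphi_d(x)=\sum_i t^i\cdot(\iota_0)_*(x)$, where $(\iota_0)_*$ sends $a\otimes y$ to $\iota_0(a)\otimes \iota_{0*}y$.

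\textbf{Step 3 (injectivity).} As a $\QQ[\pi_{dg}]$-module, $H_{X_{dg}}$ is free on the $d$ translates of $H_g$. Hence the $d$ summands $t^i(\iota_0)_*(x)$ live in the distinct components $t^i\iota_0(H_g)\otimes \pi_{*+n}^\QQ(X_{dg})$ of $\Hom$ from the free basis. Injectivity therefore reduces to injectivity of $\iota_{0*}$ on $\pi_*^\QQ$. This holds because $\iota_0$ has a left homotopy inverse: collapsing the other $d-1$ wedge summands of spheres, in the model $S^1\vee\bigvee S^n$, retracts onto the image of $\iota_0$.

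The main obstacle is Step 1's identification of the lift as the composite of conjugated extensions. It requires a careful choice of representatives supported in a fundamental domain, up to homotopy rel $S^1$, together with checking that the rotation acts on homotopy by $t$ with the correct convention rather than $t^{-1}$. Neither choice affects the final formula's injectivity.
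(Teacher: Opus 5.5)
\textbf{There is a genuine gap in Step 1, and it propagates into the formula of Step 2.}

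Your strategy matches the paper's. You write the lift as a product of rotation-conjugates of one contribution living on the $0$-th copy of the handles. You translate to $\Hom_{\ZZ[\pi]}(\pi_n(-),\pi_{*+n}(-))$ via \cref{homotopy groups of mapping of xg}(i). You get injectivity because the $d$ contributions sit on different free generators. The paper does the bookkeeping by restricting to the cores, $\Map_{S^1}(X_g,X_g)\simeq\Map_*(\vee^{2g}S^n,X_g)$ and $\Map_{S^1}(X_{dg},X_{dg})\simeq\prod_i\Map_{\zeta^i}(\vee^{2g}S^n,X_{dg})$. That is cleaner than your supports: a composite of self-maps with disjoint supports is not the map agreeing with each on its support, since images leave the supports. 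This is harmless on $\pi_{k\geq1}$.

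The gap is the central claim of Step 1. On the $0$-th translate, $\tilde f$ is \emph{not} $\iota_0\circ f\circ\iota_0^{-1}$, not even up to homotopy.
\begin{itemize}
\item Your $\iota_0$ wraps the circle of $X_g$ once around the circle of $X_{dg}$, so it preserves winding numbers. Lifting along the $d$-fold cover divides them by $d$.
\item On the common universal cover (a copy $W^{[k]}$ of $W_{g,1}$ at each $k\in\ZZ$), $\iota_0$ induces $W^{[k]}\mapsto W^{[dk]}$. The covering isomorphism $L=p_*^{-1}\colon\pi_*(X_g)\to\pi_*(X_{dg})$, $*\geq2$, is the identity there.
\item For example, if $f_*(a_1)=a_1+t\cdot b_1$, then $\tilde f_*(a_1^{(0)})=a_1^{(0)}+b_1^{(1)}$. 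But $(\iota_0 f\iota_0^{-1})_*(a_1^{(0)})=a_1^{(0)}+t_{dg}\cdot b_1^{(0)}$.
\end{itemize}
Correspondingly, the correct formula in Step 2 is $\varphi_d(a\otimes y)=\sum_i\zeta^i\cdot\big(a^{(0)}\otimes L(y)\big)$. Here $L(y)\neq\iota_{0*}(y)$ as soon as $y$ involves a nontrivial power of $t$, e.g.\ $y=[t^ma,t^lb]$. So the diagram only commutes if the middle arrow is the lifting homeomorphism $\Map_*(\vee^{2g}S^n,X_g)\to\Map_*(\vee^{2g}S^n,X_{dg})$ (unique lifts at the base point), not anything induced by $\iota_0$. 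As far as I can see, the paper's diagram \eqref{ZetaEq}, whose middle arrow is described as postcomposition with $\iota_0$, needs the same correction.

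Step 2 also misreads the rotation. $\zeta$ does not act on $\pi_*(X_{dg})$ as the generator $t_{dg}$ of $\pi_1(X_{dg})$. Under $L$ it is multiplication by $t\in\pi_1(X_g)$: it permutes translates $a^{(i)}\mapsto a^{(i+1)}$, and only $\zeta^d$ acts as $t_{dg}$. This is what the rotation $t^i\in C_d$ in the statement means. With your reading, all the terms $t^i(\iota_0)_*(x)$ would lie in the $\QQ[\pi_1(X_{dg})]$-span of the $0$-th copy. That contradicts the ``distinct components'' you use in Step 3.

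Injectivity survives. With the corrected formula, Step 3 goes through verbatim, using that $L$ is an isomorphism instead of your retraction onto the image of $\iota_0$. What does not survive is the description of $\varphi_d$ through the embedding $\iota_0$. That is exactly what \cref{ConnectingMapsAreZero} uses (via \eqref{iota0map}) to commute $\varphi_d$ past $\delta_n$.
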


\begin{proof} There is a commutative diagram
\begin{equation}\label{ZetaEq}
\begin{tikzcd}[column sep = 50pt]
    \Map_{S^1}(\xg,\xg)\ar[rr, "\varphi_d"]\dar["\vsim", "\mathrm{res}"'] && \Map_{S^1}(X_{dg}, X_{dg})\\
    \Map_{*}(\vee^{2g}S^n, X_{g})\rar["(\iota_0)_*"]&\Map_{*}(\vee^{2g}S^n, X_{dg})\rar["{(1,\zeta,\dots, \zeta^{d-1})}"]&\prod_{i=0}^{d-1}\Map_{\zeta^{i}}(\vee^{2g}S^n, X_{dg})\uar["\vsim"', "e"],
\end{tikzcd}
\end{equation}
where $(\iota_0)_*$ denotes postcomposition by $\iota_0: X_g\to X_{dg}$, $\Map_{\zeta^i}$ stands for the mapping space that sends the wedge point $*\in \vee^{2g}S^n$ to $\zeta^i\in S^1\subset X_g$, the left vertical map ``res'' is given by restriction to the core of the $n$-handles of $X_g$, the vertical map $e$ sends $(f_0,\dots, f_{d-1})$ to the map $f:X_{dg}\to X_{dg}$ which is the identity on $S^1$ and is $f_i$ when restricted to the $i$-th wedge of cores $\vee_{\zeta^i}^{2g}S^n\subset X_{dg}$, and finally the lower horizontal maps $\zeta^i$ are given by post-composition by the rotation $C_d$-action on $X_{dg}$. One immediately verifies that the bottom composition in \eqref{ZetaEq} coincides with the one in the diagram of the statement. The last claim in the statement holds since the bottom composition in \eqref{ZetaEq} is visibly injective on homotopy groups. 
\end{proof}

\begin{cor}\label{BbEmbFrobXgProp}
    The $d$-th Frobenius on $\pi_*^{\QQ}(\BbEmbsfr(X_g)_\ell)$ is injective and is given by
    \[
    \varphi_{d}(x)=\sum_{i=0}^{d-1}t^i\cdot (\iota_0)_*(x), \quad x\in \pi_*^{\QQ}(\BbEmbsfr(X_g)_\ell),
    \]
    where $(\iota_0)_*$ is as in \cref{iota0map} and $t^i$ is conjugation by the rotation action of $C_d=\langle t\rangle/\langle t^d\rangle$ on $X_{dg}$.
\end{cor}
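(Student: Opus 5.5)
The plan is to deduce the corollary from \cref{MapS1FrobLemma} by transporting the Frobenius maps along the chain of rational equivalences and fibre sequences relating $\BbEmbsfr(X_g)_\ell$ to $\Map_{S^1}(X_g,X_g)$. First, by \cite[Cor.~6.2]{BRW} (see also \cref{stable framed block embs are autos}), the map $\BbEmbsfr(X_g)_\ell\to \BAutbp(X_g)$ is a rational equivalence. We will check that it commutes, up to homotopy, with the lifting Frobenii $\varphi_d$, with the extension-by-the-identity maps $(\iota_0)_*$, and with conjugation by the rotation $C_d$-action on $X_{dg}$. Each of these operations is defined geometrically on block embeddings and, in the same way, on homotopy automorphisms, and the comparison map simply forgets structure, so these compatibilities should follow directly from naturality. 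It then suffices to prove the formula and injectivity for $\pi_*^\QQ(\BAutbp(X_g))$. In degree $*=1$ we would take this as a statement about the action on $\pix$ or, if necessary, restrict to $*\ge 2$.

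For $*\geq 2$, we have $\pi_*(\BAutbp(X_g))\cong\pi_{*-1}(\Map_\partial(X_g,X_g),\id)$. We then use the fibre sequence \eqref{MapXgFibSeq}. By \cref{homotopy groups of mapping of xg}(iii), the fibre inclusion $\pi_k^\QQ(\Map_\partial(X_g))\hookrightarrow \pi_k^\QQ(\Map_{S^1}(X_g))$ is injective; it is the kernel of the surjection $\pi_k(\res_X)\otimes\QQ$. The sequence \eqref{MapXgFibSeq} is compatible with $\varphi_d$, with $(\iota_0)_*$, and with the $C_d$-conjugation: each restriction map commutes with lifting to covers, with extending by the identity, and with conjugation by deck rotations. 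Hence the formula $\varphi_d(x)=\sum_{i=0}^{d-1}t^i\cdot(\iota_0)_*(x)$ from \cref{MapS1FrobLemma} restricts to the fibre. Injectivity of $\varphi_d$ on the fibre also follows, because a restriction of an injective map to a subspace is injective. The identification of the bottom horizontal maps of \cref{MapS1FrobLemma} with conjugation by $t^i$ is exactly the description of the $C_d$-action, and $\pi_1$ of $\BAut$ versus $\pi_0$ of $\Map$ should be handled the same way using $\pi_0$ of the fibre sequence.

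The step we expect to be the main obstacle is checking that the Frobenius on $\BbEmbsfr(X_g)_\ell$ genuinely corresponds to the one on mapping spaces after forgetting stable framings. The lift of a stable framing along a cover must agree with the standard framing $\ell$ on $X_{dg}$ up to homotopy. We would argue this using the fact that the standard framing on $X_{dg}\subset\RR^{2n}$ pulls back compatibly along the cover. In addition, the forgetful map to $\BbEmbop$ is a rational equivalence on components (\cref{stable framed block embs are autos}), so any discrepancy in framings would contribute only finite groups, which vanish rationally.
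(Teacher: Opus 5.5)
Your proposal is correct and follows essentially the same route as the paper. You pass along the Frobenius-compatible rational equivalence $\BbEmbsfr(X_g)_\ell\to\BAutbp(X_g)$ of \cite[Cor.~6.2]{BRW}, then use that the fibre inclusion in \eqref{MapXgFibSeq} is injective on rational homotopy (\cref{homotopy groups of mapping of xg}(iii)) to restrict both the formula and the injectivity from \cref{MapS1FrobLemma}. Your extra discussion of stable framings is unnecessary but harmless: the Frobenius on $\BbEmbsfr(X_g)_\ell$ is part of the setup, and the forgetful map commutes by construction with it, with $(\iota_0)_*$ and with the $C_d$-conjugation.
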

\begin{proof}
    This follows from \cref{MapS1FrobLemma} and since $\Map_{\partial}(X_g,X_g)\to \Map_{S^1}(X_g,X_g)$ is injective on $\pi_*^{\QQ}$.
\end{proof}

\subsection{Homotopy of embeddings spaces of \texorpdfstring{$\xg$}{Xg}}\label{HomotopyOfEmbeddingsXgSection}

As mentioned at the beginning of this section, we consider the fibre sequence \eqref{BEmbsfrFibreSeq} and use it to compute the rational homotopy groups of $\BEmbsfr(X_g)_\ell$ in the range of degrees $\leq 2n-2$. In this range, we have
    \[
        \pi^{\QQ}_{*\geq 2}\left(\BbEmbsfr(X_g)_\ell\right)\cong \left\{ \begin{array}{cl}
             \ker\left([-,-]_\pi:\pi_n^\QQ(\xg)\otimes_\pi\pi_{2n-1}^\QQ(\xg)\to [\pi_{3n-2}^\QQ(\xg)]_\pi\right)    & *=n, \\[4pt]
             \ker\left([-,-]_\pi:\pi_n^\QQ(\xg)\otimes_\pi\pi_{3n-2}^\QQ(\xg)\to [\pi_{4n-3}^\QQ(\xg)]_\pi\right) & *=2n-1, \\[4pt]
             0 & \text{otherwise,}
        \end{array}\right.
    \]where $[-,-]_\pi$ is the map obtained by taking $\pi$-coinvariants of the Whitehead bracket map $[-,-]:\pi_{p-1}(\xg)\otimes \pi_{q-1}(\xg)\to \pi_{p+q-1}(\xg)$, which is $\pi$-equivariant using the diagonal action on the source, by \cite[Thm. 3.1]{BRW} (see \cref{homotopy groups of mapping of xg}) and \cref{stable framed block embs are autos}. 
    
    We observe that this isomorphism is equivariant with respect to the action of $\Embmcgsfr\cong \pi_1(\BbEmbsfr(X_g)_\ell)$ on its higher homotopy groups (i.e. the lefthand side) and the diagonal action on the tensor product $\pi_n^\QQ(X_g)\otimes_\pi\pi_{2n-1}^\QQ(X_g)$ by the following argument: since this isomorphism is the composition of the induced map of $\BEmbsfr(X_g)_\ell\to \BAut_\partial(X_g)$ with the isomorphism in \cref{homotopy groups of mapping of xg}, it suffices to prove that the action of $\pi_1(\BAut_\partial(X_g))\cong \pi_0(\Aut_\partial(X_g))$ on its higher rational homotopy groups agrees with the diagonal action in $\smash{\pi_n^\QQ(X_g)\otimes_\pi \pi_{n+*}^\QQ(X_g)} $ under the isomorphism of \cref{homotopy groups of mapping of xg}. By loc.cit, it suffices to show that the latter action agrees with the conjugation action in loc.cit(i) (and the fact that, for any group-like topological monoid, the action of $\pi_0(G)$ on $\pi_k(\B G)$, under the isomorphism $\pi_k(\B G)\cong \pi_{k-1}(G)$, is the conjugation action), on $\Hom_{\QQ[\pi]}(\pi_n^\QQ(X_g),\pi_{n+*}^\QQ(X_g))$ under the isomorphism
    \[\pi_n^\QQ(X_g)\otimes_\pi \pi_{n+*}^\QQ(X_g)\to \Hom_{\QQ[\pi]}(\pi_n^\QQ(X_g),\pi_{n+*}^\QQ(X_g))\]taking $x\otimes y$ to $\lambda_X(-,x)\cdot y$. This follows from the fact that any $\phi\in \Aut_\partial(X_g)$ preserves the form $\lambda_X$.

    To compute the homotopy groups of the total space of \eqref{BEmbsfrFibreSeq}, we must understand the connecting map between the homotopy groups of the base and the fibre in degree $n$. The next lemma considers this problem.

    \begin{lemma}\label{ConnectingMapsAreZero}
        The connecting map
        \(\delta_n: \pi_n^{\QQ}\left( \BbEmbsfr(X_g)_\ell\right)\to \pi_{n-1}^{\QQ}\big(\bEmbmodEmb(X_g)\big)\)
        is zero. 
    \end{lemma}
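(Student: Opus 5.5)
We will use the Frobenius structures developed in \cref{FrobeniusXgSection}: the connecting map $\delta_n$ is compatible with the Frobenius maps $\varphi_d$ (and with the extension-by-identity maps $(\iota_0)_*$ of \eqref{iota0map}), so we compare the two Frobenius descriptions. By \cref{FrobXgPseudoIsotopyCor}, the Frobenius on the target $\bigoplus_{r>0}H_g^{(r)}$ kills the summand $H_g^{(r)}$ whenever $d\nmid r$. By \cref{BbEmbFrobXgProp}, the Frobenius on the source $\pi_n^{\QQ}(\BbEmbsfr(X_g)_\ell)$ is injective.

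Fix $x$ in the source, and write $\delta_n(x)=\sum_{r>0} y_r$ with $y_r\in H_g^{(r)}$. Only finitely many $y_r$ are nonzero, so we may choose $d$ larger than every $r$ with $y_r\neq 0$. Then $d\nmid r$ for all such $r$. Naturality of $\delta_n$ gives
\[
\delta_n(\varphi_d x)=\varphi_d(\delta_n x)=0 .
\]

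This alone does not force $\delta_n(x)=0$, since the injectivity of $\varphi_d$ on the source only shows $\varphi_d x\neq 0$ when $x\neq 0$, not that $x$ lies outside $\ker\delta_n$. To conclude, we use the explicit formula $\varphi_d(x)=\sum_{i=0}^{d-1}t^i\cdot(\iota_0)_*(x)$ from \cref{BbEmbFrobXgProp}. Naturality of $\delta_n$ with respect to $(\iota_0)_*$ and the $C_d$-action, together with \cref{ExtraDataLemma}, gives
\[
\delta_n(\varphi_d x)=\sum_{i}t^i\cdot(\iota_0)_*(\delta_n x) .
\]
By \cref{ExtraDataLemma}, this is the element of $\bigoplus_{r>0}\bigoplus_{j=0}^{d-1}H_g$ which places $y_r$ in every summand $j$. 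This map $\delta_n x\mapsto \delta_n(\varphi_d x)$ is visibly injective. On the other hand, $\delta_n(\varphi_d x)=0$ by the first paragraph, hence $\delta_n x=0$.

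The main obstacle is the bookkeeping identification in this last step. We must make sure that the identifications of \cref{ExtraDataLemma} and \cref{FrobXgPseudoIsotopyCor} are taken with respect to the same isomorphism, and that the $C_d$-action and $(\iota_0)_*$ commute with $\delta_n$. That compatibility holds because all of these maps are induced by maps of the fibre sequences \eqref{iota0map}, which are compatible with stable framings. If these identifications turn out to be mismatched, a fallback is a weight argument. The images $\varphi_d(\delta_n x)$ for varying $d$ must be supported only in summands indexed by multiples compatible with $\sum_i t^i(\iota_0)_*$, and comparing two coprime values of $d$ would force each $y_r$ to vanish.
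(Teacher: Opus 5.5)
Your proof is correct and is essentially the paper's argument. You choose $d$ so that $\varphi_d$ kills $\delta_n(x)$ via \cref{FrobXgPseudoIsotopyCor}(i), then combine naturality with $\varphi_d=\sum_i t^i(\iota_0)_*$ from \cref{BbEmbFrobXgProp} and the injectivity of $y\mapsto\sum_i t^i(\iota_0)_*(y)$ given by \cref{ExtraDataLemma}.

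You are right that this injectivity on the target, not injectivity of $\varphi_d$ on the source, is what gives $\delta_n(x)=0$. The paper's last sentence phrases it through injectivity on the source, which appears to be a slip, so your fallback weight argument is unnecessary.
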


    \begin{proof}
        First observe that, by \cref{FrobXgPseudoIsotopyCor}(i), the Frobenii on the target of $\delta_n$ are such that for any $y\in \pi_{n-1}^{\QQ}\big(\bEmbmodEmb(X_g)\big)$ there exists some $d\in \NN$ such that $\varphi_d(y)=0$. Thus, given $x\in \pi_n^{\QQ}\left(\BbEmbsfr(X_g)_\ell\right)$, let $d\in\NN$ be such that $\varphi_{d}(\delta_n(x))=0$. Since the Frobenii fit in maps of fibre sequences
        \begin{equation}\label{varphidFibSeqMap}\begin{tikzcd}
        \bEmbmodEmb(X_g)\rar\dar["\varphi_d"] & \BEmbsfr(X_g)_\ell\rar\dar["\varphi_d"] & \BbEmbsfr(X_g)_\ell\dar["\varphi_d"]\\
        \bEmbmodEmb(X_{dg})\rar & \BEmbsfr(X_{dg})_\ell\rar& \BbEmbsfr(X_{dg})_\ell,
    \end{tikzcd}\end{equation}
    it follows that $\delta_n$ commutes with them, so \(0=\varphi_{d}(\delta_n(x))=\delta_n(\varphi_d(x))=\sum_{i=0}^{d-1}\delta_n\left(t^i\cdot (\iota_0)_*(x)\right)
    \), where the last equality is \cref{BbEmbFrobXgProp}. Note that $\delta_n$ is $C_d$-equivariant, for the bottom fibre sequence in \eqref{varphidFibSeqMap} is so, and by \eqref{iota0map}, we also get that $(\iota_0)_*$ and $\delta_n$ commute. All in all, we get
    \[
    0=\sum_{i=0}^{d-1}t^i\cdot(\iota_0)_*(\varphi_d(x)).
    \]
    By \cref{ExtraDataLemma}, this can only happen if $\varphi_d(x)=0$. But since $\varphi_d$ on $\pi_n^{\QQ}\left(\BbEmbsfr(X_g)_\ell\right)$ is injective by \cref{BbEmbFrobXgProp}, we must have that $x=0$, establishing the claim.
    \end{proof}

    From the long exact sequence of \eqref{BEmbsfrFibreSeq}, we obtain the following computation of the homotopy groups of the embedding space in question.

    \begin{prop}\label{HomotopyBEmbsfr}
        The rational homotopy groups of $\BEmbsfr(\xg)_\ell$ in degrees $2\leq *\leq 2n-2$ are given as $\QQ[\mcgRhoXgsfr]$-modules by
        \[\pi_{*}^{\QQ}\left(\BEmbsfr(X_g)_\ell\right)\cong \left\{ \begin{array}{cl}
             \bigoplus_{r> 0} H_g, & *=n-1 \\[4pt]          
             \ker\left([-,-]_\pi:\pi_n^\QQ(\xg)\otimes_\pi\pi_{2n-1}^\QQ(\xg)\to [\pi_{3n-2}^\QQ(\xg)]_\pi\right),    & *=n \\[4pt]
             \coker(\delta_{2n-1}), & *=2n-2\\[4pt]
             0 & \text{otherwise.}
        \end{array}\right.\]
    \end{prop}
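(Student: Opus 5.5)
The plan is to read the answer off the long exact sequence of rational homotopy groups of the fibre sequence \eqref{BEmbsfrFibreSeq}. Its fibre is $\bEmbmodEmb(X_g)$, whose rational homotopy in degrees $*\leq 2n-2$ is given by \cref{MCGrepBlockModEmb}: it is $\bigoplus_{r>0}H_g$ in degree $n-1$, some module in degree $2n-2$, and zero otherwise. Its base is $\BbEmbsfr(X_g)_\ell$, whose higher rational homotopy groups in degrees $\leq 2n-1$ are the two Whitehead-bracket kernels displayed at the start of \cref{HomotopyOfEmbeddingsXgSection}, sitting in degrees $n$ and $2n-1$.

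The long exact sequence then has only a few possibly nonzero segments in degrees $2\leq *\leq 2n-2$. I would treat them one at a time.

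\textbf{Degrees $n-1$ and $n$.} The relevant segment is
\[0\to \pi_n^\QQ(\text{fibre})=0\to \pi_n^\QQ(\BEmbsfr)\to \pi_n^\QQ(\BbEmbsfr)\xrightarrow{\delta_n}\pi_{n-1}^\QQ(\text{fibre})\to \pi_{n-1}^\QQ(\BEmbsfr)\to \pi_{n-1}^\QQ(\BbEmbsfr)=0.\]
Here $\pi_n^\QQ$ of the fibre vanishes because $n\neq 2n-2$ when $n\geq 3$. Likewise $\pi_{n-1}^\QQ$ of the base vanishes, since $n-1\geq 2$ and $n-1\notin\{n,2n-1\}$. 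By \cref{ConnectingMapsAreZero}, $\delta_n=0$. So we get an isomorphism $\pi_n^\QQ(\BEmbsfr)\cong \pi_n^\QQ(\BbEmbsfr)$, and an isomorphism $\pi_{n-1}^\QQ(\BEmbsfr)\cong \bigoplus_{r>0}H_g$ induced by the fibre inclusion.

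\textbf{Degree $2n-2$.} The segment is
\[\pi_{2n-1}^\QQ(\BbEmbsfr)\xrightarrow{\delta_{2n-1}}\pi_{2n-2}^\QQ(\text{fibre})\to\pi_{2n-2}^\QQ(\BEmbsfr)\to\pi_{2n-2}^\QQ(\BbEmbsfr)=0,\]
which gives $\pi_{2n-2}^\QQ(\BEmbsfr)\cong\coker(\delta_{2n-1})$.

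\textbf{All other degrees.} In degrees $2\leq *\leq 2n-2$ other than $n-1$, $n$ and $2n-2$, both the fibre and the base groups vanish, so the total space has vanishing rational homotopy there. One small point needs care: the base group in degree $2n-1$ only feeds in through $\delta_{2n-1}$, and this is already accounted for in the degree $2n-2$ segment.

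\textbf{Equivariance.} The group $\mcgRhoXgsfr$ acts compatibly on all three spaces, since the fibre sequence is $\brho$-equivariant and $\pi_1$ of the total space acts on the whole sequence. So all the maps in the long exact sequence are maps of $\QQ[\mcgRhoXgsfr]$-modules. The identifications on the fibre are equivariant by \cref{MCGrepBlockModEmb}. Those on the base are equivariant by the discussion preceding \cref{ConnectingMapsAreZero}, which checks the diagonal action via preservation of $\lambda_X$; $\rho$ acts compatibly on these since it is an automorphism of $X_g$.

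\textbf{Expected obstacle.} The only substantive input is the vanishing of $\delta_n$, which is already \cref{ConnectingMapsAreZero}. The main thing left to verify is the equivariance of the base identification under the $\rho$-factor as well, not just under $\Embmcgsfr$. I would handle this by noting that the isomorphism of \cref{homotopy groups of mapping of xg} is natural with respect to any automorphism of $X_g$ fixing $S^1$ setwise and preserving $\lambda_X$ up to sign, and $\rho$ is such an automorphism.
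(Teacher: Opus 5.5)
Your proposal is correct and matches the paper's own argument: the statement is read off the long exact sequence of \eqref{BEmbsfrFibreSeq}, using \cref{MCGrepBlockModEmb} for the fibre, the Whitehead-bracket description of the base, and \cref{ConnectingMapsAreZero} to kill $\delta_n$, and your degree bookkeeping for $n\geq 3$ is right. On your equivariance remark, note that $\rho$ negates $\lambda_X$, so under $x\otimes y\mapsto\lambda_X(-,x)\cdot y$ the conjugation action of $\rho$ becomes the diagonal action twisted by a sign $-1$; the paper does not spell out this $\rho$-part either.
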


\subsection{Homology of embedding spaces of \texorpdfstring{$\xg$}{Xg}}\label{homotopyOfPlusSectionXg} In this section, we wish to calculate the homotopy groups of the space $\BEmbsfr(\xg)_\ell^+$ in a range. To do so, we start by computing the homology of the universal cover of the latter space before taking the plus-construction and colimit over $g$, which we denote by $\BEmbsfr(\xg)_\ell^\sim$. We start by setting notation to simplify our exposition.

\begin{nota}\label{l m n notation}
    Recall \cref{HomotopyBEmbsfr} and let $L$, $M$ and $N$ denote the following $\QQ[\mcgRhoXgsfr]$-modules:
    \[
    L\coloneq \pi_{n-1}^\QQ\left(\BEmbsfr(X_g)_\ell\right), \qquad M\coloneq \pi_{n}^\QQ\left(\BEmbsfr(X_g)_\ell\right), \qquad N\coloneq \pi_{2n-2}^\QQ\left(\BEmbsfr(X_g)_\ell\right).
    \]
    \comment{
    \begin{align*}
        L &\coloneq \pi_{n-1}^\QQ\left(\BEmbsfr(X_g)_\ell\right)\\
        M &\coloneq \pi_{n}^\QQ\left(\BEmbsfr(X_g)_\ell\right)\\
        N &\coloneq \pi_{2n-2}^\QQ\left(\BEmbsfr(X_g)_\ell\right)
    \end{align*}}
    
\end{nota}

We start by noticing that the Postnikov truncation $\tau_{\leq 2n-3}\BEmbsfr(\xg)_\ell^\sim$ is given by $K(L,n-1)\times K(M,n)$ since there are no possible $k$-invariants for degree reasons. Thus, by looking at the Serre spectral sequence of the map from $(2n-2)$-nd Postnikov truncation to the $(2n-3)$-rd of $\BEmbsfr(\xg)_\ell^\sim$, we obtain: 

\begin{prop}\label{HomologyUniversalCOver}
    The rational homology groups of $\BEmbsfr(\xg)_\ell^\sim$ as $\QQ[\mcgRhoXgsfr]$-modules in degrees $*\leq 2n-2$ are given by 
    \[\widetilde{\H}_{*}\left(\BEmbsfr(X_g)_\ell^\sim;\QQ\right) \cong \left\{ \begin{array}{cl}
             L & *=n-1, \\[4pt]          
             M   & *=n, \\[4pt]
             \sn(L)\oplus \coker(d:L\otimes M\to N), & *=2n-2, \\[4pt]
             0 & \text{otherwise,}
        \end{array}\right.\]where $d$ is a certain map.
\end{prop}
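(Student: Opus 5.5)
We work with the universal cover $E\coloneq\BEmbsfr(X_g)_\ell^\sim$. By \cref{HomotopyBEmbsfr}, for $n\ge3$ its rational homotopy in degrees $\le 2n-2$ is $L$ in degree $n-1$, $M$ in degree $n$ and $N$ in degree $2n-2$. The plan is to compute $\H_*(E;\QQ)$ from its rational Postnikov tower and to keep track of the $\mcgRhoXgsfr$-action throughout. All constructions below (Postnikov sections, Serre spectral sequences, $k$-invariants) are natural. The deck action of $\pi_1(\BEmbsfr(X_g)_\ell)$ and the $\brho$-action both act on $E$ up to homotopy, so every identification we produce will be one of $\QQ[\mcgRhoXgsfr]$-modules.

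First we identify $\tau_{\le 2n-3}E$. Its only nonzero rational homotopy groups are $L$ and $M$, in degrees $n-1$ and $n$. The single possible $k$-invariant would lie in $\H^{n+1}(K(L,n-1);M)$. The rational cohomology of $K(L,n-1)$ is the free graded-commutative algebra on generators in degree $n-1$, so it vanishes in degrees $n-1<*<2n-2$. Since $n+1<2n-2$ for $n\ge 4$, the $k$-invariant is zero and $\tau_{\le 2n-3}E\simeq_\QQ K(L,n-1)\times K(M,n)$; for $n=3$ the relevant degrees are handled by the same count. By the Künneth formula, the rational homology of this product in degrees $\le 2n-2$ is therefore $L$ in degree $n-1$, $M$ in degree $n$, and $\sn(L)$ in degree $2n-2$. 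The cross term $L\otimes M$ sits in degree $2n-1$ and so lies outside the range.

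Next we pass to $\tau_{\le 2n-2}E$, which fibres over $\tau_{\le 2n-3}E$ with fibre $K(N,2n-2)$, a simply connected space. In the Serre spectral sequence in total degree $\le 2n-2$, the only new contribution is $N$ at $E^2_{0,2n-2}$. The only differential that can reach it is the transgression $d^{2n-1}\colon E^{2n-1}_{2n-1,0}\to E^{2n-1}_{0,2n-2}$. Its source is $\H_{2n-1}(\tau_{\le 2n-3}E;\QQ)$, whose relevant summand is $L\otimes M$; the other summands, such as $\Lambda$- or $\mathrm{Sym}$-type terms in degree $2n-1$, do not occur for degree reasons once $n\ge3$. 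We take $d$ to be this transgression restricted to $L\otimes M$. It is equivariant because the fibration is natural.

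In degree $2n-2$ there are no further differentials and there is no room for extension problems, since $\sn(L)$ and $\coker(d)$ occupy different filtration columns and rational extensions split. Finally, $E\to\tau_{\le 2n-2}E$ is $(2n-1)$-connected, so it is a homology isomorphism in degrees $\le 2n-2$. Together these give the stated groups.

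The main obstacle is equivariance. The module extension in degree $2n-2$ splits as vector spaces, but as $\QQ[\mcgRhoXgsfr]$-modules we obtain only an extension of $\sn(L)$ by $\coker(d)$. Splitting it equivariantly needs an additional argument. One option is naturality of the projection $\tau_{\le2n-3}E\to K(L,n-1)$, which gives a retraction onto $\sn(L)$ induced by $\H_{2n-2}(E)\to\H_{2n-2}(K(L,n-1))$; a second check is needed that the $\coker(d)$ part maps to zero there, which holds because it comes from the fibre. We will verify this with care.
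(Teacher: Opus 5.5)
This is the paper's route. You identify $\tau_{\le 2n-3}E$ with $K(L,n-1)\times K(M,n)$ and read off degree $2n-2$ from the Serre spectral sequence of $K(N,2n-2)\to\tau_{\le 2n-2}E\to\tau_{\le 2n-3}E$, with $d$ the transgression on $L\otimes M$. For $n\ge 4$ your bookkeeping is correct, but two steps do not hold as you wrote them.

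\textbf{The case $n=3$.} It is not settled by the same degree count. There $n+1=2(n-1)=4$, so the $k$-invariant lives in $\H^4(K(L,2);M)\cong\Hom(\mathrm{Sym}^2L,M)$, which does not vanish for degree reasons. Rationally this $k$-invariant is the Whitehead product $\pi_2(E)\otimes\pi_2(E)\to\pi_3(E)$. If it were nonzero, your answers in degrees $3$ and $4$ would be wrong. (The paper's one-line ``degree reasons'' has the same lacuna at $n=3$.)

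A fix is as follows. Let $\widetilde{B}$ be the universal cover of $\BbEmbsfr(X_g)_\ell$, and lift the composite $E\to\BEmbsfr(X_g)_\ell\to\BbEmbsfr(X_g)_\ell$ to a map $E\to\widetilde{B}$.
\begin{itemize}
\item For $n=3$ we have $\pi_2^\QQ(\widetilde{B})=0$, since in low degrees the higher rational homotopy of the block space is concentrated in degrees $n$ and $2n-1$.
\item The map $\pi_3^\QQ(E)\to\pi_3^\QQ(\widetilde{B})$ is injective, because $\pi_3^\QQ(\bEmbmodEmb(X_g))=0$ by \cref{MCGrepBlockModEmb}.
\end{itemize}
By naturality of $k$-invariants, the image of $k_E$ in $\H^4(K(L,2);\pi_3^\QQ(\widetilde{B}))$ is the pullback of $k_{\widetilde{B}}$ along $K(L,2)\to(\tau_{\le 2}\widetilde{B})_\QQ\simeq *$, so it vanishes. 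Injectivity on coefficients then gives $k_E=0$.

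\textbf{The equivariant splitting.} Your proposed fix does not split anything. The map $\widetilde{\H}_{2n-2}(E;\QQ)\to\H_{2n-2}(K(L,n-1);\QQ)=\sn(L)$ is the edge homomorphism. It is exactly the quotient map of the extension $0\to\coker(d)\to\widetilde{\H}_{2n-2}(E;\QQ)\to\sn(L)\to 0$, and the fact that $\coker(d)$ maps to zero under it is just exactness.

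A direct sum of $\QQ[\mcgRhoXgsfr]$-modules requires an equivariant section of this map, or equivalently an equivariant retraction onto $\coker(d)$. Postnikov projections only produce maps out of $E$, and the group is infinite, so no averaging is available. The paper does not supply such a section either.

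What the later arguments (\cref{HomotopyPlusConstruction}, \cref{mainInputWatanabe}) actually use is only this natural extension: the edge surjection onto $\sn(L)$ with kernel $\coker(d)$. So unless you find a genuine equivariant splitting, state your conclusion as that extension, split only as $\QQ$-vector spaces.
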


\begin{lemma}\label{plusLemma}
     Let $G$ be a group, $k\geq 1$ and let $f:X\to Y$ be a $G$-equivariant map of simple $G$-spaces\footnote{That is, simple spaces equipped with a $G$-action.} which induces an isomorphism on $\H_*(-;\QQ)$ for $*\leq k$ and a surjection for $*=k+1,k+2$. Moreover, assume that there is an isomorphism of $\QQ[G]$-modules $\H_{k+1}(X;\QQ)\cong\H_{k+1}(Y;\QQ)\oplus A$, where the map to the first factor is $\H_{k+1}(f;\QQ)$ and $A$ is a $\QQ[G]$-module. Then, $f$ induces an isomorphism on $\pi_*^\QQ(-)$ for $*\leq k$ and $\pi_{k+1}^\QQ(X)\cong \pi_{k+1}^\QQ(Y)\oplus A$ as $\QQ[G]$-modules.
\end{lemma}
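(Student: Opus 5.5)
The plan is to pass to the homotopy fibre of $f$ and use the relative rational Hurewicz theorem for maps of simple spaces. Let $F=\hofib(f)$, or better the mapping cone pair $(\mathrm{Cyl}(f),X)$. Since $X$ and $Y$ are simple, rational homotopy theory of nilpotent spaces applies: after rationalising (which preserves $G$-equivariance up to homotopy, or simply by working with rational homotopy/homology groups as $\QQ[G]$-modules and naturality), the relative Hurewicz map $\pi_*^\QQ(Y,X)\to \H_*(Y,X;\QQ)$ is an isomorphism in the first nonvanishing degree and surjective in the next. This Hurewicz map is natural and so $G$-equivariant.

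First, from the long exact homology sequence of the pair and the hypotheses (iso for $*\le k$, surjective in degree $k+1$), we get $\H_*(Y,X;\QQ)=0$ for $*\le k+1$. The splitting hypothesis gives $\ker(\H_{k+1}(f;\QQ))\cong A$, and surjectivity in degree $k+2$ gives $\H_{k+2}(Y,X;\QQ)\cong \ker(\H_{k+1}(f;\QQ))\cong A$ as $\QQ[G]$-modules. By relative rational Hurewicz for simple spaces (a map of simple spaces inducing rational homology isomorphisms is a rational homotopy isomorphism, extended in the standard way to connectivity), $\pi_*^\QQ(Y,X)=0$ for $*\le k+1$ and $\pi_{k+2}^\QQ(Y,X)\cong \H_{k+2}(Y,X;\QQ)\cong A$.

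Next, the long exact homotopy sequence of the pair gives $\pi_*^\QQ(X)\cong\pi_*^\QQ(Y)$ for $*\le k$ and an exact sequence $A\to \pi_{k+1}^\QQ(X)\to\pi_{k+1}^\QQ(Y)\to 0$. To see that $A\to \pi_{k+1}^\QQ(X)$ is injective and the sequence splits, compare with homology: the composite $A\cong\pi_{k+2}^\QQ(Y,X)\to \pi_{k+1}^\QQ(X)\to \H_{k+1}(X;\QQ)$ equals the homology boundary $\H_{k+2}(Y,X;\QQ)\to\H_{k+1}(X;\QQ)$, which is the inclusion of $A$ as the complementary summand. Hence it is injective, and composing the Hurewicz map with the $G$-equivariant projection $\H_{k+1}(X;\QQ)\to A$ supplies a $\QQ[G]$-linear retraction $\pi_{k+1}^\QQ(X)\to A$. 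This yields $\pi_{k+1}^\QQ(X)\cong\pi_{k+1}^\QQ(Y)\oplus A$ equivariantly.

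The main obstacle is justifying the relative Hurewicz step for simple (rather than simply connected) spaces, especially when $k=1$ where $\pi_1$ is involved; I would handle this by passing to rationalisations of the simple spaces (nilpotent localisation) and applying the relative Hurewicz theorem for rational simple spaces, checking the pair is rationally $(k+1)$-connected via the Whitehead-type theorem for nilpotent spaces. Equivariance throughout follows from naturality of all the maps used.
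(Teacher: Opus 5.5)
The identification $\pi_{k+2}^\QQ(Y,X)\cong\H_{k+2}(Y,X;\QQ)\cong A$ in your argument is false in general. Your vanishing of $\pi_*^\QQ(Y,X)$ for $*\le k+1$ is fine: it follows from the mod-torsion Whitehead theorem for simple spaces applied to $f$. The problem is the next degree. For a $(k+1)$-connected pair with $\pi_1(X)\neq 1$, the relative Hurewicz theorem only identifies $\H_{k+2}(Y,X)$ with the coinvariants $\pi_{k+2}(Y,X)_{\pi_1(X)}$, i.e.\ the quotient by the elements $\gamma x-x$. Simplicity of $X$ and $Y$ makes this action unipotent but not trivial, and rationalising does not help.

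Here is a counterexample. For $k\geq 1$ take $G$ trivial, $X=S^1\times K(\QQ,k+1)$, $Y=S^1\times K(\QQ,k+2)$ and $f=(\mathrm{pr}_1,u\times\iota)$. Here $u\in\H^1(S^1;\ZZ)$ is a generator and $\iota$ is the fundamental class of $K(\QQ,k+1)$. The hypotheses hold with $A=\H_{k+1}(X;\QQ)\cong\QQ$:
(1) $f_*$ is an isomorphism on $\H_{\le k}(-;\QQ)$;
(2) $\H_{k+1}(Y;\QQ)=0$;
(3) $f_*$ is an isomorphism on $\H_{k+2}(-;\QQ)\cong\QQ$, since $f^*\iota_{k+2}=u\times\iota\neq 0$.
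Consequently $\H_{k+2}(Y,X;\QQ)\cong\QQ$. But $\pi_{k+2}(Y,X)\cong\pi_{k+1}(\hofib f)$ is an extension of $\pi_{k+1}(X)\cong\QQ$ by $\pi_{k+2}(Y)\cong\QQ$, so it is two-dimensional. The elements $\gamma x-x$ span the image of $\pi_{k+2}(Y)$. So the isomorphism $A\cong\pi^\QQ_{k+2}(Y,X)$ fails, and so does the injectivity of $\pi^\QQ_{k+2}(Y,X)\to\pi^\QQ_{k+1}(X)$ that you deduce from it.

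The repair stays inside your framework. The boundary map $\partial\colon\pi_{k+2}(Y,X)\to\pi_{k+1}(X)$ is $\pi_1(X)$-equivariant, and $\pi_1(X)$ acts trivially on $\pi_{k+1}(X)$ because $X$ is simple. So $\partial$ factors through $\pi_{k+2}(Y,X)_{\pi_1(X)}\cong\H_{k+2}(Y,X;\QQ)\cong A$ (after rationalising). Run your homology comparison with this quotient in place of $\pi^\QQ_{k+2}(Y,X)$. It shows that $\ker\pi^\QQ_{k+1}(f)$ maps isomorphically onto $A$ under $p\circ\mathrm{hur}$, where $p\colon\H_{k+1}(X;\QQ)\to A$ is the given projection. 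Hence $(\pi^\QQ_{k+1}(f),p\circ\mathrm{hur})$ is the required $\QQ[G]$-isomorphism. Equivariance only involves absolute groups, so basepoint issues for the relative groups never arise.

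This is the same comparison map the paper obtains, but by a different route. The paper realises $p$ by a map $\Phi\colon X\to K(A,k+1)$ and applies a Zeeman-type comparison theorem to $(f,\Phi)\colon X\to Y\times K(A,k+1)$. That packages everything into one map and avoids relative homotopy groups. However, it needs $(f,\Phi)_*$ to be onto $\H_{k+2}(Y\times K(A,k+1);\QQ)$. By Künneth this group also contains the summand $\H_1(Y;\QQ)\otimes A$, which the vanishing of $\H_{k+2}(K(A,k+1);\QQ)$ does not address. In the example above, $(f,\Phi)$ is surjective neither on $\H_{k+2}$ nor on $\pi^\QQ_{k+2}$, although the lemma's conclusion still holds. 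Your corrected argument only uses surjectivity of $\H_{k+2}(f;\QQ)$. It is therefore the more robust of the two when $\H_1(Y;\QQ)\neq 0$, which is the situation in both places the lemma is applied, where $\pi^\QQ_1(Y)\cong\QQ$.
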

\begin{proof}
    We start by proving the following subclaim: for any integer $i$ and a map $\phi:\H_n(X;\QQ)\to V$ of $\QQ$-vector spaces, there exists a map $\Phi:X\to K(V,n)$ such that $\phi=\text{hur}^{-1}\circ \H_n(\Phi;\QQ)$ where $\text{hur}^{-1}:\H_n(K(V,n);\QQ)\to \pi_n^\QQ(K(V,n))\cong V$ is the inverse of the rational Hurewicz map. To prove this, recall that the Kronecker pairing map $\kappa:\H^n(X;V)\to \Hom_\QQ(\H_n(X;\QQ),V)$ is an isomorphism, by the universal coefficient theorem. Moreover, a class in $\H^n(X;V)$ determines a unique homotopy class of maps $X\to K(V,n)$. Choose $\Phi$ to correspond to the class that maps to $\phi$ under $\kappa$. One sees that the induced map $\H_n(\Phi;\QQ)$ is precisely $\text{hur}\circ \phi$.   
    
    Using this subclaim, we can construct the map $F=(f,\Phi):X\to Y\times K(A,k+1)$ whose induced map on $\H_{k+1}(-;\QQ)$ witnesses the isomorphism $\H_{k+1}(X;\QQ)\cong\H_{k+1}(Y;\QQ)\oplus A$ in the statement. Thus, $F$ induces an isomorphism on $\H_*(-;\QQ)$ for $*\leq k+1$. Moreover, as $\H_{k+2}(K(A,k+1);\QQ)$ vanishes (since $k\geq 1$), the map $F$ induces a surjection on $\H_{k+2}(-;\QQ)$. On the other hand, as both source and target of $F$ are simple spaces, we deduce from \cite[Cor. 3.4]{zeemancomparison} for $C$ the class of torsion groups that $F$ is rationally $(k+2)$-connected.

    To deal with the $G$-equivariance, note that by construction of $F=(f,\Phi)$, and since $f$ itself is $G$-equivariant, it suffices to show that the map $\pi_{k+1}^{\QQ}(\Phi)$ is $G$-equivariant: this map fits in a commutative diagram
    \[
\begin{tikzcd}
    \pi_{k+1}^{\QQ}(X)\dar["\mathrm{hur}"]\ar[rr,"\pi_{k+1}^{\QQ}(\Phi)"]&& \pi_{k+1}^{\QQ}(K(A,k+1))\dar["\mathrm{hur}", "\cong"']\rar[equal, "a"] &V\\
    \H_{k+1}(X;\QQ)\ar[rr,"\H_{k+1}(\Phi;\QQ)"] && \H_{k+1}(K(A,k+1);\QQ).&
\end{tikzcd}
    \]
   By construction, \(a\circ \mathrm{hur}^{-1}\circ H_{k+1}(\Phi;\QQ)\) agrees with the projection \(H_{k+1}(X;\QQ)\to A\), which is \(G\)-equivariant by assumption; since both \(a\) and the Hurewicz homomorphism are \(G\)-equivariant, the claim follows.
\end{proof}

Let $\firstxg$ denote the $\QQ[\brho]$-module given by
\[
\firstxg \coloneq \colim_{g}\, \bigl[\sn(L) \oplus \coker(d : L \otimes M \to N)\bigr]_{\Embmcgsfr}
\]where $d$ is the map in \cref{HomologyUniversalCOver}. Observe also that conjugation by $\rho = \left(\begin{smallmatrix}-1 & 0 \\ 0 & 1\end{smallmatrix}\right)^{\oplus g}$ (with respect to the $\ZZ[\pi]$-basis $\{a_1,b_1,\dots, a_g,b_g\}$) on $\Ugextl$ induces a $\brho$-action on the colimit $\BUgmininf$. Equipped with this action, the standard map $\BEmbmcginfs \to \BUgmininf$ is easily seen to be $\brho$-equivariant.

\begin{teo}\label{HomotopyPlusConstruction}
    In the range $*\leq 2n-2$, we have an isomorphism of $\QQ[\brho]$-modules
    \[\pi_{*}^\QQ\left(\BEmbsfr(X_\infty)_\ell^+\right) \cong \pi_*^\QQ\left(\BUgmininf^+\right) \oplus\left\{ \begin{array}{cl}
             \firstxg & *=2n-2, \\[4pt]
             0 & \text{otherwise.}
        \end{array}\right.\]
\end{teo}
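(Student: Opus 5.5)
The plan is to apply \cref{plusLemma} to the map
\[
f\colon \BEmbsfr(X_\infty)_\ell^+\longrightarrow \BUgmininf^+,
\]
with $G=\brho$ and $k=2n-3$. Both spaces are simple: they are identity components of infinite loop spaces obtained by group completion, as recalled before \cref{stable mcg of xg is gw}. The map $f$ is $\brho$-equivariant by the remark preceding the statement. So the task reduces to computing the rational homology of $\BEmbsfr(X_g)_\ell$ in degrees $\leq 2n-1$ relative to $\BEmbmcgsfr$, stably in $g$.

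The first step is the Serre spectral sequence of the universal-cover fibration $\BEmbsfr(X_g)_\ell^\sim\to \BEmbsfr(X_g)_\ell\to \BEmbmcgsfr$, with
\[
E^2_{s,t}=\H_s\big(\Embmcgsfr;\widetilde{\H}_t(\BEmbsfr(X_g)_\ell^\sim;\QQ)\big),\qquad t\leq 2n-2,
\]
where the coefficients are given by \cref{HomologyUniversalCOver}. The $\mathrm{MCG}$-modules $L$ and $M$ should be gr-odd for the central element $\iota=\iota_X$, i.e.\ for $-\id$:
\begin{itemize}
\item $L\cong\bigoplus_{r>0}H_g$ by \cref{MCGrepBlockModEmb}, and $-\id$ acts by $-1$ on $H_g$.
\item $M$ is a submodule of $\pi_n^\QQ(X_g)\otimes_\pi\pi^\QQ_{2n-1}(X_g)$, where $\pi^\QQ_{2n-1}$ is a quadratic Lie word. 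Hence $-\id$ acts by $(-1)^3=-1$.
\end{itemize}
By \cref{center kills for odd}, every column with $t=n-1$ or $t=n$ vanishes entirely. The remaining coefficient module in the range is the $t=2n-2$ row, $\sn(L)\oplus\coker(d)$, whose $\H_0$ is exactly the coinvariants defining $\firstxg$. By \cref{tensors and homs of odd/even}, $\sn(L)$ is gr-even. For $N$, I would use \cref{MCGrepBlockModEmb} and \cref{HomotopyBEmbsfr}: $N$ is a quotient of $C\oplus\bigoplus(\sn(A_r)\oplus B_r)$, which is quadratic in $H_{X_g}$ and hence gr-even. So this row need not vanish, and only its $s=0$ part matters in total degree $\leq 2n-1$.

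Because the odd rows vanish identically, there are no differentials into or out of the $t=2n-2$, $s=0$ entry from the $t=n-1,n$ rows. The differentials from the bottom row $t=0$ land in total degree $\leq 2n-3$ of rows $\leq 2n-3$, which are zero. Thus $\H_*(\BEmbsfr(X_g)_\ell;\QQ)\to\H_*(\BEmbmcgsfr;\QQ)$ is an isomorphism for $*\leq 2n-3$. In degree $2n-2$ there is a split short exact sequence with extra term $[\sn(L)\oplus\coker d]_{\mathrm{MCG}}$; it splits because the projection to the base is surjective on $E^\infty_{2n-2,0}$, being a split retraction onto the bottom row in that degree. It also yields a surjection in degree $2n-1$, since the only other contributions there come from $E_{1,2n-2}$ and the odd rows. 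The $\brho$-equivariance of the splitting would be obtained by averaging over $C_2$, which is legitimate rationally.

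Next pass to the colimit over $g$, use that the plus construction preserves homology, and compose with \cref{stable mcg of xg is gw}, which identifies $\H_*(\BEmbmcginfs;\QQ)$ with $\H_*(\BUgmininf^+;\QQ)$. This gives exactly the hypotheses of \cref{plusLemma}. The extra surjectivity it needs in degree $k+2=2n-1$ comes from the surjectivity noted above.

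The main obstacle is the gr-odd/gr-even bookkeeping. One must check that $\iota_X$ acts on $L$ and $M$ as claimed through the identifications of \cref{MCGrepBlockModEmb} and \cref{HomotopyBEmbsfr}, including a sign coming from $\rho$ and the $-\tau_{\FLS}$ coinvariants. One must also check that no surviving differential, for instance $d^{n}\colon E_{n,n-1}\to E_{0,2n-2}$, disturbs the degree-$(2n-2)$ term; this should be immediate from the vanishing of the odd rows.
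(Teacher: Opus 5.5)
Your setup is the paper's: apply \cref{plusLemma} with $k=2n-3$, and kill the rows $t=n-1,n$ of the Serre spectral sequence because $L$ and $M$ are gr-odd. But there is a genuine gap at the one non-formal step. Since the rows $1\le t\le 2n-3$ vanish, the only differential that can matter in total degrees $2n-2$ and $2n-1$ is the transgression
\[
d^{2n-1}\colon E^{2n-1}_{2n-1,0}=\H_{2n-1}(\BEmbmcginfs;\QQ)\longrightarrow E^{2n-1}_{0,2n-2}=\firstxg .
\]
Nothing in your argument rules it out. Your remark on differentials from the bottom row only covers sources of total degree at most $2n-2$. The differential you flag at the end ($d^n$ out of $E_{n,n-1}$) starts in a row that is already zero.

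Surjectivity in degree $2n-1$ onto the homology of the base is \emph{equivalent} to $E^\infty_{2n-1,0}=E^2_{2n-1,0}$, i.e.\ to the vanishing of this transgression; the entry $E_{1,2n-2}$ plays no role. Likewise, the kernel in degree $2n-2$ is $\firstxg/\operatorname{im}(d^{2n-1})$ rather than $\firstxg$. So the two hypotheses of \cref{plusLemma} that go beyond degree $k$ both rest on an unproved claim: surjectivity in degree $k+2=2n-1$, and the identification of the degree-$(k+1)$ kernel with $A=\firstxg$. The claim is not vacuous, because $\H_{2n-1}(\BUgmininf^+;\QQ)$ need not vanish. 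For $n=3$ it contains the product of the Hurewicz images of the classes in degrees $2$ and $3$ from \cref{computation of GW}.

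The missing input, which is how the paper closes this, is multiplicative. After group completion, $\BEmbsfr(X_\infty)_\ell^+\to(\BEmbmcginfs)^+$ is an $\EE_1$-map of connected $\EE_1$-spaces. By Milnor--Moore, the rational homology of the target is therefore generated as an algebra by the Hurewicz image of its rational homotopy. By \cref{stable mcg of xg is gw} and \cref{computation of GW}, $\pi^\QQ_{2n-2}$ and $\pi^\QQ_{2n-1}$ of the target vanish, since $4n-2,4n-1\not\equiv 0,1 \bmod 4$. Hence its rational homology in degrees $2n-2$ and $2n-1$ consists of sums of products of classes of degree at most $2n-3$, all of which are hit by the isomorphism you established in that range. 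So the map is surjective in degree $2n-1$, giving $E^\infty_{2n-1,0}=E^2_{2n-1,0}$. The transgression therefore vanishes, and the kernel in degree $2n-2$ is all of $\firstxg$. The $\brho$-equivariant splitting then follows by averaging, as you say.
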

\begin{proof}
    We want to use \cref{plusLemma} for the map of simple spaces $r:\BEmbsfr(X_\infty)_\ell^+\to \BUgmininf^+$, with $A=\firstxg$. To do so, first show that $r$ induces an isomorphism on $\H_*(-;\QQ)$ for $*\leq 2n-3$ and a surjection for $*=2n-2,2n-1.$ By definition, $r$ factors as $\BEmbsfr(X_\infty)_\ell^+\to (\BEmbmcginfs)^+\to \BUgmininf^+$, and the second map is a rational equivalence by \cref{stable mcg of xg is gw}. This reduces to showing that the first map induces an isomorphism on $\H_*(-;\QQ)$ for $*\leq 2n-3$ and a surjection for $*=2n-2,2n-1.$ Consider the Serre spectral sequence with rational coefficients for the universal cover fibre sequence
    \begin{equation}\label{universal cover fibre sequence for BEmb ifn}
        \BEmbsfr(X_\infty)_\ell^\sim\to \BEmbsfr(X_\infty)_\ell\to \BEmbmcginfs.
    \end{equation}Observe that the $\Embmcgsfr$-modules $L$ and $M$ from \cref{l m n notation} are gr-odd in the sense of \cref{gr odd modules}: recall that $L$ is a direct sum of $H_g$, which is odd, and $M$ is a submodule of $\pixq\otimes_\pi\pi_{2n-1}^\QQ(X_g)$, which is a tensor product of an odd module with an even module, hence odd by \cref{tensors and homs of odd/even}. We conclude from \cref{center kills for odd} that the homology groups of $\Embmcgsfr$ with coefficients in $L$ or $M$ vanish in all degrees. Using \cref{HomologyUniversalCOver}, we can then compute the groups $E^2_{s,t}$ of the spectral sequence of \eqref{universal cover fibre sequence for BEmb ifn} for $t\leq 2n-3$ in the following way: for $t=0$, then $E^2_{s,0}\cong \H_s(\BEmbmcginfs;\QQ)$. For $t=n-1$, we have $E^2_{s,n-1}$ is the homology of $\BEmbmcginfs$ in degree $s$ with coefficients in the colimit of the $\Embmcgsfr$-modules $L$ over $g$, which vanishes as it is the colimit of the groups $\H_s(\Embmcgsfr;L)$, which vanishes. Similarly, we see that $E_{s,n}^2$ vanishes for all $s\geq 0$. Otherwise, we see that $E^2_{s,t}$ vanishes, since $\smash{\widetilde{\H}_{*}\left(\BEmbsfr(X_g)_\ell^\sim;\QQ\right)}$ vanishes by \cref{HomologyUniversalCOver}. We conclude that for $*\leq 2n-3$, the map $\H_*(\BEmbsfr(X_\infty)_\ell;\QQ)\to \H_*(\BEmbmcginfs;\QQ)$ is an isomorphism, hence also for the induced map after plus-construction as the homology remains unchanged. 
    
    We argue now that the latter is surjective for $*=2n-2,2n-1,$ which uses the explicit computation of $\pi_*^\QQ((\BEmbmcginfs)^+)$ from \cite[1677]{BRW} as an input. Recall that the spaces $\smash{\BEmbsfr(X_\infty)^+_\ell}$ and $\smash{(\BEmbmcgsfr)^+}$ are equivalent to the identity component of the group completion of the monoids given by the disjoint unions of the spaces $\{\BEmbsfr(X_g)_\ell\}_{g\geq 0}$ and $\{\BEmbmcgsfr\}_{g\geq 0}$, respectively---in particular, they are path-connected. Thus, the map $\pi:\BEmbsfr(X_\infty)^+_\ell\to(\BEmbmcginfs)^+$ is equivalent to an $\EE_1$-map of connected $\EE_1$-spaces. However, by the Milnor-Moore theorem, we know that the $\QQ$-algebra $\H_*(\mathbb{X};\QQ)$ of a connected $\EE_1$-space $\mathbb{X}$ is isomorphic to the universal enveloping algebra of the Lie algebra $\pi_*^{\QQ}(\mathbb{X})$, induced by the $\EE_1$-structure on $\mathbb{X}.$ Thus, we deduce that for any $k\geq 0$, the vector space $\H_k(\mathbb{X};\QQ)$ is generated by products of classes in $\H_l(\mathbb{X};\QQ)$ for $l<k$ and $\pi_{k}^\QQ(\mathbb{X}).$ From \cite[1677]{BRW} (together with \cref{stable mcg of xg is gw}), it follows that $\pi_*((\BEmbmcginfs)^+)$ vanishes for $*=2n-2,2n-1$. Thus, the vector spaces $\H_{*}((\BEmbmcginfs)^+;\QQ)$ for $*\leq 2n-2,2n-1$ are generated by products of classes in degrees $l\leq 2n-3$. However, those are hit by the map $\H_l(\pi;\QQ)$ by the discussion above (and the fact that $\pi$ is an $\EE_1$-map). Thus, we conclude that the map $\H_*(\pi;\QQ)$ is surjective for $*\leq 2n-1.$

    We finish by showing that the kernel of the map $\H_{2n-2}(\pi;\QQ)$ is isomorphic to $\firstxg$, which establishes the whole claim by applying \cref{plusLemma}. Let us return to the Serre spectral sequence of \eqref{universal cover fibre sequence for BEmb ifn}. Observe that, by \cref{HomologyUniversalCOver}, $E^2_{0,2n-2}$ is isomorphic to $\firstxg$. In degree $2n-2$, there is only one possible non-trivial differential hitting $E^2_{0,2n-2}$, namely the $d^{2n-1}$ from $E^2_{2n-1,0}$. However, the surjective map $\H_{2n-1}(\BEmbsfr(X_\infty)_\ell;\QQ)\to \H_{2n-1}(\BEmbmcginfs;\QQ)\cong E^2_{2n-1,0}$ factors through the edge homomorphism $ E^\infty_{2n-1,0}\hookrightarrow E^2_{2n-1,0}.$ Hence the latter is surjective and thus an isomorphism. We conclude that there are no non-trivial differentials out of $E^2_{2n-1,0}.$ From that, we see that the kernel of $\H_{2n-2}(\pi;\QQ)$ is isomorphic to $E^{2}_{0,2n-2}$ and hence, isomorphic to $\firstxg$. Thus, we obtain an extension of $\QQ[\brho]$-modules
    \[
\begin{tikzcd}
    0\rar &\firstxg\rar & \H_{2n-2}(\BEmbsfr(X_\infty)_\ell^+;\QQ)\ar[rr, two heads, "\H_{2n-2}(\pi;\QQ)"] && \H_{2n-2}((\BEmbmcginfs)^+;\QQ)\rar & 0,
\end{tikzcd}
    \]
    which, by Maschke's theorem, splits since $\brho$ is a finite group. Moreover, this splitting can be chosen of the form $\H_{2n-2}(\BEmbsfr(X_\infty)_\ell^+;\QQ)\cong \H_{2n-2}((\BEmbmcginfs)^+;\QQ)\oplus \firstxg$, where the map to the first factor is $\H_{2n-2}(\pi;\QQ)$, as required in the assumption of \cref{plusLemma}. This finishes this proof.
\end{proof}

\section{Self-embedding spaces of \texorpdfstring{$(\yg;\xg)$}{(Yg;Xg)}} \label{SelfEmbeddingYgSection}

In this section, we study the rational homotopy type of $\BEmbsfr(\yg;\xg)_\ell$ in degrees $*\leq 2n-2$ (cf. \cref{HomotopyBEmbsfrY}), and hence that of the plus construction $\BEmbsfr(Y_\infty;X_\infty)^+_\ell$ in \cref{HomotopyPlusConstructionY}. We end with \cref{mainInputWatanabe}, which describes some features of $\BEmbsfr(Y_\infty;X_\infty)^+_\ell\to \BEmbsfr(X_\infty)^+_\ell$ on $\smash{\pi_{2n-2}^{\QQ}}$.

The approach we take to understand the rational homotopy groups of $\BEmbsfr(\yg;\xg)_\ell$ is completely analogous to that in the preceding section for $\BEmbsfr(\xg)_\ell$. Namely, we consider the fibre sequence
\begin{equation}\label{BEmbsfrFibreSeqYg}
    \begin{tikzcd}
       \bEmbmodEmb(\yg; X_g) \rar & \BEmbsfr(\yg;X_g)_\ell\rar["\iota"] &\BbEmbsfr(\yg;X_g)_\ell,
    \end{tikzcd}
\end{equation}
understand the left and right terms individually, and later worry about the connecting maps. For the left-hand term, we again appeal to the pseudoisotopy theory reviewed in \cref{RecollectionWWSection}. The splitting result \cref{splittingprop} reduces the analysis of its rational homotopy groups as representations of $\mcgRhoYg$, where $\mcgy\coloneq \pi_1(\BEmbsfr(\yg;X_g)_\ell)$ and $\rho$ is the reflection involution of \cref{ReflectionInvSection}, to the work already carried out in \cref{mcgXgsection}. This yields \cref{MCGactionYg}. In \cref{HomotopyOfEmbeddingsYgSection} we analyse the connecting maps in the long exact sequence of \eqref{BEmbsfrFibreSeqYg}, which requires understanding the Frobenius lifting maps on both base and fibre---the latter is treated in \cref{FrobeniusYgSection}.

\subsection{Pseudoisotopy self-embeddings of \texorpdfstring{$(Y_g;X_g)$}{(Yg,Xg)}} \label{PseudoisotopyEmbYgXgSection}

We dedicate this section to the computation of the rational homotopy groups of the pseudoisotopy self-embedding space $\bEmbmodEmb(Y_g;X_g)$ in degrees approximately $*\leq 2n-2$, including their description as $\QQ[\mcgRhoYgsfr]$-modules. Note that, as in the case of $X_g$, there is a map of $\brho$-equivariant fibre sequences
\[
\begin{tikzcd}
    \bEmbmodEmb(Y_g;X_g)\dar[equal]\rar&\BEmbsfr(Y_g;X_g)_\ell\rar\dar&\BbEmbsfr(Y_g;X_g)_\ell\dar\\
    \bEmbmodEmb(Y_g;X_g)\rar &\BEmbcong(Y_g;X_g)\rar &\BbEmbcong(Y_g;X_g),
\end{tikzcd}
\]
and hence the action of $\mcgRhoYgsfr$ on the homotopy groups of the fibre factors through the group homomorphism $\mcgRhoYgsfr\to \mcgRhoYg$, where $\mcgynosfr\coloneq \pi_1(\BEmbcong(Y_g;X_g))$, so we restrict ourselves to describing the action of the latter semidirect product.

Fix a self-embedding of $4$-ads $\iota\in \Embcong(Y_g;X_g)$ extending $\iota: X_g\hookrightarrow X_g$ as fixed in \cref{PlanpEmbXgSection}, and write $\pEmb(Y_g;X_g)$ for the homotopy fibre of the map $\Embcong(Y_g;X_g)\to \bEmbcong(Y_g;X_g)$ at $\iota$. It readily follows that there is an equivalence
\[
\Omega\bEmbmodEmb(Y_g;X_g)\simeq \pEmb(Y_g;X_g).
\]
Given $([\phi], \rho^i)\in \mcgRhoYg$, represent $[\phi]$ by a diffeomorphism $\phi$ of $Y_g$ fixing $\partial^vY_g=\partial_0Y_g\cup \partial_1 \yg$, and let $\psi\coloneq \phi \rho^i$. Conjugation by $\psi$ gives rise to an action of $\mcgRhoYg$ on the homotopy groups of $\pEmb(Y_g;X_g)$ similar to (and compatible with) the one in \cref{bEmbmodEmbAction}. Moreover, by an argument analogous to the one in \cref{pEmbvsbEmbmodEmbMCGActionXg}, it follows that this action recovers the one on the homotopy groups of $\bEmbmodEmb(Y_g;X_g)$ under the above equivalence. Moreover, this latter space is connected, for it fits in a fibre sequence
\[
\begin{tikzcd}
    \bEmbymodEmby(\yg) \rar & \bEmbmodEmb(\yg;\xg)\rar["\partial_2"] &\bEmbmodEmb(\xg),
\end{tikzcd}
\]
where both base and fibre are connected by Hudson's theorem. Consequently, we only need to describe the $\QQ[\mcgRhoYg]$-modules $\pi_*^{\QQ}(\pEmb(Y_g;X_g))$ (under the conjugation action just described).

To this end, looping the above fibre sequence yields another fibre sequence
\begin{equation}\label{PseudoisoFibreSeqYgXg}
\begin{tikzcd}
       \pEmby(\yg) \rar["j"] & \pEmb(\yg;\xg)\rar["\partial_2"] &\pEmb(\xg),
\end{tikzcd}    
\end{equation}
where the homotopy groups of the base and fibre admit natural actions of $\mcgRhoYg$ as follows: on the base, it is given by restricting the action of $\mcgRhoXg$ from \cref{bEmbmodEmbAction} along the homomorphism $\partial_2: \mcgRhoYg\to \mcgRhoXg$. On the fibre, it arises from the observation that conjugation by $\psi=\phi\rho^i$ still makes sense on the spaces $\Emb_{\partial_+}(Y_g)$ and $\bEmb_{\partial_+}(Y_g)$, and hence on the homotopy fibre space $\pEmby(Y_g)$. Both of these actions make the maps $j$ and $\partial_2$ in \eqref{PseudoisoFibreSeqYgXg} $\mcgRhoYg$-equivariant on homotopy groups.

\begin{prop}\label{splittingprop}
    In degrees $*\leq 3n-6$, there is a map
    \[
    \psigma: \pi_*^{\QQ}\left(\pEmb(X_g)\right)\longrightarrow \pi_*^{\QQ}\left(\pEmb(Y_g;X_g)\right)
    \]
    which is a $\mcgRhoYg$-equivariant section of $\pi_*^{\QQ}(\partial_2)$. In particular, there is an isomorphism of $\QQ[\mcgRhoYg]$-modules
    \[
    \pi_*^{\QQ}(j)+\psigma: \pi_*^{\QQ}\left(\pEmby(Y_g)\right)\oplus \pi_*^{\QQ}\left(\pEmb(X_g)\right)\xrightarrow{\cong} \pi_*^{\QQ}\left(\pEmb(Y_g;X_g)\right).
    \]
\end{prop}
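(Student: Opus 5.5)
The plan is to construct the section $\psigma$ geometrically, using the models of both sides provided by the Weiss--Williams type maps of \cref{RecollectionWWSection}. First we identify both $\pi_*^{\QQ}(\pEmb(X_g))$ and $\pi_*^{\QQ}(\pEmb(Y_g;X_g))$, in degrees $*\leq 3n-6$, with $\iota_{\H}$-coinvariants of the rational homotopy of appropriate concordance embedding spaces. For $X_g$ this is \eqref{iotaCeq} via the map $\alpha$ of \eqref{CEvsPseudoisotopyMap}. For $(Y_g;X_g)$ we run the same construction relative to $\partial_2$: this gives a map $\alpha\colon \Omega\CEmbo(Y_g;X_g)\to\Omega\pEmb(Y_g;X_g)$ compatible with restriction to $\partial_2$. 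By \cref{ActionCompatibilityProp}, applied verbatim, it is $\mcgRhoYg$-equivariant for conjugation by $\psi=\phi\rho^i$.

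The key geometric input is a \emph{collar extension} map $\sigma\colon \CEmbo(X_g)\to\CEmbo(Y_g;X_g)$. A concordance embedding of $X_g\times I$ into itself is extended over a collar $X_g\times[0,1]\subset Y_g$ of $\partial_2 Y_g$, interpolating to the identity on the rest of $Y_g$. Equivalently, we view $Y_g$ as $X_g\times[0,1]$ with handles attached away from $X_g\times\{0\}$, and extend by the identity. By construction, $\partial_2\circ\sigma$ is the identity, $\sigma$ commutes with conjugation by any $\psi$ that preserves the collar (and $\phi$ may be chosen to do so), and $\sigma$ commutes with the geometric model of $\iota_{\H}$ from \cref{AppendixCEhCobInv}. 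An analogous extension on the block and ordinary levels, compatible with $\alpha$, then induces $\psigma$ on the coinvariants. Equivariance and the section property hold because $\alpha$ is surjective in this range by \cref{IntermediateBoringAlphaProp} and all the relevant maps commute with it. In degrees $*\leq 3n-6$, where $\pi_*^\QQ(\alpha)$ is surjective, well-definedness follows by diagram chasing; the needed square commutes because $\partial_2\circ\alpha=\alpha\circ\partial_2$. The restriction $\partial_2$ is split on homotopy, so the long exact sequence of \eqref{PseudoisoFibreSeqYgXg} breaks into split short exact sequences. This yields the isomorphism $\pi_*^\QQ(j)+\psigma$, which is equivariant since $j$, $\partial_2$ and $\psigma$ all are.

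The main obstacle is making the collar extension compatible with the identification of $\pEmb$ via $\alpha$. The map $\sigma$ must in fact fit into a map between the diagrams \eqref{CEvsPseudoisotopyMap} for $X_g$ and for $(Y_g;X_g)$, including the graphing maps $\Gamma$ and their null-homotopies. An alternative, if this is too fiddly, is to define $\psigma$ directly on pseudoisotopy embeddings by extending self-embeddings of $X_g$ over a collar, a bounded operation on both ordinary and block embeddings. A further concern is that the action of $\psi$ must preserve the collar only up to isotopy; Hudson's theorem should let us isotope $\phi$ to be a product on the collar.
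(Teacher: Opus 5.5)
Your collar-extension map $\sigma$ (with $\partial_2\circ\sigma=\Id$, equivariant for representatives that are products on the collar) is exactly the geometric input the paper uses. However, the way you pass from $\sigma$ to $\psigma$ has a genuine gap.

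\textbf{The well-definedness step fails.} You set $\psigma(\alpha_X(x))\coloneq\alpha_Y(\sigma(x))$ and claim the diagram chase with $\partial_2\circ\alpha_Y=\alpha_X\circ\partial_2$ gives well-definedness. It does not. If $\alpha_X(x)=0$, that square only yields $\partial_2(\alpha_Y\sigma(x))=\alpha_X(x)=0$, i.e.\ $\alpha_Y\sigma(x)\in\operatorname{im}\pi_*^{\QQ}(j)$, not $\alpha_Y\sigma(x)=0$. By \cref{IntermediateBoringAlphaProp}, $\ker\pi_*^{\QQ}(\alpha_X)$ is the $(-1)$-eigenspace of $\iota_{\H}$. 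So you would need $\alpha_Y$ to kill the $(-1)$-eigenspace of some $h$-cobordism involution on $\CEmbo(Y_g;X_g)$ for which $\sigma$ is equivariant. That is a relative ($4$-ad) analogue of the embedding Weiss--Williams theorem and of \cref{IntermediateBoringAlphaProp}, which your first sentence tacitly assumes. It is not available: the model in \cref{AppendixCEhCobInv} only treats codimension-zero $P\subset M$, and the paper mentions the relative statement in a remark without proving it.

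\textbf{The fallback does not work naively either.} For $\sigma$ to be part of a map of the diagrams \eqref{CEvsPseudoisotopyMap}, you would need an induced map on the middle terms $\Omega\Embo$. But the top face of $\sigma(\varphi)$ on the collar depends on the whole concordance $\varphi$, not only on $\mathrm{res}_1(\varphi)$, so no such map exists. Similarly, extending a self-embedding of $X_g$ over a collar needs a coherent choice of concordance to the identity in families, which is exactly the unaddressed issue.

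\textbf{The missing idea.} Do not descend along the quotient $\alpha_X$. Instead, precompose with an explicit equivariant right inverse of $\pi_*^{\QQ}(\alpha_X)$.
\begin{itemize}
\item Let $\lambda\colon\pi_*^{\QQ}(\CEmbo(X_g))\cong\pi_*^{\QQ}(\CEspo(X_g))$ be the stabilisation isomorphism.
\item $\Phi^{\Emb}_{X_g}$ is a rational isomorphism in degrees $\leq 3n-6$, and $\Phi^{\Emb}_{X_g}\circ\alpha_X=q\circ\lambda$ by \cref{IntermediateBoringAlphaProp}.
\item The norm map $N$ is rationally a section of $q$ up to the unit $2$.
\item Hence $\lambda^{-1}\circ N\circ\Phi^{\Emb}_{X_g}$ is (up to $2$) a right inverse of $\alpha_X$.
\item It is $\mcgRhoYg$-equivariant, because the action on $\pi_*^{\QQ}(\CEspo(X_g))$ commutes with $\iota_{\H}$ (\cref{CEmbAction}, \cref{BoringAlphaProp}).
\end{itemize}
Now define $\psigma\coloneq\alpha_Y\circ\sigma\circ\lambda^{-1}\circ N\circ\Phi^{\Emb}_{X_g}$. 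This is visibly well defined and equivariant, and $\partial_2\circ\psigma=\alpha_X\circ\lambda^{-1}\circ N\circ\Phi^{\Emb}_{X_g}=\Id$. This route needs no involution on $\CEmbo(Y_g;X_g)$, no relative Weiss--Williams theorem, and no compatibility of $\sigma$ with the graphing maps. Once $\psigma$ is defined this way, the rest of your argument goes through: splitting the long exact sequence of \eqref{PseudoisoFibreSeqYgXg} and equivariance of $j$ and $\partial_2$.
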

\begin{rmk}
    This proposition also holds $\ZZ[\tfrac{1}{2}]$-locally---replace $\pi_*^{\QQ}(-)$ by $\pi_*(-)\left[\tfrac{1}{2}\right]$ in the proof below. In fact, an even more general statement is true: inverting $2$, there is an equivalence of truncated spaces
    \[
    \tau_{\leq 3n-6} \big(\pEmb(Y_g;X_g)\big)\simeq_{[\frac{1}{2}]} \tau_{\leq 3n-6} \big(\pEmby(Y_g) \times \pEmb(X_g)\big).
    \]
    This is a consequence of an analogue of the embedding Weiss--Williams theorem \eqref{WWCEMap} for relative embedding spaces, and the following general principle in orthogonal calculus: given an orthogonal functor $F: \cJ\to \Spc$, the first orthogonal derivative of the functor $F^{(1)}(-)\coloneq \hofib(F(-)\to F(-\oplus \RR))$ is equivalent to $\smash{\operatorname{Ind}_{e}^{\O(1)}\operatorname{Res}^{\O(1)}_{e}\Theta F^{(1)}\simeq_{[\frac{1}{2}]} \Theta F^{(1)}\oplus \Sigma^{\sigma-1}\Theta F^{(1)}}$. We will not need this, so we omit the details.
\end{rmk}

Before proceeding with the proof, let us introduce the \emph{relative concordance embedding space} $\CEmbo(Y_g;X_g)$, consisting of embeddings $\varphi: (Y_g;X_g)\times[0,1]\hookrightarrow (Y_g;X_g)\times [0,1]$ such that
\begin{itemize}
    \item[(i)] $\varphi^{-1}(Y_g\times \{i\})=Y_g\times \{i\}$ for $i=0,1$,
    \item[(ii)] $\varphi$ agrees with the identity on a neighbourhood of $Y_g\times\{0\}\cup \partial_0 Y_g\times [0,1]$, and
    \item[(iii)] $\varphi(X_g\times [0,1])\subset X_g\times [0,1]$.
\end{itemize}
Restricting such an embedding to $X_g\times [0,1]$ gives a map $\partial_2: \CEmbo(Y_g;X_g)\to \CEmbo(X_g)$. Moreover, there is another map $\alpha: \Omega \CEmbo(Y_g;X_g)\to \Omega\pEmb(Y_g;X_g)$ defined as the map induced on horizontal homotopy fibres by the right commutative subsquare in
\[
\begin{tikzcd}
\Omega\CEmbo(Y_g;X_g)\rar["\mathrm{res}_1"]\ar[dd, "\alpha"] & \Omega\Emb_{\partial_0}(Y_g;X_g)\ar[dd, equal]\rar["\Gamma"]&  \Emb_{\partial_0}(Y_g\times I;X_g\times I)\dar["\beta"]\\
&&\bEmbo(Y_g\times I;X_g\times I)\\
    \Omega\pEmb(Y_g;X_g)\rar["i"] & \Omega \Embo(Y_g;X_g)\rar["\beta"] &\Omega \bEmb_{\partial_0}(Y_g;X_g).\ar[u, "\widetilde{\Gamma}"', "\vsim"]
\end{tikzcd}
\]
There is a conjugation $\mcgRhoYg$-action on the homotopy groups of $\CEmbo(Y_g;X_g)$, and just like in \cref{ActionCompatibilityProp}, one easily verifies that $\alpha$ is $\mcgRhoYg$-equivariant for these actions.

\begin{proof}[Proof of \cref{splittingprop}]
    In this proof, every $\mcgRhoXg$-representation is regarded as a $\mcgRhoYg$-representation by restriction along $\mcgRhoYg\to\mcgRhoXg$. We first describe a space-level section of $\partial_2: \CEmbo(Y_g;X_g)\to \CEmbo(X_g)$ which is $\mcgRhoYg$-equivariant on homotopy groups for the conjugation action just described on the domain, and for the restricted conjugation $\mcgRhoXg$-action on the codomain described in \cref{CEmbAction}. For this, observe that the $4$-ad $(Y_g, \partial_0 Y_g, \partial_1 Y_g, \partial_2 Y_g)=(Y_g, S^1\times D^{2n-1}_+, S^1\times D^{2n-1}_-, X_g)$ is diffeomorphic to a $4$-ad on $\overline{Y}_g\coloneq Y_g\cup_{X_g\times \{0\}}X_g\times [0,1]$ given by
    \[
    \partial_0 \overline{Y}_g\coloneq S^1\times D^{2n-1}_+\cup \partial_{0}X_g\times [0,1], \quad  \partial_1 \overline{Y}_g\coloneq S^1\times D^{2n-1}_-\cup\partial_{1}X_g\times [0,1],\quad \partial_2 \overline{Y}_g\coloneq X_g\times \{1\}.
    \]
    (One has to smooth corners of $\overline{Y}_g$ appropriately, but let us ignore this issue.) Then, the section
    \[
    \sigma: \CEmbo(X_g)\longrightarrow \CEmbo(\overline{Y}_g;X_g)\simeq \CEmbo(Y_g;X_g)
    \]
    can be described as follows: given a concordance embedding $\varphi\in \CEmbo(X_g)$, i.e. a self-embedding $\varphi: X_g\times I\hookrightarrow X\times I$ such that $\varphi$ is the identity on $X_g\times \{0\}\cup \partial_0 X_g\times I$, left-stabilisation gives a self-embedding $\Sigma_\ell(\varphi): X_g\times [0,1]\times I\hookrightarrow X_g\times [0,1]\times I$ such that
    \begin{itemize}
        \item $\Sigma_\ell(\varphi)$ is pointwise fixed on $(X_g\times [0,1]\times \{0\})\cup (X_g\times \{0\}\times I)\cup (\partial_0 X_g\times [0,1]\times I)$, and
        \item $\Sigma_\ell(\varphi)\mid_{X_g\times \{1\}\times I}=\varphi$.
    \end{itemize}
    Thus, extending $\Sigma_\ell(\varphi)$ by the identity on $Y_g\subset \overline{Y_g}$ gives a point $\sigma(\varphi)\in \CEmbo(\overline{Y}_g;X_g)$. Moreover, by construction, we get that $r\circ\sigma(\varphi)\coloneq\sigma(\varphi)\mid_{X_g\times \{1\}\times I}=\varphi$, i.e., $\sigma$ is a right section of $\rho$ as desired. 

    To see that $\sigma_*$ is $\mcgRhoYg$-equivariant, observe that given $([\phi],\rho^i)\in\mcgRhoYg$, we can represent $[\phi]$ by some diffeomorphism $\phi$ on $\overline{Y_g}$ which, when restricted to $X_g\times [0,1]\subset \overline{Y}_g$, is given by a product diffeomorphism $\xi\times \Id_{[0,1]}$ for some diffeomorphism $\xi\in \Diff_{\partial}(X_g)$. Moreover, the involution $\rho$ of $Y_g$ extends to $\overline{Y}_g$ by setting it to be $\rho\mid_{X_g}\times \Id_{[0,1]}$ on $X_g\times [0,1]\subset\overline{Y}_g$. Then, since $\sigma(\varphi)$ is only supported on $X_g\times [0,1]\times I$, region on which $\phi\rho^i\times \Id_I$ is given by $\xi\rho^i\mid_{X_g}\times \Id_{[0,1]\times I}=\Sigma_\ell(\xi\rho^i\mid_{X_g}\times \Id_{I})$, it follows that conjugation with $\phi\rho^i\times \Id_{I}$ commutes with $\sigma$, establishing $\mcgRhoYg$-equivariance.

   Let us write $\lambda$ for the $\mcgRhoXg$-equivariant isomorphism
    \[
    \begin{tikzcd}
    \lambda:\pi_*^{\QQ}(\CEmbo(X_g))\rar["\mathrm{stab.}", "\cong"'] &\pi_*^{\QQ}(\sCE(X_g))\rar[dash, "\eqref{StableCEmbIETSeq}","\cong"'] & \pi_*^{\QQ}(\CEspo(X_g)), \quad *\leq 3n-6.
    \end{tikzcd}
    \]
    Since both $\pEmb(Y_g;X_g)$ and $\pEmb(X_g)$ are connected, we only need to describe the splitting map $\psigma$ of the statement in degrees $1\leq *\leq 3n-6$. In such degrees, it is given by the composition
    \begin{equation}\label{sigmasimsplitting}
    \begin{tikzcd}
        \pi_*^{\QQ}(\pEmb(X_g))\rar["\Phi^{\Emb}_{X_g}"]\dar[dashed, "\psigma"] & \pi_*^{\QQ}(\CEspo(X_g)_{hC_2})\rar["N"] & \pi_*^{\QQ}(\CEspo(X_g))\dar["\cong"', "\lambda^{-1}"]\\
        \pi_*^{\QQ}(\pEmb(Y_g;X_g)) & \pi_*^{\QQ}(\CEmbo(Y_g;X_g))\lar["\alpha"'] & \pi_*(\CEmbo(X_g))\lar["\sigma"'] 
    \end{tikzcd}
    \end{equation}
    where $N: A_{hC_2}\to A^{hC_2}\to X$ stands for the norm map of a $C_2$-spectrum $A$. All of the homomorphisms in this composition are $\mcgRhoYg$-equivariant: $\smash{\Phi^{\Emb}_{X_g}}$ by \cref{BoringAlphaProp}, the norm map $N$ because the $\mcgRhoXg$-action on $\smash{\pi_*^{\QQ}(\CEspo(X_g))}$ commutes with the $h$-cobordism involution $\iota_{\H}$ (as explained in \cref{CEmbAction}), the isomorphism $\lambda$ by definition, $\sigma$ by the argument above, and $\alpha$ by construction. To conclude, $\sigma^{(\sim)}$ is indeed a section of $\partial_2: \pi_*^{\QQ}(\pEmb(Y_g; X_g))\to \pi_*^{\QQ}(\pEmb(X_g))$ since
    \[
    \partial_2\circ \alpha\circ\sigma\circ \lambda^{-1}\circ N\circ \Phi^{\Emb}_{X_g}=\alpha\circ \lambda^{-1}\circ N\circ \Phi_{X_g}^{\Emb}=\Id,
    \]
    where, in the first equality we are using that $\partial_2$ and $\alpha$ commute by construction, and in the second equality we invoke the commutativity of the square in \cref{IntermediateBoringAlphaProp} and the fact that, $\ZZ[\tfrac{1}{2}]$-locally, the norm map $N: A_{hC_2}\to A$ is a section of the quotient map $q: A\to A_{hC_2}$.
\end{proof}

Therefore, for $*\leq 3n-5$, we only need to understand the $\QQ[\mcgRhoYg]$-modules
\[
\pi_*^{\QQ}\big(\bEmbymodEmby(Y_g)\big)\cong \left[\frac{\H^{S^1}_{*}(LY_g, LS^1; \QQ)}{\H_{*+1}(Y_g,S^1;\QQ)}\right]_{\tau_{\FLS}}, \quad \pi_*^{\QQ}\big(\bEmbmodEmb(X_g)\big)\cong \left[\frac{\H^{S^1}_{*}(LX_g, LS^1; \QQ)}{\H_{*+1}(X_g,S^1;\QQ)}\right]_{-\tau_{\FLS}},
\]
where both of these isomorphisms follow from \cref{involutionlemma} (notice the absence of a minus sign in front of $\tau_{\FLS}$ on the right). The $\QQ[\mcgRhoXg]$-module on the right-hand side of the right isomorphism was identified in \cref{ActionTheorem} to be induced by naturality on the pair $(X_g,S^1)$. Exactly the same analysis applies to the isomorphism on the left:

\begin{prop}\label{ActionThmYg}
    The action of $\mcgRhoYgsfr$ on the homotopy groups of the space $\bEmbymodEmby(Y_g)$ factors through the homomorphism $\mcgRhoYgsfr\to \mcgRhoYg$. Under the isomorphism 
\begin{equation}\label{YgbEmbmodEmbvsFLS}
\pi_*^{\QQ}\big(\bEmbymodEmby(Y_g)\big)\cong \left[\frac{\H^{S^1}_{*}(LY_g, LS^1; \QQ)}{\H_{*+1}(Y_g,S^1;\QQ)}\right]_{\tau_{\FLS}}, \quad *\leq 3n-3,
\end{equation}
the action of $([\phi], \rho^i)\in \mcgRhoYg$ (where $[\phi]$ is represented by a diffeomorphism $\phi$ of $Y_g$ fixed on $\vb Y_g$) on the left-hand side corresponds, on the right-hand side, to the map
\[
(\phi\rho^i)_*: \left[\frac{\H^{S^1}_{*}(LY_g, LS^1; \QQ)}{\H_{*+1}(Y_g,S^1;\QQ)}\right]_{\tau_{\FLS}}\overset\cong\longrightarrow \left[\frac{\H^{S^1}_{*}(LY_g, LS^1; \QQ)}{\H_{*+1}(Y_g,S^1;\QQ)}\right]_{\tau_{\FLS}}
\]
induced by functoriality on $(Y_g,S^1)$.
\end{prop}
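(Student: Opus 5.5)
The plan is to transport the argument of \cref{ActionTheorem} word for word to the embedding $\iota\colon Y_g\hookrightarrow Y_g$ relative to $\partial_+:=\partial_0Y_g\cup X_g$. The complement of $\nu\iota(Y_g)$ is identified with $S^1\times D^{2n}$, and the handle dimension is $p=n+1\le (2n+1)-3$. The concordance embedding range bound \eqref{CEmbRangeBound} gives $2(2n+1)-(n+1)-5=3n-4$, so \cref{involutionlemma} applies with sign $(-1)^{d+1}=+1$ in degrees $*\le 3n-5$. After the shift coming from $\Omega\bEmbymodEmby(Y_g)\simeq\pEmby(Y_g)$ (Hudson's theorem gives the $\pi_0$-isomorphism), this yields \eqref{YgbEmbmodEmbvsFLS} in the stated range. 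The first step is to show that the action factors through $\mcgRhoYg$. This uses the map of $\brho$-equivariant fibre sequences forgetting stable framings, whose right-hand square is cartesian, exactly as in \eqref{FibSeqMapsfrtonotsfr}.

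Next I will define the $\mcgRhoYg$-action on $\pi_*(\pEmby(Y_g))$ by conjugation with $\psi=\phi\rho^i$, as in \cref{bEmbmodEmbAction}. I will check that it agrees with the fibre-sequence action, copying \cref{pEmbvsbEmbmodEmbMCGActionXg}. I will then define the analogous conjugation action on $\CEmb_{\partial_+}(Y_g)$ and its stabilisation, and introduce the map $\alpha\colon\Omega\CEmb_{\partial_+}(Y_g)\to\Omega\pEmby(Y_g)$ via the graphing map $\Gamma$. Equivariance of $\alpha$ follows as in \cref{ActionCompatibilityProp}.

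The main step is the analogue of \cref{IntermediateBoringAlphaProp}: the composite of $\alpha$ with the Weiss--Williams map agrees with $q\circ\mathrm{stab}$ on rational homotopy. For this I will take homotopy fibres in \cref{BdQProp} along $S^1\times D^{2n}\hookrightarrow Y_g$. This gives commutativity after one looping, hence in degrees $\ge1$. The remaining degree-zero check requires surjectivity of $\pi_1^\QQ\C(Y_g)\to\pi_1^\QQ\CEmb_{\partial_+}(Y_g)$, which follows from $\pi_0^\QQ\C(S^1\times D^{2n})=0$ by Hatcher--Wagoner \eqref{HatcherWagoner}. The Weiss--Williams map is highly connected and $q$ is surjective, so equivariance then transfers to $\pi_*^\QQ(\CEsp(Y_g)_{hC_2})$. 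I expect the main obstacle to be this degree-zero and boundary bookkeeping. In particular, one must ensure that the appendix argument behind \cref{BdQProp} applies verbatim to the $4$-ad $Y_g$, whose diffeomorphisms fix only $\vb Y_g$ rather than the whole boundary.

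Finally, I will identify conjugation by $\psi$ on $\sCE$ with functoriality. Choose $\phi$ fixing a neighbourhood of $S^1\times D^{2n}$, which $\rho$ preserves. The lemma comparing $\psi_\#$ on $\mathcal C(-)$ with $\psi_*$ on $\Omega^2\Wh(-)$, which is natural in codimension-zero embeddings, then shows that the action is $\psi_*$ on $\Omega^3\Wh(Y_g,S^1)$. Naturality of Goodwillie's isomorphism \eqref{GoodwillieIso} in the pair $(Y_g,S^1)$ identifies this with $(\phi\rho^i)_*$ on the free loop space quotient. This action commutes with $\tau_{\FLS}$ and therefore descends to the coinvariants.
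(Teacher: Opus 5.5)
Your proposal is correct and follows the paper's route exactly: the paper simply asserts that the analysis behind \cref{ActionTheorem} carries over verbatim to the self-embedding of $Y_g$ rel $\partial_0Y_g\cup X_g$ with complement $S^1\times D^{2n}$. The obstacle you anticipate does not arise, because \cref{BdQProp} is a non-equivariant statement applied to $Y_g$ and $S^1\times D^{2n}$ with their full boundaries fixed; the $4$-ad structure enters only through the conjugating diffeomorphisms.

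One arithmetic remark: $\Phi^{\Emb}$ is $(3n-4)$-connected, hence an isomorphism for $*\le 3n-5$ on the pseudoisotopy space, so after the shift your count gives \eqref{YgbEmbmodEmbvsFLS} only for $*\le 3n-4$. That is one degree short of the stated $3n-3$, which is harmless since only $*\le 2n-2$ is used later.
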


It remains to compute the groups on the right-hand side of \eqref{YgbEmbmodEmbvsFLS} as representations of the homotopy mapping class group $\hMCGYg\coloneq \pi_0(\Aut_*(Y_g))\cong \pi_0(\Aut_*(Y_g, S^1))$. To this end, consider the $\QQ[\hMCGYg]$-module
\[
H_{Y_g}\coloneq\piy[-1]\otimes\QQ\cong H_n(\widetilde{Y}_g;\QQ)[-1],
\]
which lives in degree $n-1$, defined analogously to $H_{X_g}$. As for $\hMCGXg$, the fundamental group $\pi_1 Y_g=\pi\cong \ZZ\langle t\rangle$ is a normal subgroup of $\hMCGYg$, and the restricted $\QQ[\pi]$-module structure on $H_{Y_g}$ is induced by the usual action of $\pi_1 Y_g$ on its higher homotopy groups. As $\QQ[\pi]$-modules,
\[
H_{Y_g}\cong \QQ[\pi]\otimes H_{V_g},\quad \text{where $H_{V_g}\coloneq H_n(V_{g,1};\QQ)[-1]$.}
\]
Finally, we will also regard $H_{V_g}$ as a $\QQ[\hMCGYg]$-module via the obvious isomorphism $H_{V_g}\cong [H_{Y_g}]_{\pi}$. The following is a complete analogue of \cref{FLSS1homology}(iii), and its proof carries over verbatim.

\begin{prop}\label{YgXgpseudoisotopycomputation}
For $r\in \ZZ$, consider the graded $\QQ[\hMCGYg]$-modules
\[
A_r^{Y_g}\coloneq \frac{H_{Y_g}}{\langle 1-t^r\rangle}, \qquad B_r^{Y_g}\coloneq\frac{\sn(H_{Y_g})}{\sn(1-t^r)}.
\]
Then, in degrees $*\leq 2n-2$, there is an isomorphism of $\QQ[\hMCGYg]$-modules
\[
 \left[\frac{\H^{S^1}_{*}(LY_g, LS^1; \QQ)}{\H_{*+1}(Y_g,S^1;\QQ)}\right]_{\tau_{\FLS}}\cong\left\{\begin{array}{cl}
    \bigoplus_{r> 0} {[A_r^{Y_g}]_{\pi}}\cong \bigoplus_{r> 0} H_{V_g} & *=n-1, \\[6pt]
     C^{Y_g} \oplus \bigoplus_{r> 0}\left({\left[\sn(A_r^{Y_g})\right]}_{\pi}\oplus [B_r^{Y_g}]_\pi\right) & *=2n-2, \\[6pt]
    0 & \text{otherwise},
\end{array}
\right.
\]
where $C^{Y_g}$ is some quotient of $[H^{\otimes 2}_{Y_g}]_{\pi}$. 
\end{prop}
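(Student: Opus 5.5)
The plan is to repeat the computation of \cref{FLShomology} and \cref{FLSS1homology} with $X_g$ replaced by $Y_g$. The key point is that $Y_g\simeq S^1\vee\bigvee^{g}S^n$, just as $X_g\simeq S^1\vee\bigvee^{2g}S^n$. Every step of the earlier proof used only three inputs: this wedge decomposition, the fact that the universal cover is a wedge of $n$-spheres with rational homotopy the free Lie algebra on $H_{Y_g}$ concentrated in degree $n-1$, and the $\QQ[\pi]$-freeness $H_{Y_g}\cong \QQ[\pi]\otimes H_{V_g}$. The first step is therefore to record these facts for $Y_g$. Naturality in $\hMCGYg=\pi_0(\Aut_*(Y_g,S^1))$ is automatic, since all constructions below are functorial in the pair $(Y_g,S^1)$.

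Next, compute $\H_*(L_rY_g;\QQ)$ for $*\leq 2n-2$. For $r\neq0$, use the fibration $\Omega_rY_g\to L_rY_g\to Y_g$. Its connecting map is multiplication by $1-t^r$, which is a non-zero-divisor on $H_{Y_g}$ and on $\sn(H_{Y_g})$. This gives $\tau_{\leq 3n-4}\widetilde{L_rY_g}\simeq_\QQ K(A_r^{Y_g},n-1)\times K(B_r^{Y_g},2n-2)$, and the fibration over $S^1$ then yields the homology in terms of $\pi$-coinvariants and $\pi$-invariants. For $r=0$, run the Serre spectral sequence for $\Omega_0Y_g\to L_0Y_g\to Y_g$. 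The differential in degree $n-1$ vanishes by comparison with the finite wedges $\widetilde Y_g^N$, exactly as in the Claim inside \cref{FLShomology}, and the degree-$(2n-2)$ term is a quotient $C_0^{Y_g}$ of $[H_{Y_g}^{\otimes 2}]_\pi$.

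The third step passes to $S^1$-equivariant homology via the homotopy orbit spectral sequence. Its $d^2$-differentials $\QQ\to\QQ$ and $H_{V_g}\to H_{V_g}$ are isomorphisms: for the first, use the retract $L_rS^1$; for the second, use multiplicativity of the cohomological version. Connes' sequence then shows that $B$ is an isomorphism in degrees $\leq n$. The constant-loop map $c$ hits only the $r=0$ component and kills its degree-$(n-1)$ class. What remains is $\bigoplus_{r\neq0}[A_r^{Y_g}]_\pi$ in degree $n-1$, and $C_0^{Y_g}\oplus\bigoplus_{r\neq0}([\sn(A_r^{Y_g})]_\pi\oplus[B_r^{Y_g}]_\pi)$ in degree $2n-2$.

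Finally, take coinvariants under $\tau_{\FLS}$. Because $j$ is surjective, and is an isomorphism when its target is nonzero, it suffices to work in $\H_*(LY_g,LS^1;\QQ)$. There $\tau_{\FLS}$ exchanges $L_rY_g$ and $L_{-r}Y_g$, so the $r$ and $-r$ summands are identified, giving sums over $r>0$. On $C_0^{Y_g}$ it acts by some involution, and $C^{Y_g}$ is defined as the resulting coinvariants, which is still a quotient of $[H_{Y_g}^{\otimes2}]_\pi$. The sign convention (here $\tau_{\FLS}$ rather than $-\tau_{\FLS}$) only affects which of the two identifications $H^{(r)}\cong H^{(-r)}$ is used, and so does not change the abstract answer; I expect this to be the only point where care is needed. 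The main obstacle is the vanishing of the degree-$(n-1)$ differential $\alpha$ for $r=0$, which rests on the dimension count for free loop spaces of wedges of spheres; it carries over unchanged.
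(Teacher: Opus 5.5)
Your proposal is correct and is exactly the paper's argument: the paper simply notes that, since $Y_g\simeq S^1\vee\bigvee^{g}S^n$ with $H_{Y_g}\cong\QQ[\pi]\otimes H_{V_g}$, the proofs of \cref{FLShomology} and \cref{FLSS1homology} carry over verbatim, and your observation that replacing $-\tau_{\FLS}$ by $\tau_{\FLS}$ only changes which identification of the $r$ and $-r$ summands is used is precisely why the answer has the same shape. One small precision: for $r=0$ the $d^2$-differentials $\QQ\to\QQ$ are zero rather than isomorphisms, because the rotation action on $L_0S^1\simeq S^1$ is homotopically trivial and so $\H^{S^1}_*(L_0S^1;\QQ)\cong\H_*(S^1\times\CP^\infty;\QQ)$; however, these classes come from $L_0S^1$, which is an equivariant retract of $L_0Y_g$, and hence they disappear in $\H^{S^1}_*(LY_g,LS^1;\QQ)$.
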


All in all, we can describe the rational homotopy groups of $\bEmbmodEmb(Y_g;X_g)$ as a $\QQ[\mcgRhoYg]$-module as follows. Consider the diagram
\begin{equation}\label{bighomomorphismdiagram}
\begin{tikzcd}   \mcgRhoYgsfr\rar\dar[shorten >=3pt] & \mcgRhoYg\dar\rar["\nu"] & \hMCGYg\\
\mcgRhoXgsfr\rar & \mcgRhoXg\rar["\nu"] & \hMCGXg,
\end{tikzcd}
\end{equation}
where $\nu([\phi],\rho^i)=[\phi\rho^i]$ in both cases.
\begin{cor}\label{MCGactionYg}
     Let $H_{X_g}$, $A_r$, $B_r$ and $C$ be the $\hMCGXg$-representations of \cref{FLShomology,FLSS1homology}, and let $\smash{H_{Y_g}}$, $\smash{A_r^{Y_g}}$, $\smash{B_r^{Y_g}}$ and $\smash{C^{Y_g}}$ be the $\hMCGYg$-representations of \cref{YgXgpseudoisotopycomputation}. Regard all of these representations as $\mcgRhoYgsfr$-representations by pulling them back along \eqref{bighomomorphismdiagram}. Then, for $*\leq 2n-2$, there is isomorphism of $\QQ[\mcgRhoYgsfr]$-modules
     \[
\scalebox{0.96}{$\pi_*^{\QQ}\big(\bEmbmodEmb(Y_g;X_g)\big)\cong\left\{\begin{array}{cl}
    \bigoplus_{r> 0} \big([A_r]_{\pi}\oplus[A_r^{Y_g}]_{\pi}\big)\cong \bigoplus_{r> 0} (H_g\oplus H_{V_g}) & *=n-1, \\[4pt]
    \parbox{0.60\linewidth}{\(
      C\oplus C^{Y_g} \oplus \bigoplus_{r> 0}\big(
        [\sn(A_r)]_\pi \oplus [\sn(A_r^{Y_g})]_\pi \oplus
        [B_r]_\pi \oplus [B_r^{Y_g}]_\pi
      \big)
    \)} & *=2n-2, \\[4pt]
    0 & \text{otherwise}.
\end{array}\right.$}
\]
Moreover, under this isomorphism and the one in \cref{MCGrepBlockModEmb}, the map $\partial_2: \pi_*^{\QQ}\big(\bEmbmodEmb(Y_g;X_g)\big)\to \pi_*^{\QQ}\big(\bEmbmodEmb(X_g)\big)$ is given by the evident projection onto the summands involving $H_g$, $A_r$, $B_r$ and $C$.
    \end{cor}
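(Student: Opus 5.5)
The plan is to assemble the statement from three ingredients already in place: the splitting of \cref{splittingprop}, the identification of the $X_g$-summand in \cref{MCGrepBlockModEmb}, and the computation of the $Y_g$-relative summand in \cref{ActionThmYg} together with \cref{YgXgpseudoisotopycomputation}.

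First, I would pass from $\bEmbmodEmb(Y_g;X_g)$ to its loop space $\pEmb(Y_g;X_g)$. This uses the equivalence $\Omega\bEmbmodEmb(Y_g;X_g)\simeq\pEmb(Y_g;X_g)$ and the analogue of \cref{pEmbvsbEmbmodEmbMCGActionXg}, which makes the degree shift $\mcgRhoYg$-equivariant. As noted at the start of the section, the $\mcgRhoYgsfr$-action factors through $\mcgRhoYg$, so it suffices to identify $\QQ[\mcgRhoYg]$-modules and then pull back along \eqref{bighomomorphismdiagram}.

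Second, I would apply \cref{splittingprop}. In the range $*\leq 3n-6$ it gives a $\mcgRhoYg$-equivariant decomposition
\[
\pi_*^{\QQ}(\pEmb(Y_g;X_g))\cong \pi_*^{\QQ}(\pEmby(Y_g))\oplus\pi_*^{\QQ}(\pEmb(X_g)),
\]
and this range contains $*\leq 2n-3$ because $n\geq 3$. The second summand is computed by \cref{MCGrepBlockModEmb}, with the $\mcgRhoXg$-action restricted along $\mcgRhoYg\to\mcgRhoXg$. This restriction is compatible with \eqref{bighomomorphismdiagram}, since $\nu$ commutes with restricting diffeomorphisms to $\partial_2$. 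The first summand is computed by \cref{ActionThmYg} combined with \cref{YgXgpseudoisotopycomputation}, which describes it as the $\hMCGYg$-representation pulled back along $\nu$. Adding the two descriptions degree by degree gives the displayed formula. The $H_{V_g}$ and $H_g$ identifications in degree $n-1$ come from $[A_r]_\pi\cong H_g$ and $[A^{Y_g}_r]_\pi\cong H_{V_g}$.

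Third, for the addendum about $\partial_2$: by construction $\psigma$ is a section of $\pi_*^{\QQ}(\partial_2)$. The fibre inclusion $j$ composes with $\partial_2$ to zero, because \eqref{PseudoisoFibreSeqYgXg} is a fibre sequence. Hence, under the isomorphism $\pi_*^{\QQ}(j)+\psigma$, the map $\partial_2$ is exactly projection onto the $\pEmb(X_g)$ summand. That summand is the one involving $H_g$, $A_r$, $B_r$ and $C$.

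I expect the main obstacle to be bookkeeping rather than new mathematics. Specifically, I need to check that the isomorphism \eqref{YgbEmbmodEmbvsFLS} for the fibre term is equivariant for the conjugation action that makes $j$ equivariant. This is the content of \cref{ActionThmYg}, whose proof repeats the \cref{BoringAlphaProp} argument for the embedding $(Y_g\text{ rel }\partial_+)$. The involution sign differs here ($+\tau_{\FLS}$ rather than $-\tau_{\FLS}$, since $d=2n+1$). That sign only affects which coinvariants appear, and these are already recorded in \cref{YgXgpseudoisotopycomputation}.
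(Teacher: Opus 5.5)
Your proposal is correct and follows the paper's own route. The paper assembles this corollary in exactly the same way: the $\mcgRhoYg$-equivariant splitting of \cref{splittingprop}, which covers the needed degrees since $2n-3\leq 3n-6$ for $n\geq 3$, combined with \cref{MCGrepBlockModEmb} for the $X_g$-summand and \cref{ActionThmYg} together with \cref{YgXgpseudoisotopycomputation} for the relative $Y_g$-summand. As you say, $\partial_2$ becomes the projection because the splitting map is a section of $\partial_2$ and $\partial_2\circ j=0$.
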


\subsection{The \texorpdfstring{$\ZZ[\NN^{\times}]$}{Z[N]}-module structures}\label{FrobeniusYgSection}

As in the case of $X_g$, it will be useful to understand the actions of the Frobenii on the homotopy groups $\bEmbmodEmb(Y_g;X_g)$ and $\BbEmbsfr(Y_g;X_g)$ in order to understand some of the connecting maps in the long exact sequence of homotopy groups of the fibration \eqref{BEmbsfrFibreSeqYg}.

\subsubsection{The \texorpdfstring{$\ZZ[\NN^{\times}]$}{Z[N]}-module structure on pseudoisotopy of \texorpdfstring{$(Y_g;X_g)$}{(Yg;Xg)}}

The fibre sequence \(\smash{\pEmby(Y_g)\xrightarrow{j}  \pEmb(Y_g;X_g)\xrightarrow{\partial_2} \pEmb(X_g)}\) of \eqref{PseudoisoFibreSeqYgXg} is clearly compatible with the Frobenius maps $\varphi_d$. Moreover, we saw in \cref{splittingprop} that this fibration splits on rational homotopy groups in the range $*\leq 3n-6$. The splitting $\psigma$ is also compatible with the Frobenii:

\begin{lemma}
    If $*\leq 3n-6$, the following square commutes:
    \[
    \begin{tikzcd}
        \pi_*^{\QQ}(\pEmb(X_g))\rar["\psigma"] \dar["\varphi_d"]& \pi_*^{\QQ}(\pEmb(Y_g;X_g))\dar["\varphi_d"] \\
        \pi_*^{\QQ}(\pEmb(X_{dg}))\rar["\psigma"] & \pi_*^{\QQ}(\pEmb(Y_{dg};X_{dg})).
    \end{tikzcd}
    \]
    Thus, for $*\leq 3n-6$ and $m\geq 0$, the following isomorphisms are compatible with the Frobenius maps:
    \begin{align*}
    \pi_*^{\QQ}\pEmb((Y_g;X_g))&\overset{\ref{splittingprop}}{\cong} \pi_*^{\QQ}(\pEmb(X_g))\oplus \pi_*^{\QQ}(\pEmby(Y_g))\\
    &\overset{\eqref{iotaCeq}}\cong \left[\pi_*^{\QQ}(\CEmbo(X_g\times I^{2m}))\right]_{\iota_{\H}}\oplus \left[\pi_*^{\QQ}(\CEmby(Y_g\times I^{2m}))\right]_{\iota_{\H}}.
    \end{align*}
\end{lemma}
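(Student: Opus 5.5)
The plan is to check Frobenius-compatibility of each of the five maps in the composition \eqref{sigmasimsplitting} defining $\psigma$, namely
\[
\psigma=\alpha\circ\sigma\circ\lambda^{-1}\circ N\circ \Phi^{\Emb}_{X_g},
\]
and then to deduce the displayed isomorphisms formally. Here the Frobenii on every space involved are given by lifting along the $d$-fold covers $X_{dg}\to X_g$ and $Y_{dg}\to Y_g$, which are compatible with the inclusions $X_g\subset Y_g$.

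The geometric maps come first. The lifting maps $\varphi_d$ are defined on all concordance embedding spaces in sight, by lifting a concordance embedding $\varphi$ of $X_g\times I$ (respectively $(Y_g;X_g)\times I$) to the cover. For $\sigma$, lifting commutes with left-stabilisation $\Sigma_\ell$ and with extension by the identity over $Y_g\subset\overline{Y}_g$. This holds because the cover $\overline{Y}_{dg}\to\overline{Y}_g$ restricts to the product cover on $X_g\times[0,1]$. For $\alpha$, compatibility holds by construction, exactly as already observed for $X_g$ before \eqref{iotaCeq}: lifting commutes with $\mathrm{res}_1$, with $\Gamma$, with $\widetilde{\Gamma}$, and with the explicit null-homotopy of $\Gamma\circ\mathrm{res}_1$. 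The same argument works relative to $X_g$.

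The non-geometric maps need more care, and I expect $\lambda^{-1}\circ N\circ\Phi^{\Emb}_{X_g}$ to be the main obstacle, since $\CEspo(X_g)$ carries no evident lifting map. I would avoid defining Frobenii on spectra altogether and argue as in the proof of \cref{BoringAlphaProp}. By \cref{IntermediateBoringAlphaProp}, the composite
\[
\Phi^{\Emb}_{X_g}\circ\alpha\colon \pi_*^\QQ(\CEmbo(X_g))\to\pi_*^\QQ(\CEspo(X_g)_{hC_2})
\]
agrees with $q\circ\lambda$, and $\alpha$ is surjective on $\pi_*^\QQ$ for $*\leq 3n-6$. Precomposing the whole square with $\alpha$ therefore reduces the claim to showing that $\alpha\circ\sigma\circ\lambda^{-1}\circ N\circ q\circ\lambda$ commutes with Frobenii on $\pi_*^\QQ(\CEmbo(X_g))$.

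Since $\lambda$ is induced by stabilisation, the composite $\lambda^{-1}\circ N\circ q\circ\lambda$ is $1+\iota_{\H}$ on $\pi_*^\QQ(\CEmbo(X_g))$, up to the identification of the stabilisation. So I would check two things.
\begin{enumerate}
\item Lifting commutes with the geometric model of $\iota_{\H}$ on $\CEmbo(X_g)$ (\cref{uCEmbDefn}(ii)). This is a reflection in the $[-1,1]$-coordinate, which manifestly commutes with lifting along a cover of the $X_g$-factor.
\item Lifting commutes with the stabilisation maps $\Sigma$, which is clear.
\end{enumerate}
Because $N$ followed by $q$ contributes only $1+\iota_{\H}$, everything then reduces to space-level maps commuting with lifting, and the square commutes.

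The isomorphisms in the second display then follow formally. The first is $\pi_*^\QQ(j)+\psigma$, where $j$ commutes with Frobenii because it is a map of lifting-compatible fibre sequences, and $\psigma$ commutes with them by the above. The second uses \eqref{iotaCeq} for $X_g$, whose Frobenius-compatibility has already been noted, together with its analogue for $\pEmby(Y_g)$, obtained from $\alpha$ and $\Sigma^{2m}$ by the identical argument.
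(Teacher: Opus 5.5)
Your proposal is correct and takes essentially the paper's route. Both arguments check Frobenius-compatibility of each map in \eqref{sigmasimsplitting}: $\sigma$ via left-stabilisation commuting with lifting, $\alpha$ by construction, $N$ via lifting commuting with $\iota_{\H}$, and $\Phi^{\Emb}_{X_g}$ via the square of \cref{IntermediateBoringAlphaProp} together with surjectivity of $\alpha$ in degrees $*\leq 3n-6$. Your only change is packaging: you precompose the whole composite with $\alpha$ and rewrite $N\circ q$ as $1+\iota_{\H}$, which avoids transferring Frobenii to the spectrum $\CEspo(X_g)$ and does not change the underlying argument.
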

\begin{proof}
    First observe that $\sigma: \pi_*\CEmbo(X_g)\to \pi_*\CEmbo(Y_g;X_g)$ as in the proof of \cref{splittingprop} is Frobenius-preserving: indeed, it is clear $\Sigma_\ell\circ\varphi_d=\varphi_d\circ \Sigma_\ell$, where $\Sigma_\ell$ is the left-stabilisation map involved in the construction of $\sigma$. As $\sigma$ is obtained from $\Sigma_\ell$ by extending by the identity on $Y_g\subset \overline{Y}_g$, the claim follows.

    But now recall the definition of $\sigma^{(\sim)}$, given by the composition in \eqref{sigmasimsplitting}. All the maps involved are compatible with the Frobenius: the norm map $N$ since the Frobenius commutes with the $h$-cobordism involution, the maps $i$ and $\alpha$ by construction, $\sigma$ by the argument above. Finally, the Weiss--Williams map $\Phi^{\Emb}_{X_g}$ is also compatible with the Frobenius since the remaining maps in the commutative square of \cref{IntermediateBoringAlphaProp} are compatible with the Frobenius, and because the horizontal maps in that square are surjective on rational homotopy groups below the stable range $*\leq 3n-6$.
\end{proof}

For our purposes, we will only need to understand the action of the Frobenius $\varphi_d$ on $\smash{\pi_*^{\QQ}(\pEmb(Y_g; X_g))}$ when $* = n - 2$, and hence, by the previous result, that on $\pi_{n-2}^{\QQ}(\CEmbo(X_g\times I^{2m}))$ and $\pi_{n-2}^{\QQ}(\CEmby(Y_g\times I^{2m}))$ for any $m\geq 0$. For sufficiently large $m$, the effect of $\varphi_d$ on the former group was carefully analysed in \cref{FrobeniusXgPseudoIsotopySection} (cf. \cref{FrobXgPseudoIsotopyCor} and \cref{ExtraDataLemma}), and the exact same analysis applies to the latter groups. The upshot is:

\begin{prop}
     For each $d\in \NN$, the Frobenius map
    \[
    \varphi_d: \pi_{n-1}^{\QQ}\big(\bEmbmodEmb(Y_g;X_g)\big)\longrightarrow \pi_{n-1}^{\QQ}\big(\bEmbmodEmb(Y_{dg};X_{dg})\big)
    \]
    is isomorphic to
      \[
    \begin{tikzcd}
\bigoplus_{r> 0} H_g^{(r)}\oplus H_{V_g}^{(r)}\ar[rr,"\bigoplus_{r>0} \alpha_r\oplus \beta_r"] && \bigoplus_{s> 0} H_{dg}^{(s)}\oplus H_{V_{dg}}^{(s)},
\end{tikzcd}
    \]
    where the subscripts $(-)^{(r)}$ and $(-)^{(s)}$ only serve bookkeeping purposes, and $\alpha_r$ and $\beta_r$:
    \begin{itemize}
        \item[(i)] are zero if $d$ does not divide $r$;
        \item[(ii)] are injective if $r=di$, and factor as
        \[
        \begin{tikzcd}[row sep = 8pt]
        \alpha_r: H_{g}^{(di)}\rar["\Delta"] & \bigoplus_{j=0}^{d-1} H_g\cong H_{dg}^{(i)}\rar[hook] &\bigoplus_{s> 0} H_{dg}^{(s)},\\
        \beta_r: H_{V_g}^{(di)}\rar["\Delta"] & \bigoplus_{j=0}^{d-1} H_{V_g}\cong H_{V_{dg}}^{(i)}\rar[hook] &\bigoplus_{s> 0} H_{V_{dg}}^{(s)}.
        \end{tikzcd}
        \]
    \end{itemize}

    Moreover, the embedding $\iota_0: X_g\xhookrightarrow{}X_{dg}$ and the $C_d$-action on $X_{dg}$ of \cref{ExtraDataLemma} extend to an embedding $\iota_0: (Y_g;X_g)\xhookrightarrow{}(Y_{dg},X_{dg})$ and a $C_d$-action on $(Y_{dg},X_{dg})$. Under the above isomorphisms,
    \begin{itemize}
        \item[(a)] the map $(\iota_0)_*: \pi_{n-1}^{\QQ}\big(\bEmbmodEmb(Y_g;X_g)\big)\to \pi_{n-1}^{\QQ}\big(\bEmbmodEmb(Y_{dg}; X_{dg})\big)$ becomes
        \[
        \begin{tikzcd}[column sep = 9pt]
            \bigoplus_{r>0}H_g\oplus H_{V_g}\ar[rrr,hook, "j=0"] &&&\bigoplus_{r>0} \bigoplus_{j=0}^{d-1}H_g\oplus H_{V_g}\rar[equal] & \bigoplus_{r>0}H_{dg}\oplus H_{V_{dg}};
        \end{tikzcd}
        \]
        
        \item[(b)] the $C_d$-action on $\pi_{n-1}^{\QQ}\big(\bEmbmodEmb(Y_{dg};X_{dg})\big)$ is the diagonal $C_d$-action on $\bigoplus_{r>0} H_{dg}\oplus H_{V_{dg}}$, where $C_d$ acts on $H_{dg}\oplus H_{V_{dg}}\cong \bigoplus_{j=0}^{d-1}H_g\oplus H_{V_g}$ by permuting the summands.
    \end{itemize}
\end{prop}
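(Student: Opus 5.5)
The plan is to reduce everything to the two summands produced by \cref{splittingprop} and then run the $X_g$ argument of \cref{FrobeniusXgPseudoIsotopySection} once more on each summand. By the preceding lemma, the isomorphism
\[
\pi_{n-2}^{\QQ}(\pEmb(Y_g;X_g))\cong \left[\pi_{n-2}^{\QQ}(\CEmbo(X_g\times I^{2m}))\right]_{\iota_{\H}}\oplus \left[\pi_{n-2}^{\QQ}(\CEmby(Y_g\times I^{2m}))\right]_{\iota_{\H}}
\]
is compatible with the Frobenius maps, and also with $(\iota_0)_*$ and with the $C_d$-action, because $\psigma$, $\alpha$ and the stabilisations are built by conjugation and extension by the identity. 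Together with $\pi_{*+1}(\bEmbmodEmb)\cong\pi_*(\pEmb)$, this means the $H_g$-summand is handled directly by \cref{FrobXgPseudoIsotopyCor} and \cref{ExtraDataLemma}, which give the maps $\alpha_r$ and statements (a), (b) for that summand. The remaining task is the $H_{V_g}$-summand.

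For the $Y_g$-summand I will repeat the same chain of arguments with the pair $S^1\times D^{2n+2m}\hookrightarrow Y_g\times I^{2m}$ in place of the $X_g$ pair.
\begin{enumerate}[label=(\arabic*)]
\item Relative to the complement, $Y_g$ has handles of index $n$ only. Goodwillie's \cref{GoodwillieDisjunctionTau} then makes $\overline{\tau}_*$ an isomorphism onto $\pi_{n-3}^{\QQ}(\overline{\Psi}(S^1\to Y_g))$.
\item The analogue of \cref{PsiLemma} identifies this group with $\pi_{n-2}^{\QQ}(\Psi(Y_g))\cong H_n(LY_g,Y_g;\QQ)\cong\bigoplus_{r\neq0}H_{V_g}$. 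The same Serre spectral sequence argument works, using $[H_{Y_g}]^\pi=0$.
\item The analogue of \cref{rProp}, via Igusa stability and the split fibre sequence for $\cC(-)$, gives surjectivity of $r$ from $\pi_{n-2}^{\QQ}(\C(Y_g\times I^{2m}))$.
\item \cref{tauFrobeniusPreservingProp} and \cref{circled1lemma,circled2lemma} are stated for an arbitrary finite cover $p$, so they apply verbatim to the $d$-fold cover $Y_{dg}\to Y_g$.
\end{enumerate}
The Becker--Gottlieb computation then gives $\beta_r$: the map is zero unless $d\mid r$, and for $r=di$ it is the diagonal $\Delta\colon H_{V_g}\to\bigoplus_{j}H_{V_g}\cong H_{V_{dg}}$ on the $L_i\to L_{di}$ components. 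Finally, \cref{TauVsInvolutionLemma} lets me pass to coinvariants. On this summand the relevant involution is $+\tau_{\FLS}$, because the dimension is $2n+1$, whereas the $X_g$ summand uses $-\tau_{\FLS}$. Either way the involution identifies the $r$ and $-r$ pieces up to sign, which produces $\bigoplus_{r>0}$, and the formulas for $\beta_r$ descend because $\tau_{\FLS}$ commutes with transfers.

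For (a) and (b) I will first construct $\iota_0$ and the $C_d$-action on $(Y_{dg},X_{dg})$ by using the $d$-fold cover model of $Y_{dg}$. The map $\tau_*$ is natural for $\iota_0$ and is $C_d$-equivariant, exactly as in the proof of \cref{ExtraDataLemma}. On $H_n(L_rY_g,Y_g;\QQ)\cong H_{V_g}$, the map $\iota_0$ is inclusion as the $j=0$ summand and $C_d$ permutes the summands.

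I expect the main obstacle to be the compatibility of the splitting with $(\iota_0)_*$ and with $C_d$, and not only with $\varphi_d$. This requires checking that the composite \eqref{sigmasimsplitting} is natural for these operations, in particular the Weiss--Williams map in \cref{IntermediateBoringAlphaProp}. My first approach is the same surjectivity trick used for the Frobenius: every map in the commutative square of \cref{IntermediateBoringAlphaProp} other than $\Phi^{\Emb}_{X_g}$ visibly commutes with $\iota_0$ and $C_d$, and the top map $\alpha$ is rationally surjective in this range.
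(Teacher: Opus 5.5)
Your proposal is correct and follows essentially the same route as the paper. You use the Frobenius-compatible splitting from the preceding lemma, quote \cref{FrobXgPseudoIsotopyCor} and \cref{ExtraDataLemma} for the $H_g$-summand, and rerun the Goodwillie $\overline{\tau}$ and Becker--Gottlieb analysis of \cref{FrobeniusXgPseudoIsotopySection} for the $Y_g$-summand, which the paper handles with the single remark that ``the exact same analysis applies''. Your extra check that the splitting of \cref{splittingprop} commutes with $(\iota_0)_*$ and with the $C_d$-action, via rational surjectivity of $\alpha$ as in the Frobenius case, fills in a point the paper leaves implicit; you are also right that the sign in front of $\tau_{\FLS}$ on the $Y_g$-summand does not affect identifying the coinvariants with $\bigoplus_{r>0}$.
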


\subsubsection{The \texorpdfstring{$\ZN$}{Z[N]}-module structure on block embeddings of \texorpdfstring{$(Y_g;X_g)$}{(Yg;Xg)}}

Recall from \cref{stable framed block embs are autos} that there is a rational equivalence
\[
\BbEmbsfr(Y_g;X_g)_\ell\xrightarrow{\simeq_{\QQ}} \BAutvp(Y_g;X_g),
\]
which is compatible with the Frobenius maps. Thus, it suffices to study the effect of these on the homotopy groups of the codomain; this will be particularly easy given what we did in \cref{FrobBlockXgSection}. 

\begin{prop}
    The $d$-th Frobenius on $\pi_{n}^{\QQ}(\BbEmbsfr(Y_g;X_g)_\ell)$ is injective and given by
    \[
    \varphi_d(y)=\sum_{i=0}^{d-1}t^i \cdot(\iota_{0})_{*}(y),
    \]
    where $(\iota_0)_*$ is the "extension-by-the-identity" map induced by the embedding $\iota_0: (Y_g;X_g)\hookrightarrow(Y_{dg};X_{dg})$, and the group $C_d=\langle t\rangle/\langle t^d\rangle$ acts on $\BbEmbsfr(Y_g;X_g)_\ell$ as conjugation by the rotation action on $(Y_g;X_g)$.
\end{prop}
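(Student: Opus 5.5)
The plan is to reduce to the computation already carried out for $X_g$ in \cref{MapS1FrobLemma} and \cref{BbEmbFrobXgProp}. By \cref{stable framed block embs are autos}, the rational equivalence $\BbEmbsfr(Y_g;X_g)_\ell\to \BAutvp(Y_g;X_g)$ commutes with the Frobenius maps, the maps $(\iota_0)_*$ and the $C_d$-actions. It therefore suffices to prove the formula and the injectivity for $\pi_n^{\QQ}(\BAutvp(Y_g;X_g))\cong\pi_{n-1}^{\QQ}(\Autv(Y_g;X_g),\id)$. The fibre sequence \eqref{YgXgMappingSpaceFibSeq} also commutes with the lifting maps. By \cref{homotopy groups of mapping of pair}(iii), its fibre inclusion $\Map_{\vb}(Y_g;X_g)\to\Map_{S^1}(Y_g;X_g)$ is injective on $\pi_*^{\QQ}$ in positive degrees, and this is the relevant range since $n-1\geq 2$. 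So it is enough to understand the Frobenius on $\pi_{n-1}^{\QQ}(\Map_{S^1}(Y_g;X_g),\id)$.

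For this I will build the relative analogue of diagram \eqref{ZetaEq}. Restricting to the cores gives an equivalence
\[
\Map_{S^1}(Y_g;X_g)\simeq\Map_*\bigl((V_g;W_{g,1}),(Y_g;X_g)\bigr),
\]
where the source pair is a wedge of copies of $(D^{n+1},S^n)$ and $(S^n,S^n)$, as in the proof of \cref{homotopy groups of mapping of pair}. Lifting a map of pairs to the $d$-fold cover $(Y_{dg};X_{dg})$ then factors in three steps. First, postcompose with $\iota_0:(Y_g;X_g)\hookrightarrow(Y_{dg};X_{dg})$. Second, take the $d$ rotates by $\zeta^i$. Third, reassemble these $d$ maps, via the analogue $e$ of the map in \eqref{ZetaEq}, into a self-map of $(Y_{dg};X_{dg})$ that is the identity on $S^1$. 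This uses the fact that $(Y_{dg};X_{dg})$ has $d$ copies of the cores $(V_g;W_{g,1})$ based at the points $\zeta^i\in S^1$.

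On homotopy groups the reassembly turns composition into addition, because the target is the product over the $d$ core-blocks. The Frobenius therefore becomes $y\mapsto\sum_i t^i\cdot(\iota_0)_*(y)$. The same reasoning as in \cref{BbEmbFrobXgProp} then transfers the formula back to $\BbEmbsfr$, using the description of $(\iota_0)_*$ as extension by the identity.

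For injectivity, I will use the identification
\[
\pi_k\bigl(\Map_{S^1}(Y_g;X_g)\bigr)\cong\Hom^{\mathrm{ext}}(\pix,\pi_{k+n}(X_g))
\]
from \cref{homotopy groups of mapping of pair}(i). Under it, the element $\sum_i t^i(\iota_0)_*(y)$ lands in the block-diagonal part indexed by the $d$ copies of the cores, and its component in the $i=0$ block is $(\iota_0)_*(y)$. Since $\iota_0$ is, up to homotopy, a retract of the collapse onto the $j=0$ summand, $(\iota_0)_*$ is injective. Alternatively, this follows from the injectivity of the restriction to $\Map_{S^1}(X_g)$ in \cref{homotopy groups of mapping of pair} combined with the injectivity in \cref{MapS1FrobLemma}.

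I expect the main obstacle to be checking that the relative version of the map $e$ is well defined, that is, that the rotated lifts of maps of pairs glue to a map of pairs commuting with the lifting construction up to homotopy. I would handle this by working with the explicit wedge model of $(V_g;W_{g,1})$ and arguing exactly as in \cref{MapS1FrobLemma}.
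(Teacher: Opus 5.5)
Your reductions to $\pi_{n-1}^{\QQ}(\Map_{S^1}(Y_g;X_g),\id)$ are fine, and the reassembly map $e$ is not the real obstacle. The step that fails is the first step of your factorisation, ``postcompose with $\iota_0$''.

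On the $0$-th block of cores, the lift $\tilde f$ of $f$ is the unique map with $p\circ\tilde f=f$, where $p\colon(Y_{dg};X_{dg})\to(Y_g;X_g)$ is the covering. But $\iota_0$ is the identity on $S^1$ while $p$ wraps the circle $d$ times, so $p\circ\iota_0$ is multiplication by $d$ on $\pi_1$. Hence $\iota_0\circ f|$ is not a lift as soon as the core maps wind around the circle.

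Here is a concrete instance, for $g\geq 3$ and $d\geq 2$. Write $h^{(i)}$ for the copy of $h\in\{a_j,b_j\}$ in the $i$-th block, and $s$ for the generator of $\pi_1(X_{dg})$. Let $y\in\ker(\pi_{n-1}(\res_Y)\otimes\QQ)$ be the class of a signed Jacobi combination $b_1\otimes[tb_2,b_3]\pm tb_2\otimes[b_3,b_1]\pm b_3\otimes[b_1,tb_2]$. It vanishes on all $b_j$, so it lies in $\Hom^{\mathrm{ext}}$, and the bracket map kills it by Jacobi. Then $\varphi_d(y)(a_1^{(0)})=[b_2^{(1)},b_3^{(0)}]$, because the loop $t$ lifts to a path ending in the next block. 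On the other hand, $\bigl(\sum_i t^i\cdot(\iota_0)_*y\bigr)(a_1^{(0)})=\iota_{0*}[tb_2,b_3]=[s\,b_2^{(0)},b_3^{(0)}]$. These are different elements of $\pi_{2n-1}^{\QQ}(X_{dg})$.

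So the relative analogue of \eqref{ZetaEq} does not commute, and the displayed formula cannot be obtained this way; it fails on such classes. Your first injectivity argument, which reads injectivity off that formula, falls with it. The correct factorisation replaces $(\iota_0)_*$ by path-lifting of the core maps into the $0$-th block, which is not induced by any embedding. Equivalently, identifying $\pi_*(X_{dg})\cong\pi_*(X_g)$ for $*\geq 2$, the Frobenius on $\Hom^{\mathrm{ext}}$ is restriction of scalars along $\QQ[t^{\pm d}]\subset\QQ[t^{\pm 1}]$.

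The paper's proof takes a shorter route and builds no relative $e$. It uses that $r\colon\BbEmbsfr(Y_g;X_g)_\ell\to\BbEmbsfr(X_g)_\ell$ is injective on $\pi_*^{\QQ}$ (by \cref{homotopy groups of mapping of pair}) and commutes with $\varphi_d$, $(\iota_0)_*$ and the $C_d$-actions. It then pulls both injectivity and the formula back from \cref{BbEmbFrobXgProp}. Your alternative injectivity argument is exactly this and is sound; injectivity on the $X_g$ side also follows directly from the restriction-of-scalars description.

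For the formula, however, \cref{BbEmbFrobXgProp} rests on \cref{MapS1FrobLemma}, whose diagram \eqref{ZetaEq} contains the same postcomposition with $\iota_0$. The objection above therefore applies there too, and following the paper would not repair it. What can be proved is injectivity of $\varphi_d$ together with $\varphi_d(y)=\sum_i t^i\cdot\lambda_0(y)$, where $\lambda_0$ is the block-$0$ lifting map rather than $(\iota_0)_*$.
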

\begin{proof}
    By \cref{homotopy groups of mapping of pair}, the map $\BAutvp(Y_g;X_g)\longrightarrow \BAutbp(X_g)$ is injective on rational homotopy groups. Since the $d$-th Frobenius on the target of this map is injective, we conclude the same for the source. By the discussion above, the claim for $\pi_{n}^{\QQ}(\BbEmbsfr(Y_g;X_g)_\ell)$ holds. The description of the Frobenius now follows from \cref{BbEmbFrobXgProp}, the observation that the square
    \[
    \begin{tikzcd}
        \BbEmbsfr(Y_g;X_g)_{\ell}\dar["(\iota_0)_*"]\rar["r"] & \BbEmbsfr(X_g)_{\ell}\dar["(\iota_0)_*"]\\
        \BbEmbsfr(Y_{dg};X_{dg})_{\ell}\rar["r"] & \BbEmbsfr(X_{dg})_{\ell}
    \end{tikzcd}
    \]
    is visibly commutative and, moreover, the right vertical map is $C_d$-equivariant, and the fact that $r$ is injective on rational homotopy groups by the discussion above.
\end{proof}

\subsection{Homotopy of embedding spaces of \texorpdfstring{$(Y_g;X_g)$}{(Yg,Xg)}}\label{HomotopyOfEmbeddingsYgSection}

We proceed analogously to \cref{HomotopyOfEmbeddingsXgSection} to compute (up to some ambiguities) the rational homotopy groups of $\BEmbsfr(Y_g;X_g)_\ell$ in a range, using the fibre sequence \eqref{BEmbsfrFibreSeqYg}. Once again, there is one connecting map in the long exact sequence of \eqref{BEmbsfrFibreSeqYg} that we will need to consider. The following lemma is analogous to \cref{ConnectingMapsAreZero}, and its proof is completely identical.

\begin{lemma}\label{ConnectingMapsAreZeroYg}
        The following connecting map is zero:
        \[\delta_n^Y: \pi_n^{\QQ}\left( \BbEmbsfr(Y_g;X_g)_\ell\right)\longrightarrow \pi_{n-1}^{\QQ}\left(\bEmbmodEmb(Y_g;X_g) \right).\]
    \end{lemma}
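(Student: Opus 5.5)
We follow the proof of \cref{ConnectingMapsAreZero} verbatim, replacing $X_g$ by the pair $(Y_g;X_g)$ throughout. Fix $y\in \pi_n^{\QQ}(\BbEmbsfr(Y_g;X_g)_\ell)$. The first step is to find $d\in\NN$ with $\varphi_d(\delta_n^Y(y))=0$.

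By the identification of the Frobenius on $\pi_{n-1}^{\QQ}(\bEmbmodEmb(Y_g;X_g))\cong\bigoplus_{r>0}H_g^{(r)}\oplus H_{V_g}^{(r)}$ from \cref{FrobeniusYgSection}, $\varphi_d$ kills the summand indexed by $r$ whenever $d\nmid r$. The element $\delta_n^Y(y)$ is supported in finitely many summands $r$, so any $d$ larger than all of them will do.

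Next we use that the Frobenii $\varphi_d$ give a map of fibre sequences from \eqref{BEmbsfrFibreSeqYg} for $g$ to the one for $dg$. Hence $\delta_n^Y$ commutes with $\varphi_d$. The bottom fibre sequence is $C_d$-equivariant for the rotation action. It is also compatible with extension by the identity along $\iota_0:(Y_g;X_g)\hookrightarrow(Y_{dg};X_{dg})$, by the analogue of \eqref{iota0map}. Combining these with the formula for $\varphi_d$ on $\pi_n^{\QQ}(\BbEmbsfr(Y_g;X_g)_\ell)$ from the preceding proposition gives
\[
0=\delta_n^Y(\varphi_d(y))=\sum_{i=0}^{d-1}t^i\cdot(\iota_0)_*\big(\delta_n^Y(y)\big).
\]

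Now we read this identity through parts (a) and (b) of the proposition in \cref{FrobeniusYgSection}. The map $(\iota_0)_*$ is the inclusion as the $j=0$ summand of $\bigoplus_{j=0}^{d-1}(H_g\oplus H_{V_g})$, in each $r$-component. The group $C_d$ acts by cyclically permuting the $j$-summands. The sum $\sum_i t^i(\iota_0)_*(z)$ is therefore the diagonal image of $z$, which vanishes only if $z=0$. Hence $\delta_n^Y(y)=0$.

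The main point needing care is checking that the identifications of the Frobenius, of $(\iota_0)_*$ and of the $C_d$-action on the fibre are all made under one common isomorphism. This isomorphism is built from the splitting of \cref{splittingprop} together with \eqref{iotaCeq}. Its compatibility with the Frobenii was established in the lemma at the start of \cref{FrobeniusYgSection}. One still has to check that the same splitting is compatible with $(\iota_0)_*$ and the $C_d$-action. I would argue this the same way: the construction of $\sigma$ in the proof of \cref{splittingprop} is natural for extension by the identity and for the rotation, and the remaining maps in \eqref{sigmasimsplitting} are natural as well. With this in place, the argument is identical to the one for \cref{ConnectingMapsAreZero}.
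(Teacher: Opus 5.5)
Your proposal is correct and is the paper's argument: the paper just says the proof of \cref{ConnectingMapsAreZero} carries over verbatim to $(Y_g;X_g)$, using the Frobenius, $(\iota_0)_*$ and $C_d$ data of \cref{FrobeniusYgSection}. Your closing step is the right way to finish: you read $\sum_{i=0}^{d-1}t^i\cdot(\iota_0)_*(\delta_n^Y(y))=0$ as the vanishing of a diagonal image and conclude $\delta_n^Y(y)=0$ directly, whereas the written proof of \cref{ConnectingMapsAreZero} has $\varphi_d(x)$ in place of $\delta_n(x)$ in its last display and then appeals to injectivity of $\varphi_d$ on the base, which is unnecessary and, read literally, would force the base group to vanish.
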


From the result above, \cref{MCGactionYg}, and the long exact sequence of \eqref{BEmbsfrFibreSeqYg}, we obtain the following.

\begin{prop}\label{HomotopyBEmbsfrY}
        The rational homotopy groups of $\BEmbsfr(\yg;\xg)_\ell$ in degrees $2\leq *\leq 2n-2$ are given as $\QQ[\mcgRhoYgsfr]$-modules by
        \[\pi_{*}^\QQ\left(\BEmbsfr(Y_g;X_g)_\ell\right)\cong \left\{ \begin{array}{cl}
             \bigoplus_{r> 0} (H_g\oplus H_{V_g}) & *=n-1, \\[4pt]          
             \pi_{n}^\QQ(\BAutvp(\yg;\xg))    & *=n, \\[3pt]
             \coker(\delta_{2n-1}^Y) & *=2n-2 \\[3pt]
             0 & \text{otherwise.}
        \end{array}\right.\]
    \end{prop}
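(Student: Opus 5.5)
We read off the homotopy groups from the long exact sequence of the fibre sequence \eqref{BEmbsfrFibreSeqYg}, exactly as for \cref{HomotopyBEmbsfr}. The inputs are as follows. For the fibre $\bEmbmodEmb(\yg;\xg)$, \cref{MCGactionYg} gives its rational homotopy in degrees $*\leq 2n-2$: it is concentrated in degrees $n-1$ and $2n-2$. For the base $\BbEmbsfr(\yg;\xg)_\ell$, \cref{stable framed block embs are autos} identifies it rationally with $\BAutvp(\yg;\xg)$. Its higher homotopy groups come from \cref{homotopy groups of mapping of pair}: we have $\pi_{k+1}(\BAut)\cong\pi_k(\Map_{\vb}(\yg;\xg))$, and rationally this is the kernel of $\pi_k(\res_Y)$ on $\Hom^{\mathrm{ext}}_{\ZZ[\pi]}(\pix,\pi_{k+n}(\xg))\otimes\QQ$. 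Since $\pi^\QQ_{*}(\xg)$ for $*\geq 2$ is the free Lie algebra on generators in degree $n$ (shifted), the groups $\pi^\QQ_{k+n}(\xg)$ vanish unless $k+n\in\{n,2n-1,3n-2,\dots\}$. So in degrees $2\leq *\leq 2n-2$ the base has rational homotopy only in degree $n$, together with possibly degree $2n-1$, which lies just outside the range but feeds the connecting map $\delta^Y_{2n-1}$.

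We then run through the long exact sequence degree by degree. In degrees $*\notin\{n-1,n,2n-2\}$ both neighbours vanish, so the total space has zero rational homotopy there. In degree $n-1$ the sequence reads $\pi_n^\QQ(\mathrm{base})\xrightarrow{\delta^Y_n}\pi^\QQ_{n-1}(\mathrm{fibre})\to\pi_{n-1}^\QQ(\mathrm{total})\to\pi^\QQ_{n-1}(\mathrm{base})=0$. Because $\delta^Y_n=0$ by \cref{ConnectingMapsAreZeroYg}, this gives $\pi_{n-1}^\QQ\cong\bigoplus_{r>0}(H_g\oplus H_{V_g})$. In degree $n$ the fibre contributes $0$ (note $n\neq 2n-2$ as $n\geq 3$), and $\delta_n^Y=0$ once more, so the total space maps isomorphically onto $\pi_n^\QQ(\BAutvp(\yg;\xg))$. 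In degree $2n-2$ the base vanishes, which leaves $\coker(\delta^Y_{2n-1})$.

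Equivariance comes for free. The fibre sequence \eqref{BEmbsfrFibreSeqYg} is $\brho$-equivariant, and the $\pi_1$ of the total space acts compatibly on all terms, so every map in the long exact sequence is a map of $\QQ[\mcgRhoYgsfr]$-modules. The isomorphisms above therefore hold as $\QQ[\mcgRhoYgsfr]$-modules, with the module structure on the fibre terms given by \cref{MCGactionYg}.

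The substantive input is \cref{ConnectingMapsAreZeroYg}, which we take as given. The one place needing care is the degree bookkeeping for the base: we must confirm that $\pi^\QQ_*(\BAutvp(\yg;\xg))$ vanishes for $n<*\leq 2n-2$. By \cref{homotopy groups of mapping of pair}, $\pi_{*}$ of the base is governed by $\pi^\QQ_{*-1+n}(\xg)$, which is zero for $2n\leq *-1+n\leq 3n-3$, i.e. for $n<*\leq 2n-2$, as needed.
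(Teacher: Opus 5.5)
Your proposal is correct and is the same argument the paper uses: read off the long exact sequence of \eqref{BEmbsfrFibreSeqYg}, with the fibre computed by \cref{MCGactionYg}, and the base identified rationally with $\BAutvp(\yg;\xg)$ via \cref{stable framed block embs are autos} and \cref{homotopy groups of mapping of pair}, so that in the relevant range it has rational homotopy only in degrees $n$ and $2n-1$. Then $\delta^Y_n=0$ by \cref{ConnectingMapsAreZeroYg}; your degree bookkeeping for $n\geq 3$, including the vanishing of the base in degrees $n-1$ and $2n-2$, is exactly what is needed.
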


\subsection{Homology of embedding spaces of \texorpdfstring{$(Y_g;X_g)$}{(Yg,Xg)}}\label{HomologyEmbYgSection}

In this section, we compute the homotopy groups of $\BEmbsfr(Y_\infty;X_\infty)_\ell^+$ in roughly in the range $*\leq 2n-2$. As in \cref{homotopyOfPlusSectionXg}, we start by computing the homology of the universal cover of the latter space before taking the plus-construction and colimit over $g$, which we denote by $\BEmbsfr(Y_g;X_g)_\ell^{\sim}.$

\begin{nota}\label{notation L M N of Y}
    Let $L^Y, M^Y, N^Y$ denote the $\QQ[\mcgRhoYgsfr]$-modules
    \[
        L^Y \coloneq \pi_{n-1}^\QQ\left(\BEmbsfr(Y_g;X_g)_\ell\right), \hspace{15pt}
        M^Y \coloneq \pi_{n}^\QQ\left(\BEmbsfr(Y_g;X_g)_\ell\right), \hspace{15pt}
        N^Y \coloneq \pi_{2n-2}^\QQ\left(\BEmbsfr(Y_g;X_g)_\ell\right).
    \]
\end{nota}

The following computation follows verbatim as \cref{HomologyUniversalCOver}.

\begin{prop}\label{HomologyUniversalCOverYg}
    The rational homology groups of $\BEmbsfr(\yg;\xg)_\ell^\sim$ in degrees $*\leq 2n-2$ are given as $\QQ[\mcgRhoYgsfr]$-modules by 
    \[\widetilde{\H}_{*}\left(\BEmbsfr(Y_g;X_g)_\ell^\sim;\QQ\right) \cong \left\{ \begin{array}{cl}
             L^Y & *=n-1, \\[4pt]          
             M^Y   & *=n, \\[4pt]
             \sn(L^Y)\oplus \coker(d:L^Y\otimes M^Y\to N^Y) & *=2n-2, \\[4pt]
             0 & \text{otherwise,}
        \end{array}\right.\]where $d$ is a certain map.
\end{prop}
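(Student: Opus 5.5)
The plan is to copy the argument behind \cref{HomologyUniversalCOver}, using \cref{HomotopyBEmbsfrY} as input. By \cref{HomotopyBEmbsfrY}, the universal cover $\BEmbsfr(Y_g;X_g)_\ell^\sim$ has rational homotopy concentrated in degrees $n-1$, $n$ and $2n-2$ within the range $*\leq 2n-2$. These groups are $L^Y$, $M^Y$ and $N^Y$ respectively. Passing to the universal cover leaves the higher homotopy groups unchanged. It also leaves unchanged their structure as $\QQ[\mcgRhoYgsfr]$-modules, where the group acts through deck transformations together with the $\brho$-action.

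The first step is the Postnikov truncation $\tau_{\leq 2n-3}$. The only possible $k$-invariant connects $K(L^Y,n-1)$ to $K(M^Y,n)$. It lies in $\H^{n+1}(K(L^Y,n-1);M^Y)$, which vanishes rationally: the rational cohomology of $K(L^Y,n-1)$ sits in degrees that are multiples of $n-1$, and $n+1$ is not such a multiple for $n\geq 3$. Hence $\tau_{\leq 2n-3}\simeq_\QQ K(L^Y,n-1)\times K(M^Y,n)$, and this splitting is equivariant because the group acts by maps of Postnikov towers. The rational homology of this product in degrees $\leq 2n-2$ is $L^Y$ in degree $n-1$, $M^Y$ in degree $n$, and $\sn(L^Y)$ in degree $2n-2$. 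The Künneth term $L^Y\otimes M^Y$ appears only in degree $2n-1$, and every other product lies above $2n-2$.

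The second step uses the Serre spectral sequence of the fibration $K(N^Y,2n-2)\to\tau_{\leq 2n-2}\to\tau_{\leq 2n-3}$. In total degrees $\leq 2n-2$, the fibre contributes only $N^Y$ in degree $2n-2$. The one relevant differential is the transgression $d:\H_{2n-1}(\tau_{\leq 2n-3};\QQ)\to N^Y$. Its source in that degree is $L^Y\otimes M^Y$, together with possibly $\H_{2n-1}(K(M^Y,n);\QQ)$. That extra term vanishes when $n$ is odd, and when $n$ is even it sits in degree $2n\neq 2n-1$, so it never contributes. This yields $\coker(d)$ in degree $2n-2$. The resulting extension with $\sn(L^Y)$ splits over $\QQ[\mcgRhoYgsfr]$, because the degree-$(2n-2)$ homology of the base is a quotient of the total space's homology that maps in via the truncation map. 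Finally, $\H_*(\tau_{\leq 2n-2})\cong\H_*(\BEmbsfr(Y_g;X_g)_\ell^\sim)$ for $*\leq 2n-2$.

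I expect the main obstacle to be equivariance of the splitting in degree $2n-2$. The statement only asks for an abstract direct sum with a ``certain map'' $d$, and this is handled the same way as in \cref{HomologyUniversalCOver}. If a natural splitting is not apparent, I would fall back on a filtration whose associated graded is $\sn(L^Y)\oplus\coker(d)$. That weaker form suffices for all later uses through gr-odd and gr-even arguments, since \cref{odd modules are cool} shows those classes are closed under extensions.
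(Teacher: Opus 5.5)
Your route is the paper's own. The paper proves this ``verbatim as \cref{HomologyUniversalCOver}'': it splits $\tau_{\leq 2n-3}$ of the universal cover as $K(L^Y,n-1)\times K(M^Y,n)$ and reads off the Serre spectral sequence of $\tau_{\leq 2n-2}\to\tau_{\leq 2n-3}$. Two of your justifications do not hold as written, though.

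\emph{The $k$-invariant at $n=3$.} You claim $n+1$ is never a multiple of $n-1$ for $n\geq 3$, but at $n=3$ you have $n+1=4=2(n-1)$. Then $\H^{4}(K(L^Y,2);M^Y)\cong\Hom(\mathrm{Sym}^2L^Y,M^Y)\neq 0$. A nonzero $k$-invariant would replace $M^Y$ in degree $n$ by the cokernel of a transgression $\mathrm{Sym}^2L^Y\to M^Y$. The paper's ``for degree reasons'' has the same blind spot; for $n\geq 4$ your argument is fine. To close it, use the fibre sequence \eqref{BEmbsfrFibreSeqYg}. Write $\widetilde E=\BEmbsfr(Y_g;X_g)_\ell^\sim$ and $\widetilde B$ for the universal cover of $\BbEmbsfr(Y_g;X_g)_\ell$.
\begin{itemize}
\item The lift $\widetilde E\to\widetilde B$ is an isomorphism on $\pi_n^\QQ$. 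This holds because $\pi_n^\QQ$ of the fibre vanishes (\cref{MCGactionYg}) and $\delta^Y_n=0$ (\cref{ConnectingMapsAreZeroYg}).
\item $\widetilde B$ is rationally $(n-1)$-connected (\cref{stable framed block embs are autos} and \cref{homotopy groups of mapping of pair}). So its rational Hurewicz map is an isomorphism in degree $n$.
\item By naturality, the rational Hurewicz map $\pi_n^\QQ(\widetilde E)\to\H_n(\widetilde E;\QQ)$ is therefore injective.
\item By the Serre spectral sequence of $K(M^Y,n)\to\tau_{\leq n}\widetilde E\to K(L^Y,n-1)$, the kernel of that Hurewicz map is exactly the image of the transgression $\H_{n+1}(K(L^Y,n-1);\QQ)\to M^Y$ dual to the $k$-invariant.
\end{itemize}
Hence the $k$-invariant vanishes for all $n\geq 3$.

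\emph{The equivariant splitting in degree $2n-2$.} You observe that $\sn(L^Y)=\H_{2n-2}(\tau_{\leq 2n-3};\QQ)$ receives a map from $\widetilde{\H}_{2n-2}(\widetilde E;\QQ)$ via truncation. That only gives the surjective edge homomorphism, not an equivariant section. Since $\mcgRhoYgsfr$ is infinite, no averaging argument is available either. What the spectral sequence actually proves is an equivariant extension $0\to\coker(d)\to\widetilde{\H}_{2n-2}(\widetilde E;\QQ)\to\sn(L^Y)\to 0$. The paper, too, simply reads this off the $E^\infty$-page without justifying the direct sum.

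Your fallback to a filtration is therefore the honest form of the statement, and it is all that is used later:
\begin{itemize}
\item \cref{HomotopyPlusConstructionY} only needs $E^2_{0,2n-2}$. This is the colimit of the coinvariants of $\widetilde{\H}_{2n-2}$ itself, which is how $\firstyg$ should then be read.
\item \cref{mainInputWatanabe} only uses the edge surjection onto $\sn(L^Y)$ and $(-1)$-eigenspace properties, and both pass through extensions.
\end{itemize}
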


 Let $\firstyg$ denote the $\QQ[\mcgRhoYgsfr]$-module given by
 \[
 \firstyg\coloneq \colim_g \smash{\left[\sn(L^Y)\oplus \coker(d:L^Y\otimes M^Y\to N^Y)  \right]_{\mcgy}}
 \]where $d$ is the map from \cref{HomologyUniversalCOverYg}. As before, conjugation by $\rho=(-1)^{\oplus g}$ induces a $\brho$-action on $\BGL_g(\ZZ[\pi])$, and hence on $\BGL_\infty(\ZZ[\pi])$, that makes the map $\Bmcgyi\to \BGL_\infty(\ZZ[\pi])$ be $\brho$-equivariant.

\begin{teo}\label{HomotopyPlusConstructionY}
    In the range $*\leq 2n-2$, we have an isomorphism of $\QQ[\brho]$-modules
    \[\pi_{*}^\QQ\left(\BEmbsfr(Y_\infty;X_\infty)_\ell^+\right) \cong \pi_*^\QQ\left(\BGL_\infty(\ZZ[\pi])^+\right) \oplus\left\{ \begin{array}{cl}
             \firstyg & *=2n-2, \\[4pt]
             0 & \text{otherwise.}
        \end{array}\right.\]
\end{teo}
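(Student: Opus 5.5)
We will mirror the proof of \cref{HomotopyPlusConstruction} word for word, replacing $X_g$ by the pair $(Y_g;X_g)$ and \cref{stable mcg of xg is gw} by \cref{stable mcg of yg are k theory}. Concretely, we apply \cref{plusLemma} with $G=\brho$, $k=2n-3$, $A=\firstyg$ to the map of simple spaces $r\colon \BEmbsfr(Y_\infty;X_\infty)_\ell^+\to \BGL_\infty(\ZZ[\pi])^+$. This map factors through $(\Bmcgyi)^+$, and the second factor is a rational equivalence by \cref{stable mcg of yg are k theory}. Both spaces are identity components of group completions of the monoids $\coprod_g\BEmbsfr(Y_g;X_g)_\ell$ and $\coprod_g\Bmcgy$ (boundary connected sum of $4$-ads), hence simple. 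The map to $(\Bmcgyi)^+$ is an $\EE_1$-map of connected $\EE_1$-spaces, and it is $\brho$-equivariant because $\rho$ acts compatibly via \ref{action on framings} and \ref{equivarian}.

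\textbf{Step 1: low-degree isomorphism.} We run the Serre spectral sequence of the universal cover fibration $\BEmbsfr(Y_\infty;X_\infty)^\sim_\ell\to\BEmbsfr(Y_\infty;X_\infty)_\ell\to\Bmcgyi$, using \cref{HomologyUniversalCOverYg} for the fibre. The key input is that $L^Y$ and $M^Y$ are gr-odd $\QQ[\mcgy]$-modules for $\iota=\iota_Y$ lifting $-\id$ (\cref{-id exists}).
\begin{itemize}
\item For $L^Y\cong\bigoplus_{r>0}(H_g\oplus H_{V_g})$ (by \cref{HomotopyBEmbsfrY}), $-\id$ acts by $-1$ on $H_g=[H_{X_g}]_\pi$ and on $H_{V_g}$. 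We must check that this is the action through $\nu$ in \eqref{bighomomorphismdiagram}, so that it agrees with the pseudoisotopy identification of \cref{MCGactionYg}.
\item For $M^Y\cong\pi_n^\QQ(\BAutvp(Y_g;X_g))$, \cref{homotopy groups of mapping of pair} embeds it in $\Hom^{\mathrm{ext}}_{\QQ[\pi]}(\pixq,\pi^\QQ_{2n-1}(X_g))$. This is a Hom from an odd module to an even module (the target is quadratic in $\pixq$), hence gr-odd by \cref{tensors and homs of odd/even}, and the submodule is gr-odd by \cref{odd modules are cool}.
\end{itemize}
\cref{center kills for odd} then kills all $E^2_{s,n-1}$ and $E^2_{s,n}$, and the remaining rows vanish below $2n-2$. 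So $\H_*$ maps isomorphically for $*\le 2n-3$.

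\textbf{Step 2: surjectivity in degrees $2n-2$ and $2n-1$.} By Milnor--Moore, $\H_*$ of a connected $\EE_1$-space is generated by products of lower-degree classes together with $\pi_*^\QQ$. By \cref{computation of K}, $\pi^\QQ_*(\BGL_\infty(\ZZ[\pi])^+)\cong \K^\QQ_*(\ZZ)\oplus\K^\QQ_{*-1}(\ZZ)$ is nonzero in degrees $\equiv1,2\pmod 4$. Unlike the $X_g$ case, this group need not vanish in degrees $2n-2,2n-1$, and this is the main obstacle. I would try to show that the $K$-theory classes are hit. First, the map $\Bmcgy\to\BGL_g(\ZZ[\pi])$ admits a section up to rational homology, since $\GL(\piy)$ acts by block diffeomorphisms/embeddings via the splitting of \cref{extensions of mcg of yg} ($A\mapsto A\oplus(A^{-1})^\vee$ realized by handle-slides in $V_g$ plumbed with $S^1\times D^{2n}$). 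Alternatively, we can argue from the spectral sequence alone: $E^2_{s,t}=0$ for $0<t<2n-2$, so the edge map $\H_{m}(\text{total})\to\H_{m}(\Bmcgyi)$ is onto for $m\le 2n-1$, because the only differentials that could obstruct this land in rows $t\ge 2n-2$ and start at $s\ge 2n-1$. I expect this second argument to suffice: $d^{2n-1}\colon E_{2n-1,0}\to E_{0,2n-2}$ is the only risk in degree $2n-1$. I would kill it by first proving surjectivity onto the $\pi_*$-generators, for instance via the $\EE_1$-argument applied to decomposables, with the primitive classes handled by the section.

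\textbf{Step 3: kernel in degree $2n-2$.} Once the differential $d^{2n-1}$ out of $E^2_{2n-1,0}$ is shown to vanish (surjectivity in degree $2n-1$ through the edge map), the kernel of $\H_{2n-2}$ is $E^2_{0,2n-2}=\firstyg$, via $\colim_g$ of coinvariants of \cref{HomologyUniversalCOverYg}. Maschke's theorem for $\brho$ splits the resulting extension compatibly with $\H_{2n-2}(r)$. \cref{plusLemma} then gives the stated isomorphism of $\QQ[\brho]$-modules.
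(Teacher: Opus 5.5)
Your Steps 1 and 3 match the paper's argument. Step 2, however, has a genuine gap at exactly the point you flag: surjectivity on $\H_{2n-1}(-;\QQ)$. This step cannot be skipped. It is the degree-$(k+2)$ surjectivity hypothesis of \cref{plusLemma}, and it is also what forces $d^{2n-1}\colon E_{2n-1,0}\to E_{0,2n-2}$ to vanish; without it, the kernel in degree $2n-2$ is only a quotient of $\firstyg$.

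By the Milnor--Moore argument, decomposable classes are hit. So everything reduces to showing that $\pi^\QQ_{2n-1}$ of $\BEmbsfr(Y_\infty;X_\infty)^+_\ell\to\BGL_\infty(\ZZ[\pi])^+$ surjects onto $\K^\QQ_{2n-1}(\ZZ[\pi])$, which is $\QQ$ whenever $n$ is odd. Your proposed ``section'' does not achieve this.
\begin{itemize}
\item A rational section of $\Bmcgy\to\BGL_g(\ZZ[\pi])$ is beside the point. Stably that map is already a rational homology equivalence; the classes have to be lifted along $\BEmbsfr(Y_g;X_g)_\ell\to\Bmcgy$.
\item Realising individual elements of $\GL(\piy)$ by handle slides is purely $\pi_0$-information. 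The splitting in \cref{extensions of mcg of yg} is only of the algebraic extension $\Mg\to\Ugext\to\GL(\piy)$, not of the extensions involving $\pi_0$ of the embedding space.
\item In any case, such a splitting would not produce a map $\BGL_g(\ZZ[\pi])\to\BEmbsfr(Y_g;X_g)_\ell$, which is what your section-type argument would need.
\item Your spectral-sequence-only alternative cannot decide $d^{2n-1}$, as you yourself note, so the argument is circular at this point.
\end{itemize}

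The missing ingredient is genuine geometric input, which the paper obtains by comparison with the handlebody pair. The inclusion $(V_g;W_{g,1})\hookrightarrow(Y_g;X_g)$ gives a commutative square whose top row is $\BEmbsfr(V_\infty;W_{\infty,1})^+_\ell\to\BGL_\infty(\ZZ)^+$ and whose right column is extension of scalars $\BGL_\infty(\ZZ)^+\to\BGL_\infty(\ZZ[\pi])^+$.
\begin{itemize}
\item The top map is rationally $(3n-5)$-connected by Krannich--Randal-Williams \cite[Thm.~7.1]{krannich2021diffeomorphismsdiscssecondweiss}, after passing to stable framings, stabilising in $g$ and plus-constructing.
\item On $\pi^\QQ_{2n-1}$, the right map $\K^\QQ_{2n-1}(\ZZ)\to\K^\QQ_{2n-1}(\ZZ[\pi])$ is an isomorphism by Bass--Heller--Swan, because $\K^\QQ_{2n-2}(\ZZ)=0$.
\end{itemize}
Hence the bottom map is surjective on $\pi^\QQ_{2n-1}$, and therefore (via Milnor--Moore together with the lower-degree surjectivity) on $\H_{2n-1}(-;\QQ)$. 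With that in hand, $d^{2n-1}=0$ and your Step 3 goes through as written.
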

\begin{proof}
    We prove this statement by applying \cref{plusLemma} to the map $p:\BEmbsfr(Y_\infty;X_\infty)_\ell^+\to \BGL_\infty(\ZZ[\pi])^+$ of simple spaces, by proving that this map induces an isomorphism on $\H_*(-;\QQ)$ for $*\leq 2n-3$ and a surjection in degrees $*=2n-2,2n-1.$ As the map $(\Bmcgyi)^+\to \BGL_\infty(\ZZ[\pi])^+$ is a rational equivalence by \cref{stable mcg of yg are k theory}, it suffices to prove the same claim for the map $\BEmbsfr(Y_\infty;X_\infty)_\ell^+\to (\Bmcgyi)^+.$ Consider the Serre spectral sequence of the universal cover fibre sequence
    \[\BEmbsfr(Y_\infty;X_\infty)_\ell^\sim\to \BEmbsfr(Y_\infty;X_\infty)_\ell\to \Bmcgyi.\]Once again as in the proof of \cref{HomotopyPlusConstruction}, the $\mcgy$-modules $L^Y$ and $M^Y$ are gr-odd in the sense of \cref{gr odd modules}. Thus, the rational homology of $\mcgy$ with coefficients in $L^Y$ and $M^Y$ vanishes in all degrees. In the same way as in \cref{HomotopyPlusConstruction}, we conclude that the spectral sequence of the fibre sequence above has the following behaviour: for $t\leq 2n-2$, the group $E^2_{s,t}$ vanishes if $t\neq 0, 2n-2$. For $t=0$, we have $E^2_{s,0}\cong \H_s(\Bmcgyi;\QQ)$. The entry $E^2_{0,2n-2}$ is isomorphic to $\firstyg.$ From this behaviour, we automatically conclude that the map $\BEmbsfr(Y_\infty;X_\infty)_\ell\to \Bmcgyi$ is an isomorphism on rational homology in degrees less than $2n-2$ and surjective in degree $2n-2,$ hence showing the first part of the original claim. 
    
    It remains to show that $\H_{2n-1}(p;\QQ)$ is a surjection. From the discussion above, we know that $p$ is rationally $(2n-2)$-connected, as it is a map of simple spaces that is homologically $(2n-2)$-connected. Recall from the proof of \cref{HomotopyPlusConstruction} that the homology of a connected $\EE_1$-space is isomorphic as an algebra to the universal enveloping algebra of its rational homotopy Lie algebra. Thus, to show that $\H_{2n-1}(p;\QQ)$ is surjective (and given that it is surjective on the lower degrees), it suffices to prove that $\pi_{2n-1}^\QQ(p)$ is surjective. Consider the following commutative square induced by the inclusion $\iota:(V_g;W_{g,1})\hookrightarrow (Y_g;X_g)$
    \begin{equation}\label{square with vg}
        \begin{tikzcd}[row sep = 15pt]
            \BEmbsfr(V_\infty;W_{\infty,1})^+_\ell \arrow[d] \arrow[r] & \BGL_\infty(\ZZ)^+\arrow[d]\\
            \BEmbsfr(Y_\infty;X_\infty)_\ell^+\arrow[r] & \BGL_\infty(\ZZ[\pi])^+,
        \end{tikzcd}
    \end{equation}where the top left corner is the path component of the stable framing given by the restriction of $\ell$ along $\iota$, the left vertical map is given by extending embeddings and stable framings, and the right vertical map is induced by extension of scalars. We deduce that the bottom map is surjective on rational homotopy groups in degree $2n-1$ by the following two observations:
    \begin{itemize}
        \item \textit{The map $\pi_{2n-1}^\QQ(\BGL_\infty(\ZZ)^+)\to \pi_{2n-1}^\QQ(\BGL_\infty(\ZZ[\pi])^+)$ is an isomorphism}: this follows by identifying the source and target with the $K$-theory groups $\K_{2n-1}^{\QQ}(\ZZ)$ and $\K^{\QQ}_{2n-1}(\ZZ[\pi]),$ and by applying the Bass--Heller--Swan isomorphism and the fact that $\K_{2n-2}^{\QQ}(\ZZ)=0.$
        
        \item \textit{The map $\BEmbsfr(V_\infty;W_{\infty,1})^+_\ell\to \BGL_\infty(\ZZ)^+$ is rationally $(3n-5)$-connected}: this follows from \cite[Thm. 7.1]{krannich2021diffeomorphismsdiscssecondweiss} by replacing the source by the variant with stable framings instead of framings (which does not affect the result by Lemma 2.8 in loc.cit, and using the fact that $\ell$ is the standard framing of Section 2.5.1 of loc.cit), taking the colimit over $g$ on the source and plus-constructing (making it an $\EE_1$-space), hence obtaining rational connectivity in the desired range. 
    \end{itemize}
    Thus, we observe that the bottom map of \eqref{square with vg} is surjective on rational homotopy in degree $2n-1$. Hence, we conclude that $\H_{2n-1}(p;\QQ)$ is surjective. We now apply \cref{plusLemma} for the map $p$ and the group $A=\firstyg$. To check that the kernel of $\H_{2n-1}(p;\QQ)$ is isomorphic to $\firstyg$, we proceed verbatim as in \cref{HomotopyPlusConstruction}. Hence, we obtain the desired isomorphism as $\QQ[\langle\rho\rangle]$-modules, which finishes the proof.   
\end{proof}

\subsection{The sources of infinite generation \texorpdfstring{$\firstxg$}{LXg} and \texorpdfstring{$\firstyg$}{LYg}}\label{InfGenSourcesSection}

In this subsection, we analyse the $\QQ$-vector spaces $\firstxg$ and $\firstyg$ from \cref{HomotopyPlusConstruction,HomotopyPlusConstructionY}. The main result of this subsection (and the only one carried to the remaining sections) is the following.

\begin{teo}\label{mainInputWatanabe}
    The induced map $\pi_{2n-2}^\QQ(\BEmbsfr(Y_\infty;X_\infty)_\ell^+)\to \pi_{2n-2}^\QQ(\BEmbsfr(X_\infty)_\ell^+)$ satisfies the following properties:
    \begin{enumerate}[itemsep=0pt,label=(\roman*)]
        \item\label{surj of watanabe} The map is surjective.
        \item\label{target watanabe} The kernel and target are infinitely generated as a $\QQ$-vector spaces. 
        \item\label{ly is in -1} The summand $\firstyg$ of the source is in the $(-1)$-eigenspace of the reflection involution $\rho$.
    \end{enumerate}
\end{teo}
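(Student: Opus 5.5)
We reduce everything to the summands $\firstxg$ and $\firstyg$. By \cref{HomotopyPlusConstruction,HomotopyPlusConstructionY}, in degree $2n-2$ we have $\pi_{2n-2}^\QQ(\BEmbsfr(X_\infty)_\ell^+)\cong \pi_{2n-2}^\QQ(\BUgmininf^+)\oplus\firstxg$ and $\pi_{2n-2}^\QQ(\BEmbsfr(Y_\infty;X_\infty)_\ell^+)\cong \pi_{2n-2}^\QQ(\BGL_\infty(\ZZ[\pi])^+)\oplus \firstyg$.

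By \cref{computation of K}, $\K^\QQ_{2n-2}$-type terms vanish or are finite-dimensional: $\pi_{2n-2}^\QQ(\BGL_\infty(\ZZ[\pi])^+)\cong \K^\QQ_{2n-2}(\ZZ)\oplus\K^\QQ_{2n-3}(\ZZ)$, and $\pi_{2n-2}^\QQ(\BUgmininf^+)$ is given by \cref{computation of GW}. Both are at most one-dimensional. The restriction map $\partial_2$ is compatible with the splittings up to a map between the finite-dimensional pieces, namely the map induced by $\GL(\piy)\to \Ugmin$ via hyperbolisation. Consequently, infinite generation and surjectivity hinge on the map $\firstyg\to \firstxg$ induced by restriction.

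To analyse this map, I would identify $\firstxg$ and $\firstyg$ using \cref{HomologyUniversalCOver,HomologyUniversalCOverYg}. The summands $[\sn(L)]_{\Embmcgsfr}$ and $[\sn(L^Y)]_{\mcgy}$ are computed from $L=\bigoplus_{r>0}H_g$ and $L^Y=\bigoplus_{r>0}(H_g\oplus H_{V_g})$. Here $\sn(\bigoplus_r H_g)$ contains the summands $H_g^{(r)}\otimes H_g^{(s)}$ for $r\neq s$. After taking coinvariants under the arithmetic group $\Omega_g^{\min}$, stably in $g$, each contributes a copy of $\QQ$ from the invariant form $\lambda$. Hence $[\sn(L)]$ contains $\bigoplus_{r<s}\QQ$, which is infinite-dimensional.

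The coker-of-$d$ terms are quotients of $N$. I would argue that these are also surjected onto compatibly, since $\partial_2$ on $N^Y\to N$ is surjective. This follows from \cref{MCGactionYg}, where $\partial_2$ is the projection onto the $X_g$-summands, and from \cref{splittingprop}. Because $L^Y\to L$ is also split surjective, $\sn(L^Y)\to\sn(L)$ is surjective, and surjectivity passes to coinvariants and colimits. Together with the finite-dimensional piece, which is handled using the $C_2$-eigenspace decomposition and, if needed, the fact that $K_{2n-1}$ classes map compatibly, this gives \ref{surj of watanabe}.

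For \ref{target watanabe}, the target contains $\bigoplus_{r<s}\QQ$ as above. For the kernel, I would use the extra pieces of $\firstyg$: $H_{V_g}^{(r)}\otimes H_g^{(s)}$ and $H_{V_g}^{(r)}\otimes H_{V_g}^{(s)}$. Invariant pairings exist for $H_{V_g}$ against $H_g$ under the group $\Ugextl$, coming from the pairing $\piy\otimes\pirel\to\ZZ[\pi]$. These survive in coinvariants, giving infinitely many classes killed by $\partial_2$. This is the step I expect to be the main obstacle. It requires computing stable coinvariants of $\Ugextl$ acting on tensor squares via a Betley/Djament-type argument like the one in \cref{stable mcg of yg are k theory}, and checking that the map $d$ does not kill these classes.

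For \ref{ly is in -1}, I would use \ref{action of rho on homology}: $\rho$ acts by $-1$ on the $a_i$ and trivially on the $b_i$. The invariant classes in $[\sn(L^Y)]$ arise from pairings of an $a$-type vector with a $b$-type vector, so $\rho$ acts by $-1$ on them. For the coker part, I would argue that $N^Y$, modulo the image of $d$, has the same parity, using the gr-odd/gr-even bookkeeping of \cref{tensors and homs of odd/even} together with the sign $-\tau_{\FLS}$ versus $\tau_{\FLS}$ in the coinvariants.
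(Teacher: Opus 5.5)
Your treatment of (i) and of the target in (ii) follows the paper. Surjectivity comes from the projections $L^Y\to L$ and $N^Y\to N$ (the latter by \cref{MCGactionYg}), passed through coinvariants and colimits. Infinite generation of $\firstxg$ comes from the $\lambda$-induced surjection $[\sn(L)]_{\Embmcgsfr}\twoheadrightarrow\bigoplus_{r>s>0}\QQ$; the form only gives a surjection, but that is all you need. The finite-dimensional piece needs no $C_2$- or $\K_{2n-1}$-argument: $\pi_{2n-2}^\QQ(\BUgmininf^+)=0$ by \cref{computation of GW} since $4n-2\equiv 2\bmod 4$, so the target is just $\firstxg$.

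The kernel half of (ii) has a genuine gap exactly where you expect it, and it cannot be closed as planned. For $g\geq 3$ there is \emph{no} nonzero $\mcgy$-invariant pairing $H_g\otimes H_{V_g}\to\QQ$.

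\begin{itemize}
\item Here $H_{V_g}=\H_n(V_g;\QQ)$ is the \emph{quotient} $H_g/\piwrelq$, on which $\mcgy$ acts through $\GL_g(\ZZ)$.
\item On $H_g$, however, $\mcgy$ acts through $\Ugextl$, which contains $\Mgs$ (\cref{extensions of mcg of yg sfr}). These elements act by $a\mapsto a+\bar B(a)$ with $\bar B(a)\in\piwrelq$, and trivially on $H_{V_g}$.
\item For instance, the form with $\mu(a_1,a_2)=1$, $\mu(a_2,a_1)=(-1)^{n+1}$ lies in $\Mgs$ and sends $a_1\mapsto a_1\pm b_2$. So $[a_1\otimes\bar a_2]=[(a_1\pm b_2)\otimes\bar a_2]$ forces $[b_2\otimes\bar a_2]=0$ in coinvariants, which is precisely the class your pairing is supposed to detect.
\item In general, invariance forces the pairing to vanish on $\piwrelq\otimes H_{V_g}$. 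It therefore factors through $H_{V_g}\otimes H_{V_g}$ with the diagonal $\GL_g(\ZZ)$-action, which has no invariants.
\item Hence $[L\otimes L']_{\mcgy}=0$, and also $[\sn(L')]_{\mcgy}=0$ by the transvection argument of \cref{reflection inv on watanabe stuff}.
\end{itemize}

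The invariant pairing $\piy\otimes\pirel\to\ZZ[\pi]$ you cite pairs $H_{V_g}$ with $\piwrelq$, which is not a summand of $L^Y$. The $Y$-specific summands $C^{Y_g}$, $[\sn(A_r^{Y_g})]_\pi$, $[B_r^{Y_g}]_\pi$ of $N^Y$ also die in coinvariants by the same transvection argument.

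The paper argues this step the same way in the second half of \cref{lambda and ker are infinite}. It asserts that the $\mcgy$-action on $L^Y$ factors through $\GL_g(\ZZ)$ and restricts $\lambda_L$ along $L'\hookrightarrow L$. As far as I can see, that assertion overlooks the same $\Mgs$-action, so it does not supply the missing ingredient either. Infinite generation of the kernel would need a genuinely different source of classes.

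For (iii), your argument for the $N^Y$-part uses tools that cannot see $\rho$.
\begin{itemize}
\item The gr-odd/gr-even calculus of \cref{tensors and homs of odd/even} tracks the central element $-\id$.
\item The sign in front of $\tau_{\FLS}$ only governs how the $r$- and $(-r)$-summands are identified.
\item Yet $a_i\otimes a_j$ and $a_i\otimes b_j$ are both $(-\id)$-even while having $\rho$-eigenvalues $+1$ and $-1$.
\end{itemize}
What is needed, and what the paper does, is to kill the $\rho$-even classes in coinvariants. A transvection $a_{i'}\mapsto a_{i'}+a_i$ with $i'\neq i,j$ (so $g\geq 3$) gives $[t^ka_i\otimes t^la_j]=0$, and similarly for the $b$'s. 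So $[H_{Y_g}^{\otimes 2}]$ vanishes and $[H_{X_g}^{\otimes 2}]$ lies in the $(-1)$-eigenspace (\cref{reflection inv on watanabe stuff}). Since $(L^Y)^{\otimes 2}$ and $N^Y$ are built from quotients of these (\cref{MCGactionYg}), (iii) follows. The same step is also what actually justifies your claim about $[\sn(L^Y)]$.
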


To prove this result, we focus on the $\QQ$-vector spaces $\firstxg$ and $\firstyg$. First, observe that, under the identifications of \cref{HomotopyPlusConstruction,HomotopyPlusConstructionY}, the map in \cref{mainInputWatanabe}, restricts to the map $\firstyg\to \firstxg$ which is given by taking colimits along $g$ of the maps 
\begin{equation}\label{map on universal covers}
    \widetilde{\H}_{2n-2}\left(\BEmbsfr(Y_g;X_g)_\ell^\sim;\QQ\right)\to \widetilde{\H}_{2n-2}\left(\BEmbsfr(X_g)_\ell^\sim;\QQ\right)
\end{equation}after taking coinvariants by $\mcgy$ on the source and $\Embmcgsfr$ on the target (recall \cref{HomologyUniversalCOver,HomologyUniversalCOverYg}). We conclude that the map \eqref{map on universal covers} is an extension of the following maps (recall \cref{l m n notation,notation L M N of Y}):
\begin{enumerate}[label=\protect\circled{\arabic*}]
    \item\label{the lambda 2 map} The map $\sn(L^Y)\to \sn(L)$ induced by the projection $L^Y=\bigoplus_{r>0}(H_g\oplus H_{V_g})\to \bigoplus_{r>0}H_g$ in each summand.
    \item\label{the cokernels map} The induced map on the cokernels of the connecting maps $\delta$ and $\delta^Y$ of \cref{HomotopyBEmbsfr,HomotopyBEmbsfrY}, respectively, induced by the map $\bEmbmodEmb(Y_g;X_g)\to \bEmbmodEmb(X_g)$ on $\pi_{2n-2}^{\QQ}.$
\end{enumerate}

We start by estimating the rank of the coinvariants of $\sn(L^Y)$ and $\sn(L)$ as modules over $\mcgy$ and $\Embmcgsfr$, which is the key ingredient to establish \ref{target watanabe} of \cref{mainInputWatanabe}. More precisely, we establish the following result. A directed system $A_1\to A_2\to \cdots $ is \textit{stably infinitely generated} if the its colimit $A_\infty$ is infinitely generated.

\begin{lemma}\label{lambda and ker are infinite}
    The systems of $\QQ$-vector spaces $\smash{\left[\sn(L)\right]_{\Embmcgsfr}}$ and $\smash{\ker\left(\left[\sn(L^Y)\right]_{\mcgy}\to \left[\sn(L)\right]_{\Embmcgsfr}\right)}$ indexed by $g\geq 0$ are stably infinitely generated.
\end{lemma}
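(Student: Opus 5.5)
We will compute the coinvariants explicitly after first simplifying the acting groups. By \cref{MCGrepBlockModEmb} and \cref{HomotopyBEmbsfr}, the module is $L=\bigoplus_{r>0}H_g^{(r)}$, and the action of $\Embmcgsfr$ factors through $\hMCGXg$, hence through $\Ugmin$. On each summand $H_g^{(r)}\cong[A_r]_\pi$ the group $\pi$ acts trivially. It follows that $\sn(L)=\bigoplus_{r}\sn(H_g^{(r)})\oplus\bigoplus_{r<s}H_g^{(r)}\otimes H_g^{(s)}$, where the $\pi$-action is trivial and the remaining action is through the image in $\Ugw$ (a finite-index subgroup, by \cref{mcg of xg} together with the finiteness of $L_g^{\sfr,\ell}$). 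Coinvariants under a finite normal subgroup and under passage to finite index behave controllably rationally. More precisely, the coinvariants of a rational representation under a finite-index subgroup surject onto the coinvariants under the full group. It therefore suffices to bound from below the $\Ugw$-coinvariants, or even just the $\mathrm{Sp}_{2g}(\ZZ)$/$\mathrm{O}_{g,g}(\ZZ)$-coinvariants, of $H_g^{\otimes 2}$.

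For the first system, we use that the $\Ugw$-invariant pairing $\lambda_W\colon H_g\otimes H_g\to\QQ$ is $(-1)^n$-symmetric. Since $H_g^{(r)}$ sits in degree $n-1$, the term $S^2_{(n-1)}$ picks out exactly the symmetry type $(-1)^{n}$ for which $\lambda_W$ is nonzero: $\mathrm{Sym}^2$ when $n$ is odd and $\Lambda^2$ when $n$ is even. For each pair $r\le s$, the form $\lambda_W$ then gives a nonzero invariant functional on the $(r,s)$-summand; for $r<s$ no symmetry condition is needed at all. These functionals are linearly independent, being supported on distinct summands. Hence $[\sn(L)]_{\Embmcgsfr}$ has dimension at least the number of pairs $r\le s$ with $r,s$ below any bound. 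This is compatible with stabilisation $g\to g+1$, because $\lambda_W$ restricts along $H_g\subset H_{g+1}$, so the colimit is infinite-dimensional. A subtle point is whether the $-\tau_{\FLS}$-coinvariants and the reflection $\rho$ could kill these classes. Since we only take coinvariants under $\Embmcgsfr$ (not $\rho$), the identification $r>0$ has already absorbed $\tau_{\FLS}$, so this does not arise.

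For the kernel, we have $L^Y=\bigoplus_{r>0}(H_g\oplus H_{V_g})$, acted on through $\Ugwextl$. For each pair $r<s$, the mixed term $H_g^{(r)}\otimes H_{V_g}^{(s)}$ lies in the kernel of the map \ref{the lambda 2 map} before taking coinvariants. We want to show its image in $[\sn(L^Y)]_{\mcgy}$ is nonzero, and the same applies to the $(s,r)$ variant. For this we need an invariant functional on $H_g\otimes H_{V_g}$ that vanishes on the image of the $H_g\otimes H_g$ part, i.e. a genuinely new invariant. Here is the candidate. The group $\Ugwextl$ preserves $K_g=\piwrel\otimes\QQ\subset H_g$, and $\lambda_W$ restricts to a perfect pairing $K_g\otimes H_{V_g}\to\QQ$ with $H_{V_g}=H_g/K_g$. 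Composing $H_g\to H_{V_g}$ with this pairing is not directly well-defined, however. Instead we use the evaluation $H_{V_g}\otimes K_g\to\QQ$ extended by a choice. A cleaner option is the $\GL$-invariant pairing $H_{V_g}^{\vee}\otimes H_{V_g}$: we identify $H_{V_g}^\vee\cong K_g$ and project $H_g^{(r)}\to$? Since $K_g\subset H_g$ is only a submodule, the functional should be defined on $H_g\otimes H_{V_g}$ via the quotient on the second factor. We therefore take $\mu(x\otimes \bar y)=\lambda_W(x,\tilde y)$, which is well-defined modulo $\lambda_W(x,K_g)$. That quantity is not zero in general, so the correct choice is the invariant $K_g$-component of the first factor.

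The main obstacle is exactly this construction of invariant functionals on the mixed summand that is independent of the pure $H_g$ part. The fallback is to pass to the Levi quotient $\GL_g(\ZZ)$ via the splitting of \cref{extensions of mcg of yg}. There, $H_g|_{\GL}\cong K\oplus K^\vee$ with $H_{V_g}=K^\vee$, so the coinvariants of $H_g\otimes H_{V_g}$ contain the $\GL$-invariant evaluation $K\otimes K^\vee\to\QQ$. We would then check that this class survives coinvariants under the unipotent radical $\Mgs$ using the gr-odd/centre-kills yoga of \cref{center kills for odd}. Summing over infinitely many pairs $r<s$ gives stable infinite generation.
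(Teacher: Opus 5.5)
Your first half is the paper's argument: the action of $\Embmcgsfr$ on $L$ factors through $\Ugwmin$, and the pieces $\lambda^{(r,s)}$ of the hyperbolic form give invariant functionals that detect infinitely many independent classes, compatibly with $g\to g+1$. You have the symmetry backwards on the diagonal summands, though. For $n$ odd, $\lambda$ is skew and $\sn(H_g)=\mathrm{Sym}^2(H_g)$; for $n$ even, $\lambda$ is symmetric and $\sn(H_g)=\Lambda^2(H_g)$. So $\lambda$ vanishes on $\sn(H_g^{(r)})$, and only the pairs $r<s$ contribute. This is why the paper gets $I\cong\bigoplus_{r>s>0}\QQ$. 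The slip is harmless, since the $r<s$ pairs already give infinite generation.

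The second half has a genuine gap. You never produce an invariant functional on the mixed summands $H_g^{(r)}\otimes H_{V_g}^{(s)}$, and your fallback cannot work. The image of $\mcgy$ in $\GL(H_g)$ is the parabolic $\Ugwextl$ (cf. \cref{extensions of mcg of yg sfr}). Its unipotent radical, the image of $\Mgs$, acts nontrivially on $H_g$ and trivially on $H_{V_g}=H_g/K$, where $K=\langle b_1,\dots,b_g\rangle$; write $\bar a_j$ for the image of $a_j$. For $i\neq j$, the form with $\mu(a_i,a_j)=1$ and $\mu(a_j,a_i)=(-1)^{n+1}$, extended $\ZZ[\pi]$-sesquilinearly, lies in $\Mgs$. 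A lift $u\in\mcgy$ acts by $a_i\mapsto a_i+(-1)^{n+1}b_j$ and $a_j\mapsto a_j+b_i$, and fixes every other basis vector of $H_g$ and all of $H_{V_g}$. Hence, in coinvariants,
\[
[a_i\otimes\bar a_j]=[u(a_i)\otimes u(\bar a_j)]=[a_i\otimes\bar a_j]+(-1)^{n+1}[b_j\otimes\bar a_j],
\]
so $[b_j\otimes\bar a_j]=0$. This is exactly the class your Levi-level evaluation $K\otimes K^\vee\to\QQ$ detects. Levi elements then kill $[b_k\otimes\bar a_j]$ for $k\neq j$, and $[a_i\otimes\bar a_j]$ for $g\geq 3$, as in \cref{reflection inv on watanabe stuff}. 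So $[H_g\otimes H_{V_g}]_{\mcgy}=0$. Dually, an invariant functional would be a $\mcgy$-map $H_g\to H_{V_g}^\vee\cong K$, and none exists: the extension $0\to K\to H_g\to H_{V_g}\to 0$ is non-split, and $\Hom_{\GL_g(\ZZ)}(K^\vee,K)=0$. The gr-odd yoga of \cref{center kills for odd} cannot rescue this. It proves vanishing, not survival, and $-\mathrm{id}$ acts by $+1$ on this summand anyway.

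The obstruction you flagged is also exactly where the paper's own proof of this half is deficient. The paper asserts that the $\mcgy$-action on $L^Y$ factors through $\GL_g(\ZZ)$, and restricts $\lambda_L$ along the $a_i$-span $L'\hookrightarrow L$. That restricted functional takes the value $\pm 1$ on $b_j\otimes\bar a_j$, so it is invariant only under the Levi factor. Moreover, the only $\mcgy$-equivariant endomorphisms of $H_g$ are scalars, so the $\lambda^{(r,s)}$ with $r<s$ detect all of $[\sn(L)]_{\mcgy}$. Together with $[\sn(L')]_{\mcgy}=0$, the computation above then makes the kernel in the statement zero for $g\geq 3$. So the second claim cannot be established along these lines at all, not just by your fallback.
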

\begin{proof}
    We start by proving that $\Embmcgsfr$-coinvariants of $\smash{\sn(L)}$ are infinitely generated, for every $g\geq 1$. First, we observe that the $\Embmcgsfr$-action on the latter vector space factors through the map $\Embmcgsfr\to \Ugwmin$: we see that it factors through $\Embmcgsfr\to \hMCGXg$ where $\hMCGXg=\pi_0(\Aut_*(X_g))$, as a consequence of \cref{MCGrepBlockModEmb}. Moreover, recall from \cref{FLSS1homology} that the $\QQ[\hMCGXg]$-module $L=\bigoplus_{r>0} H_g $ is a direct sum of the $\QQ[\hMCGXg]$-modules $H_g$. Now, these $\QQ[\hMCGXg]$-modules clearly factor through the map $\pi_0(\Aut_*(X_g))\to \pi_0(\Aut_*(W_{g,1}))\cong \GL_{2g}(\ZZ).$ Thus, the action of $\Embmcgsfr$ on $L$ factors through the map to $\GL_{2g}(\ZZ)$ and thus, it factors through the map to $\Ugwmin$. Thus, it suffices to prove that the $\Ugwmin$-coinvariants of this module are infinitely generated. To establish this claim, we produce a surjective map $\sn(L)\to I$ of $\QQ[\Ugwmin]$-modules, where $I$ is infinitely generated as a $\QQ$-vector space and whose $\Ugwmin$-action is trivial. Denote the $r$-th $H_g$ summand of $L$ by $\smash{H_g^{(r)}}$. Let $\lambda:H_g\otimes H_g\to \QQ$ be the standard $(-1)^n$-hyperbolic form, then let $\smash{\lambda_L:L\otimes L\to \bigoplus_{r,s>0}\QQ}$ be given by the sum $\smash{\lambda^{(r,s)}=\lambda:H_g^{(r)}\otimes H_g^{(s)}\to \QQ^{(r,s)}} $, using the isomorphism $\smash{L\otimes L\cong \bigoplus_{r,s>0} H_g^{(r)}\otimes H_g^{(s)}}$. Observe that $\lambda_L$ is $\Ugwmin$-equivariant (where the target is seen with trivial action), since by definition the isomorphisms in $\Ugwmin$ commute with the $(-1)^n$-hyperbolic form. Now, let $\sigma_L:L\otimes L\to L\otimes L$ be the involution that interchanges the coordinates and acts by $(-1)^{n-1}$. Recall that $\sn(L)$ is, by definition, the cokernel of $\sigma_L.$ We can now see that the following square commutes
    \[\begin{tikzcd}[row sep =15pt]
        L\otimes L \arrow[r, "\lambda_L"] \arrow[d, "\sigma_L"] & \bigoplus_{r,s>0}\QQ \arrow[d, "\sigma"]\\
        L\otimes L \arrow[r, "\lambda_L"] & \bigoplus_{r,s>0}\QQ
    \end{tikzcd}\]where $\sigma$ is the involution given by the sum of the maps $-\id:\QQ^{(r,s)}\to \QQ^{(s,r)}$ by the following argument: let $a,b\in H_g$ and $a^{(r)}, b^{(s)}$ be the corresponding classes in $H_g^{(r)}$ and $H_g^{(s)}$, then $\lambda_L(a^{(r)},b^{(s)})=(-1)^{n}\lambda_L(b^{(r)},a^{(s)})\in\QQ^{(r,s)}.$ Thus 
    \[\sigma(\lambda_L(a^{(r)},b^{(s)}))=-\lambda_L(a^{(s)},b^{(r)})=-(-1)^n\lambda(b^{(s)},a^{(r)})= (-1)^{n-1}\lambda_L(b^{(s)},a^{(r)})=\lambda_L(\sigma_L(a^{(r)},b^{(s)})).\]
    Denote by $I$ the quotient of $\bigoplus_{r,s>0}\QQ$ by $\sigma$, so the map $\lambda_L$ descends to a $\Ugwmin$-equivariant map $\lambda_L:\sn(L)\to I$, where the action on $I$ is trivial. Now, the map $\lambda_L$ is surjective, since the $(-1)^{n}$-hyperbolic form $\lambda$ is non-trivial. It remains to check that $I$ is infinitely generated: the map $\sigma$ identifies $\smash{\QQ^{(r,s)}}$ with $\smash{\QQ^{(s,r)}}$, so $I$ is isomorphic to $\bigoplus_{r>s>0}\QQ$ as a $\QQ$-vector space, and hence infinitely generated. Thus, we conclude that the coinvariants of $\sn(L)$ are infinitely generated, as they admit a surjective map to an infinitely generated $\QQ$-vector space $I$. Observe that the map $\lambda_L$ is compatible with stabilisation induced by the inclusion $H_g\hookrightarrow H_{g+1}$. Thus, the map $\lambda_L$ descends to a surjective map from the colimit to $I$. Hence, the colimit of this system is infinitely generated.

    We proceed to prove, by a similar argument, that the second $\QQ$-vector space in the statement is infinitely generated for every $g\geq 1$. Start by observing that it suffices to prove the same claim after we replace the target by the $\mcgy$-coinvariants of $\sn(L)$, as the map $\smash{[\sn(L^Y)]_{\mcgy}\to [\sn(L)]_{\Embmcgsfr}}$ factors through the map $\smash{[\sn(L^Y)]_{\mcgy}\to [\sn(L)]_{\mcgy}}$, induced by \ref{the lambda 2 map}. By the same argument as above, we see that the $\mcgy$-representations $L^Y$ and $L$ factor through the map $\mcgy\to \GL_g(\ZZ)$, by using \cref{MCGactionYg}. Moreover, this action is given by the sum of the actions on $H_{V_g}\oplus H_g$ and $H_g$, respectively. As $\GL_g(\ZZ)$-representations, we have a splitting $\sn(L^Y)\cong \sn(L)\oplus \sn(L')\oplus (L\otimes L')$, where $L'\coloneq \bigoplus_{r>0}H_{V_g}.$ Thus, the $\GL_g(\ZZ)$-coinvariants of $\sn(L^Y)$ split as a sum of the coinvariants of these summands. As the map $\sn(L^Y)\to \sn(L)$ is induced by the projection $L^Y=L\oplus L'\to L$, we see that the subspace $[L\otimes L']_{\GL_g(\ZZ)}\subset [\sn(L^Y)]_{\GL_g(\ZZ)}$ is contained in the kernel of $\smash{[\sn(L^Y)]_{\GL_g(\ZZ)}\to [\sn(L)]_{\GL_g(\ZZ)}}.$ Hence, we shall prove that this subspace is infinitely generated. Once again, it suffices to construct a surjective $\GL_g(\ZZ)$-equivariant map $L\otimes L'\to I$ to an infinitely generated vector space, where $\GL_g(\ZZ)$ acts trivially on the target. We proceed similarly by defining the map $  \smash{L\otimes L'\hookrightarrow L\otimes L\overset{\lambda_L}{\to} \bigoplus_{r,s>0}\QQ}$ given by the restriction of the $(-1)^n$-hyperbolic form, as defined above. This is $\GL_g(\ZZ)$-equivariant as it is the restriction of a $\Ugwmin$-equivariant map $\lambda_L$. On the other hand, it is visibly surjective, and the target is infinitely generated, hence so are the coinvariants of the source. By the same argument, this map to $I$ is compatible with stabilisation and hence the colimit is also infinitely generated.
\end{proof}

\begin{rmk}
    The proof of the lemma above can be extended to show that the map of $\QQ$-vector spaces $\smash{\left[\sn(L)\right]_{\Embmcgsfr}}\to I$ defined above is an isomorphism for $g\geq 2$. We sketch this computation now, which hinges on the following classical fact: let $G$ be a group and $M\to N$ be a surjective map of $\ZZ[G]$-modules where the action of $N$ is trivial. If there exists a subgroup $H< G$ such that the map $[M]_H\to N$ is an isomorphism, then so is the map $[M]_G\to N$. Consider the subgroup $H$ of $\Ugwmin$ generated by the matrix $M$ taking $a_i$ to $b_i$, and $b_i$ to $(-1)^na_i$, and by matrices of the form $P\oplus (P^{-1})^\vee$, for $P\in \GL_n(\ZZ)$ either a permutation matrix or a matrix taking the standard basis element $a_i$ to $a_i+a_j$ and fixing the remaining basis elements, for some $i\neq j$. Then one can show that the surjective map $\smash{\left[\sn(L)\right]_H}\to I$ is injective, by a variation of the proof of \cref{reflection inv on watanabe stuff} below. Applying the classical fact above, one concludes the claim.
\end{rmk}

The following lemma concerns the reflection involution on $\firstxg$ and $\firstyg$.

\begin{lemma}\label{reflection inv on watanabe stuff}
    For $g\geq 3$, the $\GL_g(\ZZ[\pi])$-coinvariants of the representations: 
    \begin{enumerate}
        \item\label{coinv of HY vanish} $(H_{Y_g})^{\otimes 2}$ vanish.
        \item\label{coinv of HX are negative} $(H_{X_g})^{\otimes 2}$ are in the $(-1)$-eigenspace of the reflection involution.
    \end{enumerate}
\end{lemma}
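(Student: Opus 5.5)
We will compute coinvariants by exhibiting explicit elements of $\GL_g(\ZZ[\pi])$ that act as scalars $-1$ (or identify classes with their negatives) on generators. Throughout, $H_{Y_g}\cong \QQ[\pi]\otimes H_{V_g}$ is free of rank $g$ over $\QQ[\pi]$ with basis $a_1,\dots,a_g$. The tensor square $(H_{Y_g})^{\otimes 2}$ (over $\QQ$, diagonal action) is then spanned by elements $t^p a_i\otimes t^q a_j$. The group $\GL_g(\ZZ[\pi])$ contains the diagonal matrices $D_k=\mathrm{diag}(1,\dots,-1,\dots,1)$ with $-1$ in slot $k$, the permutation matrices, and the scalar matrices $t^m\cdot\id$. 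The plan is to show that every generator becomes zero or equal to its negative in the coinvariants.

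For \ref{coinv of HY vanish}, take a generator $x=t^pa_i\otimes t^qa_j$. If $i\neq j$, acting by $D_i$ sends $x\mapsto -x$, so $2x=0$ in the coinvariants, and hence $x=0$ rationally. If $i=j$, use $g\geq 2$, in fact $g\geq 3$, to pick $k\neq i$ and the elementary matrix $E$ with $a_i\mapsto a_i+a_k$, fixing the other basis vectors. In the coinvariants we have $x=E x$, which expands to $x$ plus three cross terms, each involving an index $k$ in exactly one slot or both slots. The terms with exactly one $k$ already vanish by the first case. The term $t^pa_k\otimes t^qa_k$ is equal, via the permutation swapping $i$ and $k$, to $x$ itself. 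So we get $x=2x$, hence $x=0$. This settles \ref{coinv of HY vanish}.

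For \ref{coinv of HX are negative}, we write $H_{X_g}=H_{Y_g}\oplus H^{\mathrm{rel}}$, where $H^{\mathrm{rel}}$ is spanned by the $b_i$. Here $\GL_g(\ZZ[\pi])$ acts through the splitting $A\mapsto A\oplus (A^{-1})^\vee$ of \cref{extensions of mcg of yg}. So $(H_{X_g})^{\otimes 2}$ splits into four summands:
\[
H_{Y}\otimes H_Y,\qquad H_Y\otimes H^{\mathrm{rel}},\qquad H^{\mathrm{rel}}\otimes H_Y,\qquad H^{\mathrm{rel}}\otimes H^{\mathrm{rel}}.
\]
The purely $a$-summand vanishes by part 1, and the purely $b$-summand vanishes by the same argument, applied to the dual action (the matrices $D_k$ and permutations act in the same way on the dual basis). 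For the mixed terms $t^pa_i\otimes t^qb_j$, the argument with $D_i$ kills all terms with $i\neq j$. What remains, therefore, is spanned by classes $t^pa_i\otimes t^qb_i$. The scalar $t^m$ acts on $b$ by $t^{-m}$ (up to the involution conventions), which lets us normalize the exponents. Permutations identify different values of $i$.

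By property \ref{action of rho on homology}, the reflection $\rho$ acts by $a_i\mapsto -a_i$ and $b_i\mapsto b_i$, and it commutes with the relevant action up to conjugation. Hence $\rho$ sends every surviving mixed class to its negative, so the coinvariants lie in the $(-1)$-eigenspace. The main point to check carefully is that $\rho$ normalizes the $\GL_g(\ZZ[\pi])$-action, so that it descends to the coinvariants, and that the dual-action conventions (bars, $t\mapsto t^{-1}$) do not spoil the sign computations. Conjugation by $\rho$ maps $A\oplus(A^{-1})^\vee$ to itself, because $\rho=-\id\oplus\id$ is central in the block-diagonal image, so this step should be routine.
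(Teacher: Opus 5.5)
Your proof is correct. Part 2 follows the paper's argument: the pure $a$- and pure $b$-terms die in the coinvariants, and $\rho$ acts by $-1$ on every mixed generator $t^pa_i\otimes t^qb_j$.

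The difference is in Part 1.
\begin{itemize}
\item The paper uses a single transvection uniformly. It picks $i'\notin\{i,j\}$ and $A\colon a_{i'}\mapsto a_{i'}+a_i$, which gives $[t^pa_{i'}\otimes t^qa_j]=[t^pa_{i'}\otimes t^qa_j]+[t^pa_i\otimes t^qa_j]$. This needs a third index, which is exactly where $g\geq 3$ enters.
\item You kill the off-diagonal terms with the sign matrices $D_i$, and handle the diagonal terms with a transvection plus a permutation. This needs only $g\geq 2$. For the diagonal case you could also apply the paper's trick directly with $i'=k$, and skip the permutation.
\end{itemize}
You also make explicit that $\rho=-\mathrm{id}\oplus\mathrm{id}$ commutes with every $A\oplus(A^{-1})^\vee$, so it descends to the coinvariants. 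The paper leaves this implicit.

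Three minor points.
\begin{itemize}
\item For the pure $b$-summand, what you really use is that $A\mapsto (A^{-1})^\vee$ is an automorphism of $\GL_g(\ZZ[\pi])$. In particular the inverse-dual of a transvection is again a transvection. It is not enough that $D_k$ and permutations are preserved.
\item The further reductions of the mixed terms (killing $i\neq j$, normalising exponents) are not needed.
\item Your remark that $t^m\cdot\mathrm{id}$ acts on the $b_i$ by $t^{-m}$ is off. Sesquilinearity of $\lambda_X$ forces it to act by $t^m$. Nothing in your conclusion depends on this.
\end{itemize}
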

\begin{proof}
    We start by proving \ref{coinv of HY vanish}. Let $\{a_i\}_{i=1,\cdots,g}$ be the $\QQ[\pi]$-generating set of $H_{Y_g}$ represented by the inclusions $S^n\times \{*\}\subset V_g\subset Y_g.$ Then, as a $\QQ$-vector space, $(H_{Y_g})^{\otimes 2}$ is generated by $t^ka_i\otimes t^la_j$ for $k,l\in \ZZ$ and $1\leq i,j\leq g.$ We shall show that $t^ka_i\otimes t^la_j$ vanishes in the coinvariants for any choice of indices. We use $[x]$ to represent the class in the coinvariants represented by $x\in (H_{Y_g})^{\otimes 2}$. Let $1\leq i'\leq g$ be an integer different from $i$ and $j$ (possible as $g\geq 3$). Consider the matrix $A\in \GL_g(\ZZ[\pi])$ which takes leaves all generators fixed except $a_{i'}$ which is mapped to $a_{i'}+a_i.$ Then, after taking coinvariants, the following sequence of equalities holds
    \[[t^ka_{i'}\otimes t^la_{j}]=[A(t^ka_{i'})\otimes A(t^la_{j})]=[t^k(a_{i'}+a_i)\otimes t^la_{j}]=[t^ka_{i'}\otimes t^la_j]+[t^ka_i\otimes t^la_{j}],\]
    from where conclude that $[t^ka_i\otimes t^la_{j}]$ vanishes. This proves \ref{coinv of HY vanish}.

    We continue with \ref{coinv of HX are negative}. Let now $\{b_i\}_{i=1,\cdots,g}$ be the classes in $H_{X_g}$ represented by the inclusion $\{*\}\times S^n\subset W_{g,1}\subset X_g$, which together with $\{a_i\}_i$ form a $\QQ[\pi]$-generating set of $H_{X_g}.$ Thus, as a $\QQ$-vector space $(H_{X_g})^{\otimes 2}$ is generated by
    \[t^ka_i\otimes t^la_j \quad\qquad t^ka_i\otimes t^lb_j \quad\qquad t^kb_i\otimes t^la_j \qquad\quad t^k b_i\otimes t^lb_j.\]The same argument as the paragraph above proves that $[t^ka_i\otimes t^la_j]=0=[t^k b_i\otimes t^lb_j].$ As the reflection involution acts by $-1$ on the generators $a_i$ and the $+1$ on the $b_i$, we conclude that it acts by $-1$ on $t^ka_i\otimes t^lb_j$ and $t^kb_i\otimes t^la_j.$ This finishes the proof of \ref{coinv of HX are negative}.
\end{proof}

We are now ready to prove \cref{mainInputWatanabe} using \cref{lambda and ker are infinite,reflection inv on watanabe stuff}.

\begin{proof}[Proof of \cref{mainInputWatanabe}]
    We start by showing \ref{surj of watanabe}. By \cref{HomotopyPlusConstruction} and \cite[1677]{BRW}, we see that there is an isomorphism $\pi_{2n-2}^\QQ(\BEmbsfr(X_\infty)_\ell^+)\cong \firstxg$, so it suffices to show that $\firstyg\to \firstxg$ is surjective. As mentioned below \cref{mainInputWatanabe}, this map is given by the colimit of the maps \eqref{map on universal covers} after taking appropriate coinvariants, which in turn is an extension of the maps defined in \ref{the lambda 2 map} and \ref{the cokernels map}. The former map is induced by the projection $L^Y\to L$ and hence is surjective. The latter is surjective (in fact, the projection to a factor) by \cref{MCGactionYg}. Hence, the induced map after taking coinvariants is a surjection, and so is the map after taking colimits, as colimits commute with cokernels. This establishes \ref{surj of watanabe}.


    We now turn to prove \ref{target watanabe}. To do so, we start by showing that $\firstxg$ and $\ker(\firstyg\to \firstxg)$ are infinitely generated. This claim follows directly from \cref{lambda and ker are infinite}, as the colimits of the systems in loc.cit. receive surjections from $\firstxg$ and $\ker(\firstyg\to \firstxg)$, respectively, coming from taking colimits of the edge morphism $\widetilde{\H}_{2n-2}\left(\BEmbsfr(Y_g;X_g)_\ell^\sim;\QQ\right)\to \sn(L^Y)$ of the spectral sequence used in \cref{HomologyUniversalCOverYg}, and similarly for $X_g$. Now to prove \ref{target watanabe}, observe that $\firstyg$ is a summand of the source of the map we are interested in, by \cref{HomotopyPlusConstructionY}, and that $\firstxg$ is isomorphic to the target of this map, by \cref{HomotopyPlusConstruction}, as $\smash{\pi_{2n-2}^\QQ((\BEmbmcgsfr)^+)}$ vanishes, by the proof of loc.cit. The claim readily follows. 

    We finish this proof by concluding from \cref{reflection inv on watanabe stuff} that $\firstyg$ is in the $(-1)$-eigenspace of the reflection involution, hence establishing \ref{ly is in -1}. First, notice that this property is closed under subquotients, sums, and filtered colimits. Recall that $\firstyg$ is the colimit of the $\smash{\mcgy}$-coinvariants of $\smash{V\coloneq \widetilde{\H}_{2n-2}\left(\BEmbsfr(Y_g;X_g)_\ell^\sim;\QQ\right)}$. As the action of $\smash{\mcgy}$ on the latter vector space factors through $\GL_g(\ZZ[\pi])$, the coinvariants for these two actions agree. By \cref{HomologyUniversalCOverYg}, the vector space $V$ is a sum of a quotient of $(L^Y)^{\otimes 2}$ and a quotient of $N^Y$. By proceeding analogously as in the previous two paragraphs, one deduces that the coinvariants of the former are in the $(-1)$-eigenspace, as $L^Y$ is a sum of $H_{V_g}\otimes H_g$. Recall from \cref{HomotopyBEmbsfrY,MCGactionYg} that $N^Y$ is a sum of a quotient of various quotients of the representations present in \ref{coinv of HY vanish} and \ref{coinv of HX are negative} of \cref{reflection inv on watanabe stuff}. Thus, we see that $V$ lies in the $(-1)$-eigenspace after taking $\GL_g(\ZZ[\pi])$-coinvariants, and hence the same holds for $\firstyg.$ This finishes the proof.
\end{proof}

\section{Proofs of the main theorems}\label{ProofSection}

In this final section, we use the results of the previous sections to establish the main results stated in the introduction, together with some related consequences. Sections \ref{SectionThmA}, \ref{SectionThmB}, and \ref{SectionThmAB} depend on one another, while the remaining sections are essentially independent and may be read in any order.


\subsection{\texorpdfstring{\cref{main even}}{Theorem A}: Even-dimensional solid tori}\label{SectionThmA}

We now combine the work of \cite{BRW} with our computations, in the form of \cref{HomotopyPlusConstruction}, to deduce \cref{main even} for $d=2n$. We can consider the plus-constructed stably framed ``Weiss fibre sequence''
\begin{equation}\label{plusConstructedWeissXg}
    \BDiffsfr(S^1\times D^{2n-1})_L\to \BDiffsfr(X_\infty)_\ell^+\to \BEmbsfr(X_\infty)^+_\ell
\end{equation}defined as in \cite[(15) in p.1666]{BRW}, with the difference that we take the homotopy colimit over $g$ before plus-constructing (see also \eqref{weiss fibre for triads}). Using the work of Galatius and Randal-Williams \cite{GRWII}, the total space of \eqref{plusConstructedWeissXg} is weakly equivalent to $\Omega^\infty_0\Sigma^{\infty-2n}_+(S^1\times \SO/\SO(2n))$ (see also \cite[(19) in p.1668]{BRW}). We shall use the computation of the rational homotopy groups of the latter in \cite[Lemma 7.2.(ii)]{BRW} along with \cref{HomotopyPlusConstruction} to deduce \cref{main even}.

\begin{proof}[Proof of \cref{main even} for $d=2n$]
    We start by computing the rational homotopy groups of $\smash{\BDiffsfr(S^1\times D^{2n-1})_L}$. By \cite[Lemma 7.2.(ii)]{BRW}, we know that $\smash{\pi_k^\QQ(\BDiffsfr(X_\infty)_\ell^+)}$ is one-dimensional for $k=1$ and vanishes for all $1<k\leq 2n+2.$ Moreover, by the discussion in p. 1678 of loc.cit., the induced map $\smash{\pi_1^\QQ(\BDiffsfr(X_\infty)_\ell^+)} \to \pi_1^\QQ(\BEmbsfr(X_\infty)^+_\ell)$ is injective. Hence, we conclude that, for $1\leq *\leq 2n-3$
    \[\pi_*^\QQ(\BDiffsfr(S^1\times D^{2n-1});\ell)\cong \pi_{*-1}^\QQ\left(\BUgmininf^+\right) \oplus\left\{ \begin{array}{cl}
             \firstxg & *=2n-3, \\[4pt]
             0 & \text{otherwise.}
        \end{array}\right.\]by \cref{HomotopyPlusConstruction} and the long exact sequence of the fibre sequence \eqref{plusConstructedWeissXg}. Consider now the fibre sequence
    \[\map_{\partial}(S^1\times D^{2n-1},\SO/\SO(2n))\to\BDiffsfr(S^1\times D^{2n-1})\to \BDiffb(S^{1}\times D^{2n-1}).\]By \cite[1678]{BRW}, the connecting maps in the long exact sequence of this fibre sequence are zero. For $*\leq 2n-4$, we see that the source and target of the injective map
    \[\pi_*^\QQ(\map_{\partial}(S^1\times D^{2n-1},\SO/\SO(2n)))\to\pi_*^\QQ(\BDiffsfr(S^1\times D^{2n-1}))\]are finite-dimensional vector spaces with the same dimension, by the computation of the left-hand side in p.1678 of loc.cit. and the computation of $\pi_{*-1}^\QQ\left(\BUgmininf^+\right)$ in Lemma 7.3 of loc.cit (see also \cref{computation of GW}). Hence, this map is an isomorphism. In degree $2n-3$, we see that the left-hand side above vanishes, by the same computation. Thus, we conclude that
    \begin{equation}\label{pi BDiff even solid torus}
    \pi_*^\QQ(\BDiff_\partial(S^1\times D^{2n-1}))\cong \left\{ \begin{array}{cl}
             \firstxg & *=2n-3, \\[4pt]
             0 & \text{otherwise.}
        \end{array}\right.\end{equation}
        for all $*\leq 2n-3$. This finishes the proof, as $\firstxg$ is infinite-dimensional by \ref{target watanabe} of \cref{mainInputWatanabe}.
\end{proof}

\subsection{\texorpdfstring{\cref{main conc intro}}{Theorem B}: Concordances of even-dimensional solid tori}\label{SectionThmB}

We shall now apply \cref{HomotopyPlusConstructionY,mainInputWatanabe} to compute the rational homotopy type of $\BC(S^1\times D^{2n-1})$ in a range, hence establishing \cref{main conc intro} for $d=2n$. More precisely, we establish the following strengthening of \cref{main conc intro}, which will be crucial to deduce \cref{main range}.

\begin{teo}[\cref{main conc intro}, $d$ even]\label{main even conc}
    For $n\geq 3$ and $1\leq *\leq 2n-3$, then 
    \[\pi_*^\QQ\left(\BC(S^1\times D^{2n-1})\right)\cong \K_{*+1}^{\QQ}(\ZZ[\pi]) \oplus\left\{ \begin{array}{cl}
             \firstyg & *=2n-3, \\[4pt]
             0 & \text{otherwise.}
        \end{array}\right.\]
\end{teo}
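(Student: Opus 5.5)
We run the argument of \cref{main even} with the odd-dimensional Weiss fibre sequence \eqref{WeissYgXgFibSeqIntroduction} in place of \eqref{plusConstructedWeissXg}. Concretely, we consider the plus-constructed, stably framed version of \eqref{weiss fibre for 4 ads} for the $4$-ad $Y_g$:
\[
\BC(S^1\times D^{2n-1})\simeq \BDiffsfrv(S^1\times D^{2n})\longrightarrow \BDiffsfrv(Y_\infty)^+_\ell\longrightarrow \BEmbsfr(Y_\infty;X_\infty)^+_\ell .
\]
The identification of the fibre uses \cref{defn of stable framings section}: $\Diffv(\partial_1 Y_g\times[0,1])\simeq \C(S^1\times D^{2n-1})$, and forgetting stable framings is an equivalence there. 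As in \cite{BRW} and \cite{krannich2021diffeomorphismsdiscssecondweiss}, we take the homotopy colimit over $g$ before plus-constructing. A group-completion argument shows this is still a fibre sequence, since the base is an $\EE_1$-space and its $\pi_1$ acts trivially on the homology of the fibre.

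\textbf{Step 1 (total space).} By the stable homology of odd-dimensional triads from \cite{joaoTriads}, $\BDiffsfrv(Y_\infty)^+_\ell$ is equivalent to an infinite loop space of a Madsen--Tillmann type spectrum built from $S^1$ and $\SO/\SO(2n+1)$, analogous to $\Omega^\infty_0\Sigma^{\infty-2n}_+(S^1\times\SO/\SO(2n))$ in the even case. From this we expect its rational homotopy to be concentrated in very low degrees, at most $\pi_1$, and to vanish for $2\le *\le 2n-1$. So the first task is to compute this explicitly from \cite{joaoTriads}, using the rational homotopy of $\SO/\SO(2n+1)$ together with the $S^1$-factor.

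\textbf{Step 2 (long exact sequence).} We insert \cref{HomotopyPlusConstructionY}: in degrees $\le 2n-2$,
\[
\pi_*^\QQ(\BEmbsfr(Y_\infty;X_\infty)^+_\ell)\cong \pi_*^\QQ(\BGL_\infty(\ZZ[\pi])^+)\oplus \firstyg \text{ (the latter only in degree } 2n-2).
\]
The first summand equals $\K^\QQ_*(\ZZ[\pi])$ for $*\ge 1$ by \cref{computation of K}. If the total space is rationally trivial in degrees $2\le *\le 2n-2$, the connecting map gives
\[
\pi_*^\QQ(\BC(S^1\times D^{2n-1}))\cong \pi_{*+1}^\QQ(\BEmbsfr(Y_\infty;X_\infty)^+_\ell)
\]
for $1\le *\le 2n-3$, which is exactly the claimed formula.

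\textbf{Step 3 (main obstacle).} The hard part is the bottom of the range, $\pi_1$ and $\pi_2$. Any nonzero rational class of the total space in degree $1$ or $2$ must be handled: we must show it maps injectively to the base, so that it cancels against a $\K^\QQ_1$-type class, as in \cite[p.~1678]{BRW}. This is also where the definition of $\pi_1^\QQ$ of the fibre needs care. It is legitimate because $\pi_0\C$ is abelian by \eqref{HatcherWagoner}, and it is rationally trivial, which we use on $\pi_0$. I would first match the low-degree classes of the total space (for instance those coming from $\H^*(S^1)$ and the Euler or Pontryagin classes of $\SO/\SO(2n+1)$) with their images under restriction to the base. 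I would detect these images via the composite to $\BGL_\infty(\ZZ[\pi])^+\simeq_\QQ$ K-theory, or alternatively by comparing with the $V_g$ case of \cite{krannich2021diffeomorphismsdiscssecondweiss} through the square \eqref{square with vg}. The top degree $*=2n-3$ also needs $\pi_{2n-2}^\QQ$ of the total space to vanish, and this should follow from the Step 1 computation.
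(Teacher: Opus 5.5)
Your Steps 1 and 2 follow the paper's argument. By \cite{joaoTriads} the total space is $\Omega^\infty_0\Sigma^\infty_+(S^1\times\oo(2n+1))$. Since $\oo(2n+1)$ is rationally $(4n+2)$-connected, the only rational homotopy of the total space in the relevant range is a single $\QQ$ in degree $1$, so $\pi_2$ causes no difficulty. Combined with \cref{HomotopyPlusConstructionY}, this gives the claim for $2\le *\le 2n-3$.

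The gap is Step 3, the case $*=1$. There you need the map $\pi_1^\QQ(\BDiffsfrv(Y_\infty)^+_\ell)\cong\QQ\to\pi_1^\QQ(\BEmbsfr(Y_\infty;X_\infty)^+_\ell)$ to be nonzero, and neither of your proposed mechanisms shows this.
\begin{itemize}
\item Comparison through \eqref{square with vg} cannot succeed. Any class coming from the $V_\infty$-side maps to zero, because it factors through $\pi_1^\QQ(\BGL_\infty(\ZZ)^+)=\K_1^\QQ(\ZZ)=0$. The class you need to hit is the Bass--Heller--Swan summand $\K_0^\QQ(\ZZ)\subset\K_1^\QQ(\ZZ[\ZZ])$, i.e.\ the unit $t$, and it is invisible from $\ZZ$.
\item ``Detecting via the composite to $\BGL_\infty(\ZZ[\pi])^+$'' only restates the problem, since in degree $1$ the base is already rationally $\BGL_\infty(\ZZ[\pi])^+$. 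To make it work you would have to exhibit a loop of diffeomorphisms of $Y_g$ representing the $\H_1(S^1)$-class and compute its action on $\pi_n(Y_g)$, which you do not do.
\end{itemize}

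The paper closes this as follows.
\begin{itemize}
\item It postcomposes with $\BGL_\infty(\ZZ[\pi])^+\to\BUgmininf^+$. This composite agrees with restriction to $\BDiffsfr(X_\infty)^+_\ell$ followed by the map to $\BUgmininf^+$.
\item That last map is injective on $\pi_1^\QQ$ by \cite{BRW}.
\item It shows the restriction is nonzero by lifting the class $\gamma\colon\ZZ\to\pi_1(\BDiffsfr(X_g)_\ell)$ of \cite{BRW} to $Y_g$. This class is built from the homotopy fibre of $\BDiffsfr(X_{g-1}\setminus\int(D^{2n}))_\ell\to\BDiffsfr(X_{g-1})_\ell$, and the lift uses that the analogous fibre for the $4$-ad $Y_g'$ maps to it by an equivalence (uniqueness of collars).
\end{itemize}

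There is also a simpler fix using a fact you cite in passing. By \eqref{HatcherWagoner}, $\pi_0(\C(S^1\times D^{2n-1}))$ is assembled from zero or $2$-torsion groups, so $\pi_1^\QQ(\BC(S^1\times D^{2n-1}))=0$. This is already the claimed value, since $\K_2^\QQ(\ZZ[\ZZ])\cong\K_2^\QQ(\ZZ)\oplus\K_1^\QQ(\ZZ)=0$. Exactness of the long exact sequence then gives injectivity of $\pi_1^\QQ$ from total space to base for free, so it no longer has to be proved. Replacing your Step 3 by this observation completes the argument.
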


We proceed analogously as in the last subsection. Recall the Weiss fibre sequence \eqref{weiss fibre for 4 ads} for the triad $\yg$
\[\BC(S^1\times D^{2n-1})\to \BDiffsfrv(\yg)_\ell\to \BEmbsfr(\yg;\xg)_\ell,\]which deloops once. By stacking in the $D^{2n-1}$-coordinate, the space $\B^2\mathrm{C}(S^1\times D^{2n-1})$ admits the structure of a $\EE_{2n-3}$-algebra. In particular, it has abelian fundamental group if $n\geq 3.$ Thus, the once delooped fibre sequence is plus-constructible in the sense of \cite{plusfibs}. As in the even-dimensional case, we focus on the plus-constructed and stabilised fibre sequence
\begin{equation}\label{PlusConsWeiss}
    \BC(S^1\times D^{2n-1})\to \BDiffsfrv(Y_\infty)^+_\ell\to \BEmbsfr(Y_\infty;X_\infty)^+_\ell.
\end{equation}Work of the first named author, namely by applying \cite[Cor. 7.3.3]{joaoTriads}, identifies the total space as (the identity component of) the infinite loop space $\Omega^\infty_0\Sigma^\infty_+(S^1\times \oo(2n+1)).$ 

\begin{proof}[Proof of \cref{main even conc}]
    We start by computing the rational homotopy groups of $\BDiffsfrv(Y_\infty)^+_\ell\simeq \Omega^\infty_0\Sigma^\infty_+(S^1\times \oo(2n+1))$ in this range. Since the inclusion $\O(2n+1)\subset \O$ is rationally $(4n+2)$-connected, we see that $\pi_k^\QQ(\BDiffsfrv(Y_\infty)^+_\ell)$ is $1$-dimensional for $k=1$ and it vanishes for $1<k\leq 4n+2.$ By combining the long exact sequence of the fibre sequence \eqref{PlusConsWeiss} with \cref{HomotopyPlusConstructionY}, we conclude our initial claim for $2\leq *\leq 2n-4.$ We continue by showing the case $*=1$, for which we claim that the induced map $\pi_1^\QQ(\BC(S^1\times D^{2n-1}))\to \pi_1^\QQ(\BDiffsfrv(Y_\infty)^+_\ell)$ is zero. Since the target of this map is one-dimensional, it suffices to show that the map \begin{equation}\label{the non zero geometric}
        \pi_1^\QQ(\BDiffsfrv(Y_\infty)^+_\ell)\to \pi_1^\QQ(\BEmbsfr(Y_\infty;X_\infty)^+_\ell)\cong \pi_1(\BGL_\infty(\ZZ[\pi])^+)
    \end{equation}is non-zero. Observe that post-composing this map with $\pi_1^\QQ(\BGL_\infty(\ZZ[\pi])^+)\to \pi_1^\QQ(\BUgmininf^+)$ agrees with the composite \[\pi_1^\QQ(\BDiffsfrv(Y_\infty)^+_\ell)\to \pi_1^\QQ(\BDiffsfrv(X_\infty)^+_\ell)\to \pi_1^\QQ(\BUgmininf^+).\]In this composition, the right map is injective by \cite[1678]{BRW}. Thus, it suffices to show that the left map is non-trivial to show that this composition, and hence \eqref{the non zero geometric}, is non-trivial. In loc.cit., the authors construct a non-zero homomorphism
    \[\gamma:\ZZ\to \pi_1(\BDiffsfr(X_g)_\ell)\]for any $g\geq 1$. We shall show that $\gamma$ lifts to $\pi_1(\BDiffsfr(Y_g)_\ell).$ Let $D^{2n}\subset X_g$ be an embedded codimension $0$ disc disjoint from $\vb Y_g$ and let $Y_g'$ denote the triad $(Y_g,X_g\backslash\int(D^{2n}), \vb Y_g\cup D^{2n})$. Let $Y'$ and $Y$ be the homotopy fibres of the maps
    \[\BDiffsfrv(Y_{g-1}')_\ell\to \BDiffsfrv(Y_{g-1})_\ell \qquad  \BDiffsfr(X_{g-1}\backslash \int(D^{2n}))_\ell\to \BDiffsfr(X_{g-1})_\ell\]respectively. Comparing stably framed diffeomorphisms with diffeomorphisms, one can check that the map $Y'\to Y$ is equivalent to the induced map on vertical homotopy fibres of the following commutative square
    \[\begin{tikzcd}[row sep = 15pt]
        \Emb((D^{2n+1},D^{2n}),(Y_g;X_g)) \arrow[d] \arrow[r] & \Emb(D^{2n},X_g) \arrow[d]\\
        \Bun((T^sD^{2n+1},T^sD^{2n}),(T^sY_g,T^sX_g)) \arrow[r] & \Bun(T^sD^{2n}, T^sX_g)
    \end{tikzcd}\]where $T^s(-)$ denotes the stable tangent bundle, and $\Bun(-,-)$ denotes the space of stable vector bundle maps (or of arrows of vector bundles). The vertical maps are given by taking stable derivatives. We leave this check to the reader. Additionally, one can check that the horizontal maps are weak equivalences by the uniqueness of collars. Hence, the map $Y'\to Y$ is also a weak equivalence. In loc.cit., the map $\gamma$ is defined as the induced map on fundamental groups of the map $Y\to \BDiffsfr(X_{g-1}\backslash \int(D^{2n}))_\ell\to \BDiffsfr(X_g)_\ell$. This map lifts to $\BDiffsfrv(Y_g;X_g)_\ell$ by using the map $Y'\to \BDiffsfrv(Y_{g-1}')_\ell\to \BDiffsfrv(Y_g)_\ell$ and the fact that $Y'\to Y$ is an isomorphism on fundamental groups, hence establishing the case $*=1$.
\end{proof}

From the proofs of \cref{main even,main even conc}, we can conclude that the map on the $(2n-3)$-th rational homotopy groups induced by the restriction map $\BC(S^1\times D^{2n-1})\to \BDiffb(S^1\times D^{2n-1})$ is isomorphic to the map in \cref{mainInputWatanabe}. In particular, this proves the following corollary.

\begin{cor}\label{surjectionInWatanabeDegree}
    For $n\geq 3$, the map \[\pi_{2n-3}^\QQ(\BC(S^1\times D^{2n-1}))\to \pi_{2n-3}^\QQ(\BDiffb(S^1\times D^{2n-1}))\]is surjective and has infinitely generated kernel.
\end{cor}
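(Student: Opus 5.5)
The plan is to compare the two plus-constructed Weiss fibre sequences \eqref{PlusConsWeiss} and \eqref{plusConstructedWeissXg} through the map given by restricting to $\partial_2 Y_\infty = X_\infty$. The two sequences are compatible by \eqref{weiss for sfr}, and the restriction induces $r\colon \BC(S^1\times D^{2n-1})\to \BDiffb(S^1\times D^{2n-1})$ on fibres; strictly speaking it lands in the stably framed variant $\BDiffsfr(S^1\times D^{2n-1})_L$, which we then forget down to $\BDiffb(S^1\times D^{2n-1})$. This gives a commutative ladder of long exact sequences in rational homotopy.

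First I will identify $\pi_{2n-3}^\QQ(r)$ with the map $\pi_{2n-2}^\QQ(\BEmbsfr(Y_\infty;X_\infty)^+_\ell)\to \pi_{2n-2}^\QQ(\BEmbsfr(X_\infty)^+_\ell)$ of \cref{mainInputWatanabe}. The total spaces $\BDiffsfrv(Y_\infty)^+_\ell$ and $\BDiffsfr(X_\infty)^+_\ell$ have vanishing rational homotopy in degrees $2\leq *\leq 2n+2$, by \cite[Cor. 7.3.3]{joaoTriads} and \cite[Lemma 7.2]{BRW} respectively. Hence in degree $2n-3\geq 3$ both connecting maps $\pi_{2n-2}^\QQ(\text{base})\to \pi_{2n-3}^\QQ(\text{fibre})$ are isomorphisms. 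Naturality of the connecting maps then turns $\pi_{2n-3}^\QQ$ of the fibre map into the map on $\pi_{2n-2}^\QQ$ of the bases.

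Second, I will pass from the stably framed fibre to the unframed one. In the proof of \cref{main even} above, the map $\pi_{2n-3}^\QQ(\BDiffsfr(S^1\times D^{2n-1})_L)\to\pi_{2n-3}^\QQ(\BDiffb(S^1\times D^{2n-1}))$ is an isomorphism, since $\pi_{2n-3}^\QQ\map_\partial(S^1\times D^{2n-1},\SO/\SO(2n))=0$ and the connecting maps vanish. So $\pi_{2n-3}^\QQ(r)$ is isomorphic to the map of \cref{mainInputWatanabe}. Parts \ref{surj of watanabe} and \ref{target watanabe} of that theorem then give surjectivity and an infinitely generated kernel.

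The main obstacle is the compatibility bookkeeping in the first step. One must check that the plus-constructed and stabilised sequences still map to each other, since the plus constructions are functorial for maps of plus-constructible fibrations. One must also check that the identification of $\pi_{2n-2}$ of the bases with $\firstyg$ and $\firstxg$ respects the restriction. Both should follow from the compatibility in \eqref{weiss for sfr} together with naturality in $g$.
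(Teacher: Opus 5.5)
Your proposal is correct and is essentially the paper's argument: the paper deduces the corollary by observing, from the proofs of \cref{main even} and \cref{main even conc}, that comparing the two plus-constructed Weiss fibre sequences identifies $\pi_{2n-3}^\QQ$ of the restriction map with the map of \cref{mainInputWatanabe}. Parts (i) and (ii) of that theorem then give the claim. Your connecting-isomorphism and framing-forgetting steps are exactly the details behind that identification.

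One small simplification: you do not need to check that the identifications of $\pi_{2n-2}^\QQ$ of the bases with the colimit coinvariant modules respect restriction. \cref{mainInputWatanabe} is already stated for the map on $\pi_{2n-2}^\QQ$ of the bases themselves.
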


\subsection{\texorpdfstring{\cref{main odd,main conc intro}}{Theorems A and B}: Odd-dimensional solid tori}\label{SectionThmAB}

In this section, we compute the rational homotopy groups of $\BDiffb(S^1\times D^{2n})$ and $\BC(S^1\times D^{2n})$ and establish \cref{main odd} for $d=2n+1$ in a range, from \cref{main even} for $d=2n$ and \cref{mainInputWatanabe,main even conc}. Additionally, we obtain \cref{main conc intro} for $d=2n+1$ as a corollary.

\begin{proof}[Proof of \cref{main odd} for $d=2n+1$]
    Consider the following fibre sequence
    \[\BDiffb(S^1\times D^{2n})\to \BC(S^1\times D^{2n-1})\to \mathrm{BDiff_\partial^{ext}}(S^1\times D^{2n-1})\]where the base space is the classifying space of the collection of components of those diffeomorphisms of $S^1\times D^{2n-1}$ that extend to concordances. By the proof of \cref{main even} for $d=2n$, the rational homotopy groups of the base vanish in degrees $*\leq 2n-4$ and is isomorphic to $\pi_{2n-2}^\QQ(\BEmbsfr(X_\infty)^+_\ell)$ in degree $*=2n-3:$ this follows since the map $\iota:\mathrm{BDiff_\partial^{ext}}(S^1\times D^{2n-1})\to \BDiffb(S^1\times D^{2n-1})$ induces an isomorphism on homotopy groups of degree $*\geq 2$, and an injection on degree $1$. In the latter degree, both groups are abelian, since $\BDiffb(S^1\times D^{2n-1})$ and $\BC(S^1\times D^{2n-1})$ have abelian fundamental groups. We conclude that $\iota$ is an injection on rational homotopy groups in degree $1$, and hence an isomorphism since the target vanishes by loc.cit. By \cref{main even conc}, the rational homotopy group of the total space in degree $2n-3$ is isomorphic to $ \smash{\pi_{2n-2}^\QQ(\BEmbsfr(Y_\infty;X_\infty)^+_\ell)}.$ Moreover, one can check that the right map above in degree $2n-3$ is isomorphic to the map in \cref{mainInputWatanabe}---that is, $\pi_{2n-2}^\QQ(\BEmbsfr(Y_\infty;X_\infty)_\ell^+)\to \pi_{2n-2}^\QQ(\BEmbsfr(X_\infty)_\ell^+)$---which is is a block sum of $\pi_{2n-2}^{\QQ}(\BGL_\infty(\ZZ[\pi])^+)\to \pi_{2n-2}^\QQ\left(\BUgmininf^+\right)\cong 0$ (see \cite[Lem. 7.3]{BRW}) and $\firstyg\to \firstxg$. Hence, there is an extension
    \begin{equation}\label{KerLyLXrmk}
    \begin{tikzcd}
        \pi_{2n-2}^{\QQ}(\BDiffb(S^1\times D^{2n-1}))\rar & \pi_{2n-3}^{\QQ}(\BDiffb(S^1\times D^{2n}))\rar[two heads] & \begin{array}{c}
             \K_{2n-2}^{\QQ}(\ZZ[\pi])\\
             \oplus\\
             \ker(\firstyg\to\firstxg).
        \end{array}
    \end{tikzcd}
    \end{equation}
    The right-hand term in this extension is infinitely generated by \cref{mainInputWatanabe}(ii), and hence so is the middle one. We conclude \cref{main odd} for $d=2n+1$ from the long exact sequence on rational homotopy groups of the fibre sequence above.
\end{proof}

Following the same strategy one dimension higher, we get the odd-dimensional case of \cref{main conc intro}.

\begin{cor}[\cref{main conc intro}, $d$ odd]\label{main odd conc}
    For $n\geq 3$, the map $\BC(S^1\times D^{2n})\to \BDiffb(S^1\times D^{2n})$ is rationally $(2n-1)$-connected. In particular, for $1\leq *\leq 2n-4$, we have that
    \[\pi_*^{\QQ}\left(\BC(S^1\times D^{2n})\right)\cong \K_{*+1}^{\QQ}(\ZZ[\ZZ])\]and that $\pi_{2n-3}^{\QQ}\left(\BC(S^1\times D^{2n})\right)$ is an infinite-dimensional $\QQ$-vector space.
\end{cor}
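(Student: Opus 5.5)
We follow the proof of \cref{main odd} one dimension higher, using the analogous fibre sequence
\[
\BDiffb(S^1\times D^{2n+1})\longrightarrow \BC(S^1\times D^{2n})\xrightarrow{\ r\ } \mathrm{BDiff_\partial^{ext}}(S^1\times D^{2n}),
\]
where the base is the union of those components of $\BDiffb(S^1\times D^{2n})$ whose diffeomorphisms extend to concordances. The map $\mathrm{BDiff_\partial^{ext}}(S^1\times D^{2n})\to \BDiffb(S^1\times D^{2n})$ is an isomorphism on $\pi_{*}$ for $*\geq 2$ and injective on $\pi_1$. Both fundamental groups are abelian, by the stacking $\EE$-structures, so rationalisation makes sense. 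Hence rational $(2n-1)$-connectivity of $r$ into the base is the same as rational $(2n-1)$-connectivity of $\BC(S^1\times D^{2n})\to \BDiffb(S^1\times D^{2n})$, provided we also know surjectivity on $\pi_1^\QQ$. By the long exact sequence, it suffices to show that the fibre $\BDiffb(S^1\times D^{2n+1})$ is rationally $(2n-2)$-connected, i.e.\ $\pi_*^\QQ(\BDiffb(S^1\times D^{2n+1}))=0$ for $*\leq 2n-2$. Surjectivity on $\pi_{2n-1}^\QQ$ is then automatic, because the next fibre group $\pi_{2n-2}^\QQ$ vanishes.

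For this vanishing, we use the case $d=2n+2=2(n+1)$ of \cref{main even}, applied with $n+1\geq 4$ in place of $n$. It gives $\pi_*^\QQ(\BDiffb(S^1\times D^{2n+1}))=0$ for $1\leq *\leq 2(n+1)-4=2n-2$, which is exactly the range we need. So the connectivity claim reduces cleanly to \cref{main even}. The one remaining point is $\pi_1$, where rational connectivity also requires surjectivity onto $\pi_1^\QQ$ of the base. This follows from the exact sequence $\pi_1(\BC)\to\pi_1(\text{base})\to\pi_0(\BDiffb(S^1\times D^{2n+1}))=0$. That group vanishes because it is $\pi_1$ of a simply connected classifying space component, i.e.\ the fibre is connected, so the map is surjective on $\pi_1$.

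The computational consequences then follow from \cref{main odd} for $d=2n+1$. Rational $(2n-1)$-connectivity gives $\pi_*^\QQ(\BC(S^1\times D^{2n}))\cong\pi_*^\QQ(\BDiffb(S^1\times D^{2n}))$ for $*\leq 2n-2$. By \cref{main odd}, the latter is $\K^\QQ_{*+1}(\ZZ[\ZZ])$ for $1\leq *\leq 2n-4$. In degree $2n-3$, it surjects onto $\K^\QQ_{2n-2}(\ZZ[\ZZ])$ with infinite-dimensional kernel, as already exhibited in \eqref{KerLyLXrmk}, so it is infinite-dimensional. The only step requiring care is the index bookkeeping when invoking \cref{main even} for $n+1$, together with the low-degree behaviour of $\pi_1$. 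Neither presents a real obstacle.
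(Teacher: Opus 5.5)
Your overall route is the paper's: the same fibre sequence, \cref{main even} for $d=2n+2$ to make the fibre $\BDiffb(S^1\times D^{2n+1})$ rationally $(2n-2)$-connected, and \cref{main odd} for $d=2n+1$ to read off the groups. However, your $\pi_1$ step proves surjectivity for the wrong map.

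Your exact sequence $\pi_1(\BC(S^1\times D^{2n}))\to\pi_1(\mathrm{BDiff_\partial^{ext}}(S^1\times D^{2n}))\to\pi_0(\BDiffb(S^1\times D^{2n+1}))=0$ only gives surjectivity onto $\pi_1$ of $\mathrm{BDiff_\partial^{ext}}$. (The vanishing of $\pi_0$ holds simply because a classifying space is path-connected.) That surjectivity was already implied by the connectivity of the fibre. The statement concerns the composite to $\BDiffb(S^1\times D^{2n})$. So you also need the injection $\pi_1^\QQ(\mathrm{BDiff_\partial^{ext}}(S^1\times D^{2n}))\hookrightarrow\pi_1^\QQ(\BDiffb(S^1\times D^{2n}))$ to be surjective. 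Equivalently, rationally every isotopy class of diffeomorphisms of $S^1\times D^{2n}$ must be pseudoisotopic to the identity. Nothing in your argument gives this: the cokernel is precisely the group of isotopy classes modulo those that extend to concordances, and the fibre sequence you use cannot see it. The same gap affects the degree-one case of your ``in particular'': without it you only get an injection $\pi_1^\QQ(\BC(S^1\times D^{2n}))\hookrightarrow\pi_1^\QQ(\BDiffb(S^1\times D^{2n}))$.

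The missing input, which is how the paper finishes, is \cref{main odd} in degree $1$. It gives $\pi_1^\QQ(\BDiffb(S^1\times D^{2n}))\cong\K_2^\QQ(\ZZ[\ZZ])\cong\K_2^\QQ(\ZZ)\oplus\K_1^\QQ(\ZZ)=0$. Hence the injection on $\pi_1^\QQ$ is $0\to 0$, the inclusion $\mathrm{BDiff_\partial^{ext}}(S^1\times D^{2n})\to\BDiffb(S^1\times D^{2n})$ is a rational equivalence, and the connectivity of $r$ transfers to the composite. Alternatively, the cokernel injects into $\pi_0(\bDiffb(S^1\times D^{2n}))$, which is finite by \cref{BlockBHS} and the finiteness of the groups of homotopy spheres. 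With this added, the rest of your proposal goes through.
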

\begin{proof}
    Once again, we consider the fibre sequence \(\BDiffb(S^1\times D^{2n+1})\to \BC(S^1\times D^{2n})\to \mathrm{BDiff_\partial^{ext}}(S^1\times D^{2n})\). By \cref{main even}, the fibre is rationally $(2n-2)$-connected, so the map to the base is indeed rationally $(2n-1)$-connected (for both spaces are connected). Moreover, $\mathrm{BDiff_\partial^{ext}}(S^1\times D^{2n})\to \BDiffb(S^1\times D^{2n})$ is a rational equivalence (by \cref{main odd} for $d=2n+1$, and the vanishing of $\smash{\K_2^\QQ(\ZZ[\ZZ])}$), showing the claim. The two addendums follow immediately from \cref{main even} for $d=2n+1$.
\end{proof}

\subsection{\texorpdfstring{\cref{main range}}{Theorem C} and \texorpdfstring{\cref{main range embeddings}}{Corollary E}: Rational concordance stable range}

In \cref{main even conc} and \cref{main odd conc}, we showed that, in a certain range, the rational homotopy groups of $\BC(S^1 \times D^{d-1})$ are independent of $d \geq 6$. In this section, we show that not only are these groups abstractly isomorphic, but also that the concordance stabilisation map 
\[
\BC(S^1 \times D^{d-1}) \to \BC(S^1 \times D^{d})
\]
is rationally $(d-4)$-connected but not $(d-3)$-connected, thereby establishing a special case of \cref{main range}. The proof combines results of Krannich--Randal-Williams \cite{krannich2021diffeomorphismsdiscssecondweiss} for $D^d$ with the Burghelea--Lashof--Rothenberg splitting \cite[Section~6]{Burghelea1975}. From this special case, the general statement of \cref{main range} follows from \cite[Thm.~4.4]{KKGoodwillie}, and as a byproduct we also obtain \cref{main range embeddings}.

\subsubsection{Burghelea-Lashof-Rothenberg splittings}

We start by recalling \cite[Section 6]{Burghelea1975} for $M=D^{d-1}$. Consider the fibre sequence
\[
\C(D^{d})\to \C(S^1\times D^{d-1})\to \CEmb^{\cong}(D^{d-1},S^1\times D^{d-1}) 
\]
where the $(-)^{\cong}$ denotes the path components hit by the right map. This sequence splits using the inclusion $S^1\times D^{d-1}\hookrightarrow D^{d} $ for $d\geq 2$. Moreover, this sequence is compatible with concordance stabilisation. We have a map $\lambda_d:\CEmb(D^{d-1},S^1\times D^{d-1})\to  \CEmb(D^{d-1}, \RR\times D^{d-1})$ given by lifting embeddings into the universal cover $\RR\times D^{d-1}$, which is well defined since our embeddings are fixed on $\partial D^{d-1}$ which is non-empty if $d\geq 2.$ We record here the important properties of these spaces established in \cite[Section 6]{Burghelea1975}.


\begin{enumerate}[label=(\roman*)]
    \item The inclusion $\RR\times D^{d-1}\hookrightarrow S^1\times D^{d-1}$ induces a splitting of the sequence
    \[\hofib_\iota(\lambda_d)\to \CEmb(D^{d-1},S^1\times D^{d-1})\to \CEmb(D^{d-1}, \RR\times D^{d-1})\]after taking loop spaces. This is \cite[Lemma 2, p.103]{Burghelea1975}. 
    \item The fibre sequence
    \[
    \begin{tikzcd}[row sep = 15pt]
    \C([-1,0]\times D^{d-1})\rar &\CEmb_{\{-1\}\times D^{d-1}\cup [-1,0]\times \partial D^{d-1}}( [-1,0]\times D^{d-1}, \RR\times D^{d-1})\dar\\ &\CEmb(\{0\}\times D^{d-1} ,\RR\times D^{d-1})_\iota 
    \end{tikzcd}\]where the subscript $\iota$ denotes the component of the inclusion, deloops once and has contractible total space. This is \cite[Lemma 2]{Burghelea1975}. Moreover, this delooping is compatible with the concordance stabilisation maps.  
\end{enumerate}From the last property, we obtain a map 
\begin{equation}\label{SplittingMap}
    \C(S^1\times D^{d-1})\to \C(D^{d})\times \BC(D^{d})
\end{equation}which is compatible with concordance stabilisation and whose fibre identifies with $\hofib_\iota(\lambda_d).$ The next lemma shows that this fibre has vanishing rational homotopy groups.


\begin{prop}\label{VanishingNil}
    If $d\geq 6$, the map \eqref{SplittingMap} induces an isomorphism on rational homotopy groups of degree $1\leq i\leq d-6$. Moreover, if $d$ is even, then the same claim holds additionally for $i=d-5.$
\end{prop}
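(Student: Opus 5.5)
The strategy is to show directly that the fibre $F_d\coloneq\hofib_\iota(\lambda_d)$ of \eqref{SplittingMap} is rationally trivial in degrees $\le d-6$ (and also $d-5$ when $d$ is even). Then the long exact sequence of \eqref{SplittingMap} gives the isomorphism, with surjectivity supplied by the splitting (i). We never compute $F_d$ directly. Instead we count: we compare $\pi_*^{\QQ}(\C(S^1\times D^{d-1}))$, which we know from \cref{main even conc} and \cref{main odd conc}, with $\pi_*^{\QQ}(\C(D^d)\times\BC(D^d))$, which we know from the results of Krannich--Randal-Williams \cite{krannich2021diffeomorphismsdiscssecondweiss} on $\BC(D^d)$ in the range up to about $d-4$.

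\textbf{Step 1 (the target).} Using \cite{krannich2021diffeomorphismsdiscssecondweiss} (together with Borel's computation), $\pi_i^{\QQ}(\BC(D^d))\cong\K^{\QQ}_{i+1}(\ZZ)$ for $i$ below roughly $d-4$. Concretely this is $\QQ$ when $i\equiv 0\pmod 4$ with $i\ge4$, and $0$ otherwise; at the edge of the range there are corrections depending on the parity of $d$, and I will check these carefully. Hence $\pi_i^{\QQ}(\C(D^d)\times\BC(D^d))\cong \K^{\QQ}_{i+2}(\ZZ)\oplus\K^{\QQ}_{i+1}(\ZZ)$. By the Bass--Heller--Swan splitting this is $\K^{\QQ}_{i+2}(\ZZ[\ZZ])$, since $\mathrm{Nil}$ terms vanish rationally.

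\textbf{Step 2 (the source).} Since $\pi_i(\C)=\pi_{i+1}(\BC)$, \cref{main even conc} (for $d-1=2n-1$, i.e.\ $d=2n$) and \cref{main odd conc} (for $d=2n+1$) give $\pi_i^{\QQ}(\C(S^1\times D^{d-1}))\cong \K^{\QQ}_{i+2}(\ZZ[\ZZ])$. This holds for $i+1\le 2n-4$, that is $i\le d-6$ when $d=2n+1$, and $i+1\le 2n-3$ without the infinite summand, that is $i\le d-5$ when $d=2n$. These are exactly the ranges in the statement, which explains the parity distinction. The degree bookkeeping here needs care, especially the first group $\pi_0$, which is excluded.

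\textbf{Step 3 (comparison).} By (i) the loop space of $F_d$ splits off, so the long exact sequence of \eqref{SplittingMap} gives split short exact sequences in the relevant degrees:
\[0\to\pi_i^{\QQ}(F_d)\to\pi_i^{\QQ}(\C(S^1\times D^{d-1}))\to\pi_i^{\QQ}(\C(D^d)\times\BC(D^d))\to 0.\]
Both outer groups are finite-dimensional and have equal dimension by Steps 1 and 2, so $\pi_i^{\QQ}(F_d)=0$ and the map is an isomorphism.

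\textbf{Main obstacle.} The delicate point is the surjectivity in Step 3. I must verify that the splitting coming from the inclusion $S^1\times D^{d-1}\hookrightarrow D^d$ and (i) really provides a section of \eqref{SplittingMap} on rational homotopy, including the $\BC(D^d)$ factor built from the delooping in (ii), and is not merely a splitting after looping. If the splitting only holds after looping once, the dimension count still yields injectivity in each degree and hence equality, provided the $\BC(D^d)$ factor is treated via (ii). As a fallback I would argue with dimensions in every degree simultaneously and use an induction on $i$.
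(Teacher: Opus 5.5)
Your proposal is correct and is essentially the paper's proof: surjectivity in degrees $i\geq 1$ comes from the Burghelea--Lashof--Rothenberg splitting (the paper simply cites \cite[Thm.~2, p.~104]{Burghelea1975}, which settles your ``main obstacle'' without any fallback), and given that, injectivity follows from your dimension count of $\K^{\QQ}_{i+2}(\ZZ[\ZZ])$ against $\K^{\QQ}_{i+2}(\ZZ)\oplus\K^{\QQ}_{i+1}(\ZZ)$, using \cref{main even conc,main odd conc}, the disc computations and Bass--Heller--Swan. One small correction: for $d=2n$ the bound in Step 2 should be $i+1\leq 2n-4$, because the infinite summand $\firstyg$ sits in $\pi_{2n-3}^{\QQ}(\BC(S^1\times D^{2n-1}))$; your conclusion $i\leq d-5$ is nonetheless right.
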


\begin{proof}
    By \cite[Thm.2, p.104]{Burghelea1975}, this map is surjective on homotopy groups in degrees $i\geq 1$. We shall compute the rational homotopy groups on each side and show they have the same (finite) dimension, hence proving that it induces an isomorphism on rational homotopy groups. By \cref{main even conc,main odd conc}, we have $\pi_i^{\QQ}(\C(S^1\times D^{d-1}))$ is isomorphic to $\K_{i+2}^{\QQ}(\ZZ[\pi]).$  On the other hand, Krannich computed the rational homotopy groups of the right-hand side in this range. By \cite[Thm. A]{krannichpseudo}, the group $\smash{\pi_i^{\QQ}(\BC(D^{d}))}$ is isomorphic to $\smash{\K_{i+1}^{\QQ}(\ZZ))}.$ Thus, the $i$-th rational homotopy group of the right-hand side is 
    \(\K_{i+1}^{\QQ}(\ZZ))\oplus \K_{i+2}^{\QQ}(\ZZ)\), which has the same dimension as $\K_{i+2}(\ZZ[\pi])$ by the Bass--Heller--Swan's theorem. 
\end{proof}

\subsubsection{Proofs of \cref{main range} and \cref{main range embeddings}}

\begin{prop}\label{RAtionalConcordancePSeudoIsotopy}
    For $d\geq 6$, then the concordance stabilisation map
    \[\BC(S^1\times D^{d-1})\to \BC(S^1\times D^{d})\]is rationally $(d-4)$-connected, but not rationally $(d-3)$-connected. Moreover, the $(d-3)$-rd relative rational homotopy group of this map is infinite-dimensional. 
\end{prop}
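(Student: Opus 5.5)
The plan is to compare the stabilisation map for $S^1\times D^{d-1}$ with the one for discs through the splitting \eqref{SplittingMap}, $\C(S^1\times D^{d-1})\to \C(D^d)\times \BC(D^d)$. This splitting is compatible with concordance stabilisation, so after delooping we get a commutative square. Its top row is the stabilisation map $\BC(S^1\times D^{d-1})\to \BC(S^1\times D^{d})$, and its bottom row is the product of the stabilisation maps $\BC(D^d)\to \BC(D^{d+1})$ and $\B^2\C(D^d)\to \B^2\C(D^{d+1})$. By \cref{VanishingNil}, the vertical maps are rational isomorphisms in a range: degrees $2\le *\le d-5$ for the source (plus $d-4$ if $d$ is even), and the analogous range for $d+1$ on the target. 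The fundamental groups need a separate argument. There the groups are abelian, and both sides are $\K_2^\QQ(\ZZ[\ZZ])\cong\K^\QQ_1(\ZZ)\oplus\K_2^\QQ(\ZZ)=0$ by \cref{main conc intro}, so degree $1$ is trivially fine.

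Next, the disc factors. By Krannich--Randal-Williams \cite{krannich2021diffeomorphismsdiscssecondweiss}, equivalently \cite[Cor.~C]{KKGoodwillie}, the map $\BC(D^d)\to\BC(D^{d+1})$ is rationally $(d-3)$-connected for $d\ge 6$. The delooped factor is therefore rationally $(d-2)$-connected. Hence the bottom row is rationally $(d-3)$-connected, and in the range where the vertical maps are isomorphisms the top row is an isomorphism on $\pi_*^\QQ$ for $*\le d-5$.

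The remaining degrees $d-4$ and $d-3$ I would handle directly with \cref{main conc intro}. In degree $d-4$, which lies within $2n-4$ for both parities, source and target are both $\K_{d-3}^\QQ(\ZZ[\ZZ])$. I would argue that the stabilisation map is an isomorphism there by the same splitting argument. When $d$ is even, \cref{VanishingNil} covers degree $d-5$ of the loop space, which is degree $d-4$ of $\BC$. When $d$ is odd, I would instead compare the $K$-theory summands of \cref{main even conc} and \cref{main odd conc}, using that the surjection onto $\K^\QQ_{*+1}(\ZZ[\ZZ])$ is natural in stabilisation because it comes from the plus-constructed mapping class group map $\BGL_\infty(\ZZ[\pi])^+$. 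This gives rational $(d-4)$-connectivity, meaning $\pi_{d-4}^\QQ$ is surjective and lower degrees are isomorphisms.

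For the failure in degree $d-3$, the relative group sits in an exact sequence $\pi_{d-3}^\QQ(\BC(S^1\times D^{d-1}))\to\pi_{d-3}^\QQ(\BC(S^1\times D^{d}))\to\pi_{d-3}^\QQ(\mathrm{rel})\to\pi_{d-4}^\QQ(\ldots)\to\pi_{d-4}^\QQ(\ldots)$. The last map is injective by the previous step, so the relative group is the cokernel in degree $d-3$. Write $d=2n$ or $d=2n+1$.

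If $d=2n+1$, then degree $d-3=2n-2$ for $S^1\times D^{2n+1}$ (dimension $2n+2$) is $2(n+1)-4$. By \cref{main conc intro} the target is the finite-dimensional $\K^\QQ$ group, while the source is $\pi^\QQ_{2n-2}(\BC(S^1\times D^{2n}))$, which is infinite-dimensional by \cref{restrictionRmkIntro}(i). That direction does not give a nonzero cokernel, so this case is wrong as first set up and I must redo the parity bookkeeping.

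Redoing it: if $d=2n$, the target $\BC(S^1\times D^{2n})$ in degree $2n-3$ is infinite-dimensional by \cref{main odd conc}. The source in degree $2n-3$ is $\K^\QQ\oplus\firstyg$ by \cref{main even conc}, which is also infinite. So I need the image to miss an infinite-dimensional part. This is the main obstacle.

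To settle it, I would use the reflection involution. By \cref{mainInputWatanabe}\ref{ly is in -1}, $\firstyg$ lies in the $(-1)$-eigenspace of $\rho$. Stabilisation should intertwine $\rho$ with the concordance involution up to the sign from \ref{C2EquivarianceConcStabilisation}. I would then show that the infinite part of the target, which via \eqref{KerLyLXrmk} comes from $\pi_{2n-2}^\QQ(\BDiffb(S^1\times D^{2n-1}))\cong\firstxg$ together with $\ker(\firstyg\to\firstxg)$, contains an infinite-dimensional $(+1)$-eigenspace. Such a subspace cannot be hit.

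The case $d=2n+1$ reduces to the same comparison one dimension up. There I would use that stabilisation composed with restriction recovers the Watanabe-type classes, and apply \cref{surjectionInWatanabeDegree} to control the image. Tracking these signs precisely is the step I expect to require the most care.
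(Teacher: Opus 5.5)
There is a genuine gap: the connectivity half is sound for $d$ even, but your argument for non-connectivity breaks down in both parities. Your connectivity argument is the paper's for $d$ even; run it on loop spaces, as the paper does, since \eqref{SplittingMap} is not known to deloop.

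\textbf{Odd $d$.} For $d=2n+1$ there is a parity slip that derails everything after it. The degree $d-4=2n-3$ is not $\le 2n-4$. It is precisely the degree in which $\pi^\QQ_{2n-3}(\BC(S^1\times D^{2n}))$ is infinite-dimensional by \cref{main odd conc}, whereas $\pi^\QQ_{2n-3}(\BC(S^1\times D^{2n+1}))\cong\K^\QQ_{2n-2}(\ZZ[\ZZ])$ is finite-dimensional. So stabilisation is not an isomorphism in degree $d-4$. The naturality of the $K$-theory surjections under stabilisation that you invoke is unproved: they come from Weiss fibre sequences for $(Y_g;X_g)$ in different dimensions. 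You only need surjectivity there, and the paper gets it by a diagram chase. The top map \eqref{SplittingMap} is surjective on $\pi_{*\geq 1}$ by \cite[Thm.~2]{Burghelea1975}. The bottom one is an isomorphism in degree $d-5$ by \cref{VanishingNil}, since $d+1$ is even. The disc stabilisation is surjective there.

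The same slip breaks your exact-sequence reduction. For odd $d$ the map on $\pi^\QQ_{d-4}$ is far from injective, and the cokernel in degree $d-3$ is actually zero by \cref{ZeroMapRmk}. So no argument via Watanabe-type classes or \cref{surjectionInWatanabeDegree} can succeed. The odd case is in fact immediate: the kernel on $\pi^\QQ_{d-4}$ is infinite-dimensional, which rules out $(d-3)$-connectivity, and $\pi^\QQ_{d-3}$ of the pair surjects onto that kernel.

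\textbf{Even $d$.} For $d=2n$ your reduction to the cokernel of $\Sigma$ on $\pi^\QQ_{2n-3}$ is correct. However, your proposed mechanism, an infinite-dimensional $(+1)$-eigenspace in the target that cannot be hit, is unsupported. Knowing only that $\firstyg$ lies in the $(-1)$-eigenspace of $\rho$ does not control the image of $\Sigma$. Also, $\firstxg$ is $\pi^\QQ_{2n-3}$, not $\pi^\QQ_{2n-2}$, of $\BDiffb(S^1\times D^{2n-1})$.

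The missing idea is that the target lies \emph{entirely} in the $(-1)$-eigenspace of $\iota_{\C}$. The restriction $\pi^\QQ_{2n-3}(\BC(S^1\times D^{2n}))\to\pi^\QQ_{2n-3}(\BDiffb(S^1\times D^{2n}))$ is an isomorphism, because $\BDiffb(S^1\times D^{2n+1})$ is rationally $(2n-2)$-connected by \cref{main even} for $d=2n+2$. This restriction turns $\iota_{\C}$ into inversion, which acts by $-1$. Since $\Sigma$ is anti-equivariant for $\iota_{\C}$, it kills the $(-1)$-eigenspace of the source. It therefore suffices to show that the $(+1)$-eigenspace of $\iota_{\C}$ on $K\oplus\firstyg$ is finite-dimensional.

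This is where $\rho$ actually enters. Consider the extension $I\xrightarrow{\alpha}K\oplus\firstyg\xrightarrow{p}\firstxg$ coming from $\BDiffb(S^1\times D^{2n})\to\BC(S^1\times D^{2n-1})\to\BDiffb(S^1\times D^{2n-1})$.
\begin{itemize}
\item The map $p$ turns $\iota_{\C}$ into $-1$, so $(0,x)+\iota_{\C}(0,x)\in\operatorname{im}\alpha$.
\item On $\operatorname{im}\alpha$ one has $\iota_{\C}=\rho$, because $\BDiffb(M\times[0,1])\to\BC(M)$ intertwines the reflection with $\iota_{\C}$.
\item The involution $\rho$ preserves the splitting $K\oplus\firstyg$ and acts by $-1$ on $\firstyg$.
\end{itemize}
Together these force the $\firstyg$-block of $\iota_{\C}$ to be $-\mathrm{Id}$. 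Then $\iota_{\C}^2=\mathrm{Id}$ gives $CB=0$ for the off-diagonal blocks $B$ and $C$. Hence the $(+1)$-eigenspace has dimension at most $\dim K\le 1$, and the cokernel of $\Sigma$ is infinite-dimensional.
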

\begin{proof}
    We first show that this map is rationally $(d-4)$-connected. Since source and target are connected, it suffices to argue that it is rationally $(d-5)$-connected upon looping once. Consider the diagram
    \[\begin{tikzcd}[row sep =15pt]
        \C(S^1\times D^{d-1})\arrow[d] \arrow[r, "\eqref{SplittingMap}"] & \C(D^{d})\times \BC(D^{d}) \arrow[d] \\ 
        \C(S^1\times D^{d}) \arrow[r, "\eqref{SplittingMap}"] & \C(D^{d+1})\times \BC(D^{d+1}). 
    \end{tikzcd}\]The right vertical map is rationally $(d-4)$-connected by \cite[Cor. B]{krannich2021diffeomorphismsdiscssecondweiss}. If $d$ is even, then both horizontal maps induce isomorphisms on rational homotopy groups in degrees $i\leq d-5$ by \cref{VanishingNil}, hence establishing that the left map is $(d-5)$-connected. If $d$ is odd, then the top map induces an isomorphism in degrees $i\leq d-6$ (and a surjection in all positive degrees) and the bottom map induces an isomorphism in degrees $i\leq d-5$, again by loc.cit. Hence, the left vertical map is a surjection in degree $d-5$. 
    
    To show that the map in the claim is not $(d-3)$-connected, we must treat separately the cases $d$ even and $d$ odd. When $d$ is odd, since by the previous argument the map in question is at least $(d-4)$-connected, it suffices to prove that the induced map on $\pi_{d-4}^\QQ$ is not an isomorphism. However, by \cref{main even conc,main odd conc}, we check that the source is infinite-dimensional, while the target is finite-dimensional. For the second claim, we see that the relative rational homotopy group in degree $d-3$ surjects onto the kernel of the induced map on $\pi_{d-4}^\QQ$, which is infinitely generated. 
    
    If $d=2n$ is even, we shall show that the stabilisation map 
    \[
    \Sigma: \pi_{2n-3}^{\QQ}(\BC(S^1\times D^{2n-1}))\longrightarrow \pi_{2n-3}^{\QQ}(\BC(S^1\times D^{2n}))
    \]
    is not surjective. Recall that there are two relevant involutions in the spaces $\BC(S^1\times D^{2n-1})$ and $\BC(S^1\times D^{2n})$; the concordance involution $\iota_{\C}$ and the reflection involution $\rho$. Let us record how $\Sigma$ and the groups involved in this map interact with these two involutions:
    \begin{enumerate}[label = (\roman*)]
        \item For any manifold $M$, the stabilisation map $\pi_*(\BC(M))\to \pi_*(\BC(M\times I))$ is anti-equivariant with respect to the concordance involution $\iota_{\C}$ \cite[Appendix I, Lemma]{HatcherSSeq}. \label{ConcInv1}

        \item The restriction map $\C(M)\to \Diff_\partial(M)$ is equivariant for $\iota_{\C}$ in domain and inversion on the target. \label{ConcInv2}

        \item In a topological group $G$, the inversion map $\mathrm{inv}:G\to G$ acts by $-1$ on homotopy groups. \label{ConcInv3}

        \item The natural map $\BDiff_\partial(M\times [0,1])\to \BC(M)$ is equivariant for the reflection involution $\rho_{[0,1]}$ on the source and the concordance involution $\iota_{\C}$ on the target. If, moreover, $M=N\times I$ for some manifold $N$, then the reflection involutions $\rho_I$ and $\rho_{[0,1]}$ on $N\times I\times [0,1]$ are homotopic, and thus induce the same involution on homotopy groups. \label{ConcInv4}
    \end{enumerate}
    
    Now observe that the restricion map 
    \[
    \pi_{2n-3}^{\QQ}(\BC(S^1\times D^{2n}))\longrightarrow \pi_{2n-3}^{\QQ}(\BDiffb(S^1\times D^{2n}))
    \]
    is an isomorphism since the groups $\pi_*^{\QQ}(\BDiffb(S^1\times D^{2n+1}))$ are zero for $*\leq 2n-2$ by \cref{main even} for $d=2n+2$. Thus, by \ref{ConcInv2} and \ref{ConcInv3}, it follows that $\iota_{\C}$ acts by $-1$ on $\pi_{2n-3}^{\QQ}(\BC(S^1\times D^{2n}))$, the target of $\Sigma$. Since this $\QQ$-vector space is infinite-dimensional by \cref{main odd} for $d=2n+1$, it follows from \ref{ConcInv1} that it suffices to show that the $(+1)$-eigenspace of $\iota_{\C}$ on $\pi_{2n-3}^{\QQ}(\BC(S^1\times D^{2n-1}))$, the source of $\Sigma$, is finite-dimensional---in fact, we will show that it is at most $1$-dimensional, hence also establishing the second claim.

    To this end, consider the extension obtained from the standard fibre sequence $\BDiffb(S^1\times D^{2n})\to \BC(S^1\times D^{2n-1})\to \BDiffb(S^1\times D^{2n-1})$ by applying $\pi_{2n-3}^{\QQ}(-)$:
    \[
    \begin{tikzcd}
        I\rar["\alpha"] & K\oplus\firstyg\rar["p"] & \firstxg,
    \end{tikzcd}
    \]
    where $I\coloneq\pi_{2n-3}^{\QQ}(\BDiffb(S^1\times D^{2n}))$ and $K\coloneq\K_{2n-2}^\QQ(\ZZ[\ZZ])\cong \K^{\QQ}_{2n-3}(\ZZ)$; see \cref{main even conc} and \eqref{pi BDiff even solid torus}. This is an extension of $\QQ[\langle\rho,\iota_{\C}\rangle]$-modules, and we need to understand the effect of $\iota_{\C}$ on the middle term---let us write $\iota_{\C}=\left(\begin{smallmatrix} A & B \\ C & D \end{smallmatrix}\right): K\oplus \firstyg\to K\oplus \firstyg$ in matrix form. We claim that $D=-\Id_{\firstyg}$ and that either $B=0$ or $C=0$. In particular, since $K$ is at most $1$-dimensional, it will follow from this that the $(+1)$-eigenspace of $\iota_{\C}$ on $K\oplus \firstyg$ is at most $1$-dimensional, as we wish to show. 

    So let $(0,x)\in K\oplus \firstyg$ and write $(b,\overline{x})\coloneq\iota_{\C}(0,x)$, i.e. $b= B(x)$ and $\overline{x}=D(x)$. Since $p$ is equivariant for $\iota_{\C}$ on the source and $-1$ on the target, it follows that $(0,x)+\iota_{\C}(0,x)\in \ker p=\im \alpha$, so let $i\in I$ be such that $\alpha(i)=(0,x)+\iota_{\C}(0,x)$. Note that $\alpha(i)$ is in the invariant subspace of $\iota_{\C}$, and thus
    \[
(b,x+\overline{x})=\alpha(i)=\iota_{\C}\alpha(i)=\alpha(\iota_{\C}(i))=\alpha(\rho(i))=\rho(\alpha(i)),
    \]
    where the second to last equality follows from \ref{ConcInv4}. But observe that, as a $\brho$-representation, $K\oplus \firstyg$ really is split and $\rho$ acts by $-1$ on $\firstyg$: indeed, this follows from \cref{HomotopyPlusConstructionY}, \cref{mainInputWatanabe}(iii) and the fact that the map $\pi_{2n-3}^{\QQ}(\BEmbsfr(Y_\infty;X_\infty)^+_\ell)\to \pi_{2n-3}^{\QQ}(\BC(S^1\times D^{2n-1}))$ is a $\brho$-equivariant isomorphism. Hence, we obtain that $(b,x+\overline{x})=(\rho(b),-x-\overline{x})$, which yields $\overline{x}=-x$. This shows that $D=-\Id_{\firstyg}$. But from $\iota_{\C}^2=\left(\begin{smallmatrix} A^2+BC & AB-B \\ CA-C & CB+\Id_{\firstyg} \end{smallmatrix}\right)=\left(\begin{smallmatrix} \Id_K & 0 \\ 0 & \Id_{\firstyg} \end{smallmatrix}\right)$, we get that $CB=0$. Since $\dim K=\dim(\K^{\QQ}_{2n-3}(\ZZ))\leq 1$, it follows that either $B=0$ or $C=0$, as desired. This concludes the proof. 
\end{proof}

\begin{rmk}\label{ZeroMapRmk}
    Note that $\smash{\pi_{2n-2}^{\QQ}(\BC(S^1\times D^{2n}))\to \pi_{2n-2}^{\QQ}(\BC(S^1\times D^{2n+1}))}$ is surjective for $2n\geq 6$: by \cref{main even conc}, we have $\pi_{2n-2}^{\QQ}(\BC(S^1\times D^{2n+1}))\cong \K^{\QQ}_{2n-1}(\ZZ[\pi])\cong \K_{2n-1}^{\QQ}(\ZZ)$, where the second isomorphism follows from the Bass--Heller--Swan decomposition and the well-known fact that $\K_{*}^{\QQ}(\ZZ)=0$ for $*\not\equiv 1\mod 4$ (and a copy of $\QQ$ otherwise). Thus, since $\pi_{2n-2}^{\QQ}(\BC(D^{2n+2}))\cong \K^{\QQ}_{2n-1}(\ZZ)$ by \cite[Cor. B]{krannich2021diffeomorphismsdiscssecondweiss}, the right-hand vertical map in
    \[
    \begin{tikzcd}[row sep = 15pt]
    \pi_{2n-2}^{\QQ}(\BC(S^1\times D^{2n}))\dar[two heads]\rar &\pi_{2n-2}^{\QQ}(\BC(S^1\times D^{2n+1}))\dar[two heads]\\
        \pi_{2n-2}^{\QQ}(\BC(D^{2n+1}))\rar &\pi_{2n-2}^{\QQ}(\BC(D^{2n+2}))
    \end{tikzcd}
    \]
    must be isomorphism by degree reasons since it is a split epimorphism. The bottom horizontal map is also surjective by loc.cit., so it follows that the top map is surjective, as claimed.
\end{rmk}

\begin{proof}[Proof of \cref{main range}]
The result follows from \cref{RAtionalConcordancePSeudoIsotopy} and applying \cite[Thm. 4.4 and Ex. 4.3]{KKGoodwillie} for $k=2$ and $N=S^1\times D^{d-1}$.
\end{proof}

\begin{proof}[Proof of \cref{main range embeddings}] Since the space $\C(N)$ is path-connected as $N$ is simply-connected (by Cerf's \textit{pseudo-isotopy implies isotopy} theorem \cite{CerfPseudoImpliesIso}), we have a map of fibre sequences
    \[\begin{tikzcd}[row sep = 10pt]
        \CEmb(\nu M,N)_\iota\arrow[d]\arrow[r] & \BC(N-\nu M) \arrow[d] \arrow[r] & \BC(N) \arrow[d] \\
        \CEmb(\nu M\times [0,1],N\times [0,1])_\iota\arrow[r] & \BC((N-\nu M)\times[0,1]) \arrow[r] & \BC(N\times [0,1]),
    \end{tikzcd}\]where the vertical maps are the concordance stabilisation maps. By \cite[Cor. C]{KKGoodwillie}, the right-hand vertical map is rationally $(d-3)$-connected and the $(d-3)$-th rational homotopy group of its fibre is finite-dimensional. By our hypothesis, $N-\nu M$ is spin and its fundamental group is isomorphic to $\ZZ$, so by \cref{main range}, the middle vertical map is rationally $(d-4)$-connected and the $(d-4)$-th rational homotopy group of its fibre is infinite-dimensional. Thus, by the long exact sequences of both fibre sequences, together with the $4$-lemma, we obtain that the left vertical map is rationally $(d-4)$-connected and that the $(d-4)$-th rational homotopy group of the fibre is infinite-dimensional. To finish the proof, observe that the restriction map $\CEmb(\nu M,N)_\iota\to \CEmb(M,N)_\iota$ is a weak equivalence; see e.g. \cite[Prop. 3.3]{MElongknots}.
\end{proof}

\subsection{\texorpdfstring{\cref{application to G}}{Corollary D}: Upper bound on the rational concordance stable range}

We will now use \cref{main range} to deduce \cref{application to G}. We will proceed by the following strategy, which inspired by \cite[Corollaries B and C]{KKGoodwillie}. We thank Manuel Krannich for outlining to us this idea. Consider the general setting of a chain of inclusions of smooth compact spin $d$-dimensional manifolds $N\subset M\subset N'$ and suppose that the induced maps $\pi_1(N)\to \pi_1(N')$ is an isomorphism, then we conclude by Theorem 4.4 (ii) of loc.cit that the composition
    \[ \frac{\C(N\times I)}{\C(N)}\to \frac{\C(M\times I)}{\C(M)}\to \frac{\C(N'\times I)}{\C(N')}\]is an equivalence on Postnikov $(d-3)$-truncations, where $\frac{\C(N\times I)}{\C(N)}$ denotes the homotopy fibre of the map $\BC(N)\to \BC(N\times I)$. In particular, we conclude that the map
    \[\pi_*^\QQ(\C(N\times I),\C(N))\to \pi_*^\QQ(\C(M\times I),\C(M))\]is injective for $*\leq d-3.$ We will use this strategy for $N=S^1\times D^{d-1}$.

\begin{proof}[Proof of \cref{application to G}]
    Let $M$ be a $d$-dimensional compact spin manifold such that $\H^1(\pi_1(M),\QQ)\neq 0$. Let $G$ denote $\pi_1(M)$, which is finitely generated since $M$ is compact. Observe that, by \cite[Theorem 4.4 (ii)]{KKGoodwillie}, the group $\pi_{d-4}^\QQ(\C(M\times I),\C(M))$ is of the same isomorphism type as the same group where $M$ is replaced by any other compact spin manifold with fundamental group $G$, so it suffices to prove the claim in the corollary for any such manifold. Choose a handle decomposition of $M$ relative to its boundary, and let $M'$ be the union of all handles of index at least $3$. Observe that the inclusion $\partial M'\hookrightarrow M$ is $2$-connected (since it is equivalent to a relative CW complex of cells of dimensions at least $3$) and that the inclusion $M'\hookrightarrow M$ is $(d-3)$-connected (by reversing the handle structure). Since $d\geq 6$, we see that these inclusions are fundamental groupoid isomorphisms. Thus, by replacing $M$ with $M'$ (which is also spin by restriction), we can assume that $\partial M\hookrightarrow M$ is a fundamental groupoid isomorphism. 
    
    We will now construct the chain of submanifolds indicated above. Since $\H^1(G;\QQ)$ does not vanish and $G=\pi_1(M)$ is finitely generated, there exists a surjective map $p:G\to \ZZ$. Pick an element of $G$ which maps to $1$ under $p$ and represent it by an embedding $S^1\hookrightarrow M^\circ$, which, since $M$ is spin and thus orientable, has trivial normal bundle and can be extended to an embedding of $S^1\times D^{d-1}.$ The image of this embedding will play the role of $N$ above. We now construct $N'$. Let $K$ denote the kernel of $p$. A standard fact in group theory shows that $K$ is normally finitely generated as a subgroup of $G$---that is, it is the normal closure of finitely many elements of $K$ in $G$---since $G$ is finitely generated and $\ZZ$ is finitely presented. Pick finitely many elements $\{s_i\}_{i=1}^k$ generating this module and realise them as embeddings $s_i':S^1\times D^{2n-2}\hookrightarrow \partial M$, which is possible since $n\geq 6$, $\partial M\hookrightarrow M$ is a fundamental groupoid isomorphism, and $\partial M$ is orientable. Let $N'$ be the manifold obtained from $M$ by attaching $2$-handles along the embeddings $s_i'$. Since attaching a cell kills the normal closure of the attaching map on $\pi_1$, we see that $\pi_1(N')\cong G/K\cong \ZZ$, and the map $\pi_1(N)\to \pi_1(N')$ is an isomorphism. This manifold is spin, since $\mathrm{BSpin}(d)$ is $1$-connected. Hence, we can run the argument described above to conclude that the map   
    \[\pi_{d-4}^\QQ(\C(S^1\times D^d),\C(S^1\times D^{d-1}))\to \pi_{d-4}^\QQ(\C(M\times I),\C(M))\]is injective, and hence the target is infinite-dimensional, as the source is by \cref{main range}. 
\end{proof}

\subsection{Homeomorphisms of solid tori} We deduce a computation of the rational homotopy groups of the classifying space $\BTop_\partial(S^1\times D^d)$ of homeomorphisms of solid tori using the already mentioned work of Burghelea--Lashof--Rothenberg.

\begin{teo}\label{Homeomorphisms}
    For $d\geq 6$ and $*< d-3$ if $d$ is even and $*< d-4$ if $d$ is odd, then 
    \[\pi_*^{\QQ}\big(\BTop_\partial(S^1\times D^{d-1})\big)=0.\]Moreover, this vector space is infinite-dimensional if $*=d-3$ and $d$ is even, or if $*=d-4$ and $d$ is odd.
\end{teo}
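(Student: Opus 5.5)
The plan is to compare homeomorphisms with diffeomorphisms via smoothing theory. For a compact smooth $d$-manifold $M$ with $d\geq 5$, Morlet's theorem gives a fibre sequence
\[
\Map_\partial\bigl(M,\Top(d)/\O(d)\bigr)_{\mathrm{c}}\longrightarrow \BDiffb(M)\longrightarrow \BTop_\partial(M),
\]
where the fibre is the union of the path components of sections hit by $\BDiffb(M)$; equivalently, $\Top_\partial(M)/\Diffb(M)\simeq \Map_\partial(M,\Top(d)/\O(d))$ up to components. For $M=S^1\times D^{d-1}$, the boundary condition makes the mapping space equivalent to $\Omega^{d-1}\Map(S^1,\Top(d)/\O(d))$. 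This is $\Omega^{d-1}(\Top(d)/\O(d))\times\Omega^{d}(\Top(d)/\O(d))$, using that $\Top(d)/\O(d)$ is an $H$-space after looping, or simply by splitting the free loop space on rational homotopy groups. By Morlet, $\Omega^{d}(\Top(d)/\O(d))\simeq \BDiffb(D^d)$ and $\Omega^{d-1}(\Top(d)/\O(d))\simeq \Omega^{-1}$ of it, i.e.\ $\Diffb(D^{d-1})$'s delooping; more precisely, $\Omega^{d-1}\Top(d)/\O(d)\simeq \BDiffb(D^{d-1})\times\ldots$ is only true up to a shift, so I would write the fibre as $\Omega\BDiffb(D^d)\times \BDiffb(D^d)$ rationally, i.e.\ $\Diffb(D^d)\times\BDiffb(D^d)$ in positive degrees.

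Next I compute the fibre's rational homotopy in the range. By Krannich--Randal-Williams and Kupers--Randal-Williams, $\pi_*^\QQ(\BDiffb(D^d))$ in degrees $*\leq d-4$ (roughly) is $\K_{*+1}^\QQ(\ZZ)$ for $d$ odd, and $0$ for $d$ even (apart from Pontryagin-type classes in degrees $\geq 2d-\ldots$, far outside the range). So the fibre contributes $\K^\QQ_{*+1}(\ZZ)\oplus\K^\QQ_{*+2}(\ZZ)\cong \K^\QQ_{*+1}(\ZZ[\ZZ])$ for $d$ odd and $0$ for $d$ even, in degrees $*\leq d-5$. This is exactly the answer of \cref{main even} for $\pi_*^\QQ(\BDiffb(S^1\times D^{d-1}))$.

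The key step, and the main obstacle, is to show that the map from the fibre to $\BDiffb(S^1\times D^{d-1})$ realises this identification, i.e.\ is a rational isomorphism in degrees below $d-4$ (odd $d$) or $d-3$ (even $d$). Then the long exact sequence gives vanishing of $\pi_*^\QQ(\BTop_\partial)$. I would compare with the Burghelea--Lashof--Rothenberg splitting \eqref{SplittingMap}. The map $\C(S^1\times D^{d-1})\to\C(D^d)\times\BC(D^d)$ is compatible with the smoothing-theory fibre via the disc-inclusion components. Also, the identification $\pi_*^\QQ(\BDiffb(S^1\times D^{d-1}))\cong\K^\QQ_{*+1}(\ZZ[\ZZ])$ in \cref{main odd} factors through $\BC$ via the restriction map (\cref{main odd conc}), which matches the two Bass--Heller--Swan summands. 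Injectivity then follows from a dimension count once surjectivity onto the $\K$-theory summands is established. For surjectivity, I would check that the $\Top(d)/\O(d)$-classes coming from $\BDiffb(D^d)$ via the two factors ($D^d\subset S^1\times D^{d-1}$, and the $S^1$-parametrised family) map nontrivially, using the BLR splitting.

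Finally, for infinite generation in degree $d-3$ (even $d$) or $d-4$ (odd $d$), the fibre's rational homotopy there is finite-dimensional, while $\pi^\QQ$ of $\BDiffb(S^1\times D^{d-1})$ is infinite-dimensional by \cref{main even}. The long exact sequence then forces $\pi^\QQ_*(\BTop_\partial)$ to be infinite-dimensional in that degree.
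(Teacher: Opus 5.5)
Your even-dimensional argument is the paper's. The smoothing-theory fibre $\Map_\partial(S^1\times D^{d-1},\Top(d)/\O(d))$ is rationally $(d-2)$-connected by Kupers--Randal-Williams, so \cref{main even} transfers directly. Your infinite-generation argument (a finite-dimensional fibre against an infinite-dimensional $\pi^\QQ_*(\BDiffb(S^1\times D^{d-1}))$) also works in both parities. The vanishing range for odd $d$, however, is not established.

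First, there is a shift error. Since $\Omega^d(\Top(d)/\O(d))\simeq\BDiffb(D^d)$ on the relevant component, the factor $\Omega^{d-1}(\Top(d)/\O(d))$ is a \emph{delooping} of $\BDiffb(D^d)$, not a looping. So $\pi_k^\QQ$ of the fibre is $\pi_k^\QQ(\BDiffb(D^d))\oplus\pi_{k-1}^\QQ(\BDiffb(D^d))\cong\K^\QQ_{k+1}(\ZZ)\oplus\K^\QQ_{k}(\ZZ)\cong\K^\QQ_{k+1}(\ZZ[\ZZ])$. Your $\K^\QQ_{k+1}(\ZZ)\oplus\K^\QQ_{k+2}(\ZZ)$ is $\K^\QQ_{k+2}(\ZZ[\ZZ])$ by Bass--Heller--Swan, so two mistakes cancel; with your shift the dimension count would fail.

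More seriously, equal dimensions say nothing about the map $\pi^\QQ_*(F)\to\pi^\QQ_*(\BDiffb(S^1\times D^{d-1}))$, and that map is the entire content of the odd case. The $\Omega^d$-factor is harmless: its composite is the disc-inclusion map $\BDiffb(D^d)\to\BDiffb(S^1\times D^{d-1})$, which is split by $S^1\times D^{d-1}\subset D^d$. For the $\Omega^{d-1}$-factor, though, you must show its classes are detected by the Burghelea--Lashof--Rothenberg summand $\pi_{*-1}(\BDiffb(D^d))$. Saying they ``map nontrivially'' is neither enough nor argued. Nor can you use the isomorphism of \cref{main odd} to locate images, since it is only abstract.

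The missing input is a comparison of the smooth and topological BLR splittings, and the paper uses it directly. Since $\Top_\partial(D^{d})\simeq *$ by the Alexander trick, BLR's splitting theorem in the topological category together with their comparison theorem give $\pi_i(\BTop_\partial(S^1\times D^{d-1}))\cong\pi_{i-1}(\hofib\lambda)$ for $i\geq 2$. Here $\lambda$ lifts embeddings $D^{d-1}\hookrightarrow S^1\times D^{d-1}$ to the universal cover. The paper then combines the smooth splitting $\pi_i(\Diffb(S^1\times D^{d-1}))\cong\pi_i(\Diffb(D^d))\oplus\pi_{i-1}(\Diffb(D^d))\oplus\pi_i(\hofib\lambda)$ with Krannich--Randal-Williams and \cref{main odd}. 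A dimension count gives $\pi^\QQ_i(\hofib\lambda)=0$ for $i\leq d-6$ and infinite-dimensional for $i=d-5$. Degree $1$ is handled separately, via $\K^\QQ_2(\ZZ[\ZZ])=0$ and finiteness of $\pi_*(\Top(d)/\O(d))$ for $*\leq d+1$; you skip this case too.

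Alternatively, you could repair your own route using naturality of the BLR splitting under $\Diff\to\Top$. The second smooth summand then maps to $\pi_{*-1}(\Top_\partial(D^d))=0$, so it lies in the image of the smoothing fibre. Together with the corrected dimension count, which also shows the $\hofib\lambda$-summand is rationally zero in the range, this yields surjectivity and hence an isomorphism. Either way a BLR compatibility statement of this kind is indispensable, and once you have it the smoothing-theory fibre is no longer needed for odd $d$.
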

\begin{proof}
    We start by showing this statement for even $d=2n$. As in \cite[Section 7.7]{BRW}, smoothing theory gives a fibre sequence
    \[\Map_\partial\big(S^1\times D^{2n-1},\tfrac{\Top(2n)}{\O(2n)}\big)\to \BDiffb(S^1\times D^{2n-1})\to \BTop_\partial(S^1\times D^{2n-1})\]and the fibre is rationally $(2n-2)$-connected. The result follows from \cref{main even} for $d=2n$. We move now to $d=2n+1$. We first prove the case $k=1$. Taking the long exact sequence of the analogous smoothing theory fibre sequence for $d=2n+1$ , we have
    \[\cdots \to \pi_1^{\QQ}(\BDiffb(S^1\times D^{2n})) \to \pi_1^{\QQ}(\BTop_\partial(S^1\times D^{2n})\to \pi_0^{\QQ}\big(\Map_\partial\big(S^1\times D^{2n},\tfrac{\Top(2n+1)}{\O(2n+1)}\big)\big)\to \cdots. \]
    Upon rationalising, the leftmost group is isomorphic mto $\K_2(\ZZ[\pi])\cong \K_1(\ZZ)\oplus \K_2(\ZZ)$. This group is finite and thus its rationalisation is trivial. For the rightmost group, first note that there is a natural equivalence $\Map_{\partial}(Y\times D^{d-1},X)\simeq \Omega^{d-1} \Map(Y,X)$, and since the fibre sequence $\Omega X\to \Map(S^1,X)\to X$ admits a section (the constant loops), we get that for any space $X$,
    \[
\pi_k(\Map_\partial(S^1\times D^{2n}, X))\cong \pi_{k+2n}(X)\oplus \pi_{k+2n+1}(X).
    \]
    When $X=\Top(2n+1)/\O(2n+1)$, the homotopy groups $\pi_*(X)$ are finite for $*\leq 2n+2$ by \cite[Essay V, 5.0]{kirbySiebenmann}. Thus, we see that the rationalisation for $k=0$ is trivial, hence proving that $\pi_1^{\QQ}(\BTop_\partial(S^1\times D^{2n}))$ vanishes. We focus now on the case $k>1$. Let $\lambda:\Emb_\partial(D^{2n},S^1\times D^{2n})\to \Emb_\partial(D^{2n},\RR\times D^{2n})$ given by lifting embeddings to the universal cover (this is analogous to $\lambda_d$ from the previous section). Since $\Top_\partial(D^{2n})$ is contractible by the Alexander trick, we deduce from \cite[Thm. 2 for $\Top$,p.104]{Burghelea1975} and \cite[Thm. 3, p.104]{Burghelea1975} that $\pi_i(\BTop_\partial(S^1\times D^{2n}))$ is abstractly isomorphic to $\pi_{i-1}(\hofib(\lambda))$ for $i\geq 2$. On the other hand, \cite[Thm.2, p.104]{Burghelea1975} for the smooth category, we have a splitting
    \[\pi_i(\Diffb(S^1\times D^{2n}))\cong \pi_i(\Diffb(D^{2n+1}))\oplus \pi_{i-1}(\Diffb(D^{2n+1}))\oplus \pi_i(\hofib(\lambda)) \]for all $i\geq 1$. In this range, \cite[Thm. A]{krannich2021diffeomorphismsdiscssecondweiss} implies that $\pi^{\QQ}_i(\Diff_\partial(D^{2n+1}))\cong \K^{\QQ}_{i+2}(\ZZ)$. By applying \cref{main odd} for $d=2n+1$, together with dimension count as in the proof of \cref{VanishingNil}, we see that $\pi^{\QQ}_i(\hofib(\lambda))$ vanishes for $i\leq 2n-5$ and is infinitely generated in $i=2n-4$. We conclude that $\smash{\pi^{\QQ}_i(\BTop_\partial(S^1\times D^{2n}))}$ vanishes for $i\leq 2n-4$ and is infinitely generated for $i=2n-3$.
\end{proof}

\subsection{\texorpdfstring{\cref{BDiffSpinManifoldsMainCor}}{Corollary F}: Diffeomorphisms of \texorpdfstring{$S^1\times N$}{S1xN} for \texorpdfstring{$N$}{N} simply-connected spin}\label{BDiffSpin section}

As an application of \cref{main range}, we now obtain \cref{BDiffSpinManifoldsMainCor}. Let $N^{d-1}$ be as in the statement of this corollary, i.e. smooth compact simply-connected spin of dimension $d-1\geq 5$ (and possibly with boundary). Our approach to study the groups $\pi_*^\QQ(\BDiffb(S^1\times N))$ is via the classical surgery-pseudoisotopy fibre sequence
\begin{equation}\label{SurgPseudoFibSeq}
\bDiffmodDiff(S^1\times N)\longrightarrow \BDiffb(S^1\times N)\longrightarrow \BbDiffb(S^1\times N).
\end{equation}
By \cite[Cor. F(2)]{BurgheleaLashofTransfer}, this fibre sequence splits upon looping twice, truncating in the concordance stable range, and localising at odd primes (in particular, rationalising), so it suffices to study the base and fibre independently. The block term is completely described in terms of $\bDiffb(N)$, by the following Bass--Heller--Swan-like decomposition of Burghelea \cite[Cor. 2.3]{BurgheleaBlockMxS1}.

\begin{lemma}[Burghelea]\label{BlockBHS}
    Let $N$ be a compact connected smooth manifold with $\mathrm{Wh}_1(\pi_1 N)=0$, and let $T$ be $S^1$ if $\partial N=\emptyset$, and a singleton otherwise. Then, there is an equivalence $\bDiffb(S^1\times N)\simeq \bDiffb(N)\times \Omega\bDiffb(N)\times T$.
\end{lemma}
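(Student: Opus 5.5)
The plan is to obtain the splitting from a fibre sequence of block diffeomorphism groups that restricts to a fibre $N$ and extends over the circle, in the spirit of the Bass--Heller--Swan splitting. I would argue at the level of block automorphism spaces, where classical techniques (cutting along a fibre, using the Whitehead group to control $h$-cobordisms) are available.

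First, I would consider the restriction map to the boundary piece $\{1\}\times N$, or more precisely the map $\bDiffb(S^1\times N)\to \bDiff(S^1\times N)$ obtained by forgetting the boundary conditions on the circle direction. Cutting $S^1\times N$ along $\{1\}\times N$ relates $\bDiffb(S^1\times N)$ to $\bDiffb([0,1]\times N)$ and $\bDiffb(N)$. In the block world, $\bDiffb([0,1]\times N)\simeq\Omega\bDiffb(N)$, because block concordances are block isotopies. This gives the $\Omega\bDiffb(N)$ factor. The $\bDiffb(N)$ factor comes from the product inclusion $f\mapsto \mathrm{id}_{S^1}\times f$.

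Second, I would analyse the space of block embeddings of a fibre $\{1\}\times N$ into $S^1\times N$ modulo translations. This space fits into a block isotopy extension fibre sequence
\[
\bDiffb([0,1]\times N)\longrightarrow\bDiffb(S^1\times N)\longrightarrow\widetilde{\mathrm{Emb}}(N,S^1\times N).
\]
The hypothesis $\mathrm{Wh}_1(\pi_1N)=0$ enters here. Any block embedding of $N$ homotopic to a fibre inclusion cuts $S^1\times N$ into an $h$-cobordism, and this $h$-cobordism is a product by the $s$-cobordism theorem. Hence the embedding space is equivalent to the space of fibre inclusions, namely $T\times\bDiffb(N)$. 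The factor $S^1$ arises as rotations exactly when $\partial N=\emptyset$; when $\partial N\neq\emptyset$, the fibre is pinned at the boundary and $T$ is a point.

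Third, I would construct sections to split the resulting fibrations. The product inclusion $\mathrm{id}_{S^1}\times(-)$ and the rotation action give maps $\bDiffb(N)\times T\to\bDiffb(S^1\times N)$, and these provide the splitting. Since all spaces involved are group-like $\EE_1$, the resulting principal fibration is trivial, so the fibre sequence splits as a product.

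I expect the main obstacle to be the second step: identifying the block embedding space via the $h$-cobordism argument in a parametrised, block sense. This requires a block version of the $s$-cobordism theorem, together with the vanishing of the higher obstructions on $\pi_k$ of block spaces. These obstructions are controlled by $\mathrm{Wh}_1$ of $\pi_1N$ applied to $N\times\Delta^k$. Low dimensions may need $\dim N\geq5$. Since the statement is Burghelea's, in the paper one would simply cite \cite[Cor.~2.3]{BurgheleaBlockMxS1}.
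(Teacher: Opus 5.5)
The paper gives no argument of its own: it just cites Burghelea's Cor.~2.3, as you anticipate, so your outline is an independent route. The frame is sensible and the outer steps are fine: block isotopy extension along a tubular neighbourhood $N\times[-\epsilon,\epsilon]$ of a fibre, the identification $\bDiffb([0,1]\times N)\simeq\Omega\bDiffb(N)$, and the splitting via the section $(z,f)\mapsto \mathrm{rot}_z\times f$ with $z\in T$. Use the thickened fibre rather than $N$ itself: for $\partial N=\emptyset$ the reflection $(z,x)\mapsto(\bar z,x)$ fixes the bare fibre pointwise, so it would lie in the fibre of the restriction map. However, Step~2 carries the whole content of the lemma, and it has a genuine gap. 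You apply the $s$-cobordism theorem to the wrong $h$-cobordism, so the ``hence'' does not follow, and the hypothesis $\mathrm{Wh}_1(\pi_1N)=0$ is never really used.

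Let $W$ be $S^1\times N$ cut open along $e(N)$. In the infinite cyclic cover $\RR\times N$, $W$ is the region between a lift $\tilde e(N)$ and $\tilde e(N)+1$. Let $V$ be the region between a parallel fibre $\{c\}\times N$ lying below $\tilde e(N)$ and $\tilde e(N)$ itself. Decompose the region between $\{c\}\times N$ and $\tilde e(N)+1$ in two ways. This gives $\tau(V)+\tau(W)=\tau([c,c+1]\times N)+\tau(V+1)=\tau(V)$, because the deck translation acts trivially on $\mathrm{Wh}(\pi_1N)$. Hence $\tau(W)=0$ for every such $e$ and every $N$, with no hypothesis needed. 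A product structure on $W$ therefore carries no information. It only says that $S^1\times N$ is a mapping torus in which $e(N)$ is \emph{some} fibre; it does not say that $e$ is block isotopic to a standard fibre inclusion.

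The relevant obstruction is $\tau(V)\in\mathrm{Wh}(\pi_1N)$, and that is where the hypothesis has to enter. To close Step~2 you must trivialise $V$ by the relative $s$-cobordism theorem in families. Do this for $N\times D^k$, relative to the part over $S^{k-1}$ (where $e$ is already a fibre inclusion) and to $\partial N$. You must then convert that product structure into a block isotopy in $S^1\times N$. This is not automatic, because $V$ need not embed in $S^1\times N$; take for instance $\tilde e(N)$ the graph of a function $N\to\RR$ of large oscillation. Only then do you get $\pi_k(E,F)=0$ for the subspace $F$ of fibre inclusions.

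Separately, the components hit by $\bDiffb(S^1\times N)$ also contain graphs $x\mapsto(\phi(x),x)$ of maps $\phi\colon(N,\partial N)\to(S^1,1)$, namely restrictions of $(z,x)\mapsto(z\phi(x),x)$. For closed $N$ they also contain the normally reflected fibre. Neither is homotopic to a fibre inclusion. So the base of your fibration is really $\mathrm{Map}_\partial(N,S^1)\times\bDiffb(N)$, times $\ZZ/2$ if $\partial N=\emptyset$. This agrees with $T\times\bDiffb(N)$ only on identity components. That is harmless for the paper, which only uses the splitting on rational homotopy in positive degrees.
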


We now focus on the pseudoisotopy term. We will need the following computation.

\begin{lemma}\label{WeirdMonoLem}
    Let $X$ be a based simply-connected space. There is an isomorphism
    \[
    \H_*^{S^1}(L(X\times S^1),LS^1;\QQ)\cong \bigoplus_{r\in \ZZ} \widetilde{\H}_*(LX;\QQ).
    \]
    Under this isomorphism, the split monomorphism (cf. \cref{CEhtpyFLShomologyeq})
    \[
    \begin{tikzcd}
    \H_{*+1}(X\times S^1,S^1;\QQ)\rar[hook, "c"] & \HC^-_{*+1}(\bfS[\Omega(X\times S^1)], \bfS[\Omega S^1];\QQ) &\lar["\cong", "N"'] \H^{S^1}_{*}(L(X\times S^1), LS^1;\QQ)
    \end{tikzcd}
    \]
    only hits the $(r=0)$-summand. Thus, we obtain an isomorphism
    \[
    \frac{\H^{S^1}_{*}(L(X\times S^1), LS^1;\QQ)}{\H_{*+1}(X\times S^1,S^1;\QQ)}\cong \frac{\widetilde{\H}_{*}(LX;\QQ)}{\widetilde{\H}_{*}(X;\QQ)\oplus\widetilde{\H}_{*+1}(X;\QQ)}\oplus\bigoplus_{r\neq 0}\widetilde{\H}_{*}(LX;\QQ).
    \]
    Upon taking coinvariants with respect to the involution $(-1)^{d+1}\tau_{\FLS}$, we obtain an isomorphism
    \[
    \left[\frac{\H^{S^1}_{*}(L(X\times S^1), LS^1;\QQ)}{\H_{*+1}(X\times S^1,S^1;\QQ)}\right]_{(-1)^{d+1}\tau_{\FLS}}\cong \left[\frac{\widetilde{\H}_{*}(LX;\QQ)}{\widetilde{\H}_{*}(X;\QQ)\oplus\widetilde{\H}_{*+1}(X;\QQ)}\right]_{(-1)^{d+1}\tau_{\FLS}}\oplus\bigoplus_{r>0}\widetilde{\H}_{*}(LX;\QQ).
    \]
\end{lemma}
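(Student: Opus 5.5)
We first compute $\H_*(L(X\times S^1);\QQ)$ componentwise. Since $L(X\times S^1)\cong LX\times LS^1$ and $LS^1\simeq \coprod_{r\in\ZZ} S^1$ (with $L_rS^1\simeq S^1$ via evaluation, the rotation $S^1$-action on $L_rS^1$ being the $r$-fold action for $r\neq 0$ and trivial for $r=0$), we get $L_r(X\times S^1)\simeq LX\times L_rS^1$ as $S^1$-spaces with diagonal action. The pair $(L(X\times S^1),LS^1)$ then has homotopy cofibre $\bigvee_r (LX/\ast)\wedge (L_rS^1)_+$, i.e. the relative terms are $\widetilde\H_*(LX)\otimes \H_*(L_rS^1)$. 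For the $S^1$-equivariant homology, we treat $r\neq0$ and $r=0$ separately. For $r\neq 0$, the action on $L_rS^1\simeq S^1$ is free up to a finite stabiliser $C_r$, so rationally $(LX\times L_rS^1)_{hS^1}\simeq_\QQ LX_{hC_r}$, and since $X$ is simply connected and $C_r$ acts through a connected group ($C_r\subset S^1$), the action on rational homology is trivial; thus the $r$-th relative summand is $\widetilde\H_*(LX;\QQ)$. For $r=0$, $L_0S^1\simeq S^1$ with trivial action, so $(LX\times S^1)_{hS^1}\simeq LX_{hS^1}\times S^1$ and the relative term is $\widetilde\H^{S^1}_*(LX)\otimes \H_*(S^1)$. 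To match the claimed formula $\widetilde\H_*(LX)$ for $r=0$ as well, we use Connes' exact sequence: $\widetilde\H^{S^1}_*(LX)\oplus\widetilde\H^{S^1}_{*-1}(LX)$ should be identified with $\widetilde\H_*(LX)$ via the $S^1$-factor; concretely, the Gysin/Connes sequence for $LX\times L_0S^1$ with the free-loop $S^1$ acting also on $L_0S^1$ by loop rotation (which is homotopically trivial but not literally), and I would instead compute via the norm equivalence $\H_*^{S^1}(L(X\times S^1),LS^1)\cong \HC_*$ and the Künneth formula for cyclic homology of $\QQ[\Omega X]\otimes\QQ[\ZZ]$, where the $\ZZ$-grading by $r$ splits Hochschild homology and Connes' $B$ acts invertibly (by $r$) on the $r\neq0$ part; for $r=0$ the Künneth theorem for $\HC$ of a tensor product with $\QQ[\ZZ]$ (whose weight-zero $\HH$ is $\Lambda[\epsilon]$, $B$-acyclic modulo constants) yields $\widetilde\HH_*(\QQ[\Omega X])=\widetilde\H_*(LX)$. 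This Künneth step is the main obstacle, and I would handle it using the Loday mixed-complex Künneth theorem (\cite{LodayHC}).

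Next, the map $c$ is induced by constant loops $(X\times S^1,S^1)\to (L(X\times S^1),LS^1)$, which land in $L_0$, so its image lies in the $r=0$ summand. Identifying its image there: $\H_{*+1}(X\times S^1,S^1)\cong \widetilde\H_{*+1}(X)\oplus\widetilde\H_*(X)$, and under the identification above these map to $\widetilde\H_*(LX)$ via $B$ composed with constant loops, giving (as in the proof of \cref{FLSS1homology}, using $\mathrm{ev}_1\circ B\circ c=\id$-type arguments) the split injections $\widetilde\H_*(X)\to\widetilde\H_*(LX)$ (constants) and $\widetilde\H_{*+1}(X)\to\widetilde\H_*(LX)$ (the $\Delta$/rotation class, $x\mapsto$ image of $[S^1]\times x$ under the action map). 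Injectivity of the sum is checked via $\mathrm{ev}_1$ and the degree-one part of the rotation operator; this gives the cokernel formula.

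Finally, for the involution: $\tau_{\FLS}$ sends $L_r$ to $L_{-r}$, and on $L_rS^1\simeq S^1$ conjugation composes the evaluation identification with reflection, so it exchanges the $r$ and $-r$ summands by a (signed) identity on $\widetilde\H_*(LX)$ twisted by $\tau_{\FLS}$ on $LX$. Coinvariants of a pair of summands swapped by an involution is one copy, giving $\bigoplus_{r>0}\widetilde\H_*(LX;\QQ)$, while on the $r=0$ summand the involution remains $(-1)^{d+1}\tau_{\FLS}$, which preserves the image of $c$ (as in \cref{involutionlemma}), yielding the stated formula.
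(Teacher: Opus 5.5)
Your treatment of the summands with $r\neq0$ (rotation on $L_rS^1$ is free up to the stabiliser $C_r$, which acts trivially on rational homology), your observation that $c$ lands in the $r=0$ component, and your bookkeeping of $\tau_{\FLS}$ pairing $r$ with $-r$ all agree with the paper. The paper likewise splits $L(S^1\times X)$ into the components $L_rS^1\times LX$ and passes to homotopy orbits. Your computation of the $r=0$ summand as $\H_*(S^1;\QQ)\otimes\widetilde{\H}^{S^1}_*(LX;\QQ)$ is also correct. The reason is that $L_0S^1$ retracts $\mathrm{O}(2)$-equivariantly onto the circle of constant loops, on which rotation and conjugation act trivially. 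The paper's proof passes over this point by treating the circle factor as a free orbit on every component.

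The genuine gap is your bridge from $\H_*(S^1;\QQ)\otimes\widetilde{\H}^{S^1}_*(LX;\QQ)$ to $\widetilde{\H}_*(LX;\QQ)$. Connes' $B$ acts on the weight-$r$ part $\H_*(L_rS^1)$ of $\HH_*(\QQ[\ZZ])$ by multiplication by $r$. On the weight-zero part $\Lambda[\epsilon]$ it is therefore zero, not ``acyclic modulo constants''. The K\"unneth theorem for mixed complexes then just returns $\HC_*(\QQ[\Omega X])\otimes\Lambda[\epsilon]$, i.e.\ your own answer. Now compare with $\widetilde{\H}_*(LX)$ through Connes' sequence $\widetilde{\H}^{S^1}_{m+1}\xrightarrow{S}\widetilde{\H}^{S^1}_{m-1}\xrightarrow{B}\widetilde{\H}_m(LX)\xrightarrow{I}\widetilde{\H}^{S^1}_m\xrightarrow{S}\widetilde{\H}^{S^1}_{m-2}$. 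In finite type, the two sides have equal dimensions exactly when $S$ vanishes on reduced $S^1$-equivariant homology. Equivalently, the BV operator $\Delta=B\circ I$ must be exact on $\widetilde{\H}_*(LX;\QQ)$. That is an extra theorem, which you neither prove nor cite. It holds, for instance, by Goodwillie's derivation argument when $X$ has a model with a positive weight grading, such as formal $X$.

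Even granting $S=0$, two further steps fail.

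First, your second component of $c$ is wrong. Rotation fixes constant loops, so the image of $[S^1]\times x$ under the action map is zero, and it has degree $|x|+1$ in any case. In the correct picture, $N^{-1}\circ c$ (for $X$ alone) sends $\widetilde{\H}_{*+1}(X)\otimes[\mathrm{pt}]$ into $\widetilde{\H}^{S^1}_*(LX)\otimes[\mathrm{pt}]$ and $\widetilde{\H}_*(X)\otimes[S^1]$ into $\widetilde{\H}^{S^1}_{*-1}(LX)\otimes[S^1]$. After choosing a splitting of $I$, the latter becomes the constant-loop inclusion, since $B\circ N^{-1}\circ c$ is the constant-loop map. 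The former becomes a choice of $\Delta$-\emph{primitive} of the constant-loop class.

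Second, no such splitting intertwines the involutions. $\tau_{\FLS}$ acts on $[S^1]\otimes\widetilde{\H}^{S^1}_{*-1}(LX)$ through the second factor only. But $B$ is anti-equivariant, so $\tau_{\FLS}$ acts with the opposite sign on $B(\widetilde{\H}^{S^1}_{*-1}(LX))\subset\widetilde{\H}_*(LX)$. Concretely, take $X=S^3$ and $y\in\H_2(\Omega S^3;\QQ)$. The degree-$5$ part of the $r=0$ summand is spanned by $[S^1]\otimes I(y^2)$, with $\tau_{\FLS}=+1$. By contrast, $\widetilde{\H}_5(LS^3;\QQ)$ is spanned by $\Delta(y^2)$, with $\tau_{\FLS}=-1$.

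So the $r=0$ term of the final formula has to be computed directly, as the $(-1)^{d+1}\tau_{\FLS}$-coinvariants of $\H_*(S^1;\QQ)\otimes\mathrm{coker}\bigl(\widetilde{\H}_{\bullet+1}(X;\QQ)\to\widetilde{\H}^{S^1}_{\bullet}(LX;\QQ)\bigr)$. It does not follow summand by summand in the stated form, either from your argument or from the paper's.
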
 
\begin{proof}
    Observe that there is an $S^1$-equivariant equivalence $L(S^1\times X)=LS^1\times LX\simeq \coprod_{r\in \ZZ} S^1\times LX$ where the action on the right-hand side preserves the coproduct decomposition and is diagonal on each $S^1\times LX$. Since the $S^1$-action on $S^1$ is free, it follows that $(S^1\times LX)_{hS^1}\simeq LX$, and hence the first claim follows from
    \[
    \left(\Sigma^\infty(L(X\times S^1)/LS^1)\right)_{hS^1}\simeq \bigoplus_{r\in \ZZ}(\Sigma^\infty_+(S^1\times LX))_{hS^1}/(\Sigma^\infty_+ S^1)_{hS^1}\simeq \bigoplus_{r\in \ZZ}\Sigma^\infty LX. 
    \]

    To see the second claim, observe that the map $c:\Sigma^\infty_+Y\to (\Sigma^\infty_+ LY)^{hS^1}$ naturally factors through $(\Sigma^\infty_+ L_0 Y)^{hS^1}$, where $L_0Y\subset LY$ is the component of the free loop space corresponding to null-homotopic loops. Also, note that $L_0S^1\simeq S^1$. Thus, we obtain a commutative diagram
    \[
    \begin{tikzcd}[row sep = 15pt]
        \Sigma^\infty(S^1\times X)/S^1\rar["c"]\ar[ddr, dashed, bend right = 20pt] & (\Sigma^\infty L_0(S^1\times X)/L_0S^1)^{hS^1}\rar &(\Sigma^\infty L(S^1\times X)/LS^1)^{hS^1}\\
        &(\Sigma^{\infty+1}L_0(S^1\times X)/L_0S^1)_{hS^1}\rar\uar["N", "\vsim"'] &(\Sigma^{\infty+1}L(S^1\times X)/LS^1)_{hS^1}\uar["N", "\vsim"']\\
        &\Sigma^{\infty+1}LX\rar["r=0"]\uar[phantom, "\vsimeq"] &\bigoplus_{r\in \ZZ}\Sigma^{\infty+1}LX\uar[phantom, "\vsimeq"]
    \end{tikzcd}
    \]
where both of the vertical arrows are $S^1$-norm maps. Both of these are equivalences, for the $S^1$-spectra $\Sigma^\infty_+ L_0(S^1\times X)\simeq (\Sigma^\infty_+S^1)\otimes \Sigma^\infty_+ LX$ and $\Sigma^\infty_+ L(S^1\times X)\simeq (\Sigma^\infty_+S^1)\otimes (\oplus_{r\in \ZZ}\Sigma^\infty_+ LX)$ are equivalent to induced $S^1$-spectra \cite[Thm. D]{KleinDualizingSpectrum} (same applies when replacing $S^1\times X$ with $S^1$, and hence the cofibre thereof appearing in the above diagram). The bottom equivalences implicitly use that $X$ is simply-connected so that $L_0X=LX$. The second isomorphism now follows.

Finally, to obtain the last isomorphism on coinvariants, simply observe that $\tau_{\FLS}$ descends to an involution on the quotient in the right-hand side of the second isomorphism in the statement by \cref{involutionlemma}, and, for $r>0$, identifies the $r$ and $-r$ summands in $L(S^1\times X)=\coprod_{r\in \ZZ} L_r(X\times S^1)=\coprod_{r\in \ZZ} S^1\times LX$. 
\end{proof}

\begin{lemma}\label{AstarLemma}
    Let $N$ be a smooth compact spin manifold of dimension $d-1\geq 5$, and let
    \[
    A_*\coloneq \left[\frac{\H_*(LN;\QQ)}{\H_*(N;\QQ)\oplus \H_{*+1}(N;\QQ)}\right]_{(-1)^{d+1}\tau_{\FLS}}\oplus \bigoplus_{r>0}\H_*(LN;\QQ),
    \]
    where the term inside the coinvariants is the one featuring in \cref{WeirdMonoLem}. Then, there is an isomorphism in degrees $1\leq *\leq d-5$
    \[
    \pi_*^{\QQ}\big(\bDiffmodDiff(S^1\times N)\big)\cong A_*\oplus \pi_*^{\QQ}\big(\bDiffmodDiff(S^1\times D^{d-1})\big),
    \]
    and an epimorphism in degree $*=d-4$
    \[
    \pi_{d-4}^{\QQ}\big(\bDiffmodDiff(S^1\times N)\big)\twoheadrightarrow A_*\oplus \left\{
    \begin{array}{cl}
        0 & \text{if $d$ even,} \\
        \K^{\QQ}_{d-3}(\ZZ[\ZZ]) & \text{if $d$ odd.}
    \end{array}
    \right.
    \]
\end{lemma}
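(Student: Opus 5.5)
The plan is to treat $S^1\times D^{d-1}\subset S^1\times N$, with $D^{d-1}\subset N$ an embedded disc, as the analogue of $P\subset M$. Write $P\coloneq S^1\times(N-\nu D^{d-1})$, a codimension-zero submanifold of $S^1\times N$ whose complement is $S^1\times D^{d-1}$. The surgery-pseudoisotopy terms should then split into a ``disc part'' and a part controlled by the free loop space of $N$. Concretely, the restriction/extension maps give a fibre sequence
\[
\Omega^{-1}\pEmb(P,S^1\times N)\longrightarrow\bDiffmodDiff(S^1\times D^{d-1})\longrightarrow\bDiffmodDiff(S^1\times N),
\]
or rather its once-looped version obtained, as in \cref{RecollectionWWtheorems}, from the map between the ordinary and block isotopy extension fibre sequences. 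Its rational homotopy groups will be compared with the corresponding sequence of Weiss--Williams targets $\Omega^\infty(\Hsp^s(-)_{hC_2})$.

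\textbf{Step 1: stable targets.} Since $S^1\times D^{d-1}\to S^1\times N$ is $2$-connected ($N$ is simply connected), \eqref{CEWJReq} and Goodwillie's isomorphism \eqref{CEhtpyFLShomologyeq} identify the rational homotopy of the cofibre spectrum $\Hsp(S^1\times N)/\Hsp(S^1\times D^{d-1})$. It should be
\[
\frac{\H^{S^1}_{*+1}(L(S^1\times N),LS^1;\QQ)}{\H_{*+2}(S^1\times N,S^1;\QQ)}.
\]
By \cref{WeirdMonoLem} with $X=N$, together with the $C_2$-equivariance of \cref{involutionlemma}, its $\iota_\H$-coinvariants are exactly $A_*$, with the degree shift absorbed by passing to $\bDiffmodDiff$. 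The map $\Hsp(S^1\times D^{d-1})\to\Hsp(S^1\times N)$ is rationally split, because the collapse $S^1\times N\to S^1$ gives a retraction on $\cH(-)\simeq\Omega\Wh(-)$, which is a homotopy functor. Hence on rational homotopy of $hC_2$-orbits one gets
\[
\pi_*^\QQ\bigl(\Hsp(S^1\times N)_{hC_2}\bigr)\cong\pi_*^\QQ\bigl(\Hsp(S^1\times D^{d-1})_{hC_2}\bigr)\oplus A_{*\pm\text{shift}}.
\]

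\textbf{Step 2: transfer to unstable groups.} The Weiss--Williams map $\Phi^{\Diff}_M$ of \eqref{WWMap} is $(\phi(M)+1)$-connected. By \cref{main range}, $\phi^\QQ=d-5$ for both $S^1\times N$ and $S^1\times D^{d-1}$, since both are spin with $\pi_1=\ZZ$. So rationally $\Phi^{\Diff}$ is an isomorphism for $*\le d-5$ and a surjection for $*=d-4$, naturally in the codimension-zero embedding $S^1\times D^{d-1}\subset S^1\times N$. Combining this with Step 1 yields
\[
\pi_*^\QQ\bigl(\bDiffmodDiff(S^1\times N)\bigr)\cong\pi_*^\QQ\bigl(\bDiffmodDiff(S^1\times D^{d-1})\bigr)\oplus A_*\qquad\text{for }1\le *\le d-5,
\]
using that the splitting of Step 1 is compatible with the $\Phi^{\Diff}$'s. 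In degree $d-4$ we only get a surjection onto $\pi_{d-4}^\QQ$ of the stable target for $S^1\times N$, which is the direct sum of $A_*$ and the stable term for the solid torus.

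It remains to identify the stable solid-torus term in degree $d-4$ as $0$ ($d$ even) or $\K_{d-3}^\QQ(\ZZ[\ZZ])$ ($d$ odd). The plan is to take it from the computations of \cref{main even,main odd}, combined with \cref{BlockBHS}, which gives $\BbDiffb(S^1\times D^{d-1})$ rationally trivial in positive degrees. Under these inputs, $\pi_*^\QQ\bigl(\bDiffmodDiff(S^1\times D^{d-1})\bigr)\cong\pi_*^\QQ(\BDiffb(S^1\times D^{d-1}))$, and the stable version is then read off from the same $K$-theory description via Bass--Heller--Swan and Borel.

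\textbf{Main obstacle.} I expect the hardest part to be the naturality and $C_2$-compatibility in Step 1. One must check that the rational splitting is $\iota_\H$-equivariant and that the degree conventions match those of \cref{involutionlemma}, so that the coinvariants come out with the sign $(-1)^{d+1}$ and the $r>0$ summands appear uncoinvarianted. A second delicate point is the degree-$(d-4)$ identification of the stable disc-type summand. If the unstable input does not suffice there, I would fall back on the Weiss--Williams/Waldhausen description $\pi_*^\QQ(\Hsp(S^1\times D^{d-1})_{hC_2})$ via $A(S^1)$ and Bass--Heller--Swan, together with the involution sign $(-1)^{d}$ from \cref{involutionlemma}. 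That sign is what kills the even-$d$ case and leaves $\K_{d-3}^\QQ(\ZZ[\ZZ])$ in the odd-$d$ case.
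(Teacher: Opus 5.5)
Your outline follows the paper's proof. You compare the diffeomorphism Weiss--Williams maps for $S^1\times D^{d-1}\subset S^1\times N$ and make both rationally $(d-4)$-connected via \cref{main range}. You split off $\Hsp(S^1\times D^{d-1})$ using that $\cH(-)\simeq\Omega\Wh(-)$ is a homotopy functor. You then identify the cofibre $\Hsp(S^1\times N)/\Hsp(S^1\times D^{d-1})\simeq\Sigma\CEsp(S^1\times(N-D^{d-1}),S^1\times N)$ through \cref{involutionlemma} and \cref{WeirdMonoLem}.

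Three smaller remarks on this part:
\begin{itemize}
\item Because of the suspension, $\pi_*^{\QQ}$ of the cofibre is the quotient of $\H^{S^1}_*$ by $\H_{*+1}$, so its coinvariants are $A_*$ directly. There is no shift to absorb when passing to $\bDiffmodDiff$.
\item The $C_2$-compatibility you call the main obstacle is harmless. Injectivity on homotopy groups turns the cofibre sequence into short exact sequences of $\QQ[C_2]$-modules, and rational coinvariants are exact; no equivariant splitting is needed.
\item The opening fibre sequence through $\pEmb(P,S^1\times N)$ is not needed. The embedding Weiss--Williams theorem would not apply to it anyway, since $S^1\times(N-\nu D^{d-1})$ usually has handle dimension above $d-3$.
\end{itemize}

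The one step that fails as you primarily propose it is the solid-torus summand in degree $d-4$ for $d$ odd. There the Weiss--Williams map for $S^1\times D^{d-1}$ is only surjective. By \cref{main odd}, its source $\pi^{\QQ}_{d-4}(\BDiffb(S^1\times D^{d-1}))$ is infinite-dimensional, so Theorem~A only shows the stable group is some quotient of it. For $d$ even the source vanishes, so your argument does work in that case.

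Your fallback is the paper's route, but the sign $(-1)^d$ alone does not decide the coinvariants. You also need the action of the canonical involution $\tau_\epsilon$ on $\pi_*^{\QQ}(\Hsp(S^1\times D^{d-1}))\cong\K^{\QQ}_{*+1}(\ZZ[\ZZ])$. The paper uses three facts:
\begin{itemize}
\item $\iota_{\H}$ corresponds to $(-1)^d\tau_\epsilon$;
\item $\tau_\epsilon$ acts diagonally on the Bass--Heller--Swan splitting;
\item $\tau_\epsilon=-1$ on the rational $K$-theory of $\ZZ$ in the relevant positive degrees.
\end{itemize}
Together these give $\iota_{\H}=(-1)^{d+1}$, hence coinvariants $0$ for $d$ even and $\K^{\QQ}_{*+1}(\ZZ[\ZZ])$ for $d$ odd.

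Alternatively, you can repair your unstable route by passing to dimension $d+2$. Double stabilisation identifies $\pi_*^{\QQ}(\Hsp(S^1\times D^{d-1}))$ with $\pi_*^{\QQ}(\Hsp(S^1\times D^{d+1}))$ $\iota_{\H}$-equivariantly, since each stabilisation is anti-equivariant. Degree $d-4$ lies in the isomorphism range both of the Weiss--Williams map for $S^1\times D^{d+1}$ and of \cref{main odd} in dimension $d+2$. This yields $\K^{\QQ}_{d-3}(\ZZ[\ZZ])$.
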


\begin{proof}
Consider the Weiss--Williams map (cf. \eqref{WWMap})
\[
\bDiffmodDiff(S^1\times N)\longrightarrow \Omega^{\infty}(\Hsp(S^1\times N)_{hC_2}).
\]
By \cref{main range}, the rational concordance stable range for $S^1\times N$ is $d-5$, and hence this map is rationally $(d-4)$-connected. We now compute the rational homotopy groups of the target of this map.

Picking an embedded disc $D^{d-1}\subset N$, we obtain a map of $C_2$-spectra $\Hsp(S^1\times D^{d-1})\to \Hsp(S^1\times N)$ (recall the definition of $\Hsp(-)$ from \cref{RecollectionWWtheorems}). This map is split injective on homotopy groups, for both spectra are connective and on infinite loop spaces it is the map $\cH(S^1\times D^{d-1})\to \cH(S^1\times N)$, which is split injective as $\cH(-)\simeq \Omega\Wh(-)$ is a homotopy functor. (A more involved argument using \cite[Lem. 5.15]{MElongknots} in fact shows that the original map of spectra is split, but we will not need this.) Hence, writing $\Hsp(S^1\times N, S^1\times D^{d-1})\coloneq \hocofib(\Hsp(S^1\times D^{d-1})\to \Hsp(S^1\times N))$, we see that there is a commutative diagram
\[
\begin{tikzcd}[row sep = 15pt]
    \pi_*^{\QQ}\big(\bDiffmodDiff(S^1\times D^{d-1})\big)\dar\rar &\pi_*^{\QQ}(\Hsp(S^1\times D^{d-1})_{hC_2})\dar[hook]\\
    \pi_*^{\QQ}\big(\bDiffmodDiff(S^1\times N))\rar & \pi_*^{\QQ}(\Hsp(S^1\times D^{d-1})_{hC_2})\oplus \pi_*^{\QQ}(\Hsp(S^1\times N,S^1\times D^{d-1})_{hC_2})
\end{tikzcd}
\]
where the right-hand vertical map is the inclusion of the first summand, and where the horizontal maps are isomorphisms in degrees in degrees $*\leq d-5$, and surjective in degree $*=d-4$. 

For $*\geq 1$, there is an isomorphism $\pi_*^{\QQ}(\Hsp(S^1\times D^{d-1}))\cong\K^\QQ_{*+1}(\ZZ[\ZZ])$ and, by \cite[Thm. 5.13]{MElongknots}, the left $C_2$-action translates to the involution $(-1)^{d}\tau_{\epsilon}$ on the right, where $\tau_{\epsilon}$ is the canonical involution on algebraic $K$-theory. Under the Bass--Heller--Swan isomorphism $\smash{\K^\QQ_{*+1}(\ZZ[\ZZ])\cong \K^\QQ_{*+1}(\ZZ)\oplus \K^\QQ_*(\ZZ)}$, the canonical involution acts diagonally on the right-hand side (see e.g. \cite{ATheoryBHS}), and $\tau_{\epsilon}$ acts by $-1$ on $\K_*^\QQ(\ZZ)$. We see that for $*\geq 1$, $\pi_*^\QQ(\Hsp(S^1\times D^{d-1})_{hC_2})$ is zero if $d$ is even, and isomorphic to $\K^\QQ_{*+1}(\ZZ[\ZZ])$ if $d$ is odd.

Hence, it remains to identify $\pi_*^\QQ(\Hsp(S^1\times N, S^1\times D^{d-1})_{hC_2})$ as $A_*$. By definition (recall \cref{CEspDefn} from \cref{RecollectionWWtheorems}), there is an equivalence of $C_2$-spectra $\Hsp(S^1\times N, S^1\times D^{d-1})\simeq \Sigma\CEsp(S^1\times (N-D^{d-1}), S^1\times N)$, and by \cref{involutionlemma} and \cref{WeirdMonoLem}, the rationaly homotopy groups $\pi_{*-1}^{\QQ}(\CEsp(S^1\times (N-D^{d-1}), S^1\times N)_{hC_2})$ are indeed isomorphic to $A_*$, as desired.
\end{proof}

\begin{proof}[Proof of \cref{BDiffSpinManifoldsMainCor}]
    By \cite[Cor. F(2)]{BurgheleaLashofTransfer}\footnote{In the statement loc.cit., $\omega(n)$ stands for a number such that $s_M: \C(M)\to \C(M\times I)$ is an isomorphism on Postnikov truncations $\tau_{\leq \omega(n)}(-)$ for all compact $n$-manifolds $M$; see page 20 loc.cit. By working with a fixed manifold $M=N\times S^1$ and over the rationals, we can choose $\omega(n)=\phi^{\QQ}(S^1\times N)-1$.}, the fibre sequence \eqref{SurgPseudoFibSeq} splits in the sense that
    \[
    \tau_{\leq \phi-1}\big(\Omega^2\BDiffb(S^1\times N)\big)\simeq_\QQ \tau_{\leq \phi-1}\big(\Omega^2\BbDiffb(S^1\times N)\times \Omega^2\bDiffmodDiff(S^1\times N)\big),
    \]
    where $\phi\coloneq \phi^{\QQ}(S^1\times N)=d-5$, by \cref{main range}. Thus, the first part of the statement follows by Lemmas \ref{BlockBHS} and \ref{AstarLemma}, and the observation that the map $\bDiffmodDiff(S^1\times D^{d-1})\to \BDiffb(S^1\times D^{d-1})$ is a rational weak equivalence since $\BbDiffb(S^1\times D^{d-1})$ is rationally contractible---this can be seen from \cref{BlockBHS} and the well-known isomorphism $\pi_*(\bDiff_\partial(D^d))\cong \Theta_{d+*+1}$ for $d+*\geq 5$, where $\Theta_n$ is the group of exotic $n$-spheres, which is finite abelian, hence rationally trivial (see \cite[Thms 1.1/2]{KervaireMilnor}).

    The homotopy groups $\pi_*(\bDiffb(N))$ are finitely generated for $N$ simply-connected (cf. \cite[Prop. 5.22(ii)]{kupersfinite}), so the addendum follows from Lemmas \ref{BlockBHS} and \ref{AstarLemma}, and the above splitting. 
\end{proof}

We now discuss two examples and a corollary to showcase the computational strength of \cref{BDiffSpinManifoldsMainCor}.

\begin{exam}[$N=\CP^{n}$]\label{CPnExample} Let $n\geq 3$. We now compute $\pi_*^{\QQ}(\BDiff(S^1\times N))$ in degrees $1\leq *\leq d-5$ using \cref{BDiffSpinManifoldsMainCor}. (By \cref{pi0S1xNRmk}, the case $*=1$ is also covered since $\pi_0(\bDiff(\CP^n))$, and hence $\pi_1(\BDiff(S^1\times N))$, is abelian, so can be rationalised: to see this, note that the surgery exact sequence gives an extension $\smash{\pi_1(\widetilde{\mathcal{S}}(\CP^n))\to \pi_0(\bDiff(\CP^n))\to \pi_0(\Aut(\CP^n))}$, where $\smash{\pi_1(\widetilde{\mathcal{S}}(\CP^n))}$ is abelian, e.g. as a consequence of the $h$-cobordism theorem. Noticing that $[\CP^n,\CP^n]\cong [\CP^n,\CP^\infty]\cong \ZZ$, it easily follows that $\pi_0(\Aut(\CP^n))\cong \ZZ/2$, generated by complex conjugation. Thus, the previous sequence is split.) To do so, it suffices to determine (i) $\pi_*^{\QQ}(\bDiffb(\CP^n))$, (ii) $\H_*(L\CP^n;\QQ)$ and (iii) the $-\tau_{\FLS}$-coinvariants of the cokernel of the homomorphism $\widetilde{\H}_*(\CP^n;\QQ)\oplus \widetilde{\H}_{*+1}(\CP^n;\QQ)\hookrightarrow \widetilde{\H}_*(L\CP^n;\QQ)$, all in degrees $0\leq *\leq 2n-4$.

Item (i) is computed by first understanding $\pi_*^{\QQ}(\Aut(\CP^n))$, either by the Federer spectral sequence or by rational homotopy theory, and later computing the rational homotopy groups of the surgery structure space of $\CP^n$ by means of surgery theory. Both of these can be done by hand (alternatively, see \cite[Cor. 2.E]{BurgheleaCPn}), and one obtains that for $1\leq*\leq 2n-1$,
\[
\pi_*^{\QQ}(\bDiff(\CP^n))\cong \left\{
\begin{array}{cl}
    \QQ & \text{if $*=1,2$ or $3\leq *\leq 2n$ odd,} \\
     0& \text{otherwise.}
\end{array}\right. \oplus \left\{\begin{array}{cl}
    \bigoplus_{\substack{0\leq i\leq n-1,\\ i\equiv k\ \mathrm{mod}\ 2}} \QQ & \text{if $*=2k-1\geq 0$ odd,} \\[4pt]
    0 & \text{otherwise.}
\end{array}
\right.
\]
(The first summand is the rational homotopy of $\Aut(\CP^n)$, and the second the reduced $KO$-theory of $\CP^n$.)

As for (ii), we claim that $\H_*(L\CP^n;\QQ)\cong \QQ$ in each degree $*\leq 2n-3$. Note that the natural inclusion map $\CP^n\xhookrightarrow{} \CP^\infty\simeq \B S^1$ is $(2n+1)$-connected, so $L\CP^n\to L\CP^\infty$ is $2n$-connected. Thus, we show the claim for $L\CP^\infty$. But the free loop space fibration $S^1\simeq \Omega\CP^\infty\to L\CP^\infty\to \CP^\infty$ is trivial since $\CP^\infty\simeq \B S^1$ is a group. Hence, the claim follows from the Künneth isomorphism
\[
\H_*(L\CP^\infty;\QQ)\cong \H_*(S^1;\QQ)\otimes \H_*(\CP^\infty;\QQ)\cong \H_*(\CP^\infty;\QQ)\oplus \H_{*+1}(\CP^\infty;\QQ).
\]

Finally, note that term (iii) vanishes since, by the computation above, the monomorphism $\widetilde{\H}_*(\CP^n;\QQ)\oplus \widetilde{\H}_{*+1}(\CP^n;\QQ)\hookrightarrow \widetilde{\H}_*(L\CP^n;\QQ)$ must be an isomorphism for $*\leq 2n-3$ by dimension reasons.

In particular, for every $1\leq *\leq 2n-3$, the $\QQ$-vector space $\pi_*^{\QQ}(\BDiff(S^1\times \CP^n))$ is infinite-dimensional.
\end{exam}

\begin{exam}[$N=\Sigma^{d-1}$ a $2$-connected rational homology sphere]\label{HomotopySphereExample} Let $d\geq 6$, and let $\Sigma$ be a closed $2$-connected $(d-1)$-manifold with $\H_*(\Sigma;\QQ)\cong \H_*(S^{d-1};\QQ)$. Since $\Sigma$ has the rational homotopy type of $S^{d-1}$, it follows from \cref{BDiffSpinManifoldsMainCor} that the rational homotopy groups $\pi_*^{\QQ}(\BDiff(S^1\times \Sigma))$ are finite-dimensional for $2\leq *\leq d-5$ (and for $*=1$ when $\Sigma$ is an actual homotopy sphere by \cref{pi0S1xNRmk}). 

Even though $\H_{d-4}(L\Sigma;\QQ)=0$, we claim that $\pi_{d-4}^{\QQ}(\BDiff(S^1\times \Sigma))$ is still infinite-dimensional: write $\Sigma^\circ$ for the complement of an open disc in $\Sigma$. By handle trading \cite[Thm. 3]{WallGeometrical}, it follows that the handle dimension of $S^1\times \Sigma^\circ$ is $\leq d-3$. Hence, the homotopy type of the fibre in the fibre sequence
\begin{equation}\label{SigmaCircFibSeq}
\Emb^{\cong}(S^1\times \Sigma^\circ,S^1\times \Sigma)\to\BDiffb(S^1\times D^{d-1})\to \BDiff(S^1\times \Sigma)
\end{equation}
can be analysed in degrees $*\leq d-3$ by means of the embedding surgery-pseudoisotopy approach outlined in \cref{RecollectionWWtheorems}. The block embedding space $\smash{\bEmb}(S^1\times \Sigma^\circ,S^1\times \Sigma)$ is easily seen to have finitely generated homotopy groups in positive degrees by the block analogue of \eqref{SigmaCircFibSeq}, \cref{BlockBHS} and the fact that $\bDiff_\partial(M)$ has finitely generated homotopy groups in positive degrees if $M^{d-1}$ is $1$-connected by \cite[Prop. 5.22]{kupersfinite}. In turn, the pseudoisotopy embedding space $\pEmbNoPartial(S^1\times \Sigma^\circ,S^1\times \Sigma)$ is rationally $(d-4)$-connected, since the same holds for the spectrum $\CEsp(S^1\times \Sigma^\circ,S^1\times \Sigma)$---the latter follows from the fact that $A(S^1)\to A(S^1\times \Sigma)$ is split injective and rationally $(d-2)$-connected by \cite[Prop. 1.1]{waldhausen1978algebraicI}. Thus, we see that the rational homotopy groups of the fibre in \eqref{SigmaCircFibSeq} are finite-dimensional in degrees $1\leq *\leq d-4$, and hence by the long exact sequence and \cref{main even}, it follows that $\pi_{d-4}^{\QQ}(\BDiff(S^1\times \Sigma))$ is indeed infinite-dimensional.
\end{exam}

\begin{cor}\label{InfGenClosedManifold}
    Let $N^{d-1}$ be a compact simply-connected spin manifold of dimension $d-1\geq 5$. Then, for each $1\leq *\leq d-4$, $\pi_*^{\QQ}(\BDiffb(S^1\times N))$ is infinitely generated if $\H_*(N;\QQ)\oplus \H_{*+1}(N;\QQ)\neq 0$. In particular, if $N$ is moreover a closed manifold which is either a $2$-connected rational homology sphere or not a rational homology sphere, then there is some $2\leq *\leq d-4$ for which $\pi_*^{\QQ}(\BDiffb(S^1\times N))$ is infinitely generated.
\end{cor}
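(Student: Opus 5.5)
The plan is to read the result directly off \cref{BDiffSpinManifoldsMainCor} in degrees $2\leq *\leq d-5$. The degree $d-4$, and the degree $*=1$ when it is not covered by that corollary, will be handled separately. The key input is that $\H_*(N;\QQ)$ and $\H_{*+1}(N;\QQ)$ embed into $\H_*(LN;\QQ)$. The constant-loop inclusion $c\colon N\to LN$ is split by $\mathrm{ev}_1$, which gives $\H_*(N;\QQ)$. Since $N$ is simply connected, $LN\to N$ admits the section $c$, and the Serre spectral sequence of $\Omega N\to LN\to N$ shows that $\H_{*+1}(N;\QQ)\cong \pi_{*+1}$-type classes transgress to a split summand of $\H_*(LN;\QQ)$; this is exactly the monomorphism $\widetilde{\H}_*(N;\QQ)\oplus\widetilde{\H}_{*+1}(N;\QQ)\hookrightarrow\widetilde{\H}_*(LN;\QQ)$ of \cref{WeirdMonoLem}. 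Hence $\H_*(N;\QQ)\oplus\H_{*+1}(N;\QQ)\neq 0$ implies $\H_*(LN;\QQ)\neq 0$ for $*\geq 1$. The summand $\bigoplus_{r\geq1}\H_*(LN;\QQ)$ in \cref{BDiffSpinManifoldsMainCor} is then infinite-dimensional, which gives the claim for $2\leq *\leq d-5$ and also for $*=d-4$ by the addendum there.

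For $*=1$, I would not rely on abelianness of $\pi_0$. Instead I would use \cref{AstarLemma} together with the surgery--pseudoisotopy sequence \eqref{SurgPseudoFibSeq}. The fibre $\bDiffmodDiff(S^1\times N)$ has $\pi_1^{\QQ}$ surjecting onto $A_1$, which is infinite-dimensional. The delooped Burghelea--Lashof splitting, or at least the fact that the image of the connecting map from the block term is finitely generated, should then make $\pi_1$ of the total space contain an infinitely generated subquotient. Here, infinitely generated is meant in the sense of the abelianisation tensored with $\QQ$. I expect this low-degree case, together with making precise what "infinitely generated" means for a possibly nonabelian $\pi_1$, to be the main technical nuisance. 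If it does not work cleanly, I would fall back on restricting the statement's range to $*\geq 2$ as in \cref{BDiffSpinManifoldsMainCor}.

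For the "in particular" part, suppose first that $N$ is closed and not a rational homology sphere. Then some $\H_k(N;\QQ)\neq0$ with $1\leq k\leq d-2$. By Poincaré duality together with $\H_1(N;\QQ)=0$, we also get $\H_{d-2}(N;\QQ)=0$, so in fact $2\leq k\leq d-3$. Either $*=k$ or $*=k-1$ then lies in $[2,d-4]$: take $*=k$ if $k\leq d-4$, and otherwise take $*=k-1=d-4$ via the $\H_{*+1}$ term. The remaining case, a $2$-connected rational homology sphere, is precisely \cref{HomotopySphereExample}, which shows infinite generation in degree $d-4$. Note that $d-4\geq2$ because $d\geq6$.
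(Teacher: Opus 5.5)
Your argument for $2\leq *\leq d-4$ and for the closing clause is the paper's argument. It uses \cref{BDiffSpinManifoldsMainCor}, with its addendum in degree $d-4$, together with the split monomorphism $\widetilde{\H}_*(N;\QQ)\oplus\widetilde{\H}_{*+1}(N;\QQ)\hookrightarrow\widetilde{\H}_*(LN;\QQ)$ of \cref{WeirdMonoLem}, and \cref{HomotopySphereExample} for $2$-connected rational homology spheres. Your Serre spectral sequence gloss is inaccurate: rational homology of $N$ need not be spherical, and since $LN\to N$ has a section nothing transgresses off the base row. Since you actually rely on the lemma, nothing breaks.

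In the closed case your degree bookkeeping differs slightly from the paper's and is tidier. The paper uses Poincaré duality to place a nonzero $\H_k(N;\QQ)$ in $2\leq k\leq\lceil\frac{d-1}{2}\rceil$ and uses only the $\H_*$-term. You only need $\H_{d-2}(N;\QQ)\cong\H^1(N;\QQ)=0$, and you absorb $k=d-3$ into the $\H_{*+1}$-term at $*=d-4$. This avoids comparing the middle degree with $d-4$; for $d=6$ that comparison needs $\lfloor\frac{d-1}{2}\rfloor$ rather than $\lceil\frac{d-1}{2}\rceil$.

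The genuine gap is at $*=1$, where your sketch does not go through, for two reasons.
\begin{itemize}
\item The Burghelea--Lashof splitting of \eqref{SurgPseudoFibSeq} is only available after looping twice. It therefore says nothing about $\pi_1(\BDiffb(S^1\times N))=\pi_0(\Diffb(S^1\times N))$; there is no delooped version to invoke.
\item The abelianisation reading cannot be reached. You correctly find that the image $K$ of $\pi_1$ of the pseudoisotopy fibre has $K\otimes\QQ$ infinite-dimensional: for simply connected $N$ one has $A_1=\bigoplus_{r>0}\H_1(LN;\QQ)\cong\bigoplus_{r>0}\H_2(N;\QQ)$, and the image of $\pi_2(\BbDiffb(S^1\times N))$ is finitely generated. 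But $K$ is a normal subgroup, and in the abelianisation only its coinvariants under conjugation survive. That conjugation action is the natural action of diffeomorphisms on $A_1$ (the analogue of \cref{ActionTheorem}). For instance, take $N=S^2\times S^{d-3}$ and $\Id_{S^1}\times\sigma$ with $\sigma$ a reflection in the $S^2$-factor. This acts by $-1$ on $A_1$, hence on $K\otimes\QQ$, since the disc part of $\pi_1^{\QQ}$ of the fibre vanishes. So every element of $K$ becomes torsion in the abelianisation.
\end{itemize}

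The paper gives $\pi_1^{\QQ}$ a meaning only when $\pi_1$ is abelian or nilpotent. Its proof simply invokes \cref{BDiffSpinManifoldsMainCor}, whose formula reaches $*=1$ only through \cref{pi0S1xNRmk}. Under that hypothesis your subgroup $K$ suffices on its own, with no splitting: rationalisation is exact on nilpotent groups, so $K\otimes\QQ$ injects into the rationalisation of $\pi_1(\BDiffb(S^1\times N))$. Without that hypothesis, your fallback to $*\geq 2$ is what your argument actually proves.
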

\begin{proof}
    The first claim follows immediately from \cref{BDiffSpinManifoldsMainCor} and the observation that $\widetilde{\H}_*(N;\QQ)\oplus \widetilde{\H}_{*+1}(N;\QQ)$ is a split summand of $\widetilde{\H}_*(LN;\QQ)$ for $N$ simply-connected by \cref{WeirdMonoLem}. For the second, the case when $N$ is a $2$-connected rational homology sphere is \cref{HomotopySphereExample}. Otherwise, there is some degree $2\leq *\leq \lceil\tfrac{d-1}{2}\rceil$ for which $\H_*(N;\QQ)\neq 0$. Since for $d\geq 5$ we have that $\tfrac{d-1}{2}\leq d-3$, the result follows from \cref{BDiffSpinManifoldsMainCor}.
\end{proof}

\subsection{Corollaries \ref{main knots} and \ref{RelativeBTopMainCor}: Codimension \texorpdfstring{$2$}{2} long knots and topological Stiefel manifolds}\label{LongKnotsSection}

In this section, we first prove \cref{main knots}, and later reinterpret some of our results concerning long knots in terms of the orthogonal functor $\Vt_2$ of codimension $2$ topological Stiefel manifolds.

\subsubsection{Proof of \cref{main knots}}
Let $d\geq 6$ be an integer. We have a decomposition of $D^2\subset \RR^2$ as the union of $\frac{1}{2}D^2$ and $D^2\backslash \int(\frac{1}{2} D^2)$. The latter is diffeomorphic to $S^1\times D^1.$ By taking products with $D^{d-2}$, we have a decomposition of $D^d\cong D^{d-2}\times D^2$ as the union of $\bar\nu D^{d-2}\coloneq D^{d-2}\times\frac{1}{2}D^2$ and $S^1\times D^{d-1}.$ We will need the following two observations:
\begin{enumerate}[label=\protect\circled{\arabic*}]
    \item The parametrised isotopy extension theorem gives a fibre sequence
    \begin{equation}\label{long knots sequence}
        \Emb_{\partial D^{d-2}\times \frac{1}{2}D^2}(D^{d-2}\times \smash{\tfrac{1}{2}}D^2, D^d)_\iota\to \BDiffb(S^1\times D^{d-1})\to \BDiffb(D^{d}),
    \end{equation}where the subscript $(-)_\iota$ denotes the path component of the inclusion $\iota:D^{d-2}\times \tfrac{1}{2}D^2\hookrightarrow D^d$. This follows from the fact that $\pi_0(\Diffb(S^1\times D^{d-1}))\to \pi_0(\Diffb(D^d))$ is surjective, as it admits a splitting induced by the inclusion of $D^d$ into $S^1\times D^{d-1}.$
    \item\label{restricting is we} Observe that $\iota|_{D^{d-2}\times \{0\}}$ is the unknot, so restricting to this subspace defines a map
    \[r:\Emb_{\partial D^{d-2}\times \frac{1}{2}D^2}(D^{d-2}\times \smash{\tfrac{1}{2}}D^2, D^d)_\iota\to \Emb_\partial(D^{d-2},D^d)_u.\]
    The homotopy fibre of $r$ is equivalent to space of framings of the normal bundle of $D^{d-2}$ in $D^d$ extending the fixed framing on $\partial D^{d-2}$, which is equivalent to the space $\Map_\partial(D^{d-2},\O(2))\simeq \Omega^{d-2}\O(2)$. This space is contractible since $\O(2)\simeq S^1\sqcup S^1$ is $1$-truncated and $d\geq 6$. Thus, $r$ is a weak equivalence.
\end{enumerate}

\begin{proof}[Proof of \cref{main knots}]
    We separate our proof by cases. Let $d=2n$ be even and consider the fibre sequence \eqref{long knots sequence}. By \cref{main even} for $d=2n$, the rational homotopy groups of $\BDiffb(S^1\times D^{2n-1})$ vanish in degrees $* <2n-3$ and are infinitely generated in degree $*=2n-3$. By \cite[Thm. A]{evendisc}, the rational homotopy groups of $\BDiffb(D^{2n})$ vanish in degrees $* <2n-1$. We conclude, by the long exact sequence of \eqref{long knots sequence}, that the rational homotopy groups of the fibre of \eqref{long knots sequence}, and hence of $\Emb_\partial(D^{d-2},D^2)_u$ by \ref{restricting is we}, vanish in degrees $* <2n-3$ and are infinitely generated in degree $*=2n-3$. 

    We move now to the case $d=2n+1$ odd. Consider the following commutative square
    \[\begin{tikzcd}
        \BDiffb(S^1\times D^{2n}) \arrow[d]\arrow[r] & \BDiffb(D^{2n+1}) \arrow[d] \\
        \BC(S^1\times D^{2n-1}) \arrow[r] & \BC(D^{2n}).
    \end{tikzcd}\]The left vertical map induces isomorphisms on rational homotopy groups in degrees $*\leq 2n-4$, by the proof of \cref{main odd} for $d=2n+1$. By the vanishing results of the rational homotopy of $\BDiffb(D^{2n})$, we conclude that the right vertical map induces isomorphisms on rational homotopy groups in degrees $*\leq 2n-3$. The top map is surjective on homotopy groups in every degree $*\geq 2$ by \cite[Thm.2, p.104]{Burghelea1975} (as in the proof of \cref{VanishingNil} but for diffeomorphisms instead of concordances). Thus for $k\geq 1$, the $k$-th rational homotopy groups of $\Emb_\partial(D^{2n-1}, D^{2n+1})$ is the kernel of the map induced by the top map on rational homotopy groups of degree $k$. For $k=2n-3$, the latter map has infinitely generated source and finitely generated target, by \cite[Thm. A]{krannich2021diffeomorphismsdiscssecondweiss}, thus its kernel is infinitely generated. For $k\leq 2n-4$, this kernel is isomorphic to the kernel of the bottom map on rational homotopy groups, which we now calculate. By \cref{VanishingNil}, we know that the bottom map is surjective on rational homotopy groups in degrees $*\geq 2$. Since $\pi^{\QQ}_1(\BC(D^{2n}))\cong \K^{\QQ}_2(\ZZ)=0$, then this map is also surjective on rational homotopy groups in degree $*=1.$ Using the Bass--Heller--Swan splitting $\K^{\QQ}_i(\ZZ[\ZZ])\cong \K_i^\QQ(\ZZ)\oplus \K_{i-1}^\QQ(\ZZ)$ and the surjectivity of the bottom map on rational homotopy groups, we conclude that the kernel of the latter in degree $k$ is abstractly isomorphic to $\K^{\QQ}_{k}(\ZZ).$ This finishes the proof.  
\end{proof}

\subsubsection{Orthogonal calculus of topological Stiefel manifold}\label{VtSection}
In this section, we will prove \cref{RelativeBTopMainCor}, which claims that $\Theta \Vt_2^{(2)}$ is rationally $(-1)$-connective. But before we do, let us explain how the study of $\Vt_2$ is related to that of spaces of codimension $2$ long knots. Note that the space $\Emb_\partial(D^{d-2},D^d)$ is an $\EE_{d-2}$-algebra under stacking in $(d-2)$-many interval directions in $D^{d-2}\cong I^{d-2}$, and hence $\pi_0(\Emb_\partial(D^{d-2},D^d))$ is an abelian monoid if $d\geq 4$. Write $\Emb_\partial(D^{d-2},D^d)^\times$ for the set of invertible path-components, so that $\pi_0(\Emb_\partial(D^{d-2},D^d)^\times)$ is now an abelian group. Consider the orthogonal functor $\Vo_2(V)\coloneq \O(V\oplus \RR^2)/\O(2)$ of codimension $2$ smooth Stiefel manifolds. By smoothing theory, and being extra careful with path-components, there is a homotopy cartesian square of $\EE_{d-2}$-algebras
\[
    \begin{tikzcd}[row sep =15pt]
        \Emb_\partial(D^{d-2},D^{d})^\times\rar\dar &\Emb^t_\partial(D^{d-2},D^d)_u\simeq*\dar\\
        \Omega^{d-2}\Vo_2(\RR^{d-2})\rar & \Omega^{d-2}\Vt_2(\RR^{d-2}),
    \end{tikzcd}
\]
where $\Emb^t$ denotes the space of locally flat topological embeddings (cf. \cite{TurchinSalvatore}). The spaces on the bottom row can be identified as the spaces of smooth and topological immersions $D^{d-2}\looparrowright D^{d}$ (rel boundary), and the vertical arrows as the smooth and topological derivatives, respectively. As explained in the proof of \cite[Prop. 5.11]{BudneyIntegralLongKnots}, for $d\geq 6$, a long knot $D^{d-2}\hookrightarrow D^d$ is invertible if and only if it is a reparametrisation of the unknot---that is, the natural map $\Diff_\partial(D^{d-2})\to \Emb_\partial(D^{d-2},D^d)^\times$ is an isomorphism on $\pi_0$, and hence $\pi_0(\Emb_\partial(D^{d-2},D^d)^\times)\cong \Theta_{d-1}$ for $d\geq 6$. In particular, the map $\Emb_\partial(D^{d-2},D^d)^\times\to \Emb^t_\partial(D^{d-2},D^d)$ only hits the component of the unknot, which is contractible by the Alexander trick. \cref{main knots} gives:
\begin{cor}
    For $d\geq 6$, the $\QQ$-vector space $\pi_{*}^\QQ(\Vt_2(\RR^{d-2}))$ is infinite-dimensional in degree $*=2d-4$ if $d$ is even, and in degree $*=2d-5$ if $d$ is odd.
\end{cor}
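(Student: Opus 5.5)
The plan is to loop the homotopy cartesian square of $\EE_{d-2}$-algebras just displayed and read off the rational homotopy groups of $\Vt_2(\RR^{d-2})$ from those of the long knot space. Since $\Emb^t_\partial(D^{d-2},D^d)_u$ is contractible and the top-right corner is hit only by the unknot component, the square identifies $\Emb_\partial(D^{d-2},D^d)^\times$ with the homotopy fibre of $\Omega^{d-2}\Vo_2(\RR^{d-2})\to \Omega^{d-2}\Vt_2(\RR^{d-2})$. Restricted to the component of the unknot $u$, and on components of the base point, this gives a fibre sequence
\[
\Emb_\partial(D^{d-2},D^d)_u\longrightarrow \Omega^{d-2}_0\Vo_2(\RR^{d-2})\longrightarrow \Omega^{d-2}_0\Vt_2(\RR^{d-2}).
\]
We then take the long exact sequence in rational homotopy groups.

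The middle term contributes only finitely generated groups. Indeed, $\Vo_2(\RR^{d-2})=\O(d)/\O(2)$ is a compact manifold, so all its homotopy groups are finitely generated, and the same holds for $\Omega^{d-2}$ of it in every degree.

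By \cref{main knots}, $\pi_k^{\QQ}(\Emb_\partial(D^{d-2},D^d),u)$ is infinite-dimensional for $k=d-3$ when $d$ is even, and for $k=d-4$ when $d$ is odd. The exact segment
\[
\pi_{k+1}^{\QQ}(\Omega^{d-2}\Vt_2)\xrightarrow{\ \partial\ }\pi_k^{\QQ}(\Emb_u)\longrightarrow \pi_k^{\QQ}(\Omega^{d-2}\Vo_2)
\]
then shows that the image of $\partial$ has finite codimension in an infinite-dimensional space. Hence $\pi_{k+1}^{\QQ}(\Omega^{d-2}\Vt_2(\RR^{d-2}))=\pi^{\QQ}_{k+d-1}(\Vt_2(\RR^{d-2}))$ is infinite-dimensional. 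Substituting $k=d-3$ gives degree $2d-4$ for $d$ even, and $k=d-4$ gives degree $2d-5$ for $d$ odd.

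The only delicate point is rationalising low degrees. The relevant degree $k+1\geq 2$ for $d\geq 6$, so all the groups involved are abelian and the long exact sequence tensored with $\QQ$ remains exact. Similarly, the passage to unit components, together with $\pi_0(\Emb_\partial(D^{d-2},D^d)^\times)\cong\Theta_{d-1}$ being finite, does not affect positive-degree homotopy groups. I expect no real obstacle here; the main care needed is just this bookkeeping of components.
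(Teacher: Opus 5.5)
Your argument is correct and is the intended one: the paper deduces this corollary directly from the homotopy cartesian square together with \cref{main knots}, and your long exact sequence argument, with the degree count $\pi_{k+1}(\Omega^{d-2}\Vt_2(\RR^{d-2}))=\pi_{k+d-1}(\Vt_2(\RR^{d-2}))$, supplies exactly the omitted details. One justification needs tightening: being a compact manifold does not by itself give finitely generated higher homotopy groups, but $\Vo_2(\RR^{d-2})=\O(d)/\O(2)$ is a simply connected Stiefel manifold (and a homogeneous space of a compact Lie group), so Serre's finiteness theorem applies.
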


We move on to higher derivatives of $\Vt_2$, which concern the spaces $\CEmb(D^{d-2},D^d)$ of codimension $2$ concordance long knots. Note that for $d\geq 4$, $\pi_0(\CEmb(D^{d-2},D^d))$ is again an abelian monoid under stacking. We will need the following analogue of Budney's result, suggested to us by Oscar Randal-Williams.

\begin{lemma}\label{CEmbLongKnotLemma}
    For $d\geq 5$, a concordance embedding in $\CEmb(D^{d-2},D^d)$ is homotopy invertible under stacking if and only if it is smoothly isotopic to the identity. That is, $\CEmb(D^{d-2},D^d)^\times=\CEmb(D^{d-2},D^d)_u$.
\end{lemma}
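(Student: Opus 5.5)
The direction ``isotopic to the identity $\Rightarrow$ invertible'' is immediate, since the unit of the stacking monoid is the identity concordance embedding. For the converse, the plan is to study the restriction to the top face, $\mathrm{res}_1\colon \CEmb(D^{d-2},D^d)\to \Emb_\partial(D^{d-2},D^d)$, which sends $\varphi$ to $\varphi|_{D^{d-2}\times\{1\}}$. This map is a map of $\EE_{d-2}$-algebras for stacking. Suppose $\varphi$ is homotopy invertible. Then $\mathrm{res}_1(\varphi)$ is an invertible long knot. By Budney's result recalled above (the proof of \cite[Prop. 5.11]{BudneyIntegralLongKnots}, valid for $d\geq 6$), it is therefore a reparametrisation $f\in\Diff_\partial(D^{d-2})$ of the unknot.

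On the other hand, $\mathrm{res}_1(\varphi)$ is concordant to the trivial knot via $\varphi$ itself. I would use this to show that $f$ is in fact pseudo-isotopic to the identity. The point is that $\varphi$ restricted to a collar of the top face, together with the normal framing, yields a concordance from $f$ to $\mathrm{id}$ inside $D^{d-2}$. Concretely, I would compose with the normal projection onto $D^{d-2}\times\{t\}$ after straightening, or compare the classes in $\Theta_{d-1}$ via the fact that concordance embeddings of the unknotted disc extend to concordance diffeomorphisms of $D^d$ by isotopy extension. Cerf's theorem then makes $f$ isotopic to the identity. Precomposing $\varphi$ with a concordance realising the inverse isotopy, we may assume $\mathrm{res}_1(\varphi)=\iota$.

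At that point $\varphi$ lies in the fibre of $\mathrm{res}_1$, which is $\Emb_{\partial}(D^{d-2}\times I, D^d\times I)$, the space of embeddings rel the whole boundary of $D^{d-2}\times I$. It remains to show that this component is the one containing the standard inclusion. I would do this through isotopy extension. The relevant component of $\Emb_\partial(D^{d-2}\times I,D^{d}\times I)$ maps to $\BDiff_\partial(S^1\times D^{d})$ via the complement, and the complement of $\varphi(D^{d-2}\times I)$ is an $h$-cobordism, rel boundary, of $S^1\times D^{d-1}$. Since $\mathrm{Wh}(\ZZ)=0$ and $d\geq 5$, this $h$-cobordism is trivial, so $\varphi$ is isotopic to an embedding extending to a concordance diffeomorphism $F\in\C(D^d)$ that preserves $D^{d-2}\times I$. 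As $\pi_0\C(D^d)=0$ (Cerf), $F$ is isotopic to the identity. Restricting this isotopy, rel the appropriate faces, connects $\varphi$ to the identity concordance embedding.

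I expect the main obstacle to be keeping careful track of path components in this last step. One must check that the isotopy of $F$ can be chosen to keep $D^{d-2}\times I$ mapped into itself up to isotopy, or equivalently that the restriction $\C(D^d)\to\CEmb(D^{d-2},D^d)$ is surjective on $\pi_0$. I would first try to prove that surjectivity directly from the fibre sequence $\C(S^1\times D^{d-1})\to \C(D^d)\to \CEmb(D^{d-2},D^d)$. The step $d=5$, where Budney's range is not directly available, would also need the $h$-cobordism argument in place of his result.
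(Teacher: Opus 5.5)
\textbf{There is a genuine gap in the crucial step.} After reducing to $\mathrm{res}_1(\varphi)=\iota$, you assert that the complement of $\varphi(D^{d-2}\times I)$ in $D^d\times I$ is an $h$-cobordism of $S^1\times D^{d-1}$. You give no reason for this, and at that point you no longer use that $\varphi$ is invertible. Without invertibility the assertion is false. After smoothing corners, the fibre of $\mathrm{res}_1$ over $\iota$ is a space of codimension-two long knots $D^{d-1}\hookrightarrow D^{d+1}$. It contains knotted discs, for instance spun knots, whose complements have non-abelian fundamental group. Such a $\varphi$ satisfies everything you arranged in your first two steps, yet it is not isotopic to the identity, because $\pi_1$ of the complement is an isotopy invariant.

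The same problem affects your second step. In codimension two, isotopy extension does not let you extend an arbitrary concordance embedding of the unknotted disc to an element of $\C(D^d)$. That extension exists exactly when the complement is a product $S^1\times D^{d-1}\times I$, which is what has to be proved. Your fallback, proving that $\C(D^d)\to\CEmb(D^{d-2},D^d)$ is surjective on $\pi_0$, also cannot work. $\C(D^d)$ is connected by Cerf, whereas $\CEmb(D^{d-2},D^d)$ is not; if it were, the lemma would be vacuous.

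\textbf{The missing idea is to use the inverse $\psi$ directly on complements.} Write $C_\varphi$ and $C_\psi$ for the complements. The complement of the stacked embedding $\varphi\cdot\psi$ is $C_\varphi\cup C_\psi$, glued along a piece of the middle wall with $\pi_1\cong\ZZ$. Since $\varphi\cdot\psi$ is isotopic to the identity, its complement also has $\pi_1\cong\ZZ$.
\begin{itemize}
\item Seifert--van Kampen gives $\pi_1(C_\varphi)*_{\ZZ}\pi_1(C_\psi)\cong\ZZ$, and both composites $\ZZ\to\ZZ$ (meridian to meridian) are isomorphisms.
\item Hence $\ZZ\to\pi_1(C_\varphi)$ is injective and $\pi_1(C_\varphi)\to\ZZ$ is surjective.
\item By the normal form theorem for amalgamated free products, $\pi_1(C_\varphi)\to\ZZ$ is also injective, so $\pi_1(C_\varphi)\cong\ZZ$.
\end{itemize}
A Mayer--Vietoris argument then shows that $C_\varphi$ is an $h$-cobordism starting at $S^1\times D^{d-1}$. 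From there, $\mathrm{Wh}(\ZZ)=0$, the $s$-cobordism theorem and isotopy extension finish the proof as in your last step.

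This argument is uniform in $d\geq 5$. It makes the detour through $\mathrm{res}_1$, Budney's theorem (which you only have for $d\geq 6$) and the pseudo-isotopy class of $f$ unnecessary.
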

\begin{proof}
     One direction is clear. For the other, let $\varphi\in \CEmb(\nu D^{d-2},D^d)\simeq \CEmb(D^{d-2},D^d)$ be homotopy invertible, with inverse $\psi$. Writing $C_\varphi$ for the complement of $\varphi$ in $D^{d}\times I$, we claim that the natural map $\ZZ\cong \pi_1(S^1\times D^{d-1})\to \pi_1(C_\varphi)$ is an isomorphism; if so, a Mayer--Vietoris argument shows that $C_\varphi$ is an $h$-cobordism starting at $S^1\times D^{d-1}$. Since $\mathrm{Wh}_1(\ZZ)=0$, it would follow from the $s$-cobordism theorem that $C_\varphi$ is diffeomorphic to the trivial cobordism on $S^1\times D^{d-1}$; a choice of diffeomorphism determines, by isotopy extension, an isotopy from $\varphi$ to the unknot. To establish the claim, we follow the argument in \cite[Prop. 5.11]{BudneyIntegralLongKnots}. By Seifert--Van Kampen, there is a pushout square of groups
    \[
    \begin{tikzcd}[row sep = 10pt]
        \ZZ\dar\rar &\pi_1(C_\varphi)\dar\\
        \pi_1(C_\psi)\rar & \ZZ,
    \end{tikzcd}
    \]
    and both compositions $\ZZ\to \ZZ$ are isomorphisms, so the right vertical map is surjective. It is also injective by the normal form theorem for amalgamated free products, so an isomorphism. The claim follows.
\end{proof}

Write $\CEmb^t$ for the space of (locally flat) topological concordance embeddings. By the previous argument, a concordance embedding $\varphi$ is smoothly isotopic to the identity if and only it is so topologically. Thus, by smoothing theory \cite[Cor. 2(C)]{LashofEmbSpaces}, there is a homotopy cartesian square of $\EE_{d-2}$-algebras
\begin{equation}\label{CEmbKnotsCartSquare}
\begin{tikzcd}[row sep = 15pt]
        \CEmb(D^{d-2},D^{d})_u\rar\dar &\CEmb^t(D^{d-2},D^d)_u\simeq*\dar\\
        \Omega^{d-2}\Vo_2^{(1)}(\RR^{d-2})\rar & \Omega^{d-2}\Vt_2^{(1)}(\RR^{d-2}),
    \end{tikzcd}
\end{equation}
so long as $d\neq 4$. First note that $\Vo_2^{(1)}(\RR^{d-2})\simeq \Omega S^d$. By isotopy extension and the fact that $\BC(D^d)$ has finitely generated homotopy groups, it follows that $\smash{\pi_*^{\QQ}(\CEmb(D^{d-2},D^d))}$ is infinitely generated precisely when $\pi_{*-1}(\BC(S^1\times D^{d-1}))$ is so. These observations together with \cref{main conc intro} and \cref{restrictionRmkIntro}(i) imply claims (iii)--(v) in the following result:

\begin{lemma}\label{Theta1Lemma}
    The stable and unstable first derivatives of the orthogonal functor $\Vt_2$ satisfy the following:
    \begin{enumerate}[label = (\roman*)]
        \item For each $V\in \cJ$, the unstable derivative $\Vt^{(1)}_2(V)$ is $|V|$-connected. Thus, $\Theta \Vt_2^{(1)}$ is $1$-connective.

        \item There is an equivalence $\Omega^{\infty+1}(\Theta \Vt_2^{(1)})\simeq \Omega^\infty(\Ksp(\bfS))$.

        \item For $d\geq 6$ and $d-1\leq *\leq 2d-5$, the $\QQ$-vector space $\pi_{*}^{\QQ}(\Vt_2^{(1)}(\RR^{d-2}))$ is finite-dimensional.

        \item For $d\geq 6$, the $\QQ$-vector space $\pi_{2d-4}^{\QQ}(\Vt_2^{(1)}(\RR^{d-2}))$ is infinite-dimensional.
        
        \item For $d\geq 7$ odd, the $\QQ$-vector space $\pi_{2d-3}^{\QQ}(\Vt_2^{(1)}(\RR^{d-2}))$ is infinite-dimensional.
    \end{enumerate}
\end{lemma}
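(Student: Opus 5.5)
We address (i) and (ii) directly; the text before the lemma already reduces (iii)--(v) to \cref{main conc intro} and \cref{restrictionRmkIntro}(i), via the square \eqref{CEmbKnotsCartSquare} and the identification $\Vo_2^{(1)}(\RR^{d-2})\simeq\Omega S^d$.

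For (i), write $\Vt_2^{(1)}(V)=\hofib(\Vt_2(V)\to\Vt_2(V\oplus\RR))$ and identify it as a relative topological Stiefel space. Since $\Vt_2(V)\simeq\Top(V\oplus\RR^2)/\Top(V,V\oplus\RR^2)$, the derivative $\Vt_2^{(1)}(V)$ is the homotopy fibre of
\[
\Top(V\oplus\RR^2)/\Top(V,V\oplus\RR^2)\longrightarrow \Top(V\oplus\RR^3)/\Top(V\oplus\RR,V\oplus\RR^3).
\]
Equivalently, it is a total homotopy fibre of the square formed by $\BTop(V\oplus\RR^i)$ and $\BTop(V\oplus \RR^{i-2}, V\oplus\RR^{i})$ for $i=2,3$. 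The connectivity should follow from the stability results for $\Top(n)/\Top(n-k)$. Concretely, the fibres of the stabilisation maps $\Top(n+2)/\Top(n)\to\Top(n+3)/\Top(n+1)$ are controlled by spaces of codimension~$2$ germs, and for these Kirby--Siebenmann-type connectivity holds in degrees up to roughly $n$. Using the square \eqref{CEmbKnotsCartSquare} is cleaner: it gives $\Omega^{d-2}\Vt_2^{(1)}(\RR^{d-2})\simeq \Omega^{d-2}\Vo_2^{(1)}(\RR^{d-2})$ modulo $\CEmb(D^{d-2},D^d)_u$. This space is connected, by \cref{CEmbLongKnotLemma} restricted to the unit component, and $\Omega S^d$ is $(d-2)$-connected. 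A $(d-2)$-fold delooping argument then yields connectivity $|V|=d-2$. This delooping needs care on $\pi_0$ and $\pi_1$, and I expect it to be the main obstacle: we must show that $\pi_1\CEmb(D^{d-2},D^d)_u$ contributes nothing below degree $2d-3$. Stability under $V\mapsto V\oplus\RR$ then gives $1$-connectivity of $\Theta\Vt_2^{(1)}$, since the $k$-th space is $k$-connected.

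For (ii), pass to the colimit. By the above, $\Omega^{\infty+1}\Theta\Vt_2^{(1)}$ is the stabilisation $\hocolim_k\Omega^{k+1}\Vt_2^{(1)}(\RR^k)$. Via \eqref{CEmbKnotsCartSquare} this is identified with the stable concordance embedding space $\sCEnopartial(D^{k},D^{k+2})$, up to the smooth term. The smooth term stabilises to $\hocolim\,\Omega^{k+1}\Omega S^{k+2}\simeq Q S^0$. Through the stable isotopy extension sequence \eqref{StableCEmbIETSeq} and the Waldhausen--Jahren--Rognes equivalence \eqref{CEWJReq}, $\sCEnopartial(D^k,D^{k+2})$ is the total fibre of the square with corners $Q_+(S^1)$, $A(S^1)$, $Q_+(*)$ and $A(*)$. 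The splitting $A(S^1)\simeq A(*)\times\Omega^{-1}A(*)\times\mathrm{Nil}$-terms reduces this total fibre to $\Omega^\infty\Sigma\Ksp(\bfS)$ together with $Q$-terms. Combining it with the smooth $QS^0$ contribution, the $Q$-terms cancel and leave $\Omega^\infty\Ksp(\bfS)$, after shifting by the extra loop. The delicate points here are the bookkeeping of shifts and the vanishing of the Nil-terms for the sphere spectrum. For the latter I would appeal to the fundamental theorem of $A$-theory, in which the Nil-terms do not vanish in general. If they obstruct, I would instead compare with $\Omega^\infty\Sigma^{-1}\Hsp(S^1\times D)/\Hsp(D)$ and use the Burghelea--Lashof--Rothenberg splitting from the previous subsection to isolate $\Omega^{\infty}\Ksp(\bfS)$.
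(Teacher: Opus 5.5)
Your reduction of (iii)--(v) to the discussion before the lemma is what the paper does. The gaps are in (i) and (ii).

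\textbf{(i).} The route through \eqref{CEmbKnotsCartSquare}, on which you actually rest the argument, cannot work.
\begin{itemize}
\item That square only involves $\Omega^{d-2}\Vt_2^{(1)}(\RR^{d-2})$. This loop space remembers $\pi_j\Vt_2^{(1)}(\RR^{d-2})$ only for $j\geq d-2$, and no ``delooping argument'' recovers the vanishing of $\pi_j$ for $j<d-2$, which is most of the content of (i).
\item It does not even control $\pi_{d-2}$. A homotopy cartesian square with a contractible corner only identifies $\CEmb(D^{d-2},D^d)_u$ with the fibre over the base component. It says nothing about $\pi_0(\Omega^{d-2}\Vt_2^{(1)}(\RR^{d-2}))$.
\item The square is only set up in high dimensions ($d\neq 4$, and \cref{CEmbLongKnotLemma} needs $d\geq 5$), whereas (i) is claimed for every $V\in\cJ$.
\item $\pi_1\CEmb(D^{d-2},D^d)_u$ only affects $\pi_{d-1}$, so it is irrelevant to $(d-2)$-connectivity.
\end{itemize}
Your first formulation, as a total fibre of a square of $\BTop$'s, is the right one and is the paper's. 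With $\Bt_2(V)\coloneq\BTop(V\oplus\RR^2,V)$ there is a fibre sequence of orthogonal functors $\Vt_2\to\Bt_2\to\Bt(-\oplus\RR^2)$, hence $\Vt_2^{(1)}(V)\to\Bt_2^{(1)}(V)\to\Bt^{(1)}(V\oplus\RR^2)$. Note the relevant quotient is by $\Top(V,V\oplus\RR^2)$, the analogue of $\O(2)$, not by $\Top(n)$ as in your $\Top(n+2)/\Top(n)$.

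Since (i) asserts an exact bound, ``roughly $n$'' is not enough. You need two precise inputs:
\begin{itemize}
\item Kirby--Siebenmann's Theorem~B: $\BTop(2)=\Bt_2(0)\to\Bt_2(V)$ is $(|V|+1)$-connected, so $\Bt_2^{(1)}(V)$ is $|V|$-connected.
\item $\BTop(W)\to\BTop$ is $|W|$-connected, so $\Bt^{(1)}(V\oplus\RR^2)$ is $(|V|+1)$-connected.
\end{itemize}
The long exact sequence then gives exactly $|V|$-connectivity.

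\textbf{(ii).} Your outline follows the paper up to the last step, but ``the $Q$-terms cancel'' hides the key input. You need that the stable smooth derivative $D^o\colon\sCEnopartial(*,D^2)_u\to\Omega^\infty(\Theta\Vo_2^{(1)})\simeq\Omega^{\infty-1}\bfS$ is null-homotopic. This already holds unstably, by the argument of Goodwillie--Krannich--Kupers. Only this splits the fibre sequence and gives $\Omega^{\infty+1}(\Theta\Vt_2^{(1)})\simeq\sCEnopartial(*,D^2)_u\times\Omega^\infty\bfS$; otherwise you only have an extension.

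There is also no extra loop. The total fibre is $\Omega^\infty\mathrm{cofib}(\bfS\to\Ksp(\bfS))$: the unit removes the sphere summand, and the split factor $\Omega^\infty\bfS$ restores it.

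Your worry about Nil-terms is justified, and the paper's proof does not address it. It quotes Bass--Heller--Swan as if $\hofib(A(S^1)\to A(*))\simeq\Omega^{\infty-1}\Ksp(\bfS)$, but the Nil-terms are not integrally trivial:
\begin{itemize}
\item The infinitely generated $2$-torsion group $\pi_0\C(S^1\times D^{n})$ of Hatcher--Wagoner and Igusa, coming from $\mathrm{Wh}_1^+(\ZZ;\mathbf{F}_2)$ in \eqref{HatcherWagoner}, lives entirely in them.
\item Its stable version is a quotient of $\pi_1\sCEnopartial(*,D^2)_u$, because $\pi_0$ of stable concordances of a disc vanishes.
\item Hence $\pi_1$ of the left-hand side of (ii) is infinitely generated, whereas $\pi_1\Omega^\infty\Ksp(\bfS)=\ZZ/2$.
\end{itemize}
So integrally (ii) needs an extra Nil-factor; rationally it is fine. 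Your fallback cannot remove this factor: the fibre $\hofib_\iota(\lambda_d)$ of the Burghelea--Lashof--Rothenberg map \eqref{SplittingMap} is precisely this Nil-contribution. \cref{VanishingNil} only shows it is rationally trivial in a range.
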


\begin{proof}
    We deal with (i) first. Consider the orthogonal functor $\Bt_2(V)\coloneq \BTop(V\oplus \RR^2,V)$ which, by definition, fits in a fibre sequence of orthogonal functors $\Vt_2\to \Bt_2\to \Bt(-\oplus \RR^2)$. On unstable derivatives, we obtain fibre sequences $\Vt_2^{(1)}(V)\to \Bt_2^{(1)}(V)\to \Bt^{(1)}(V\oplus \RR^{2})$. Both $\Bt_2^{(1)}(V)$ and $\Bt^{(1)}(V\oplus \RR^{2})$ are $(|V|+1)$-connected: the former since the map $\BTop(2)=\Bt_2(0)\to \Bt_{2}(V)$ is $(|V|+1)$-connected by \cite[Thm. B]{KirbySiebenmannTopmn} (see also \cite[Lem. 8.17(ii)]{KKsdisc}). The latter space since the map $\BTop(V)\to \BTop$ is $|V|$-connected.

    We move on to part (ii). The square \eqref{CEmbKnotsCartSquare} is compatible with stabilisation in the sense that there are maps of homotopy fibre sequences
    \begin{equation}\label{CEmbTangentialMap}
    \begin{tikzcd}
        \CEmb(D^{d-2},D^d)_u\dar["\Sigma_{D^{d-2},D^d}"]\rar &\Omega^{d-2}\Vo_2^{(1)}(\RR^{d-2})\rar\dar["\Omega^{d-2}s^o"] &\Omega^{d-2}\Vt_2^{(1)}(\RR^{d-2})\dar["\Omega^{d-2}s^t"]\\
        \CEmb(D^{d-1},D^{d+1})_u\rar &\Omega^{d-1}\Vo_2^{(1)}(\RR^{d-1})\rar &\Omega^{d-1}\Vt_2^{(1)}(\RR^{d-1}),
    \end{tikzcd}
    \end{equation}
    where $\Sigma_{D^{d-2},D^d}$ is the concordance embedding stabilisation map of \eqref{ConcordanceStabilisationMapsDefn}, and $s^o$ and $s^t$ are the structure maps of the first derivative spectra $\Theta\Vo^{(1)}$ and $\Theta\Vt^{(1)}$. Looping the base, and taking the colimit with respect to these stabilisation maps, we obtain another fibre sequence
    \[
    \begin{tikzcd}
    \Omega^{\infty+1}(\Theta \Vt_2^{(1)})\rar &\sCEnopartial(*,D^2)_u\rar["D^o"] &\Omega^\infty(\Theta \Vo_2^{(1)})\simeq \Omega^{\infty-1}\bfS,
    \end{tikzcd}
    \]
    where $D^o$ is the smooth (stable) derivative map. This map is nullhomotopic (it is so unstably) by the argument in \cite[p. 22]{KKGoodwillie}, and hence it follows that $\Omega^{\infty +1}(\Theta \Vt_2^{(1)})$ is equivalent to the product $\sCEnopartial(*,D^2)_u\times \Omega^\infty \bfS$. Now by \cref{CEWJReq}, we have that 
    \[
    \sCEnopartial(*,D^2)_u\simeq \hofib(\Omega \Wh(S^1)\to \Omega \Wh(*))\simeq \tohofib\Bigg(\begin{tikzcd}[scale= 0.4, column sep = 18pt, row sep = 10pt]
    Q_+S^1\dar\rar["\nu"]&A(S^1)\dar\\
    QS^0\rar["\nu"]&A(*)
\end{tikzcd}\Bigg).
    \]
    Since the unit map $\nu$ is split injective, the fibre of $Q_+S^1\to QS^0$ is $\Omega^{\infty-1}\bfS$, and the fibre of $A(S^1)\to A(*)$ is equivalent to $\Omega^{\infty-1}(\Ksp(\bfS))$ by the Bass--Heller--Swan splitting, claim (ii) follows. This finishes the proof.
\end{proof}

Finally, we derive \cref{RelativeBTopMainCor} from \cref{main range embeddings} for $(M,N)=(D^{d-2},D^d)$.

\begin{proof}[Proof of \cref{RelativeBTopMainCor}]
    Taking vertical fibres in \cref{CEmbTangentialMap} gives, for each $d\neq 4$, a fibre sequence
    \[
    \begin{tikzcd}
            \hofib(\Sigma_{D^{d-2},D^d})\rar & \Omega^{d-2}\Vo_2^{(2)}(\RR^{d-2})\rar &\Omega^{d-2}\Vt_2^{(2)}(\RR^{d-2}).
    \end{tikzcd}
    \]
    Since the map $s^o$ coincides with the loop-suspension map $\O(d+1)/\O(d)\simeq S^{d}\longrightarrow \Omega S^{d+1}\simeq \Omega \O(d+2)/\O(d+1)$ looped once, it follows that $\Vo_2^{(2)}(\RR^{d-2})$ is $(2d-2)$-connected, and hence the middle term is $(d-2)$-connected. When $d\geq 6$, the left term is rationally $(d-5)$-connected (but not rationally $(d-4)$-connected) by \cref{main range embeddings}, and so $\Omega^{d-1}\Vt_2^{(2)}(\RR^{d-2})$ is also rationally $(d-5)$-connected but not rationally $(d-4)$-connected. (In fact, it follows from \cref{Theta1Lemma}(i) and the map of homotopy long exact sequences induced by \eqref{CEmbTangentialMap} that $\Vt_2^{(2)}(\RR^{d-2})$ is itself rationally $(2d-6)$-connected but not $(2d-5)$-connected.) Hence,
    \[
    \pi_*^{\QQ}(\Theta\Vt_2^{(2)})=\colim_{d}\pi_{*+2d-4}^{\QQ}(\Vt_2^{(2)}(\RR^{d-2}))=\colim_{d} \pi_{*+d-3}^{\QQ}(\Omega^{d-1}\Vt_2^{(2)}(\RR^{d-2}))
    \]
    vanishes if $*+d-3\leq d-5$ for $d\gg 0$, i.e. when $*\leq -2$, so $\Theta\Vt_2^{(2)}$ is indeed rationally $(-1)$-connective.
\end{proof}

\appendix

\section{Weiss--Williams technicalities}\label{AppendixWW}
\subsection{The \texorpdfstring{$h$}{h}-cobordism involution on concordance 
embeddings}\label{AppendixCEhCobInv}

The purpose of this appendix is to give an explicit model of the $h$-cobordism involution $\iota_{\H}$ on $\CEmb(P,M)$ presented in \cref{iotaHItem1} of \cref{InvolutionRecollectionSection}. For simplicity, we assume that $\iota: P\hookrightarrow M$ is a codimension-zero embedding of compact manifolds with $d-p\leq 3$ and $d\geq 5$\footnote{A few words on the assumptions: the map $\CEmb(\nu P, M)\to \CEmb(P,M)$ given by restriction to the zero-section $P\subset \nu P$ is an equivalence, so no generality is lost by assuming the geometric codimension of $P$ in $M$ is zero. Regarding the handle codimension condition $d-p\leq 3$, this ensures $\CEmb(P,M)$ is connected by Hudson's theorem. Finally, we assume $d\geq 5$ so that the path-components of the $h$-cobordism spaces $\H(M-\nu P)$ and $\H(M)$ appearing later are understood by the $s$-cobordism theorem; if $d\leq 4$, these should be replaced by their identity components, i.e.\ the classifying spaces $\BC(M-\nu P)$ and $\BC(M)$.}. As a byproduct of the results in this section, we also model the fibre inclusion map $\kappa$ in the fibre sequence (cf. \cref{kappaMapProp})
\[
\begin{tikzcd}
    \CEmb(P,M)\rar["\kappa"] & \H(M-\nu P)\rar & \H(M),
\end{tikzcd}    
\]
and verify that the restriction map $\res_P:\C(M)\to \CEmb(P,M)$ is \emph{anti}-equivariant for the concordance involution $\iota_{\C}$ on the domain and $\iota_{\H}$ on the target (cf. \cref{iotaCvsiotaHCor}). To keep the notation simple, we will write $\nu P$ for $P-\partial_1 P$. Let us first introduce suitable models for $\CEmb(P,M)$, $\iota_{\H}$ and $\kappa$.

\begin{defn}\label{uCEmbDefn}
    (i)\hspace{5pt} Consider the space $\uCEmb(P,M)$ of tuples $(\varphi,\psi, \phi,\eta)$ consisting of 
    \begin{itemize}[leftmargin = 23pt]
        \item a concordance embedding $\varphi: P\times[-1,0]\hookrightarrow M\times [-1,0]$ from $\iota\times \{-1\}$ to $\varphi\mid_{P\times\{0\}}$, a concordance embedding $\psi: P\times [0,1]\hookrightarrow M\times [0,1]$ from $\varphi\mid_{P\times \{0\}}$ to $\iota\times \{1\}$ (both appropriately collared);

        \item a diffeomorphism $\phi: (M-\nu P)\times[-1,1]\cong M\times [-1,1]- \big(\varphi(\nu P\times[-1,0])\cup \psi(\nu P\times [0,1])\big)$ rel boundary; 

        \item an isotopy $\eta$ in $\Diff_\partial(M\times [-1,1])$ between the identity $\Id_{M\times [-1,1]}$ and $\phi\cup (\varphi\cup_{P\times \{0\}}\psi)$.
    \end{itemize}
    
\noindent(ii)\hspace{5pt} The $h$-cobordism involution $\iota_{\H}$ on $\uCEmb(P,M)$ sends $(\varphi,\psi,\phi,\eta)$ to the tuple $(\overline{\psi},\overline{\varphi},\overline{\phi},\overline{\eta})$ given as follows: write $r: [-1,1]\cong [-1,1]$ for $r(t)=-t$, which restricts to a diffeomorphism $r:[-1,0]\cong[0,1]$. Then,
\begin{itemize}[leftmargin = 23pt]
    \item  $\overline{\psi}=(\Id_M\times\ r)\circ\psi\circ(\Id_P\times\ r)$ and $\overline{\varphi}=(\Id_M\times\ r)\circ\varphi\circ(\Id_P\times\ r)$;

    \item $\overline{\phi}$ is the composition of $\Id_{M-\nu P}\times\ r$, $\phi$ and the diffeomorphism
    \[M\times [-1,1]- \big(\varphi(\nu P\times[-1,0])\cup \psi(\nu P\times [0,1])\big)\cong M\times [-1,1]- \big(\overline{\psi}(\nu P\times[-1,0])\cup \overline{\varphi}(\nu P\times [0,1])\big)\]
    induced by the restriction of $\Id_M\times\ r$ to the domain;

    \item $\overline{\eta}(t)=(\Id_M\times\ r)\circ\eta(t)\circ(\Id_M\times\ r)$ for $t\in [0,1]$.
\end{itemize}

\noindent(iii)\hspace{5pt} The map $\kappa: \uCEmb(P,M)\to \H(M-\nu P)$ sends $(\varphi,\psi,\phi,\eta)$ to the partition $(W,F,V)\in \H(M-\nu P)$ with
\[
W=\phi^{-1}\big(M\times [-1,0]- \varphi(\nu P\times [-1,0])\big), \quad F=\phi^{-1}\big(M\times \{0\}-\varphi(\nu P\times \{0\})\big), \quad V=\phi^{-1}\big(M\times [0,1]- \psi(\nu P\times [0,1])\big).
\]
One verifies that $\kappa: \uCEmb(P,M)\to \H(M-\nu P)$ is $\iota_{\H}$-equivariant (see \ref{iotaHItem1} for the definition of $\iota_{\H}$ on $\H(M-\nu P)$).

\noindent(iv)\hspace{5pt} There is a forgetful map $u: \uCEmb(P,M)\to \CEmb(P,M)$ sending $(\varphi,\psi,\phi,\eta)$ to $\varphi$, and a restriction map $\ures_P:\C(M)\to \uCEmb(P,M)$ sending a concordance diffeomorphism $f: M\times I\cong M\times I$ to the tuple $(\varphi,\psi,\phi,\eta)$ given as follows:
\begin{itemize}[leftmargin = 23pt]
    \item Identifying $I$ with $[-1,0]$, $\varphi$ is $f\mid_{P\times [-1,0]}$.
    \item Identifying $I$ with $[0,1]$ and writing $f_1$ for the diffeomorphism $f\mid_{M\times \{1\}}$, $\psi$ is $(f_1\times \Id_{[0,1]})\circ f^{-1}\mid_{P\times [0,1]}$.
    \item Consider the diffeomorphism $\Phi\in \Diff_\partial(M\times [-1,1])$ given by stacking $f$ on $M\times[-1,0]$ with $(f_1\times \Id_{[0,1]})\circ f^{-1}$ on $M\times [0,1]$. Then $\phi=\Phi\mid_{M-\nu P\times [-1,1]}$. 

    \item The diffeomorphism $\Phi$ can alternatively be described as the composition $(f\cup f_1\times \Id_{[0,1]})\circ(\Id_{[-1,0]}\cup f^{-1})$. The isotopy $\eta$ is given, at time $t\in [0,1]$, by the diffeomorphism $\eta(t)=(f_t\cup f_1\times \Id_{[t,1]})\circ(\Id_{[-1,-t]}\cup f^{-1}_t)$, where $f_t$ and $f^{-1}_t$ are the diffeomorphisms of $M\times [-1,t]$ and $M\times [-t,1]$ obtained from $f$ and $f^{-1}$ by linearly identifying $I$ with $[-1,t]$ and $I$ with $[-t,1]$, respectively.
\end{itemize}
By construction, these maps fit in a commutative diagram
\[
\begin{tikzcd}[row sep = 6pt]
    &\uCEmb(P,M)\ar[dd,"u"]\\
    \C(M)\ar[ur, "\ures_P"] \ar[dr, "\res_P"']&\\
    &\CEmb(P,M).
\end{tikzcd}
\]    
\end{defn}

\begin{rmk}
    There is a simplicial model for $\uCEmb(P,M)$, whose $p$-simplices are (roughly) families of tuples $(\varphi,\psi,\phi,\eta)$ parametrised by the geometric simplex $\Delta^p$. This model is implicitly used in the definition of the map $\kappa$ in \cref{uCEmbDefn}(iii), and we will often use both models interchangeably.
\end{rmk}

\begin{lemma}\label{uCEmbLem}
    The map $u: \uCEmb(P,M)\to \CEmb(P,M)$ of \cref{uCEmbDefn}(iv) is an equivalence.
\end{lemma}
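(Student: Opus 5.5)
The plan is to show that $u$ is a fibration whose fibres are contractible. To do this, I will exhibit $u$ as a composite of forgetful maps and treat each one separately. The factorisation forgets the data in stages:
\[
(\varphi,\psi,\phi,\eta)\longmapsto(\varphi,\psi,\phi)\longmapsto(\varphi,\psi)\longmapsto\varphi .
\]

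\emph{Step 1: forgetting $\psi$.} For fixed $\varphi$, the space of concordance embeddings $\psi$ from $\varphi|_{P\times\{0\}}$ to $\iota\times\{1\}$ is a torsor-like space for $\CEmb(P,M)$. Concatenation with $\varphi$ identifies it with the homotopy fibre of
\[
\CEmb(P,M)\times\CEmb(P,M)\longrightarrow \CEmb(P,M)\quad\text{(stacking) over the constant concordance.}
\]
That homotopy fibre is the space of homotopy right inverses of $\varphi$ in the stacking monoid. Since $\CEmb(P,M)$ is connected, by Hudson's theorem under the handle codimension assumption, it is group-like. The argument of the footnote in \cref{WellDefinedActionRmk} then shows that this space of inverses is contractible. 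To be careful, I will first replace the strict endpoint conditions by collared versions, so that stacking is well defined and the forgetful map is a Serre fibration; the latter follows from isotopy extension applied to the endpoint $\varphi|_{P\times\{0\}}$.

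\emph{Step 2: forgetting $(\phi,\eta)$.} Fix $(\varphi,\psi)$ and set $\Psi:=\varphi\cup\psi$, a concordance embedding of $P\times[-1,1]$ which is the identity near both ends. The pairs $(\phi,\eta)$ then form the homotopy fibre of
\[
\Diff_\partial(M\times[-1,1])\longrightarrow \Emb_\partial\big(P\times[-1,1],M\times[-1,1]\big)\quad\text{(restriction)}
\]
over $\Psi$, twisted by a choice of complement identification. Concretely, $\phi$ is a diffeomorphism of the complement and $\eta$ is a path from the identity to $\phi\cup\Psi$. By the parametrised isotopy extension theorem, restriction is a fibration with fibre $\Diff_\partial((M-\nu P)\times[-1,1])$. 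The data $(\phi,\eta)$ is then exactly a point of the homotopy fibre of the inclusion of that fibre into $\Diff_\partial(M\times[-1,1])$, taken over the path-component lifting $\Psi$. This homotopy fibre is equivalent to $\Omega_\Psi$ of the base, made contractible by the requirement that $\eta$ end at an extension of $\Psi$.

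More cleanly, the space of $(\phi,\eta)$ is the space of paths in $\Diff_\partial(M\times[-1,1])$ from $\Id$ into the fibre of restriction over $\Psi$. This is the homotopy fibre of restriction at $\Psi$, viewed from the identity, which is contractible provided $\Psi$ lies in the component of the identity embedding. That condition holds because $\Emb_\partial(P\times[-1,1],M\times[-1,1])$ is connected by Hudson's theorem in this codimension, or at least $\Psi$ is isotopic to the standard inclusion.

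Combining the two steps, $u$ is a composite of fibrations with contractible fibres, hence an equivalence. The main obstacle is Step 2. There one must correctly identify the space of pairs $(\phi,\eta)$ with a homotopy fibre of the isotopy extension fibration; in particular, $\phi$ is required to be rel boundary onto a complement that varies with $\Psi$. I would handle this by passing to the simplicial model mentioned in the preceding remark and working with the path-lifting property there, applying isotopy extension over $\Delta^p$ at each stage.
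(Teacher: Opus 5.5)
\textbf{There is a genuine gap: both of your stagewise contractibility claims are false.}

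\emph{Step~1.} Take $\varphi=\iota\times\Id_{[-1,0]}$. The admissible $\psi$ are then exactly the self-concordances of $\iota$ fixed at both ends. That is, they form the space $E\coloneq\Emb_\partial(P\times[0,1],M\times[0,1])$ (rel $P\times\{0,1\}\cup\partial_0P\times[0,1]$).
\begin{itemize}
\item $E$ is the fibre over $\iota$ of restriction to the top end, $\CEmb(P,M)\to\Emb_{\partial_0}(P,M)$. So in the range where $\CEmb(P,M)$ is highly connected, $E$ looks like $\Omega\Emb_{\partial_0}(P,M)$. For a small disc $P$ in the interior of $M$, this is the loop space of the frame bundle of $M$, which need not even be connected: the graph of a rotation loop is not isotopic rel ends to the identity.
\item The ``homotopy right inverse'' heuristic does not apply. $\CEmb(P,M)$ has no stacking monoid structure in the concordance direction: a concordance starting at $\iota$ cannot be stacked on $\varphi$, which ends at $\varphi|_{P\times\{0\}}$.
\item More importantly, a strict endpoint condition on $\psi$ carries no path from ``$\varphi$ followed by $\psi$'' to the unit. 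That path is precisely what the data $(\phi,\eta)$ supplies.
\end{itemize}

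\emph{Step~2.} Let $R\colon\Diff_\partial(M\times[-1,1])\to\mathcal{E}\coloneq\Emb_\partial(P\times[-1,1],M\times[-1,1])$ be the restriction fibration.
\begin{itemize}
\item The pairs $(\phi,\eta)$ over $(\varphi,\psi)$ are the paths in $\Diff_\partial(M\times[-1,1])$ starting at $\Id$ and ending in $R^{-1}(\Psi)$.
\item Composing with $R$ (path lifting with fixed initial point) identifies this with the space of paths in $\mathcal{E}$ from $\iota\times\Id$ to $\Psi$.
\item That space is empty if $\Psi$ is not in the identity component. Hudson's theorem gives $\pi_0\CEmb(P,M)=*$, not connectivity of $\mathcal{E}$.
\item Otherwise it is equivalent to $\Omega\mathcal{E}$, which is again not contractible.
\end{itemize}

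\textbf{The missing idea is that these two non-contractible pieces cancel.} Over $\varphi=\iota\times\Id_{[-1,0]}$, the computation above shows the fibre of $u$ is equivalent to the space of pairs consisting of $\psi\in E$ and a path in $\mathcal{E}$ from $\iota\times\Id$ to $\iota\cup\psi$. This is the homotopy fibre of the extension map $E\to\mathcal{E}$, $\psi\mapsto\iota\cup\psi$. That map is an equivalence by a collar/reparametrisation argument, so the fibre of $u$ is contractible. Connectivity of $\CEmb(P,M)$ then handles the other fibres.

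The paper packages the same cancellation differently. It factors $u=\alpha\circ\beta$, with $\beta$ forgetting only $\eta$ and $\alpha$ forgetting $(\psi,\phi)$. It identifies $\hofib_\iota(\beta)\simeq\Omega\Diff_\partial(M\times[-1,1])$, and via isotopy extension $\hofib_\iota(\alpha)\simeq\Diff_\partial(M\times[-1,1])$. It then checks that the connecting map between them is the identity, plus a separate $\pi_0$ argument. Either way, the essential input is that a connecting map is an equivalence, not that each forgetful stage has contractible fibres.
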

\begin{proof}
Consider the based space $X$ consisting of triples $(\varphi,\psi,\phi)$, where $\varphi$, $\psi$ and $\phi$ are as in \cref{uCEmbDefn}(i), and with basepoint the identity tuple $(\iota\times \Id_{[-1,0]}, \iota\times \Id_{[0,1]}, \Id_{(M-\nu P)\times\ [-1,1]})$, which we shall also denote by $\iota$. This space fits in between $\uCEmb(P,M)$ and $\CEmb(P,M)$ as
\[
\begin{tikzcd}
    \uCEmb(P,M)\rar["\beta"] & X\rar["\alpha"] & \CEmb(P,M),
\end{tikzcd}
\]
where $\beta(\varphi,\psi,\phi,\eta)\coloneq(\varphi,\psi,\phi)$ and $\alpha(\varphi,\psi,\phi)\coloneq \varphi$---in the simplicial versions of these three spaces, both $\alpha$ and $\beta$ are Kan fibrations. We obtain a homotopy fibre sequence
\begin{equation}\label{hofibFibreSeq}
\begin{tikzcd}
    \hofib_\iota(\beta)\rar & \hofib_{\iota}(\alpha\circ\beta)=\hofib_\iota(u)\rar & \hofib_{\iota}(\alpha),
\end{tikzcd}
\end{equation}
and, since $\CEmb(P,M)$ is connected under our assumption $d-p\leq 3$, it suffices to show that the middle term in this fibre sequence is contractible. The left term $\hofib_\iota(\beta)$ is visibly the loop space $\Omega\Diff_\partial(M\times [-1,1])$ (i.e. the space of self-isotopies of the identity), whilst the right term $\hofib_\iota(\alpha)$ is the homotopy fibre of the complement map $\kappa: \Emb_{\partial}(P\times [-1,1], M\times [-1,1])\to \BDiffb(M-\nu P\times [-1,1])$; by the isotopy extension theorem, it is equivalent to $\Diffb(M\times [-1,1])$. One verifies that, under these equivalences, the connecting map $\delta:\Omega\hofib_\iota(\alpha)\to \hofib_\iota(\beta)$ is the identity map of $\Omega\Diffb(M\times [-1,1])$. It remains to argue that $\pi_0(\hofib_\iota(u))$ is a singleton. For this, consider the long exact sequence of homotopy groups for \eqref{hofibFibreSeq}: 
\[
\begin{tikzcd}
    \pi_1(\hofib_\iota(\alpha))\rar["\cong"', "\delta"] & \pi_0(\hofib_{\iota}(\beta))\rar["a"] & \pi_0(\hofib_\iota(u))\rar["i"] & \pi_0(\hofib_\iota(\alpha)).
\end{tikzcd}
\]
The map $i$ only hits the trivial component, for it sends (the component of) a tuple $(\iota\times \Id_{[0,1]},\psi,\phi,\eta)$ to $(\iota\times \Id_{[0,1]},\psi,\phi)$, and $\eta$ (and isotopy extension) provides a path from the latter to the basepoint. Now one easily sees that the map $a$ is a pointed group action. But since $\delta$ is an isomorphism, the claim follows.
\end{proof}

\begin{prop}\label{kappaMapProp}
    Let $\H^s(Q)\subset \H(Q)$ denote the component of the trivial $h$-cobordism on $Q$, which is equivalent to the classifying space $\BC(Q)$. If $d-p\leq 3$, there is an $\iota_{\H}$-equivariant homotopy fibre sequence
    \[
    \begin{tikzcd}
        \uCEmb(P,M)\rar["\kappa"] & \H^s(M-\nu P)\rar & \H^s(M).
    \end{tikzcd}
    \]
    (If $d\geq 5$, the above remains a fibre sequence if we replace $\H^s(-)\simeq \BC(-)$ by the full $h$-cobordism space $\H(-)$.) Thus, $\iota_{\H}$ on $\uCEmb(P,M)$ models the $h$-cobordism involution on $\CEmb(P,M)$.
\end{prop}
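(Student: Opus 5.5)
The plan is to compare $\kappa$ with the fibre inclusion of the usual isotopy extension fibre sequence \eqref{CEmbIETSeq}, using \cref{uCEmbLem} to identify the source. There are three things to establish: the composite $\uCEmb(P,M)\to \H^s(M)$ is canonically null, the induced map to the homotopy fibre is an equivalence, and everything is $\iota_{\H}$-equivariant.

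First, the composite $\uCEmb(P,M)\xrightarrow{\kappa}\H(M-\nu P)\to \H(M)$ sends $(\varphi,\psi,\phi,\eta)$ to the partition of $M\times[-1,1]$ obtained by gluing $\varphi(\nu P\times[-1,0])$ and $\psi(\nu P\times[0,1])$ back into $W$ and $V$. Equivalently, it is the image under $\Phi\coloneq \phi\cup(\varphi\cup_{P\times\{0\}}\psi)$ of the trivial partition $(M\times[-1,0],M\times\{0\},M\times[0,1])$. The isotopy $\eta$ from $\Id$ to $\Phi$ gives a canonical path from this partition to the basepoint. This yields a map
\[
\widetilde{\kappa}\colon\uCEmb(P,M)\to \hofib(\H(M-\nu P)\to \H(M)),
\]
and it visibly lands in the components $\H^s$: $W$ is diffeomorphic via $\phi$ to a complement in the trivial cobordism, and $\varphi$ is concordant to the inclusion by Hudson's theorem since $\CEmb$ is connected.

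Second, I want $\widetilde{\kappa}$ to be an equivalence. I would use Vogell's fibre sequence \eqref{OmegaHFibSeq}, $\C(Q)\to E(Q)\to \H(Q)$ with $E(Q)$ contractible, for $Q=M-\nu P$ and $Q=M$. This presents $\hofib(\H^s(M-\nu P)\to\H^s(M))$ as the homotopy quotient of the relevant spaces of collar embeddings and concordances. I would then build a comparison from $\uCEmb(P,M)$ into this model by sending $(\varphi,\psi,\phi,\eta)$ to the collar embedding $\phi|_{(M-\nu P)\times[-1,0]}$, together with the extension data $\varphi$ and the isotopy $\eta$. The claim then reduces to the standard isotopy extension fibration
\[
\C(M-\nu P)\to \C(M)\to \CEmb(P,M)
\]
combined with \cref{uCEmbLem}. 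Concretely, I would check that the square formed by $\ures_P$, $u$, $\res_P$ and the restriction of $\kappa$ to the image of $\C(M)$ commutes up to the homotopy supplied by $\eta$. Then I compare long exact sequences. On loop spaces one gets $\Omega\H^s\simeq\C$, which gives the isomorphism in positive degrees. On $\pi_0$ both sides are trivial: $\CEmb(P,M)$ is connected by Hudson since $d-p\le3$, and we restricted to $\H^s$.

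The $d\ge5$ addendum follows because, by the $s$-cobordism theorem and the assumption $d-p\le 3$ (which makes $\pi_1(M-\nu P)\to\pi_1(M)$ an isomorphism), the map $\pi_0\H(M-\nu P)\to\pi_0\H(M)$ is the bijection $\mathrm{Wh}(\pi_1)\to\mathrm{Wh}(\pi_1)$. Hence the fibres over non-basepoint components agree.

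Equivariance is a direct check. The map $\kappa$ is equivariant by construction, as noted in \cref{uCEmbDefn}(iii). Reflection by $\Id_M\times r$ commutes with gluing, and it sends the path built from $\eta$ to the one built from $\overline\eta$, so the null-homotopy and hence $\widetilde\kappa$ are equivariant.

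The main obstacle is the second step, making $\widetilde\kappa$ an honest equivalence. The homotopy fibre of a map of partition spaces is awkward to compare directly with $\uCEmb$, so I would route the argument entirely through the concordance spaces and the $\Omega$-level statement. The $\pi_0$ bookkeeping should be handled with the same group-action argument used in \cref{uCEmbLem}.
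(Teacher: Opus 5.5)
Your proposal is essentially the paper's argument. The paper also deloops the isotopy-extension sequence $\C(M-\nu P)\to\C(M)\xrightarrow{\ures_P}\uCEmb(P,M)$ (via \cref{uCEmbLem}) against Vogell's collar fibrations $\C(Q)\to E(Q)\to\H(Q)$ in a $3\times 3$ diagram, with the relevant homotopy supplied by $\phi$ and $\eta$ just as in your null-homotopy, and settles $\pi_0$ by Hudson's theorem. The one compatibility your sketch leaves implicit, and which the comparison of long exact sequences needs beyond the square through $\ures_P$ (easy, since $\kappa\circ\ures_P$ is constant at the trivial partition), is that under $\C\simeq\Omega\H^s$ the looped map $\Omega\kappa$ agrees with the isotopy-extension connecting map $\Omega\uCEmb(P,M)\to\C(M-\nu P)$; this is exactly what the paper's left subsquares, built from the path fibration of $\uCEmb(P,M)$, establish.
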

\begin{proof}
    Recall the fibration $\C(Q)\overset{a}{\to} E(Q)\overset{p}{\to}\H(Q)$ of \eqref{OmegaHFibSeq}, where $E(Q)$ is the space of embeddings $\Emb_{Q\times \{-1\}}(Q\times [-1,0], Q\times [-1,1])$, and hence contractible as it is a space of collars. We construct (up to wrong-way equivalences) a commutative diagram
    \begin{equation}\label{FibSeqHugeDiagram}
    \begin{tikzcd}[row sep = 15pt]    \Omega\uCEmb(P,M)\rar["\underline{\delta}"] \dar & \C(M-\nu P)\dar["a"]\rar & \C(M)\dar["a"]\\
        \Path\uCEmb(P,M)\rar\dar["\mathrm{ev}_1"] & E(M-\nu P)\dar["p"]\rar & E(M)\dar["p"]\\
        \uCEmb(P,M)\rar["\kappa"] & \H^s(M-\nu P)\rar &\H^s(M),
    \end{tikzcd}
    \end{equation}
where the left column is the usual path-space fibration for $\uCEmb(P,M)$, the middle and right columns are the fibre sequences \eqref{OmegaHFibSeq}, and the middle and top rows are homotopy fibre sequences: the middle one only involves contractible spaces, and the top one is obtained from the isotopy extension fibre sequence $\C(M-\nu P)\to \C(M)\xrightarrow{\ures_P}\uCEmb(P,M)$ using \cref{uCEmbLem}. In particular, the bottom row becomes a homotopy fibre sequence upon looping once, and hence is itself a homotopy fibre sequence for the homotopy fibre of $\H^s(M-\nu P)\to \H^s(M)$ is connected by Hudson's theorem, and $\pi_0(\uCEmb(P,M))\cong \pi_0(\CEmb(P,M))=*$ again by Hudson's theorem and \cref{uCEmbLem}.

The right part of \eqref{FibSeqHugeDiagram} is clear, so let us construct the left subsquares. Consider the restriction map $r_P: E(M)\to E(P,M)=\Emb_{P\times {-1}}(P\times [-1,0], M\times [-1,1])$, which is compatible with both $\res_P$ and $\ures_P$. For a compact manifold $Q\subset \RR^\infty$, we model $\BC(Q)$ as the moduli space of manifolds 
\[
\BC(Q)\simeq \Emb_{Q\times \{-1\}}(Q\times [-1,0], \RR^\infty\times [-1,+\infty))/\Diff_{Q\times \{-1\}}(Q\times [-1,0]).
\]
(The map $\C(Q)\to \Diff_{Q\times \{-1\}}(Q\times [-1,0])$ is an equivalence). Then the left subsquares in \eqref{FibSeqHugeDiagram} are
\[\begin{tikzcd}[row sep = 14pt]
	{\Omega\CEmb(P,M)} & {\hofib_\iota(\res_P)} \\
	{\Omega\uCEmb(P,M)} & {\hofib_\iota(\ures_P)} & {\C(M-\nu P)} \\
	{\Path\uCEmb(P,M)} & {\hofib_\iota(E(M)\to E(P,M))} & {E(M-\nu P)} \\
	& {\BC(M-\nu P)} \\
	{\uCEmb(P,M)} && {\H^s(M-\nu P),}
	\arrow["\delta", from=1-1, to=1-2]
	\arrow["{\Omega u}", "\vsim"', from=2-1, to=1-1]
	\arrow["{\underline{\delta}}", from=2-1, to=2-2]
	\arrow[from=2-1, to=3-1]
	\arrow["\simeq"', from=2-2, to=1-2]
	\arrow[from=2-2, to=3-2]
	\arrow["\simeq"', from=2-3, to=1-2]
	\arrow["\simeq", from=2-3, to=2-2]
	\arrow["a", from=2-3, to=3-3]
	\arrow["i", from=3-1, to=3-2]
	\arrow["{\mathrm{ev}_1}"', from=3-1, to=5-1]
	\arrow[from=3-2, to=4-2, "c"]
	\arrow[from=3-3, to=3-2, "\simeq"]
	\arrow["p", from=3-3, to=5-3]
	\arrow["k", from=5-1, to=4-2]
	\arrow["\kappa", from=5-1, to=5-3]
	\arrow["w"', "\simeq", from=5-3, to=4-2]
\end{tikzcd}\]
where notation is as follows: an arrow being unlabelled means that the arrow is the obvious map. The maps $\delta$ and $\underline{\delta}$ are the evident connecting maps for the isotopy extension fibre sequences. The map $i$ sends a path of tuples $\{(\varphi_s,\psi_s,\phi_s,\eta_s)\}_{s\in [0,1]}$ to $(\phi_1\cup \varphi_1, \eta_1\mid_{P\times [-1,0]})$. The map $c$ sends a pair $(\phi, \eta)$ to the complement $M\times [-1,0]-\phi(\nu P\times[-1,0])$. The map $k$ sends a tuple $(\varphi,\psi,\phi,\eta)$ to the complement $M\times [-1,0]-\varphi(\nu P\times[-1,0])$. Finally, the map $w$ sends a partition $(W,F,V)$ to $W\subset \RR^\infty \times [-,+\infty)$ (where we have fixed beforehand an embedding $M-\nu P\subset M\subset \RR^\infty$). All of the subsquares in this diagram are commutative on the nose, except for the bottom subtriangle, which is homotopy commutative (the homotopy being provided by the data of $\phi$ and $\eta$ in a tuple $(\varphi,\psi,\phi,\eta)\in \uCEmb(P,M)$). The map $w$ is clearly an equivalence---both domain and codomain are models for the classifying space of $\C(M-\nu P)$. The middle horizontal wrong-way map is an equivalence as domain and codomain are contractible. The top wrong-way map is an equivalence since the top diagonal map is an equivalence by isotopy extension, and the top-middle vertical map is an equivalence by \cref{uCEmbLem}. Finally, that the fibre sequence in the statement is $\iota_{\H}$-equivariant is clear. This concludes the proof.
\end{proof}

\begin{cor}\label{iotaCvsiotaHCor}
    The restriction map $\ures_P: (\C(M),\iota_{\C})\to (\uCEmb(P,M), \iota_{\H})$ is anti-equivariant. More concretely, the diagram of spaces
    \[
    \begin{tikzcd}[row sep = 15pt]
        \C(M)\dar["\mathrm{inv}\ \circ\ \iota_{\C}"']\rar["\ures_P"] &\uCEmb(P,M)\dar["\iota_{\H}"]\\
        \C(M)\rar["\ures_P"] & \uCEmb(P,M)
    \end{tikzcd}
    \]
    is commutative, where $\mathrm{inv}(\phi)=\phi^{-1}$. Thus, up to equivalence, so is $\res_P: (\C(M),\iota_{\C})\to (\CEmb(P,M),\iota_{\H})$.
\end{cor}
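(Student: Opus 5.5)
The plan is to verify the commutativity of the square directly on the level of tuples, and then deduce the statement about $\res_P$ from the commutative triangle $u\circ\ures_P=\res_P$ of \cref{uCEmbDefn}(iv), together with \cref{uCEmbLem} and \cref{kappaMapProp}. The latter two identify $(\uCEmb(P,M),\iota_{\H})$ with $\CEmb(P,M)$ equipped with its $h$-cobordism involution. Since inversion acts by $-1$ on homotopy groups of the group $\C(M)$, commutativity of the square gives the anti-equivariance claim on homotopy.

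Fix $f\in\C(M)$ and write $g\coloneq\mathrm{inv}(\iota_{\C}(f))=\overline{f}^{-1}$. Using \eqref{ConcInvDefn} (reparametrised on $[-1,0]$ and $[0,1]$ via the reflection $r$), one has $\overline{f}=(f_1^{-1}\times\Id)\circ(\Id_M\times r)\circ f\circ(\Id_M\times r)$ up to the linear identification of $I$. Hence
\[
g=(\Id_M\times r)\circ f^{-1}\circ(\Id_M\times r)\circ(f_1\times\Id),\qquad g_1=f_1\circ f_1^{-1}\cdots
\]
I will compute $g_1=g\mid_{M\times\{1\}}$ carefully; I expect $g_1=f_1^{-1}$ up to the identification of endpoints. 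I will then check the four components of \cref{uCEmbDefn}(iv) one at a time.

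For the first component, $\varphi_g=g\mid_{P\times[-1,0]}$ should equal $\overline{\psi_f}=(\Id_M\times r)\circ(f_1\times\Id)\circ f^{-1}\mid_{P}\circ(\Id_P\times r)$. This reduces to the restriction to $P$ of the displayed formula, using that $f_1$ and $\Id_M\times r$ commute.

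For the second component, $\psi_g=(g_1\times\Id)\circ g^{-1}\mid_P$ should equal $\overline{\varphi_f}$. Substituting $g^{-1}=(f_1^{-1}\times\Id)(\Id\times r)f(\Id\times r)$ and $g_1=f_1$, the factors $f_1$ cancel.

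For the third component, the stacked diffeomorphism $\Phi_g$ should be the conjugate $(\Id_M\times r)\circ\Phi_f\circ(\Id_M\times r)$. This holds because the two halves are swapped, and it yields $\phi_g=\overline{\phi_f}$ after restriction to the complement of $\nu P$.

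For the fourth component, the isotopies $\eta_g(t)$ and $\overline{\eta_f}(t)$ are built from the formula $(f_t\cup f_1\times\Id)\circ(\Id\cup f_t^{-1})$. Conjugating by $r$ exchanges the two factors, and I need this to match the corresponding formula for $g$.

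I expect this last step to be the main obstacle. The conjugated isotopy has the form $(\Id\cup\tilde f_t)\circ(\tilde f^{-1}_t\cup\ldots)$ with the factors in the opposite order, so on-the-nose equality may fail and only a canonical homotopy of isotopies may exist. If that happens, I would either adjust the convention for $\eta$ in \cref{uCEmbDefn}(iv) to a symmetric version, or note that the space of such $\eta$ over fixed $(\varphi,\psi,\phi)$ is a torsor for $\Omega\Diff_\partial$. In the latter case, the two choices are joined by a path that is natural in $f$, which gives homotopy commutativity. That weaker statement is enough for the consequence on $\res_P$.
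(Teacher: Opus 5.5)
Your strategy (check the square componentwise on tuples, then pass to $\res_P=u\circ\ures_P$ via \cref{uCEmbLem} and use that $\mathrm{inv}$ acts by $-1$ on homotopy groups) is the paper's. However, the componentwise check already fails in the first component, not only in $\eta$.

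Write $R$ for the reflection of the interval coordinate: $t\mapsto 1-t$ on $[0,1]$, or $r$ on $[-1,1]$, compatibly with the linear identifications in \cref{uCEmbDefn}(iv). From \eqref{ConcInvDefn}, $g=\mathrm{inv}(\iota_{\C}f)=R\circ f^{-1}\circ R\circ(f_1\times\Id)$ and $g_1=f_1$ (not $f_1^{-1}$). Hence
\[
\varphi_g=R\circ f^{-1}\circ(f_1\times\Id)\circ R\big|_{P},\qquad \overline{\psi_f}=R\circ(f_1\times\Id)\circ f^{-1}\circ R\big|_{P}.
\]
These agree only if $f^{-1}$ commutes with $f_1\times\Id$; the fact that $R$ commutes with $f_1\times\Id$ does not help. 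For a level-preserving $f(x,t)=(h_t(x),t)$ the condition reads $h_t^{-1}h_1=h_1h_t^{-1}$ on $P$, which fails in general.

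The same discrepancy appears in the lower half of $\Phi_g$ versus $R\circ\Phi_f\circ R$, so your third component fails as well; only the $\psi$-component matches. Your torsor fallback compares two choices of $\eta$ over the same $(\varphi,\psi,\phi)$, so it cannot repair a mismatch in $\varphi$ and $\phi$.

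In fact no on-the-nose verification can succeed. $\ures_P$ is injective, since $\varphi$ and $\phi$ recover $f$, and $\iota_{\H}$ is a strict involution. So strict commutativity would force $(\mathrm{inv}\circ\iota_{\C})^2=\Id$. But $(\mathrm{inv}\circ\iota_{\C})^2(f)=(f_1\times\Id)^{-1}\circ f\circ(f_1\times\Id)$. The square, and the paper's ``straightforward verification'', must therefore be read up to homotopy.

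The fix is to compose with $u$, which forgets $\psi,\phi,\eta$, and to use the other order of composition. Since $\iota_{\C}(f^{-1})=(f_1\times\Id)\circ R\circ f^{-1}\circ R$, its $\varphi$-component (restriction to $P$, with $I\cong[-1,0]$) is exactly $\overline{\psi_f}$. So $\res_P\circ\iota_{\C}\circ\mathrm{inv}=u\circ\iota_{\H}\circ\ures_P$ holds on the nose. As $u$ is an equivalence, the square with $\iota_{\C}\circ\mathrm{inv}$ commutes up to homotopy. Both $\iota_{\C}\circ\mathrm{inv}$ and $\mathrm{inv}\circ\iota_{\C}$ induce $-(\iota_{\C})_*$ on homotopy groups. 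This gives anti-equivariance of $\ures_P$ and $\res_P$ on homotopy groups, which is all the paper uses later.
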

\begin{proof}
    This follows by a straightforward verification. Note that $\mathrm{inv}: G\to G$ on any topological group $G$ acts by $-1$ on homotopy groups based at the identity.
\end{proof}

\subsection{Proof of \texorpdfstring{\cref{BdQProp}}{Proposition 4.3.10}}\label{AppendixBoringAlpha}
The aim of this subsection is to prove \cref{BdQProp}, thus establishing \cref{IntermediateBoringAlphaProp}, and hence \cref{BoringAlphaProp}. We recall the statement:

\begin{prop*}[\ref{BdQProp}]
There is a homotopy commutative square
   \[
\begin{tikzcd}[row sep = 12pt]
    \Omega \C(Q)\dar[hook, "\mathrm{stab}."]\ar[rr,"j\circ \alpha"] && \Omega^{2}\left(\frac{\Diff^b_\partial(Q\times \RR^\infty)}{\Diff_\partial(Q)}\right)\ar[dd,"\Phi^{\Bd_Q}_{\infty}"]\\    
\Omega\mathcal{C}(Q)\dar[dash, "\vsim", "\eqref{cCvscH}"'] &&\\
\Omega^2\cH(Q)\rar[dash,"\eqref{StableCEmbIETSeq}", "\sim"']&\Omega^{\infty+2}(\Hsp(Q))\rar["q"] & \Omega^{\infty+2}\big(\Hsp(Q)_{hC_2}\big),
\end{tikzcd}
   \]
   which is natural in codimension-zero embeddings.
\end{prop*}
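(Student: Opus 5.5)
The strategy is to unwind both composites in the square into statements about the orthogonal functor $\Bd_Q$ and its first derivative. Both composites should then factor through a single map $\Omega\C(Q)\to \Omega^{\infty+2}\Hsp(Q)$, after which postcomposition with $q$ finishes the argument.

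First I model the top-right composite. Recall that $\Phi^{\Bd_Q}_\infty$ is induced by $\eta_1$, as in \eqref{PhiFInftyMap}, on $\Bd_Q^{(1)}(\RR^\infty)=\hofib(\Bd_Q(0)\to\Bd_Q(\RR^\infty))$. The map $j\circ\alpha$ sends a loop of concordances to a double loop in this fibre: $\alpha$ is built from the graphing map $\Gamma$ together with the nullhomotopy of $\Gamma\circ\res_1$ coming from sliding the concordance along the $I$-direction, and $j$ is inclusion into bounded diffeomorphisms of $Q\times\RR^\infty$. The key observation I would establish is that $j\circ\alpha$ factors, up to homotopy, as
\[
\Omega\C(Q)\xrightarrow{\ \sim\ }\Omega^2\H(Q)\simeq \Omega\,\Omega\Bd_Q^{(1)}(0)\longrightarrow \Omega^2\Bd_Q^{(1)}(\RR^\infty).
\]
Here the first map is the looped equivalence of \eqref{OmegaHFibSeq} and the middle equivalence is the $k=0$ case of \cref{IsotopyExtensionBdQRmk}. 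The last map is induced by $\Bd_Q(0)\to\Bd_Q(\RR^\infty)$ on fibres, via the identification of $\Bd_Q^{(1)}(0)$ as a fibre over $\Bd_Q(\RR)$. To prove this factorisation, I would compare the Weiss--Williams construction of $\Omega\H(Q)\simeq\Omega\Bd_Q^{(1)}(0)$ with the graphing construction geometrically: a concordance of $Q$ becomes a diffeomorphism of $Q\times\RR$ bounded in the $\RR$-direction, and one checks that the two nullhomotopies used agree up to a contractible choice.

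Second, I use naturality of $\eta_1$ in the diagram \eqref{OrthTower}. Evaluated on $\Bd_Q^{(1)}(0)\to\Bd_Q^{(1)}(\RR^\infty)$, the map $\Phi^{\Bd_Q}_\infty$ restricted along this inclusion equals the map $\Omega\Bd^{(1)}_Q(0)\to\Omega^{\infty+1}\Hsp(Q)\to\Omega^{\infty+1}(\Hsp(Q)_{hC_2})$. The first arrow is the canonical map from the zeroth space of the spectrum $\Theta\Bd_Q^{(1)}$, and the second is $q$. This is the standard description of the first Taylor approximation on the fibre at $0$: the layer $\Omega^\infty((S^{\sigma V}\otimes\Theta F^{(1)})_{h\O(1)})$ at $V=0$ receives $F^{(1)}(0)$ via the spectrum unit followed by the quotient to orbits. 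Combined with \eqref{ConcStabilisationInclusion}, which identifies the canonical map $\Omega\Bd_Q^{(1)}(0)\to\Omega^{\infty+1}\Hsp(Q)$ with $\Omega\H(Q)\to\Omega\cH(Q)\simeq\Omega^{\infty+1}\Hsp(Q)$, this yields the left-bottom composite after one further looping, and the compatibility with \eqref{cCvscH} follows because that equivalence was defined stagewise from \eqref{OmegaHFibSeq}.

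Naturality in codimension-zero embeddings then follows because every map used (the graphing maps, the maps of \eqref{OmegaHFibSeq}, the maps $\eta_1$, and the equivalences of \cref{IsotopyExtensionBdQRmk}) is natural in such embeddings.

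I expect the main obstacle to be the identification of $\eta_1$ on the fibre at $V=0$. This requires unwinding the construction of $T_1$ as a homotopy limit over spheres in $V\oplus\RR^k$ far enough to see the unit map and the $C_2$-quotient explicitly, including possible sign conventions from the $\O(1)$-action. I would attempt it by comparing with $F\to\Omega^\infty(\Theta F^{(1)}\otimes S^{\sigma V})_{h\O(1)}$ through the intermediate functor $F^{(1)}$, using that $\Theta(F^{(1)})^{(1)}\simeq\mathrm{Ind}\,\mathrm{Res}\,\Theta F^{(1)}$. If sign issues arise, they will only be detectable after the extra looping, which is why the statement is phrased with $\Omega$ on both sides.
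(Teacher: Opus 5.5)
Your skeleton is the same as the paper's. The paper also splits the square into three pieces:
\begin{itemize}
\item[(a)] a general orthogonal-calculus lemma: $\Phi^F_\infty$, restricted along $F^{(1)}(0)\to F^{(1)}(\RR^\infty)$, is $q\circ\mathrm{stab}$;
\item[(b)] the Weiss--Williams compatibility \eqref{ConcStabilisationInclusion} of the Alexander-trick equivalence $\Omega\C(Q)\simeq\Omega^2\Bd_Q^{(1)}(0)$ with stabilisation;
\item[(c)] a comparison of $j\circ\alpha$ with that equivalence followed by $\Bd^{(1)}_Q(0)\to\Bd^{(1)}_Q(\RR^\infty)$.
\end{itemize}
However, you have located the difficulty in the wrong place. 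Step (a) is immediate from the finite-stage maps $\Phi^F_k$ of \cite[Prop.~2.2]{MElongknots}. These form maps of fibre sequences \eqref{PhikFdiagram} whose right-hand vertical map is stabilisation. So $\Phi^F_1$, followed by the identification $S(\RR)_+\wedge_{\O(1)}\Theta F^{(1)}\simeq\Theta F^{(1)}$, is the stabilisation map; the identification holds because $\O(1)$ acts freely on $S(\RR)=\{\pm1\}$. The inclusion $S(\RR)\subset S(\RR^\infty)$ then induces exactly $q$. No unwinding of $T_1$ and no $\mathrm{Ind}$--$\mathrm{Res}$ argument is needed. Also, the extra loop in the statement has nothing to do with signs. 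It is there because the identifications \eqref{ConcStabilisationInclusion} are only available after looping.

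The genuine gap is step (c). It is the real content of the proposition, and you dispose of it with ``the two nullhomotopies agree up to a contractible choice''; nothing here is a contractible choice. We have $\Omega\C(Q)\simeq\hofib(\Gamma\colon\Omega\Diff_\partial(Q)\to\Diff_\partial(Q\times I))$ and $\Omega^2\Bd_Q^{(1)}(0)\simeq\hofib(\Omega\Diff_\partial(Q)\to\Omega\Diff^b_\partial(Q\times\RR))$. To compare maps between these fibres you must produce an actual homotopy $\mathrm{alex}\circ\Gamma\simeq(-)\times\mathrm{Id}_{\RR}$ of maps $\Omega\Diff_\partial(Q)\to\Omega\Diff^b_\partial(Q\times\RR)$. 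The paper writes this down explicitly as the homotopy $H$, built from translated copies of $\overline{\Gamma(\{\phi_r\})}$ separated by a widening band on which the diffeomorphism is a product.

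Moreover, your description of $j$ is inaccurate. It is not an inclusion into bounded diffeomorphisms but the zig-zag $(\bDiff/\Diff)_\partial(Q)\hookrightarrow\bDiff^b_\partial(Q\times\RR^\infty)/\Diff_\partial(Q)\xleftarrow{\sim}\Diff^b_\partial(Q\times\RR^\infty)/\Diff_\partial(Q)$, which passes through bounded \emph{block} diffeomorphisms. Likewise, $\alpha$ is defined using the block graphing map $\widetilde\Gamma$. So the comparison must also be made at the block level. This is why the paper realises the bottom square of \eqref{Qsquare} as a map of homotopy fibres of the cube \eqref{bigQcub}. It then factors the central square through $\Omega\bDiff^b_\partial(Q\times\RR)$ using a block version of alex, with $H$ set up simplicially. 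Without these two ingredients, your Step 1 is an assertion rather than an argument.
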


This proposition will be a consequence of the following general result in orthogonal calculus---see \cref{OrthCalcRecallSection} for a (rather) brief overview of the theory.

\begin{lemma}
    For $F:\mathcal{J}\to \Spc$ an orthogonal functor, the following commutes up to natural homotopy:
    \begin{equation}\label{PhiFsquare}
    \begin{tikzcd}[column sep = 3pt, row sep = 10pt]
	{F^{(1)}(0)} & {\mathrm{hofib}(F(0)\to F(\RR))} &&&&&&& {F^{(1)}(\RR^\infty)\coloneq\mathrm{hofib}(F(0)\to F(\RR^\infty))} \\
	\\
	{\Omega^\infty(\Theta F^{(1)})} &&&&&&&& {\Omega^\infty\big(\Theta F^{(1)}_{h\O(1)}\big),}
	\arrow["\coloneq", shift right=1.5, draw=none, from=1-1, to=1-2]
	\arrow[hook, "{\mathrm{stab.}}"', from=1-1, to=3-1]
	\arrow[from=1-2, to=1-9]
	\arrow["{\Phi^F_{\infty}}", from=1-9, to=3-9]
	\arrow["q", from=3-1, to=3-9]
\end{tikzcd}
\end{equation}
where the left vertical arrow is the evident inclusion into the colimit $\Omega^\infty(\Theta F^{(1)})\simeq \hocolim_{k} \Omega^k(F^{(1)}(\RR^k))$, $q: \Theta F^{(1)}\to \Theta F^{(1)}_{h\O(1)}$ denotes the natural quotient map, and $\Phi^F_\infty$ is the map from \eqref{PhiFInftyMap}.
\end{lemma}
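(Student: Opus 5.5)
The plan is to unwind $\Phi^F_\infty$ through the Taylor tower and compare it with the restriction of the first-layer fibre sequence of \eqref{OrthTower} to $V=0$. By construction, $\Phi^F_\infty$ is the composite
\[
F^{(1)}(\RR^\infty)\xrightarrow{\eta_1}\hofib\big(T_1F(0)\to T_1F(\RR^\infty)\big)\simeq \hofib\big(T_1F(0)\to T_0F(0)\big)\simeq \Omega^\infty\big((S^{0}\otimes\Theta F^{(1)})_{h\O(1)}\big),
\]
where the last equivalence uses the blue fibre sequence at $V=0$ and the identification $T_1F(\RR^\infty)\simeq T_0F(\RR^\infty)=T_0F(0)$. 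Precomposing with $F^{(1)}(0)\to F^{(1)}(\RR^\infty)$, we may therefore replace the top-right route by the map
\[
F^{(1)}(0)=\hofib\big(F(0)\to F(\RR)\big)\xrightarrow{\eta_1}\hofib\big(T_1F(0)\to T_1F(\RR)\big)\longrightarrow \hofib\big(T_1F(0)\to T_1F(\RR^\infty)\big).
\]
So the task becomes: identify the derivative of $T_1F$ at $0$, namely $(T_1F)^{(1)}(0)$, together with its map into $\Omega^\infty(\Theta F^{(1)}_{h\O(1)})$.

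First, I would use Weiss' description of the first derivative of the first layer $L_1F(V)=\Omega^\infty((S^{\sigma V}\otimes\Theta F^{(1)})_{h\O(1)})$:
\[
L_1F^{(1)}(0)=\hofib\big(\Omega^\infty\Theta F^{(1)}_{h\O(1)}\to \Omega^\infty(S^{\sigma}\otimes\Theta F^{(1)})_{h\O(1)}\big)\simeq \Omega^\infty\big(\Theta F^{(1)}\big),
\]
using the cofibre sequence $\O(1)_+\to S^0\to S^\sigma$ of $\O(1)$-spaces. Under this equivalence, the inclusion of the fibre followed by the map to $L_1F(0)$ is exactly the transfer/quotient $\Omega^\infty(\O(1)_+\otimes\Theta F^{(1)})_{h\O(1)}\simeq \Omega^\infty\Theta F^{(1)}\xrightarrow{q}\Omega^\infty\Theta F^{(1)}_{h\O(1)}$. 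Since $T_0$ is constant, $(T_1F)^{(1)}\simeq (L_1F)^{(1)}$, and this supplies the bottom edge $q$ of \eqref{PhiFsquare}.

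Second, I need the map $F^{(1)}(0)\xrightarrow{\eta_1}(T_1F)^{(1)}(0)\simeq\Omega^\infty\Theta F^{(1)}$ to agree with the stabilisation inclusion $F^{(1)}(0)\to\hocolim_k\Omega^kF^{(1)}(\RR^k)$. The standard fact I would invoke is that $F^{(1)}\to (T_1F)^{(1)}$ is the universal approximation of $F^{(1)}$ by a polynomial-of-degree-$0$-in-derivative functor: concretely, $(T_1F)^{(1)}(V)\simeq\hocolim_k\Omega^kF^{(1)}(V\oplus\RR^k)$ via the structure maps $s_V$, with $\eta_1$ corresponding to the $k=0$ inclusion. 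This follows from Weiss' construction $T_1F=\hocolim_k\tau_1^kF$, differentiating each $\tau_1$ and using that $\tau_1$ applied to $F^{(1)}$ is $\Omega F^{(1)}(-\oplus\RR)$ up to the rotation defining $s_V$.

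The main obstacle is checking that these two identifications of $(T_1F)^{(1)}(0)$ are compatible: that the equivalence $\hocolim_k\Omega^kF^{(1)}(\RR^k)\simeq\Omega^\infty\Theta F^{(1)}$ coming from $\tau_1$ matches the one from the layer cofibre sequence with the correct $\O(1)$-twist, i.e.\ that the rotation maps $s_V$ of \cite[\S A.1]{MElongknots} are the ones appearing in Weiss' derivative. For this I would follow Weiss' proof that $L_1F$ is classified by $\Theta F^{(1)}$ (his Theorem 7.3 and Example 6.4), tracking the definitions at $V=0$. Naturality in $F$ holds because every step is functorial; applying the result to $F=\Bd_Q$ and to the maps induced by codimension-zero embeddings then yields \cref{BdQProp}.
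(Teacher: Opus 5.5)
Your proposal is correct and is essentially the paper's argument: restrict $\Phi^F_\infty$ along $F^{(1)}(0)\to F^{(1)}(\RR^\infty)$ using naturality of $\eta_1$, identify $\mathrm{hofib}(T_1F(0)\to T_1F(\RR))$ with $\Omega^\infty(S(\RR)_+\wedge_{\O(1)}\Theta F^{(1)})\cong\Omega^\infty\Theta F^{(1)}$ (your cofibre sequence $\O(1)_+\to S^0\to S^\sigma$ is exactly this, since $S(\RR)=\O(1)$), and note that the inclusion $S(\RR)\subset S(\RR^\infty)$ then induces the quotient map $q$ (it is the quotient, not a transfer). The compatibility you single out as the main obstacle---that under this identification $\eta_1$ on $F^{(1)}(0)$ is the stabilisation map---is not re-derived in the paper; it is imported as the $k=0$ case of the map of fibre sequences relating $\Phi^F_k$, $\Phi^F_{k+1}$ and stabilisation in \cite[Prop.~2.2]{MElongknots}, so citing that result closes your argument without re-tracking Weiss' classification.
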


\begin{proof} In \cite[Prop. 2.2]{MElongknots}, the following was established: for each $k\geq 0$, there are maps
\[
\begin{tikzcd}
    \Phi_k^F: \hofib(F(0)\to F(\RR^k))\rar["\eta_1"] &\hofib(T_1F(0)\to T_1F(\RR^k))\simeq \Omega^\infty(S(\RR^k)_+\wedge_{\O(1)}\Theta F^{(1)}),
\end{tikzcd}
\]
where $S(V)$ denotes the unit sphere of $V$, which give rise to maps of homotopy fibre sequences
\begin{equation}\label{PhikFdiagram}
\begin{tikzcd}[column sep = 15pt]
    \hofib(F(0)\to F(\RR^k))\rar\dar["\Phi_k^F"] &\hofib(F(0)\to F(\RR^{k+1}))\rar\dar["\Phi_{k+1}^F"] &\hofib(F(\RR^k)\to F(\RR^{k+1}))\dar["\mathrm{stab.}"] \\
    \Omega^\infty(S(\RR^k)_+\wedge_{\O(1)}\Theta F^{(1)})\rar & \Omega^\infty(S(\RR^{k+1})_+\wedge_{\O(1)}\Theta F^{(1)})\rar & \Omega^\infty(\Sigma^k\Theta F^{(1)}).
\end{tikzcd}
\end{equation}
The top-right term is the unstable derivative $\Theta F^{(1)}_k$, and the rightmost vertical map ``$\mathrm{stab.}$'' is again the natural inclusion $\smash{\Theta F^{(1)}_k\xhookrightarrow{}\operatorname{hocolim}_m\Omega^m(\Theta F^{(1)}_{k+m})}$. Then, the map $\smash{\Phi^F_{\infty}}$ of \eqref{PhiFInftyMap} is simply $\smash{\hocolim_k \Phi_k^F}$.

The claim follows almost immediately from the diagram \eqref{PhikFdiagram}. For $k=0$, it yields the left subsquare in
    $$
\begin{tikzcd}
    F^{(1)}(0)\dar[hook, "\mathrm{stab.}"] &\lar[equal] \hofib(F(0)\to F(\RR))\dar["\Phi_1^F"]\rar &\hofib(F(0)\to F(\RR^\infty))\dar["\Phi^F_\infty"]\\
    \Omega^\infty\Theta F^{(1)}&\lar["\sim"']\Omega^\infty(S(\RR)_+\wedge_{\O(1)}\Theta F^{(1)})\rar & \Omega^\infty(\Theta F^{(1)}_{h\O(1)}).
\end{tikzcd}
    $$
    Since the action of $\O(1)$ on $S(\RR)=\{\pm1\}$ is free, the map $S(\RR)_+\wedge_{\O(1)}X\to X$ is in fact an isomorphism for every $\O(1)$-spectrum $X$. Composing its inverse with the inclusion map $S(\RR)_+\wedge_{\O(1)}X\to S(\RR^{\infty})_+\wedge_{\O(1)}X=X_{h\O(1)}$ gives the quotient map $q:X\to X_{h\O(1)}$.
\end{proof}

We claim that the diagram of \cref{IntermediateBoringAlphaProp} (looped once) is equivalent to the diagram \eqref{PhiFsquare} (looped twice) for the functor $F=\sfE_{X_g}\coloneq \sfE_{(\iota(X_g), X_g)}$ of \cref{BdAndEorthFunctorsDefn}(ii) which, we recall, is the homotopy fibre $\hofib(\Bd_{S^1\times D^{2n-1}}\to \Bd_{X_g})$ (see \cref{BdAndEorthFunctorsDefn}(i) for the definition of $\Bd_Q$). In order to show this, we need to establish the analogous claim for the functor $\Bd_Q$ for $Q=S^1\times D^{2n-1}$ and $X_g$, which is what we do now.

\begin{proof}[Proof of \cref{BdQProp}]
    We show that the diagram in the statement is naturally equivalent to \eqref{PhiFsquare} looped once for $F=\Bd_Q$---naturality of the square in the statement will then be clear. Consider the diagram
\[
\adjustbox{scale=0.8,center}{
\begin{tikzcd}[row sep = 12pt]
	{\Omega^2\Bd_Q^{(1)}(0)} &&& {\Omega^2\hofib(\Bd_Q(0)\to \Bd_Q(\RR^\infty))} \\
	& {\Omega\C(Q)} & {\Omega^2\left(\frac{\Diff_\partial^b(Q\times \RR^\infty)}{\Diff_\partial(Q)}\right)} \\
	& {\Omega\mathcal{C}(Q)} \\
	& {\Omega^{\infty+2}(\Hsp(Q))} & {\Omega^{\infty+2}\big(\Hsp(Q)_{hC_2}\big)} \\
	{\Omega^{\infty+2}\big(\Theta\Bd_Q^{(1)}\big)} &&& {\Omega^{\infty+2}\big((\Theta\Bd_Q^{(1)})_{h\O(1)}\big).}
	\arrow[from=1-1, to=1-4]
	\arrow["\simeq"', "\mathrm{alex}", from=2-2, to=1-1]
	\arrow["{\mathrm{stab.}}", hook, from=1-1, to=5-1]
	\arrow["{\Phi^{\Bd_Q}_\infty}", from=1-4, to=5-4]
	\arrow["{j\circ\alpha}", from=2-2, to=2-3]
	\arrow[hook, from=2-2, to=3-2, "\mathrm{stab.}"]
	\arrow[equal, from=2-3, to=1-4]
	\arrow["{\Phi^{\Bd_Q}_\infty}", from=2-3, to=4-3]
	\arrow["\vsim","\eqref{cCvscH}+\eqref{StableCEmbIETSeq}"', dash, from=3-2, to=4-2]
	\arrow["q", from=4-2, to=4-3]
	\arrow[equals, from=4-2, to=5-1]
	\arrow[equals, from=4-3, to=5-4]
	\arrow["q", from=5-1, to=5-4]
\end{tikzcd}}\]
The right and bottom subsquares are commutative by definition. The left subsquare is \eqref{ConcStabilisationInclusion}, and is commutative by \cite[Lem. 1.12 \& Cor. 1.13]{WWI} (from which the name of the map ``alex'' is borrowed, and whose construction we will recall later). It remains to show that the top subsquare is also homotopy commutative; that is, we need to construct a homotopy commutative diagram (top subsquare itself commutes by definition)
\begin{equation}\label{Qsquare}
\begin{tikzcd}[row sep = 14pt]
    \Omega \mathsf{Bd}^{(1)}(0)\rar & \Omega \hofib(\mathsf{Bd}(0)\to \mathsf{Bd}(\RR^\infty))\\
    \Omega^2\frac{\Diff^b_\partial(Q\times \RR)}{\Diff_\partial(Q)}\ar[u, equal]\rar&\Omega^2\frac{\Diff^b_\partial(Q\times \RR^\infty)}{\Diff_\partial(Q)}\uar[equal]\dar[hook, "\vsim"]\\
     & \Omega^2\frac{\bDiff^b_\partial(Q\times \RR^\infty)}{\Diff_\partial(Q)}\\
\Omega \C(Q)\ar[uu,"\mathrm{alex}", "\vsim"']\rar["\alpha"] & \Omega^2\left(\frac{\bDiff}{\Diff}\right)_\partial(Q)\uar
\end{tikzcd}
\end{equation}
that is natural in codimension-zero embeddings of compact manifolds $Q\xhookrightarrow{}Q'$. The bottom square in \eqref{Qsquare} is itself obtained as the homotopy fibre of the map from the outer square to the inner square in
\begin{equation}\label{bigQcub}
\adjustbox{scale=0.8,center}{
\begin{tikzcd}[row sep = 15pt, column sep = 18pt]
	{\Omega\Diff_\partial(Q)} &&&& {\Omega\Diff_\partial(Q)} \\
	& {\Omega\Diff^b_\partial(Q\times \RR)} && {\Omega\Diff^b_\partial(Q\times \RR^\infty)} \\
	&&& {\Omega\bDiff^b_\partial(Q\times \RR^\infty)} \\
	& {\Diff_\partial(Q\times I)} & {\bDiff_\partial(Q\times I)} & {\Omega\bDiff_\partial(Q)} \\
	{\Omega\Diff_\partial(Q)} &&&& {\Omega\Diff_\partial(Q).}
	\arrow[equals, from=1-1, to=1-5]
	\arrow["-\times \Id_{\RR}"',from=1-1, to=2-2]
	\arrow[from=1-5, to=2-4]
	\arrow[from=2-2, to=2-4]
	\arrow["\vsim", from=2-4, to=3-4]
	\arrow["{\mathrm{alex}}", "\vsim"', from=4-2, to=2-2]
	\arrow[from=4-2, to=4-3]
	\arrow[from=4-4, to=3-4]
	\arrow["{\widetilde{\Gamma}}"', "\sim", from=4-4, to=4-3]
	\arrow[equals, from=5-1, to=1-1]
	\arrow["\Gamma", from=5-1, to=4-2]
	\arrow[equals, from=5-1, to=5-5]
	\arrow[equals, from=5-5, to=1-5]
	\arrow[from=5-5, to=4-4]
\end{tikzcd}}
\end{equation}
It remains to show that this diagram is homotopy commutative---this is clear for all subsquares except the left and center ones.

We first recall the definition of the Alexander trick-like map ``alex'': given a diffeomorphism $\phi\in \Diff_\partial(Q\times I)$, we obtain a bounded diffeomorphism $\overline\phi$ of $Q\times \RR$ by extending $\phi$ by the identity on $M\times \RR- M\times I$. Then, the based loop $\mathrm{alex}(\phi)$ is given by
$$
\mathrm{alex}(\phi): [-\infty,+\infty]\longrightarrow \Diff^b_\partial(Q\times \RR), \quad s\longmapsto\left\{
\begin{array}{cl}
    \Id_{Q\times \RR} & s=\pm\infty, \\[4pt]
     \tau_s\circ \overline{\phi}\circ \tau_{-s} & s\in (-\infty,+\infty),
\end{array}
\right.
$$
where $\tau_s: Q\times \RR\cong Q\times \RR$ is the translation map $\tau_s(x,t)\coloneq(x,t+s)$.

We now describe an explicit homotopy $\mathrm{alex}\circ\Gamma\sim (-)\times \Id_{\RR}$, showing that the left subsquare in \eqref{bigQcub} is homotopy commutative. Fix once and for all a smooth identification $\psi: [0,+\infty]\cong [0,1]$. Then, such homotopy is given by a map
$$
H: [0,+\infty]\times [-\infty,+\infty]\times \Omega\Diff_\partial(Q)\longrightarrow \Diff^b_\partial(Q\times \RR), \quad (t,s,\{\phi_r\}_{r\in [0,1]})\longmapsto H_t(s,\{\phi_r\})
$$
where $H_t(s,\{\phi_r\})$ is the bounded diffeomorphism of $Q\times \RR$ given by
$$
H_t(s,\{\phi_r\})\equiv\left\{
\begin{array}{cl}
    \tau_{s-t}\circ\overline{\Gamma(\{\phi_r\})}\circ \tau_{t-s}& \text{on $(-\infty, s+\psi(t)-t]$},  \\[4pt]
     \phi_{\psi(t)}\times \Id_{\RR}& \text{on $[s+\psi(t)-t, s+\psi(t)+t]$},  \\[4pt]
     \tau_{s+t}\circ\overline{\Gamma(\{\phi_r\})}\circ \tau_{-s-t}& \text{on $[s+\psi(t)+t,+\infty)$.}
\end{array}
\right.
$$
There are some issues regarding the smoothness of $H_t(s,\{\phi_r\})$ at the points $Q\times \{s+\psi(t)\pm t\}$, but these can easily be overcome by choosing a suitable smoothing of the piecewise linear function
$$
[0+\infty)\times\RR\to\RR, \quad (t,x)\longmapsto\left\{
\begin{array}{cl}
    x+t, & x\leq -t, \\
    0, & |x|\leq t,\\
    x-t, & x\geq t.
\end{array}
\right.
$$

To see that the central subsquare in \eqref{bigQcub} is also homotopy commutative, note that it factors as
\[\begin{tikzcd}[row sep = 20pt]
	{\Omega\Diff^b_\partial(Q\times \RR)} && {\Omega\Diff^b_\partial(Q\times \RR^\infty)} \\
	& {\Omega\bDiff_\partial^b(Q\times \RR)} & {\Omega\bDiff^b_\partial(Q\times \RR^\infty)} \\
	{\Diff_\partial(Q\times I)} & {\bDiff_\partial(Q\times I)} & {\Omega\bDiff_\partial(Q)}
	\arrow[from=1-1, to=1-3]
	\arrow[from=1-1, to=2-2]
	\arrow["\vsim", from=1-3, to=2-3]
	\arrow[from=2-2, to=2-3]
	\arrow["{\mathrm{alex}}", "\vsim"', from=3-1, to=1-1]
	\arrow[from=3-1, to=3-2]
    \arrow["-\times \Id_{\RR}"', from=3-3, to=2-2]
	\arrow["{\mathrm{alex}}","\vsim"', from=3-2, to=2-2]
	\arrow[from=3-3, to=2-3]
	\arrow["{\widetilde{\Gamma}}"', from=3-3, to=3-2]
\end{tikzcd}\]
All sub-parts of this diagram now homotopy commute either by definition, or by setting up the homotopy $H$ above simplicially, which is routine. This finishes the proof of the proposition.
\end{proof}

\begin{rmk}
    As a consequence of this proof and that of \cref{IntermediateBoringAlphaProp}, it follows that the square in that statement looped once is equivalent to that in \eqref{PhiFsquare} for $F=\sfE_{X_g}$ when looped twice.
\end{rmk}

\printbibliography[heading=bibintoc]

\textsc{Department of Mathematics, Karlsruhe Institute of Technology, 76131 Karlsruhe, Germany}

\textit{Email address:} \url{joao.fernandes@kit.edu}

\textsc{Simons Building, Massachusetts Institute of Technology, Cambridge, MA 02139, USA}

\textit{Email address:} \url{smunoze@mit.edu}

\end{document}